\newtheorem{teo}{Theorem}
\newtheorem{lem}{Lemma}
\newtheorem{deff}{Definition}
\newtheorem{remark}{Remark}
\newtheorem{ex}{Example}
\newtheorem{corol}{Corollary}
 \title{{\bf    Theory of   sectorial  operators  and its application    in \\Fractional calculus      }}
\author{Maksim \,V.~Kukushkin   \\ \\
  % Note on the connection between the Schatten indexes of the operator and its real component  Institutes
 \small  \textit{National Research University Higher School of Economics, 101000,  Moscow, Russia}\\
\small\textit{Russian Academy of Sciences,  Kabardino-Balkarian Scientific Center,}\\
 \small\textit{Institute of Applied Mathematics and Automation, 360000,  Nalchik, Russia}\\
 }
\date{}
\begin{document}

\maketitle
\tableofcontents

\section{Preliminaries}

Let    $ C,C_{i} ,\;i\in \mathbb{N}_{0}$ be     positive constants. We   assume   that  a  value of $C$    can be different in   various formulas and parts of formulas  but   values of $C_{i} $ are  certain. Denote by $ \mathrm{Fr}\,M$   the set of boundary points of the set $M.$    Everywhere further, if the contrary is not stated, we consider   linear    densely defined operators acting on a separable complex  Hilbert space $\mathfrak{H}$. Denote by $ \mathcal{B} (\mathfrak{H})$    the set of linear bounded operators   on    $\mathfrak{H}.$  Denote by
    $\tilde{L}$   the  closure of an  operator $L.$ We establish the following agreement on using  symbols $\tilde{L}^{i}:= (\tilde{L})^{i},$ where $i$ is an arbitrary symbol.  Denote by    $    \mathrm{D}   (L),\,   \mathrm{R}   (L),\,\mathrm{N}(L)$      the  {\it domain of definition}, the {\it range},  and the {\it kernel} or {\it null space}  of an  operator $L,$ respectively. The deficiency (codimension) of $\mathrm{R}(L),$ dimension of $\mathrm{N}(L)$ are denoted by $\mathrm{def}\, L,\;\mathrm{nul}\,L$ respectively. Assume that $L$ is a closed   operator acting on $\mathfrak{H},\,\mathrm{N}(L)=0,$  let us define a Hilbert space
$
 \mathfrak{H}_{L}:= \big \{f,g\in \mathrm{D}(L),\,(f,g)_{ \mathfrak{H}_{L}}=(Lf,Lg)_{\mathfrak{H} } \big\}.
$
Consider a pair of complex Hilbert spaces $\mathfrak{H},\mathfrak{H}_{+},$ the notation
$
\mathfrak{H}_{+}\subset\subset\mathfrak{ H}
$
   means that $\mathfrak{H}_{+}$ is dense in $\mathfrak{H}$ as a set of    elements and we have a bounded embedding provided by the inequality
$
\|f\|_{\mathfrak{H}}\leq C_{0}\|f\|_{\mathfrak{H}_{+}},\,C_{0}>0,\;f\in \mathfrak{H}_{+},
$
moreover   any  bounded  set with respect to the norm $\mathfrak{H}_{+}$ is compact with respect to the norm $\mathfrak{H}.$
  Let $L$ be a closed operator, for any closable operator $S$ such that
$\tilde{S} = L,$ its domain $\mathrm{D} (S)$ will be called a core of $L.$ Denote by $\mathrm{D}_{0}(L)$ a core of a closeable operator $L.$ Let    $\mathrm{P}(L)$ be  the resolvent set of an operator $L$ and
     $ R_{L}(\zeta),\,\zeta\in \mathrm{P}(L),\,[R_{L} :=R_{L}(0)]$ denotes      the resolvent of an  operator $L.$ Denote by $\lambda_{i}(L),\,i\in \mathbb{N} $ the eigenvalues of an operator $L.$
 Suppose $L$ is  a compact operator and  $N:=(L^{\ast}L)^{1/2},\,r(N):={\rm dim}\,  \mathrm{R}  (N);$ then   the eigenvalues of the operator $N$ are called   the {\it singular  numbers} ({\it s-numbers}) of the operator $L$ and are denoted by $s_{i}(L),\,i=1,\,2,...\,,r(N).$ If $r(N)<\infty,$ then we put by definition     $s_{i}=0,\,i=r(N)+1,2,...\,.$
 Let  $\nu(L)$ denotes   the sum of all algebraic multiplicities of an  operator $L.$ Denote by $n(r)$ a function equals to the quantity of the elements of the sequence $\{a_{n}\}_{1}^{\infty},\,|a_{n}|\uparrow\infty$ within the circle $|z|<r.$ Let $L$ be a compact operator, denote by  $n_{L}(r)$ or   $n(r)$ {\it counting function}   a function  corresponding to the sequence  $\{s^{-1}_{i}(L)\}_{1}^{\infty},$  in some cases we will also  use this notation for the counting function of the absolute values of the operator characteristic numbers.
 Let $\mathfrak{S}_{\sigma}(\mathfrak{H}),\,0<\sigma<\infty$ be a Schatten-von Neumann  class (Schatten class) and $\mathfrak{S}_{\infty}(\mathfrak{H})$ be a set of compact operators, by definition put
 $$
 \mathfrak{S}_{\sigma}(\mathfrak{H}):=\left\{L:\mathfrak{H}\rightarrow \mathfrak{H},\;\sum\limits_{n=1}^{\infty}s^{\sigma}_{n}(L)<\infty,\,0<\sigma<\infty\right\}.
 $$
   Denote by $  \mathfrak{S}^{\star}_{\sigma}(\mathfrak{H}),\,0\leq\sigma<\infty$ the class of the operators such that
$
 B\in \mathfrak{S}^{\star}_{\sigma}(\mathfrak{H}) \Rightarrow\{B\in\mathfrak{S}_{\sigma+\varepsilon},\,B \overline{\in} \,\mathfrak{S}_{\sigma},\,\forall\varepsilon>0 \}.
$
In accordance with \cite{firstab_lit:1kukushkin2021}, we will call it   {\it Schatten-von Neumann    class of the convergence exponent}.
Suppose  $L$ is  an   operator with a compact resolvent and
$s_{n}(R_{L})\leq   C \,n^{-\mu},\,n\in \mathbb{N},\,0\leq\mu< \infty;$ then
 we
 denote by  $\mu(L) $   order of the     operator $L$  (see \cite{firstab_lit:Shkalikov A.}).
 Denote by  $ \mathfrak{Re} L  := \left(L+L^{*}\right)/2,\, \mathfrak{Im} L  := \left(L-L^{*}\right)/2 i$
  the  real  and   imaginary Hermitian  components    of an  operator $L$  respectively.
In accordance with  the terminology of the monograph  \cite{firstab_lit:kato1980}, the set $\Theta(L):=\{z\in \mathbb{C}: z=(Lf,f)_{\mathfrak{H}},\,f\in  \mathrm{D} (L),\,\|f\|_{\mathfrak{H}}=1\}$ is called the  {\it numerical range}  of an   operator $L.$
  An  operator $L$ is called    {\it sectorial}    if its  numerical range   belongs to a  closed
sector     $\mathfrak{ L}_{\iota}(\theta):=\{\zeta:\,|\arg(\zeta-\iota)|\leq\theta<\pi/2\} ,$ where      $\iota$ is the vertex   and  $ \theta$ is the semi-angle of the sector   $\mathfrak{ L}_{\iota}(\theta).$ If we want to stress the  correspondence  between $\iota$ and $\theta,$  then   we will write $\theta_{\iota}.$
 An operator $L$ is called  {\it bounded from below}   if the following relation  holds  $\mathrm{Re}(Lf,f)_{\mathfrak{H}}\geq \gamma_{L}\|f\|^{2}_{\mathfrak{H}},\,f\in  \mathrm{D} (L),\,\gamma_{L}\in \mathbb{R},$  where $\gamma_{L}$ is called a lower bound of $L.$ An operator $L$ is called  {\it   accretive}   if  $\gamma_{L}=0.$
 An operator $L$ is called  {\it strictly  accretive}   if  $\gamma_{L}>0.$      An  operator $L$ is called    {\it m-accretive}     if the next relation  holds $(A+\zeta)^{-1}\in \mathcal{B}(\mathfrak{H}),\,\|(A+\zeta)^{-1}\| \leq   (\mathrm{Re}\zeta)^{-1},\,\mathrm{Re}\zeta>0. $
    An operator $L$ is called     {\it symmetric}     if one is densely defined and the following  equality  holds $(Lf,g)_{\mathfrak{H}}=(f,Lg)_{\mathfrak{H}},\,f,g\in   \mathrm{D}  (L).$  Consider a   sesquilinear form   $ s [\cdot,\cdot]$ (see \cite{firstab_lit:kato1980} )
defined on a linear manifold  of the Hilbert space $\mathfrak{H}.$ Let $L$ be a bounded operator acting in $\mathfrak{H},$ and assume that $\{\varphi_{n}\}_{1}^{\infty},\,\{\psi_{n}\}_{1}^{\infty}$ a pair of orthonormal bases in $\mathfrak{H}.$ Define the {\it absolute operator norm} as follows
$$
\|L\|_{2}:=\left(\sum\limits_{n,k=1}^{\infty}|(L\varphi_{n},\psi_{k})_{\mathfrak{H}}|^{2}\right)^{1/2}<\infty.
$$

Let   $  \mathfrak{h}=( s + s ^{\ast})/2,\, \mathfrak{k}   =( s - s ^{\ast})/2i$
   be a   real  and    imaginary component     of the   form $  s $ respectively, where $ s^{\ast}[u,v]=s \overline{[v,u]},\;\mathrm{D}(s ^{\ast})=\mathrm{D}(s).$ Denote by $   s  [\cdot ]$ the  quadratic form corresponding to the sesquilinear form $ s  [\cdot,\cdot].$ According to these definitions, we have $
 \mathfrak{h}[\cdot]=\mathrm{Re}\,s[\cdot],\,  \mathfrak{k}[\cdot]=\mathrm{Im}\,s[\cdot].$ Denote by $\tilde{s}$ the  closure   of a   form $s.$  The range of a quadratic form
  $ s [f],\,f\in \mathrm{D}(s),\,\|f\|_{\mathfrak{H}}=1$ is called   the {\it range} of the sesquilinear form  $s $ and is denoted by $\Theta(s).$
 A  form $s$ is called    {\it sectorial}    if  its    range  belongs to   a sector  having  a vertex $\iota$  situated at the real axis and a semi-angle $0\leq\theta<\pi/2.$   Due to Theorem 2.7 \cite[p.323]{firstab_lit:kato1980}  there exist unique    m-sectorial operators  $T_{s},T_{ \mathfrak{h}} $  associated  with   the  closed sectorial   forms $s,  \mathfrak{h}$   respectively.   The operator  $T_{\mathfrak{h}}$ is called  a {\it real part} of the operator $T_{s}$ and is denoted by  $\mathrm{Re}\, T_{s}.$

Using     notations of the paper     \cite{firstab_lit:kipriyanov1960}, we assume that $\Omega$ is a  convex domain of the  $n$ -  dimensional Euclidean space $\mathbb{E}^{n}$, $P$ is a fixed point of the boundary $\partial\Omega,$
$Q(r,\mathbf{e})$ is an arbitrary point of $\Omega.$  Let  $\mathrm{d}:=\mathrm{diam}\Omega,$ we denote by $\mathbf{e}$   a unit vector having a direction from  $P$ to $Q,$ denote by $r=|P-Q|$   the Euclidean distance between the points $P,Q,$ and   use the shorthand notation    $T:=P+\mathbf{e}t,\,t\in \mathbb{R}.$
We   consider the Lebesgue  classes   $L_{p}(\Omega),\;1\leq p<\infty $ of  complex valued functions.  For the function $f\in L_{p}(\Omega),$    we have
\begin{equation}\label{1.1}
\int\limits_{\Omega}|f(Q)|^{p}dQ=\int\limits_{\omega}d\chi\int\limits_{0}^{d(\mathbf{e})}|f(Q)|^{p}r^{n-1}dr<\infty,
\end{equation}
where $d\chi$   is an element of   solid angle of
the unit sphere  surface (the unit sphere belongs to $\mathbb{E}^{n}$)  and $\omega$ is a  surface of this sphere,   $d:=d(\mathbf{e})$  is the  length of the  segment of the  ray going from the point $P$ in the direction
$\mathbf{e}$ within the domain $\Omega.$
Without loss of  generality, we consider only those directions of $\mathbf{e}$ for which the inner integral on the right-hand  side of equality \eqref{1.1} exists and is finite. It is  the well-known fact that  these are almost all directions.
 We use a shorthand  notation  $P\cdot Q=P^{i}Q_{i}=\sum^{n}_{i=1}P_{i}Q_{i}$ for the inner product of the points $P=(P_{1},P_{2},...,P_{n}),\,Q=(Q_{1},Q_{2},...,Q_{n})$ which     belong to  $\mathbb{E}^{n}.$
     Denote by  $D_{i}f$  a weak partial derivative of the function $f$ with respect to a coordinate variable with index   $1\leq i\leq n.$
We  assume that all functions have  a zero extension outside  of $\bar{\Omega}.$
Everywhere further,   unless  otherwise  stated,  we   use  notations of the papers   \cite{firstab_lit:1Gohberg1965},  \cite{firstab_lit:kato1980},  \cite{firstab_lit:kipriyanov1960}, \cite{firstab_lit:1kipriyanov1960},
\cite{firstab_lit:samko1987}.

\chapter{Properties of fractional differential operators}\label{Ch1}

\section{Multidimensional integro-differential operators }

Accepting  the   notation  of the paper  \cite{firstab_lit:kipriyanov1960} we assume that $\Omega$ is a  convex domain of the  $n$ -  dimensional Euclidean space $\mathbb{E}^{n}$, $P$ is a fixed point of the boundary $\partial\Omega,$
$Q(r,\mathbf{e})$ is an arbitrary point of $\Omega;$ we denote by $\mathbf{e}$   a unit vector having the direction from  $P$ to $Q,$ denote by $r=|P-Q|$   the Euclidean distance between the points $P$ and $Q.$ We use the shorthand notation    $T:=P+\mathbf{e}t,\,t\in \mathbb{R}.$
We   consider the Lebesgue  classes   $L_{p}(\Omega),\;1\leq p<\infty $ of  complex valued functions.  For the function $f\in L_{p}(\Omega),$    we have
\begin{equation}\label{1.2}
\int\limits_{\Omega}|f(Q)|^{p}dQ=\int\limits_{\omega}d\chi\int\limits_{0}^{d(\mathbf{e})}|f(Q)|^{p}r^{n-1}dr<\infty,
\end{equation}
where $d\chi$   is the element of the solid angle of
the unit sphere  surface (the unit sphere belongs to $\mathbb{E}^{n}$)  and $\omega$ is a  surface of this sphere,   $d:=d(\mathbf{e})$  is the length of the  segment of the  ray going from the point $P$ in the direction
$\mathbf{e}$ within the domain $\Omega.$
Without lose of   generality, we consider only those directions of $\mathbf{e}$ for which the inner integral on the right side of equality \eqref{1.2} exists and is finite. It is  the well-known fact that  these are almost all directions.
We denote by   ${\rm Lip}\, \mu,\;(0<\mu\leq1) $  the set of functions satisfying the Holder-Lipschitz condition
$$
{\rm Lip}\, \lambda:=\left\{\rho(Q):\;|\rho(Q)-\rho(P)|\leq M r^{\lambda},\,P,Q\in \bar{\Omega}\right\}.
$$
Consider the  Kipriyanov  fractional differential operator     defined in  the paper \cite{firstab_lit:1kipriyanov1960}  by  the formal expression
\begin{equation*}
\mathfrak{D}^{\alpha}(Q)=\frac{\alpha}{\Gamma(1-\alpha)}\int\limits_{0}^{r} \frac{[f(Q)-f(T)]}{(r - t)^{\alpha+1}} \left(\frac{t}{r} \right) ^{n-1} dt+
C^{(\alpha)}_{n} f(Q) r ^{ -\alpha}\!,\, P\in\partial\Omega,
\end{equation*}
where
$
C^{(\alpha)}_{n} = (n-1)!/\Gamma(n-\alpha).
$
In accordance with Theorem 2   \cite{firstab_lit:1kipriyanov1960}, under the assumptions
\begin{equation}\label{1.3}
 lp\leq n,\;0<\alpha<l- \frac{n}{p} +\frac{n}{q}, \,q>p,
\end{equation}
we have that
     for sufficiently small $\delta>0$ the following inequality holds
\begin{equation}\label{1.4}
\|\mathfrak{D}^{\alpha}f\|_{L_{q}(\Omega)}\leq \frac{K}{\delta^{\nu}}\|f\|_{L_{p}(\Omega)}+\delta^{1-\nu}\|f\|_{L^{l}_{p}(\Omega)},\, f\in\dot{W}_{p}^{\,l}  (\Omega),
\end{equation}
where
\begin{equation*}
 \nu=\frac{n}{l}\left(\frac{1}{p}-\frac{1}{q} \right)+\frac{\alpha+\beta}{l}.
\end{equation*}
The constant  $K$ does not depend on $\delta,\,f;$   the point $P\in\partial\Omega ;\;\beta$ is an arbitrarily small fixed positive number.
Further, we assume that $\alpha \in (0,1).$
Using the notation   of the paper  \cite{firstab_lit:samko1987}, we   denote  by $I_{a+}^{\alpha}(L_{p} ),\;I_{b-}^{\alpha}(L_{p} ),\;1\leq p\leq\infty$ the left-side, right-side  classes of functions representable by the fractional integral on the segment $[a,b]$ respectively.  Let $\mathrm{d}:={\rm diam}\,\Omega ;\;C,C_{i}={\rm const},\,i\in \mathbb{N}_{0}.$ We use a shorthand  notation  $P\cdot Q=P^{i}Q_{i}=\sum^{n}_{i=1}P_{i}Q_{i}$ for the inner product of the points $P=(P_{1},P_{2},...,P_{n}),\,Q=(Q_{1},Q_{2},...,Q_{n})$ which     belong to  $\mathbb{E}^{n}.$
     Denote by  $D_{i}u$  the week  derivative of the function $u$ with respect to a coordinate variable with index   $1\leq i\leq n.$
We  assume that all functions have  a zero extension outside  of $\bar{\Omega}.$  Denote by  $  \mathrm{D} (L), \mathrm{R} (L)$    the domain of definition, range of values of the operator $L$ respectively.
Everywhere further,   unless  otherwise  stated,  we   use the notations of the papers    \cite{firstab_lit:kipriyanov1960}, \cite{firstab_lit:1kipriyanov1960},
\cite{firstab_lit:samko1987}.
Let us  define the operators
$$
 (\mathfrak{I}^{\alpha}_{0+}g)(Q  ):=\frac{1}{\Gamma(\alpha)} \int\limits^{r}_{0}\frac{g (T)}{( r-t)^{1-\alpha}}\left(\frac{t}{r}\right)^{n-1}dt,\,(\mathfrak{I}^{\alpha}_{d-}g)(Q  ):=\frac{1}{\Gamma(\alpha)} \int\limits_{r}^{d }\frac{g (T)}{(t-r)^{1-\alpha}}dt,
$$
$$
\;g\in L_{p}(\Omega),\;1\leq p\leq\infty.
$$
These  operators we call respectively   the left-side, right-side directional fractional integral.
We introduce   the classes of functions representable by the directional fractional integrals.
 \begin{equation*}
  \mathfrak{I}^{\alpha}_{0+}(L_{p}  ):=\left\{ u:\,u(Q)=(\mathfrak{I}^{\alpha}_{0+}g)(Q  ),\, g\in L_{p}(\Omega),\,1\leq p\leq\infty \right\},
 \end{equation*}
\begin{equation*}
  \mathfrak{I }  ^{\alpha}_{ d   -} (L_{p}  ) =\left\{ u:\,u(Q)=(\mathfrak{I}^{\alpha}_{d-}g)(Q  ),\;g\in L_{p}(\Omega),\;1\leq p\leq\infty  \right\}.
 \end{equation*}
Define the operators  $\psi^{+}_{\varepsilon },\; \psi^{-}_{\varepsilon }$ depended on the parameter $\varepsilon>0.$  In the left-side case
 \begin{equation}\label{1.5}
(\psi^{+}_{  \varepsilon }f)(Q)=  \left\{ \begin{aligned}
 \int\limits_{0}^{r-\varepsilon }\frac{ f (Q)r^{n-1}- f(T)t^{n-1}}{(  r-t)^{\alpha +1}r^{n-1}}  dt,\;\varepsilon\leq r\leq d  ,\\
   \frac{ f(Q)}{\alpha} \left(\frac{1}{\varepsilon^{\alpha}}-\frac{1}{ r ^{\alpha} }    \right),\;\;\;\;\;\;\;\;\;\;\;\;\;\;\; 0\leq r <\varepsilon .\\
\end{aligned}
 \right.
\end{equation}
In the right-side case
\begin{equation*}
 (\psi^{-}_{  \varepsilon }f)(Q)=  \left\{ \begin{aligned}
 \int\limits_{r+\varepsilon }^{d }\frac{ f (Q)- f(T)}{( t-r)^{\alpha +1}} dt,\;0\leq r\leq d -\varepsilon,\\
   \frac{ f(Q)}{\alpha} \left(\frac{1}{\varepsilon^{\alpha}}-\frac{1}{(d -r)^{\alpha} }    \right),\;\;\;d -\varepsilon <r \leq d ,\\
\end{aligned}
 \right.
 \end{equation*}
 where $\mathrm{D}(\psi^{+}_{  \varepsilon }),\mathrm{D}(\psi^{-}_{  \varepsilon })\subset L_{p}(\Omega).$
Using the definitions of the monograph  \cite[p.181]{firstab_lit:samko1987}  we consider the following operators.  In the left-side case
 \begin{equation}\label{1.6}
 ( \mathfrak{D} ^{\alpha}_{0+\!,\,\varepsilon}f)(Q)=\frac{1}{\Gamma(1-\alpha)}f(Q) r ^{-\alpha}+\frac{\alpha}{\Gamma(1-\alpha)}(\psi^{+}_{  \varepsilon }f)(Q).
 \end{equation}
 In the right-side case
 \begin{equation*}
 ( \mathfrak{D }^{\alpha}_{d-\!,\,\varepsilon}f)(Q)=\frac{1}{\Gamma(1-\alpha)}f(Q)(d-r)^{-\alpha}+\frac{\alpha}{\Gamma(1-\alpha)}(\psi^{-}_{  \varepsilon }f)(Q).
 \end{equation*}
 The left-side  and  right-side fractional derivatives  are  understood  respectively  as the  following  limits with respect to the norm  $L_{p}(\Omega),\,(1\leq p<\infty)$
 \begin{equation}\label{1.7}
 \mathfrak{D }^{\alpha}_{0+}f=\lim\limits_{\stackrel{\varepsilon\rightarrow 0}{ (L_{p}) }} \mathfrak{D }^{\alpha}_{0+\!,\,\varepsilon} f  ,\; \mathfrak{D }^{\alpha}_{d-}f=\lim\limits_{\stackrel{\varepsilon\rightarrow 0}{ (L_{p}) }} \mathfrak{D }^{\alpha}_{d-\!,\,\varepsilon} f .
\end{equation}
 We   need   auxiliary propositions, which  are  presented  in the next section.

 \section{Mapping and representation  theorems}

We have the following theorem on boundedness of the directional fractional integral operators.
 \begin{teo}\label{T1.1} The directional fractional integral operators are bounded in $L_{p}(\Omega),$ $1\leq p<\infty,$ the following estimates holds
  \begin{equation}\label{1.8}
 \| \mathfrak{I}^{\alpha}_{0 +}u\|_{L_{p}(\Omega)}\leq C\|u \|_{L_{p}(\Omega)},\;\|   \mathfrak{I} ^{\alpha}_{d -}u\|_{L_{p}(\Omega)}\leq C\|u \|_{L_{p}(\Omega)},\;C=  \mathrm{d} ^{\alpha}/ \Gamma(\alpha+1)  .
 \end{equation}
 \end{teo}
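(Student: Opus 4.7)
The plan is to use the polar representation \eqref{1.2} of the $L_p(\Omega)$ norm to reduce the boundedness of $\mathfrak{I}^{\alpha}_{0+}$ and $\mathfrak{I}^{\alpha}_{d-}$ to the classical one-dimensional Riemann-Liouville estimate $\|I^{\alpha}h\|_{L_p[0,\ell]}\leq \ell^{\alpha}\Gamma(\alpha+1)^{-1}\|h\|_{L_p[0,\ell]}$ on each ray issuing from $P$. To this end, fix a direction $\mathbf{e}$, write $\phi(r):=g(P+\mathbf{e}r)$, and absorb the weight $r^{n-1}$ from the radial measure into the function by setting $\psi(r):=r^{(n-1)/p}\phi(r)$, so that $\int_{0}^{d(\mathbf{e})}|\phi(r)|^{p}r^{n-1}\,dr=\|\psi\|_{L_p[0,d(\mathbf{e})]}^{p}$.

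For the left-side operator a direct algebraic manipulation of the definition yields
\begin{equation*}
r^{(n-1)/p}(\mathfrak{I}^{\alpha}_{0+}g)(P+\mathbf{e}r) = \frac{1}{\Gamma(\alpha)}\int_{0}^{r}\left(\frac{t}{r}\right)^{(n-1)(p-1)/p}\frac{\psi(t)}{(r-t)^{1-\alpha}}\,dt.
\end{equation*}
Since $(t/r)^{(n-1)(p-1)/p}\in[0,1]$ for $0\leq t\leq r$ (using $p\geq 1$), the absolute value of the left-hand side is dominated pointwise by $(I^{\alpha}_{0+}|\psi|)(r)$. The right-side operator is handled in the same way: after the same substitution the integrand carries a factor $(r/t)^{(n-1)/p}\in[0,1]$, so that the modulus is controlled by $(I^{\alpha}_{d(\mathbf{e})-}|\psi|)(r)$.

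The scalar inequality itself follows from a single application of Minkowski's integral inequality to the convolution form $I^{\alpha}_{0+}h(r)=\Gamma(\alpha)^{-1}\int_{0}^{r}s^{\alpha-1}h(r-s)\,ds$, which produces the constant $\Gamma(\alpha)^{-1}\int_{0}^{\ell}s^{\alpha-1}\,ds=\ell^{\alpha}/\Gamma(\alpha+1)$; the right-side version is obtained by the reflection $r\mapsto\ell-r$. Taking $\ell:=d(\mathbf{e})\leq\mathrm{d}$, raising the pointwise bounds to the $p$-th power, integrating over the ray $[0,d(\mathbf{e})]$ and then over the unit sphere via \eqref{1.2}, one obtains both inequalities in \eqref{1.8} with $C=\mathrm{d}^{\alpha}/\Gamma(\alpha+1)$.

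The only real technical point is the bookkeeping required to pair the internal weight $(t/r)^{n-1}$ appearing in the definition of $\mathfrak{I}^{\alpha}_{0+}$ against the radial weight $r^{n-1}$ in the measure, so that after the substitution $\psi=r^{(n-1)/p}\phi$ the residual weight has an exponent of the correct (nonnegative) sign; once this accounting is verified the theorem collapses to the familiar one-dimensional Hardy-type bound, and the constant in \eqref{1.8} cannot be improved by this method beyond $\mathrm{d}^{\alpha}/\Gamma(\alpha+1)$.
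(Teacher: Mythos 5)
Your proof is correct, and the algebra checks out: the residual weight after the substitution $\psi=r^{(n-1)/p}\phi$ is $(t/r)^{(n-1)(p-1)/p}\in[0,1]$ for the left-side operator (since $t\leq r$ and $p\geq 1$) and $(r/t)^{(n-1)/p}\in[0,1]$ for the right-side one (since $t\geq r$), so the pointwise domination by the one-dimensional Riemann--Liouville integrals holds, and the final constant $\mathrm{d}^{\alpha}/\Gamma(\alpha+1)$ agrees with \eqref{1.8}. The paper organizes the same two ingredients differently: it never slices into rays, but instead changes variables $t=r-\tau$ in the inner integral so that the integrand becomes $g(Q-\tau\mathbf{e})\tau^{\alpha-1}\left((r-\tau)/r\right)^{n-1}$, drops the geometric factor by the bound $\left((r-\tau)/r\right)^{n-1}\leq 1$, and then applies the generalized Minkowski inequality once over all of $\Omega$, using the zero extension of $g$ outside $\bar{\Omega}$ to conclude $\int_{\Omega}|g(Q-\tau\mathbf{e})|^{p}dQ\leq\|g\|_{L_{p}(\Omega)}^{p}$. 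Your fiberwise reduction buys an explicit identification with the classical one-dimensional Hardy-type bound at the cost of the weight bookkeeping you flag at the end; the paper's global translation argument avoids that bookkeeping entirely but hides the one-dimensional statement inside the $n$-dimensional Minkowski step. Both yield the same constant and both rest on the same two estimates, namely the bound of the geometric weight by one and the $L^{1}$ norm $\int_{0}^{\mathrm{d}}\tau^{\alpha-1}d\tau=\mathrm{d}^{\alpha}/\alpha$ of the convolution kernel.
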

 \begin{proof}
Let us prove   first estimate  \eqref{1.8}, the proof of the second one   is absolutely analogous. Using the generalized Minkowski inequality, we have
$$
 \| \mathfrak{I}^{\alpha}_{0 +}u\|_{L_{p}(\Omega)}=\frac{1}{\Gamma(\alpha)}\left( \int\limits_{\Omega}\left|  \int\limits^{r}_{0}\frac{g (T)}{( r-t)^{1-\alpha}}\left(\frac{t}{r}\right)^{n-1}\!\!\!dt  \right|^{p}dQ\right)^{1/p}
$$
$$
 =\frac{1}{\Gamma(\alpha)}\left( \int\limits_{\Omega}\left|  \int\limits^{r}_{0}\frac{g (Q-\tau  \mathbf{e})}{\tau^{1-\alpha}}\left(\frac{r-\tau}{r}\right)^{n-1}\!\!\!d\tau  \right|^{p}dQ\right)^{1/p}
$$
$$
  \leq\frac{1}{\Gamma(\alpha)}\left( \int\limits_{\Omega}\left(  \int\limits^{\mathrm{d}}_{0}\frac{|g (Q-\tau  \mathbf{e})|}{\tau^{1-\alpha}}d\tau  \right)^{p}dQ\right)^{1/p}
  $$
  $$
\leq \frac{1}{\Gamma(\alpha)}  \int\limits^{\mathrm{d}}_{0}\tau^{\alpha-1} d\tau \left( \int\limits_{\Omega}  |g (Q-\tau  \mathbf{e})|^{p} dQ  \right)^{1/p}\!\!\leq
 \frac{ \mathrm{d} ^{\alpha}}{\Gamma(\alpha+1)}\,  \| u\|_{L_{p}(\Omega)}.
$$\end{proof}

\begin{teo}\label{T1.2}
Suppose $f\in L_{p}(\Omega),$  there  exists   $\lim\limits_{\varepsilon\rightarrow  0}\psi^{+}_{  \varepsilon }f$ or $\lim\limits_{\varepsilon\rightarrow  0}\psi^{-}_{  \varepsilon }f$ with respect to the norm $L_{p}(\Omega),\,(1\leq p<\infty);$ then   $f\in \mathfrak{I} ^{\alpha}_{0 +}(L_{p}) $ or $f\in \mathfrak{I }^{\alpha}_{d -}(L_{p})$ respectively.
\end{teo}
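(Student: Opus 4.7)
The plan is to reduce the problem to a directional analogue of the classical Samko--Marchaud inversion theorem. Let $g := \lim_{\varepsilon\to 0}\psi^+_{\varepsilon}f\in L_p(\Omega)$ (I work with the left-side limit; the right-side case is entirely symmetric). The goal is to exhibit an $h\in L_p(\Omega)$ with $f=\mathfrak{I}^\alpha_{0+}h$.

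First, I would establish the auxiliary fact that $f(\cdot)\,r^{-\alpha}\in L_p(\Omega)$. This is forced by the hypothesis through the lower branch of the definition \eqref{1.5}: on $\{r<\varepsilon\}$ one has $\psi^+_{\varepsilon}f = \tfrac{f(Q)}{\alpha}(\varepsilon^{-\alpha}-r^{-\alpha})$, and restricting further to $\{r\leq\varepsilon\cdot 2^{-1/\alpha}\}$ gives the pointwise lower bound $|\psi^+_{\varepsilon}f|\geq |f|\,r^{-\alpha}/(2\alpha)$ (using that $r^{-\alpha}\geq 2\varepsilon^{-\alpha}$ in that region). Choosing $\varepsilon=2^{1/\alpha}r_{0}$ and using that $\|\psi^+_{\varepsilon}f\|_{L_p}$ is bounded (by the assumed convergence) gives $\int_{r\leq r_{0}}|f|^{p}r^{-\alpha p}\,dQ\leq C$ uniformly in $r_{0}$; letting $r_{0}\to \mathrm{d}$ yields $f\,r^{-\alpha}\in L_p(\Omega)$.

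Combined with identity \eqref{1.6}, this immediately promotes the hypothesis to convergence of the full Kipriyanov truncation, namely $\mathfrak{D}^{\alpha}_{0+,\varepsilon}f=(fr^{-\alpha}+\alpha\psi^+_{\varepsilon}f)/\Gamma(1-\alpha)\to\varphi$ in $L_p$, with $\varphi:=(fr^{-\alpha}+\alpha g)/\Gamma(1-\alpha)\in L_p(\Omega)$.

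The decisive step is the directional inversion identity $\mathfrak{I}^{\alpha}_{0+}\mathfrak{D}^{\alpha}_{0+,\varepsilon}f\to f$ in $L_p(\Omega)$ as $\varepsilon\to 0$. I would prove this by substituting the definitions, applying Fubini to the resulting double integral, and reducing the inner integral, via the substitution $u=(t-s)/(r-s)$, to an incomplete Beta integral that tends to $B(\alpha,1-\alpha)=\Gamma(\alpha)\Gamma(1-\alpha)$ as $\varepsilon\to 0$. The normalising factors $1/\Gamma(\alpha)$ coming from $\mathfrak{I}^{\alpha}_{0+}$ and $1/\Gamma(1-\alpha)$ coming from $\mathfrak{D}^{\alpha}_{0+,\varepsilon}$ then conspire with this Beta value to reproduce $f$. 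Granting this identity, Theorem \ref{T1.1} (boundedness of $\mathfrak{I}^{\alpha}_{0+}$ on $L_p$) gives $\mathfrak{I}^{\alpha}_{0+}\mathfrak{D}^{\alpha}_{0+,\varepsilon}f\to\mathfrak{I}^{\alpha}_{0+}\varphi$, and uniqueness of the $L_p$-limit forces $f=\mathfrak{I}^{\alpha}_{0+}\varphi$, i.e.\ $f\in \mathfrak{I}^{\alpha}_{0+}(L_p)$.

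I expect the main obstacle to be the Fubini/Beta computation in the inversion identity: the radial weight $(t/r)^{n-1}$ inside $\mathfrak{I}^{\alpha}_{0+}$ and the two truncation cutoffs $\{t\geq\varepsilon\}$ and $\{s\leq t-\varepsilon\}$ complicate the classical 1D Marchaud argument, and one must upgrade pointwise convergence of the incomplete Beta integral to $L_p$ convergence throughout $\Omega$, with the behaviour near $r=0$ carefully controlled by the second branch of $\psi^+_{\varepsilon}$. The right-side assertion follows by the same scheme with $\mathfrak{I}^{\alpha}_{d-}$, $\mathfrak{D}^{\alpha}_{d-,\varepsilon}$ and $\psi^-_{\varepsilon}$ replacing their left-side counterparts.
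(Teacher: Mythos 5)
Your proposal is correct and follows essentially the same route as the paper: both pass from the hypothesis to $L_p$-convergence of the truncated derivatives $\mathfrak{D}^{\alpha}_{0+,\varepsilon}f$ (the paper's $\varphi^{+}_{\varepsilon}f$) and then establish the inversion identity $\mathfrak{I}^{\alpha}_{0+}\mathfrak{D}^{\alpha}_{0+,\varepsilon}f\to f$ via Fubini and the Beta-type integral (formula (13.18) of Samko et al.), which yields the approximate-identity kernel $\mathcal{K}$, with the region $r<\varepsilon$ and the small-chord directions treated separately and the $L_p$ limit taken by the generalized Minkowski inequality, continuity in the mean, and dominated convergence. Your preliminary step that $fr^{-\alpha}\in L_{p}(\Omega)$ is phrased too ambitiously — boundedness of $\|\psi^{+}_{\varepsilon}f\|_{L_{p}}$ is only guaranteed for small $\varepsilon$, so rather than letting $r_{0}\to\mathrm{d}$ you should fix one small $r_{0}$ and note that $r^{-\alpha}$ is bounded on $\{r>r_{0}\}$ — but this is a one-line repair, and the paper asserts the corresponding fact without proof.
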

\begin{proof}
 Let $f\in L_{p}(\Omega)$ and $\lim\limits_{\stackrel{\varepsilon\rightarrow 0}{ (L_{p}) }}\psi^{+}_{  \varepsilon }f=\psi.$ Consider the function
$$
(\varphi^{+}_{ \varepsilon}f)(Q)=\frac{1}{\Gamma(1-\alpha)}\left\{\frac{f(Q)}{ r ^{\alpha}}+\alpha (\psi^{+}_{ \varepsilon }f)(Q) \right\}.
 $$
 Taking into account     \eqref{1.5}, we can easily prove  that $\varphi^{+}_{  \varepsilon }f\in L_{p}(\Omega).$ Obviously,    there  exists the limit
 $\varphi^{+}_{  \varepsilon }f\rightarrow \varphi\in L_{p}(\Omega),\,\varepsilon\downarrow 0.$ Taking into account Theorem \ref{T1.1}, we can  complete the proof,  if we  show that
 \begin{equation}\label{1.9}
 \mathfrak{I}^{\alpha}_{0+}\varphi^{+}_{ \varepsilon }f  \stackrel{L_{p}}{\rightarrow} f,\,\varepsilon\downarrow0.
 \end{equation}
 In the case  $(\varepsilon\leq r\leq d),$ we have
$$
(\mathfrak{I}^{\alpha}_{0 +}\varphi^{+}_{ \varepsilon }f)(Q)\frac{\pi r^{n-1}}{\sin\alpha\pi}=
 \int\limits_{\varepsilon}^{r}\frac{f (P+y\mathbf{e})y ^{n-1-\alpha}}{( r-y)^{1-\alpha} } dy
$$
$$
  +\alpha\int\limits_{\varepsilon}^{r}( r-y)^{\alpha-1}  dy\int\limits_{0 }^{y-\varepsilon }\frac{ f (P+y\mathbf{e})y^{n-1}- f(T)t^{n-1}}{( y-t )^{\alpha +1}} dt
$$
$$
 +\frac{1}{\varepsilon^{\alpha}}\int\limits_{0}^{\varepsilon  }f (P+y\mathbf{e})( r-y)^{\alpha-1} y^{n-1}  dy =I.
$$
By direct calculation, we obtain
 \begin{equation}\label{1.10}
I=   \frac{1}{\varepsilon^{\alpha}}\int\limits_{0}^{r  }f (P+y\mathbf{e})( r-y)^{\alpha-1}y^{n-1}   dy  -
 \alpha\int\limits_{\varepsilon}^{r}( r-y)^{\alpha-1} dy\int\limits_{0 }^{y-\varepsilon }\frac{  f(T)}{(  y-t)^{\alpha +1}}t^{n-1} dt .
\end{equation}
 Changing the variable of integration in  the second  integral,   we have
$$
 \alpha\int\limits_{\varepsilon}^{r}( r-y)^{\alpha-1} dy\int\limits_{0 }^{y-\varepsilon }\frac{  f(T)}{(  y-t)^{\alpha +1}}t^{n-1} dt
$$
$$
 =\alpha\int\limits_{0}^{r-\varepsilon}( r-y-\varepsilon)^{\alpha-1} dy\int\limits_{0 }^{y  }\frac{  f(T)}{(  y+\varepsilon-t)^{\alpha +1}}t^{n-1} dt
$$
\begin{equation}\label{1.11}
=\alpha\int\limits_{0}^{r-\varepsilon}f(T)t^{n-1} dt\int\limits_{t }^{r-\varepsilon  }\frac{( r-y-\varepsilon)^{\alpha-1}   }{(  y+\varepsilon-t)^{\alpha +1}}dy
$$
$$
=\alpha\int\limits_{0}^{r-\varepsilon}f(T)t^{n-1} dt\int\limits_{t +\varepsilon}^{r  } ( r-y )^{\alpha-1}   (  y -t)^{-\alpha -1} dy .
\end{equation}
Applying formula   (13.18) \cite[p.184]{firstab_lit:samko1987}, we get
\begin{equation}\label{1.12}
 \int\limits_{t +\varepsilon}^{r  } ( r-y )^{\alpha-1}   (  y -t)^{-\alpha -1} dy=
  \frac{1}{\alpha \varepsilon^{\alpha}}\cdot\frac{(r-t-\varepsilon)^{\alpha}}{ r-t }.
\end{equation}
 Combining    relations \eqref{1.10},\eqref{1.11},\eqref{1.12}, using the change of the variable     $t=r-\varepsilon\tau, $ we get
$$
(\mathfrak{I}^{\alpha}_{0 +}\varphi^{+}_{ \varepsilon }f)(Q)\frac{\pi r^{n-1}}{\sin\alpha\pi}
$$
$$
=\frac{1}{\varepsilon^{\alpha}}\left\{ \int\limits_{0}^{r  }f (P+y\mathbf{e})(r-y)^{\alpha-1}y^{n-1}   dy- \int\limits_{0}^{r-\varepsilon  } \frac{f(T)(r-t-\varepsilon)^{\alpha}}{ r-t } t^{n-1}dt \right\}
$$
\begin{equation}\label{1.13}
=\frac{1}{ \varepsilon^{\alpha}} \int\limits_{0 }^{r  } \frac{f(T)\left[(r -t)^{\alpha}-(r-t-\varepsilon)_{+}^{\alpha}\right]}{ r-t }t^{n-1}dt
$$
$$
=\int\limits_{0 }^{r/\varepsilon   }\frac{\tau^{\alpha}-(\tau-1)_{+}^{\alpha}}{ \tau } f(P+[r-\varepsilon \tau ]\mathbf{e})(r-\varepsilon \tau)^{n-1} d\tau,\;\tau_{+}=\left\{\begin{array}{cc}\tau,\;\tau\geq0;\\[0,25cm] 0,\;\tau<0\,.\end{array}\right. .
 \end{equation}
Consider the auxiliary function $\mathcal{K}$ defined in the paper   \cite[p.105]{firstab_lit:samko1987}
 \begin{equation}\label{1.14}
 \mathcal{K}(t)= \frac{\sin\alpha\pi}{\pi }\cdot\frac{ t_{+}^{\alpha}-(t-1)_{+}^{\alpha}}{ t }\in L_{p}(\mathbb{R}^{1});\; \int\limits_{0 }^{\infty  }\mathcal{K}(t)dt=1;\;\mathcal{K}(t)>0.
\end{equation}
  Combining  \eqref{1.13},\eqref{1.14} and taking into account that    $f$ has the zero extension outside of  $\bar{\Omega},$ we obtain
 \begin{equation}\label{1.15}
 (\mathfrak{I}^{\alpha}_{0+}\varphi^{+}_{ \varepsilon }f)(Q)-f(Q) =  \int\limits_{0 }^{\infty  }\mathcal{K}(t) \left\{f(P+[r-\varepsilon t]\mathbf{e})(1-\varepsilon t/r)_{+}^{n-1}-f(P+ r \mathbf{e})  \right\}dt.
\end{equation}
Consider the case  $(0\leq  r <\varepsilon).$ Taking into account \eqref{1.5}, we get
 \begin{equation}\label{1.16}
(\mathfrak{I}^{\alpha}_{0+}\varphi^{+}_{ \varepsilon }f)(Q)-f (Q) =
\frac{\sin\alpha\pi}{\pi\varepsilon^{\alpha}} \int\limits_{0}^{r }\frac{f (T)}{(r-t)^{1-\alpha} } \left(\frac{t}{r} \right)^{n-1} dt-f (Q)
$$
$$
=\frac{\sin\alpha\pi}{\pi\varepsilon^{\alpha}} \int\limits_{0}^{r }\frac{f (P+[r-t]\mathbf{e})}{t^{1-\alpha} }\left(\frac{r-t}{r} \right)^{n-1} dt-f (Q).
\end{equation}
Consider the domains
 \begin{equation*}
 \Omega_{\varepsilon} :=\{Q\in\Omega,\,d(\mathbf{e})\geq\varepsilon \},\;\tilde{\Omega}_{  \varepsilon }=\Omega\setminus \Omega_{\varepsilon}.
 \end{equation*}
In accordance with    this definition  we can  divide the surface $\omega$ into two  parts $ \omega_{\varepsilon}$ and  $\tilde{\omega}_{ \varepsilon },$ where $ \omega_{\varepsilon}$ is  the subset of  $\omega$ such that   $d(\mathbf{e})\geq\varepsilon$ and $\tilde{\omega}_{ \varepsilon }$ is  the subset of  $\omega$ such that  $d(\mathbf{e}) <\varepsilon.$
Using  \eqref{1.15},\eqref{1.16}, we get
\begin{equation*}
  \|(\mathfrak{I}^{\alpha}_{0+}\varphi^{+}_{ \varepsilon }f) -f\|^{p}_{L_{p}(\Omega)}
$$
$$
=   \int\limits_{\omega_{\varepsilon}}d\chi\int\limits_{\varepsilon}^{ d}
  \left|\int\limits_{0 }^{\infty  }\mathcal{K}(t)[f(Q- \varepsilon t \mathbf{e})(1-\varepsilon t/r)_{+}^{n-1}-f(Q)]dt\right|^{p}r^{n-1}dr
$$
$$
+ \int\limits_{\omega_{\varepsilon}}d\chi\int\limits_{0}^{\varepsilon }
\left|  \frac{\sin\alpha\pi}{\pi\varepsilon^{\alpha}}\int\limits_{0}^{r }
\frac{f (P+[r-t]\mathbf{e})}{t^{1-\alpha} }\left(\frac{r-t}{r} \right)^{n-1} dt-f (Q)\right|^{p}r^{n-1}dr
$$
$$
+  \int\limits_{\tilde{\omega}_{\varepsilon} }d\chi\int\limits_{0}^{d }
\left| \frac{\sin\alpha\pi}{\pi\varepsilon^{\alpha}}\int\limits_{0}^{r }\frac{f (P+[r-t]\mathbf{e})}{t^{1-\alpha} }\left(\frac{r-t}{r} \right)^{n-1} dt-f (Q) \right|^{p}r^{n-1}dr =I_{1}+I_{2}+I_{3}.
 \end{equation*}
Consider $ I_{1},$ using the generalized Minkovski  inequality,  we get
$$
  I^{\frac{1 }{p}}_{1} \leq  \int\limits_{0 }^{\infty  }\mathcal{K}(t)
  \left(\int\limits_{\omega_{\varepsilon} }d\chi\int\limits_{\varepsilon}^{ d}
  |f(Q- \varepsilon t \mathbf{e})(1-\varepsilon t/r)_{+}^{n-1}-f(Q)|^{p}r^{n-1}dr \right)^{\frac{1 }{p}} dt.
$$
We use the following  notation
$$
 h(\varepsilon,t):= \mathcal{K}(t)\left(\int\limits_{\omega_{\varepsilon} }d\chi\int\limits_{\varepsilon}^{ d}
  |f(Q- \varepsilon t \mathbf{e})(1-\varepsilon t/r)_{+}^{n-1}-f(Q)|^{p}r^{n-1}dr \right)^{\frac{1 }{p}} dt.
$$
It can easily be checked that
\begin{equation}\label{1.17}
  |h(\varepsilon,t)|\leq 2\mathcal{K}(t) \| f\|_{L_{p}(\Omega)},\;\forall\varepsilon>0;
\end{equation}
  \begin{equation*}
  |h(\varepsilon,t)|\leq  \left(\int\limits_{\omega_{\varepsilon} }d\chi\int\limits_{\varepsilon}^{ d}
 \left |(1-\varepsilon t/r)_{+}^{n-1}[f(Q- \varepsilon t \mathbf{e})-f(Q)]\right|^{p}r^{n-1}dr \right)^{\frac{1 }{p}} dt
 $$
 $$+\left(\int\limits_{\omega_{\varepsilon} }d\chi\int\limits_{0}^{ d}
  \left|f(Q)  [1-(1-\varepsilon t/r)_{+}^{n-1}]\right|^{p}r^{n-1}dr \right)^{\frac{1 }{p}} dt=I_{11}+I_{12}.
\end{equation*}
By virtue of the average continuity property  in  $L_{p}(\Omega),$  we have $\forall t>0:\, I_{11}\rightarrow 0,\;\varepsilon\downarrow 0.$
Consider $I_{12}$ and  let us define the function
$$
h_{1}(\varepsilon,t,r):=\left|f(Q)  \left[1-(1-\varepsilon t/r)_{+}^{n-1}\right]\right|.
$$ Obviously, the following relations hold  almost everywhere  in $\Omega$
$$
\forall t>0,\,h_{1}(\varepsilon,t,r) \leq |f(Q)|,\;h_{1}(\varepsilon,t,r)\rightarrow 0,\;\varepsilon \downarrow0.
$$
Applying the Lebesgue  dominated convergence theorem, we get $I_{12}\rightarrow 0,\;\varepsilon\downarrow 0.$
It implies that
\begin{equation}\label{1.18}
\forall t>0,\,\lim\limits_{\varepsilon\rightarrow 0} h(\varepsilon,t)=0.
\end{equation}
Taking into account  \eqref{1.17}, \eqref{1.18} and    applying  the Lebesgue  dominated convergence theorem again, we obtain
$$
 I_{1}\rightarrow 0,\;\;\varepsilon\downarrow0  .
$$
Consider $I_{2},$ using the Mincovski  inequality, we  get
$$
I^{\frac{1 }{p}}_{2} \leq \frac{\sin\alpha\pi}{\pi\varepsilon^{\alpha}}\left( \int\limits_{\omega_{\varepsilon} }d\chi\int\limits_{0}^{\varepsilon }
\left| \int\limits_{0}^{r }\frac{f (Q-t\mathbf{e})}{t^{1-\alpha} }\left(\frac{r-t}{r} \right)^{n-1} dt\right|^{p}r^{n-1}dr\right)^{\frac{1 }{p}}
$$
$$
+\left(\int\limits_{\omega_{\varepsilon} }d\chi\int\limits_{0}^{\varepsilon}
\left| f (Q) \right|^{p}r^{n-1}dr \right)^{\frac{1 }{p}}=I_{21} +I_{22}.
$$
Applying the generalized  Mincovski  inequality, we obtain
$$
I_{21}\frac{\pi}{\sin\alpha\pi}=\frac{1}{\varepsilon^{\alpha}}\left(\int\limits_{\omega_{\varepsilon} }d\chi
\int\limits_{0}^{\varepsilon }
\left|  \int\limits_{0}^{r }\frac{f (Q-t\mathbf{e})}{t^{1-\alpha} }\left(\frac{r-t}{r} \right)^{n-1} \!\!\! dt\right|^{p}r^{n-1}\! dr \right)^{\frac{1 }{p}}
$$
$$
\leq\frac{1}{\varepsilon^{\alpha}}\left\{\int\limits_{\omega_{\varepsilon} }\!\!\left[\int\limits_{0}^{\varepsilon }\!\!t ^{\alpha-1 }\!\!
\left( \int\limits_{t}^{\varepsilon }\!\!|f (Q -t \mathbf{e})|^{p}\!\left(\frac{r-t}{r} \right)^{\!\!\!(p-1)(n-1)}\!\!\!(r-t)^{n-1} \!dr  \right)^{\frac{1 }{p}}\!\!dt\right]^{p}\!\!d\chi \right\}^{\frac{1 }{p}}
 $$
 $$
\leq\frac{1}{\varepsilon^{\alpha}}\left\{\int\limits_{\omega_{\varepsilon} }\left[\int\limits_{0}^{\varepsilon } t ^{\alpha-1 }
\left( \int\limits_{t}^{\varepsilon } \left|f (P+[r-t]\mathbf{e})\right|^{p}  (r-t)^{n-1}dr  \right)^{\frac{1 }{p}}\!\!dt\right]^{p}\!\!d\chi \right\}^{\frac{1 }{p}}
 $$
  $$
\leq\frac{1}{\varepsilon^{\alpha}}\left\{\int\limits_{\omega_{\varepsilon} }\left[\int\limits_{0}^{\varepsilon } t ^{\alpha-1 }
\left( \int\limits_{0}^{\varepsilon } |f (P +r \mathbf{e})|^{p}  r^{n-1}dr  \right)^{\frac{1 }{p}}\!\!dt\right]^{p}\!\!d\chi \right\}^{\frac{1 }{p}}\!\!=\frac{1}{\alpha}\| f\| _{L_{p}( \Delta_{\varepsilon})},\;
$$
$$
\Delta_{\varepsilon} :=\{Q\in\Omega_{\varepsilon},\,r<\varepsilon \} .
 $$
Note that   ${\rm mess}\, \Delta_{\varepsilon}\rightarrow 0,\,\varepsilon\downarrow0,$
hence  $I_{21},I_{22}\rightarrow 0,\,\varepsilon\downarrow0 .$ It follows  that $I_{2 }\rightarrow 0,\,\varepsilon\downarrow0.$ In the same way, we obtain   $I_{3 }\rightarrow 0,\,\varepsilon\downarrow 0.$ Since we proved that $I_{1},I_{2},I_{3}\rightarrow 0,\,\varepsilon\downarrow0,$ then relation \eqref{1.9} holds.
 This completes the proof corresponding to the left-side case.
 The proof corresponding to the right-side case is absolutely analogous.
\end{proof}
\begin{teo}\label{T1.3}
Suppose  $f=\mathfrak{I}^{\alpha}_{0+} \psi$ or $f= \mathfrak{I} ^{\alpha}_{d -} \psi ,\;\psi\in L_{p}(\Omega),\;1\leq p<\infty;$ then
$
\, \mathfrak{D }^{\alpha}_{0+}f =\psi
$
or
$
 \mathfrak{D} ^{\alpha}_{d-}f =\psi
$
respectively.
 \end{teo}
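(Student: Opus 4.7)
The plan is to verify that the truncated derivatives $\mathfrak{D}^{\alpha}_{0+,\,\varepsilon}f$ converge in $L_{p}(\Omega)$ to $\psi$ as $\varepsilon\downarrow 0$; by definition \eqref{1.7} this is exactly the required identity $\mathfrak{D}^{\alpha}_{0+}f=\psi$. Comparing formulas \eqref{1.5} and \eqref{1.6} with the auxiliary operator $\varphi^{+}_{\varepsilon}$ introduced inside the proof of Theorem \ref{T1.2}, we see that $\mathfrak{D}^{\alpha}_{0+,\,\varepsilon}f=\varphi^{+}_{\varepsilon}f$, so the task reduces to showing $\varphi^{+}_{\varepsilon}\mathfrak{I}^{\alpha}_{0+}\psi\to \psi$ in $L_{p}(\Omega)$.

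First I would evaluate $(\varphi^{+}_{\varepsilon}\mathfrak{I}^{\alpha}_{0+}\psi)(Q)$ in closed form. Inserting the expression for $\mathfrak{I}^{\alpha}_{0+}\psi$ into each branch of \eqref{1.5}, exchanging the order of integration by Fubini, and performing the same change of variable that produced \eqref{1.11}, one reduces the double integral to one whose inner factor is
$$
\int_{t+\varepsilon}^{r}(r-y)^{\alpha-1}(y-t)^{-\alpha-1}\,dy,
$$
evaluated by formula (13.18) of \cite{firstab_lit:samko1987}, applied exactly as in \eqref{1.12}. After telescoping the pointwise term $f(Q)r^{-\alpha}$ with the two pieces coming from $\alpha\,\psi^{+}_{\varepsilon}f$ and performing the substitution $t=r-\varepsilon\tau$, I expect to arrive at the identity
$$
(\varphi^{+}_{\varepsilon}\mathfrak{I}^{\alpha}_{0+}\psi)(Q)=\int_{0}^{r/\varepsilon}\mathcal{K}(\tau)\,\psi(P+[r-\varepsilon\tau]\mathbf{e})\,(1-\varepsilon\tau/r)_{+}^{n-1}\,d\tau,
$$
with $\mathcal{K}$ being exactly the kernel \eqref{1.14}. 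The reappearance of $\mathcal{K}$ mirrors the identity \eqref{1.13} obtained in Theorem \ref{T1.2} and reflects that $\varphi^{+}_{\varepsilon}$ and $\mathfrak{I}^{\alpha}_{0+}$ give rise to the same Steklov-type mollifier in either order of composition.

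Once this representation is secured, the normalization $\int_{0}^{\infty}\mathcal{K}(\tau)\,d\tau=1$ together with the zero extension of $\psi$ outside $\bar\Omega$ yields
$$
(\varphi^{+}_{\varepsilon}\mathfrak{I}^{\alpha}_{0+}\psi)(Q)-\psi(Q)=\int_{0}^{\infty}\!\mathcal{K}(\tau)\Big\{\psi(P+[r-\varepsilon\tau]\mathbf{e})(1-\varepsilon\tau/r)_{+}^{n-1}-\psi(P+r\mathbf{e})\Big\}\,d\tau,
$$
which has the exact structure of \eqref{1.15}. Splitting $\Omega$ into $\Omega_{\varepsilon}$ and $\tilde\Omega_{\varepsilon}$ as in the proof of Theorem \ref{T1.2}, applying the generalized Minkowski inequality, the mean-continuity of translation in $L_{p}$, and two applications of the Lebesgue dominated convergence theorem with majorant $2\mathcal{K}(\tau)\|\psi\|_{L_{p}(\Omega)}$, one deduces convergence to zero in $L_{p}(\Omega)$. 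The principal technical obstacle is the closed-form evaluation of the double integral for $\varphi^{+}_{\varepsilon}\mathfrak{I}^{\alpha}_{0+}\psi$: one must handle separately the pieces of $\psi^{+}_{\varepsilon}$ corresponding to $r\geq\varepsilon$ and $r<\varepsilon$ and verify that, after the application of (13.18), their combination with $f(Q)r^{-\alpha}$ collapses to precisely the convolution with $\mathcal{K}$. The right-side case is then handled by the entirely analogous construction with $\mathfrak{I}^{\alpha}_{d-}$ and $\psi^{-}_{\varepsilon}$.
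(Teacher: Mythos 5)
Your proposal is correct: the overall strategy (reduce the claim to showing $\mathfrak{D}^{\alpha}_{0+,\varepsilon}\mathfrak{I}^{\alpha}_{0+}\psi\to\psi$ in $L_{p}$, derive a closed-form representation as a convolution with the kernel $\mathcal{K}$ of \eqref{1.14}, and then conclude by the mean-continuity and dominated-convergence argument already used for $I_{1},I_{2},I_{3}$ in Theorem \ref{T1.2}) is exactly the paper's, and your predicted identity for $(\mathfrak{D}^{\alpha}_{0+,\varepsilon}f)(Q)$ on $\varepsilon\leq r\leq d$ coincides with the one the paper obtains. Where you diverge is in the central algebraic step. You propose to substitute $f=\mathfrak{I}^{\alpha}_{0+}\psi$ into $\psi^{+}_{\varepsilon}f$, apply Fubini, and evaluate the resulting inner integral $\int(r-y)^{\alpha-1}(y-t)^{-\alpha-1}dy$ by formula (13.18) of \cite{firstab_lit:samko1987} -- a direct mirror of the computation \eqref{1.11}--\eqref{1.13} from Theorem \ref{T1.2} (note that in this reversed composition the (13.18)-type integral only appears after an additional reflection-type substitution $t\mapsto r+s-t$, not literally the shift used in \eqref{1.11}, though the resulting integral and its value $\frac{1}{\alpha\varepsilon^{\alpha}}\cdot\frac{(r-s-\varepsilon)^{\alpha}}{r-s}$ are the same). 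The paper instead first rewrites the single difference $r^{n-1}f(Q)-(r-\tau)^{n-1}f(Q-\tau\mathbf{e})$ as $\tau^{\alpha-1}\int_{0}^{r}\psi(Q-t\mathbf{e})k(t/\tau)(r-t)^{n-1}dt$ with an auxiliary kernel $k$, integrates in $\tau$, and invokes formula (6.12) \cite[p.106]{firstab_lit:samko1987} to convert $t^{-1}\int_{t/r}^{t/\varepsilon}k(s)\,ds$ into the difference $\frac{1}{\varepsilon}\mathcal{K}(t/\varepsilon)-\frac{1}{r}\mathcal{K}(t/r)$; the telescoping of $f(Q)r^{-\alpha}$ you describe then happens through the explicit value of $\mathcal{K}(t/r)$ for $t<r$. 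The two routes are of comparable length; yours reuses the machinery of Theorem \ref{T1.2} verbatim and only needs (13.18), while the paper's avoids the double-integral bookkeeping of the Fubini swap at the cost of introducing $k$ and citing a second reference formula. Both correctly isolate the $r<\varepsilon$ branch, where the operator degenerates to $f(Q)\varepsilon^{-\alpha}/\Gamma(1-\alpha)$ and the convergence follows from the small-measure estimate on $\Delta'_{\varepsilon}$.
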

\begin{proof}
Consider
$$
r^{n-1}f(Q)-(r-\tau)^{n-1}f(Q-\tau\mathbf{e})
$$
$$
=\frac{1}{\Gamma(\alpha)} \int\limits_{0}^{ r}\frac{\psi (Q-t\mathbf{e})}{t^{1-\alpha}}\left( r-t \right)^{n-1}dt-\frac{1}{\Gamma(\alpha)} \int\limits_{\tau}^{ r}\frac{\psi (Q-t\mathbf{e})}{(t-\tau)^{1-\alpha}}\left( r-t \right)^{n-1}dt
$$
$$
 =\tau^{\alpha-1}  \int\limits_{0}^{ r} \psi (Q-t\mathbf{e}) k\left(\frac{t}{\tau}\right)(r-t)^{n-1}dt,\;k(t)= \frac{1}{\Gamma(\alpha)}\left\{\begin{array}{cc}t^{\alpha-1},\;0<t<1;\\[0,25cm] t^{\alpha-1}-(t-1)^{\alpha-1},\;t>1.\end{array}\right.
$$
Hence in the case    $(\varepsilon\leq r\leq d),$ we have
$$
(\psi^{+}_{  \varepsilon }f)(Q)=\int\limits_{ \varepsilon }^{ r }\frac{r^{n-1} f(Q)-(r-\tau)^{n-1}f(Q-\tau\mathbf{e})}{ r^{n-1}\tau ^{\alpha +1}} d\tau
$$
$$
=\int\limits_{ \varepsilon }^{ r } \tau^{-2} d\tau\int\limits_{0}^{ r} \psi (Q-t\mathbf{e}) k\left(\frac{t}{\tau}\right)\left( 1-t/r \right)^{n-1}dt
$$
$$
=\int\limits_{ 0 }^{ r }\psi (Q-t\mathbf{e}) \left( 1-t/r \right)^{n-1}  dt\int\limits_{\varepsilon}^{ r}  k\left(\frac{t}{\tau}\right) \tau^{-2}d \tau
$$
$$
=\int\limits_{ 0 }^{ r }\psi (Q-t\mathbf{e}) \left( 1-t/r \right)^{n-1}t^{-1} dt\int\limits_{ t/ r  }^{t/\varepsilon}  k (s )ds.
$$
Applying   formula    (6.12) \cite[p.106]{firstab_lit:samko1987}, we get
$$
(\psi^{+}_{  \varepsilon }f)(Q)\cdot\frac{\alpha}{\Gamma(1-\alpha)}=\int\limits_{ 0 }^{ r }\psi (Q-t\mathbf{e})
 \left( 1-t/r \right)^{n-1}\left[ \frac{1}{\varepsilon}\mathcal{K}\left(\frac{t}{\varepsilon}\right) - \frac{1}{ r}\mathcal{K}\left(\frac{t}{ r}\right) \right]dt.
$$
Since in accordance with \eqref{1.14},  we have
 $$
\mathcal{K}\left(\frac{t}{ r}\right)=\left\{\Gamma(1-\alpha)\Gamma(\alpha) \right\}^{-1} \left(\frac{t}{ r}\right)^{\alpha-1}\!\!\!,
$$
then
$$
(\psi^{+}_{  \varepsilon }f)(Q)\cdot\frac{\alpha}{\Gamma(1-\alpha)}=\int\limits_{ 0 }^{ r /\varepsilon }\mathcal{K} ( t   ) \psi (Q-\varepsilon t\mathbf{e})\left( 1-\varepsilon t/r \right)^{n-1}
  dt-\frac{f(Q)}{\Gamma(1-\alpha) r ^{ \alpha}}.
$$
 Taking into account \eqref{1.6},\eqref{1.14}, and that the function     $\psi(Q)$ has the zero  extension   outside of   $\bar{\Omega},$   we obtain
$$
 ( \mathfrak{D} ^{\alpha}_{0+,\varepsilon}f)(Q)-\psi(Q) =\int\limits_{ 0 }^{\infty}
\mathcal{K} ( t   ) \left[\psi (Q - \varepsilon t \mathbf{e})(1-\varepsilon t/r)_{+}^{n-1}-\psi (Q) \right]dt,\;\varepsilon\leq r\leq d  .
$$
Consider the case    $ ( 0\leq r<\varepsilon).$ In accordance with \eqref{1.5},  we have
$$
( \mathfrak{D} ^{\alpha}_{0+,\varepsilon}f)(Q)-\psi(Q)=\frac{f(Q)}{\varepsilon^{\alpha}\Gamma(1-\alpha)}-\psi(Q).
$$
Using the generalized Mincovski  inequality, we   get
$$
\|( \mathfrak{D} ^{\alpha}_{0+,\varepsilon}f)(Q)-\psi(Q)\|_{L_{p}(\Omega)}\leq\!\! \int\limits_{ 0 }^{\infty}
\!\! \mathcal{K}(t)   \| \psi(Q - \varepsilon t \mathbf{e})(1-\varepsilon t/r)_{+}^{n-1}-\psi (Q)\|_{L_{p}(\Omega)}dt
$$
$$
+\frac{1}{ \Gamma(1-\alpha)\varepsilon^{\alpha}}\, \|f\|_{L_{p}(\Delta'_{\varepsilon})}+\|\psi\|_{L_{p}(\Delta'_{\varepsilon})},\;\Delta'_{\varepsilon}=\Delta_{\varepsilon}\cup \tilde{\Omega}_{\varepsilon},
$$
here we use the denotations that were used in Theorem \ref{T1.2}. Arguing as above (see Theorem \ref{T1.2}), we see that
    all three  summands of the right-hand  side  of the previous  inequality tend to zero, when  $\varepsilon\downarrow 0.$
\end{proof}

\begin{teo} \label{T1.4}  Suppose $\rho\in {\rm Lip}\,\lambda,\;\alpha<\lambda\leq 1,\;f\in H_{0}^{1}(\Omega) ;$ then $\rho f\in  \mathfrak{I} ^{\alpha}_{\,0 +}(L_{2} ) \cap \mathfrak{I} ^{\alpha}_{d -}(L_{2} ).$
\end{teo}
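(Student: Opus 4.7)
My plan is to deduce the statement $\rho f\in \mathfrak{I}^{\alpha}_{0+}(L_{2})$ from Theorem~\ref{T1.2} by establishing the existence of the $L_{2}$-limit of $\psi^{+}_{\varepsilon}(\rho f)$ as $\varepsilon\downarrow 0$; the right-side assertion will follow by the mirror-image argument. The principal algebraic move is the identity
\begin{equation*}
\rho(Q)f(Q)r^{n-1}-\rho(T)f(T)t^{n-1}=\rho(Q)\bigl[f(Q)r^{n-1}-f(T)t^{n-1}\bigr]+\bigl[\rho(Q)-\rho(T)\bigr]f(T)t^{n-1},
\end{equation*}
which produces the splitting $\psi^{+}_{\varepsilon}(\rho f)(Q)=\rho(Q)\psi^{+}_{\varepsilon}f(Q)+A_{\varepsilon}(Q)$, with $A_{\varepsilon}(Q)$ denoting the integral containing the factor $\rho(Q)-\rho(T)$. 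I will treat the two summands separately.

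For $A_{\varepsilon}$ the H\"older estimate $|\rho(Q)-\rho(T)|\leq M(r-t)^{\lambda}$ and the strict inequality $\lambda>\alpha$ deliver the uniform pointwise bound $|A_{\varepsilon}(Q)|\leq M\Gamma(\lambda-\alpha)\mathfrak{I}^{\lambda-\alpha}_{0+}|f|(Q)$. Theorem~\ref{T1.1} places this dominating function in $L_{2}(\Omega)$, and for almost every $Q$ the integrand of $A_{\varepsilon}(Q)$ is absolutely integrable on $[0,r]$, so $A_{\varepsilon}(Q)\to A_{0}(Q)$ pointwise a.e. The Lebesgue dominated convergence theorem then yields $A_{\varepsilon}\to A_{0}$ in $L_{2}(\Omega)$. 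Since $\rho$ is bounded on $\bar\Omega$, it remains to prove that $\psi^{+}_{\varepsilon}f$ converges in $L_{2}(\Omega)$; this is the main obstacle because $f\in H^{1}_{0}(\Omega)$ provides no pointwise H\"older control to recycle the previous argument.

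To secure that convergence I shall establish the apparently stronger inclusion $H^{1}_{0}(\Omega)\subset \mathfrak{I}^{\alpha}_{0+}(L_{2})$, from which Theorem~\ref{T1.3} immediately produces the required $L_{2}$-limit of $\psi^{+}_{\varepsilon}f$. For $f\in C^{\infty}_{0}(\Omega)$ the pointwise identity $r^{n-1}f(Q)=\int_{0}^{r}\partial_{s}[f(P+s\mathbf{e})s^{n-1}]ds$ gives $f=\mathfrak{I}^{1}_{0+}g$ with $g:=\partial_{\mathbf{e}}f+(n-1)f/r$. The first summand of $g$ lies in $L_{2}(\Omega)$ trivially; for the second I exploit the fact that $P\in\partial\Omega$ implies $r\geq d(Q,\partial\Omega)$, and the Hardy inequality on the convex domain $\Omega$ then yields $\|f/r\|_{L_{2}(\Omega)}\leq C\|f\|_{H^{1}_{0}(\Omega)}$. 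Hence $g\in L_{2}(\Omega)$ with $\|g\|_{L_{2}}\leq C\|f\|_{H^{1}_{0}}$, and the representation $f=\mathfrak{I}^{1}_{0+}g$ extends to every $f\in H^{1}_{0}(\Omega)$ by density, using the continuity in Theorem~\ref{T1.1}.

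Finally I invoke the semigroup relation $\mathfrak{I}^{1}_{0+}=\mathfrak{I}^{\alpha}_{0+}\mathfrak{I}^{1-\alpha}_{0+}$, which is a direct Fubini-plus-Euler-beta computation accounting for the weight $(t/r)^{n-1}$ of the directional integral. It writes $f=\mathfrak{I}^{\alpha}_{0+}h$ with $h:=\mathfrak{I}^{1-\alpha}_{0+}g\in L_{2}(\Omega)$ by Theorem~\ref{T1.1}, whence $f\in\mathfrak{I}^{\alpha}_{0+}(L_{2})$. Theorem~\ref{T1.3} now ensures the $L_{2}$-convergence of $\psi^{+}_{\varepsilon}f$, and combined with the result for $A_{\varepsilon}$ the sum $\psi^{+}_{\varepsilon}(\rho f)$ converges in $L_{2}(\Omega)$; Theorem~\ref{T1.2} thus gives $\rho f\in\mathfrak{I}^{\alpha}_{0+}(L_{2})$. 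The right-side inclusion is obtained by repeating the same steps after reversing the ray, with $P$ replaced by the exit point on $\partial\Omega$ and $r$ replaced by $d(\mathbf{e})-r$.
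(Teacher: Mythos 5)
Your argument is correct, but it follows a genuinely different route from the paper's. The paper never isolates the $\rho$-dependence: for $f\in C_{0}^{\infty}(\Omega)$ it proves directly that $\psi^{+}_{\varepsilon}(\rho f)$ is a Cauchy family in $L_{2}$ (using the compact support of $f$ to annihilate the terms near $P$, the H\"older bound to produce the integrable kernel $t^{\lambda-\alpha-1}$, and the expansion $1-(1-t/r)^{n-1}=\frac{t}{r}\sum_{i=0}^{n-2}(t/r)^{i}$ for the weight correction), and then, for general $f\in H_{0}^{1}(\Omega)$, it approximates by $f_{n}\in C_{0}^{\infty}$ and shows that the preimages $\varphi_{n}=\mathfrak{D}^{\alpha}_{0+}(\rho f_{n})$ form a Cauchy sequence by bounding every term through $\|c_{n,m}\|_{L_{2}}$ and $\|c'_{n,m}\|_{L_{2}}$ via the directional-derivative representation $c_{n,m}(Q)-c_{n,m}(Q-\mathbf{e}t)=\int_{0}^{t}c'_{n,m}(Q-\mathbf{e}\tau)\,d\tau$, Cauchy--Schwarz and Minkowski. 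You instead split off the commutator term $A_{\varepsilon}$ once and for all (your domination by $M\Gamma(\lambda-\alpha)\,\mathfrak{I}^{\lambda-\alpha}_{0+}|f|$ is correct, and DCT applies), and reduce the theorem to the case $\rho\equiv 1$, which you settle through the structural inclusion $H_{0}^{1}(\Omega)\subset\mathfrak{I}^{1}_{0+}(L_{2})\subset\mathfrak{I}^{\alpha}_{0+}(L_{2})$. This is more modular and yields a stronger intermediate fact, but it leans on two ingredients the paper never establishes and which you should verify in full: the index law $\mathfrak{I}^{1}_{0+}=\mathfrak{I}^{\alpha}_{0+}\mathfrak{I}^{1-\alpha}_{0+}$ for the \emph{weighted} directional integrals (it does hold, because the weights telescope, $(t/r)^{n-1}(s/t)^{n-1}=(s/r)^{n-1}$, and the Beta integral does the rest), and the Hardy inequality $\|f/r\|_{L_{2}}\leq C\|\nabla f\|_{L_{2}}$ (valid here since $\Omega$ is convex and $r=|P-Q|\geq\mathrm{dist}(Q,\partial\Omega)$). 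One small step you gloss over: Theorem~\ref{T1.3} gives convergence of $\mathfrak{D}^{\alpha}_{0+,\varepsilon}f$, and to extract convergence of $\psi^{+}_{\varepsilon}f$ you must subtract the $\varepsilon$-independent term $\Gamma(1-\alpha)^{-1}f r^{-\alpha}$ and control the region $\{r<\varepsilon\}$; both are handled by $f r^{-\alpha}\in L_{2},$ which your Hardy estimate already supplies, so this is a gap of exposition rather than of substance.
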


\begin{proof}
We   provide a proof only for the left-side   case,   the proof corresponding to the right-side case is absolutely analogous.     First,  assume that  $f\in C_{0}^{\infty}(\Omega) .$
Using  the denotations that were used in Theorem \ref{T1.2},
  we have
\begin{equation}\label{1.19}
\|\psi^{+}_{ \varepsilon_1}f  - \psi^{+}_{ \varepsilon_2}f \|_{L_{2}(\Omega )}\leq\!\!\|\psi^{+}_{ \varepsilon_1}f  - \psi^{+}_{ \varepsilon_2}f \|_{L_{2}( \Omega_{\varepsilon_{1}})}+\|\psi^{+}_{ \varepsilon_1}f  - \psi^{+}_{ \varepsilon_2}f \|_{L_{2}(\tilde{\Omega}_{ \varepsilon_{1}})},
\end{equation}
where $\varepsilon_{1}>\varepsilon_{2}>0.$
   We have the following reasoning
$$\|\psi^{+}_{ \varepsilon_1}f  - \psi^{+}_{ \varepsilon_2}f \|_{L_{2}(\Omega_{\varepsilon_{1}})}\leq
 \left(\int\limits_{\omega_{\varepsilon_{1}}}d\chi\int\limits_{\varepsilon_{1}}^{d }\left|\int\limits_{r-\varepsilon_{1}}^{r-\varepsilon_{2}}\frac{(\rho f)(Q)r^{n-1}-
 (\rho f)(T)t^{n-1}}{r^{n-1}(r-t)^{\alpha +1}} dt \right|^{2}r^{n-1} dr    \right)^{\frac{1}{2}}
$$
$$
  +\!\!\left(\int\limits_{\omega_{\varepsilon_{1}}}d\chi\int\limits_{ \varepsilon_{2}}^{ \varepsilon_{1}}\!\!\left|\int\limits_{0}^{r- \varepsilon_{1}}\frac{(\rho f)(Q)r^{n-1} }{r^{n-1}(r-t)^{\alpha +1}} dt-\!\!
 \int\limits_{ 0}^{r-\varepsilon_{2}}\frac{(\rho f)(Q)r^{n-1}-(\rho f)(T)t^{n-1}}{r^{n-1}(r-t)^{\alpha +1}} dt\right|^{2}r^{n-1} dr    \right)^{\frac{1}{2}}
 $$
 $$
 +\left(\int\limits_{\omega_{\varepsilon_{1}}}d\chi\int\limits_{0}^{\varepsilon_{2} }\left|\int\limits_{ 0}^{r-\varepsilon_{1}}
 \frac{(\rho f)(Q)r^{n-1} }{r^{n-1}(r-t)^{\alpha +1}} dt-
 \int\limits_{0}^{r- \varepsilon_{2}}\frac{(\rho f)(Q)r^{n-1} }{r^{n-1}(r-t)^{\alpha +1}} dt\right|^{2}r^{n-1} dr    \right)^{\frac{1}{2}}
$$
$$
 =I_{1}+I_{2}+I_{3}.
$$
  Since  $f\in C_{0}^{\infty}(\Omega),$ then for sufficiently  small   $\varepsilon_{1}>0$ we have  $f(Q)=0,\,r<\varepsilon_{1}.$ This implies that   $I_{2}=I_{3}=0$ and that the second summand of the right side of   inequality   \eqref{1.19} equals zero.
Making the change the variable in $I_{1},$ then using the generalized Minkowski  inequality, we get
$$
 I_{1}  =\left(\int\limits_{\omega_{\varepsilon_{1}}}d\chi\int\limits_{\varepsilon_{1}}^{d}\left|\int\limits_{ \varepsilon_{1}}^{ \varepsilon_{2}}
 \frac{(\rho f)(Q)r^{n-1}-
 (\rho  f )(Q-\mathbf{e} t )(r-t)^{n-1}}{ r^{n-1}t ^{\alpha +1}} dt\right|^{2}r^{n-1} dr    \right)^{\frac{1}{2}}
$$
$$ \leq\int\limits_{ \varepsilon_{2}}^{ \varepsilon_{1}}t^{-\alpha -1}\!\!\!\left(\int\limits_{\omega_{\varepsilon_{1}}}d \chi\int\limits_{\varepsilon_{1}}^{d}\left| (\rho  f )(Q)-(1-t/r)^{n-1}(\rho  f )(Q-\mathbf{e}t) \right|^{2}  r^{n-1} dr    \right)^{\frac{1}{2}}\!\! dt
$$
$$
  \leq\int\limits_{ \varepsilon_{2}}^{ \varepsilon_{1}}t^{-\alpha -1}\!\!\!\left(\int\limits_{\omega_{\varepsilon_{1}}}d \chi\int\limits_{\varepsilon_{1}}^{d}\left| (\rho f )(Q)- (\rho f )(Q-\mathbf{e}t) \right|^{2}  r^{n-1} dr    \right)^{\frac{1}{2}} \!\!dt
$$
$$
+\int\limits_{ \varepsilon_{2}}^{ \varepsilon_{1}}t^{-\alpha -1}\left(\int\limits_{\omega_{\varepsilon_{1}}}d \chi\int\limits_{\varepsilon_{1}}^{d}\left[ 1-( 1-t/r )^{n-1}\right]\left| (\rho f )(Q-\mathbf{e}t) \right|^{2}  r^{n-1} dr    \right)^{\frac{1}{2}}\!\!dt
$$
$$
\leq C_{1}\!\!\int\limits_{ \varepsilon_{2}}^{ \varepsilon_{1}}t^{\lambda-\alpha-1 }d t+\int\limits_{ \varepsilon_{2}}^{ \varepsilon_{1}}t^{-\alpha }\left(\int\limits_{\omega_{\varepsilon_{1}}}d \chi\int\limits_{\varepsilon_{1}}^{d}
\left|\frac{1}{r} \sum\limits_{i=0}^{n-2}\left( \frac{t}{r}\right)^{i}(\rho f )(Q-\mathbf{e}t) \right|^{2}  r^{n-1} dr    \right)^{\!\!\frac{1}{2}}\!\!dt.
$$
Using  the  function $f$    property, we see that there  exists a constant $\delta$ such that $  f  (Q-\mathbf{e}t )=0,\;r<\delta.$  In accordance with the above reasoning,      we have
 $$
 I_{1} \leq
   C_{1}\frac{  \varepsilon^{  \lambda-\alpha}_{1}-\varepsilon^{\lambda-\alpha}_{2}   }{\!\!\!\!\lambda-\alpha  }+
   (n-1)\frac{   \varepsilon^{  1- \alpha}_{1}-\varepsilon^{1- \alpha}_{2}    }{\!\!\!\delta(1-\alpha)  } \,\|f\|_{L_{2}(\Omega)} .
$$
Applying Theorem \ref{T1.1}, we complete  the     proof for the case    $ ( f\in C_{0}^{\infty}(\Omega)).$
Now assume that  $f\in H^1_{0}(\Omega),$ then there   exists the sequence $\{f_{n}\} \subset C_{0}^{\infty}(\Omega),\;   f_{n}\stackrel{ H^1_{0}}{\longrightarrow} f.$ It is easy to prove that $   \rho f_{n}\stackrel{ L_{2}}{\longrightarrow} \rho f.$
In accordance with the proven  above fact, we have $\rho f_{n}= \mathfrak{I} ^{\alpha}_{0+}\varphi_{n},\;\{\varphi_{n}\}\in L_{2}(\Omega),$ therefore
\begin{equation}\label{1.20}
 \mathfrak{I} ^{\alpha}_{0+}\varphi_{n}\stackrel{L_{2} }{\longrightarrow} \rho f.
\end{equation}
To conclude the proof, it is sufficient to show that   $\varphi_{n}\stackrel{L_{2} }{\longrightarrow}\varphi\in L_{2}(\Omega).$  Note that by virtue of  Theorem  \ref{T1.2} we have  $\mathfrak{ D} ^{\alpha}_{0+}\rho f_{n}=\varphi_{n}.$
Let  $ c_{n,m}:=f_{n+m}-f_{n},$  we have
$$
\|\varphi_{n+m}-\varphi_{n}\|_{L_{2}(\Omega)}\leq\frac{\alpha}{\Gamma(1-\alpha)}\left(\int\limits_{\Omega} \left|\int\limits_{0 }^{r}\frac{(\rho c_{n,m})(Q)r^{n-1}-
(\rho c_{n,m})(T)t^{n-1}}{r^{n-1}( t-r)^{\alpha +1}} dt\right|^{2} dQ    \right)^{\frac{1}{2}}
$$
$$
+\frac{1}{\Gamma(1-\alpha)}\left(\int\limits_{\Omega}\left| \frac{(\rho c_{n,m})(Q) }{ r ^{\alpha }} \right|^{2}dQ    \right)^{\frac{1}{2}}=I_3 +I_4.
$$
Consider $I_{3}.$ It can be shown in the usual way that
$$
\frac{\Gamma(1-\alpha)}{\alpha} I_3  \leq
  \left\{\int\limits_{\Omega}\left|\int\limits_{0 }^{ r}\frac{  (\rho c_{n,m}) (Q)-  (\rho c_{n,m}) (Q-\mathbf{e}t) }{ t^{\alpha +1}} dt\right|^{2} dQ    \right\}^{\frac{1}{2}}
$$
$$
  + \left\{\int\limits_{\Omega} \left|\int\limits_{0 }^{ r}\frac{(\rho c_{n,m})(Q-\mathbf{e}t)[1-(1- t/r)^{n-1} ]  }{ t^{1+\alpha   }} dt\right|^{2} dQ    \right\}^{\frac{1}{2}}=
    I_{01}+I_{02};
 $$
 $$
I_{01}\leq \sup\limits_{Q\in \Omega}|\rho(Q)|\left\{\int\limits_{\Omega}\left(\int\limits_{0 }^{ r}\frac{  |  c_{n,m} (Q)-    c_{n,m} (Q-\mathbf{e}t) |}{ t^{\alpha +1}} dt\right)^{2} dQ    \right\}^{\frac{1}{2}}
$$
$$
+\left\{\int\limits_{\Omega}\left|\int\limits_{0 }^{ r}\frac{ c_{n,m} (Q-\mathbf{e}t)  [  \rho (Q)-    \rho (Q-\mathbf{e}t)] }{ t^{\alpha +1}} dt\right|^{2} dQ    \right\}^{\frac{1}{2}}=
 I_{11}+I_{21} .
$$
Applying  the generalized Minkowski  inequality, then representing the function under the inner integral by the directional derivative,   we get
$$
I_{11}
 \leq C_{1}\int\limits_{ 0}^{ \mathrm{d}}t^{-\alpha -1}\left(\int\limits_{\Omega} \left| c_{n,m}(Q)-c_{n,m}(Q-\mathbf{e}t) \right|^{2}  dQ   \right)^{\frac{1}{2}} dt
$$
$$
=C_{1}\int\limits_{ 0}^{ \mathrm{d}}t^{-\alpha -1}\left(\int\limits_{\Omega} \left| \int\limits_{0}^{t} c'_{n,m} (Q-\mathbf{e}\tau) d\tau\right|^{2}   dQ\right)^{\frac{1}{2}} dt.
$$
Using the Cauchy-Schwarz inequality, the Fubini  theorem, we have
$$
I_{11} \leq C_{1}\int\limits_{ 0}^{ \mathrm{d}}t^{-\alpha -1}\left(\int\limits_{\Omega}dQ \int\limits_{0}^{t} \left|c'_{n,m} (Q-\mathbf{e}\tau)\right|^{2}d\tau \int\limits_{0}^{t}d\tau    \right)^{\frac{1}{2}} dt
$$

$$
  =C_{1}\int\limits_{ 0}^{ \mathrm{d}}t^{-\alpha -1/2 }\left( \int\limits_{0}^{t}d\tau \int\limits_{\Omega}\left|c'_{n,m} (Q-\mathbf{e}\tau)\right|^{2} dQ   \right)^{\frac{1}{2}} dt\leq C_{1}
 \frac{ \mathrm{d}^{1-\alpha}  }{1-\alpha  } \,  \|c'_{n,m} \|_{L_{2}(\Omega)}.
$$
 Arguing as above, using the Holder  property of the function $\rho,$ we see that
$$
I_{21}\leq M \int\limits_{ 0}^{ \mathrm{d}}t^{\lambda-\alpha -1}\left(\int\limits_{\Omega} \left|  c_{n,m}(Q-\mathbf{e}t) \right|^{2}  dQ    \right)^{\frac{1}{2}} dt\leq
M\frac{ \mathrm{d}^{\lambda-\alpha}  }{\lambda-\alpha } \,  \|c _{n,m} \|_{L_{2}(\Omega)}.
$$
It can be shown in the usual way that
$$
I_{02}\leq C_{1}\left\{\int\limits_{\Omega} \left|\int\limits_{0 }^{ r}  | c_{n,m}(Q-\mathbf{e}t)| \sum\limits_{i=0}^{n-2}\left(\frac{t}{r}\right)^{i}  r^{-1} t^{ -\alpha   }  dt\right|^{2} dQ    \right\}^{\frac{1}{2}}
$$
 $$
\leq C_{2}\left\{\int\limits_{\Omega} \left(\int\limits_{0 }^{ r} t^{ -\alpha   }dt \int\limits_{t}^{r}\left| c'_{n,m}(Q-\mathbf{e}\tau)\right| d\tau      \right)^{2}r^{-2}  dQ    \right\}^{\frac{1}{2}}
$$
$$
= C_{2}\left\{\int\limits_{\Omega} \left(\int\limits_{0 }^{ r}\left| c'_{n,m}(Q-\mathbf{e}\tau)\right| d\tau  \int\limits_{0}^{\tau}t^{ -\alpha   }dt      \right)^{2}r^{-2}  dQ    \right\}^{\frac{1}{2}}
$$
 $$
 \leq\frac{C_{2}}{1-\alpha}\left\{\int\limits_{\Omega} \left(\int\limits_{0 }^{ r}  \left| c'_{n,m}(Q-\mathbf{e}\tau)\right| \tau^{ -\alpha} d\tau      \right)^{2}   dQ    \right\}^{\frac{1}{2}}.
$$
Applying  the generalized Minkowski  inequality, we have
$$
I_{02}\leq C_{3}\int\limits_{0 }^{ \mathrm{d}} \tau^{ -\alpha} d\tau  \left(\int\limits_{\Omega} \left| c'_{n,m}(Q-\mathbf{e}\tau)\right|^{2} dQ     \right)^{\frac{1}{2}}   \leq
C_{3}\frac{\mathrm{d}^{1-\alpha}}{1-\alpha} \|c'_{n,m}\|_{L_{2}(\Omega)}.
$$
Consider   $I_{2},$  we have
$$
I_2  \leq\frac{C_{ 1}}{\Gamma(1-\alpha)}\left(\int\limits_{\Omega} \left| c_{n,m}(Q)\right|^{2}   r ^{-2\alpha  } dQ\right)^{ \frac{1}{2}}
=\frac{C_{ 1}}{\Gamma(1-\alpha)} \left(\int\limits_{\Omega}  r ^{-2\alpha  } \left|\int\limits_{0}^{r} c'_{n,m}(Q-\mathbf{e}t)dt\right|^{2}dQ     \right)^{\frac{1}{2}}
 $$
 $$
 \leq\frac{C_{ 1}}{\Gamma(1-\alpha)} \left(\int\limits_{\Omega}    \left|\int\limits_{0}^{r} c'_{n,m}(Q-\mathbf{e}t)t^{-\alpha}dt\right|^{2}dQ     \right)^{\frac{1}{2}}.
$$
Using the generalized Minkowski  inequality, then applying the  trivial estimates,  we get
$$
 I_2\leq  C_{ 4} \left\{\int\limits_{\omega}\left[\int\limits_{0}^{d} t^{-\alpha} dt  \left(\int\limits_{t}^{d}|c'_{n,m}(Q-\mathbf{e}t)|^{2}   r^{ n-1 }dr\right)^{\frac{1}{2}} \right]^{2}d\chi  \right\}^{\frac{1}{2}}
$$
$$
  \leq  C_{ 4} \left\{\int\limits_{\omega}\left[\int\limits_{0}^{\mathrm{d}} t^{-\alpha} dt  \left(\int\limits_{0}^{d}|c'_{n,m}(Q-\mathbf{e}t)|^{2}   r^{ n-1 }dr\right)^{\frac{1}{2}} \right]^{2}d\chi  \right\}^{\frac{1}{2}}
$$
$$
  =  C_{ 4}\int\limits_{0}^{\mathrm{d}} t^{-\alpha}   dt  \left(\int\limits_{\omega}d\chi\int\limits_{0}^{d}|c'_{n,m}(Q-\mathbf{e}t)|^{2}   r^{ n-1 }dr\right)^{\frac{1}{2}}       \leq  \frac{C_{ 4}\mathrm{d}^{1-\alpha}}{1-\alpha}  \|c'_{n,m}\|_{L_{2}(\Omega)}.
$$
  Taking into account that the sequences    $\{f_{n}\},\{f'_{n}\}$ are fundamental, we obtain       $I_{1},I_{2}\rightarrow 0.$ Hence the  sequence  $\{\varphi_{n}\} $ is fundamental and  $\varphi_{n}\stackrel{L_{2} }{\longrightarrow}\varphi\in L_{2}(\Omega).$  Note that by virtue  of  Theorem \ref{T1.1} the  directional fractional integral   operator  is bounded on the space $L_{2}(\Omega).$ Hence
$$
 \mathfrak{I }^{\alpha}_{0 +}\varphi_{n}\stackrel{L_{2} }{\longrightarrow}  \mathfrak{I} ^{\alpha}_{0+}\varphi.
$$
Combining this fact with   \eqref{1.20}, we have   $\rho f=  \mathfrak{I} ^{\alpha}_{0+}\varphi.$

\end{proof}
 \begin{lem}\label{L1.1}
 The operator $\mathfrak{D}^{ \alpha }$ is a restriction  of the  operator $\mathfrak{D}^{ \alpha }_{0+}.$
\end{lem}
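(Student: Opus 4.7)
The plan is to identify $\mathfrak{D}^{\alpha}_{0+,\varepsilon}f$ pointwise with the Kipriyanov expression $\mathfrak{D}^{\alpha}f$ plus an explicitly computable error vanishing as $\varepsilon\downarrow 0$, and then to upgrade the pointwise limit to an $L_p$ limit. Given $f\in\mathrm{D}(\mathfrak{D}^\alpha)$, so that the formal Kipriyanov expression defines an element of $L_p(\Omega)$, my goal is to verify that the $L_p$-limit in \eqref{1.7} exists and reproduces $\mathfrak{D}^{\alpha}f$.

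The first step is the algebraic decomposition of $(\psi^{+}_{\varepsilon}f)(Q)$ for $\varepsilon\le r\le d$. Writing
$$
f(Q)r^{n-1}-f(T)t^{n-1}=[f(Q)-f(T)]\,t^{n-1}+f(Q)\bigl[r^{n-1}-t^{n-1}\bigr]
$$
splits $(\psi^{+}_{\varepsilon}f)(Q)=A_\varepsilon(Q)+f(Q)\,B_\varepsilon(Q)$, where $A_\varepsilon$ is the Kipriyanov integral truncated at $r-\varepsilon$ and
$$
B_\varepsilon(Q)=\int_{0}^{r-\varepsilon}\frac{r^{n-1}-t^{n-1}}{r^{n-1}(r-t)^{\alpha+1}}\,dt.
$$
Using the factorisation $r^{n-1}-t^{n-1}=(r-t)\sum_{i=0}^{n-2}r^{n-2-i}t^{i}$ and the substitution $t=ru$, I would compute $\lim_{\varepsilon\to 0}B_\varepsilon(Q)=r^{-\alpha}\Gamma(1-\alpha)\sum_{i=0}^{n-2}i!/\Gamma(i+2-\alpha)$ via Beta-function integrals. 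A short induction on $n$, using $\Gamma(k+1-\alpha)=(k-\alpha)\Gamma(k-\alpha)$, then yields the coefficient identity
$$
\frac{1}{\Gamma(1-\alpha)}+\alpha\sum_{i=0}^{n-2}\frac{i!}{\Gamma(i+2-\alpha)}=\frac{(n-1)!}{\Gamma(n-\alpha)}=C_{n}^{(\alpha)}.
$$
Combining these, the pointwise limit of $\mathfrak{D}^{\alpha}_{0+,\varepsilon}f(Q)$ equals $\mathfrak{D}^{\alpha}f(Q)$ for every $Q$ with $r>0$.

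The remaining task is to promote this to an $L_p$ convergence. I would apply the dominated convergence theorem twice: for $B_\varepsilon(Q)$ the integrand is positive, monotonically increasing in $\varepsilon^{-1}$, and bounded by $f(Q)\,r^{-\alpha}\sum_i B(i+1,1-\alpha)$, which is $L_p$-integrable whenever $\mathfrak{D}^{\alpha}f\in L_p$; for $A_\varepsilon(Q)$ the truncated integrand is dominated by the full Kipriyanov integrand, which is in $L_p$ by the hypothesis $f\in\mathrm{D}(\mathfrak{D}^{\alpha})$. On the boundary layer $\{r<\varepsilon\}$, where $\psi^{+}_{\varepsilon}$ has the alternative piecewise definition \eqref{1.5}, a direct bound shows $\|\mathfrak{D}^{\alpha}_{0+,\varepsilon}f\|_{L_p(\{r<\varepsilon\})}=O(\varepsilon^{n/p-\alpha})\|f\|_{L_p}\to 0$ because this region has measure $O(\varepsilon^{n})$, which handles the one place where the formulas do not match.

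The main obstacle is Step~3: producing an honest $L_p$-dominating function for $A_\varepsilon$, because $[f(Q)-f(T)]/(r-t)^{\alpha+1}$ is singular at $t=r$ and the naive majorant need not be integrable. The working assumption that the full Kipriyanov integrand is itself an $L_p$ function (i.e.\ $f\in\mathrm{D}(\mathfrak{D}^{\alpha})$) is precisely what makes the argument go through; if the direct domination proves awkward, an alternative is to invoke Theorem~\ref{T1.2} and Theorem~\ref{T1.3} to first represent $f$ as $\mathfrak{I}^{\alpha}_{0+}\varphi$ for some $\varphi\in L_p$ and then identify both $\mathfrak{D}^{\alpha}_{0+}f$ and $\mathfrak{D}^{\alpha}f$ with $\varphi$.
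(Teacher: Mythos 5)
Your proposal is correct and follows essentially the same route as the paper: the same splitting $f(Q)r^{n-1}-f(T)t^{n-1}=[f(Q)-f(T)]\,t^{n-1}+f(Q)[r^{n-1}-t^{n-1}]$, the same Beta-function evaluation of the coefficient integral via the fractional integral of a power, and the same telescoping identity showing the constants sum to $C_n^{(\alpha)}$; your extra care with the $\varepsilon\downarrow 0$ limit in $L_p$ and with the boundary layer $\{r<\varepsilon\}$ only makes explicit what the paper leaves implicit. The one item you omit is the paper's closing observation that the restriction is proper (it exhibits $f\in\mathfrak{I}^{\alpha}_{0+}(L_p)\subset\mathrm{D}(\mathfrak{D}^{\alpha}_{0+})$ with nonzero boundary values, whereas every $f\in\mathrm{D}(\mathfrak{D}^{\alpha})$ vanishes a.e.\ on $\partial\Omega$), but this is not required by the literal statement that $\mathfrak{D}^{\alpha}\subset\mathfrak{D}^{\alpha}_{0+}$.
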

\begin{proof}
It suffices  to show that the  next equality holds
\begin{equation}\label{1.21}
 (\mathfrak{D}^{ \alpha }f)(Q)= \left(\mathfrak{D}^{ \alpha }_{0+} f\right)(Q), \,f\in \dot{W}_{p}^{\,l}  (\Omega).
  \end{equation}
Using simple reasonings, we get
$$
 \mathfrak{D}^{ \alpha }v=\frac{\alpha}{\Gamma(1-\alpha)}\int\limits_{0}^{r} \frac{v(Q)-v(T)}{(r- t)^{\alpha+1}}  \left(\frac{t}{r}\right)  ^{n-1} dt+
  C_{n}^{(\alpha)}    v(Q)   r ^{ -\alpha}
$$
$$
= \frac{\alpha}{\Gamma(1-\alpha)}\int\limits_{0}^{r} \frac{r^{n-1}v(Q)-t^{n-1}v(T)}{r^{n-1}(r- t)^{\alpha+1}}dt-\frac{\alpha\,v(Q)}{\Gamma(1-\alpha)}\int\limits_{0}^{r}
\frac{ r^{n-1} -t^{n-1}  }{r^{n-1}(r- t)^{\alpha+1}}   dt
$$
$$
+C_{n}^{(\alpha)}   v(Q)   r ^{  -\alpha}=
  ( \mathfrak{D} ^{\alpha}_{0+}v)(Q)-
\frac{\alpha \, v(Q) }{\Gamma(1-\alpha)} \sum\limits_{i=0}^{n-2}  r ^{ -1-i}\int\limits_{0}^{r} \frac{t^{i}}{(r- t)^{\alpha }}   dt
$$
\begin{equation*}
+C_{n}^{(\alpha)}  v(Q)   r ^{  -\alpha}-\frac{v(Q)   r ^{  -\alpha}}{\Gamma(1-\alpha)}   =  ( \mathfrak{D} ^{\alpha}_{0+}v)(Q)- I_1 +I_2 -I_3.
\end{equation*}
Applying  the formula of the fractional integral of the power  function (2.44) \cite[p.47]{firstab_lit:samko1987}, we get
$$
I_1 =
\frac{\alpha\, v(Q)\,r ^{-1 } }{\Gamma(1-\alpha)}\int\limits_{0}^{r} \frac{dt}{(r- t)^{\alpha }}      +\frac{\alpha\, v(Q) }{\Gamma(1-\alpha)} \sum\limits_{i=1}^{n-2}
r ^{-1-i}\int\limits_{0}^{r} \frac{t^{i}}{(r- t)^{\alpha }}   dt
$$
$$
  =
 v(Q)\frac{\alpha }{ \Gamma(2-\alpha)}r ^{ -\alpha }    +  v(Q)  \alpha \sum\limits_{i=1}^{n-2}
r ^{-1-i} (I^{1-\alpha}_{0+}t^{i})(r)
$$
$$
=v(Q)\frac{\alpha }{ \Gamma(2-\alpha)}r ^{ -\alpha }    +  v(Q)  \alpha \sum\limits_{i=1}^{n-2}
r ^{ -\alpha}\frac{i!}{\Gamma(2-\alpha+i)}.
$$
Hence
$$
  I_1 +I_3
=\frac{v(Q)r ^{ -\alpha }}{ \Gamma(2-\alpha)}    +   v(Q)r ^{ -\alpha }  \alpha \sum\limits_{i=1}^{n-2}
 \frac{i!}{\Gamma(2-\alpha+i)}=
 \frac{2 v(Q)r ^{ -\alpha }}{ \Gamma(3-\alpha)}
 $$
 $$
 +   v(Q)r ^{ -\alpha } \alpha \sum\limits_{i=2}^{n-2}
 \frac{i!}{\Gamma(2-\alpha+i)}=
 \frac{3!v(Q)r ^{ -\alpha }}{ \Gamma(4-\alpha)}     +    v(Q)r ^{ -\alpha } \alpha \sum\limits_{i=3}^{n-2}
 \frac{i!}{\Gamma(2-\alpha+i)}
$$
\begin{equation*}
 =\frac{(n-2)!v(Q)r ^{ -\alpha }}{ \Gamma(n-1-\alpha)}     +    v(Q)r ^{ -\alpha } \alpha
 \frac{(n-2)!}{\Gamma(n-\alpha )}=C_{n}^{(\alpha)}v(Q)r ^{ -\alpha }.
\end{equation*}
Therefore $I_{2}-I_{1}-I_{3}=0$
and we  obtain   equality \eqref{1.21}. Let us prove that the considered operators do  not coincide with each other. For this purpose consider the function
  $f= \mathfrak{I}^{\alpha}_{0+}  \varphi,\;\varphi \in L_{p}(\Omega) ,$  then in accordance with  Theorem \ref{T1.2}, we have $ \mathfrak{D} _{0+} ^{ \alpha }  \mathfrak{I}^{\alpha}_{0+}   \varphi  =\varphi.$ Hence  $\mathfrak{I}^{\alpha}_{0+} \left( L_{p}\right)\subset\mathrm{D}\left(\mathfrak{D}^{ \alpha }_{0+}\right).$ Now,  it  suffices  to note  that
$
 \exists f\in \mathfrak{I}^{\alpha}_{0+} \left( L_{p}\right) ,\;f(\Lambda)\neq 0,
$
where $\Lambda \subset\partial\Omega,\;{\rm mess}\, \Lambda\neq 0.$
On the other hand, we know that
$
f(\partial \Omega)= 0\;a.e.,\;\forall f\in \mathrm{D}\left(\mathfrak{D}^{ \alpha }  \right) .
$

\end{proof}
\begin{lem}\label{L1.2} The following identity  holds
\begin{equation*}
   \mathfrak{D} _{0+} ^{ \alpha \ast }   = \mathfrak{D }^{\alpha}_{d-},
\end{equation*}
where    limits \eqref{1.7} are understood  in the sense of  $L_{s},\,s=p,q,\,1/p+1/q=1$ norm  respectively.
 \end{lem}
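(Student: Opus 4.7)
The plan is to reduce the adjoint identity to an integration-by-parts formula for the directional fractional integrals, and then use the fact (from Theorems \ref{T1.2} and \ref{T1.3}) that on their natural domains $\mathfrak{D}^{\alpha}_{0+}$ and $\mathfrak{D}^{\alpha}_{d-}$ are the left inverses of $\mathfrak{I}^{\alpha}_{0+}$ and $\mathfrak{I}^{\alpha}_{d-}$ respectively.

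The key preliminary step is to establish the duality formula
\begin{equation*}
\int_\Omega (\mathfrak{I}^{\alpha}_{0+}\varphi)(Q)\,\overline{\psi(Q)}\, dQ \;=\; \int_\Omega \varphi(Q)\,\overline{(\mathfrak{I}^{\alpha}_{d-}\psi)(Q)}\, dQ, \qquad \varphi\in L_p(\Omega),\ \psi\in L_q(\Omega).
\end{equation*}
To see this, I would rewrite both sides using the slicing decomposition \eqref{1.2}. The left-hand side becomes $\Gamma(\alpha)^{-1}\!\int_\omega d\chi\int_0^{d(\mathbf{e})}\!\int_0^{r}\!\varphi(P+t\mathbf{e})\,t^{n-1}(r-t)^{\alpha-1}\,\overline{\psi(P+r\mathbf{e})}\,dt\,dr$; the right-hand side, after pulling in the measure $r^{n-1}dr$ from \eqref{1.2} and relabelling, becomes exactly the same iterated integral. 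The factor $(t/r)^{n-1}$ inside the definition of $\mathfrak{I}^{\alpha}_{0+}$ and its absence in $\mathfrak{I}^{\alpha}_{d-}$ balance precisely against the weight $r^{n-1}$ coming from the outer radial measure. Fubini is justified because, by Hölder and Theorem \ref{T1.1}, $\mathfrak{I}^{\alpha}_{0+}|\varphi|\cdot|\psi|\in L_1(\Omega)$.

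With this in hand, the forward inclusion $\mathfrak{D}^{\alpha}_{d-}\subseteq(\mathfrak{D}^{\alpha}_{0+})^{\ast}$ is immediate. By Theorems \ref{T1.2} and \ref{T1.3}, $\mathrm{D}(\mathfrak{D}^{\alpha}_{0+})=\mathfrak{I}^{\alpha}_{0+}(L_p)$ and $\mathrm{D}(\mathfrak{D}^{\alpha}_{d-})=\mathfrak{I}^{\alpha}_{d-}(L_q)$. For any $f=\mathfrak{I}^{\alpha}_{0+}\varphi$ and $g=\mathfrak{I}^{\alpha}_{d-}\psi$ the duality formula gives
\begin{equation*}
\int_\Omega (\mathfrak{D}^{\alpha}_{0+}f)\,\overline{g}\,dQ
=\int_\Omega \varphi\,\overline{\mathfrak{I}^{\alpha}_{d-}\psi}\,dQ
=\int_\Omega \mathfrak{I}^{\alpha}_{0+}\varphi\,\overline{\psi}\,dQ
=\int_\Omega f\,\overline{\mathfrak{D}^{\alpha}_{d-}g}\,dQ.
\end{equation*}
For the reverse inclusion, suppose $g\in\mathrm{D}((\mathfrak{D}^{\alpha}_{0+})^{\ast})$ with $(\mathfrak{D}^{\alpha}_{0+})^{\ast}g=h\in L_q$. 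Since $\mathfrak{D}^{\alpha}_{0+}\mathfrak{I}^{\alpha}_{0+}\varphi=\varphi$ for every $\varphi\in L_p$, the defining identity of the adjoint, combined with the duality formula, yields
$\int_\Omega \varphi\,\overline{g}\,dQ = \int_\Omega \mathfrak{I}^{\alpha}_{0+}\varphi\,\overline{h}\,dQ = \int_\Omega \varphi\,\overline{\mathfrak{I}^{\alpha}_{d-}h}\,dQ$
for all $\varphi\in L_p(\Omega)$. Hence $g=\mathfrak{I}^{\alpha}_{d-}h$ a.e., and Theorem \ref{T1.3} then gives $g\in\mathrm{D}(\mathfrak{D}^{\alpha}_{d-})$ with $\mathfrak{D}^{\alpha}_{d-}g=h$.

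The main obstacle is the rigorous bookkeeping in the Fubini interchange of the first step: the asymmetry of the $(t/r)^{n-1}$ weight in the two fractional integrals has to be tracked carefully against the spherical measure, and absolute integrability on the triangular region $\{0<t<r<d(\mathbf{e})\}$ along each ray must be verified uniformly in $\chi$ via Hölder and Theorem \ref{T1.1}. A secondary technical point is that uniqueness of the adjoint element in Step 3 relies on $\mathrm{D}(\mathfrak{D}^{\alpha}_{0+})=\mathfrak{I}^{\alpha}_{0+}(L_p)$ being dense in $L_p(\Omega)$; this follows from the inclusion $C_0^{\infty}(\Omega)\subset\mathfrak{I}^{\alpha}_{0+}(L_p)$, which for $p=2$ is a direct consequence of Theorem \ref{T1.4} applied with $\rho\equiv 1$.
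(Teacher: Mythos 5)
Your proof is correct and follows essentially the same route as the paper: the Fubini-based duality formula $(\mathfrak{I}^{\alpha}_{0+}\varphi,\psi)=(\varphi,\mathfrak{I}^{\alpha}_{d-}\psi)$ obtained from the slicing decomposition, the forward inclusion $\mathfrak{D}^{\alpha}_{d-}\subset\mathfrak{D}_{0+}^{\alpha\ast}$ via Theorem \ref{T1.3}, and the reverse inclusion via surjectivity of $\mathfrak{D}^{\alpha}_{0+}$ onto $L_{p}$ (equivalently, triviality of $\ker\mathfrak{D}_{0+}^{\alpha\ast}$). Your closing remark on density of $\mathfrak{I}^{\alpha}_{0+}(L_{p})$ supplies a justification that the paper leaves implicit, but it does not change the argument.
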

\begin{proof}
Let us  show that the following  relation holds
\begin{equation}\label{1.22}
 (\mathfrak{D}^{ \alpha }_{0+} f ,  g  )_{L_{2}(\Omega)}
=(f , \mathfrak{D }^{\alpha}_{d-} g )_{L_{2}(\Omega)},
\end{equation}
$$
 f\in \mathfrak{I}^{\alpha}_{0+} \left( L_{p}\right),\,g\in  \mathfrak{I } ^{\alpha}_{d-}\left(  L_{q}\right) .
$$
Note that by virtue of Theorem  \ref{T1.3}, we have    $\,\mathfrak{D}^{ \alpha }_{0+} \mathfrak{I}^{\alpha}_{0+} \varphi   =\varphi ,\,
\mathfrak{D }^{\alpha}_{d-}  \mathfrak{I } ^{\alpha}_{d-}   \psi  =\psi  ,$ where $\psi,\psi\in L_{s}(\Omega).$
Hence, using  Theorem \ref{T1.1}, we have  that the expressions at the   left-hand and right-hand sides of \eqref{1.22} are finite. Therefore, the conditions of the Fubini  theorem are satisfied  which  application gives us
$$
(\mathfrak{D}^{ \alpha }_{0+} f ,  g  )_{L_{2}(\Omega)}=\int\limits_{\omega}d\chi \int\limits_{0}^{d}\varphi(Q)
\overline{\left(\mathfrak{I } ^{\alpha}_{d-}\psi\right)(Q)}r^{n-1}dr
$$
$$
 =\frac{1}{\Gamma(\alpha)}\int\limits_{\omega}d\chi\int\limits_{0}^{d}\varphi(Q)r^{n-1}dr\int\limits_{r}^{d}\frac{\overline{\psi(T)}}{(t-r)^{1-\alpha}}\,dt
$$
\begin{equation*}
=\frac{1}{\Gamma(\alpha)}\int\limits_{\omega}d\chi\int\limits_{0}^{d}\overline{\psi(T)}t^{n-1}dt\int\limits_{0}^{t}\frac{\varphi(Q)}{(t-r)^{1-\alpha}}\left( \frac{r}{t}\right)^{n-1}dr
$$
$$
=\int\limits_{\Omega} \left(\mathfrak{I} ^{\alpha}_{0+} \varphi\right)(Q)\,\overline{\psi(Q)} \, dQ=(f , \mathfrak{D }^{\alpha}_{d-} g )_{L_{2}(\Omega)}.
\end{equation*}
Thus, inequality \eqref{1.22} is proved. It follows that   $ \mathfrak{D }^{\alpha}_{d-} \subset   \mathfrak{D} _{0+} ^{ \alpha \ast } .$  To establish the coincidence, in accordance with the definition of the  adjoint  operator, consider
\begin{equation*}
\left(\mathfrak{D}^{ \alpha }_{0+} f,g \right)_{L_{2}(\Omega)}=\left(   f,   g^{\ast}   \right)_{L_{2}(\Omega)},\,f\in \mathrm{D}(\mathfrak{D}^{ \alpha }_{0+}),\,g^{\ast}\in L_{q}(\Omega),
\end{equation*}
assuming that $g= \mathfrak{I } ^{\alpha}_{d-}g^{\ast},$ we have $g^{\ast}\in \mathrm{R}(\mathfrak{D }^{\alpha}_{d-}).$ Since  $\mathrm{R}(\mathfrak{D}^{ \alpha }_{0+})=L_{p}$ then     $\ker \mathfrak{D} _{0+} ^{ \alpha \ast }=0,$ the latter fact proves the coincidence.
\end{proof}

\section{ Strictly accretive  property}

It is remarkable that the term accretive,  which applicable to a linear operator $T$ acting in   Hilbert space $\mathfrak{H},$ is introduced by Friedrichs in the paper \cite{firstab_lit:fridrichs1958}, and means that the operator $T$ has the following property:   the numerical range $\Theta(T)$ (see \cite[p.335]{firstab_lit:kato1966}) is a subset of the right half-plane i.e.
    $$
    {\rm Re}\left( Tu,u\right)_{\mathfrak{H}}\geq0,\;u\in  \mathrm{D} (T),
    $$
the particular  case  corresponding to a much stronger condition given bellow
  $$
    {\rm Re}\left( Tu,u\right)_{\mathfrak{H}}\geq C\|u\|_{\mathfrak{H}}^{2},\;u\in  \mathrm{D} (T)
    $$
is known as a {\it strictly accretive} property  (see \cite[p. 352]{firstab_lit:kato1966}). The  following theorem establishes the strictly accretive  property    of the    Kipriyanov  fractional differential operator.\\

\begin{teo}\label{T1.5}
  Suppose  $\rho(Q)$ is a real non-negative function,   $\rho\in{\rm Lip}\, \lambda,\; \lambda>\alpha;$
  then the following inequality    holds
\begin{equation}\label{1.23}
 {\rm Re} ( f,\mathfrak{D}^{\alpha}f)_{L_2(\Omega,\rho)}\geq\mu\|f\|^{2}_{L_2(\Omega,\rho)},\;f\in H^{1}_{0} (\Omega),
\end{equation}
where
$$
\mu=     \frac{\Gamma^{-1}(1-\alpha) +C_{n}^{(\alpha)}}{2\mathrm{d}^{ \alpha}} -
\frac{\alpha M  \mathrm{d}^{\lambda-\alpha}  }{2\Gamma(1-\alpha)(\lambda-\alpha)\inf  \rho}\,.
$$
Moreover, if  we have in additional that for any fixed direction $\mathbf{e}$ the function  $\rho$ is   monotonically  non-increasing,   then
$$
\mu=\frac{\Gamma^{-1}(1-\alpha) +C_{n}^{(\alpha)}}{2\mathrm{d}^{ \alpha}}.
$$

\end{teo}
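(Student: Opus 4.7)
The plan is to exploit that by Lemma \ref{L1.1} the operator $\mathfrak{D}^{\alpha}$ admits two equivalent pointwise representations on its domain: the Kipriyanov form (``form K'') and the form $\mathfrak{D}^{\alpha}_{0+}$ (``form +''), which differ in how the boundary multiplication term $C^{(\alpha)}_{n}f(Q)r^{-\alpha}$ versus $\Gamma^{-1}(1-\alpha)f(Q)r^{-\alpha}$ is split off. The central device is to write
$$2\mathrm{Re}\,(f,\mathfrak{D}^{\alpha}f)_{L_{2}(\Omega,\rho)}=\int_{\Omega}\rho\,\bar f\,\mathfrak{D}^{\alpha}f\,dQ+\int_{\Omega}\rho\,f\,\overline{\mathfrak{D}^{\alpha}f}\,dQ,$$
expanding the first integral by form K and the second by form $+$. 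Using the algebraic identity $2\,\mathrm{Re}[\bar a(a-b)]=|a|^{2}-|b|^{2}+|a-b|^{2}$, the cross-terms collapse so that the combined integrand of the convolution part reduces to $\bigl[|f(Q)|^{2}r^{n-1}-|f(T)|^{2}t^{n-1}+|f(Q)-f(T)|^{2}t^{n-1}\bigr]/[r^{n-1}(r-t)^{\alpha+1}]$, while the pointwise terms add up to $[C^{(\alpha)}_{n}+\Gamma^{-1}(1-\alpha)]\int_{\Omega}\rho|f|^{2}r^{-\alpha}dQ$, which already carries the leading constant of $\mu$.

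Discarding the non-negative $|f(Q)-f(T)|^{2}$ contribution and passing to polar coordinates via \eqref{1.2}, set $G(t):=|\tilde f(t)|^{2}t^{n-1}$, $\tilde\rho(r):=\rho(P+r\mathbf{e})$, and $H:=\tilde\rho G$. Then I decompose $\tilde\rho(r)[G(r)-G(t)]=[H(r)-H(t)]+[\tilde\rho(t)-\tilde\rho(r)]G(t)$. The key piece to evaluate is $\int_{0}^{\mathrm{d}}dr\int_{0}^{r}[H(r)-H(t)](r-t)^{-\alpha-1}dt$. The plan is to represent $H(r)-H(t)=\int_{t}^{r}H'(s)\,ds$, apply Fubini on the triple region $\{0\le t\le s\le r\le\mathrm{d}\}$, compute the $t$- and $r$-integrals in closed form, and then integrate by parts in $s$. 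The boundary contributions vanish because $H(0)=0$ (forced either by $t^{n-1}$ when $n\ge2$ or by the zero extension $f(P)=0$ when $n=1$) and because the primitive $s^{1-\alpha}+(\mathrm{d}-s)^{1-\alpha}-\mathrm{d}^{1-\alpha}$ vanishes at both $s=0$ and $s=\mathrm{d}$. This yields the clean identity
$$\int_{0}^{\mathrm{d}}dr\int_{0}^{r}\frac{H(r)-H(t)}{(r-t)^{\alpha+1}}dt=\frac{1}{\alpha}\int_{0}^{\mathrm{d}}H(r)\bigl[(\mathrm{d}-r)^{-\alpha}-r^{-\alpha}\bigr]dr.$$

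Reassembling, the $-r^{-\alpha}$ contribution cancels the $\Gamma^{-1}(1-\alpha)$ share of the pointwise term, leaving $C^{(\alpha)}_{n}\int\rho|f|^{2}r^{-\alpha}dQ+\Gamma^{-1}(1-\alpha)\int\rho|f|^{2}(\mathrm{d}-r)^{-\alpha}dQ$; since $r\le\mathrm{d}$ and $\mathrm{d}-r\le d(\mathbf{e})\le\mathrm{d}$, both factors are bounded below by $\mathrm{d}^{-\alpha}$, producing the desired leading term $\frac{C^{(\alpha)}_{n}+\Gamma^{-1}(1-\alpha)}{2\mathrm{d}^{\alpha}}\|f\|_{L_{2}(\Omega,\rho)}^{2}$. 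The residual ``$\rho$-error'' $\int_{0}^{\mathrm{d}}G(t)\int_{t}^{\mathrm{d}}[\tilde\rho(t)-\tilde\rho(r)](r-t)^{-\alpha-1}dr\,dt$ (after swapping order) is controlled by the Lipschitz condition $|\tilde\rho(t)-\tilde\rho(r)|\le M(r-t)^{\lambda}$; here $\lambda>\alpha$ is precisely what makes $\int_{t}^{\mathrm{d}}(r-t)^{\lambda-\alpha-1}dr=(\mathrm{d}-t)^{\lambda-\alpha}/(\lambda-\alpha)$ convergent, and after passing $\|f\|_{L_{2}(\Omega)}^{2}\le\|f\|_{L_{2}(\Omega,\rho)}^{2}/\inf\rho$ one obtains the subtractive term in $\mu$. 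In the monotone non-increasing case $\tilde\rho(t)-\tilde\rho(r)\ge 0$ for $t<r$, so this residual is itself non-negative and can simply be dropped, yielding the stronger constant.

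The main technical obstacle is justifying the integration-by-parts and Fubini manipulations at the level $f\in H^{1}_{0}(\Omega)$, rather than for smooth $f$ where they are immediate. My plan is to carry out all calculations for $f\in C_{0}^{\infty}(\Omega)$, and then pass to the limit via a density argument: Theorem \ref{T1.4} ensures $\rho f\in\mathfrak{I}^{\alpha}_{0+}(L_{2})$, so $\mathfrak{D}^{\alpha}_{0+}(\rho f)\in L_{2}$ by Theorem \ref{T1.3}, from which both sides of \eqref{1.23} are seen to be continuous functionals of $f$ in the $H^{1}_{0}$-topology, and the inequality extends.
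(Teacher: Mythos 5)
Your proposal is, at its core, the same argument as the paper's: the identity $2\,\mathrm{Re}[\bar a(a-b)]=|a|^{2}-|b|^{2}+|a-b|^{2}$ applied across the two pointwise representations of $\mathfrak{D}^{\alpha}$ reproduces exactly the paper's decomposition of $\rho f\,\overline{\mathfrak{D}^{\alpha}f}+\mathrm{c.c.}$ into a full fractional derivative of $\rho|f|^{2}$ (your $H$-piece plus the pointwise terms), a discarded non-negative square, and a $\rho$-commutator controlled by the H\"older condition; your ``key identity'' is precisely the adjoint relation $\int\mathfrak{D}^{\alpha}_{0+}g=\int g\,\mathfrak{D}^{\alpha}_{d-}1$ with $\mathfrak{D}^{\alpha}_{d-}1=\Gamma^{-1}(1-\alpha)(d(\mathbf{e})-r)^{-\alpha}$, which the paper obtains from Theorem \ref{T1.4}, Theorem \ref{T1.3} and Lemma \ref{L1.2}. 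The only substantive differences are cosmetic: you treat the complex case directly via $2\,\mathrm{Re}$ rather than splitting into real and imaginary parts at the end, and you re-derive the adjoint relation by hand instead of citing it.

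There is one step in that hand derivation that would fail as written: you represent $H(r)-H(t)=\int_{t}^{r}H'(s)\,ds$ with $H=\tilde\rho\,G$, but $\rho$ is only assumed to be in ${\rm Lip}\,\lambda$ with $\alpha<\lambda\le 1$, so for $\lambda<1$ the function $H$ need not be absolutely continuous along the ray and $H'$ need not exist. The identity you want is nevertheless true for H\"older-$\lambda$ functions with $\lambda>\alpha$; the clean fix is either to prove it by the $\varepsilon$-truncation of the inner integral (the two singular $\varepsilon^{-\alpha}$ boundary contributions cancel up to $O(\varepsilon^{1-\alpha})$, and Fubini on the truncated region is legitimate), or simply to invoke the paper's own machinery (Theorem \ref{T1.4} applied to $f^{2}\in C_{0}^{\infty}(\Omega)$, then Lemma \ref{L1.2}). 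Also, in the closing density argument the continuity of $f\mapsto(f,\mathfrak{D}^{\alpha}f)_{L_{2}(\Omega,\rho)}$ on $H^{1}_{0}(\Omega)$ is most directly obtained from estimate \eqref{1.4}, i.e. $\|\mathfrak{D}^{\alpha}f\|_{L_{2}(\Omega,\rho)}\le C\|f\|_{H^{1}_{0}(\Omega)}$, rather than from the representability of $\rho f$; with these two repairs the proof is complete.
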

\begin{proof}
Consider a real  case and let $f\in C_{0}^{\infty}(\Omega),$ we have
\begin{equation*}
\rho(Q)f(Q)( \mathfrak{D}^{\alpha}f )(Q)=\frac{1}{2}( \mathfrak{D}^{\alpha}\rho f^{2} )(Q)
$$
$$
+\frac{\alpha}{2\Gamma(1-\alpha)} \int\limits_{0}^{r} \frac{\rho(Q)[f (Q)- f(T)]^{2}}{(r - t)^{\alpha+1}}
\left(\frac{t}{r}\right)^{n-1}\!\!\!dt
$$
$$
+\frac{\alpha}{2\Gamma(1-\alpha)} \int\limits_{0}^{r} \frac{f^{2}(T)[\rho (T)- \rho(Q)] }{(r - t)^{\alpha+1}}
\left(\frac{t}{r}\right)^{n-1}\!\!\!dt+ \frac{C_{n}^{\alpha}}{2}(\rho f^{2})(Q)r^{-\alpha}
$$
$$
=I_{0}(Q)+I_{1}(Q)+I_{2}(Q)+I_{3}(Q).
\end{equation*}
Applying Theorem \ref{T1.4}, we have
\begin{equation}\label{1.24}
\int\limits_{\Omega} I_{0}(Q)dQ=\frac{1}{2}\int\limits_{\Omega}(\mathfrak{D}^{\alpha}_{d-}1)(Q)   (\rho f^{2} )(Q)dQ
$$
$$
=\frac{1}{2\Gamma(1-\alpha)}\int\limits_{\Omega}(d(\mathbf{e})-r)^{-\alpha}    (\rho f^{2} )(Q)dQ\geq \frac{\mathrm{d}^{-\alpha}}{2\Gamma(1-\alpha)}\|f\|^{2}_{L_{2}(\Omega,\rho)}.
\end{equation}

Using the Fubini theorem, it can be shown in the usual way that
\begin{equation}\label{1.25}
\left|\int\limits_{\Omega} I_{2}(Q)dQ\right|\leq\frac{\alpha}{2\Gamma(1-\alpha)}\int\limits_{\omega}d \chi \int\limits_{0}^{d(\mathbf{e})}r^{n-1}dr \int\limits_{0}^{r} \frac{f^{2}(T)|\rho (T)- \rho(Q)| }{(r - t)^{\alpha+1}}
\left(\frac{t}{r}\right)^{n-1}\!\!\!dt
$$
$$
=\frac{\alpha}{2\Gamma(1-\alpha)}\int\limits_{\omega}d \chi \int\limits_{0}^{d(\mathbf{e})}f^{2}(T) t^{n-1}dt \int\limits_{t}^{d(\mathbf{e})} \frac{|\rho (T)- \rho(Q)| }{(r - t)^{\alpha+1}}
 dr
$$
$$
=\frac{\alpha}{2\Gamma(1-\alpha)}\int\limits_{\omega}d \chi \int\limits_{0}^{d(\mathbf{e})}f^{2}(T) t^{n-1}dt \int\limits_{0}^{d(\mathbf{e})-t}
\frac{|\rho (Q-\tau \mathbf{e})- \rho(Q)| }{\tau^{\alpha+1}}
 d\tau
$$
$$
\leq\frac{\alpha M}{2\Gamma(1-\alpha)}\int\limits_{\omega}d \chi \int\limits_{0}^{d(\mathbf{e})}f^{2}(T) t^{n-1}dt \int\limits_{0}^{d(\mathbf{e})-t} \tau^{\lambda-\alpha+1}
 d\tau
$$
$$
\leq\frac{\alpha M  \mathrm{d}^{\lambda-\alpha}}{2\Gamma(1-\alpha)(\lambda-\alpha)}\|f\|^{2}_{L_{2}(\Omega)} .
\end{equation}
Consider
\begin{equation}\label{1.26}
\int\limits_{\Omega} I_{3}(Q)dQ=C^{(\alpha)}_{n}\int\limits_{\Omega}(\rho f^{2})(Q)r^{-\alpha}dQ\geq\frac{ C^{(\alpha)}_{n} \mathrm{d}^{-\alpha}}{2}\|f\|^{2}_{L_{2}(\Omega,\rho)}.
\end{equation}
 Combining  \eqref{1.24},\eqref{1.25},\eqref{1.26}, and the fact that  $I_{1} $ is  non-negative, we obtain
\begin{equation}\label{1.27}
 ( f,\mathfrak{D}^{\alpha}f)_{L_2(\Omega,\rho)}\geq\mu\|f\|^{2}_{L_2(\Omega,\rho)},\;f\in C^{\infty}_{0} (\Omega).
\end{equation}
In the case when  for any fixed direction $\mathbf{e}$ the function  $\rho$ is   monotonically  non-increasing, we have    $I_{2}\geq 0.$ Hence \eqref{1.27} is   fulfilled. Now assume    that   $f\in H_{0}^{1} (\Omega),$ then  there exists   a sequence $\{f_k\}\in C^{\infty}_{0}(\Omega),\,
f_k\stackrel {H_{0}^1 }{\longrightarrow}f.
$
Using this fact,  it is not hard to prove that
$
f_k\stackrel {L_{2}(\Omega,\rho) }{\longrightarrow}f.
$
Using   inequality \eqref{1.4}, we prove  that
$
\|\mathfrak{D}^{\alpha} f \|_{L_{2}(\Omega,\rho)}   \leq C  \| f \|_{H_0 ^1(\Omega)}.
$
Therefore
$
 \mathfrak{D}^{\alpha}f_k\stackrel {L_{2}(\Omega,\rho) }{\longrightarrow} \mathfrak{D}^{\alpha}f.
$
Hence using the continuity property of the inner product, we get
$$
( f_k,\mathfrak{D}^{\alpha}f_k)_{L_{2}(\Omega,\rho)}\rightarrow ( f ,\mathfrak{D}^{\alpha}f )_{L_{2}(\Omega,\rho)} .
$$
Passing to the limit on the left and right side of inequality \eqref{1.27}, we obtain
\begin{equation}\label{1.28}
   ( f,\mathfrak{D}^{\alpha}f)_{L_2(\Omega,\rho)}\geq\mu\|f\|^{2}_{L_2(\Omega,\rho)},\;f\in H^{1}_{0} (\Omega).
\end{equation}
 Now let us  consider the  complex  case. Note that the following   equality is true
\begin{equation}\label{1.29}
{\rm Re} ( f,\mathfrak{D}^{\alpha}f)_{L_2(\Omega,\rho)}= ( u,\mathfrak{D}^{\alpha}u)_{L_2(\Omega,\rho)}+
( v,\mathfrak{D}^{\alpha}v)_{L_2(\Omega,\rho)},\;u={\rm Re}f,\,v={\rm Im}f.
\end{equation}

Combining   \eqref{1.29}, \eqref{1.28},  we obtain  \eqref{1.23}.
 \end{proof}
 \section{  Sectorial property }
Consider a uniformly elliptic operator with real   coefficients and the Kipriyanov fractional derivative    in the final term
\begin{equation*}
 Lu:=-  D_{j} ( a^{ij} D_{i}u)  +\rho\, \mathfrak{D}^{ \alpha }u,\;\;  i,j=1,2,...,n\,  ,
 \end{equation*}
 $$
 \; \mathrm{D}(L)=H^{2}(\Omega)\cap H^{1}_{0}(\Omega),
 $$
 \begin{equation*}
 a^{ij}(Q)\in C^{1}(\bar{\Omega})  ,\,a^{ij}\xi _{i}  \xi _{j}  \geq a_{0} |\xi|^{2} ,\,a_{0}>0,
\end{equation*}
 \begin{equation*}
\rho(Q)>0,\;\rho(Q)\in {\rm Lip\,\lambda},\,\alpha<\lambda\leq1.
\end{equation*}
We assume in additional that $\mu>0,$ here we use  the denotation  that is used in Theorem \eqref{T1.5}.
We also   consider the formal adjoint  operator
\begin{equation*}
 L^{+}u:=-  D_{i} ( a^{ij} D_{j}u)  + \mathfrak{D}  ^{\alpha}_{ d -}\rho u  ,\,\mathrm{D}(L^{+})=\mathrm{D}(L),
 \end{equation*}
and   the   operator
\begin{equation*}
  H=\frac{1}{2}(L+L^{+}).
 \end{equation*}
 We   use  a special case of the Green  formula
 \begin{equation}\label{1.30}
-\int\limits_{\Omega}D_{j}(a^{ij}D_{i}u)\,\bar{v}\, dQ=\int\limits_{\Omega}a^{ij}D_{i}u\, \overline{D_{j}v}\,  dQ\,,\;u\in H^{2}(\Omega),v\in H_{0}^{1}(\Omega) .
\end{equation}
\begin{remark}\label{R1.1}
  The operators $L,L^{\!+}\!,H$  are closeable. We can easily check this fact,  if we apply    Theorem 3.4 \cite[p.337]{firstab_lit:kato1966}.
\end{remark}
We have the following lemma.
\begin{teo}
\label{T1.6}
 The operators $\tilde{L},\,\tilde{L}^{+}$  are  strictly  accretive, their  numerical range  belongs to  the sector
\begin{equation*}
   \mathfrak{S}:= \{\zeta\in\mathbb{C}: \,|{\rm arg }\,(\zeta-\gamma)|\leq\theta\},
\end{equation*}
 where $\theta$ and $\gamma$ are defined by the coefficients of the operator $L.$
\end{teo}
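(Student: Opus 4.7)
The plan is to estimate the real and imaginary parts of $(Lf,f)_{L_2(\Omega)}$ separately for $f\in \mathrm{D}(L)=H^2(\Omega)\cap H_0^1(\Omega)$, and then combine them to place the numerical range $\Theta(L)$ inside a sector of the form $\mathfrak{S}$. First I apply the Green formula \eqref{1.30} to rewrite the elliptic contribution as $\bigl(-D_j(a^{ij}D_i f),f\bigr)_{L_2(\Omega)}=\int_\Omega a^{ij}D_i f\,\overline{D_j f}\,dQ$. Writing $f=u+iv$ with $u,v$ real, the realness of $a^{ij}$ together with uniform ellipticity show that the real part of this integral is bounded below by $a_0\|\nabla f\|^2_{L_2(\Omega)}$, whereas the imaginary part reduces, after a swap of summation indices, to $\int(a^{ij}-a^{ji})D_i v\,D_j u\,dQ$ and is dominated in modulus by $C_1\|\nabla f\|^2_{L_2(\Omega)}$, with $C_1$ depending only on the antisymmetric part of the coefficient matrix.

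For the fractional term, since $\rho$ is real one has $(\rho\mathfrak{D}^\alpha f,f)_{L_2(\Omega)}=(\mathfrak{D}^\alpha f,f)_{L_2(\Omega,\rho)}$, and Theorem \ref{T1.5} applied to the real and imaginary parts of $f$ via identity \eqref{1.29} yields
\[
\mathrm{Re}(\rho\,\mathfrak{D}^\alpha f,f)_{L_2(\Omega)}\geq\mu\|f\|^2_{L_2(\Omega,\rho)}\geq\mu\inf\rho\cdot\|f\|^2_{L_2(\Omega)}.
\]
For the imaginary part I use Cauchy--Schwarz together with Kipriyanov's estimate \eqref{1.4} (with $p=q=2$, $l=1$, so $\nu=\alpha+\beta<1$) to bound $|\mathrm{Im}(\rho\mathfrak{D}^\alpha f,f)_{L_2(\Omega)}|$ by $\sup\rho\cdot\|\mathfrak{D}^\alpha f\|_{L_2(\Omega)}\|f\|_{L_2(\Omega)}$, and Young's inequality then converts this bound into $\varepsilon\|\nabla f\|^2_{L_2(\Omega)}+C_\varepsilon\|f\|^2_{L_2(\Omega)}$ with $\varepsilon>0$ at my disposal.

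Combining the lower bound $\mathrm{Re}(Lf,f)\geq a_0\|\nabla f\|^2_{L_2(\Omega)}+\mu\inf\rho\cdot\|f\|^2_{L_2(\Omega)}$ with the upper bound $|\mathrm{Im}(Lf,f)|\leq(C_1+\varepsilon)\|\nabla f\|^2_{L_2(\Omega)}+C_\varepsilon\|f\|^2_{L_2(\Omega)}$, the desired sector inclusion $|\mathrm{Im}(Lf,f)|\leq\tan\theta\bigl(\mathrm{Re}(Lf,f)-\gamma\|f\|^2_{L_2(\Omega)}\bigr)$ reduces to the two inequalities $C_1+\varepsilon\leq a_0\tan\theta$ and $C_\varepsilon\leq\tan\theta\,(\mu\inf\rho-\gamma)$. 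Fixing $\varepsilon$ small, then $\tan\theta$ large enough (while keeping $\theta<\pi/2$), and finally $\gamma\in(0,\mu\inf\rho)$ sufficiently small, both inequalities can be arranged, which gives $\Theta(L)\subset\mathfrak{S}$ together with the strict accretivity $\mathrm{Re}(Lf,f)\geq\gamma\|f\|^2_{L_2(\Omega)}$. Passing to $\tilde L$ is then automatic, since $\mathfrak{S}$ is closed and convex and any $f\in\mathrm{D}(\tilde L)$ is approximable in the graph norm by elements of $\mathrm{D}(L)$; the treatment of $L^+$ is structurally identical, employing the right-sided analogue of Theorem \ref{T1.5} (whose proof is noted to be analogous) and the duality of Lemma \ref{L1.2}.

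The principal obstacle is the ordering of the constants: the choices of $\varepsilon,\theta,\gamma$ must be mutually compatible so that $\gamma>0$ and $\theta<\pi/2$ hold simultaneously, and in particular $\mu\inf\rho$ must dominate the Young residue $C_\varepsilon$ divided by $\tan\theta$, which forces $\tan\theta$ to be taken large enough (equivalently, the sector to be opened sufficiently wide). The hypothesis $\mu>0$ imposed above the statement is precisely what makes this juggling feasible.
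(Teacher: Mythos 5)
Your proof is correct and follows essentially the same route as the paper's: Green's formula \eqref{1.30} plus uniform ellipticity for the second-order part, Theorem \ref{T1.5} for the real part of the fractional term, the Kipriyanov estimate \eqref{1.4} combined with Young's inequality for its imaginary part, and a final balancing of constants to fit the numerical range into a sector (your slack in the gradient term, versus the paper's exact cancellation via the choice of $k$, is only a cosmetic difference that additionally lets you arrange $\gamma>0$, which the paper instead defers to an assumption stated after the theorem). The one caveat is your parenthetical choice $p=q=2$ in \eqref{1.4}, which violates the strict requirement $q>p$ in \eqref{1.3}; take $q>2$ and use the boundedness of $\Omega$ to return to the $L_2$ norm, exactly as the paper does in \eqref{1.37}, which changes nothing downstream.
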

\begin{proof}
Consider the operator $L .$ It is not hard to prove that
\begin{equation*}
-{\rm Re} \left( D_{j} [a^{ij} D_{i}f] ,f\right)_{L_{2}(\Omega)}\geq a_{0}\|   f  \|^{2}_{L^{1}_{2}(\Omega)},\;f\in \mathrm{D}(L).
\end{equation*}
Hence
\begin{equation}\label{1.31}
  {\rm Re}  (f_{n}, L f_{n}  )_{L_{2}(\Omega)}\geq a_{0}\|   f_{n} \|^{2}_{L^{1}_{2}(\Omega)}+ {\rm Re}(f_{n},\mathfrak{D}^{\alpha}f_{n})_{L_2(\Omega,\rho)} ,\;\{f_{n}\}\subset\mathrm{D}(L).
\end{equation}
Assume that  $ f\in \mathrm{D}(\tilde{L}).$ In accordance with the definition, there  exists a sequence $\{f_{n}\}\subset\mathrm{D}(L),\,f_{n}\xrightarrow[  L ]{}f.$  By virtue of  \eqref{1.31},  we easily  prove that  $f\in H_{0}^{1}(\Omega).$  Using the continuity property of the inner product, we pass to the limit on the left and right side of inequality \eqref{1.31}. Thus, we have
\begin{equation}\label{1.32}
 {\rm Re}( f , \tilde{L} f   )_{L_{2}(\Omega)}\geq a_{0}\|   f  \|^{2}_{L^{1}_{2}(\Omega)}+ {\rm Re}( f ,\mathfrak{D}^{\alpha}f )_{L_2(\Omega,\rho)} ,\; f  \in\mathrm{D}(\tilde{L}).
\end{equation}
 By virtue of Theorem \ref{T1.5}, we can rewrite the previous inequality as follows
 \begin{equation*}
  {\rm Re}( f,\tilde{L}f  )_{L_{2}(\Omega)}\geq a_{0}\|f\|^{2}_{L^{1}_{2}(\Omega)}+\mu\|f\|^{2}_{L_{2}(\Omega,\rho)} ,\;f\in \mathrm{D}(\tilde{L}).
\end{equation*}
Applying the     Friedrichs inequality    to the first summand of   the right side, we  get
  \begin{equation}\label{1.33}
  {\rm Re}  ( f,\tilde{L}f )_{L_{2}(\Omega)}\geq \mu_{1}\|f\|^{2}_{L_{2}(\Omega)},\;
 f  \in\mathrm{D}(\tilde{L}),\; \mu_{1}=a_{0}+\mu\inf\rho(Q)  .
 \end{equation}
Consider the  imaginary component   of the form, generated by the operator $L$
\begin{equation}\label{1.34}
\left|{\rm Im} ( f,Lf    )_{L_{2}(\Omega)}\right|\leq   \left|\int\limits_{\Omega}
\left(a^{ij}  D_{i}u D_{j}v-a^{ij}  D_{i}v D_{j}u\right)dQ\right|
$$
$$
+\left|  ( u,\mathfrak{D}^{\alpha}v )_{L_{2}(\Omega,\rho)} -( v,\mathfrak{D}^{\alpha}u   )_{L_{2}(\Omega,\rho)}\right|= I_{1}+I_{2}.
 \end{equation}
 Using the Cauchy Schwarz inequality for sums, the Young inequality,  we have
\begin{equation}\label{1.35}
a^{ij}  D_{i}u D_{j}v\leq a  |Du| |Dv| \leq
 \frac{a}{2}  \left(
|Du|^{2} +|Dv|^{2} \right),a(Q)\!=\!\left(\sum\limits_{i,j=1}^{n}
|a_{ij}(Q)|^{2} \right)^{\!\!1/2}\!\!.
\end{equation}
Hence
\begin{equation}\label{1.36}
 I_{1}\leq   a_{1}\|f\|^{2}_{L^{1}_{2}(\Omega)},\;a_{1}=\sup  a(Q)  .
\end{equation}
 Applying    inequality \eqref{1.4}, the Young inequality,    we get
\begin{equation}\label{1.37}
\left|( u,\mathfrak{D}^{\alpha}v )_{L_{2}(\Omega,p)}\right|\leq C_{1}\|u\|_{L_{2}(\Omega)}\|\mathfrak{D}^{\alpha}v\|_{L_{q}(\Omega)}
$$
$$
 \leq
C_{1} \|u\|_{L_{2}(\Omega)}\left\{\frac{K}{\delta^{\nu}}\|v\|_{L_{2}(\Omega)}+\delta^{1-\nu}\|v\|_{L^{1}_{2}(\Omega)} \right\}
$$
$$
\leq\frac{1}{\varepsilon} \|u\|^{2}_{L_{2}(\Omega)} + \varepsilon\left(\frac{ KC_{1}}{\sqrt{2}\delta^{\nu}}\right)^{2} \|v\|^{2}_{L_{2}(\Omega)}  +
 \frac{\varepsilon}{2}\left( C_{1}\delta^{1-\nu}\right)^{2} \|v\|^{2}_{L^{1}_{2}(\Omega)},
\end{equation}
where $ 2<q< 2n/(2\alpha-2+n).$
Hence
$$
 I_2   \leq\left|( u,\mathfrak{D}^{\alpha}v )_{L_{2}(\Omega,\rho)}\right|+\left|( v,\mathfrak{D}^{\alpha}u )_{L_{2}(\Omega,\rho)}\right|\leq \frac{1}{\varepsilon}\left(\|u\|^{2}_{L_{2}(\Omega)}+\|v\|^{2}_{L_{2}(\Omega)}\right)
 $$
 $$
 +\varepsilon\left(\frac{ KC_{1}}{\sqrt{2}\delta^{\nu}}\right)^{2}\left(\|u\|^{2}_{L_{2}(\Omega)}+\|v\|^{2}_{L_{2}(\Omega)} \right)+\frac{\varepsilon}{2}\left( C_{1} \delta^{1-\nu}\right)^{2}\left(\|u\|^{2}_{L^{1}_{2}(\Omega)}+\|v\|^{2}_{L^{1}_{2}(\Omega)} \right)
 $$
\begin{equation}\label{1.38}
= \left(\varepsilon \delta^{-2\nu}C_{2}  +\frac{1}{\varepsilon}\right) \|f\|^{2}_{L_{2}(\Omega)} +
 \varepsilon \delta^{2-2\nu} C_{3} \|f\|^{2}_{L^{1}_{2}(\Omega)}  .
\end{equation}
Taking into account  \eqref{1.34} and combining  \eqref{1.36},  \eqref{1.38}, we easily prove that
$$
\left|{\rm Im} ( f,\tilde{L}f    )_{L_{2}(\Omega)}\right|\leq
 \left(\varepsilon \delta^{-2\nu}C_{2}  +\frac{1}{\varepsilon}\right)   \|f\|^{2}_{L_{2}(\Omega)} + \left(\varepsilon \delta^{2-2\nu} C_{3}+a_{1}\right)   \|  f\|^{2}_{L_{2}^{1}(\Omega)}, f\in \mathrm{D}(\tilde{L}).
 $$
Thus  by virtue of    \eqref{1.33}   for an arbitrary number  $k>0,$  the next inequality holds
$$
{\rm Re}( f,\tilde{L}f    )_{L_{2}(\Omega)}-k \left|{\rm Im} ( f,\tilde{L}f    )_{L_{2}(\Omega)}\right|\geq
 \left(a_{0}-k    [\varepsilon\delta^{2-2\nu} C_{3}+a_{1}] \right)\|  f\|^{2}_{L_{2}^{1}(\Omega)}
 $$
 $$
 +\left( \mu \inf \rho(Q)- k\left[\varepsilon \delta^{-2\nu}C_{2}  +\frac{1}{\varepsilon}\right] \right)\|f\|^{2}_{L_{2}(\Omega)}.
$$
 Choose  $k= a_{0}\left(\varepsilon \delta^{2-2\nu} C_{3}+ a_{1} \right)^{-1}\!\!,$ we get
\begin{equation}\label{1.39}
\left|{\rm Im} ( f,(\tilde{L}-\gamma) f    )_{L_{2}(\Omega)}\right|   \leq \frac{1}{k}{\rm Re}( f,(\tilde{L}-\gamma)f    )_{L_{2}(\Omega)},
$$
$$
\;\gamma= \mu \inf \rho(Q)- k\left[\varepsilon \delta^{-2\nu}C_{2}  +\frac{1}{\varepsilon}\right].
\end{equation}
This  inequality shows  that the numerical range    $\Theta(\tilde{L})$ belongs to the sector with the top   $\gamma$ and the semi-angle $\theta=\arctan(1/k).$
The prove corresponding  to the operator $\tilde{L}^{+}$ is analogous.
 \end{proof}
We do not   study in detail   the conditions under which  $\gamma>0,$  but we   just   note that relation     \eqref{1.39} gives us an opportunity to formulate them in an easy way. Further,     we assume that the coefficients of the operator $L$ such that $\gamma>0.$

 \begin{teo}\label{T1.7}
   The operators $\tilde{L},\tilde{L}^{+},\tilde{H}$   is m-sectorial, the  operator $\tilde{H}$ is selfadjoint.
\end{teo}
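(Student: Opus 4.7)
The plan is to realize $\tilde L$, $\tilde L^+$, and $\tilde H$ as the m-sectorial operators associated, via Kato's first representation theorem (Theorem 2.7 of the preliminaries), with natural closed sectorial forms on $H^1_0(\Omega)$. Introduce on $\mathrm{D}(t):=H^1_0(\Omega)$
$$
t[u,v] := \int_\Omega a^{ij}(Q)\, D_i u\, \overline{D_j v}\, dQ + (\rho\, \mathfrak{D}^\alpha u, v)_{L_2(\Omega)},
$$
together with $t^{\ast}[u,v]:=\overline{t[v,u]}$ and $\mathfrak h := (t+t^{\ast})/2$. Theorem \ref{T1.5} combined with uniform ellipticity and the Friedrichs inequality gives $\mathrm{Re}\,t[u]\geq a_0\|u\|^2_{L^1_2(\Omega)}+\mu\inf\rho\cdot\|u\|^2_{L_2(\Omega)}$, so the form norm is equivalent to the $H^1_0$-norm, hence $t$ is closed. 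The bound \eqref{1.38}, already used in the proof of Theorem \ref{T1.6}, shows that $|\mathrm{Im}\,t[u]|$ is dominated by $\mathrm{Re}\,t[u]$, so $t$ is sectorial; the same arguments apply to $t^{\ast}$, while $\mathfrak h$ is symmetric and bounded below.

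By Kato's theorem there exist unique m-sectorial operators $T_t$, $T_{t^{\ast}}$, $T_{\mathfrak h}$ associated with these forms, and $T_{\mathfrak h}$ is selfadjoint because $\mathfrak h$ is symmetric. I would then verify, using the Green formula \eqref{1.30} for the principal part together with Lemma \ref{L1.1} and Lemma \ref{L1.2} for the fractional part, the identities
$$
(Lu,v)_{L_2(\Omega)}=t[u,v],\quad (L^+u,v)_{L_2(\Omega)}=t^{\ast}[u,v],\quad (Hu,v)_{L_2(\Omega)}=\mathfrak h[u,v]
$$
for $u\in\mathrm{D}(L)=H^2(\Omega)\cap H^1_0(\Omega)$ and $v\in H^1_0(\Omega)$. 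By the defining property of the Kato operator these yield $L\subseteq T_t$, $L^+\subseteq T_{t^{\ast}}$, $H\subseteq T_{\mathfrak h}$, and since the right-hand operators are closed also $\tilde L\subseteq T_t$, $\tilde L^+\subseteq T_{t^{\ast}}$, $\tilde H\subseteq T_{\mathfrak h}$.

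For the reverse inclusions I would run a core argument: $C_0^\infty(\Omega)\subseteq\mathrm{D}(L)$ is dense in $\mathrm{D}(t)=H^1_0(\Omega)$ in the form norm (equivalent to the $H^1_0$-norm), so $\mathrm{D}(L)$ is a core for the form $t$, and likewise for $t^{\ast}$ and $\mathfrak h$. A standard consequence then gives $\tilde L=T_t$, $\tilde L^+=T_{t^{\ast}}$, $\tilde H=T_{\mathfrak h}$, producing the m-sectorial property for all three operators and the selfadjointness of $\tilde H$.

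The main obstacle will be making the identification $\tilde L=T_t$ fully rigorous. Given $u\in\mathrm{D}(T_t)$, one must produce a sequence $u_n\in\mathrm{D}(L)$ with $u_n\to u$ and $Lu_n\to T_t u$ in $L_2(\Omega)$. Form-norm approximation by elements of $\mathrm{D}(L)$ is available from the core property, but upgrading this to graph-norm convergence requires combining the sectorial bound \eqref{1.39} with the continuity estimate \eqref{1.4} for $\mathfrak D^\alpha$; the restrictions on $q$ in \eqref{1.4} force one to arrange the approximating sequence inside $H^2(\Omega)\cap H^1_0(\Omega)$ rather than merely in $C_0^\infty(\Omega)$, exploiting the elliptic regularity built into $\mathrm{D}(L)$.
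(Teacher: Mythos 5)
Your route is genuinely different from the paper's. The paper does not go through the representation theorem at all here: it takes the sectoriality already established in Theorem \ref{T1.6}, invokes Kato's Theorem 3.2 to get closed range and constant deficiency of $\tilde{L}-\zeta$ on $\mathbb{C}\setminus\mathfrak{S}$, kills the deficiency at one point $\zeta_{0}$ in the left half-plane by showing $C^{\infty}_{0}(\Omega)\cap \mathrm{R}(\tilde{L}-\zeta_{0})^{\perp}=0$ via the accretivity estimate \eqref{1.42} and density of $C_{0}^{\infty}$, and then reads off the resolvent bound $\|(\tilde{L}+\zeta)^{-1}\|\leq(\mathrm{Re}\,\zeta)^{-1}$ from \eqref{1.33}. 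The form-theoretic identification you propose is carried out in the paper only afterwards, in Lemmas \ref{L1.3}--\ref{L1.4} and Theorem \ref{T1.9}, and that ordering is not an accident.

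The gap in your argument is the step ``$\mathrm{D}(L)$ is a core for the form $t$, and a standard consequence then gives $\tilde L=T_t$.'' That is not a standard consequence: a form core is not an operator core. The first representation theorem gives you $L\subseteq T_{t}$ and hence $\tilde L\subseteq T_{t}$, but a closed sectorial restriction of an m-sectorial operator can have nonzero deficiency, so m-sectoriality of $\tilde L$ does not follow. The model counterexample is the Laplacian restricted to $C_{0}^{\infty}(\Omega)$: $C_{0}^{\infty}$ is a core of the Dirichlet form, yet the operator closure has domain $H^{2}_{0}(\Omega)$, a proper restriction of the Dirichlet Laplacian. Your final paragraph half-acknowledges the problem but the proposed repair does not close it: form-norm approximation of $u\in\mathrm{D}(T_{t})$ by elements of $\mathrm{D}(L)$ cannot be upgraded to graph-norm approximation by the estimates \eqref{1.4} and \eqref{1.39} alone, because those control $Lu_{n}$ by $\|u_{n}\|_{H^{1}_{0}}$ only in the fractional term, not in the second-order term. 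What would actually close the gap is an elliptic regularity statement $\mathrm{D}(T_{t})\subseteq H^{2}(\Omega)\cap H^{1}_{0}(\Omega)=\mathrm{D}(L)$ --- essentially the difference-quotient argument of Theorem \ref{T1.11} --- or else proving m-accretivity of $\tilde L$ first and then using that m-accretive operators admit no proper accretive extensions to force $\tilde L=T_{t}$; but the latter is exactly the paper's Theorem \ref{T1.7}, so using it here would be circular. You should either import the paper's direct deficiency argument or supply the $H^{2}$-regularity of $\mathrm{D}(T_{t})$ explicitly.
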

\begin{proof} By virtue  of  Theorem \ref{T1.6} we have  that the operator $\tilde{L}$ is sectorial i.e.
 $\Theta(L)\subset  \mathfrak{S}.$  Applying Theorem 3.2   \cite[p. 336]{firstab_lit:kato1966}
 we   conclude that $\mathrm{R}(\tilde{L}-\zeta)$ is a closed  space  for any $\zeta\in \mathbb{C}\setminus\mathfrak{S}$ and that the next relation     holds
\begin{equation}\label{1.40}
  {\rm def}(\tilde{L}-\zeta)=\eta,\; \eta={\rm const} .
 \end{equation} Using   \eqref{1.33}, it is not hard to prove that   $
  \|\tilde{L}f\|_{L_{2}(\Omega)}\geq \sqrt{\mu_{1}}\|f\|_{L_{2}(\Omega)},\,
 f  \in\mathrm{D}(\tilde{L}).
$ Hence the inverse operator $(\tilde{L}+\zeta)^{-1}$ is defined  on the subspace $\mathrm{R}(\tilde{L}+\zeta),\,{\rm Re}\zeta>0.$     In accordance with    condition (3.38) \cite[p.350]{firstab_lit:kato1966},   we need to show  that
\begin{equation}\label{1.41}
{\rm def}(\tilde{L}+\zeta)=0,\;\|(\tilde{L}+\zeta)^{-1}\|\leq ({\rm Re}\zeta)^{-1},\;{\rm Re}\zeta>0 .
\end{equation}
Since $\gamma>0,$  then  the left half-plane is  included    in the the set $\mathbb{C}\setminus \mathfrak{S}.$
Note that by virtue of inequality  \eqref{1.33}, we have
 \begin{equation}\label{1.42}
  {\rm Re}  ( f,(\tilde{L}-\zeta )f  )_{L_{2}(\Omega)}\geq  (\mu- {\rm Re} \zeta ) \|f\|^{2}_{L_{2}(\Omega)} .
 \end{equation}
 Let $\zeta_{0}\in\mathbb{C}\setminus \mathfrak{S},\;{\rm Re}\zeta_{0} <0.$
  Since the  operator $\tilde{L}-\zeta_{0}$ has a closed range     $\mathrm{R} (\tilde{L}-\zeta_{0}),$ then we have
\begin{equation*}
 L_{2}(\Omega)=\mathrm{R} (\tilde{L}-\zeta_{0})\oplus \mathrm{R} (\tilde{L}-\zeta_{0})^{\perp} .
 \end{equation*}
Note that   $  C^{\infty}_{0}(\Omega)\cap \mathrm{R} (\tilde{L}-\zeta_{0})^{\perp}=0,$   because if we assume   the contrary, then applying inequality  \eqref{1.42}  for any element
 $u\in C^{\infty}_{0}(\Omega)\cap \mathrm{R}  (\tilde{L}-\zeta_{0})^{\perp},$    we get
 \begin{equation*}
(\mu-{\rm Re}\zeta_{0}) \|u\|^{2}_{L_{2}(\Omega)} \leq {\rm Re} ( u,(\tilde{L}-\zeta_{0})u  )_{L_{2}(\Omega)}=0,
 \end{equation*}
hence $u=0.$ Thus  this fact   implies that
$$
\left(g,v\right)_{L_{2}(\Omega)}=0,\,g\in  \mathrm{R}  (\tilde{L}-\zeta_{0})^{\perp},\, \in C^{\infty}_{0}(\Omega).
$$
Since $  C^{\infty}_{0}(\Omega)$  is a dense set in $L_{2}(\Omega),$ then $\mathrm{R}  (\tilde{L}-\zeta_{0})^{\perp}=0.$ It follows that  ${\rm def} (\tilde{L}-\zeta_{0}) =0.$ Now if we note \eqref{1.40}
then we  came to the conclusion that ${\rm def} (\tilde{L}-\zeta )=0,\;\zeta\in \mathbb{C}\setminus\mathfrak{S}.$  Hence ${\rm def} (\tilde{L}+\zeta )=0,\;{\rm Re}\zeta>0.$   Thus the proof  of  the first relation of  \eqref{1.41} is complete.
To prove the second relation \eqref{1.41} we should  note that
 \begin{equation*}
(\mu +{\rm Re}\zeta ) \|f\|^{2}_{L_{2}(\Omega)} \leq {\rm Re} ( f,(\tilde{L}+\zeta )f  )_{L_{2}(\Omega)}\leq \|f\|_{L_{2}(\Omega)}\|(\tilde{L}+\zeta )\|_{L_{2}(\Omega)},
 \end{equation*}
 $$
 \;f\in \mathrm{D}(\tilde{L}),\;
{\rm Re}\zeta>0 .
 $$
Using    first relation   \eqref{1.41}, we have
 \begin{equation*}
\|(\tilde{L}+\zeta )^{-1}g\|_{L_{2}(\Omega)}     \leq(\mu  +{\rm Re}\,\zeta ) ^{-1} \|g\|_{L_{2}(\Omega)}\leq ( {\rm Re}\,\zeta ) ^{-1} \|g\|_{L_{2}(\Omega)},\,g\in L_{2}(\Omega).
 \end{equation*}
 This implies that
$$
\|(\tilde{L}+\zeta )^{-1} \| \leq( {\rm Re}\,\zeta ) ^{-1},\,{\rm Re}\zeta>0.
$$
This concludes the proof  corresponding to the operator  $\tilde{L}.$  The proof  corresponding to the operator  $\tilde{L}^{+}$ is     analogous.
Consider the  operator $\tilde{H}.$ It is obvious that   $ \tilde{H}  $ is a symmetric operator. Hence   $ \Theta(\tilde{H})\subset \mathbb{R}.  $ Using \eqref{1.31} and arguing as above, we see that
 \begin{equation}\label{1.43}
    ( f,\tilde{H}f  )_{L_{2}(\Omega)}\geq  \mu_{1}  \|f\|^{2}_{L_{2}(\Omega)} .
 \end{equation}
 Continuing the used above  line of reasoning  and applying Theorem 3.2 \cite[p.336]{firstab_lit:kato1966},  we see that
  \begin{equation}\label{1.44}
{\rm def}(\tilde{H}-\zeta )=0,\,{\rm Im }\zeta\neq 0;
\end{equation}
  \begin{equation}\label{1.45}
   {\rm def}(\tilde{H}+\zeta)=0,\;\|(\tilde{H}+\zeta)^{-1}\|\leq ({\rm Re}\zeta)^{-1},\;{\rm Re}\zeta>0 .
\end{equation}
Combining \eqref{1.44} with   Theorem 3.16  \cite[p.340]{firstab_lit:kato1966}, we conclude that  the operator $\tilde{H}$ is selfadjoint.
Finally, note that in accordance with the definition,   relation  \eqref{1.45} implies   that the  operator $\tilde{H}$ is m-accretive.
  Since we  already know  that the operators $\tilde{L},\tilde{L}^{+},\tilde{H}$ are sectorial and m-accretive, then  in accordance with the definition they are m-sectorial.
 \end{proof}

\section{Compactness of the resolvent}

In this section we need using the theory of sesquilinear forms. If it is not stated otherwise, we use the definitions and  the  notation  of the monograph
\cite{firstab_lit:kato1966}.
Consider the forms
\begin{equation*}
t[u,v] = \int\limits_{\Omega} a^{ij}D_{i}u\, \overline{D_{j}v}dQ+ \int\limits_{\Omega}  \rho\,\mathfrak{D}^{\alpha }u \, \bar{v } dQ, \; u,v\in H^{1}_{0}(\Omega) ,
\end{equation*}
  \begin{equation*}
t^{*}[u,v]: =\overline{t [v,u]} =\int\limits_{\Omega} a^{ij}D_{j}u\, \overline{D_{i}v}dQ+ \int\limits_{\Omega}   u  \rho\,  \overline{\mathfrak{D}^{\alpha }v }dQ,
\end{equation*}
$$
\mathfrak{Re} t :=\frac{1}{2}(t+t^{*}).
$$
For convenience, we use the shorthand notation $h:=\mathfrak{Re} t.$

 \begin{lem}\label{L1.3}
The form $t$ is a closed sectorial form, moreover  $t=\mathfrak{\tilde{f}}, $ where
 \begin{equation*}
 \mathfrak{ f }[u,v]=(\tilde{L}u,v)_{L_{2}(\Omega)},\;u,v\in \mathrm{D}(\tilde{L}).
 \end{equation*}
\end{lem}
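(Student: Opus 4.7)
The strategy is to establish sectoriality and closedness of $t$ on $H^1_0(\Omega)$ directly, then identify it with $\tilde{\mathfrak{f}}$ via Green's formula and density of $D(L)$ in $H^1_0(\Omega)$ in the form-norm.

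First I would verify the sectorial property. The real part decomposes as
$$
\mathfrak{Re}\,t[u] = \int_\Omega a^{ij}D_i u\,\overline{D_j u}\,dQ + \mathrm{Re}\,(u,\mathfrak{D}^\alpha u)_{L_2(\Omega,\rho)};
$$
uniform ellipticity bounds the first summand below by $a_0\|u\|^2_{L^1_2(\Omega)}$, and Theorem \ref{T1.5} bounds the second by $\mu\|u\|^2_{L_2(\Omega,\rho)}$. Combined with the Friedrichs inequality, this gives a coercivity estimate $\mathfrak{Re}\,t[u]\geq c\,\|u\|^2_{H^1_0(\Omega)}$ (for suitable constant $c>0$ using $\mu>0$). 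For the imaginary part I would reuse the bookkeeping from the proof of Theorem \ref{T1.6}: the antisymmetric Dirichlet piece is controlled by $a_1\|u\|^2_{L^1_2(\Omega)}$, while the fractional piece is handled by the mapping inequality \eqref{1.4} together with Young's inequality, yielding bounds of the form $|\mathfrak{Im}\,t[u]|\leq C_\varepsilon\|u\|^2_{L_2(\Omega)}+\varepsilon\|u\|^2_{L^1_2(\Omega)}$. Balancing $\varepsilon$ against the coercivity constant gives a sector $|\arg(\zeta-\gamma)|\leq\theta$ containing the numerical range $\Theta(t)$.

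Closedness then follows from the fact that the coercivity and continuity estimates above show that the form-norm $(\mathfrak{Re}\,t[u]+\|u\|^2_{L_2(\Omega)})^{1/2}$ is equivalent to the $H^1_0(\Omega)$ norm on $D(t)=H^1_0(\Omega)$; since $H^1_0(\Omega)$ is complete, $t$ is closed in Kato's sense.

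For the identification $t=\tilde{\mathfrak{f}}$, I would start from $u,v\in D(L)=H^2(\Omega)\cap H^1_0(\Omega)$, where Green's formula \eqref{1.30} yields $(Lu,v)_{L_2(\Omega)}=t[u,v]$, so $\mathfrak{f}$ and $t$ coincide on $D(L)$. Approximating $u\in D(\tilde{L})$ by $u_n\in D(L)$ in the graph norm of $\tilde{L}$, and invoking the observation from the proof of Theorem \ref{T1.6} that such convergence forces $u_n\to u$ in $H^1_0(\Omega)$, one extends the identity $\mathfrak{f}[u,v]=t[u,v]$ to all $u,v\in D(\tilde{L})$. Hence $\mathfrak{f}\subset t$ as forms, and since $t$ is closed, $\tilde{\mathfrak{f}}\subset t$. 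For the reverse inclusion, $C_0^\infty(\Omega)\subset D(L)\subset D(\mathfrak{f})$ is dense in $H^1_0(\Omega)$, which by the equivalence of norms is dense in $D(t)$ in the form-norm; thus any $u\in D(t)$ is a form-limit of elements of $D(\mathfrak{f})$, forcing $t\subset\tilde{\mathfrak{f}}$.

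The main obstacle is the last inclusion: one must argue that graph-norm convergence of $u_n\in D(L)$ to $u\in D(\tilde{L})$ implies $H^1_0(\Omega)$-convergence, which is the energy estimate already exploited in Theorem \ref{T1.6}, and then separately that $C_0^\infty(\Omega)$ is dense in $H^1_0(\Omega)$ in the form-norm — a statement that in turn depends on the norm equivalence from the coercivity step. Once both density facts are in hand, the representation $t=\tilde{\mathfrak{f}}$ follows.
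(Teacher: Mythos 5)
Your proposal is correct and follows essentially the same route as the paper: the two-sided estimate $C_{0}\|f\|^{2}_{H^{1}_{0}}\leq|t[f]|\leq C_{1}\|f\|^{2}_{H^{1}_{0}}$ (coercivity from Theorem \ref{T1.5} plus ellipticity, boundedness from \eqref{1.35},\eqref{1.37}), closedness from completeness of $H^{1}_{0}(\Omega)$, identification of $\mathfrak{f}$ with $t$ on $\mathrm{D}(L)$ via Green's formula \eqref{1.30}, and extension to $\mathrm{D}(\tilde{L})$ by the graph-norm-to-$H^{1}_{0}$ convergence. The only cosmetic difference is that you organize the final identification as two inclusions ($\tilde{\mathfrak{f}}\subset t$ from closedness of $t$, and $t\subset\tilde{\mathfrak{f}}$ from density of $C_{0}^{\infty}(\Omega)$ in the form-norm), whereas the paper invokes Kato's Theorems 1.27 and 1.17 to close $\mathfrak{f}$ and pass to the limit — the underlying mechanism is the same.
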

 \begin{proof}
First we shall show  that the following    inequality   holds
\begin{equation}\label{1.46}
C_{0}\|f\|^{2} _{H^{1}_{0}(\Omega)}\leq  \left|t[f ]\right|\leq C_{1}\|f\|^{2} _{H^{1}_{0}(\Omega)},\,f\in H^{1}_{0}(\Omega).
\end{equation}
Using \eqref{1.32}, Theorem \ref{T1.5},   we obtain
\begin{equation}\label{1.47}
C_{0}\|f\|^{2} _{H^{1}_{0}(\Omega)}\leq   {\rm Re} t[f ] \leq\left|t[f ]\right|,\;f\in H^{1}_{0} (\Omega).
\end{equation}
Applying \eqref{1.35},\eqref{1.37}, we get
\begin{equation}\label{1.48}
|t[f]|\leq\left|\left(a^{ij}D_if,D_jf\right)_{\!L_{2}(\Omega)}\!\right|+\left|\left(\rho \,\mathfrak{D}^{\alpha} f, f\right)_{\!L_{2}(\Omega)}\!\right| \leq C_{1}\|f\|^{2}_{H^{1}_{0}(\Omega)},\,f\in H^{1}_{0} (\Omega).
\end{equation}
Note  that $ H^{1}_{0}(\Omega) \subset \mathrm{D}( \tilde{t}) .$  If $f\in \mathrm{D}( \tilde{t} ),$ then in accordance with the definition,  there exists  a sequence
$
\{f_{n}\}\subset \mathrm{D}( t),\, f_{n}\xrightarrow[t ]{ }f.
$
Applying   \eqref{1.46}, we get  $ f_{n}\xrightarrow[  ]{ H^{1}_{0}}f .$
Since the space $H^{1}_{0}(\Omega)$ is complete, then   $\mathrm{D}( \tilde{t}) \subset H^{1}_{0}(\Omega).$ It implies that $\mathrm{D}( \tilde{t})=\mathrm{D}( t).$ Hence   $t$ is a closed form.
The proof of the sectorial property contains in the proof of Theorem \ref{T1.6}. Let us prove that $t=\mathfrak{\tilde{f}}.$  First, we shall    show that
\begin{equation}\label{1.49}
\mathfrak{ \mathfrak{f} }[u,v]=t[u,v],\;u,v\in \mathrm{D}(\mathfrak{f}).
\end{equation}
 Using   formula \eqref{1.30}, we have
\begin{equation}\label{1.50}
( L u ,v )_{L_{2}(\Omega)}=t[u ,v ],\;u,v\in \mathrm{D}(L).
 \end{equation}
 Hence  we can rewrite    relation \eqref{1.46} in the following form
\begin{equation}\label{1.51}
C_{0}\|f\|^{2} _{H^{1}_{0}(\Omega)}\leq  \left|( L f,f)_{L_{2}(\Omega)}\right|\leq C_{1}\|f\|^{2} _{H^{1}_{0}(\Omega)},\;f\in \mathrm{D}(L).
\end{equation}
 Assume that $f \in \mathrm{D}(\tilde{L}),$ then there exists a sequence $\{f_{n}\}\in \mathrm{D}( L ),\,f_{n}\xrightarrow[L]{}f.$   Combining   \eqref{1.51},\eqref{1.46}, we obtain
   $f_{n}\xrightarrow[t]{}f.$   These facts give us an opportunity to pass to the limit on the left and right side of \eqref{1.50}. Thus, we obtain \eqref{1.49}. Combining  \eqref{1.49},\eqref{1.46}, we get
\begin{equation*}
C_{0}\|f\|^{2} _{H^{1}_{0}(\Omega)}\leq  \left|\mathfrak{ f }[f ]\right|\leq C_{1}\|f\|^{2} _{H^{1}_{0}(\Omega)},\;f\in \mathrm{D}(\mathfrak{f}).
\end{equation*}
Note that by virtue  of Theorem \ref{T1.6} the operator $\tilde{L}$ is sectorial, hence due to Theorem 1.27 \cite[p.399] {firstab_lit:kato1966} the form $\mathfrak{f}$ is closable. Using the facts established above, Theorem 1.17 \cite[p.395] {firstab_lit:kato1966},   passing to the limit  on the left and right side of inequality \eqref{1.49}, we get
\begin{equation*}
\mathfrak{ \mathfrak{\tilde{f}} }[u,v]=t[u,v],\;u,v\in H_{0}^{1}(\Omega).
\end{equation*}
This concludes the proof.
 \end{proof}

\begin{lem}\label{L1.4}
The form h is a closed symmetric sectorial form, moreover  $h=\mathfrak{\tilde{k}}, $ where
 \begin{equation*}
 \mathfrak{ k }[u,v]=(\tilde{H}u,v)_{L_{2}(\Omega)},\;u,v\in \mathrm{D}(\tilde{H}).
 \end{equation*}
\end{lem}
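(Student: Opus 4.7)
The plan is to follow closely the proof of Lemma \ref{L1.3}, adapting it to the symmetric form $h=(t+t^{\ast})/2$. Symmetry of $h$ is built into the definition. For sectoriality together with closedness, I would first establish the two-sided inequality
\begin{equation*}
C_{0}\|f\|^{2}_{H^{1}_{0}(\Omega)}\leq h[f]\leq C_{1}\|f\|^{2}_{H^{1}_{0}(\Omega)},\quad f\in H^{1}_{0}(\Omega).
\end{equation*}
The lower bound equals $\mathrm{Re}\,t[f]$ and was essentially obtained in \eqref{1.47}, resting on Theorem \ref{T1.5}, the ellipticity of the $a^{ij}$, and the Friedrichs inequality. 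The upper bound follows from the estimates \eqref{1.35} and \eqref{1.37} exactly as in \eqref{1.48}, since $|h[f]|\leq|t[f]|$. Being real-valued and bounded below, $h$ is trivially sectorial with vertex on the real axis and semi-angle zero. Closedness then follows from the two-sided inequality in the standard way: any $h$-Cauchy sequence is Cauchy in $H^{1}_{0}(\Omega)$, which is complete, and the limit lies in $\mathrm{D}(h)=H^{1}_{0}(\Omega)$, so $\mathrm{D}(\tilde{h})=\mathrm{D}(h)$.

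For the identification $h=\tilde{\mathfrak{k}}$, I would first verify $\mathfrak{k}[u,v]=h[u,v]$ on the core $\mathrm{D}(H)$. For $u,v\in \mathrm{D}(H)=H^{2}(\Omega)\cap H^{1}_{0}(\Omega)$, the Green formula \eqref{1.30} gives $(Lu,v)_{L_{2}(\Omega)}=t[u,v]$; the analogous identity $(L^{+}u,v)_{L_{2}(\Omega)}=t^{\ast}[u,v]$ follows by applying \eqref{1.30} to the principal part and invoking Lemma \ref{L1.2} together with Lemma \ref{L1.1}, so that the fractional piece $\int \mathfrak{D}^{\alpha}_{d-}(\rho u)\,\bar v\,dQ$ becomes $\int \rho u\,\overline{\mathfrak{D}^{\alpha}v}\,dQ$. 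Averaging yields $(Hu,v)_{L_{2}(\Omega)}=h[u,v]$ on $\mathrm{D}(H)$.

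To extend this to $\mathrm{D}(\tilde{H})$, I would proceed exactly as in Lemma \ref{L1.3}. For $u\in\mathrm{D}(\tilde{H})$, pick a sequence $\{u_{n}\}\subset \mathrm{D}(H)$ with $u_{n}\xrightarrow[H]{}u$. The lower bound of the two-sided inequality, together with \eqref{1.43}, shows that $\{u_{n}\}$ is Cauchy in $H^{1}_{0}(\Omega)$, hence converges to $u$ in the form norm associated with $h$. By Theorem \ref{T1.7} the operator $\tilde{H}$ is selfadjoint, so the form $\mathfrak{k}$ is closable by Theorem 1.27 \cite[p.399]{firstab_lit:kato1966}; passing to the limit on both sides of $\mathfrak{k}[u_{n},v]=h[u_{n},v]$ for $v$ in the common core, and then invoking Theorem 1.17 \cite[p.395]{firstab_lit:kato1966}, we obtain $\tilde{\mathfrak{k}}[u,v]=h[u,v]$ on the full domain $\mathrm{D}(h)=H^{1}_{0}(\Omega)$.

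The main obstacle I expect is the clean verification of $(L^{+}u,v)_{L_{2}(\Omega)}=t^{\ast}[u,v]$, since it requires correctly composing Lemmas \ref{L1.1} and \ref{L1.2} to transfer $\mathfrak{D}^{\alpha}_{d-}\rho$ acting on $u$ into $\rho\,\overline{\mathfrak{D}^{\alpha}v}$ in the adjoint pairing, and to justify replacing $\mathfrak{D}^{\alpha}_{0+}$ by $\mathfrak{D}^{\alpha}$ on elements of $H^{2}(\Omega)\cap H^{1}_{0}(\Omega)$. Once this identity is in hand, the rest of the argument is a routine form-closure passage along the lines already carried out for $t$ in Lemma \ref{L1.3}.
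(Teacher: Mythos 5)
Your proposal is correct and follows essentially the same route as the paper: the two-sided estimate $C_{0}\|f\|^{2}_{H^{1}_{0}}\leq h[f]\leq C_{1}\|f\|^{2}_{H^{1}_{0}}$ from \eqref{1.47}--\eqref{1.48}, closedness via completeness of $H^{1}_{0}(\Omega)$, the identity $(Hu,v)_{L_{2}}=h[u,v]$ on $\mathrm{D}(H)$ obtained from \eqref{1.30} together with Lemmas \ref{L1.1} and \ref{L1.2}, and the passage to $\mathrm{D}(\tilde{H})$ and then to $\tilde{\mathfrak{k}}$ via Theorems 1.27 and 1.17 of Kato. Your observation that a symmetric form bounded below is trivially sectorial with semi-angle zero is a slight simplification over the paper, which instead defers to the scheme of Theorem \ref{T1.6}.
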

\begin{proof}
To prove  the  symmetric  property   (see(1.5) \cite[p.387] {firstab_lit:kato1966}) of the form $h,$  it is  sufficient to note that
$$
h[u,v]=\frac{1}{2}\left(t[u,v]+ \overline{t[v,u]}  \right)=\frac{1}{2}\overline{\left( t[v,u] +\overline{t[u,v]}\right)}=\overline{h[v,u]},\;u,v\in \mathrm{D}(h).
$$
Obviously, we have
$
h[f]= {\rm Re}\,t[f].
$
Hence applying    \eqref{1.47},   \eqref{1.48},   we have
\begin{equation}\label{1.52}
C_{0}\|f\| _{H^{1}_{0}(\Omega)}\leq  h[f ] \leq C_{1}\|f\| _{H^{1}_{0}(\Omega)},\;f\in H^{1}_{0}(\Omega).
\end{equation}
 Arguing as above, using \eqref{1.52}, it is easy to  prove   that $\mathrm{D}(\tilde{h})=H_{0}^{1}(\Omega). $ Hence the form $h$ is a closed form. The  proof of the sectorial property of the form $h$ can be implemented due to the scheme of reasonings represented in   Theorem \ref{T1.6}, thus we left the technical repetition to the reader.
Let us  prove that $h=\mathfrak{\tilde{k}}.$ We shall  show that
\begin{equation}\label{1.53}
\mathfrak{ \mathfrak{k} }[u,v]=h[u,v],\,u,v\in \mathrm{D}(\mathfrak{k}).
\end{equation}
Applying \ref{L1.1}, Lemma \ref{L1.2}, we have
$$
(\rho\,\mathfrak{D}^{\alpha}f,g)_{L_{2}(\Omega)}=(f,\mathfrak{D}_{d-}^{\alpha}\rho g)_{L_{2}(\Omega)},\,f,g\in H_{0}^{1}(\Omega).
$$
Combining this fact with formula \eqref{1.30}, it is not hard to prove that
\begin{equation}\label{1.54}
( H u,v)_{L_{2}(\Omega)}=h[u,v],\;u,v\in \mathrm{D}(H).
\end{equation}
Using \eqref{1.54}, we can rewrite  estimate \eqref{1.52}  as  follows
\begin{equation}\label{1.55}
C_{0}\|f\| _{H^{1}_{0}(\Omega)}\leq    ( H f,f)_{L_{2}(\Omega)} \leq C_{1}\|f\| _{H^{1}_{0}(\Omega)},\;f\in \mathrm{D}(H).
\end{equation}
Note that in consequence of  Remark  \ref{R1.1} the  operator $H$ is closeable. Assume that  $f \in \mathrm{D}(\tilde{H}),$    then there  exists a sequence  $\{f_{n}\}\subset \mathrm{D}(H),\,f_{n}\xrightarrow[H]{}f .$   Combining  \eqref{1.55},\eqref{1.52}, we obtain
 $f_{n}\xrightarrow[h]{}f.$    Passing to the limit on the left and right side of \eqref{1.54}, we get \eqref{1.53}.
 Combining  \eqref{1.53},\eqref{1.52}, we obtain
\begin{equation*}
C_{0}\|f\| _{H^{1}_{0}(\Omega)}\leq   \mathfrak{ k }[f ] \leq C_{1}\|f\| _{H^{1}_{0}(\Omega)},\;f\in \mathrm{D}(\mathfrak{k}).
\end{equation*}
 Note that in consequence of Theorem \ref{T1.6} the operator $\tilde{H}$ is sectorial. Hence by virtue of    Theorem 1.27 \cite[p.399] {firstab_lit:kato1966} the form $\mathfrak{k}$ is closable.
Using the proven  above facts, Theorem 1.17 \cite[p.395] {firstab_lit:kato1966},  passing to the limits on the left and right side of inequality \eqref{1.53}, we get
\begin{equation*}
 \mathfrak{ \mathfrak{\tilde{k}} }[u,v]  = h[u,v],\,u,v\in H_{0}^{1}(\Omega).
\end{equation*}
This completes the proof.
\end{proof}

\begin{teo}\label{T1.8}
The operator $\tilde{H}$ has a compact resolvent,    the following estimate   holds
\begin{equation}\label{1.56}
\lambda_{n}(L_{0})\leq\lambda_{n}(\tilde{H})\leq\lambda_{n}(L_{1}),\,n\in\mathbb{N},
\end{equation}
where $\lambda_{n}(L_{k}),\,k=0,1$ are respectively  the eigenvalues of the following   operators with real  constant coefficients
\begin{equation*}
L_{k}f=-  a_{ k }^{ij} D _{j}D_{i}f +\rho_{k}f,\,\mathrm{D}(L_{k})=\mathrm{D}(L),
$$
$$
a_{ k }^{ij}\xi_{i}\xi_{j}>0,\,\rho_{k}>0.
\end{equation*}
\end{teo}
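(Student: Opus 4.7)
The plan is to handle compactness of the resolvent via the form representation of Lemma \ref{L1.4} together with Rellich's compact embedding, and then to obtain the two-sided bound \eqref{1.56} by sandwiching the quadratic form $h[u]$ between two constant-coefficient forms and applying the Courant--Fischer min-max principle.

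For the compactness statement, note that by Theorem \ref{T1.7} the operator $\tilde{H}$ is selfadjoint and by \eqref{1.43} strictly bounded below by $\mu_{1}>0$, so $0\in \mathrm{P}(\tilde{H})$ and $\tilde{H}^{-1}\in \mathcal{B}(L_{2}(\Omega))$. Lemma \ref{L1.4} identifies $\mathrm{D}(h)=H_{0}^{1}(\Omega)$ with $h[u]=(\tilde{H}u,u)_{L_{2}(\Omega)}$ on $\mathrm{D}(\tilde{H})$. Applying the lower bound in \eqref{1.52} to $u=\tilde{H}^{-1}g$ and using the Cauchy--Schwarz inequality, I would get $\|\tilde{H}^{-1}g\|_{H_{0}^{1}(\Omega)}\le C\|g\|_{L_{2}(\Omega)}$; composing with the compact embedding $H_{0}^{1}(\Omega)\subset\subset L_{2}(\Omega)$ shows $\tilde{H}^{-1}$ is compact.

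For \eqref{1.56}, write
\[
h[u]=\int_{\Omega}\tfrac{a^{ij}+a^{ji}}{2}\,D_{i}u\,\overline{D_{j}u}\,dQ+\mathrm{Re}\!\int_{\Omega}\!\rho\,\mathfrak{D}^{\alpha}u\,\bar{u}\,dQ,\qquad u\in H_{0}^{1}(\Omega).
\]
The lower bound is straightforward: uniform ellipticity gives the gradient term at least $a_{0}\|u\|^{2}_{L_{2}^{1}(\Omega)}$, and Theorem \ref{T1.5} yields the fractional term at least $\mu\inf\rho\cdot\|u\|^{2}_{L_{2}(\Omega)}$. Thus $h[u]\ge l_{0}[u]$, where $l_{0}$ is the form associated with the constant-coefficient operator $L_{0}=-a_{0}\Delta+\mu\inf\rho$, which has the prescribed structure. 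For the upper bound, estimate \eqref{1.35} handles the principal part by $\sup a(Q)\cdot\|u\|_{L_{2}^{1}(\Omega)}^{2}$, while \eqref{1.37} combined with a Hölder pairing and the Sobolev embedding $\|u\|_{L_{q'}(\Omega)}\le C\|u\|_{H_{0}^{1}(\Omega)}$ (conjugate to the exponent $q$ of \eqref{1.4}) bounds $|\mathrm{Re}(\rho\,\mathfrak{D}^{\alpha}u,u)_{L_{2}(\Omega)}|$ by $\varepsilon\,\|u\|^{2}_{L_{2}^{1}(\Omega)}+C_{\varepsilon}\|u\|^{2}_{L_{2}(\Omega)}$ after a Young inequality. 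Collecting terms produces a form $l_{1}[u]=a_{1}^{ij}\!\!\int D_{i}u\,\overline{D_{j}u}+\rho_{1}\!\int|u|^{2}$ with $a_{1}^{ij}=(\sup a+\varepsilon)\delta^{ij}$ positive definite and $\rho_{1}=C_{\varepsilon}>0$, associated with an operator $L_{1}$ of the required type.

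All three operators $L_{0},\tilde{H},L_{1}$ are selfadjoint, bounded from below, with form domain $H_{0}^{1}(\Omega)$, and have compact resolvents (for $L_{0},L_{1}$ by classical elliptic theory; for $\tilde{H}$ by the first part). The two-sided form inequality $l_{0}[u]\le h[u]\le l_{1}[u]$ on $H_{0}^{1}(\Omega)$, combined with the min-max (Courant--Fischer) characterization of eigenvalues for selfadjoint operators with compact resolvents, yields \eqref{1.56} at once. The main technical obstacle is the upper bound for the fractional term: since $\mathfrak{D}^{\alpha}$ is not $L_{2}$-bounded, one must carefully choose the parameters $\varepsilon,\delta$ in \eqref{1.4} and \eqref{1.37} so the gradient part is genuinely absorbed and the residual constant $\rho_{1}$ remains positive, while keeping $a_{1}^{ij}$ positive definite.
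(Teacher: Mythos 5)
Your proposal is correct and follows essentially the same route as the paper: both arguments sandwich the closed form $h$ between two constant-coefficient forms on the common form domain $H_{0}^{1}(\Omega)$ and invoke the variational comparison of eigenvalues for positive-definite operators with compact resolvents (the paper cites Mikhlin's comparison theorem where you cite Courant--Fischer), and both obtain compactness of $R_{\tilde H}$ from the coercivity estimate \eqref{1.52} together with the Rellich--Kondrashov embedding. The only cosmetic difference is that you construct $L_{0},L_{1}$ explicitly from the bounds of Theorem \ref{T1.5} and \eqref{1.35}, \eqref{1.37}, whereas the paper simply asserts their existence from the equivalence of the energetic norms with $\|\cdot\|_{H_{0}^{1}}$.
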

\begin{proof}
  First, we shall prove  the following      propositions \\
i) {\it The  operators $\tilde{H},L_{k}$ are positive-definite.} Using the fact that   the operator $H$ is selfadjoint, relation \eqref{1.43}, we conclude that
the  operator  $\tilde{H}$ is positive-definite. Using the definition, we can easily prove    that the operators $L_{k}$ are   positive-definite.   \\
ii) {\it The space $H^{1}_{0}(\Omega)$ coincides  with the energetic spaces  $\mathfrak{H}_{\tilde{H}},\mathfrak{H}_{L_{k}}$  as a set of   elements.} Using   Lemma \ref{L1.4}, we have
  \begin{equation}\label{1.57}
 \|f\| _{\mathfrak{H}_{\tilde{H}}}=\tilde{\mathfrak{k}}[f]=h[f],\;f\in H_{0}^{1}(\Omega).
\end{equation}
Hence the space  $\mathfrak{H}_{\tilde{H}}$ coincides with $H^{1}_{0}(\Omega)$ as a set of   elements. Using this fact,
we obtain   the coincidence of the spaces $H^{1}_{0}(\Omega)$ and $\mathfrak{H}_{L_{k}}$ as the particular case. \\
iii){\it We have the following estimates
\begin{equation}\label{1.58}
 \|f\| _{\mathfrak{H} _{L_{0}}}\leq \|f\| _{\mathfrak{H}_{\tilde{H}}}\leq  \|f\| _{\mathfrak{H} _{L_{1}}},\,f\in H^{1}_{0}(\Omega).
\end{equation}
}
We obtain the equivalence  of the   norms $\|\cdot\|_{H_{0}^{1}}$ and $\|\cdot\|_{\mathfrak{H} _{L_{k}}}$ as the particular case of relation \eqref{1.46}.
It is obvious that there   exist  such   operators  $L_{k}$    that the next  inequalities hold
\begin{equation}\label{1.59}
\|f\| _{\mathfrak{H} _{L_{0}}}\leq C_{0}\|f\| _{H^{1}_{0}(\Omega)},\;C_{1}\|f\| _{H^{1}_{0}(\Omega)}\leq\|f\| _{\mathfrak{H} _{L_{1}}},\,f\in H^{1}_{0}(\Omega).
\end{equation}
Combining \eqref{1.52},\eqref{1.57},\eqref{1.59}, we get   \eqref{1.58}.

 Now we can prove the   proposal  of this theorem.  Note that   the operators $\tilde{H},$ $L_{k}$  are positive-definite,    the   norms
 $\|\cdot\|_{H_{0}^{1}}, \|\cdot\|_{\mathfrak{H} _{L_{k}}}, \|\cdot\|_{\mathfrak{H}_{\tilde{H}}}$     are equivalent.   Applying  the  Rellich-Kondrashov  theorem, we have that the energetic  spaces
  $\mathfrak{H}_{\tilde{H}},\;\mathfrak{H} _{L_{k}}$  are compactly embedded into $L_{2}(\Omega).$ Using Theorem 3 \cite[p.216]{firstab_lit:mihlin1970}, we obtain  the fact  that  the operators $ L_{0} ,L_{1},\tilde{H}$ have  a discrete   spectrum.
 Taking into account   (i),(ii),(iii), in accordance with the definition \cite[p.225]{firstab_lit:mihlin1970}, we have
$$
L_{0}\leq \tilde{H} \leq L_{1}.
$$
 Applying     Theorem 1 \cite[p.225]{firstab_lit:mihlin1970},   we obtain  \eqref{1.56}.
 Note that by virtue of Theorem \ref{T1.7} the operator $\tilde{H}$ is m-accretive. Hence $0\in P(\tilde{H}).$ Due to    Theorem 5 \cite[p.222]{firstab_lit:mihlin1970} the operator $\tilde{H}$ has a compact resolvent at the point zero.
 Applying    Theorem 6.29 \cite[p.237]{firstab_lit:kato1966}, we conclude that the  operator $\tilde{H}$ has a compact resolvent.

\end{proof}

\begin{teo}\label{T1.9}
Operator $\tilde{L}$ has a compact resolvent,   discrete spectrum.
\end{teo}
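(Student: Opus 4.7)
The plan is to argue that the resolvent at zero is a compact operator from $L_2(\Omega)$ into itself, and then invoke the stability of the compactness of the resolvent under the change of spectral parameter. By Theorem \ref{T1.7}, the operator $\tilde{L}$ is m-sectorial, and by the discussion following Theorem \ref{T1.6} the assumption $\gamma>0$ together with strict accretivity \eqref{1.33} gives that $0\in \mathrm{P}(\tilde{L})$ and that $\tilde{L}^{-1}$ is bounded on $L_{2}(\Omega).$

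First, I would combine Lemma \ref{L1.3} with inequality \eqref{1.46} to obtain, for every $u\in \mathrm{D}(\tilde{L})\subset H^{1}_{0}(\Omega),$ the chain
\begin{equation*}
C_{0}\|u\|^{2}_{H^{1}_{0}(\Omega)}\leq |t[u]|=|(\tilde{L}u,u)_{L_{2}(\Omega)}|\leq \|\tilde{L}u\|_{L_{2}(\Omega)}\|u\|_{L_{2}(\Omega)}.
\end{equation*}
Setting $u=\tilde{L}^{-1}f$ for $f\in L_{2}(\Omega),$ and using boundedness of $\tilde{L}^{-1}$ on $L_{2}(\Omega),$ one infers
\begin{equation*}
C_{0}\|\tilde{L}^{-1}f\|^{2}_{H^{1}_{0}(\Omega)}\leq \|f\|_{L_{2}(\Omega)}\|\tilde{L}^{-1}f\|_{L_{2}(\Omega)}\leq C\|f\|^{2}_{L_{2}(\Omega)},
\end{equation*}
so the map $\tilde{L}^{-1}\colon L_{2}(\Omega)\to H^{1}_{0}(\Omega)$ is bounded.

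Next I would invoke the Rellich--Kondrashov theorem to conclude that the embedding $H^{1}_{0}(\Omega)\hookrightarrow L_{2}(\Omega)$ is compact; composition with the bounded map above shows that $\tilde{L}^{-1}\colon L_{2}(\Omega)\to L_{2}(\Omega)$ is compact. Appealing to Theorem 6.29 \cite[p.237]{firstab_lit:kato1966}, compactness of the resolvent at one point of $\mathrm{P}(\tilde{L})$ forces compactness of the resolvent at every point of $\mathrm{P}(\tilde{L}),$ so $\tilde{L}$ has a compact resolvent.

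Finally, from the compactness of $R_{\tilde{L}}(\zeta)$ one reads off that the spectrum of $\tilde{L}$ is discrete: it consists only of isolated eigenvalues of finite algebraic multiplicity, with no finite accumulation point (the nonzero eigenvalues of the compact operator $R_{\tilde{L}}(0)$ accumulate only at $0$, whence the eigenvalues of $\tilde{L}$ accumulate only at infinity). I do not anticipate a genuinely hard step here since all the heavy machinery has already been set up in Lemma \ref{L1.3}, Theorem \ref{T1.6} and Theorem \ref{T1.7}; the only mild subtlety is making sure the two-sided form estimate \eqref{1.46} is applied on the operator domain rather than on the form domain, which is justified because $\mathrm{D}(\tilde{L})\subset \mathrm{D}(t)=H^{1}_{0}(\Omega).$
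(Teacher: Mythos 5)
Your proof is correct, but it follows a genuinely different route from the paper's. The paper derives compactness of $R_{\tilde{L}}$ indirectly: it invokes the first representation theorem (Theorem 2.9 of Kato) together with Lemmas \ref{L1.3} and \ref{L1.4} to identify $\tilde{L}=T_{t}$ and $\tilde{H}=T_{h}$, observes that $\tilde{H}$ is by definition the real part of the m-sectorial operator $\tilde{L}$, and then transfers the compactness of $R_{\tilde{H}}$ (established separately in Theorem \ref{T1.8} via energetic spaces, operator comparison $L_{0}\leq\tilde{H}\leq L_{1}$, and Mikhlin's theorems) to $R_{\tilde{L}}$ by Kato's Theorem 3.3 on m-sectorial operators and their real parts. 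You instead prove compactness directly: the coercivity $C_{0}\|u\|^{2}_{H^{1}_{0}}\leq|(\tilde{L}u,u)|$ on $\mathrm{D}(\tilde{L})$ (valid since $\mathrm{D}(\tilde{L})\subset H^{1}_{0}(\Omega)$ and $t=\tilde{\mathfrak{f}}$ by Lemma \ref{L1.3}) plus boundedness of $\tilde{L}^{-1}$ (which indeed follows from \eqref{1.33} and the m-sectorial property under the standing assumption $\gamma>0$) gives boundedness of $\tilde{L}^{-1}\colon L_{2}\to H^{1}_{0}$, and Rellich--Kondrashov finishes the job. Your argument is shorter and more elementary, bypassing the real-part machinery entirely; what the paper's route buys is the structural identification $\tilde{H}=\mathrm{Re}\,\tilde{L}$ and the two-sided eigenvalue comparison \eqref{1.56}, both of which are reused later in the spectral asymptotics of Chapter 2, whereas your argument yields compactness alone. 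Both proofs conclude identically via Theorem 6.29 of Kato.
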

\begin{proof}
Note that in accordance with Theorem \ref{T1.7} the operators $\tilde{L},\tilde{H}$ are m-sectorial, the operator $\tilde{H}$  is self-adjoint. Applying Lemma \ref{L1.3}, Lemma \ref{L1.4}, Theorem 2.9 \cite[p.409]{firstab_lit:kato1966}, we get   $T_{t}=\tilde{L},\;T_{h}=\tilde{H},$ where $T_{t},T_{h}$ are the Fridrichs  extensions    of the operators $\tilde{L}, \tilde{H}$ (see \cite[p.409]{firstab_lit:kato1966}) respectively. Since in accordance
with  the   definition \cite[p.424]{firstab_lit:kato1966} the operator $\tilde{H}$ is a real part of the operator $\tilde{L},$ then due to  Theorem
\ref{T1.8}, Theorem 3.3 \cite[p.424]{firstab_lit:kato1966} the operator $\tilde{L}$ has a compact resolvent. Applying  Theorem 6.29 \cite[p.237]{firstab_lit:kato1966}, we conclude that the operator $\tilde{L}$ has a     discrete   spectrum.
\end{proof}

\section{Existence   theorems via the Lax-Milgram method}\label{S1.6}

Consider  a boundary value problem for a differential  equation of the fractional order, containing an  uniformly elliptic operator with real-valued   coefficients  in the left-hand side and fractional derivative in the Kipriyanov  sense of   in   lower terms
\begin{equation}\label{1.60}
  Lu:=-  D_{j} ( a^{ij} D_{i}u)  +p\, \mathfrak{D}^{ \alpha }u=f\in L_{2}(\Omega),\;\;i,j=1,2,...,n,
 \end{equation}
 \begin{equation}\label{1.61}
 \;  u\in H^{2}(\Omega)\cap H^{1}_{0}(\Omega),
 \end{equation}
 \begin{equation}\label{1.62}
 a^{ij}(Q)\in C^{1}(\bar{\Omega})  ,\;a^{ij}\xi _{i}  \xi _{j}  \geq a_{0}  |\xi|^{2},\,a_{0}>0,\;p(Q)>0,\;p(Q)\in {\rm Lip\,\lambda},\, \lambda>\alpha.
\end{equation}
We will use  a special case of the Green's formula
\begin{equation}\label{1.63}
-\int\limits_{\Omega}v\,\overline{D_{j}(a^{ij}D_{i}u)}\, dQ=\int\limits_{\Omega}a^{ij}D_{j}v\, \overline{D_{i}u}\,  dQ\,,\;u\in H^{2}(\Omega),v\in H_{0}^{1}(\Omega) .
\end{equation}
Further,   we  need the  following lemma.
\begin{lem}\label{L1.5}
Let $u,v\in L_{2}(\Omega),\;  {\rm dist}\,({\rm supp}\,v,\,\partial\Omega)>2|h|,$ then we have the following formula
 \begin{equation}\label{1.64}
\int\limits_{\Omega}\triangle^{ h}_{k}v\,\overline{u}\,dQ =
-\int\limits_{\Omega}v\, \triangle^{ -h}_{k}\overline{u} \,dQ.
\end{equation}
\end{lem}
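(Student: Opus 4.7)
The plan is to establish the identity by a change of variables in the shifted term, exploiting the support condition on $v$ to handle the boundary-related subtleties. Recall that the finite-difference operator in the $k$-th coordinate direction is
\begin{equation*}
\triangle^{h}_{k} v(Q) = \frac{v(Q+he_{k})-v(Q)}{h},
\end{equation*}
and in particular $\triangle^{-h}_{k}\overline{u}(Q) = [\overline{u(Q)}-\overline{u(Q-he_{k})}]/h$. I would first split the left-hand side of \eqref{1.64} as the difference of two integrals,
\begin{equation*}
\int\limits_{\Omega} \triangle^{h}_{k} v \, \overline{u}\, dQ = \frac{1}{h}\int\limits_{\Omega} v(Q+he_{k})\overline{u(Q)}\, dQ - \frac{1}{h}\int\limits_{\Omega} v(Q)\overline{u(Q)}\, dQ,
\end{equation*}
and then perform the translation $Q' = Q+he_{k}$ in the first integral.

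The support condition $\mathrm{dist}(\mathrm{supp}\, v, \partial\Omega) > 2|h|$ is the key point that makes this translation legitimate: after the substitution, the integrand $v(Q')\overline{u(Q'-he_{k})}$ vanishes outside a compact subset of $\Omega$, so the convention of zero extension of $u$ beyond $\bar{\Omega}$ makes $u(Q'-he_{k})$ well-defined as an $L_{2}$ function and the domain of integration can be written back as $\Omega$ without introducing any boundary terms. This yields
\begin{equation*}
\frac{1}{h}\int\limits_{\Omega} v(Q')\overline{u(Q'-he_{k})}\, dQ' - \frac{1}{h}\int\limits_{\Omega} v(Q)\overline{u(Q)}\, dQ = \int\limits_{\Omega} v(Q)\,\frac{\overline{u(Q-he_{k})}-\overline{u(Q)}}{h}\, dQ,
\end{equation*}
which is precisely $-\int_{\Omega} v\, \triangle^{-h}_{k}\overline{u}\, dQ$, giving \eqref{1.64}.

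The computation itself is a routine application of translation invariance of the Lebesgue measure; the only genuine step requiring care is justifying that no boundary contributions appear when we move from $v(Q+he_{k})$ on $\Omega$ to $v(Q')$ after translation, which is exactly what the hypothesis $\mathrm{dist}(\mathrm{supp}\, v,\partial\Omega) > 2|h|$ secures. I do not anticipate any serious obstacle; the statement is essentially a discrete analogue of integration by parts.
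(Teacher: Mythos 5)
Your proof is correct and follows essentially the same route as the paper's: split the difference quotient into two integrals, translate $Q\mapsto Q+he_{k}$ in the shifted term, and invoke the support condition on $v$ (together with the zero-extension convention) to rule out boundary contributions before recombining into $-\int_{\Omega}v\,\triangle^{-h}_{k}\overline{u}\,dQ$. The paper merely writes the translation in its $(r,\mathbf{e})$ coordinates, but the substance is identical.
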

\begin{proof}
Under the lemma  assumptions, we have the following reasonings
$$
\int\limits_{\Omega}\triangle^{ h}_{k}v\,\overline{u}\,dQ=\frac{1}{h}\int\limits_{\Omega} \left[v(Q+e_{k}h)-v(Q)\right]\,\overline{u(Q)}\,dQ =
 $$
$$
= \frac{1}{h}\int\limits_{\omega}d\chi \int\limits_{0}^{r}  v(P'+\mathbf{\bar{e}}r ) \,\overline{u(P'+\mathbf{\bar{e}}r-e_{k}h)}\,r^{n-1}dr-\frac{1}{h}\int\limits_{\Omega}  v(Q) \,\overline{u(Q)}\,dQ=
$$
$$
=\frac{1}{h}\int\limits_{\Omega'}   v(Q' ) \,\overline{u(Q'-e_{k}h)}\,dQ'-\frac{1}{h}\int\limits_{\Omega}  v(Q) \,\overline{u(Q)}\,dQ,\;P'=P+e_{k}h,\;Q'=P'+\mathbf{\bar{e}}r,
$$
where $\Omega'$ shift of the domain $\Omega$ on the distance $h$ in the direction $e_{k}.$
Note that   in consequence of  the  condition imposed upon    the set  ${\rm supp}\,u,$ we have: ${\rm supp}\,u_{1}\subset \Omega\cap \Omega',\;u_{1}(Q')=u(Q'-e_{k}h) .$ Hence, we can rewrite the last relation as follows
 \begin{equation*}
\int\limits_{\Omega}\triangle^{ h}_{k}v\,\overline{u}\,dQ=\frac{1}{h}\int\limits_{\Omega}   v(Q ) \overline{\left[ u(Q-e_{k}h)  -u(Q) \right]}\,dQ=
-\int\limits_{\Omega}v\,\triangle^{ -h}_{k}\overline{u}\,dQ.
\end{equation*}
\end{proof}

The  existence and uniqueness theorems    proved further   based upon the results obtained in the papers \cite{firstab_lit:1kukushkin2018}, \cite{firstab_lit:2kukushkin2017}.\\

Consider the boundary value problem \eqref{1.60},\eqref{1.61}.
The proved   strictly  accretive property of    fractional differential operators  allows  by application of the  Lax-Milgram theorem     to prove  an existence and uniqueness  theorem for the  problem. Before   formulating the main statement, consider the following definition

\begin{deff}\label{D1.1}
We will call the element $z\in H^{1}_{0}(\Omega) $ by a generalized solution of the boundary value problem \eqref{1.60},\eqref{1.61} if the following integral identity holds
\begin{equation}\label{1.65}
 B(v,z)= (v,f)_{L_{2}(\Omega)}  ,\;\forall v\in H^{1}_{0}(\Omega),
 \end{equation}
 where
\begin{equation*}
 B (v,u)= \int\limits_{\Omega} \left[ a^{ij}D_{j}v \overline{D_{i}u}  +      (\mathfrak{D}^{\alpha}_{d-}p\,v)  \, \overline{u} \right]\,dQ ,\;u,v\in H^{1}_{0}(\Omega).
\end{equation*}
\end{deff}

\begin{teo}\label{T1.10}

 There exists a  unique   generalized solution  of the   boundary value problem   \eqref{1.60},\eqref{1.61}.
\end{teo}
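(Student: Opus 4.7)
The plan is to verify the hypotheses of the Lax-Milgram theorem for the sesquilinear form $B$ on $H^{1}_{0}(\Omega)\times H^{1}_{0}(\Omega)$. The central observation is that $B$ coincides with the adjoint of the closed sectorial form $t$ studied in Lemma \ref{L1.3}, so the two nontrivial estimates (boundedness and coercivity) will be inherited from the work already done.

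First I would check that $B$ is well-defined on $H^{1}_{0}(\Omega)\times H^{1}_{0}(\Omega)$: by Theorem \ref{T1.4} applied to $\rho=p$ we have $pv\in\mathfrak{I}^{\alpha}_{d-}(L_{2})$ for every $v\in H^{1}_{0}(\Omega)$, so $\mathfrak{D}^{\alpha}_{d-}(pv)\in L_{2}(\Omega)$. Combining Lemma \ref{L1.2} (the adjoint identity $\mathfrak{D}^{\alpha\,*}_{0+}=\mathfrak{D}^{\alpha}_{d-}$) with Lemma \ref{L1.1} (so that $\mathfrak{D}^{\alpha}u=\mathfrak{D}^{\alpha}_{0+}u$ on $H^{1}_{0}(\Omega)$), I would derive the key identity
$$\int_{\Omega}(\mathfrak{D}^{\alpha}_{d-}pv)\,\bar{u}\,dQ=\int_{\Omega}pv\,\overline{\mathfrak{D}^{\alpha}u}\,dQ,\quad u,v\in H^{1}_{0}(\Omega),$$
which, together with the Green formula \eqref{1.63}, yields $B(v,u)=t^{*}[v,u]=\overline{t[u,v]}$.

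Second I would read off boundedness and coercivity from this identification. The sesquilinear versions of the estimates \eqref{1.35} and \eqref{1.37} used in Lemma \ref{L1.3} give $|t[u,v]|\leq C_{1}\|u\|_{H^{1}_{0}}\|v\|_{H^{1}_{0}}$ and hence $|B(v,u)|\leq C_{1}\|v\|_{H^{1}_{0}}\|u\|_{H^{1}_{0}}$. Coercivity is the crucial point and is a direct consequence of the strictly accretive estimate of Theorem \ref{T1.5} combined with uniform ellipticity of $a^{ij}$: from \eqref{1.47},
$$\mathrm{Re}\,B(f,f)=\mathrm{Re}\,t[f]\geq C_{0}\|f\|^{2}_{H^{1}_{0}(\Omega)},\quad f\in H^{1}_{0}(\Omega).$$
The antilinear functional $v\mapsto(v,f)_{L_{2}(\Omega)}$ is continuous on $H^{1}_{0}(\Omega)$ by the Cauchy-Schwarz and Friedrichs inequalities.

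At this point the Lax-Milgram theorem (applied after the routine complex conjugation that swaps the role of the arguments, since our unknown stands in the second slot of $B$) delivers a unique $z\in H^{1}_{0}(\Omega)$ satisfying $B(v,z)=(v,f)_{L_{2}(\Omega)}$ for all $v\in H^{1}_{0}(\Omega)$, and this $z$ is the desired generalized solution. The principal technical hurdle, namely coercivity of a form containing the Kipriyanov fractional derivative, has already been absorbed into Theorem \ref{T1.5}; the only remaining subtlety is the identification of $B$ with $t^{*}$ through Lemmas \ref{L1.1} and \ref{L1.2}, without which the direct verification of coercivity for the form as originally written would be the main obstacle.
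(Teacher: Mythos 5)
Your proposal is correct and follows essentially the same route as the paper: verify the Lax--Milgram hypotheses for $B$, using the adjoint identity $(\mathfrak{D}^{\alpha}_{d-}p\,v,u)_{L_{2}}=(v,\mathfrak{D}^{\alpha}u)_{L_{2}(\Omega,p)}$ to rewrite the fractional term, the mapping estimate \eqref{1.4} with Young and Friedrichs for boundedness, and the strictly accretive property of the Kipriyanov derivative together with uniform ellipticity for coercivity. The only cosmetic difference is that you package the two estimates as inherited from the form $t^{*}$ of Lemma \ref{L1.3}, whereas the paper verifies them directly for $B$ (citing the external inequalities (28), (36) of \cite{firstab_lit:1kukushkin2018}), which yields the same constants $K_{1},K_{2}=\min\{a_{0},\lambda^{-2}p_{0}\}$.
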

\begin{proof}
We will show that   form \eqref{1.65} satisfies the conditions of the Lax-Milgram theorem, particulary    we will show that  the next inequalities hold
  \begin{equation}\label{1.66}
  |B (v,u)|\leq K_{1}\|v\|_{H^{1}_{0}(\Omega)}\|u\|_{H^{1}_{0}(\Omega)} ,\;\;{\rm Re}\,B (v,v)\geq  K_{2}\|v\|^{2}_{H^{1}_{0}(\Omega) },\;u,v\in H^{1}_{0}(\Omega),
 \end{equation}
where  $K_{1}>0,\;K_{2}>0$ are constants independent on  the real functions  $u,v.$\\
Let us prove the first inequality   \eqref{1.66}.
 Using the Cauchy-Schwarz inequality for a sum,  we have
\begin{equation*}
a^{ij}  D_{j}v \overline{D_{i}u}\leq a(Q) |Dv||Du| ,\;a(Q)=\left(\sum\limits_{i,j=1}^{n}
|a_{ij}(Q)|^{2} \right)^{1/2}.
\end{equation*}
Hence
\begin{equation}\label{1.67}
 \left| \int\limits_{\Omega} a^{ij} D_{j}v \overline{D_{i}u}\, dQ\right|\leq    P   \|v\|_{H^{1}_{0}(\Omega)}\|u\|_{H^{1}_{0}(\Omega)},\;P=\sup\limits_{Q\in \Omega}|a(Q)|.
\end{equation}
In consequence of Lemma 1 \cite{firstab_lit:1kukushkin2018},  Lemma 2 \cite{firstab_lit:1kukushkin2018},    we have
\begin{equation}\label{1.68}
 (\mathfrak{D} ^{\alpha}_{d-}p\, v ,u    )_{L_{2}(\Omega )} =  ( v,\mathfrak{D}^{ \alpha }u)_{L_{2}(\Omega,p)},\;u,v\in H^{1}_{0}(\Omega)  .
\end{equation}
 Applying   inequality \eqref{1.3}, then  Jung's inequality    we get
\begin{equation*}
\left|( v,\mathfrak{D}^{ \alpha }u)_{L_{2}(\Omega,p)}\right|     \leq C_{0}
  \|v\|_{L_{2}(\Omega)}\|\mathfrak{D}^{\alpha}u\|_{L_{q}(\Omega)} \leq
C_{0}  \|v\|_{L_{2}(\Omega)}\left\{\frac{K}{\delta^{\nu}}\|u\|_{L_{2}(\Omega)}+\delta^{1-\nu}\|u\|_{L^{1}_{2}(\Omega)} \right\} \leq
$$
$$
\leq\frac{1}{\varepsilon} \|v\|^{2}_{L_{2}(\Omega)} + \varepsilon\left(\frac{ KC_{0} }{\sqrt{2}\delta^{\nu}}\right)^{2} \|u\|^{2}_{L_{2}(\Omega)}  +
 \frac{\varepsilon}{2}\left( C_{0} \delta^{1-\nu}\right)^{2} \|u\|^{2}_{L^{1}_{2}(\Omega)}  ,
\end{equation*}
\begin{equation*}
 2<q<\frac{2n}{2\alpha-2+n},\;C_{0} = ({\rm mess}\,\Omega)^{\frac{q-2}{q}}\sup\limits_{Q\in\Omega}p(Q).
 \end{equation*}
Applying the Friedrichs inequality, finally we have the  following estimate
\begin{equation}\label{1.69}
| (\mathfrak{D} ^{\alpha}_{d-}p\, v ,u    )_{L_{2}(\Omega )} |\leq C\|v\|_{H^{1}_{0}}\|u\|_{H^{1}_{0}}.
 \end{equation}
Note that   the first inequality   \eqref{1.66} follows   from inequalities  \eqref{1.67},\eqref{1.69}.   Using  inequalities (28)\,\cite{firstab_lit:1kukushkin2018}, (36)\,\cite{firstab_lit:1kukushkin2018}, we have
\begin{equation}\label{1.70}
{\rm Re}\,B(v,v)\geq a_{0}\|v\|^{2}_{L_{2}^{1}(\Omega)}+\lambda^{-2}\|v\|^{2}_{L_{2}(\Omega,p)}\geq a_{0}\|v\|^{2}_{L_{2}^{1}(\Omega)}+\lambda^{-2}p_{0}\|v\|^{2}_{L_{2}(\Omega)},\;p_{0}=\inf\limits_{Q\in \Omega} p(Q).
\end{equation}
It is obvious  that
\begin{equation*}
a_{0}\|v\|^{2}_{L_{2}^{1}(\Omega)}+\lambda^{-2}p_{0}\|v\|^{2}_{L_{2}(\Omega)}\geq K_{2}\left(\|v\|^{2}_{L_{2}^{1}(\Omega)}+
\|v\|^{2}_{L_{2}(\Omega)} \right)=
\end{equation*}
\begin{equation}\label{1.71}
=K_{2}\left(\int\limits_{\Omega}\sum\limits_{i=1}^{n}|D_{i}v|^{2}dQ+\int\limits_{\Omega} | v|^{2}dQ\right)=K_{2}\|v\|^{2}_{H_{0}^{1}(\Omega)},\;K_{2}=\min\{a_{0},\lambda^{-2}p_{0}\}.
\end{equation}
Hence  the second inequality   \eqref{1.66} follows from    inequalities \eqref{1.70}, \eqref{1.71}.

Since the conditions of Lax-Milgram theorem holds, then for all bounded on $H^{1}_{0}(\Omega) $ functional  $F,$  exists a  unique element $z\in H^{1}_{0}(\Omega) $ such that
 \begin{equation}\label{1.72}
   B (v,z)= F(v)  ,\;\forall v\in H^{1}_{0}(\Omega).
 \end{equation}
 Consider a functional
 \begin{equation}\label{1.73}
F(v)=(v,f)_{L_{2}(\Omega)},\,f\in L_{2}(\Omega),\,v\in H^{1}_{0}(\Omega).
 \end{equation}
Applying the  Cauchy-Schwarz inequality, we get
\begin{equation*}
|F(v)|= |( v,f)_{L_{2}(\Omega)}|\leq \|f\|_{L_{2}(\Omega)}\|v\| _{H_{0}^{1}(\Omega)}.
\end{equation*}
Hence  functional \eqref{1.73} is bounded on $H^{1}_{0}(\Omega),$ then in accordance with \eqref{1.72} we have the  equality
 \begin{equation}\label{1.74}
   B ( v,z)= ( v,f)_{L_{2}(\Omega)}  ,\;\forall v\in H^{1}_{0}(\Omega).
 \end{equation}
 Therefore, in accordance with Definition  \ref{D1.1}  the element  $z$ is a  unique generalized solution of the   boundary value problem \eqref{1.60},\eqref{1.61}.
\end{proof}

The theorem \ref{T1.10} allows to prove the existence and uniqueness theorem for the boundary value problem   \eqref{1.60},\eqref{1.61}.

 \begin{teo}\label{T1.11}
 There exists a  unique strong solution  of the boundary value  problem \eqref{1.60},\eqref{1.61}.
\end{teo}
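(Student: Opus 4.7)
The plan is to bootstrap the generalized solution $z\in H^{1}_{0}(\Omega)$ delivered by Theorem \ref{T1.10} up to $H^{2}(\Omega)$ regularity, after which the integral identity \eqref{1.65} can be read back as the $L_2$-equality $Lz=f$. The machinery is the Nirenberg method of difference quotients, and Lemma \ref{L1.5} is provided precisely to serve as the discrete integration by parts.

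For interior regularity I would fix a cutoff $\eta\in C_{0}^{\infty}(\Omega)$ and insert the test function $v:=\triangle_{k}^{-h}(\eta^{2}\triangle_{k}^{h}z)$ into \eqref{1.65}, where $|h|$ is smaller than $\mathrm{dist}(\mathrm{supp}\,\eta,\partial\Omega)/3$ so that Lemma \ref{L1.5} applies. Pushing $\triangle_{k}^{-h}$ onto the coefficient side of the principal term via \eqref{1.64}, expanding the resulting commutator with $\eta^{2}$, and invoking the uniform ellipticity \eqref{1.62} give, after a standard Young-type absorption of the Lipschitz factor of $a^{ij}$, a bound of the form
\begin{equation*}
a_{0}\,\|\eta\,\triangle_{k}^{h}Dz\|_{L_{2}(\Omega)}^{2}\leq \mathrm{Re}\!\int_{\Omega}a^{ij}D_{j}v\,\overline{D_{i}z}\,dQ+C_{\varepsilon}\|z\|_{H^{1}_{0}(\Omega)}^{2}+\varepsilon\,\|\eta\,\triangle_{k}^{h}Dz\|_{L_{2}(\Omega)}^{2}.
\end{equation*}

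The principal obstacle is to control the non-local term $(\mathfrak{D}^{\alpha}_{d-}(pv),z)_{L_{2}(\Omega)}$ on the same scale. For this I would apply the adjoint identity \eqref{1.68} (equivalently Lemma \ref{L1.2}) to rewrite it as $(pv,\mathfrak{D}^{\alpha}z)_{L_{2}(\Omega)}$, then estimate $\mathfrak{D}^{\alpha}z$ through Kipriyanov's inequality \eqref{1.4} at some $q\in (2,\,2n/(n+2\alpha-2))$, together with Hölder's inequality and the $L_{2}$-bound for difference quotients of $H^{1}_{0}$-functions. A further Young inequality with parameters $\varepsilon,\delta>0$, chosen exactly as in the proof of Theorem \ref{T1.6}, absorbs the contribution into $C\|z\|_{H^{1}_{0}(\Omega)}^{2}+\varepsilon\|\eta\,\triangle_{k}^{h}Dz\|_{L_{2}(\Omega)}^{2}$. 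Combining this with the elliptic estimate and with the obvious bound $|(v,f)_{L_{2}(\Omega)}|\leq C\|f\|_{L_{2}(\Omega)}\|z\|_{H^{1}_{0}(\Omega)}$ yields an $h$-uniform estimate on $\|\eta\,\triangle_{k}^{h}Dz\|_{L_{2}(\Omega)}$, and the Nirenberg characterization produces $z\in H^{2}_{\mathrm{loc}}(\Omega)$. Boundary regularity is then obtained by the standard straightening of $\partial\Omega$ and performance of the same scheme with tangential difference quotients, the missing purely-normal second derivative being recovered algebraically from the equation \eqref{1.60} itself (both $-D_{j}(a^{ij}D_{i}z)$ restricted to tangential directions and $\rho\mathfrak{D}^{\alpha}z$ are already in $L_{2}(\Omega)$).

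Once $z\in H^{2}(\Omega)\cap H^{1}_{0}(\Omega)$, the Green formula \eqref{1.63} applied to the principal part and identity \eqref{1.68} applied to the fractional part together convert \eqref{1.65} into
\begin{equation*}
\int_{\Omega}(Lz-f)\,\overline{v}\,dQ=0,\qquad \forall v\in H^{1}_{0}(\Omega),
\end{equation*}
and the density of $H^{1}_{0}(\Omega)$ in $L_{2}(\Omega)$ forces $Lz=f$ almost everywhere, so $z$ is a strong solution. Uniqueness follows at once from the coercivity half of \eqref{1.66}: if $u_{1},u_{2}$ are two strong solutions, then $w:=u_{1}-u_{2}\in H^{1}_{0}(\Omega)$ satisfies $B(w,w)=0$, hence $K_{2}\|w\|_{H^{1}_{0}(\Omega)}^{2}\leq \mathrm{Re}\,B(w,w)=0$ and $w\equiv 0$.
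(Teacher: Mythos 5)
Your proposal is correct and follows the same route as the paper's proof: Nirenberg difference quotients with Lemma \ref{L1.5} supplying the discrete integration by parts, the adjoint identity \eqref{1.68} together with Kipriyanov's inequality \eqref{1.4} to control the non-local term, a Young-type absorption, Lemma 7.24 of Gilbarg--Trudinger to pass from $h$-uniform difference-quotient bounds to $D_iD_jz\in L_2$, and the coercivity half of \eqref{1.66} (equivalently the uniqueness part of Theorem \ref{T1.10}) for uniqueness. The one point where you genuinely diverge is the treatment of the boundary. You localize with a smooth cutoff $\eta$, obtain $z\in H^{2}_{\mathrm{loc}}(\Omega)$, and then straighten $\partial\Omega$, run tangential difference quotients, and recover the purely normal second derivative algebraically from the equation; the paper instead takes $v=\chi\triangle^{h}z$ with $\chi$ a characteristic-function cutoff supported at distance $2|h|$ from $\partial\Omega$, and reads off the global bound $\|D_iD_jz\|_{L_2(\Omega)}\leq C$ directly from an interior estimate whose constant is claimed to be independent of the subdomain. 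Your version is the more standard and the more robust one: the paper's uniform constant relies on the identity $D_j(\chi\triangle^{h}z)=\chi D_j\triangle^{h}z$, which suppresses the commutator with the cutoff that in the usual smooth-cutoff argument degrades near the boundary, whereas your scheme pays for that commutator honestly and compensates with the boundary chart. Since both arguments rest on the same estimates (ellipticity \eqref{1.62}, the bound $\|p\,\mathfrak{D}^{\alpha}z\|_{L_2}\leq C\|z\|_{H^{1}}$, and Lemma \ref{L1.5}), this is a difference in bookkeeping rather than in substance, and your final passage from \eqref{1.65} to $Lz=f$ via \eqref{1.63} and \eqref{1.68} coincides with the paper's.
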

\begin{proof}

In consequence of  Theorem \ref{T1.10} there  exists  a unique element $z\in H^{1}_{0}(\Omega),$ so that equality  \eqref{1.74} holds.
Note that if the generalized  solution   of boundary value problem \eqref{1.60},\eqref{1.61} belongs to Sobolev space $H^{2}(\Omega),$ then
applying formulas \eqref{1.63},\eqref{1.68} we get
\begin{equation*}
 (  v,Lz )_{L_{2}(\Omega )} = B( v,z)=( v,f)_{L_{2}(\Omega )},\;\forall v\in C_{0}^{\infty}(\Omega),
\end{equation*}
hence
\begin{equation*}
 (  v,Lz-f )_{L_{2}(\Omega )} = 0,\;\forall v\in C_{0}^{\infty}(\Omega).
\end{equation*}
Using  the well-known fact that  there does not exist  a  non-zero element in the Hilbert space  orthogonal to a dense  manifold, we conclude that    $z$ is solution of the boundary value problem\eqref{1.60},\eqref{1.61}.

Let's prove that $z\in H^{2} (\Omega).$ Choose the function $v$ in \eqref{1.74} so that $\overline{({\rm supp} \,v)}     \subset \Omega,$   implementing  easy calculation, using equality \eqref{1.68}, we get
\begin{equation}\label{1.75}
 \int\limits_{\Omega} a^{ij}  D_{j} v  \overline{D_{i} z}\, dQ = \int\limits_{\Omega}   v \overline{q}\, dQ ,\; \forall v\in H^{1}_{0}(\Omega),\,   \overline{({\rm supp} \,v)}     \subset \Omega,
\end{equation}
where $q=(f-p\,  \mathfrak{D}^{ \alpha }z).$
For $2|h|<  {\rm dist}\,({\rm supp}\,v,\,\partial\Omega),$ let us  change the function $v$ on its  difference attitude $\triangle^{-h} v=\triangle^{-h}_{k}v $ for some $1\leq k\leq n.$ Using \eqref{1.64},   \eqref{1.75}, we get
\begin{equation*}
 \int\limits_{\Omega}  D_{j} v\,\overline{\triangle^{ h}\left( a^{ij}  D_{i} z\right)} dQ=-\int\limits_{\Omega}  (D_{j}\triangle^{-h} v)\overline{a^{ij}D_{i} z}\, dQ=-\int\limits_{\Omega} a^{ij} (D_{j}\triangle^{-h} v)\overline{D_{i} z}\, dQ= -\int\limits_{\Omega}(\triangle^{-h}v)\, \overline{q} \, dQ.
\end{equation*}
Using elementary calculation, we get
$$
\triangle^{ h}\left( a^{ij}  D_{i} z\right)(Q)=a^{ij}(Q+h \,\vec{\mathrm{e}}_{k})  (D_{i} \triangle^{ h} z )(Q)+[\triangle^{ h} a^{ij}(Q) ]( D_{i} z)(Q),
$$
hence
\begin{equation*}
 \int\limits_{\Omega}D_{j} v\, \overline{a^{ij}(Q+h \,\vec{\mathrm{e}}_{k})  (D_{i}\triangle^{ h}  z)} \, dQ=      -\int\limits_{\Omega}    Dv \cdot g +(\triangle^{-h}v)\,\overline{q}  \, dQ,
\end{equation*}
where $g=(g_{1},g_{2},...,g_{n}),\;g_{j}=(\triangle^{ h} a^{ij})   D_{i} z.$
Using the  last relation,   the Cauchy Schwarz inequality, finiteness property of the function $v,$ Lemma 7.23 \cite[p.164]{firstab_lit:gilbarg}, we have
\begin{equation}\label{1.76}
\left|\int\limits_{\Omega}a^{ij}(Q+h \,\vec{\mathrm{e}}_{k}) D_{j} v\, \overline{  (D_{i}\triangle^{ h}  z)}\, dQ \right| = \left|\int\limits_{\Omega} D_{j} v\, \overline{a^{ij}(Q+h \,\vec{\mathrm{e}}_{k})  (D_{i}\triangle^{ h}  z)}\, dQ \right| \leq
 $$
 $$
\leq\|Dv\|_{L_{2}(\Omega)} \|g\|_{L_{2}(\Omega)}+\|\triangle^{-h}v\|_{L_{2}(\Omega)}\|q\|_{L_{2}(\Omega)} \leq\|Dv\|_{L_{2}(\Omega)}\left(\|g\|_{L_{2}(\Omega)}+\|q\|_{L_{2}(\Omega)}\right)  .
\end{equation}
Applying  the Cauchy Schwarz inequality for   sums and integrals, it is easy to see that
$$
\|g\|_{L_{2}(\Omega)}=\left(\int\limits_{\Omega}\sum\limits_{j=1}^{n}|(\triangle^{ h} a^{ij})   D_{i} z|^{2}dQ\right)^{1/2}\leq \left(\int\limits_{\Omega}|Dz|^{2}\sum\limits_{i,j=1}^{n}| \triangle^{ h} a^{ij} |^{2}dQ\right)^{1/2}\leq
$$
$$
\leq \sup\limits_{Q\in \Omega}
\left(\sum\limits_{i,j=1}^{n}\left| \triangle^{ h} a^{ij}(Q) \right|^{2}\right)^{1/2}\left(\int\limits_{\Omega}|Dz|^{2}dQ\right)^{1/2}\leq C_{1} \|z\|_{H^{1}(\Omega)}.
$$
  Using \eqref{1.3}, we have
$$
\|q\|_{L_{2}(\Omega)}\leq \|f\|_{L_{2}(\Omega)}+   \|p\,\mathfrak{D}^{ \alpha }z\|_{L_{2}(\Omega)} \leq \|f\|_{L_{2}(\Omega)}+C_{2}
  \|z\|_{H^{1} (\Omega)}.
 $$
In accordance with the  above, using  \eqref{1.76}, we get
\begin{equation}\label{1.77}
 \left|\int\limits_{\Omega}a^{ij}(Q+h \,\vec{\mathrm{e}}_{k}) D_{j} v\, \overline{  (D_{i}\triangle^{ h}  z)}\, dQ \right| \leq C\left(\|z\|_{H^{1} (\Omega)}+\|f\|_{L_{2}(\Omega)}   \right)\|D  v\|_{L_{2}(\Omega)}.
\end{equation}
Applying condition  \eqref{1.62}, we obtain the following estimate
\begin{equation}\label{1.78}
\left| \int\limits_{\Omega}  a^{ij} \xi_{j}\overline{\xi_{i}}\, dQ\right| =
\left|\int\limits_{\Omega}  a^{ij}(  {\rm Re}\xi_{j}\,{\rm Re}\xi_{i}+  {\rm Im} \xi_{j} \,{\rm Im} \xi_{i} )\, dQ+
\imath \int\limits_{\Omega}  a^{ij}(  {\rm Re}\xi_{i}\,{\rm Im}\xi_{j}-  {\rm Re} \xi_{j} \,{\rm Im} \xi_{i} )\, dQ\right| =
$$
$$
=\left\{\left(\int\limits_{\Omega}  a^{ij}(  {\rm Re}\xi_{j}\,{\rm Re}\xi_{i}+  {\rm Im} \xi_{j} \,{\rm Im} \xi_{i} )\, dQ\right)^{2}+
 \left( \int\limits_{\Omega}  a^{ij}(  {\rm Re}\xi_{i}\,{\rm Im}\xi_{j}-  {\rm Re} \xi_{j} \,{\rm Im} \xi_{i} )\, dQ\right)^{2}\right\}^{1/2}\geq
 $$
 $$
 \geq \int\limits_{\Omega}  a^{ij}(  {\rm Re}\xi_{j}\,{\rm Re}\xi_{i}+  {\rm Im} \xi_{j} \,{\rm Im} \xi_{i} )\, dQ\geq
 k_{0} \int\limits_{\Omega}
 \left | \xi \right|^{2}\, dQ.
\end{equation}
Define the function $\chi,$ so that $  {\rm dist}\,({\rm supp}\,\chi,\,\partial\Omega)>2|h|,$
\begin{equation*}
\chi(Q)=\left\{ \begin{aligned}
 1,\;\;\;\;\;\;\;Q\in {\rm supp  }\,\chi,\\
  0,\;Q\in  \bar{\Omega} \setminus {\rm supp  }\,\chi .\\
\end{aligned}
 \right.
 \end{equation*}
Suppose   $v=\chi\triangle^{h}z,$ using relations \eqref{1.77}, \eqref{1.78}, we have an  estimate
\begin{equation}\label{1.79}
k_{0}\|\chi\triangle^{ h} D   z\|^{2}_{L_{2}(\Omega)}\leq\left| \int\limits_{\Omega}\chi  a^{ij}(Q+h \,\vec{\mathrm{e}}_{k}) \triangle^{ h}  D_{j} z \overline{\triangle^{ h}D_{i}  z}\, dQ\right|=
$$
$$
 =\left| \int\limits_{\Omega} a^{ij}(Q+h \,\vec{\mathrm{e}}_{k})     D_{j} (\chi\triangle^{h}z)\, \overline{ (D_{i}\triangle^{ h}  z)}\, dQ\right|\leq
$$
$$
\leq
C\left(\|z\|_{H^{1} (\Omega)}+\|f\|_{L_{2}(\Omega)}   \right)\|\chi\triangle^{ h} D   z\|_{L_{2}(\Omega)}.
\end{equation}
Using the  Jung's inequality, for all positive $k,$ we  get an estimate
\begin{equation*}
2\left(\|z\|_{H^{1} (\Omega)}+\|f\|_{L_{2}(\Omega)}   \right)\|\chi\triangle^{ h} D   z\|_{L_{2}(\Omega)}\leq
\frac{1}{k} \left(\|z\|_{H^{1} (\Omega)}+\|f\|_{L_{2}(\Omega)}   \right)^{2}+ k \|\chi\triangle^{ h} D   z\|^{2}_{L_{2}(\Omega)} .
\end{equation*}
Choosing $k<2k_{0}C^{-1},$ we can represent  inequality \eqref{1.79} as follows
\begin{equation*}
 \|\chi\triangle^{ h} D   z\|^{2}_{L_{2}(\Omega)}\leq
C_{1}\left(\|z\|_{H^{1} (\Omega)}+\|f\|_{L_{2}(\Omega)}   \right)^{2} .
\end{equation*}
It implies that for a domain $\Omega'$ such that  ${\rm dist}(\Omega',\partial \Omega)>2|h|,$ we have
\begin{equation*}
\|\triangle^{ h}_{i}  D_{j}z\|_{L_{2}(\Omega ') } \leq C_{2}\left(\|z\|_{H^{1} (\Omega)}+\|f\|_{L_{2}(\Omega)}   \right),
\;  i,j=1,2,...,n.
\end{equation*}
In consequence of Lemma 7.24 \cite[p.165]{firstab_lit:gilbarg}, we have that there  exists a generalized derivative $D_{i}  D_{j}  z$ satisfying    the condition
\begin{equation*}
 \|D_{i}  D_{j}  z\|_{L_{2}(\Omega  ) } \leq C_{2}\left(\|z\|_{H^{1} (\Omega)}+\|f\|_{L_{2}(\Omega)}   \right),
\;  i,j=1,2,...,n.
\end{equation*}
Hence $z\in H^{2}(\Omega).$
\end{proof}

\section{Remarks}

  The chapter   represents  the  results  obtained as a prerequisite to  the spectral theory of  fractional differential  operators. However, some    propositions  are of   the independent interest from the point of view of the fractional calculus theory.
  The  new  concept  of the introduced  multidimensional directional fractional integral represents an effective tool by virtue of a    simplest  construction in comparison with its known analogs. Some key aspects of the classical one-dimensional fractional calculus theory were considered for the multidimensional case, for instance the sufficient conditions   of the representability   by the directional  fractional integral were formulated.   Auxiliary propositions  for the  fractional differential equation  theory  such as  the inclusion of  the Sobolev space to  the class  of   functions     representable  by the directional  fractional integral were proved.  Note that in spite of the fact that  the technique of the  proofs  is analogous to  the one-dimensional case, we can claim  that it has its own peculiarities and  is of particular interest. It also should be noted that the  extension of the Kipriyanov fractional differential  operator  was obtained in the natural way that had been  dictated by the operator theory point  of view.
  The proved strictly accretive property, that is itself can be claimed as a  significant result  in the framework of the operator theory,  plays an important role in   further development of the spectral theory. These results create   a rather complete description  reflecting   qualitative properties of  fractional differential operators  establishing a base for further study in the framework of the spectral theory.

    Apparently,   the proved propositions can be spread to unbounded domains, for instance the strictly accretive property was obtained in the one-dimensional case for the real axis. It is worth noticing  that the application of the  sesquilinear forms theory,  as a tool to study the second order differential operators with a fractional derivative in the final term,  gives  an opportunity to analyze  the major role  of the senior term  from the operator theory point of view. This technique   can be used for  studying  the spectrum of  perturbed  fractional differential operators. Therefore, the idea of the proof may be of interest regardless of the results. It can easily be checked   that the Kypriaynov operator   is reduced  to the Marchaud operator in the one-dimensional case. At the same time,
 the most of proved  propositions   are only true   for  the  dimensions more than one.
 However,   using   Corollary 1 \cite{firstab_lit:1kukushkin2018}, which establishes the strictly  accretive property  of the Marchaud  operator, we can apply the obtained  technique to  the one-dimensional case.

In   section \ref{1.7},
  we were guided by the well-known, in the classical case corresponding to the  differential  equations
of the positive-integer order,    idea  of connection between the solvability of a boundary-value problem and properties of the corresponding quadratic
functional.  The idea   to use the same approach in the fractional case    required a some  technique of the fractional calculus theory, in particular we used a strictly   accretive  property of the fractional differential
operators.

Applying  the  Lax-Milgram  theorem  we  proved the existence of a generalized solution of the boundary value problem for  the differential equation of the  fractional order. The method allowing to establish the fact that the found  generalized solution belonging  to   the  Sobolev space is  the very  strong solution was elaborated applicably to the fractional differential equations. Although   the  method  is not novel  in the theory of partial differential equations, the surpassed difficulties related to the fractional nature of the objects give us a significant  complement to the general  theory.

\chapter{Spectral properties of the sectorial operators}\label{Ch2}

\section{Historical review}
It is remarkable that initially the perturbation theory of selfadjoint operators was born in  the  works of M. Keldysh
\cite{firstab_lit:1keldysh 1951}-\cite{firstab_lit:3keldysh 1971} and had been  motivated by  the   works of  famous scientists such as T. Carleman \cite{firstab_lit:1Carleman} and Ya.  Tamarkin  \cite{firstab_lit:1Tamarkin}.   Many papers were published within the framework of this theory over time, for instance    F. Browder \cite{firstab_lit:1Browder},  M. Livshits \cite{firstab_lit:1Livshits}, B. Mukminov \cite{firstab_lit:1Mukminov}, I. Glazman \cite{firstab_lit:1Glazman}, M. Krein \cite{firstab_lit:1Krein}, B. Lidsky \cite{firstab_lit:1Lidskii},  A. Marcus \cite{firstab_lit:1Markus},\cite{firstab_lit:2Markus}, V. Matsaev \cite{firstab_lit:1Matsaev}-\cite{firstab_lit:3Matsaev}, S. Agmon \cite{firstab_lit:2Agmon}, V. Katznelson \cite{firstab_lit:1Katsnelson}, N. Okazawa \cite{firstab_lit:3Okazawa}.
Nowadays there  exists   a  huge amount of theoretical results formulated  in the work of A. Shkalikov    \cite{firstab_lit:Shkalikov A.}. However for applying  these results for a concrete operator $W$ we must have  a representation   of it   by  a sum of operators $W=T+A.$
  It is essential   that $T$ must be an operator of a special type either a  selfadjoint or   normal operator. If we consider a case  where  in the representation the operator  $T$ is  neither selfadjoint nor normal and we cannot approach the  required representation in an  obvious  way, then it is possible to   use  another technique  based on    properties  of the real component of the initial operator. Note that in this case the made assumptions related to the initial operator $W$ allow us to consider  a m-accretive operator class which was thoroughly studied by mathematicians such as T. Kato \cite{firstab_lit:1.1kato1961}, N. Okazawa \cite{firstab_lit:1Okazawa},\cite{firstab_lit:2Okazawa}.    This is a subject to consider  in the second  section. In the third  section   we
demonstrate the  significance   of the obtained abstract results  and consider  concrete operators. Note that the  relevance   of such consideration  is based on the following.
 The eigenvalue problem is still relevant for  the  second order fractional  differential operators.  Many papers were  devoted to this question, for instance the papers
 \cite{kukushkin2019}, \cite{firstab_lit:1Nakhushev1977}.    We would like to study spectral properties of some class of  non-selfadjoint operators in   the abstract case.  Via obtained results  we  study  a  multidimensional case corresponding to  the  second order fractional  differential operator, this case   can be reduced to the
  cases   considered in the papers listed above.
 We consider a
Kipriyanov fractional differential operator, considered in detail in the papers \cite{firstab_lit:kipriyanov1960}-\cite{firstab_lit:2.2kipriyanov1960}, which presents itself as  a  fractional derivative in a weaker sense   with respect
to the approach classically known with the name of the Riemann-Liouville derivative. More precisely, in the one dimensional case  the Kipriaynov operator  coincides with the Marchaud operator  which relationship  with   the Weyl and  Riemann-Liouville     operators is well known
 \cite{firstab_lit:1arx Ferrari},\cite{firstab_lit:samko1987}.

\section{ Special operator class}

  Further,    if it is not     stated otherwise   we  use   the  notations   of  the  monographs   \cite{firstab_lit:1Gohberg1965},\cite{firstab_lit:kato1980},\cite{firstab_lit:samko1987}.
  Consider a pair of  complex  separable Hilbert spaces $\mathfrak{H},\mathfrak{H}_{+}$ such that
\begin{equation}\label{2.1}
\mathfrak{H}_{+}\subset\subset\mathfrak{ H} .
\end{equation}
This denotation  implies  that we have a bounded embedding provided by the inequality
\begin{equation}\label{2.2}
\|f\|_{\mathfrak{H}}\leq \|f\|_{\mathfrak{H}_{+}},\;f\in \mathfrak{H}_{+},
\end{equation}
moreover   any  bounded  set in the space   $\mathfrak{H}_{+} $ is a compact set in   the space $\mathfrak{H}.$
We  also assume that $\mathfrak{H}_{+}$ is a dense set  in $\mathfrak{H}.$
We  consider   non-selfadjoint operators that can be represented by a sum $W=T+A,$  where the operators $T$ and $A$     act   on  $\mathfrak{H}.$     We   assume  that:  there  exists a linear manifold  $\mathfrak{M}\subset \mathfrak{H}_{+} $    that   is  dense in $ \mathfrak{H}_{+},$   the operators $T,A$ and their adjoint operators are   defined on $\mathfrak{M}.$   Further,  we  assume that      $\mathrm{D}(W)=\mathfrak{M}.$ These   give  us the opportunity to  claim      that  $\mathrm{D}(W )\subset \mathrm{D}(W^{\ast})$ thus, by virtue of this fact,  the real component of $W$ is defined on $\mathfrak{M}.$
  Suppose the  operator $W^{+}$ is the   restriction of   $W^{\ast}$ to $ \mathrm{D}(W );$   then the operator $W^{+}$ is called a {\it formal adjoint} operator with respect to $W,$  it is clear that we need not impose more restrictions to guaranty its closeness sing the adjoint operator is closed.
 Denote  by $\tilde{W}^{+} $ the closure of the operator $W^{+}.$
 Further, we assume  that the following conditions are fulfilled
\begin{equation}\label{2.3}
\mathrm{i})\,\mathrm{Re}(Tf,f)_{\mathfrak{H}}\geq C_{0}\|f\|^{2}_{\mathfrak{H}_{+}}\!,\;\mathrm{ii})\,
\left| (Tf,g)_{\mathfrak{H}}\right|\leq C_{1}\|f\|_{\mathfrak{H}_{+}}\|g\|_{\mathfrak{H}_{+}},
$$
$$
 \mathrm{iii})\,\mathrm{Re}(Af,f)_{\mathfrak{H}}\geq C_{2} \|f\|^{2}_{\mathfrak{H}}, \; \mathrm{iv})\,|(Af,g)_{\mathfrak{H}}|\leq C_{3}\|f\|_{\mathfrak{H}_{+}}\|g\|_{\mathfrak{H} },\,f,g\in  \mathfrak{M}.
\end{equation}
 Due to these conditions  it is easy to prove  that the operators $W,\mathfrak{Re}W $ are closeable (see   Theorem 3.4 \cite[p.268]{firstab_lit:kato1980}).
  To make  some formulas readable we also use the following form of notation
 $$V:=   \mathfrak{Re}R_{\tilde{W}} ,\, \mathcal{H}:= \mathfrak{Re}W,\,H:= Re \tilde{W}.
 $$

In this section  we   formulate abstract theorems that  are generalizations of some particular  results obtained by the author. First,  we generalize Theorem 4.2 \cite{firstab_lit:1kukushkin2018} establishing the sectorial property of the second order fractional differential operator.
\begin{lem}\label{L2.1}
 The numerical range of the operators $\tilde{W},\,\tilde{W}^{+}$ belongs to    a positive sector.
\end{lem}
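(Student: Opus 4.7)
The plan is first to establish the sector inclusion for the operator $W$ on its original domain $\mathfrak{M}$ by estimating directly the real and imaginary parts of the quadratic form $(Wf,f)_{\mathfrak{H}}$, and then to transfer the resulting inequality to the closure $\tilde{W}$ through an approximation argument in the stronger norm of $\mathfrak{H}_{+}$. For $f\in\mathfrak{M}$ the decomposition $W=T+A$ together with conditions (i), (iii) and the embedding \eqref{2.2} yields
$$
\mathrm{Re}(Wf,f)_{\mathfrak{H}}\geq C_{0}\|f\|^{2}_{\mathfrak{H}_{+}}+C_{2}\|f\|^{2}_{\mathfrak{H}}\geq (C_{0}+C_{2})\|f\|^{2}_{\mathfrak{H}},
$$
while conditions (ii) and (iv) give $|(Tf,f)_{\mathfrak{H}}|\leq C_{1}\|f\|^{2}_{\mathfrak{H}_{+}}$ and $|(Af,f)_{\mathfrak{H}}|\leq C_{3}\|f\|_{\mathfrak{H}_{+}}\|f\|_{\mathfrak{H}}\leq C_{3}\|f\|^{2}_{\mathfrak{H}_{+}}$, hence $|\mathrm{Im}(Wf,f)_{\mathfrak{H}}|\leq(C_{1}+C_{3})\|f\|^{2}_{\mathfrak{H}_{+}}$. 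Dividing the two bounds produces $|\mathrm{Im}(Wf,f)_{\mathfrak{H}}|\leq C_{0}^{-1}(C_{1}+C_{3})\,\mathrm{Re}(Wf,f)_{\mathfrak{H}}$, so $\Theta(W)$ lies in the sector with vertex at the origin and semi-angle $\theta=\arctan[(C_{1}+C_{3})/C_{0}]<\pi/2$; condition (iii) additionally secures strict positivity of $\mathrm{Re}(Wf,f)_{\mathfrak{H}}$.

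To pass to $\tilde{W}$, fix $f\in\mathrm{D}(\tilde{W})$ and choose $\{f_{n}\}\subset\mathfrak{M}$ with $f_{n}\to f$ and $Wf_{n}\to\tilde{W}f$ in $\mathfrak{H}$. Applied to $g_{nm}:=f_{n}-f_{m}$, condition (i) combined with (iv) and a Young splitting of the resulting cross term yields
$$
\tfrac{C_{0}}{2}\|g_{nm}\|^{2}_{\mathfrak{H}_{+}}\leq \|Wg_{nm}\|_{\mathfrak{H}}\|g_{nm}\|_{\mathfrak{H}}+\tfrac{C_{3}^{2}}{2C_{0}}\|g_{nm}\|^{2}_{\mathfrak{H}},
$$
whose right-hand side tends to zero. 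Thus $\{f_{n}\}$ is Cauchy in $\mathfrak{H}_{+}$; its $\mathfrak{H}_{+}$-limit must equal $f$ by uniqueness under the bounded embedding, so $f\in\mathfrak{H}_{+}$ and $f_{n}\to f$ in the $\mathfrak{H}_{+}$ norm. Continuity of the inner product on $\mathfrak{H}$ and of the norm on $\mathfrak{H}_{+}$ then let us pass to the limit in the inequalities established on $\mathfrak{M}$, giving the required sector inclusion for $\tilde{W}$.

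The case of $\tilde{W}^{+}$ reduces to the same argument once one verifies that hypotheses (i)-(iv) are inherited by the pair $(T^{*},A^{*})$ generating $W^{+}$ on $\mathfrak{M}$: (i) and (iii) follow from $\mathrm{Re}(T^{*}f,f)_{\mathfrak{H}}=\mathrm{Re}(Tf,f)_{\mathfrak{H}}$ and the analogous identity for $A$, while (ii) and (iv) are obtained from $(T^{*}f,g)_{\mathfrak{H}}=\overline{(Tg,f)_{\mathfrak{H}}}$ and $(A^{*}f,g)_{\mathfrak{H}}=\overline{(Ag,f)_{\mathfrak{H}}}$ by swapping the roles of the arguments in the bilinear estimates.

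The main obstacle will be the approximation step. The sectoriality inequality on $\mathfrak{M}$ is expressed in terms of the stronger norm $\|\cdot\|_{\mathfrak{H}_{+}}$, whereas the graph closure of $W$ only supplies a priori control of $\|f_{n}\|_{\mathfrak{H}}$ and $\|Wf_{n}\|_{\mathfrak{H}}$. Promoting $f_{n}\to f$ to convergence in $\mathfrak{H}_{+}$ requires exploiting the coercivity (i) against the cross term produced by (iv) and absorbing it via Young's inequality, so that the surviving right-hand side is controlled entirely by $\mathfrak{H}$-quantities known to vanish along the approximating sequence.
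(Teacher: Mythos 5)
Your proof is essentially correct, and the core estimates are the same as the paper's: conditions (i) and (iii) give $\mathrm{Re}(Wf,f)_{\mathfrak{H}}\geq C_{0}\|f\|^{2}_{\mathfrak{H}_{+}}+C_{2}\|f\|^{2}_{\mathfrak{H}}$, while (ii) and (iv) control the imaginary part. Where you diverge is in how the two bounds are combined. The paper splits $|(Af,g)_{\mathfrak{H}}|\leq C_{3}\|f\|_{\mathfrak{H}_{+}}\|g\|_{\mathfrak{H}}$ by a Young inequality with a free parameter $\varepsilon$, producing $|\mathrm{Im}(Wf,f)_{\mathfrak{H}}|\leq \tfrac{C_{3}}{2}\varepsilon^{-1}\|f\|^{2}_{\mathfrak{H}}+(\tfrac{C_{3}}{2}\varepsilon+C_{1})\|f\|^{2}_{\mathfrak{H}_{+}}$, and then optimizes the auxiliary constant $k$ so that the vertex $\gamma(\varepsilon)$ of the resulting sector can be pushed to be nonnegative (indeed positive for $\varepsilon$ large, since $\gamma(\varepsilon)\to C_{2}$). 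You instead crush $\|f\|_{\mathfrak{H}}\leq\|f\|_{\mathfrak{H}_{+}}$ immediately and divide the two bounds, which is simpler but lands you at the sector $\mathfrak{L}_{0}(\theta)$ with vertex at the origin. Since the lemma (and its use in the next lemma, where the sector is written $\mathfrak{L}_{\gamma}(\theta)$ with $\gamma>0$) asks for a \emph{positive} sector, you should close this small gap: either absorb the surplus term and observe that $\mathrm{Re}((W-C_{2})f,f)_{\mathfrak{H}}\geq C_{0}\|f\|^{2}_{\mathfrak{H}_{+}}$ together with your imaginary-part bound places $\Theta(W)$ in $\mathfrak{L}_{C_{2}}(\theta)$, or note that a vertex-zero sector intersected with the half-plane $\mathrm{Re}\,z\geq C_{0}+C_{2}$ (which you already have from strict accretivity) sits inside a sector with strictly positive vertex and a slightly larger semi-angle. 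Either remark is one line.

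On the remaining points you are in good shape, and in one respect more careful than the paper: the passage from $\mathfrak{M}$ to $\mathrm{D}(\tilde{W})$ genuinely requires promoting graph convergence to $\mathfrak{H}_{+}$-convergence, which the paper dismisses with ``obviously'' while you supply the coercivity-plus-Young argument that makes it work. Your verification that the formal adjoint inherits the hypotheses (with the roles of the arguments transposed in (ii) and (iv), which is harmless for the quadratic form) matches what the paper leaves implicit.
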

\begin{proof}  Due to    inequalities   \eqref{2.2},\eqref{2.3}   we   conclude that the  operator $W$ is strictly accretive, i.e.
\begin{equation}\label{2.4}
\mathrm{Re}(Wf,f)_{\mathfrak{H} }\geq C_{0} \|f\|^{2}_{\mathfrak{H} } ,\;f\in \mathrm{D}(W).
\end{equation}
 Let us prove that the operator $\tilde{W}$ is canonical sectorial. Combining   \eqref{2.3}  (ii) and  \eqref{2.3} (iii), we get
\begin{equation}\label{2.5}
\mathrm{Re}(Wf,f)_{\mathfrak{H}}= \mathrm{Re}(Tf,f)_{\mathfrak{H}}+\mathrm{Re}(Af,f)_{\mathfrak{H}}\geq  C_{0}\|f\|_{\mathfrak{H}_{+}}+C_{2}\|f\|_{\mathfrak{H}},\,f\in \mathrm{D}(W) .
\end{equation}
  Obviously    we can extend the previous  inequality  to
\begin{equation}\label{2.6}
\mathrm{Re}(\tilde{W}f,f)_{\mathfrak{H}} \geq  C_{0}\|f\|_{\mathfrak{H}_{+}}+C_{2}\|f\|_{\mathfrak{H}} ,\,f\in \mathrm{D}(\tilde{W}).
\end{equation}
By virtue of \eqref{2.6}, we obtain  $\mathrm{D}(\tilde{W})\subset\mathfrak{H}_{+}. $
Note that we have the estimate
\begin{equation*}
|\mathrm{Im}(Wf,f)_{\mathfrak{H}}|\leq \left|\mathrm{Im}(Tf,f)_{\mathfrak{H}}\right|+\left|\mathrm{Im}(Af,f)_{\mathfrak{H}}\right|=
I_{1}+I_{2},\,f\in \mathrm{D}(W) .
\end{equation*}
Using        inequality \eqref{2.3} (ii),  the    Young  inequality,   we get
$$
I_{1}=\left| (Tv,u)_{\mathfrak{H}}- (Tu,v)_{\mathfrak{H}}\right|\leq  \left| (Tv,u)_{\mathfrak{H}}\right|+\left| (Tu,v)_{\mathfrak{H}}\right|\leq   2 C_{1}\|u\|_{\mathfrak{H}_{+}}\|v\|_{\mathfrak{H}_{+}}\leq  C_{1} \|f\|^{2}_{\mathfrak{H}_{+}},
$$
where $f=u+i\, v.$ Consider $I_{2}.$
Applying  the  Cauchy Schwartz inequality and inequality \eqref{2.3} (iv),  we obtain for arbitrary positive $\varepsilon$
$$
 \left| (Av,u)_{\mathfrak{H}} \right|\leq C_{3}\|v\|_{\mathfrak{H}_{+}}\|u\|_{\mathfrak{H}} \leq
\frac{C_{3}}{2}\left\{\frac{1}{\varepsilon}\|u\|^{2}_{\mathfrak{H}}+\varepsilon\|v\|^{2}_{\mathfrak{H}_{+}}\right\}\,;
$$
$$
 \left| (Au,v)_{\mathfrak{H}}\right|\leq
\frac{C_{3}}{2}\left\{\frac{1}{\varepsilon}\|v\|^{2}_{\mathfrak{H}}+\varepsilon\|u\|^{2}_{\mathfrak{H}_{+}}\right\}.
$$
 Hence
$$
I_{2}=\left| (Av,u)_{\mathfrak{H}} -(Au,v)_{\mathfrak{H}}\right|\leq \left| (Av,u)_{\mathfrak{H}}| +|(Au,v)_{\mathfrak{H}}\right|\leq \frac{C_{3}}{2}\left\{\frac{1}{\varepsilon}\|f\|^{2}_{\mathfrak{H}}+\varepsilon\|f\|^{2}_{\mathfrak{H}_{+}}\right\}.
$$
  Finally,   we have the following estimate
$$
|\mathrm{Im}(Wf,f)_{\mathfrak{H}}|  \leq  \frac{C_{3}}{2}\, \varepsilon^{-1} \|f\|^{2}_{\mathfrak{H}}+\left(\frac{C_{3}}{2}\,\varepsilon+C_{1}\right)\|f\|^{2}_{\mathfrak{H}_{+}} ,\,f\in \mathrm{D}(W).
 $$
Thus,  we   conclude that      the next inequality holds for   arbitrary $k>0$
\begin{equation*}
{\rm Re}( Wf, f    )_{\mathfrak{H}}-k \left|{\rm Im} ( Wf, f    )_{\mathfrak{H}}\right|\geq
$$
$$
\geq\left[C_{0}-k\left(\frac{C_{3}}{2}\,\varepsilon+C_{1}\right)\right] \|f\|^{2}_{\mathfrak{H}_{+}} +\left(C_{2}-k\,\frac{C_{3}}{2}\, \varepsilon^{-1}\right)\|f\|^{2}_{\mathfrak{H}},\,f\in \mathrm{D}(W) .
\end{equation*}
  Using the continuity property of the inner product, we can extend the previous  inequality  to  the set  $\mathrm{D}(\tilde{W}).$
It follows easily that
\begin{equation}\label{2.7}
\left|{\rm Im} \left( [\tilde{W}-\gamma(\varepsilon)]f, f    \right)_{\mathfrak{H}}\right|   \leq \frac{1}{k(\varepsilon)}
{\rm Re}\left( [\tilde{W}-\gamma(\varepsilon)]f, f    \right)_{\mathfrak{H}}\! ,\,f\in \mathrm{D}(\tilde{W}) ,
$$
$$
k(\varepsilon)= C_{0}\left(\frac{ C_{3} }{2} \,\varepsilon+ C_{1}\right)^{-1},
\;\gamma(\varepsilon)=C_{2}-  k(\varepsilon) \, \frac{  C_{3} }{2}\,  \varepsilon^{-1} .
\end{equation}
The previous  inequality implies that the numerical range   of the operator $ \tilde{W} $ belongs to the sector $\mathfrak{L}_{\gamma}(\theta)$ with the vertex situated   at the point  $\gamma$ and the semi-angle $\theta=\arctan(1/k).$
Solving system of equations  \eqref{2.7} relative to $\varepsilon$ we obtain  the positive root $\xi$ corresponding to  the value $\gamma=0$ and the following description  for the coordinates of  the   sector vertex $\gamma$
\begin{equation*}
\gamma:=\left\{ \begin{aligned}
  \gamma<0,\;\varepsilon\in(0,\xi)  ,\\
  \gamma\geq 0,\; \varepsilon\in [\xi,\infty)   \\
\end{aligned}
 \right., \xi=\sqrt{\left(\frac{C_{1}}{ C_{3}}\right)^{2}+ \frac{C_{0}}{C_{2}}   }-\frac{C_{1}}{ C_{3}}.
\end{equation*}
 It follows that the operator $\tilde{W}$ has a positive sector.   The proof corresponding to    the operator  $ \tilde{W}^{+} $ follows   from the  reasoning  given  above if we note  that   $W ^{+}$    is formal adjoint with respect to   $W.$
  \end{proof}
  \begin{lem}\label{L2.2}
  The operators $\tilde{W},\tilde{W}^{+}$   are m-accretive, their  resolvent sets contain  the half-plane $\{\zeta:\,\zeta\in \mathbb{C},\,\mathrm{Re}\,\zeta<C_{0}\}.$
\end{lem}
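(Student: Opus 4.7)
The plan is to mirror the argument used in Theorem \ref{T1.7}. By Lemma \ref{L2.1} the numerical range $\Theta(\tilde W)$ is contained in a closed sector $\mathfrak S$ of semi-angle strictly less than $\pi/2$; being convex (Toeplitz--Hausdorff) its closure has connected complement. Combining \eqref{2.6} with the embedding \eqref{2.2} yields the extended strict accretivity $\mathrm{Re}(\tilde Wf,f)_{\mathfrak H}\geq C_0\|f\|^{2}_{\mathfrak H}$, so $\overline{\Theta(\tilde W)}\subset\{\mathrm{Re}\,\zeta\geq C_0\}$ and the whole open half-plane $\{\mathrm{Re}\,\zeta<C_0\}$ sits in a single connected component of $\mathbb C\setminus\overline{\Theta(\tilde W)}$. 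Kato's Theorem~3.2 (as invoked in Theorem~\ref{T1.7}) then guarantees that $\mathrm R(\tilde W-\zeta)$ is closed and $\mathrm{def}(\tilde W-\zeta)$ is constant throughout this component.

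Next I would pick any $\zeta_0$ with $\mathrm{Re}\,\zeta_0<C_0$ and prove the deficiency vanishes there. Injectivity together with a quantitative bound is immediate from
\[
(C_0-\mathrm{Re}\,\zeta_0)\|f\|^{2}_{\mathfrak H}\leq \mathrm{Re}\bigl((\tilde W-\zeta_0)f,f\bigr)_{\mathfrak H}\leq \|(\tilde W-\zeta_0)f\|_{\mathfrak H}\|f\|_{\mathfrak H},
\]
so the entire task reduces to surjectivity. The plan is to decompose $\mathfrak H=\mathrm R(\tilde W-\zeta_0)\oplus\mathrm R(\tilde W-\zeta_0)^{\perp}$ and rerun the orthogonality trick from Theorem~\ref{T1.7}: for any $u\in\mathfrak M\cap \mathrm R(\tilde W-\zeta_0)^{\perp}$, plug $u$ itself into the orthogonality relation (permissible because $\mathfrak M=\mathrm D(W)\subset \mathrm D(\tilde W)$) to obtain $((\tilde W-\zeta_0)u,u)_{\mathfrak H}=0$, take real parts, and invoke the previous display to force $u=0$. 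Density of $\mathfrak M$ in $\mathfrak H_{+}$ together with \eqref{2.1} then yields $\mathrm R(\tilde W-\zeta_0)^{\perp}=\{0\}$.

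With $\mathrm{def}(\tilde W-\zeta)=0$ throughout $\{\mathrm{Re}\,\zeta<C_0\}$ together with the injectivity estimate above, one concludes $\{\mathrm{Re}\,\zeta<C_0\}\subset \mathrm P(\tilde W)$. The m-accretive property then follows by applying this half-plane inclusion with $\zeta$ replaced by $-\zeta$ (valid whenever $\mathrm{Re}\,\zeta>0$) combined with the standard bound
\[
(\mathrm{Re}\,\zeta)\|f\|^{2}_{\mathfrak H}\leq \mathrm{Re}\bigl((\tilde W+\zeta)f,f\bigr)_{\mathfrak H}\leq \|(\tilde W+\zeta)f\|_{\mathfrak H}\|f\|_{\mathfrak H},
\]
which yields $\|(\tilde W+\zeta)^{-1}\|\leq (\mathrm{Re}\,\zeta)^{-1}$. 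The identical reasoning, with $W$ replaced by $W^{+}$, handles $\tilde W^{+}$, since \eqref{2.3} is manifestly symmetric in $W$ and its formal adjoint (as already exploited in Lemma~\ref{L2.1}).

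I expect the only genuine obstacle to be the triviality of $\mathrm R(\tilde W-\zeta_0)^{\perp}$: everything else is bookkeeping on top of Lemma~\ref{L2.1} and the abstract setup. The density trick works precisely because the manifold $\mathfrak M$, on which $W$ and $W^{\ast}$ are simultaneously controlled, was assumed dense in $\mathfrak H_{+}$ and hence in $\mathfrak H$; without this joint density one would have to substitute a Lax--Milgram style construction carried out in the $\mathfrak H_{+}$-topology, which would force an independent verification that the resulting solution lies in $\mathrm D(\tilde W)$ rather than merely in $\mathfrak H_{+}$.
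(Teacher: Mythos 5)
Your proposal is correct and follows essentially the same route as the paper's proof: Lemma~\ref{L2.1} plus Kato's Theorem~3.2 for closed range and constant deficiency, the orthogonal decomposition $\mathfrak{H}=\mathrm{R}(\tilde W-\zeta_{0})\oplus\mathrm{R}(\tilde W-\zeta_{0})^{\perp}$ with the accretivity estimate killing $\mathfrak{M}\cap\mathrm{R}(\tilde W-\zeta_{0})^{\perp}$, density of $\mathfrak{M}$ to conclude the deficiency vanishes, and the standard bound $\|(\tilde W+\zeta)^{-1}\|\leq(\mathrm{Re}\,\zeta)^{-1}$. The only cosmetic difference is that you track the half-plane $\{\mathrm{Re}\,\zeta<C_{0}\}$ from the outset, whereas the paper first works in $\mathbb{C}\setminus\mathfrak{L}_{\gamma}(\theta)$ and records the half-plane inclusion at the end by combining \eqref{2.6} with Kato's theorem.
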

\begin{proof}    Due to Lemma \ref{L2.1}  we know that the operator $\tilde{W}$ has a positive sector,  i.e. the numerical range of    $\tilde{W}$   belongs to the  sector
 $\mathfrak{L}_{\gamma}(\theta),\,\gamma>0.$  In consequence of Theorem 3.2   \cite[p.268]{firstab_lit:kato1980},
 we have   $\forall\zeta\in \mathbb{C}\setminus\mathfrak{L}_{\gamma}(\theta),$ the set  $\mathrm{R}(\tilde{W}-\zeta)$ is a closed  space,   and the next relation     holds
\begin{equation*}
  {\rm def}(\tilde{W}-\zeta)=\eta,\; \eta={\rm const} .
 \end{equation*}
Due to  Theorem 3.2   \cite[p.268]{firstab_lit:kato1980} the inverse operator $(\tilde{W}+\zeta)^{-1}$ is defined  on the subspace $\mathrm{R}(\tilde{W}+\zeta),\,{\rm Re}\zeta>0.$
In accordance with the definition of m-accretive operator  given in the monograph   \cite[p.279]{firstab_lit:kato1980}   we need to show  that
\begin{equation*}
{\rm def}(\tilde{W}+\zeta)=0,\;\|(\tilde{W}+\zeta)^{-1}\|\leq ({\rm Re}\zeta)^{-1},\,{\rm Re}\zeta>0.
\end{equation*}
  For this purpose assume that
 $\zeta_{0} \in\mathbb{C}\setminus \mathfrak{L}_{\gamma}(\theta),\;{\rm Re}\zeta_{0}  <0.$
Using  \eqref{2.4}, we get
 \begin{equation}\label{2.8}
  {\rm Re}  \left( f,[\tilde{W}-\zeta_{0} ]f  \right)_{\!\!\mathfrak{H}}\!\!\geq  (C_{0}- {\rm Re} \zeta_{0} ) \|f\|^{2}_{\mathfrak{H}},\;f\in \mathrm{D}(\tilde{W}).
 \end{equation}
Since the  operator $\tilde{W}-\zeta_{0}$    has the closed range    $\mathrm{R} (\tilde{W}-\zeta_{0}),$ it follows that
\begin{equation*}
 \mathfrak{H}=\mathrm{R} (\tilde{W}-\zeta_{0})\oplus \mathrm{R} (\tilde{W}-\zeta_{0})^{\perp} .
 \end{equation*}
Note that the  intersection of the  sets  $  \mathfrak{M}$ and $\mathrm{R} (\tilde{W}-\zeta_{0})^{\perp}$ is   zero. If we assume   otherwise, then applying  inequality  \eqref{2.8}   for any element
 $u\in \mathfrak{M}\cap \mathrm{R}  (\tilde{W}-\zeta_{0})^{\perp}$    we get
 \begin{equation*}
(C_{0}-{\rm Re}\zeta_{0}) \|u\|^{2}_{\mathfrak{H}} \leq {\rm Re} \left( u,[\tilde{W}-\zeta_{0}]u  \right)_{\mathfrak{H}}=0,
 \end{equation*}
hence $u=0.$ Thus  the intersection of the sets  $  \mathfrak{M}$ and $\mathrm{R} (\tilde{W}-\zeta_{0})^{\perp}$ is   zero. It implies that
$$
\left(g,v\right)_{\mathfrak{H}}=0,\;\forall g\in  \mathrm{R}  (\tilde{W}-\zeta_{0})^{\perp},\;\forall v\in \mathfrak{M}.
$$
Since $ \mathfrak{M}$   is    a dense set in $\mathfrak{H}_{+},$  then   taking into account  \eqref{2.2}, we obtain that      $ \mathfrak{M}$  is  a  dense  set  in $\mathfrak{H}.$ Hence    $\mathrm{R}  (\tilde{W}-\zeta_{0})^{\perp}=0,\,{\rm def} (\tilde{W}-\zeta_{0}) =0.$ Combining this fact with Theorem 3.2   \cite[p.268]{firstab_lit:kato1980}, we get   ${\rm def} (\tilde{W}-\zeta )=0,\;\zeta\in \mathbb{C}\setminus\mathfrak{L}_{\gamma}(\theta).$  It is clear that
${\rm def} (\tilde{W}+\zeta )=0,\,\forall\zeta,\,{\rm Re}\zeta>0.$  Let us prove that $\|(\tilde{W}+\zeta)^{-1}\|\leq ({\rm Re}\zeta)^{-1},\,\forall\zeta,\,{\rm Re}\zeta>0.$
  We must   notice that
 \begin{equation*}
(C_{0}+{\rm Re}\zeta ) \|f\|^{2}_{\mathfrak{H}} \leq {\rm Re} \left( f,[\tilde{W}+\zeta ]f  \right)_{\mathfrak{H}}\leq \|f\|_{\mathfrak{H}}\|(\tilde{W}+\zeta )f\|_{\mathfrak{H}},\;f\in \mathrm{D}(\tilde{W}),\;
{\rm Re}\zeta>0 .
 \end{equation*}
By virtue of the fact   ${\rm def} (\tilde{W}+\zeta )=0,\;\forall\zeta,\,{\rm Re}\zeta>0$  we   know   that the  resolvent   is   defined. Therefore
 \begin{equation*}
\|(\tilde{W}+\zeta )^{-1}f\|_{\mathfrak{H}}     \leq(C_{0}+{\rm Re}\,\zeta ) ^{-1} \|f\|_{\mathfrak{H}}\leq ( {\rm Re}\,\zeta ) ^{-1} \|f\|_{\mathfrak{H}},\;f\in \mathfrak{H}.
 \end{equation*}
This implies that
$$
\|(\tilde{W}+\zeta )^{-1} \| \leq( {\rm Re}\,\zeta ) ^{-1},\;\forall\zeta,\,{\rm Re}\zeta>0.
$$
If we combine inequality \eqref{2.6}  with     Theorem 3.2 \cite[p.268]{firstab_lit:kato1980}, we get    $\mathrm{P}(\tilde{W})\supset\{\zeta:\,\zeta\in \mathbb{C},\,\mathrm{Re}\,\zeta<C_{0}\}.$
The  proof corresponding to the operator  $\tilde{W}^{+}$    is absolutely analogous.
\end{proof}

 \begin{lem}\label{L2.3}
 The operator $\tilde{\mathcal{H}}$ is strictly accretive,  m-accretive,  selfadjoint.
\end{lem}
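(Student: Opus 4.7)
The plan is to dispatch the three claims in turn, leveraging the template already built for $\tilde W$ in Lemmas \ref{L2.1}--\ref{L2.2}. First I would verify that $\mathcal{H}$ is symmetric on the common core $\mathfrak{M}$. Since $W$ and $W^{+}$ are both defined on $\mathfrak{M}$ and $W^{+}\subset W^{\ast}$, the relations $(Wf,g)_{\mathfrak{H}}=(f,W^{+}g)_{\mathfrak{H}}$ and $(W^{+}f,g)_{\mathfrak{H}}=(f,Wg)_{\mathfrak{H}}$ hold for all $f,g\in\mathfrak{M}$; adding them gives $(\mathcal{H}f,g)_{\mathfrak{H}}=(f,\mathcal{H}g)_{\mathfrak{H}}$. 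Passing to the closure, $\tilde{\mathcal{H}}$ is again symmetric (a standard fact for closures of densely defined symmetric operators), hence $\Theta(\tilde{\mathcal{H}})\subset\mathbb{R}$.

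Next, the strictly accretive property is immediate from hypotheses \eqref{2.3}(i),(iii): for $f\in\mathfrak{M}$,
\[
(\mathcal{H}f,f)_{\mathfrak{H}}=\mathrm{Re}(Wf,f)_{\mathfrak{H}}=\mathrm{Re}(Tf,f)_{\mathfrak{H}}+\mathrm{Re}(Af,f)_{\mathfrak{H}}\geq C_{0}\|f\|^{2}_{\mathfrak{H}_{+}}+C_{2}\|f\|^{2}_{\mathfrak{H}}\geq C_{2}\|f\|^{2}_{\mathfrak{H}}.
\]
By continuity of the inner product this estimate extends to all $f\in\mathrm{D}(\tilde{\mathcal{H}})$, giving $\Theta(\tilde{\mathcal{H}})\subset[C_{2},\infty)$.

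For m-accretivity I would repeat the scheme of Lemma \ref{L2.2} almost verbatim. Since $\tilde{\mathcal{H}}$ is closed and $\Theta(\tilde{\mathcal{H}})\subset[C_{2},\infty)$, Theorem 3.2 in \cite[p.268]{firstab_lit:kato1980} shows that $\mathrm{R}(\tilde{\mathcal{H}}-\zeta)$ is closed and $\mathrm{def}(\tilde{\mathcal{H}}-\zeta)$ is constant on the connected set $\mathbb{C}\setminus[C_{2},\infty)$. Fixing any $\zeta_{0}$ with $\mathrm{Re}\,\zeta_{0}<0$, the strict accretivity yields $\mathrm{Re}([\tilde{\mathcal{H}}-\zeta_{0}]f,f)_{\mathfrak{H}}\geq(C_{2}-\mathrm{Re}\,\zeta_{0})\|f\|^{2}_{\mathfrak{H}}$, so any $u\in\mathfrak{M}\cap\mathrm{R}(\tilde{\mathcal{H}}-\zeta_{0})^{\perp}$ must vanish; density of $\mathfrak{M}$ in $\mathfrak{H}$ then forces $\mathrm{R}(\tilde{\mathcal{H}}-\zeta_{0})^{\perp}=0$, i.e.\ $\mathrm{def}(\tilde{\mathcal{H}}-\zeta_{0})=0$. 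By constancy this persists throughout $\mathbb{C}\setminus[C_{2},\infty)$ and in particular for every $\zeta$ with $\mathrm{Re}\,\zeta>0$, $\mathrm{def}(\tilde{\mathcal{H}}+\zeta)=0$. The resolvent estimate $\|(\tilde{\mathcal{H}}+\zeta)^{-1}\|\leq(\mathrm{Re}\,\zeta)^{-1}$ follows, as in Lemma \ref{L2.2}, from the chain $(\mathrm{Re}\,\zeta)\|f\|^{2}_{\mathfrak{H}}\leq\mathrm{Re}([\tilde{\mathcal{H}}+\zeta]f,f)_{\mathfrak{H}}\leq\|f\|_{\mathfrak{H}}\|(\tilde{\mathcal{H}}+\zeta)f\|_{\mathfrak{H}}$.

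Selfadjointness is then essentially free: the previous paragraph gives $\mathrm{def}(\tilde{\mathcal{H}}\pm i)=0$ (the points $\pm i$ lie in $\mathbb{C}\setminus[C_{2},\infty)$), and a closed symmetric operator with both deficiency indices equal to zero is selfadjoint by Theorem 3.16 in \cite[p.340]{firstab_lit:kato1966}, cited in the same way for Theorem \ref{T1.7}. The only step that needs any care is the symmetry of $\mathcal{H}$ on $\mathfrak{M}$, since it relies on the precise way $W^{+}$ is defined as the formal adjoint, but nothing there is harder than unpacking definitions; the rest is a transcription of the already-performed arguments for $\tilde W$ into the symmetric setting where the sector degenerates to a ray.
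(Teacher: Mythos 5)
Your proposal is correct and follows essentially the same route as the paper: symmetry of $\mathcal{H}$ on $\mathfrak{M}$ passed to the closure, the strict accretivity estimate from \eqref{2.3}(i),(iii) together with \eqref{2.2} and continuity of the inner product, m-accretivity by transcribing the deficiency-index argument of Lemma \ref{L2.2}, and selfadjointness from $\mathrm{def}(\tilde{\mathcal{H}}-\zeta)=0$ for $\mathrm{Im}\,\zeta\neq 0$ via Theorem 3.16 of Kato. The only difference is cosmetic — you spell out the symmetry verification and the Lemma \ref{L2.2} scheme that the paper merely references.
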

\begin{proof}
  It is  obvious  that $   \mathcal{H}   $ is a symmetric operator. Due to the continuity  property of the inner product we can conclude that
   $\tilde{\mathcal{H}}  $ is   symmetric too.  Hence  $\Theta(\tilde{\mathcal{H}})\subset \mathbb{R}.$ By virtue of  \eqref{2.5}, we have
 \begin{equation*}
( \mathcal{H} f,f)_{\mathfrak{H}}\geq C_{0}\|f\|^{2}_{\mathfrak{H}_{+}},\,f\in \mathrm{D}(W).
\end{equation*}
Using inequality \eqref{2.2} and  the  continuity property of  the   inner product, we obtain
 \begin{equation}\label{2.9}
(\tilde{\mathcal{H}}f,f)_{\mathfrak{H}}\geq C_{0}\|f\|^{2}_{\mathfrak{H}_{+}}\geq C_{0}\|f\|^{2}_{\mathfrak{H}},\,f\in \mathrm{D}(\tilde{\mathcal{H}}).
\end{equation}
It implies that $\tilde{\mathcal{H}}$ is strictly accretive.
  In the same way as in the proof of Lemma \ref{L2.2}   we   come  to conclusion that
$\tilde{\mathcal{H}}$ is m-accretive. Moreover, we obtain the   relation ${\rm def}(\tilde{\mathcal{H}}-\zeta )=0,\,  {\rm Im }\zeta\neq 0 .$
Hence by virtue  of Theorem 3.16  \cite[p.271]{firstab_lit:kato1980} the operator $\tilde{\mathcal{H}}$ is selfadjoint.
\end{proof}
 \begin{teo}\label{T2.1}
 The  operators $\tilde{\mathcal{H}},\tilde{W},\tilde{W}^{+}$   have    compact resolvents.
\end{teo}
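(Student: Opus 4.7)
The plan is to reduce compactness of the resolvents to the compact embedding $\mathfrak{H}_+\subset\subset\mathfrak{H}$ by showing in each case that the resolvent at zero maps $\mathfrak{H}$ boundedly into $\mathfrak{H}_+$. All three operators have the point $0$ in their resolvent set: for $\tilde{W},\tilde{W}^+$ this is part of Lemma \ref{L2.2} (take $\zeta=0$, which satisfies $\mathrm{Re}\,0<C_0$ since $C_0>0$), and for $\tilde{\mathcal{H}}$ it follows from strict accretivity and selfadjointness established in Lemma \ref{L2.3}. Hence $R_{\tilde{W}}(0),\,R_{\tilde{W}^+}(0),\,R_{\tilde{\mathcal{H}}}(0)$ are well-defined bounded operators on $\mathfrak{H}$.

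For the operator $\tilde{W}$, the central estimate is extracted from inequality \eqref{2.6}: for every $f\in\mathrm{D}(\tilde{W})$,
\[
C_{0}\|f\|^{2}_{\mathfrak{H}_{+}}\leq \mathrm{Re}(\tilde{W}f,f)_{\mathfrak{H}}\leq \|\tilde{W}f\|_{\mathfrak{H}}\|f\|_{\mathfrak{H}}.
\]
Combining this with the bound $\|f\|_{\mathfrak{H}}\leq C_{0}^{-1}\|\tilde{W}f\|_{\mathfrak{H}}$ derived in the proof of Lemma \ref{L2.2} (from $\mathrm{Re}(\tilde{W}f,f)_{\mathfrak{H}}\geq C_{0}\|f\|_{\mathfrak{H}}^{2}$), we obtain
\[
\|R_{\tilde{W}}(0)g\|_{\mathfrak{H}_{+}}\leq C\|g\|_{\mathfrak{H}},\quad g\in\mathfrak{H}.
\]
Thus $R_{\tilde{W}}(0)$ factors as $\mathfrak{H}\to\mathfrak{H}_{+}\hookrightarrow\mathfrak{H}$, where the second map is compact by hypothesis \eqref{2.1}. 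Therefore $R_{\tilde{W}}(0)$ is compact on $\mathfrak{H}$. Extending this to every $\zeta\in\mathrm{P}(\tilde{W})$ is routine via the first resolvent identity
\[
R_{\tilde{W}}(\zeta)=R_{\tilde{W}}(0)\bigl(I+\zeta R_{\tilde{W}}(\zeta)\bigr),
\]
which exhibits $R_{\tilde{W}}(\zeta)$ as a product of a compact and a bounded operator.

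For $\tilde{W}^{+}$ the argument is verbatim the same, since inequality \eqref{2.6} and Lemma \ref{L2.2} apply symmetrically to the formal adjoint. For $\tilde{\mathcal{H}}$ we invoke the corresponding estimate \eqref{2.9}, namely $(\tilde{\mathcal{H}}f,f)_{\mathfrak{H}}\geq C_{0}\|f\|^{2}_{\mathfrak{H}_{+}}$, together with strict accretivity to write $\|R_{\tilde{\mathcal{H}}}(0)g\|_{\mathfrak{H}_{+}}\leq C\|g\|_{\mathfrak{H}}$ in the same fashion.

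There is no serious obstacle; the only point that deserves attention is the verification that $0$ actually lies in the resolvent set of each operator, which requires inspecting the sign conventions and strict accretivity constants established in Lemmas \ref{L2.2} and \ref{L2.3}. Once this is observed, the proof reduces to the two-line estimate above applied three times, plus one invocation of the compact embedding \eqref{2.1} and one application of the first resolvent identity to propagate compactness from $\zeta=0$ to the whole resolvent set.
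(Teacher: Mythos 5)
Your proof is correct, but for $\tilde{W}$ and $\tilde{W}^{+}$ it takes a genuinely different route from the paper. For $\tilde{\mathcal{H}}$ the two arguments essentially coincide: the paper passes through Mikhlin's energetic-space machinery (bounded sets in the energetic norm are compact in $\mathfrak{H}$, hence discrete spectrum, hence compact inverse, hence compact resolvent), while you factor $\tilde{\mathcal{H}}^{-1}$ through the compact embedding $\mathfrak{H}_{+}\subset\subset\mathfrak{H}$ directly — same idea, fewer citations. For $\tilde{W}$ and $\tilde{W}^{+}$, however, the paper does not use the coercivity estimate
$C_{0}\|f\|^{2}_{\mathfrak{H}_{+}}\leq \mathrm{Re}(\tilde{W}f,f)_{\mathfrak{H}}\leq C_{0}^{-1}\|\tilde{W}f\|^{2}_{\mathfrak{H}}$
at all; instead it introduces the sesquilinear forms $\mathfrak{t}[f,g]=(\tilde{W}f,g)_{\mathfrak{H}}$ and $\mathfrak{h}[f,g]=(\tilde{\mathcal{H}}f,g)_{\mathfrak{H}}$, proves they are closable and sectorial, establishes $\mathfrak{Re}\,\tilde{\mathfrak{t}}=\tilde{\mathfrak{h}}$ and hence $\tilde{\mathcal{H}}=Re\,\tilde{W}$ via the first representation theorem, and then invokes Kato's Theorem 3.3 (an m-sectorial operator has compact resolvent iff its real part does). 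Your argument is shorter and more elementary — it needs only inequality \eqref{2.6}, the bounded invertibility at $\zeta=0$ from Lemma \ref{L2.2}, the compact embedding \eqref{2.1}, and the resolvent identity. What the paper's longer detour buys is the identity $\tilde{\mathcal{H}}=Re\,\tilde{W}$ itself, which is not merely scaffolding: it is explicitly reused at the start of the next section and underlies the representation \eqref{2.13} in Theorem \ref{T2.2}. So your proof establishes the stated theorem more economically but does not produce this byproduct, which the paper would then have to derive separately.
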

\begin{proof}
First note that due to Lemma \ref{L2.3} the operator $\tilde{\mathcal{H}}$  is selfadjoint.  Using \eqref{2.9},  we obtain    the estimates
$$
\|f\|_{ H }\geq  \sqrt{C_{0} }\|f\|_{\mathfrak{H}_{+}}\geq \sqrt{C_{0} } \|f\|_{\mathfrak{H} }, \;f\in \mathfrak{H}_{H},
$$
where $H:=\tilde{\mathcal{H}}.$
Since  $\mathfrak{H}_{+}\subset\subset\mathfrak{H} ,$ then  we   conclude that each   set   bounded     with respect to the energetic norm generated by the operator $\tilde{\mathcal{H}}$ is  compact with respect to the norm $\|\cdot\|_{\mathfrak{H}}.$     Hence in accordance  with Theorem   \cite[p.216]{firstab_lit:mihlin1970} we conclude that   $\tilde{\mathcal{H}}$ has a discrete spectrum.  Note that  in consequence of  Theorem 5 \cite[p.222]{firstab_lit:mihlin1970} we have that    a selfadjoint strictly accretive    operator with  discrete spectrum has a compact inverse operator. Thus using   Theorem 6.29 \cite[p.187]{firstab_lit:kato1980} we obtain  that $\tilde{\mathcal{H}}$ has a compact resolvent.

 Further, we need the  technique of the   sesquilinear forms theory  stated    in \cite{firstab_lit:kato1980}.
Consider the sesquilinear forms
$$ \mathfrak{t} [f,g]=(\tilde{W}f,g)_{\mathfrak{H}},\,f,g\in \mathrm{D}(\tilde{W}),\;  \mathfrak{h} [f,g]=(\tilde{\mathcal{H}}f,g)_{\mathfrak{H}},\,f,g\in \mathrm{D}(\tilde{\mathcal{H}}).
$$
 Recall  that due to   inequality \eqref{2.6} we came to the  conclusion that $\mathrm{D}(\tilde{W})\subset \mathfrak{H}_{+}.$ In the same way  we can deduce  that  $\mathrm{D}(\tilde{\mathcal{H}})\subset \mathfrak{H}_{+}.$
By virtue   of   Lemma  \ref{L2.1},  Lemma \ref{L2.3}, it is easy to prove that the   sesquilinear forms $\mathfrak{t},\mathfrak{h}$  are sectorial. Applying   Theorem 1.27 \cite[p.318]{firstab_lit:kato1980}  we get that these  forms  are closable. Now note that   $\mathfrak{Re}\, \tilde{\mathfrak{t}}$   is a sum of two closed sectorial forms. Hence   in consequence of Theorem 1.31 \cite[p.319]{firstab_lit:kato1980},  we have that   $\mathfrak{Re}\,\tilde{\mathfrak{t}}$   is a closed form. Let us show that $\mathfrak{Re}\,\tilde{\mathfrak{t}}=\tilde{\mathfrak{h}}.$    First  note that this equality is true   on the  elements of the linear manifold $\mathfrak{M} \subset\mathfrak{H}_{+}.$   This  fact can be obtained directly from the following
$$\tilde{\mathfrak{t}}  [f,g]=(  W   f,g)_{\mathfrak{H}},
\;  \overline{\tilde{\mathfrak{t}}[g,f]} = ( W ^{+}\! f,g)_{\mathfrak{H}},\,f,g\in \mathfrak{M}.
$$
On the other hand
$$\tilde{\mathfrak{h}}[f,g]=(\tilde{\mathcal{H}}f,g)_{\mathfrak{H}}=( \mathcal{H} f,g)_{\mathfrak{H}},\,f,g\in \mathfrak{M}.
$$
Hence
\begin{equation}\label{2.10}
\mathfrak{Re}\,\tilde{\mathfrak{t}}[f,g]=\tilde{\mathfrak{h}}[f,g],\;f,g\in \mathfrak{M}.
\end{equation}
 Using  \eqref{2.3}, we get
\begin{equation}\label{2.11}
 C_{0}\|f\|^{2}_{\mathfrak{H}_{+}}\leq \mathrm{Re}\,\tilde{\mathfrak{t}}[f] \leq C_{4} \|f\|^{2}_{\mathfrak{H}_{+}},\,C_{0} \|f\|^{2}_{\mathfrak{H}_{+}}\leq\tilde{\mathfrak{h}}[f]\leq  C_{4}\|f\|^{2}_{\mathfrak{H}_{+}},\,f\in \mathfrak{M},
\end{equation}
where $C_{4}=C_{1}+C_{3}.$  Since $ \mathfrak{Re} \,\tilde{\mathfrak{t}}[f]=\mathrm{ Re } \,\tilde{\mathfrak{t}}[f],\,f\in \mathfrak{M},$   the sesquilinear forms  $\mathfrak{Re} \,\tilde{\mathfrak{t}},\tilde{\mathfrak{h}}$ are closed forms, then using \eqref{2.11} it is easy to prove that $\mathrm{D}(\mathfrak{Re}\,\tilde{\mathfrak{t}})=\mathrm{D}(\tilde{\mathfrak{h}})=\mathfrak{H}_{+}.$ Using   estimates \eqref{2.11},   it is not hard to prove  that   $\mathfrak{M}$ is a core of the forms $\mathfrak{Re} \,\tilde{\mathfrak{t}},\tilde{\mathfrak{h}}.$ Hence using \eqref{2.10}, we obtain
$\mathfrak{Re}\,\tilde{\mathfrak{t}}[f] =\tilde{\mathfrak{h}}[f],\,f\in \mathfrak{H}_{+}.$ In accordance with    the   polarization principle (see (1.1) \cite[p.309]{firstab_lit:kato1980}),  we have     $\mathfrak{Re}\,\tilde{\mathfrak{t}}=\tilde{\mathfrak{h}}.$
Now   recall      that the     forms
  $\tilde{\mathfrak{t}},\tilde{\mathfrak{h}}$ are generated    by the operators  $\tilde{W}, \tilde{\mathcal{H}}$ respectively. Note that    in consequence of
  Lemmas \ref{L2.1}-\ref{L2.3}  these operators   are m-sectorial. Hence     by virtue  of    Theorem 2.9 \cite[p.326]{firstab_lit:kato1980}, we get  $T_{\tilde{\mathfrak{t}}}=\tilde{W},  T_{\tilde{\mathfrak{h}}}=\tilde{\mathcal{H}} .$    Since we have proved that  $\mathfrak{Re}\,\tilde{\mathfrak{t}}=\tilde{\mathfrak{h}}$, then $T_{\mathfrak{Re}\,\tilde{\mathfrak{t}}}=\tilde{\mathcal{H}}.$ Therefore,   by definition   we have  that the operator $\tilde{\mathcal{H}}$ is  the real part of the m-sectorial operator $\tilde{W},$   by symbol $\tilde{\mathcal{H}}=Re  \tilde{W}.$   Since we   proved above  that $\tilde{\mathcal{H}}$ has a compact resolvent, then
  using  Theorem 3.3 \cite[p.337]{firstab_lit:kato1980}  we conclude that the  operator $\tilde{W}$ has a compact resolvent. The proof corresponding   to the operator $\tilde{W}^{+}$ is absolutely  analogous.
\end{proof}

\section{Asymptotic equivalence}

It is remarkable that     we obtain the equality $\tilde{\mathcal{H}}=Re  \tilde{W}$ in the proof of  Theorem \ref{T2.1}. This fact     is however  worth considering itself and  one may find a comprehensive analysis in the section Remarks. Thus,  since further we prefer standing at the operator theory point of view that is harmoniously connected with the sesqulinear forms theory, we deal with the operator $H:=Re \tilde{W}.$

\begin{teo}\label{T2.2}
The following relation   holds
\begin{equation}\label{2.12}
  \lambda_{i}(R_{H})\asymp \lambda_{i}\left(V\right).
\end{equation}
 \end{teo}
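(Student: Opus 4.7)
The plan is to realize $R_{\tilde{W}}$ via the second representation theorem in a form that makes its Hermitian component transparent relative to $R_{H}$, and then to compare eigenvalues by the min-max principle. The whole argument lives on the form side and unpacks the consequences of the identification $\tilde{\mathcal{H}}=\mathrm{Re}\,\tilde{W}$ already established in the proof of Theorem \ref{T2.1}.

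First, I would use the sesquilinear-form setting of the previous section: $\tilde{W}=T_{\tilde{\mathfrak{t}}}$ and $H=\tilde{\mathcal{H}}=T_{\tilde{\mathfrak{h}}}$, where $\tilde{\mathfrak{h}}=\mathfrak{Re}\,\tilde{\mathfrak{t}}$, with common form domain $\mathrm{D}(\tilde{\mathfrak{t}})=\mathrm{D}(\tilde{\mathfrak{h}})=\mathfrak{H}_{+}=\mathrm{D}(H^{1/2})$. Splitting $\tilde{\mathfrak{t}}=\tilde{\mathfrak{h}}+i\,\mathfrak{k}$, Lemma \ref{L2.1} furnishes a semi-angle $\theta<\pi/2$ (determined by the constants in \eqref{2.3}) such that
\begin{equation*}
|\mathfrak{k}[f]|\leq (\tan\theta)\,\tilde{\mathfrak{h}}[f],\qquad f\in\mathfrak{H}_{+}.
\end{equation*}
Since $\tilde{\mathfrak{h}}[f,g]=(H^{1/2}f,H^{1/2}g)_{\mathfrak{H}}$ by the second representation theorem, this bound allows one to produce a bounded self-adjoint $B\in\mathcal{B}(\mathfrak{H})$ with $\|B\|\leq\tan\theta$ satisfying $\mathfrak{k}[f,g]=(B\,H^{1/2}f,H^{1/2}g)_{\mathfrak{H}}$. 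Consequently $\tilde{\mathfrak{t}}[f,g]=((I+iB)H^{1/2}f,H^{1/2}g)_{\mathfrak{H}}$, which I would lift to the operator factorization
\begin{equation*}
R_{\tilde{W}}=H^{-1/2}(I+iB)^{-1}H^{-1/2},\qquad R_{\tilde{W}}^{\ast}=H^{-1/2}(I-iB)^{-1}H^{-1/2},
\end{equation*}
valid on the whole of $\mathfrak{H}$.

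Taking the Hermitian component and using the algebraic identity $(I+iB)^{-1}+(I-iB)^{-1}=2(I+B^{2})^{-1}$, one obtains
\begin{equation*}
V=H^{-1/2}(I+B^{2})^{-1}H^{-1/2}.
\end{equation*}
Since $0\leq B^{2}\leq (\tan^{2}\theta)\,I$, the sandwich $\cos^{2}\theta\cdot I\leq (I+B^{2})^{-1}\leq I$ holds, and conjugating by $H^{-1/2}$ yields
\begin{equation*}
\cos^{2}\theta\cdot R_{H}\leq V\leq R_{H}
\end{equation*}
in the sense of bounded self-adjoint operators. Both $R_{H}$ and $V$ are compact non-negative self-adjoint (by Theorem \ref{T2.1} and the displayed formula); the min-max principle then delivers $\cos^{2}\theta\cdot\lambda_{i}(R_{H})\leq\lambda_{i}(V)\leq\lambda_{i}(R_{H})$, which is exactly \eqref{2.12}.

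The main technical obstacle is the rigorous transfer from the form identity $\tilde{\mathfrak{t}}[f,g]=((I+iB)H^{1/2}f,H^{1/2}g)_{\mathfrak{H}}$ to the operator factorization of $R_{\tilde{W}}$: one must verify that $H^{1/2}$ restricts to a bijection between $\mathfrak{H}_{+}$ and $\mathfrak{H}$ with the correct norm, that $B$ extends by continuity from a sesquilinear form on $\mathfrak{H}_{+}$ to a bounded operator on $\mathfrak{H}$, and that the composite $H^{-1/2}(I+iB)^{-1}H^{-1/2}$ coincides with $R_{\tilde{W}}$ on all of $\mathfrak{H}$ rather than merely on a core. A minor side point is that the sector of $\tilde{W}$ may have vertex $\gamma\geq 0$; this only strengthens the bound of $\mathfrak{k}$ by $\tilde{\mathfrak{h}}$ and does not affect the subsequent argument.
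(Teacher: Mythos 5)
Your proposal is correct, and its skeleton coincides with the paper's: both rest on the factorization $\tilde{W}=H^{1/2}(I+iB)H^{1/2}$ with $B=B^{\ast}$, $\|B\|\leq\tan\theta$, the resulting formula $V=c\,H^{-1/2}(I+B^{2})^{-1}H^{-1/2}$, and a two-sided comparison of $(Vf,f)_{\mathfrak{H}}$ with $(R_{H}f,f)_{\mathfrak{H}}$ followed by the eigenvalue comparison lemma (Lemma 1.1 of Gohberg--Krein, which plays the role of the min-max step you invoke). The one genuine divergence is how $B$ is produced. The paper applies Kato's Theorem 3.2 twice, to $\tilde{W}$ and to $\tilde{W}^{+}$, obtains two operators $B_{1},B_{2}$, and then spends the bulk of the proof showing $B_{1}=-B_{2}$: this forces it to prove $\mathrm{N}(H^{1/2})=0$ via the integral formula for $H^{1/2}$ and to show that $\mathfrak{M}$ is a core of $H^{1/2}$, so that $(B_{1}+B_{2})H^{1/2}$ vanishes on a dense set. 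You instead read a single $B$ off the imaginary component $\mathfrak{k}$ of the closed sectorial form and only then invert, which sidesteps the $B_{1}=-B_{2}$ detour entirely -- at the price of the technical obstacle you correctly flag (passing from the form identity to the operator factorization of the resolvent), but that is exactly what Kato's Lemma 3.1 and Theorem 3.2 on p.~337 package for you, so the gap is citable rather than real. Two small points in your favour: your identity $(I+iB)^{-1}+(I-iB)^{-1}=2(I+B^{2})^{-1}$ is the correct one (the paper omits the factor $2$, which happens to cancel against its explicit $\tfrac12$ in the definition of $V$), and your two-sided bound $\cos^{2}\theta\cdot R_{H}\leq V\leq R_{H}$ gives cleaner constants than the paper's $\|S\|^{-2}$ and $\|S^{-1}\|$ with $S=I+B^{2}$.
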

\begin{proof}
  Note that the properties established in  Lemma \ref{L2.1}, Lemma \ref{L2.2} give an opportunity to apply   Theorem 3.2 \cite[p.337]{firstab_lit:kato1980} according to which   there exist  the selfadjoint   operators  $B_{i}:=\{B_{i}\in\mathcal{B} (\mathfrak{H}),\,\|B_{i}\|\leq \tan \theta \},\,i=1,2 $
(where $\theta$ is the  semi-angle of the sector $\mathfrak{L}_{0}(\theta)\supset \Theta(\tilde{W})$) such  that
\begin{equation}\label{2.13}
\tilde{W}=H^{\frac{1}{2}}(I+i B_{1}) H^{\frac{1}{2}},\;\tilde{W}^{+}=H^{\frac{1}{2}}(I+i B_{2}) H^{\frac{1}{2}}.
\end{equation}
Since the set of  linear operators  generates  ring,   it follows that
\begin{equation*}
  Hf\! =\!\frac{1}{2}\left[H^{\frac{1}{2}}(I+i B_{1})  +H^{\frac{1}{2}}(I+i B_{2})\right]H^{\frac{1}{2}}\!  =
  $$
  $$
  =\! \frac{1}{2}\left\{H^{\frac{1}{2}}\left[(I+i B_{1})  + (I+i B_{2})\right]\right\}H^{\frac{1}{2}}\!=
$$
$$
= \! H f +
 \frac{i}{2} H^{\frac{1}{2}}\left(B_{1}+B_{2}\right)  H^{\frac{1}{2}}f  ,\;f\in \mathfrak{M}.
\end{equation*}
 Consequently
\begin{equation}\label{2.14}
H^{\frac{1}{2}}\left(B_{1}+B_{2}\right) H^{\frac{1}{2}}f=0  ,\;f\in \mathfrak{M}.
\end{equation}
Let us show that $B_{1}=-B_{2}.$
 In  accordance with  Lemma \ref{L2.3} the operator $H$ is m-accretive, hence we have
$
(H+\zeta)^{-1}\in  \mathcal{B }(\mathfrak{H}),\,\mathrm{Re}\,\zeta>0.
$
Using this fact, we get
\begin{equation}\label{2.15}
{\rm Re}\left([H+\zeta]^{-1}Hf,f\right)_{\mathfrak{H}}={\rm Re}\left([H+\zeta]^{-1}[H+\zeta]  f,f\right)_{\mathfrak{H}}-{\rm Re}\left(\zeta\,[H+\zeta]^{-1}   f,f\right)_{\mathfrak{H}}\geq
$$
$$
\geq \|f\|^{2}_{\mathfrak{H}}-|\zeta|\cdot\|(H+\zeta)^{-1}\|\cdot\|f\|^{2}_{\mathfrak{H}}=\|f\|^{2}_{\mathfrak{H}} \left(1-|\zeta|\cdot \|(H+\zeta)^{-1}\|\right),
$$
$$
\,\mathrm{Re}\,\zeta>0,\,f\in \mathrm{D}(H).
\end{equation}
Applying    inequality \eqref{2.9},   we obtain
$$
\| f\| _{\mathfrak{H}}\|(H+\zeta)^{-1}f\| _{\mathfrak{H}}\geq|(f,[H+\zeta]^{-1}f)|
 \geq ( {\rm Re} \zeta+C_{0} )\|(H+\zeta)^{-1}f\|^{2}_{\mathfrak{H}},\;f\in \mathfrak{H}.
$$
It implies that
$$
\|(H+\zeta)^{-1}\|\leq ({\rm Re}\zeta +C_{0}  )^{-1},\;{\rm Re}\zeta>0.
$$
  Combining    this  estimate  and  \eqref{2.15}, we have
$$
{\rm Re}\left([H+\zeta]^{-1}Hf,f\right)_{\!\mathfrak{H}}\geq \|f\|_{\mathfrak{H}}^{2}\left(1-\frac{|\zeta|}{{\rm Re}\zeta +C_{0}}\right),\,{\rm Re}\zeta>0,\,f\in \mathrm{D}(H).
$$
Applying    formula (3.45) \cite[p.282]{firstab_lit:kato1980} and  taking into account that $H^{\frac{1}{2}}$ is selfadjoint, we get
\begin{equation}\label{2.16}
 \left(H^{\frac{1}{2} }f,f\right)_{\!\mathfrak{H}}=\frac{1}{\pi}\int\limits_{0}^{\infty}\zeta^{-1/2}{\rm Re}\left([H+\zeta]^{-1}Hf,f\right)_{\!\mathfrak{H}}d\zeta\geq
$$
$$
 \geq\|f\|^{2}_{\mathfrak{H}}\cdot \frac{C_{0}}{\pi }\int\limits_{0}^{\infty}\frac{\zeta^{-1/2}}{\zeta+C_{0}}  d\zeta=
 \sqrt{C_{0}}  \|f\|^{2}_{\mathfrak{H}},\,f\in  \mathrm{D} (H) .
\end{equation}
Since in accordance with   Theorem 3.35 \cite[p.281]{firstab_lit:kato1980} the set      $\mathrm{D}(H)$ is  the core of the operator $H^{ \frac{1}{2}},$    then we can extend \eqref{2.16} to
  \begin{equation}\label{2.17}
 \left(H^{\frac{1}{2}}f,f\right)_{\!\!\mathfrak{H}}\geq \sqrt{C_{0}} \|f\|^{2}_{\mathfrak{H}},\;f\in \mathrm{D} (H^{\frac{1 }{2}}).
\end{equation}
Hence $\mathrm{N}(H^{\frac{1}{2} })=0.$ Combining this fact and  \eqref{2.14}, we obtain
\begin{equation}\label{2.18}
 \left(B_{1}+B_{2}\right)  H^{ \frac{1}{2}}f=0  ,\;f\in \mathfrak{M}.
\end{equation}
Let us show that the set $\mathfrak{M}$ is a core of the operator $H^{ \frac{1}{2}}.$ Note that due to Theorem 3.35 \cite[p.281]{firstab_lit:kato1980}
 the operator  $H^{ \frac{1}{2}}$
is selfadjoint and $\mathrm{D}(H)$ is a core of the  operator $H^{ \frac{1}{2}}.$ Hence   we have the representation
\begin{equation}\label{2.19}
 \|H^{ \frac{1}{2}}f \|^{2}_{\mathfrak{H}}=(Hf,f)_{ \mathfrak{H}},\,f\in \mathrm{D}(H).
\end{equation}
To achieve    our aim, it is
sufficient  to show  the following
\begin{equation}\label{2.20}
\forall\,f_{0}\in \mathrm{D}(H^{ \frac{1}{2}}),\,\exists\, \{f_{n}\}_{1}^{\infty}\subset \mathfrak{M}:\; f_{n}\stackrel{\mathfrak{H}}{\longrightarrow} f_{0},\,H^{ \frac{1}{2}}f_{n}\stackrel{\mathfrak{H}}{\longrightarrow} H^{ \frac{1}{2}}f_{0}.
 \end{equation}
Since  in accordance with the definition the set $\mathfrak{M}$ is a core of $H$, then we can extend   second relation \eqref{2.11} to
$
\sqrt{C_{0}} \|f\| _{\mathfrak{H}_{+}}\leq (Hf,f)_{\mathfrak{H}}\leq \sqrt{C_{4}} \|f\| _{\mathfrak{H}_{+}},\,f\in \mathrm{D}(H).
$
Applying    \eqref{2.19}, we can write
\begin{equation}\label{2.21}
\sqrt{C_{0}} \|f\| _{\mathfrak{H}_{+}}\leq \|H^{ \frac{1}{2}}f \| _{\mathfrak{H}}\leq \sqrt{C_{4}} \|f\| _{\mathfrak{H}_{+}},\,f\in \mathrm{D}(H).
 \end{equation}
Using    lower estimate \eqref{2.21} and the fact that  $\mathrm{D}(H)$    is a  core of    $H^{ \frac{1}{2}},$ it is not hard to prove that  $\mathrm{D}(H^{   \frac{1}{2} })\subset\mathfrak{H}_{+}.$
Taking into account this fact and using         upper estimate \eqref{2.21}, we obtain \eqref{2.20}. It implies that  $\mathfrak{M}$ is a core of $H^{ \frac{1}{2}}.$ Note that  in accordance with    Theorem 3.35   \cite[p.281]{firstab_lit:kato1980} the operator $ H ^{ \frac{1}{2}} $ is m-accretive. Hence  combining  Theorem 3.2 \cite[p.268]{firstab_lit:kato1980}        with \eqref{2.17},
  we obtain   $\mathrm{R}(H ^{ \frac{1}{2}})=\mathfrak{H}.$ Taking into account that      $\mathfrak{M}$ is a  core   of the operator $H^{ \frac{1}{2}},$  we   conclude that
$\mathrm{R}(\check{H} ^{\frac{1}{2} })$ is dense in $\mathfrak{H},$ where $\check{H} ^{ \frac{1}{2} }$ is the  restriction of  the operator $H ^{ \frac{1}{2}}$ to $\mathfrak{M}.$ Finally, by virtue of \eqref{2.18}, we have that the sum   $B_{1}+B_{2}$   equal to zero   on the dense subset  of $\mathfrak{H}.$ Since these operators are   defined on $\mathfrak{H}$ and  bounded,  then   $B_{1}=-B_{2}.$   Further,   we    use the denotation  $B_{1}:=B.$

Note that due to Lemma \ref{L2.2} there exist the  operators $R_{\tilde{W}},R_{\tilde{W}^{+}}.$      Using the  properties  of the operator $B,$    we get
$\|(I\pm iB)f\|_{\mathfrak{H} }\|f\|_{\mathfrak{H} }\geq\mathrm{Re }\left([I\pm iB]f,f\right)_{\mathfrak{H}} =\|f\|^{2}_{\mathfrak{H}},\,f\in \mathfrak{H}.$ Hence
\begin{equation*}
\|(I\pm iB)f\|_{\mathfrak{H} } \geq \|f\|_{\mathfrak{H}},\,f\in \mathfrak{H}.
\end{equation*}
 It implies that the operators  $I\pm iB$ are invertible.
Since it was proved above that  $ \mathrm{R} (H^{ \frac{1}{2}})=\mathfrak{H},\,\mathrm{N}(H^{\frac{1}{2}})=0$,    then  there exists an operator $H^{-\frac{1}{2}}$ defined on $\mathfrak{H}.$
  Using    representation   \eqref{2.13}  and  taking into account  the   reasonings given above,  we obtain
\begin{equation}\label{2.22}
R_{\tilde{W}}=H^{-\frac{1 }{2}}(I+iB )^{-1} H^{- \frac{1}{2}},\;R_{\tilde{W}^{+}}=H^{-\frac{1 }{2}}(I-iB )^{-1} H^{- \frac{1}{2}}.
\end{equation}
Note that the following equality can be proved easily  $R^{\ast}_{ \tilde{W}}=R^{\,}_{\tilde{W}^{+}}.$ Hence  we have
\begin{equation}\label{2.23}
V=\frac{1}{2}\left(R_{\tilde{W}}+R_{\tilde{W}^{+}}\right).
\end{equation}
Combining  \eqref{2.22},\eqref{2.23},  we get
\begin{equation}\label{2.24}
V=\frac{1}{2}\,H^{-\frac{1 }{2}}\left[(I+iB )^{-1}+(I-iB )^{-1} \right]H^{- \frac{1}{2}}.
\end{equation}
 Using the obvious identity
$
(I+B^{2})=(I+iB ) (I-iB )= (I-iB )(I+iB ),
$
 by  direct calculation we   get
\begin{equation}\label{2.25}
(I+iB )^{-1}+(I-iB )^{-1}=(I+B^{2})^{-1}.
\end{equation}
Combining \eqref{2.24},\eqref{2.25}, we obtain
\begin{equation}\label{2.26}
V=\frac{1}{2}\,H^{-\frac{1 }{2}}  (I+B^{2} )^{-1}  H^{- \frac{1}{2}}.
\end{equation}
Let us evaluate the form $\left(V  f,f\right)_{\mathfrak{H}}.$  Note  that  there exists the operator $R_{H}$ (see Lemma \ref{L2.3}). Since  $H$ is selfadjoint (see Lemma \ref{L2.3}), then  due to Theorem 3 \cite[p.136]{firstab_lit: Ahiezer1966}     $R_{H}$ is selfadjoint.  It is clear that $R_{H}$  is positive because  $H$ is positive.   Hence by virtue of  the  well-known theorem (see \cite[p.174]{firstab_lit:Krasnoselskii M.A.})   there exists a unique  square root of the operator $ R_{H} ,$ the   selfadjoint operator $ \hat{R} $ such that
  $\hat{R} \hat{R}  =R_{H}.$     Using the    decomposition $H=H^{\frac{1}{2}}H^{\frac{1}{2}},$   we get   $H^{-\frac{1}{2}}H^{-\frac{1}{2}}H=I.$ Hence
$R_{H}\subset H^{-\frac{1}{2}}H^{-\frac{1}{2}},$ but   $ \mathrm{D} (R_{H})=\mathfrak{H}.$ It implies that $R_{H}=H^{-\frac{1}{2}}H^{-\frac{1}{2}}.$  Using  the  uniqueness property  of  the  square root  we   obtain       $H^{-\frac{1}{2}}= \hat{R}.$    Let us use the shorthand notation  $S:=I+B^{2}.$  Note that due to the obvious inequality   $\left(\|Sf\|_{\mathfrak{H}}  \geq\|f\|_{\mathfrak{H}},\,f\in \mathfrak{H}\right)$ the  operator $S^{-1}$ is bounded on the set $\mathrm{R}(S).$ Taking into account the reasoning given above, we get
 $$
\left(V  f,f\right)_{\mathfrak{H}}=\left(H^{-\frac{1 }{2}} S^{-1}     H^{- \frac{1}{2}}   f,f\right)_{\mathfrak{H}}=
\left( S^{-1}     H^{- \frac{1}{2}}   f,H^{-\frac{1 }{2}}f\right)_{\mathfrak{H}}\leq
$$
$$
\leq \|S^{-1}     H^{- \frac{1}{2}}   f \| _{\mathfrak{H}}  \|H^{- \frac{1}{2}}   f \|_{\mathfrak{H}}\leq   \|S^{- 1}\|  \cdot  \|   H^{- \frac{1}{2}}   f \|^{2}_{\mathfrak{H}}
=
  \|S^{- 1}       \|\cdot\left(R_{H }  f,f\right)_{\mathfrak{H}},\;f\in \mathfrak{H}.
$$
On the other hand,  it is easy to see that  $(S^{-1}f,f)_{\mathfrak{H}}\geq \|S^{-1}f\|^{2}_{\mathfrak{H}},\,f\in \mathrm{R}(S).$ At the same time   it is obvious that   $S$ is bounded and we have   $\|S^{-1}f\|_{\mathfrak{H}}\geq \|S\|^{-1} \|f\|_{\mathfrak{H}},\,f\in \mathrm{R}(S).$ Using   these estimates, we have
$$
\left(V f,f\right)_{\mathfrak{H}}=\left( S^{-1}     H^{- \frac{1}{2}}   f,H^{-\frac{1 }{2}}f\right)_{\mathfrak{H}}\geq  \|S^{-1}     H^{- \frac{1}{2}}   f \|^{2}_{\mathfrak{H}}\geq
$$
$$
\geq  \| S   \|^{-2} \cdot  \|   H^{- \frac{1}{2}}   f \|^{2}_{\mathfrak{H}}= \| S   \|^{-2}  \cdot\left(R_{H }  f,f\right)_{\mathfrak{H}},\;f\in \mathfrak{H}.
$$
  Note that due to Theorem \ref{T2.1} the operator $R_{H}$ is compact. Combining \eqref{2.23} with Theorem \ref{T2.1}, we get that the operator $V$ is compact.  Taking into account these facts and   using   Lemma 1.1 \cite[p.45]{firstab_lit:1Gohberg1965}, we obtain
 \eqref{2.12}.
\end{proof}

\section{Singular numbers     and completeness of the root vectors}

The following   theorem   is  formulated in   terms of      order   $\mu:=\mu(H)$    and  devoted to the Schatten-von Neumann classification of     the  operator     $R_{\tilde{W}}.$

\begin{teo}\label{T2.3} We have the following classification

\begin{equation*}
R_{\tilde{W}}\in  \mathfrak{S}_{p},\,p= \left\{ \begin{aligned}
\!l,\,l>2/\mu,\,\mu\leq1,\\
   1,\,\mu>1    \\
\end{aligned}
 \right.\;.
\end{equation*}
Moreover   under  the  assumption   $ \lambda_{n}(R_{H})\geq  C \,n^{-\mu},\,n\in \mathbb{N},$  we have
$$
 R_{ \tilde{W}}\in\mathfrak{S}_{p}\;  \Rightarrow \;\mu p>1,\;1\leq p<\infty,
$$
where $\mu:=\mu(H).$
 \end{teo}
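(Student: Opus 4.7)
The plan is to combine the factorization $R_{\tilde{W}}=H^{-1/2}(I+iB)^{-1}H^{-1/2}$ from \eqref{2.22}, already obtained in the proof of Theorem \ref{T2.2}, with standard Ky Fan-type inequalities for singular numbers and with the asymptotic equivalence $\lambda_i(R_H)\asymp\lambda_i(V)$ supplied by Theorem \ref{T2.2}. Two preliminary observations would be recorded first: $\|(I+iB)^{-1}\|\le 1$, which follows from $\|(I+iB)f\|\,\|f\|\ge\mathrm{Re}\,([I+iB]f,f)=\|f\|^{2}$; and $s_n(H^{-1/2})=\lambda_n(H^{-1/2})=\lambda_n(R_H)^{1/2}$, since $H^{-1/2}$ is positive selfadjoint and compact in view of Theorem \ref{T2.1}.

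For the inclusion in the case $\mu\le 1$, I would apply the elementary estimate $s_n(XY)\le\|X\|\,s_n(Y)$ twice to \eqref{2.22} to obtain
\begin{equation*}
s_n(R_{\tilde{W}})\le\|H^{-1/2}\|\,\|(I+iB)^{-1}\|\,s_n(H^{-1/2})\le C\,\lambda_n(R_H)^{1/2}\le C'\,n^{-\mu/2},
\end{equation*}
the last inequality being a consequence of the definition of $\mu(H)$. Summation then yields $\sum s_n(R_{\tilde{W}})^{p}<\infty$ whenever $p>2/\mu$. For the case $\mu>1$, the stronger Ky Fan multiplicative inequality $s_{2n-1}(XY)\le s_n(X)\,s_n(Y)$ applied to the same factorization gives $s_{2n-1}(R_{\tilde{W}})\le s_n(H^{-1/2})^{2}=\lambda_n(R_H)\le C\,n^{-\mu}$, from which trace-class membership $R_{\tilde{W}}\in\mathfrak{S}_{1}$ follows by direct summation.

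For the converse implication under the hypothesis $\lambda_n(R_H)\ge C\,n^{-\mu}$, I would start from \eqref{2.23}, namely $2V=R_{\tilde{W}}+R_{\tilde{W}^{+}}$, combined with $s_n(R_{\tilde{W}})=s_n(R_{\tilde{W}^{+}})$ (since $R_{\tilde{W}^{+}}=R_{\tilde{W}}^{*}$). The subadditive Ky Fan inequality $s_{2n-1}(A+B)\le s_n(A)+s_n(B)$ then delivers $s_{2n-1}(V)\le s_n(R_{\tilde{W}})$. Since $V$ is positive selfadjoint by \eqref{2.26}, one has $s_{2n-1}(V)=\lambda_{2n-1}(V)$, and Theorem \ref{T2.2} together with the lower bound on $\lambda_n(R_H)$ forces $\lambda_{2n-1}(V)\ge c\,n^{-\mu}$. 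Consequently $\sum_n s_n(R_{\tilde{W}})^{p}\ge c\sum_n n^{-\mu p}$, so finiteness of the left-hand side necessitates $\mu p>1$.

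The principal obstacle is the bookkeeping in the Ky Fan index shifts of the form $n\mapsto 2n-1$ when translating singular-number asymptotics into summability exponents, together with verifying in each step that the relevant operator ($R_{\tilde{W}}$, $R_{\tilde{W}^{+}}$, $V$, $H^{-1/2}$) is compact so that the singular-number framework applies; both points are secured by Theorem \ref{T2.1}.
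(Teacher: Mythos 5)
Your proof is correct, and it takes a genuinely different route from the paper's in all three parts. For $\mu\le 1$ the paper does not touch the factorization \eqref{2.22} at all: it works with the quadratic-form inequality $(|R_{\tilde{W}}|^{2}f,f)=\|R_{\tilde{W}}f\|^{2}\le C_{0}^{-1}\mathrm{Re}(R_{\tilde{W}}f,f)=C_{0}^{-1}(Vf,f)$ coming from H2(iii), compares $\lambda_{i}(|R_{\tilde{W}}|^{2})$ with $\lambda_{i}(V)$ via Lemma 1.1 of Gohberg--Krein, and then invokes Theorem \ref{T2.2}; you reach the same bound $s_{n}(R_{\tilde{W}})=O(n^{-\mu/2})$ by pushing norms through $H^{-1/2}(I+iB)^{-1}H^{-1/2}$. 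For $\mu>1$ the paper proves only trace-class membership, via the matrix-trace criterion (Lemma 8.1 and Theorem 8.1 of Gohberg--Krein applied to $\sum(R_{\tilde{W}}\psi_{i},\psi_{i})$ together with the sectorial bound on the imaginary part); your Ky Fan multiplicative estimate $s_{2n-1}(R_{\tilde{W}})\le s_{n}(H^{-1/2})^{2}=\lambda_{n}(R_{H})$ is strictly more informative, since it gives a pointwise decay rate and hence $\mathfrak{S}_{p}$ for every $p>1/\mu$, not merely $p=1$. For the converse the paper first has to establish that $V$ possesses a complete orthonormal eigensystem (a page-long detour through $V^{-1}$, its strict accretivity and discrete spectrum) before it can apply inequality (7.9) of Gohberg--Krein to $\sum|(R_{\tilde{W}}\varphi_{i},\varphi_{i})|^{p}$; your subadditive Ky Fan step $s_{2n-1}(V)\le s_{n}(R_{\tilde{W}})$ applied to $2V=R_{\tilde{W}}+R_{\tilde{W}^{+}}$ bypasses that completeness argument entirely and yields the same divergence of $\sum n^{-\mu p}$. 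Both proofs ultimately rest on Theorem \ref{T2.2} and on the structural facts ($R_{\tilde{W}}^{\ast}=R_{\tilde{W}^{+}}$, the factorization, $\|(I\pm iB)^{-1}\|\le 1$, $H^{-1/2}=(R_{H})^{1/2}$) established in its proof, so nothing extra needs to be re-derived; your version buys a shorter argument and two-sided pointwise singular-number estimates, at the price of index shifts $n\mapsto 2n-1$ that, as you note, must be tracked when converting to summability exponents.
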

\begin{proof}
  Consider the case $(\mu\leq1).$   Since we    already know  that   $R_{  \tilde{W }   }^{*}=R_{ \tilde{W}^{+} }^{\!},$ then it can easily be checked that
the operator $ R_{  \tilde{W }  }^{*}  R_{ \tilde{W }}^{\, } $   is a selfadjoint positive compact operator. Due to the well-known fact   \cite[p.174]{firstab_lit:Krasnoselskii M.A.} there exists the operator $|R_{\tilde{W }}  |.$
   By virtue  of Theorem 9.2 \cite[p.178]{firstab_lit:Krasnoselskii M.A.} the operator $|R_{\tilde{W }}  |$ is compact.
Since $\mathrm{N}(|R_{\tilde{W }}|^{2}) =0,$  it follows that $\mathrm{N}(|R_{\tilde{W }}|)=0.$ Hence      applying   Theorem \cite[p.189]{firstab_lit: Ahiezer1966}, we get that the operator
$|R_{\tilde{W }}|$ has an infinite  set of the eigenvalues.   Using  condition \eqref{2.3} (iii), we get
$$
{\rm Re}(R_{ \tilde{W}}f,f)_{\mathfrak{H}}\geq C_{0}\|R_{ \tilde{W}}f\|^{2}_{\mathfrak{H}},\;f\in \mathfrak{H}.
$$
Hence
$$
(|R_{\tilde{W }}|^{2} f,f)_{\mathfrak{H}}=\|R_{ \tilde{W}}f\|^{2}_{\mathfrak{H}}\leq C^{-1}_{0}{\rm Re}(R_{ \tilde{W}}f,f)_{\mathfrak{H}}= C^{-1}_{0}(V f,f)_{\mathfrak{H}},\,V:=  \left(R_{\tilde{W}}\right)_{\mathfrak{R}}.
$$
Since we already know that the operators $|R_{\tilde{W }}|^{2},V$ are compact, then  using  Lemma 1.1 \cite[p.45]{firstab_lit:1Gohberg1965}, Theorem \ref{T2.2},  we get
\begin{equation}\label{2.27}
 \lambda_{i} (|R_{\tilde{W }}|^{2} )\leq C^{-1}_{0} \,\lambda_{i}(V)\leq  C  i^{-\mu },\;i\in \mathbb{N}.
\end{equation}
   Recall  that by  definition  we have     $s_{i}(R_{ \tilde{W}})= \lambda_{ i }( |R_{\tilde{W }}|  ).$ Note that the operators $|R_{\tilde{W }}|,|R_{\tilde{W }}|^{2}$ have the  same eigenvectors.
This fact can be easily proved if we note   the obvious  relation
$
|R_{\tilde{W }}|^{2}f_{i}=|\lambda _{i} (|R_{\tilde{W }}|)|^{2} f_{i},\, i\in \mathbb{N}
$
and
   the  spectral representation for the  square root of a  selfadjoint positive compact operator
$$
|R_{\tilde{W }}|f=\sum\limits_{i=1}^{\infty}\sqrt{\lambda _{ i }(|R_{\tilde{W }}|^{2})}  \left(f,\varphi_{i}\right)  \varphi_{i}, \,f\in \mathfrak{H},
$$
where  $f_{i}\,, \varphi_{i}$ are  the  eigenvectors  of the  operators $|R_{\tilde{W }}|,|R_{\tilde{W }}|^{2}$ respectively  (see (10.25) \cite[p.201]{firstab_lit:Krasnoselskii M.A.}).  Hence
$ \lambda_{ i }( |R_{\tilde{W }}|  ) =
\sqrt{\lambda _{ i }( |R_{\tilde{W }}|^{2}  )} ,\,i\in \mathbb{N}.$
Combining this fact with  \eqref{2.27}, we get
$$
\sum\limits_{i=1}^{\infty}s^{p}_{i}(R_{ \tilde{W}})=
\sum\limits_{i=1}^{\infty}\lambda_{i}^{\frac{p}{2} }( |R_{\tilde{W }}|^{2}  )\leq C \sum\limits_{i=1}^{\infty} i^{-\frac{\mu p }{2}}.
$$
This completes the proof    for the case $(\mu\leq1).$

Consider the case $(\mu>1).$ It follows from
  \eqref{2.23} that  the  operator $V$ is positive     and   bounded. Hence   by virtue  of Lemma 8.1 \cite[p.126]{firstab_lit:1Gohberg1965}, we have that for any  orthonormal basis $\{\psi_{i}\}_{1}^{\infty}\subset \mathfrak{H}$ the following equalities  hold
\begin{equation}\label{2.28}
\sum\limits_{i=1}^{\infty}{\rm Re}(R_{\tilde{W}}\psi_{i},\psi_{i})_{\mathfrak{H}}=\sum\limits_{i=1}^{\infty}  (V \psi_{i},\psi_{i})_{\mathfrak{H}}=\sum\limits_{i=1}^{\infty}  (V\, \varphi_{i},\varphi_{i})_{\mathfrak{H}} ,
\end{equation}
where $\{\varphi_{i}\}_{1}^{\infty}$ is the orthonormal  basis of  the  eigenvectors of the operator $V.$
Due to  Theorem   \ref{T2.2}, we get
$$
  \sum\limits_{i=1}^{\infty}  (V \varphi_{i},\varphi_{i})_{\mathfrak{H}} =\sum\limits_{i=1}^{\infty}
 s_{i}(V) \leq C  \sum\limits_{i=1}^{\infty}  i^{-\mu }  .
$$
By virtue of  Lemma \ref{L2.1}, we get $ |{\rm Im} (R_{\tilde{W}}\psi_{i},\psi_{i})_{\mathfrak{H}}|\leq k^{-1} (\xi)\, {\rm Re}(R_{\tilde{W}}\psi_{i},\psi_{i})_{\mathfrak{H}}.$ Combining this fact with \eqref{2.28}, we get that  the    following series is convergent
$$
\sum\limits_{i=1}^{\infty} (R_{\tilde{W}}\psi_{i},\psi_{i})_{\mathfrak{H}}<\infty.
$$
Hence by definition \cite[p.125]{firstab_lit:1Gohberg1965} the operator $R_{\tilde{W}}$ has  a finite matrix trace.
 Using    Theorem 8.1 \cite[p.127]{firstab_lit:1Gohberg1965}, we get   $R_{ \tilde{W}}\in \mathfrak{S}_{1}.$ This completes the    proof for the case   $(\mu>1).$

Now, assume that  $ \lambda_{n}(R_{H})\geq  C \,n^{-\mu},\,n\in \mathbb{N},\,0\leq\mu<\infty.$     Let us show that the   operator $V$ has the complete orthonormal  system of the  eigenvectors.   Using   formula \eqref{2.26},   we get
$$
V^{\!^{-1}}=2H^ \frac{1 }{2}  (I+B^{2})      H^{ \frac{1}{2}},\; \mathrm{D} (V^{\!^{-1}})= \mathrm{R} (V).
$$
Let us prove that   $\mathrm{D}(V^{\!^{-1}})\subset \mathrm{D}(H).$    Note that   the set $\mathrm{D}(V^{^{\!-\!1}})$ consists  of the  elements $f+g,$ where $f\in \mathrm{D}(\tilde{W}),\,g\in \mathrm{D}(\tilde{W}^{+}).$    Using  representation \eqref{2.13}, it is easy to prove that
 $ \mathrm{D}(\tilde{W})\subset\mathrm{D}(H),\,\mathrm{D}(\tilde{W}^{+}) \subset\mathrm{D}(H) .$ This gives the desired result.
Taking into account the facts proven  above, we get
\begin{equation}\label{2.29}
 (V^{^{\!-\!1}}\!\!f,f)_{\mathfrak{H}}  =2(  S      H^{ \frac{1}{2}}f,H^ \frac{1 }{2}f)_{\mathfrak{H}}\geq 2\|H^ \frac{1 }{2} f\|^{2}_{\mathfrak{H}}=2(Hf,f)_{\mathfrak{H}}  ,\,f\in \mathrm{D}(V^{^{\!-\!1}}),
\end{equation}
where $S=I+B^{2}.$
 Since  $V$ is selfadjoint, then   due to  Theorem 3 \cite[p.136]{firstab_lit: Ahiezer1966} the operator    $V^{^{\!-\!1}}$ is selfadjoint.
  Combining  \eqref{2.29} with
 Lemma \ref{L2.3} we get that    $V^{^{\!-\!1}}$ is strictly accretive.
Using these facts   we can write
\begin{equation}\label{2.30}
\|f\| _{V^{^{ -\!1}}} \geq C\|f\|_{H  } ,\,f\in  \mathfrak{H}_{ V^{^{ -\!1}}} .
\end{equation}
Since  the operator $H$ has a discrete spectrum (see Theorem 5.3 \cite{firstab_lit:1kukushkin2018}), then any set  bounded   with respect to the      norm $\mathfrak{H}_{H}$ is a compact set   with respect to the norm    $\mathfrak{H}$ (see Theorem 4 \cite[p.220]{firstab_lit:mihlin1970}). Combining this fact with \eqref{2.30}, Theorem 3 \cite[p.216]{firstab_lit:mihlin1970}, we get
  that the operator $V^{^{\!-\!1}}$ has a discrete spectrum, i.e.   it has   the infinite set of  the eigenvalues
$\lambda_{1}\leq\lambda_{2}\leq...\leq\lambda_{i}\leq..., \, \lambda_{i} \rightarrow \infty  ,\,i\rightarrow \infty$ and the   complete orthonormal system of the  eigenvectors.
 Now  note that the operators $V,\,V^{^{\!-\!1}}$ have the same eigenvectors.  Therefore   the operator   $V$ has the  complete orthonormal  system of the  eigenvectors. Recall  that any  complete orthonormal system  is a  basis in  separable Hilbert space. Hence the complete orthonormal  system of the   eigenvectors of the operator $V$ is
  a basis in the space $\mathfrak{H}.$
 Let  $\{\varphi_{i}\}_{1}^{\infty}$ be the  complete orthonormal system  of the   eigenvectors of the  operator  $V$   and suppose    $
R_{ \tilde{W}}\in\mathfrak{S}_{p};$ then by virtue of   inequalities  (7.9) \cite[p.123]{firstab_lit:1Gohberg1965}, Theorem \ref{T2.2},  we get
\begin{equation}\label{2.31}
\sum\limits_{i=1}^{\infty}|s_{i} (R_{\tilde{W}} )|^{p}\geq\sum\limits_{i=1}^{\infty}| (R_{\tilde{W}}\varphi_{i},\varphi_{i})_{\mathfrak{H}}|^{p}\geq\sum\limits_{i=1}^{\infty}|{\rm Re}(R_{\tilde{W}}\varphi_{i},\varphi_{i})_{\mathfrak{H}}|^{p}=
$$
$$
=\sum\limits_{i=1}^{\infty}| (V \varphi_{i},\varphi_{i})_{\mathfrak{H}}|^{p}=\sum\limits_{i=1}^{\infty}
|\lambda_{i}(V)|^{p}\geq C   \sum\limits_{i=1}^{\infty}  i^{- \mu p }  .
\end{equation}
We claim   that $\mu p>1.$   Assuming the converse in the previous   inequality,  we come   to   contradiction with the condition  $R_{ \tilde{W}}\in\mathfrak{S}_{p}.$
This completes the proof.
\end{proof}

  The following theorem establishes  the  completeness property of  the  system  of   root vectors   of the operator $R_{\tilde{W}}.$

\begin{teo}\label{T2.4}
Suppose  $\theta< \pi \mu/2;$   then the  system  of   root vectors of the operator $R_{ \tilde{W} }$ is complete, where $\theta$ is   the   semi-angle of the     sector $ \mathfrak{L}_{0}(\theta)\supset \Theta (\tilde{W}),\,\mu:=\mu(H).$
 \end{teo}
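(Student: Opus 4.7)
My plan for Theorem \ref{T2.4} is to combine the asymptotic spectral estimate of Theorem \ref{T2.2} with a classical Keldysh--Markus--Matsaev completeness theorem for compact sectorial operators. Set $T:=R_{\tilde{W}}$; by Theorem \ref{T2.1} the operator $T$ is compact, and by the relation $R_{\tilde{W}}^{*}=R_{\tilde{W}^{+}}$ established in Theorem \ref{T2.2} its real part coincides with $V=(T+T^{*})/2=\mathfrak{Re}R_{\tilde{W}}$. Theorem \ref{T2.2} supplies the equivalence $\lambda_{n}(V)\asymp\lambda_{n}(R_{H})$, and since $R_{H}$ is positive selfadjoint, the definition $\mu:=\mu(H)$ yields $\lambda_{n}(V)=O(n^{-\mu})$; that is, the compact positive selfadjoint operator $V$ is of order $\mu$ in the sense of \cite{firstab_lit:Shkalikov A.}.

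Next I would verify that the numerical range of $T$ is contained in the same sector as $\Theta(\tilde{W})$. By Lemma \ref{L2.2} the operator $\tilde{W}$ is m-accretive, so $T\in\mathcal{B}(\mathfrak{H})$ and $\mathrm{R}(\tilde{W})=\mathfrak{H}$; hence every nonzero $g\in\mathfrak{H}$ can be written $g=\tilde{W}f$ with $f=Tg\neq 0$, and the elementary identity $(Tg,g)_{\mathfrak{H}}=(f,\tilde{W}f)_{\mathfrak{H}}=\overline{(\tilde{W}f,f)_{\mathfrak{H}}}$ gives $\arg(Tg,g)_{\mathfrak{H}}=-\arg(\tilde{W}f,f)_{\mathfrak{H}}\in[-\theta,\theta]$. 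Thus $\Theta(T)\subset\mathfrak{L}_{0}(\theta)$ with the same semi-angle $\theta$ as that of $\tilde{W}$.

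The concluding step is to invoke a classical completeness theorem in the spirit of Keldysh, Markus and Matsaev: if a compact operator $T$ has a real part of order $\mu>0$ and a numerical range contained in a sector of semi-angle $\theta<\pi\mu/2$, then the system of root vectors of $T$ is complete in $\mathfrak{H}$. Applying this statement (for example in the form formulated in the author's paper \cite{firstab_lit:1kukushkin2021}, or as drawn from the classical sources \cite{firstab_lit:1Markus}, \cite{firstab_lit:2Matsaev}, \cite{firstab_lit:1Gohberg1965}) to $T=R_{\tilde{W}}$ with order parameter $\mu=\mu(H)$ yields the desired completeness under the hypothesis $\theta<\pi\mu/2$.

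The main obstacle I anticipate is the precise matching between the hypotheses of the classical theorem and our data: the sharp threshold $\pi\mu/2$ arises only when the order parameter is taken for the \emph{real part} of $T$ rather than for $T$ itself, and it is exactly this pairing that the asymptotic equivalence $\lambda_{n}(V)\asymp\lambda_{n}(R_{H})$ of Theorem \ref{T2.2} delivers. Once the correct classical statement is identified, the remaining verifications (compactness, sectoriality, m-accretivity) are already in place from Theorems \ref{T2.1}--\ref{T2.2} and Lemmas \ref{L2.1}--\ref{L2.2}, and the proof reduces to citing the classical completeness result.
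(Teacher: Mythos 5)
Your proposal follows essentially the same route as the paper: the paper likewise reduces the claim to the Gohberg--Krein completeness criterion (Theorem 6.2 of \cite{firstab_lit:1Gohberg1965}), applied with the rotation $\beta=\pi/2$ so that the relevant component $\mathfrak{Im}\left(e^{i\pi/2}R_{\tilde{W}}\right)$ is exactly the real part $V$, whose decay $s_{i}(V)=O(i^{-\mu})$ is supplied by Theorem \ref{T2.2} just as you describe. The only detail the paper adds beyond your sketch is a short contradiction argument showing that the minimal sector angle $\vartheta(R_{\tilde{W}})$ of the numerical range is strictly positive, so that it can be written as $\pi/d$ with $d$ finite and $d\geq\pi/2\theta>\mu^{-1}$, which is precisely the hypothesis matching you anticipated as the remaining obstacle.
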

\begin{proof}
Using    Lemma \ref{L2.1}, we have
\begin{equation}\label{2.32}
|{\rm Im} (R_{\tilde{W}}f,f)_{\mathfrak{H}}|\leq k^{-1}(\xi) \, {\rm Re}(R_{\tilde{W}}f,f)_{\mathfrak{H}},\,f\in \mathfrak{H}.
\end{equation}
Therefore   $ \overline{\Theta (R_{\tilde{W}})}\subset \mathfrak{L}_{0}(\theta).$  Note that the map
     $z: \mathbb{C}\rightarrow \mathbb{C},\;  z=1/\zeta$ takes each    eigenvalue  of the operator $R_{\tilde{W}}$  to the eigenvalue  of the operator $\tilde{W}.$ It is also clear    that   $z:\mathfrak{L}_{0}(\theta)\rightarrow \mathfrak{L}_{0}(\theta).$
  Using   the definition  \cite[p.302]{firstab_lit:1Gohberg1965} let us consider the following set
\begin{equation*}
   \mathfrak{P}:=\left\{z:\,z=t\, \xi,\,\xi\in  \overline{\Theta (R_{\tilde{W}})},\, 0\leq t<\infty\right\}.
\end{equation*}
It is easy to see  that $  \mathfrak{P}$ coincides with a closed sector of the complex plane  with the  vertex situated  at the point zero. Let us denote  by
 $\vartheta(R_{\tilde{W}})$ the  angle of this sector. It is obvious that $  \mathfrak{P}\subset \mathfrak{ L }_{0}(\theta).$ Therefore    $ 0 \leq\vartheta(R_{\tilde{W}}) \leq  2\theta.$ Let us prove that     $0 <\vartheta(R_{\tilde{W}}),$ i.e.    the strict inequality holds.    If we assume  that $\vartheta(R_{\tilde{W}})=0,$ then we get  $ e^{-i \mathrm{arg} z} =\varsigma,\,\forall z\in  \mathfrak{P}\setminus 0, $
 where $\varsigma$  is a constant independent on $z.$ In consequence of this fact   we have $\mathrm{Im}\, \Theta (\varsigma R_{\tilde{W}})=0.$ Hence the operator $\varsigma R_{\tilde{W}}$ is symmetric (see Problem  3.9 \cite[p.269]{firstab_lit:kato1980}) and by virtue of the fact   $ \mathrm{D} (\varsigma R_{\tilde{W}})=\mathfrak{H}$ one is selfadjoint. On the other hand, taking into account   the      equality  $R^{\ast}_{ \tilde{W}}=R^{\,}_{ \tilde{W}^{+}}$   (see the proof of   Theorem \ref{T2.2}), we have $(\varsigma R_{\tilde{W}}f,g)_{\mathfrak{H}}=( f,\bar{\varsigma} R_{\tilde{W}^{+}}g)_{\mathfrak{H}},\,  f,g\in \mathfrak{H}.$ Hence
 $\varsigma R_{\tilde{W}}=\bar{\varsigma} R_{\tilde{W}^{+}}.$ In the particular case we have $\forall f\in \mathfrak{H},\,\mathrm{Im}f=0:\, \mathrm{Re}\,\varsigma\,R_{\tilde{W}}f= \mathrm{Re}\,\varsigma\,R_{\tilde{W}^{+}}f,\,\mathrm{Im}\,\varsigma\,R_{\tilde{W}}f= -\mathrm{Im}\,\varsigma\,R_{\tilde{W}^{+}}f  .  $ It implies that  $\mathrm{N}(R_{\tilde{W}})\neq 0.$  This contradiction   concludes  the proof of the fact $ \vartheta(R_{\tilde{W}})>0.$
    Let us use   Theorem 6.2 \cite[p.305]{firstab_lit:1Gohberg1965}    according to  which   we have the following. If the following two conditions  (a) and (b) are fulfilled, then   the  system  of   root vectors  of the operator $R_{\tilde{W}}$ is complete.  \\

\noindent  a) $\vartheta(R_{\tilde{W}})=  \pi/ d ,$ where $d>1,$\\

\noindent b) for some $\beta,$ the  operator $B:=\mathfrak{Im}\left(e^{ i \beta}R_{\tilde{W}} \right) :\;s_{i}(B)=o(i^{-1/d }),\,i\rightarrow \infty .$\\

\noindent Let us show that conditions (a) and (b) are fulfilled. Note that due to Lemma \ref{L2.1} we have  $0\leq\theta<\pi/2.$
Hence $ 0<\vartheta(R_{\tilde{W}})<\pi.$ It implies that  there exists  $1<d<\infty$ such that
$\vartheta(R_{\tilde{W}})=  \pi/ d.$  Thus   condition  (a) is fulfilled.
  Let us choose the  certain value  $\beta=\pi/2$ in   condition (b) and notice  that $\mathfrak{Im}\left(e^{ i \pi/2}R_{\tilde{W}} \right) =\mathfrak{Re} R_{\tilde{W}} . $ Since the operator $V:= \mathfrak{Re} R_{\tilde{W}}  $ is  selfadjoint, then we have   $s_{i}(V)=\lambda_{i}(V),\,i\in \mathbb{N}.$
  In consequence of Theorem \ref{T2.2}, we obtain
\begin{equation*}
 s_{i}(V)\, i^{1/d} = s_{i}(V)\, i^{ \mu} \cdot i^{1/d-\mu}    \leq C\cdot i^{1/d-\mu} ,\,i\in \mathbb{N}.
\end{equation*}
 Hence to achieve  condition  (b),      it is sufficient  to show that  $d>\mu^{-1}.$
By virtue of the conditions $ \vartheta(R_{\tilde{W}}) \leq 2 \theta,\,\theta< \pi \mu/2,$ we have $d=\pi/\vartheta(R_{\tilde{W}})\geq \pi/2\theta>\mu^{-1}.$ Hence we obtain  $s_{i}(V)=o(i^{-1/d }).$
 Since   both     conditions (a),(b) are fulfilled, then using   Theorem 6.2 \cite[p.305]{firstab_lit:1Gohberg1965} we complete the proof.
\end{proof}

  Theorem \ref{T2.3} is  devoted to    the  description  of $s$-numbers    behavior  but     questions related with   asymptotic of the eigenvalues $\lambda_{i}(R_{\tilde{W}}),\,i\in \mathbb{N}$ are still relevant in our work.  It is a well-known fact  that for any  bounded  operator with the compact  imaginary component    there is a relationship between $s$-numbers of the  imaginary component  and the  eigenvalues    (see \cite {firstab_lit:1Gohberg1965}).  Similarly using  the information on    $s$-numbers of the real component,    we  can obtain  an asymptotic formula for the eigenvalues $\lambda_{i}(R_{\tilde{W}}),\,i\in \mathbb{N}.$      This idea is realized in the following theorem.
\begin{teo}\label{T2.5}
 The following inequality   holds
 \begin{equation}\label{2.33}
\sum\limits_{i=1}^{n}|\lambda_{i}(R_{\tilde{W}})|^{p}\leq
  \sec ^{p} \theta \,\left\|S^{- 1}      \right\| \sum\limits_{i=1}^{n } \,\lambda^{ p}_{i}(R_{H}),
\end{equation}
$$
 n=1,2,...,\, \nu(R_{\tilde{W}}),\,1\leq p<\infty.
$$
Moreover   if  $\nu(R_{\tilde{W}})=\infty$ and the  order $\mu(H)\neq0,$  then  the following asymptotic formula  holds
\begin{equation}\label{2.34}
|\lambda_{i}(R_{\tilde{W}})|=  o\left(i^{-\mu+\varepsilon}    \right)\!,\,i\rightarrow \infty,\;\forall \varepsilon>0.
\end{equation}
\end{teo}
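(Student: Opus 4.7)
The plan is to deduce (2.33) by combining a Schur triangular reduction of $R_{\tilde{W}}$ with the sector estimate from Lemma \ref{L2.1} and the factorization (2.26) linking $V=\mathfrak{Re}\,R_{\tilde{W}}$ to $R_H$, and then to obtain (2.34) from (2.33) by exploiting the monotonicity of $|\lambda_i(R_{\tilde{W}})|$ together with the tail behaviour of $\sum k^{-\mu p}$.

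First I would invoke Schur's lemma for compact operators (see \cite{firstab_lit:1Gohberg1965}) to produce an orthonormal system $\{\varphi_i\}_{i=1}^{\nu(R_{\tilde{W}})}$ along which $R_{\tilde{W}}$ is upper triangular, so that $(R_{\tilde{W}}\varphi_i,\varphi_i)_{\mathfrak{H}}=\lambda_i(R_{\tilde{W}})$ with the eigenvalues enumerated in non-increasing modulus. By Lemma \ref{L2.1} the sector estimate $|\mathrm{Im}(R_{\tilde{W}}f,f)_{\mathfrak{H}}|\leq \tan\theta\,(Vf,f)_{\mathfrak{H}}$ holds for all $f\in\mathfrak{H}$, so
\begin{equation*}
|\lambda_i(R_{\tilde{W}})|^{2}=\bigl(\mathrm{Re}(R_{\tilde{W}}\varphi_i,\varphi_i)\bigr)^{2}+\bigl(\mathrm{Im}(R_{\tilde{W}}\varphi_i,\varphi_i)\bigr)^{2}\leq \sec^{2}\theta\,(V\varphi_i,\varphi_i)^{2},
\end{equation*}
giving $|\lambda_i(R_{\tilde{W}})|^{p}\leq \sec^{p}\theta\,(V\varphi_i,\varphi_i)^{p}$ after raising to the $p$-th power and recalling the Schur property of $\{\varphi_i\}$.

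Next I would apply the Ky Fan / Schur--Horn majorization principle: since $V$ is a positive selfadjoint compact operator, the diagonal sequence $\{(V\varphi_i,\varphi_i)\}$ is majorized by $\{\lambda_i(V)\}$, and convexity of $x\mapsto x^{p}$ on $[0,\infty)$ for $p\geq 1$ yields $\sum_{i=1}^{n}(V\varphi_i,\varphi_i)^{p}\leq \sum_{i=1}^{n}\lambda_i^{p}(V)$. From (2.26) one has $(Vf,f)_{\mathfrak{H}}=(S^{-1}H^{-1/2}f,H^{-1/2}f)_{\mathfrak{H}}\leq\|S^{-1}\|(R_{H}f,f)_{\mathfrak{H}}$, whence by Lemma 1.1 of \cite{firstab_lit:1Gohberg1965} $\lambda_i(V)\leq \|S^{-1}\|\lambda_i(R_H)$, so $\lambda_i^{p}(V)\leq \|S^{-1}\|^{p}\lambda_i^{p}(R_H)$. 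The elementary observation $S=I+B^{2}\geq I$ gives $\|S^{-1}\|\leq 1$, hence $\|S^{-1}\|^{p}\leq \|S^{-1}\|$ for $p\geq 1$; collecting the estimates yields (2.33).

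For (2.34) I would fix $\varepsilon>0$ and choose $p\geq 1$ adapted to $\mu$. When $\mu\leq 1$, the choice $p=1$ combined with $\sum_{k=1}^{i}k^{-\mu p}\sim i^{1-\mu p}/(1-\mu p)$ (with a logarithmic correction at $\mu=1$) and the monotonicity $i|\lambda_i(R_{\tilde{W}})|^{p}\leq\sum_{k=1}^{i}|\lambda_k(R_{\tilde{W}})|^{p}$ yields $|\lambda_i|=O(i^{-\mu})$, which is $o(i^{-\mu+\varepsilon})$ since $i^{\mu-\varepsilon}|\lambda_i|\leq C\,i^{-\varepsilon}\to 0$. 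When $\mu p>1$, (2.33) and the assumption $\nu(R_{\tilde{W}})=\infty$ imply $\sum_{i=1}^{\infty}|\lambda_i(R_{\tilde{W}})|^{p}<\infty$, so monotonicity gives $i|\lambda_i|^{p}=o(1)$, that is, $|\lambda_i|=o(i^{-1/p})$; picking $p$ close to $1/(\mu-\varepsilon/2)$ (admissible whenever $p\geq 1$ can be arranged) converts this into $|\lambda_i|=o(i^{-\mu+\varepsilon})$.

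The main obstacle is the simultaneous use of the Schur basis and the $p$-th power majorization: one needs a single orthonormal system $\{\varphi_i\}$ that works uniformly for all $n\leq \nu(R_{\tilde{W}})$, and the convexity argument demands $p\geq 1$, which restricts the admissible range of $p$ in the regime where $\mu$ is not much below $1$. The sharpening $\|S^{-1}\|^{p}\leadsto \|S^{-1}\|$ in the conclusion is made possible only by the observation $\|S^{-1}\|\leq 1$, and the upgrade from the $O$-type to the $o$-type bound in (2.34) is driven by the strict inequality in the choice $p<1/(\mu-\varepsilon)$, which must be handled carefully in the borderline case.
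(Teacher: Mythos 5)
Your proof of \eqref{2.33} is correct but follows a genuinely different route from the paper. You triangularize $R_{\tilde{W}}$ via a Schur basis so that the diagonal entries are the eigenvalues, apply the sector estimate \eqref{2.32} pointwise to get $|\lambda_i(R_{\tilde{W}})|^{p}\leq\sec^{p}\theta\,(V\varphi_i,\varphi_i)^{p}$, and then control the diagonal of the positive operator $V$ by its eigenvalues through weak majorization (this is exactly inequality (7.9) of \cite[p.123]{firstab_lit:1Gohberg1965}, which the paper itself uses in Theorem \ref{T2.3}, only in the opposite direction). The paper instead invokes Theorem 6.1 of \cite[p.81]{firstab_lit:1Gohberg1965} applied to $iR_{\tilde{W}}$, converting $\mathrm{Re}\,\lambda_m(R_{\tilde{W}})$ into $\mathrm{Im}\,\lambda_m(iR_{\tilde{W}})$ and bounding $\sum|\mathrm{Re}\,\lambda_m|^{p}$ by $\sum s_m^{p}(V)$, then applies the same sector estimate to the eigenvalues rather than to the quadratic form. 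Both routes funnel through $V=\mathfrak{Re}\,R_{\tilde{W}}$ and then through \eqref{2.26}; your version has the advantage of actually producing the stated constant $\sec^{p}\theta\,\|S^{-1}\|$, since you track $(Vf,f)\leq\|S^{-1}\|(R_Hf,f)$ and observe $\|S^{-1}\|\leq1$ so that $\|S^{-1}\|^{p}\leq\|S^{-1}\|$, whereas the paper closes the argument by citing only the asymptotic equivalence \eqref{2.12}, which would not by itself yield that explicit constant. For \eqref{2.34} your argument coincides with the paper's in the regime $\mu p>1$ (monotonicity of $|\lambda_i|$ plus convergence of the series), and your $p=1$ computation giving the $O(i^{-\mu})$ bound for $\mu\leq1$ is a clean shortcut the paper does not use.

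One caveat, which you half-identify at the end: the restriction $p\geq1$ in \eqref{2.33} means that the window $1/\mu<p<1/(\mu-\varepsilon)$ intersects $[1,\infty)$ only when $\mu<1+\varepsilon$, so for $\mu>1$ and small $\varepsilon$ neither your argument nor the paper's actually delivers $o(i^{-\mu+\varepsilon})$ — both only give $O(i^{-1})$ there. This is a gap in the paper's own proof, not something introduced by your approach, but your closing remark locates the problematic regime as ``$\mu$ not much below $1$'' when in fact it is $\mu\geq1+\varepsilon$ that causes the failure; for $\mu$ just below $1$ the choice $p=1/(\mu-\varepsilon/2)>1$ is perfectly admissible.
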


\begin{proof}
Let $L$ be a bounded    operator with  a compact imaginary component.  Note that  according to Theorem 6.1 \cite[p.81]{firstab_lit:1Gohberg1965},   we have
 \begin{equation}\label{2.35}
\sum\limits_{m=1}^{k}|{\rm Im}\,\lambda_{m}(L)|^{p}\leq \sum\limits_{m=1}^{k}
|s_{m} (\mathfrak{Im} L    )|^{p},\;\left(k=1,\,2,...,\,\nu_{\,\mathfrak{I}}(L)\right),\,1\leq p<\infty,
\end{equation}
where    $ \nu_{\,\mathfrak{I}}(L) \leq \infty $ is   the sum of   all  algebraic multiplicities corresponding to   the not real  eigenvalues   of the operator $L$ (see  \cite[p.79]{firstab_lit:1Gohberg1965}).    It can easily be checked that
 \begin{equation}\label{2.36}
\mathfrak{Im}(iL) = \mathfrak{Re}L ,\; \mathrm{Im }\, \lambda_{m} (i\,L)= \mathrm{Re}\,  \lambda_{m}(L),\;m\in \mathbb{N}.
\end{equation}
By virtue of  \eqref{2.32}, we have ${\rm Re}\,\lambda_{m}(R_{\tilde{W}})>0,\,m=1,2,...,\nu\left(  R_{\tilde{W}} \right) .$ Combining this fact with     \eqref{2.36}, we get
 $\nu_{\mathfrak{I}}(i R_{\tilde{W}} )=\nu\left(R_{\tilde{W}}\right).$ Taking  into account the previous  equality  and    combining \eqref{2.35},\eqref{2.36},       we obtain
 \begin{equation}\label{2.37}
\sum\limits_{m=1}^{k}|{\rm Re}\,\lambda_{m}(R_{\tilde{W}})|^{p}\leq \sum\limits_{m=1}^{k}
|s_{m} (V )|^{p} ,\; \left(k=1,\,2,...\,,\nu  (R_{\tilde{W}})\right),\,V:= \mathfrak{Re }R_{\tilde{W}} .
\end{equation}
 Note that by virtue of \eqref{2.32}, we have
$$
|{\rm Im}\,\lambda_{m}(R_{\tilde{W}})|\leq  \tan \theta \,{\rm Re}\,\lambda_{m}\,(R_{\tilde{W}}),\,m\in \mathbb{N}.
$$
Hence
 \begin{equation}\label{2.38}
| \lambda_{m}(R_{\tilde{W}})|= \sqrt{|{\rm Im}\,\lambda_{m}(R_{\tilde{W}})|^{2}+|{\rm Re}\,\lambda_{m}(R_{\tilde{W}})|^{2}}\leq
$$
$$
  \leq \sqrt{\tan^{2}\theta+1} \;|{\rm Re}\,\lambda_{m}(R_{\tilde{W}})|
=\sec \theta\, |{\rm Re}\,\lambda_{m}(R_{\tilde{W}})|,\,m\in \mathbb{N}.
\end{equation}
Combining \eqref{2.37},\eqref{2.38}, we get
 \begin{equation*}
\sum\limits_{m=1}^{k}| \lambda_{m}(R_{\tilde{W}})|^{p}\leq \sec^{p}\!\theta\,\sum\limits_{m=1}^{k}
|s_{m} (V )|^{p} ,\; \left(k=1,\,2,...\,,\nu  (R_{\tilde{W}})\right).
\end{equation*}
Using   \eqref{2.12}, we complete     the proof of   inequality \eqref{2.33}.

Suppose $  \nu(R_{\tilde{W}})=\infty,\, \mu(H)\neq0 $ and let us prove \eqref{2.34}.   Note   that for    $\mu>0$ and for any $ \varepsilon>0,$ we can choose  $p$  so that
 $\mu p>1,\, \mu-\varepsilon<1/p .$
Using the condition $\mu p>1,$  we obtain  convergence of the series on  the left side of    \eqref{2.33}. It implies that
 \begin{equation}\label{2.39}
|\lambda_{i}(R_{\tilde{W}})| i^{1/p}\rightarrow 0,\;i\rightarrow\infty.
\end{equation}
It is obvious that
$$
|\lambda_{i}(R_{\tilde{W}})| i^{ \mu-\varepsilon}<|\lambda_{i}(R_{\tilde{W}})| i^{ 1/p} ,\,i\in \mathbb{N}.
$$
Taking into account \eqref{2.39}, we obtain  \eqref{2.34}.
\end{proof}

\section{Comparison analysis with the  subordination concept }

\subsection{Counterarguments }
 We begin with definitions.
Suppose  $\Omega$
is a convex domain of the n-dimensional Euclidian space with the  sufficient smooth
boundary,
$L_{2}(\Omega)$ is  a  complex Lebesgue space  of summable with square functions,   $H^{2}(\Omega),$ $H^{1}(\Omega)$
 are  complex Sobolev spaces, $D_{i}f:= \partial f/\partial x_{i} ,\,1 \leq i \leq n$ is the    weak   partial derivatives of  of the function $f.$   Consider a sum of a  uniformly elliptic operator
and the extension of the Kipriyanov fractional differential operator of   order $0 <\alpha < 1$ (see Lemma
2.5 \cite{firstab_lit:1kukushkin2018})
\begin{equation*}
 Lu:=-  D_{j} ( a^{ij} D_{i}f) \, +   \mathfrak{D}  ^{ \alpha }_{0+}f ,
 $$
 $$
   \mathrm{D}(L)=H^{2}(\Omega)\cap H^{1}_{0}(\Omega),
  \end{equation*}
 with the following  assumptions relative to the  real-valued   coefficients
\begin{equation*}
 a^{ij}(Q) \in C^{1}(\bar{\Omega}),\,   a^{ij}\xi _{i}  \xi _{j}  \geq   a  |\xi|^{2} ,\,  a  >0 .
 \end{equation*}
 It was proved in the  paper  \cite{firstab_lit:1kukushkin2018} that the operator
 $   L^{+}f:=-D_{i} ( a^{ij} D_{j}f)+ \mathfrak{D}   ^{ \alpha }_{d-}f,$
 $
 \mathrm{D}   (L^{+})=\mathrm{D}(L)
 $  is formal adjoint with respect  to $L.$
Note that  in accordance with Theorem 2    \cite{firstab_lit(JFCA)kukushkin2018} we have $\mathrm{R}(L) = \mathrm{R}(L^{+}) = L_{2}(\Omega),$ due to the reasonings  of  Theorem 3.1 \cite{firstab_lit:1.1kukushkin2017} the operators
 $L,L^{+}$ are strictly accretive. Taking into account  these facts we can conclude that the operators $L,L^{+}$ are closed
(see problem 5.15 \cite[p.165]{firstab_lit:kato1980}). Consider the operator $\mathfrak{Re}L.$  Having made the   absolutely analogous
reasonings   as in the previous case,  we  conclude that the operator $\mathfrak{Re}L$ is closed. Applying
the   reasonings of  Theorem 4.3 \cite{firstab_lit:1kukushkin2018}, we obtain that the operator  $\mathfrak{Re}L$ is   selfadjoint and strictly accretive.
Recall  that to apply    the methods    described  in the paper  \cite{firstab_lit:Shkalikov A.} we must have some decomposition of the initial
operator $L$ on a sum $L=\mathcal{T}+\mathcal{A},$     where
$\mathcal{T}$ must be an  operator of a special type   either a  selfadjoint or  a normal operator.
Note that the uniformly elliptic operator of  second order  is
neither selfadjoint  no normal  in the general case.   To demonstrate  the   significance of the method obtained in this paper,   we would like to note  that    a search  for  a convenient decomposition of $L$ on a sum
of a selfadjoint operator and  some operator does not  seem  to be a reasonable  way. Now  to justify this  claim  we consider one of  possible decompositions of $L$ on  a sum.   Consider  a selfadjoint strictly accretive operator
$  \mathcal{T}  :\mathfrak{H}\rightarrow \mathfrak{H}. $

\begin{deff}\label{D2.1}
In accordance with the   definition of the paper  \cite{firstab_lit:Shkalikov A.},  a quadratic form $\mathfrak{a}: = \mathfrak{a}[f]$ is called a $\mathcal{T}$ - subordinated form if the following condition holds
\begin{equation}\label{2.40}
\left|\mathfrak{a}[f]\right| \leq b\, \mathfrak{t}[f] +M\|f\|^{2}_{\mathfrak{H}},\;
  \mathrm{D}(\mathfrak{a}) \supset \mathrm{D}(\mathfrak{t}),\; b < 1,\; M > 0,
\end{equation}
where $\mathfrak{t}[f]=\|\mathcal{T}^{\frac{1}{2}}\|^{2}_{\mathfrak{H}},\,f\in \mathrm{D}(\mathcal{T}^{\frac{1}{2}}).$
 The form $\mathfrak{a}$  is called  a completely $\mathcal{T}$ - subordinated form    if besides   of \eqref{2.40}  we have the following additional condition  $\forall \varepsilon>0 \,\exists b,M>0:\, b <\varepsilon.$
\end{deff}
Let us consider the trivial  decomposition  of the operator $L$ on the sum $L=2 \mathfrak{Re}L-L^{+}$ and let us use the notation $\mathcal{T}:=2\mathfrak{Re}L,\, \mathcal{A}:=-L^{+}.$ Then we have $L=\mathcal{T}+\mathcal{A}.$
Due to the sectorial property
proven in   Theorem 4.2 \cite{firstab_lit:1kukushkin2018}  we have
\begin{equation}\label{2.41}
 |(\mathcal{A}f,f)_{L_{2} }| \!=\!\sec \theta_{f}\, |\mathrm{Re}(\mathcal{A}f,f)_{L_{2} }|\! = \!\sec \theta_{f}\, \frac{1}{2}(\mathcal{T}f,f)_{L_{2} },\,f\in \mathrm{D}(\mathcal{T}),
\end{equation}
 where $ 0\leq\theta_{f}\leq \theta,\, \theta_{f}:=\left|\mathrm{arg}  (L^{+}f,f)_{L_{2} }\right|,\, L_{2}:=L_{2}(\Omega)  $   and $\theta$ is the semi-angle   corresponding to the sector $\mathfrak{L}_{0}(\theta).$
  Due to Theorem 4.3 \cite{firstab_lit:1kukushkin2018}  the operator $\mathcal{T}$ is m-accretive. Hence in consequence of Theorem
3.35  \cite[p.281]{firstab_lit:kato1980} we have that  $\mathrm{D}(\mathcal{T} )$   is a core of the operator $\mathcal{T}^{\frac{1}{2}}.$ It implies that we can extend   relation \eqref{2.41} to
\begin{equation}\label{2.42}
 \frac{1}{2} \, \mathbf{\mathfrak{t}}[f]\leq|\mathbf{\mathfrak{a }}[f]|\leq \sec \theta \frac{1}{2} \, \mathbf{\mathfrak{t}}[f],\, f\in \mathrm{D} (\mathbf{\mathfrak{t}}),
\end{equation}
where $\mathfrak{a}$ is a quadratic form generated by $\mathcal{A}$ and $\mathfrak{t}[f] = \|\mathcal{T}^{\frac{1}{2}}f\|^{2}_{\mathfrak{H}}.$
 If we consider
the case $0<\theta<\pi/3,$ then it is obvious  that there exist  constants  $b < 1$ and $M > 0$ such that
the following inequality holds
$$
|\mathfrak{a}[f]| \leq b\, \mathfrak{t}[f] +M\|f\|^{2}_{L_{2}},\;
 f\in \mathrm{D}(\mathfrak{t}).
$$
Hence the form $\mathfrak{a}$ is a $\mathcal{T}$ - subordinated form.
In accordance with  the  definition given in the paper \cite{firstab_lit:Shkalikov A.} it means $\mathcal{T}$ - subordination of the operator $\mathcal{A}$  in the sense of   form. Assume
that $\forall \varepsilon>0\,\exists b,M>0:\,  b <\varepsilon.$  Using  inequality \eqref{2.42},  we get
$$
   \frac{1}{2}\mathfrak{t}[f]   \leq \varepsilon \, \mathfrak{t}[f] + M(\varepsilon)\|f\|^{2}_{L_{2} };\; \mathfrak{t}[f]\leq \frac{2 M(\varepsilon)}{(1-2\varepsilon)}\|f\|^{2}_{L_{2} },  \,f\in  \mathrm{D}(\mathfrak{t}),\, \varepsilon < 1/2.
$$
Using the strictly accretive property of the operator $L$ (see inequality  (4.9) \cite{firstab_lit:1kukushkin2018}), we   obtain
$$
C\|f\|^{2}_{H^{1}_{0} }  \leq  \mathfrak{t}[f],  \,f\in  \mathrm{D}(\mathfrak{t}).
$$
On the other hand, using the results of the paper  \cite{firstab_lit:1kukushkin2018},  it is easy to prove that $H^{1}_{0}(\Omega)\subset\mathrm{D}(\mathfrak{t}).$
Taking into account the facts considered above,   we get
$$
\|f\|_{H^{1}_{0} }   \leq  C \|f\|_{L_{2}},  \,f\in H^{1}_{0}(\Omega)  ,
$$
   but as it is well known this inequality is not true. This contradiction shows us that the form $\mathfrak{a}$ is not    a  completely $\mathcal{T}$ - subordinated form. It implies that we cannot  use     Theorem 8.4 \cite{firstab_lit:Shkalikov A.}   which could give us an opportunity to describe the spectral properties of the operator  $L.$
Note that the reasonings corresponding to  another trivial decomposition of $L$ on a sum  is analogous.

     This rather particular example does not aim to show the inability of using   remarkable
methods considered in the paper  \cite{firstab_lit:Shkalikov A.}  but only creates   prerequisite for    some   value  of    another
method  based on  using spectral properties of the real component of the  initial operator $L.$

\subsection{Arguments}

 In this paragraph, we would like to demonstrate   the    effectiveness    of   the elaborated   method.

  Suppose   $\mathfrak{H} := L_{2}(\Omega
),\, \mathfrak{H}^{+} := H^{1}_{0}(\Omega),\, Tf :=  -D_{j}(a^{ij}D_{i}f),\, Af :=
   \mathfrak{D}^{ \alpha }_{0+}f,\; \mathrm{D}(T),\mathrm{D}(A) =H^{2}(\Omega)\cap H_{0}^{1}(\Omega);$ then    due to the Rellich-Kondrachov  theorem we have that  condition
\eqref{2.1} is fulfilled. Due to  the  results obtained in the paper  \cite{firstab_lit:1kukushkin2018} we have that   condition  \eqref{2.3} is  fulfilled. Applying the results obtained in the paper  \cite{firstab_lit:1kukushkin2018}
we  conclude that the operator $\mathfrak{Re}L$ has   non-zero order. Hence we can apply the abstract results of
this paper to the   operator $L.$ In fact, Theorems \ref{T2.3}-\ref{T2.5} describe the  spectral properties of the operator $L.$

 Let us  provide  one more example.     We deal with the differential operator acting in the complex Sobolev  space and defined by the following
expression
$$
\mathcal{L}f := (c_{k}f^{(k)})^{(k)} + (c_{k-1}f^{(k-1)})^{(k-1)}+...+  c_{0}f,
$$
$$
\mathrm{D}(\mathcal{L}) = H^{2k}(I)\cap H_{0}^{k}(I),\,k\in \mathbb{N},
$$
where    $I: = (a, b) \subset \mathbb{R},$ the   complex-valued coefficients
$c_{j}(x)\in C^{(j)}(\bar{I})$ satisfy the condition $  {\rm sign} (\mathrm{Re} c_{j}) = (-1)^{j} ,\, j = 1, 2, ..., k.$
It is easy
to see that
$$
\mathrm{Re}(\mathcal{L}f,f)_{L_{2}(I)}\geq\sum\limits_{j=0}^{k}|\mathrm{Re} c_{j}|\,\|f^{(j)}\|^{2}_{L_{2}(I)}\geq C \|f^{(j)}\|^{2}_{H_{0}^{k}(I)},\;f\in \mathrm{D}(\mathcal{L}).
$$
On the other hand
$$
|(\mathcal{L}f,f)_{L_{2}(I)}|=\left|\sum\limits_{j=0}^{k}(-1)^{j}(c_{j}f^{(j)},g^{(j)} )_{L_{2}(I)}\right|\leq
\sum\limits_{j=0}^{k}\left|(c_{j}f^{(j)},g^{(j)} )_{L_{2}(I)}\right|\leq
$$
$$
\leq C \sum\limits_{j=0}^{k} \|f^{(j)}\| _{L_{2}(I)}\|g^{(j)}\| _{L_{2}(I)}\leq
\|f \| _{H^{k}_{0}(I)}\|g \| _{H^{k}_{0}(I)},\;f\in \mathrm{D}(\mathcal{L}).
$$
Consider the  Riemann-Liouville   operators of fractional differentiation of   arbitrary non-negative
order $\alpha$ (see \cite[p.44]{firstab_lit:samko1987})  defined by the expressions
\begin{equation*}
 D_{a+}^{\alpha}f=\left(\frac{d}{dx}\right)^{[\alpha]+1}\!\!\!\!I_{a+}^{1-\{\alpha\}}f;\;
 D_{b-}^{\alpha}f=\left(-\frac{d}{dx}\right)^{[\alpha]+1}\!\!\!\!I_{b-}^{1-\{\alpha\}}f,
\end{equation*}
where the fractional integrals of      arbitrary positive order  $\alpha$ defined by
$$
\left(I_{a+}^{\alpha}f\right)\!(x)=\frac{1}{\Gamma(\alpha)}\int\limits_{a}^{x}\frac{f(t)}{(x-t)^{1-\alpha}}dt,
 \left(I_{b-}^{\alpha}f\right)\!(x)=\frac{1}{\Gamma(\alpha)}\int\limits_{x}^{b}\frac{f(t)}{(t-x)^{1-\alpha}}dt
, f\in L_{1}(I).
$$
Suppose  $0<\alpha<1,\, f\in AC^{l+1}(\bar{I}),\,f^{(j)}(a)=f^{(j)}(b)=0,\,j=0,1,...,l;$ then the next formulas follows
from   Theorem 2.2 \cite[p.46]{firstab_lit:samko1987}
\begin{equation}\label{2.43}
 D_{a+}^{\alpha+l}f= I_{a+}^{1- \alpha }f^{(l+1)},\;
 D_{b-}^{\alpha+l}f= (-1)^{l+1}I_{b-}^{1- \alpha }f^{(l+1)}.
\end{equation}
  Further, we need  the following inequalities    (see  \cite{firstab_lit:(vlad)kukushkin2016})
\begin{equation}\label{2.44}
\mathrm{Re} (D_{a+}^{\alpha}f,f)_{L_{2}(I)}\geq C\|f\|^{2}_{L_{2}(I)},\,f\in I_{a+}^{\alpha}(L_{2}),\;
$$
$$
\mathrm{Re} (D_{b-}^{\alpha}f,f)_{L_{2}(I)}\geq C\|f\|^{2}_{L_{2}(I)},\,f\in I_{b-}^{\alpha}(L_{2}),
\end{equation}
where $I_{a+}^{\alpha}(L_{2}),I_{b-}^{\alpha}(L_{2})$ are the  classes of  the  functions representable by the fractional integrals (see\cite{firstab_lit:samko1987}).
  Consider the following operator  with the  constant  real-valued  coefficients
$$
\mathcal{D}f:=p_{n}D_{a+}^{\alpha_{n}}+q_{n}D_{b-}^{\beta_{n}}+p_{n-1}D_{a+}^{\alpha_{n-1}}+q_{n-1}D_{b-}^{\beta_{n-1}}+...+
p_{0}D_{a+}^{\alpha_{0}}+q_{0}D_{b-}^{\beta_{0}},
$$
$$
\mathrm{D}(\mathcal{D}) = H^{2k}(I)\cap H_{0}^{k}(I),\,n\in \mathbb{N},
$$
where $\alpha_{j},\beta_{j}\geq 0,\,0 \leq [\alpha_{j}],[\beta_{j}] < k,\, j = 0, 1, ..., n.,\;$
\begin{equation*}
 q_{j}\geq0,\;{\rm sign}\,p_{j}= \left\{ \begin{aligned}
  (-1)^{\frac{[\alpha_{j}]+1}{2}},\,[\alpha_{j}]=2m-1,\,m\in \mathbb{N},\\
\!\!\!\!\!\!\! \!\!\!\!(-1)^{\frac{[\alpha_{j}]}{2}},\;\,[\alpha_{j}]=2m,\,\,m\in \mathbb{N}_{0}   .\\
\end{aligned}
\right.
\end{equation*}
Using \eqref{2.43},\eqref{2.44},  we get
$$
(p_{j}D_{a+}^{\alpha_{j}}f,\!f)_{L_{2}(I)}\!=
\!p_{j}\!\left(\!\!\left(\frac{d}{dx}\right)^{\!\!\!m}\!\!D_{a+}^{m-1+\{\alpha_{j}\}}\!\!f,\!f   \!\right)_{\!\!L_{2}(I)}\!\!\!\!\!\! =
(\!-1)^{m}p_{j}\!\left(\! I_{a+}^{1-\{\alpha_{j}\}}\!\!f^{(m)}\!\!,\!f^{(m)}   \!\right)_{\!\!L_{2}(I)}\!\! \geq
$$
$$
\geq C\left\|I_{a+}^{1-\{\alpha_{j}\}}f^{(m)}\right\|^{2}_{L_{2}(I)}=
C\left\|D_{a+}^{\{\alpha_{j}\}}f^{(m-1)}\right\|^{2}_{L_{2}(I)}\geq C \left\| f^{(m-1)}\right\|^{2}_{L_{2}(I)},
$$
 where  $f \in \mathrm{D}(\mathcal{D})$ is    a real-valued function and   $ [\alpha_{j}]=2m-1,\,m\in \mathbb{N}.$
  Similarly,  we obtain for orders  $ [\alpha_{j}]=2m,\,m\in \mathbb{N}_{0}$
$$
(p_{j}D_{a+}^{\alpha_{j}}f,f)_{L_{2}(I)}=p_{j}\left( D_{a+}^{2m +\{\alpha_{j}\}}f,f   \right)_{L_{2}(I)}=(-1)^{m}p_{j}\left( D_{a+}^{m+\{\alpha_{j}\}}f ,f^{(m)}   \right)_{L_{2}(I)}=
$$
$$
=(-1)^{m}p_{j}\left( D_{a+}^{ \{\alpha_{j}\}}f^{(m)} ,f^{(m)}   \right)_{\!L_{2}(I)}\geq C \left\| f^{(m)}\right\|^{2}_{L_{2}(I)}.
$$
Thus in both cases  we have
$$
(p_{j}D_{a+}^{\alpha_{j}}f,f)_{L_{2}(I)}\geq C \left\| f^{(s)}\right\|^{2}_{L_{2}(I)},\;s= \big[[\alpha_{j}]/2\big] .
$$
 In the same way, we obtain the inequality
$$
(q_{j}D_{b-}^{\alpha_{j}}f,f)_{L_{2}(I)}\geq C \left\| f^{(s)}\right\|^{2}_{L_{2}(I)},\;s= \big[[\alpha_{j}]/2\big] .
$$
  Hence in the
complex case we have
$$
\mathrm{Re}(\mathcal{D}f,f)_{L_{2}(I)}\geq C \left\| f \right\|^{2}_{L_{2}(I)},\;f\in \mathrm{D}(\mathcal{D}).
$$
Combining   Theorem 2.6 \cite[p.53]{firstab_lit:samko1987}  with  \eqref{2.43}, we get
$$
\left\| p_{j}D_{a+}^{\alpha_{j}}f \right\| _{L_{2}(I)}=  \left\|  I_{a+}^{1-\{\alpha_{j}\}}f^{([\alpha_{j}]+1)} \right\| _{L_{2}(I)}
 \leq C   \left\|   f^{([\alpha_{j}]+1)} \right\|_{L_{2}(I)}\leq C   \left\|   f  \right\|_{H^{k}_{0}(I)};
 $$
 $$
 \;\left\|q_{j}D_{b-}^{\alpha_{j}}f \right\| _{L_{2}(I)}
\leq  C   \left\|   f  \right\|_{H^{k}_{0}(I)},\;f\in \mathrm{D}(\mathcal{D}).
$$
 Hence, we obtain
$$
\left\| \mathcal{D}f \right\| _{L_{2}(I)}\leq C \left\|  f \right\|_{H^{k}_{0}(I)},\;f\in \mathrm{D}(\mathcal{D}).
$$
Now we can formulate the main result. Consider the operator
$$
G=\mathcal{L}+\mathcal{D},
$$
$$
\mathrm{D}(G)=H^{2k}(I)\cap H_{0}^{k}(I).
$$
Suppose  $\mathfrak{H} := L_{2}(I),\, \mathfrak{H}^{+} := H_{0}^{k}(I),\, T := \mathcal{L},\, A := \mathcal{D};$ then due to the well-known fact of the  Sobolev spaces theory
      condition \eqref{2.1} is fulfilled, due to the reasonings given above condition  \eqref{2.3}
is fulfilled. Taking into account the equality
$$
\mathfrak{Re}\mathcal{L} f = ( \mathrm{Re}c_{k} f^{(k)})^{(k)} + ( \mathrm{Re}c_{k-1} f^{(k-1)})^{(k-1)} + ... +  \mathrm{Re}c_{0} f,\;f\in \mathrm{D}(\mathcal{D})
$$
and using  the method described
  in the paper  \cite{firstab_lit:2kukushkin2017}, we   can prove   that the closure of the  operator $\mathfrak{Re}\, G  $  has  a non-zero order.
Hence we can successfully  apply  the abstract results of this paper  to the   operator $G.$ Now it is easily seen that   Theorems \ref{T2.3}-\ref{T2.5} describe the  spectral properties of the operator $G.$

\section{Connection between  singular numbers asyptotics}\label{S2.6}

   We consider  statements   particularly represented in the previous section, however they   will be undergone to  a thorough study since our principal challenge is to obtain an accurate description of the  Schatten-von Neumann class index of a non-selfadjoint operator. Bellow, we produce a slight generalizations of the results represented in the previous section that  gives us a description of spectral properties of a non-selfadjoint operator $L$ acting in $\mathfrak{H}.$

   We have the following classification in terms of the operator order $\mu,$ where it is defined as follows $ \lambda_{n}(R_{H})=O  (n^{-\mu}),\,n\rightarrow\infty.$\\

\begin{teo}\label{T2.6}
Assume that $L$ is a non-sefadjoint operator acting in $\mathfrak{H},$  the following  conditions hold\\

 \noindent  $ (\mathrm{H}1) $ There  exists a Hilbert space $\mathfrak{H}_{+}\subset\subset\mathfrak{ H}$ and a linear manifold $\mathfrak{M}$ that is  dense in  $\mathfrak{H}_{+}.$ The operator $L$ is defined on $\mathfrak{M}.$    \\

 \noindent  $( \mathrm{H2} )  \,\left|(Lf,g)_{\mathfrak{H}}\right|\! \leq \! C_{1}\|f\|_{\mathfrak{H}_{+}}\|g\|_{\mathfrak{H}_{+}},\,
      \, \mathrm{Re}(Lf,f)_{\mathfrak{H}}\!\geq\! C_{2}\|f\|^{2}_{\mathfrak{H}_{+}} ,\,f,g\in  \mathfrak{M},\; C_{1},C_{2}>0.
$
\\

\noindent Let $W$ be  a closure of the   restriction of the operator  $L$ on the set $\mathfrak{M}.$  Then   the following  propositions are true.\\

\noindent $({\bf A})$  If $\mu\leq1,$ then   $\; R_{  W  }\in  \mathfrak{S}_{p},\,p>2/\mu.$ If  $\mu>1,$ then $R_{ W }\in  \mathfrak{S}_{1}.$ \\

 \noindent Moreover, under assumptions
$\lambda _{ n} \left(H \right)=O  (n^{ \mu}),\,\mu>0,$   the following implication holds $$R_{  W}\in\mathfrak{S}_{p},\,p\in [1,\infty),\Rightarrow \mu>1/p.$$

\noindent$({\bf B})$     In the case  $\nu(R_{ W })=\infty,\,\mu \neq0,$  the following relation  holds
$$
|\lambda_{n}(R_{W})|=  o\left(n^{-\tau}    \right)\!,\,n\rightarrow \infty,\;0<\tau<\mu.\\
$$

\noindent $({\bf C})$   Assume that   $\theta< \pi \mu/2\, ,$    where $\theta$ is   the   semi-angle of the     sector $ \mathfrak{L}_{0}(\theta)\supset \Theta (W).$
Then  the system of   root   vectors  of   $R_{ W }$ is complete in $\mathfrak{H}.$\\

\end{teo}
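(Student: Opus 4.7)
The plan is to reduce all three assertions to the previously established Theorems \ref{T2.3}, \ref{T2.4}, and \ref{T2.5} by recognizing hypotheses (H1), (H2) as a particular case of the setup introduced in condition \eqref{2.3}. Specifically, I would identify $T:=L$ and $A:=0$ in the decomposition $W=T+A$ used throughout Section 2.2: then \eqref{2.3}(i) and \eqref{2.3}(ii) are guaranteed by (H2) (with the new $C_2$ playing the role of the old $C_0$), while \eqref{2.3}(iii), (iv) become vacuous. Under this identification, the closure denoted by $W$ in the statement of Theorem \ref{T2.6} coincides with the operator denoted $\tilde W$ in the previous sections. The embedding \eqref{2.2} moreover yields $\mathrm{Re}(Lf,f)_\mathfrak{H}\geq C_2 C_0^{-2}\|f\|^2_\mathfrak{H}$, so strict accretivity of $W$ in $\mathfrak H$ is automatic.

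Having verified the hypotheses of the machinery of Section 2.2, I would invoke Lemma \ref{L2.1} to place the numerical range of $W$ in a positive sector $\mathfrak{L}_0(\theta)$; the expressions for $k(\varepsilon),\gamma(\varepsilon)$ there simplify when $C_3=0$, giving a fixed semi-angle $\theta=\arctan(C_1/C_2)$ and a vertex $\gamma\geq 0$ without any optimization over $\varepsilon$. Lemma \ref{L2.2} then yields m-accretivity of $W$ and $W^+$ with resolvent sets containing a suitable half-plane, while Lemma \ref{L2.3} together with Theorem \ref{T2.1} shows that $H:=\mathrm{Re}\,W$ is selfadjoint, strictly accretive, has a compact resolvent, and that $W$ itself has a compact resolvent. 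At this stage Theorem \ref{T2.2} provides the asymptotic equivalence $\lambda_i(R_H)\asymp\lambda_i(V)$, which is the analytic heart of the argument.

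With these ingredients, assertion $({\bf A})$ is Theorem \ref{T2.3} applied to $W$, assertion $({\bf C})$ is Theorem \ref{T2.4}, and assertion $({\bf B})$ is the asymptotic formula \eqref{2.34} of Theorem \ref{T2.5}. The main obstacle is organizational rather than conceptual: one must trace the constants $C_0,\ldots,C_3$ through the proofs of Lemmas \ref{L2.1}--\ref{L2.3} and Theorems \ref{T2.1}--\ref{T2.5} and check that setting $C_3=0$ does not cause the expressions for $k(\varepsilon)$, $\gamma(\varepsilon)$, or the upper bound in \eqref{2.11} to degenerate. Since those formulas actually simplify when $C_3=0$ and the resulting estimates are sharper than in the general case, no essential new difficulty arises.
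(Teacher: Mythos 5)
There is a genuine gap, and it is precisely the point that Theorem~\ref{T2.6} is meant to address. Your reduction $T:=L$, $A:=0$ places you inside the framework of the section on the special operator class, but that framework carries a standing assumption beyond \eqref{2.3}: the operators $T,A$ \emph{and their adjoints} are defined on $\mathfrak{M}$, which is what makes the formal adjoint $W^{+}$ and the Hermitian real component $\mathcal{H}=\mathfrak{Re}\,W=(W+W^{*})/2$ available on $\mathfrak{M}$. Lemmas~\ref{L2.1}--\ref{L2.3} and Theorems~\ref{T2.1}--\ref{T2.5} are all built on $\tilde{W}^{+}$ and $\tilde{\mathcal{H}}$ (e.g.\ the identity $R^{\ast}_{\tilde W}=R_{\tilde W^{+}}$ and the factorization \eqref{2.13} in the proof of Theorem~\ref{T2.2}). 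Hypotheses H1, H2 of Theorem~\ref{T2.6} do \emph{not} imply $\mathfrak{M}\subset \mathrm{D}(L^{\ast})$: the bound $|(Lf,g)_{\mathfrak{H}}|\leq C_{1}\|f\|_{\mathfrak{H}_{+}}\|g\|_{\mathfrak{H}_{+}}$ controls the functional $f\mapsto (Lf,g)_{\mathfrak{H}}$ only in the $\mathfrak{H}_{+}$-norm, not in the $\mathfrak{H}$-norm, so $g\in\mathrm{D}(L^{\ast})$ cannot be concluded. Hence the objects your citation chain relies on need not exist under the hypotheses of Theorem~\ref{T2.6}, and "no essential new difficulty arises" is exactly where the difficulty lies; see also the discussion in Section~\ref{S2.7}.

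The paper's proof circumvents this by replacing the Hermitian component with the form-theoretic real part: H2 makes the sesquilinear form $t[f,g]=(Wf,g)_{\mathfrak{H}}$ closed and sectorial, the first representation theorem gives $W=T_{t}$, and $H:=Re\,W:=T_{\mathfrak{h}}$ is defined without any reference to $W^{\ast}$ on $\mathfrak{M}$. One then rebuilds the key factorization directly from Kato's Lemma 3.1, Lemma 3.2 and Theorem 3.2: $W=H^{1/2}(I+iB)H^{1/2}$, $W^{\ast}=H^{1/2}(I-iB)H^{1/2}$ with $B$ bounded selfadjoint, whence $R_{W}=H^{-1/2}(I+iB)^{-1}H^{-1/2}$ and $\mathfrak{Re}R_{W}=\tfrac{1}{2}H^{-1/2}(I+B^{2})^{-1}H^{-1/2}$; after that Theorems~\ref{T2.2}--\ref{T2.5} do carry over verbatim with $H=Re\,W$ in place of $\tilde{\mathcal{H}}$. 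A secondary slip: with $A=0$, condition \eqref{2.3}(iii) does not become vacuous — it reads $0\geq C_{2}\|f\|^{2}_{\mathfrak{H}}$ and fails for $C_{2}>0$ — so even the verification of \eqref{2.3} requires relaxing the constants; your observation that the embedding supplies strict accretivity of $W$ in $\mathfrak{H}$ repairs this minor point, but not the adjoint issue above.
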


\begin{proof} Note that  the closeness of  the  operator  $W$ follows from
 the first  condition  H2    and   Theorem 3.4 \cite[p.268]{firstab_lit:kato1980}, the detailed reasonings are left to the reader.
Let us show that $W$ is sectorial. By virtue of condition H2, we get
$$
\mathrm{Re}(Wf,f)_{\mathfrak{H}}\geq C_{2}\|f\|^{2}_{\mathfrak{H_{+}}}\geq C_{2}\varepsilon\|f\|^{2}_{\mathfrak{H}_{+}}+\frac{C_{2}(1-\varepsilon)}{C_{0}}\|f\|^{2}_{\mathfrak{H}};
$$
$$
\mathrm{Re}(Wf,f)_{\mathfrak{H}} -k|\mathrm{Im}(Wf,f)_{\mathfrak{H}}|\geq (C_{2}\varepsilon-k C_{1})\|f\|^{2}_{\mathfrak{H}_{+}}  +\frac{C_{2}(1-\varepsilon)}{C_{0}}\|f\|^{2}_{\mathfrak{H}}=\frac{C_{2}(1-\varepsilon)}{C_{0}}\|f\|^{2}_{\mathfrak{H}},
$$
where
$
 k=  \varepsilon C_{2}/C_{1}.
$
Hence $\Theta(W)\subset \mathfrak{L}_{\gamma}(\theta),\,\gamma=C_{2}(1-\varepsilon)/C_{0}.$ Thus, the claim of Lemma \ref{L2.1}   is true regarding   the operator $W.$   Using this fact, we conclude that the claim of   Lemma   \ref{L2.2}   is true regarding the operator $W$ i.e.  $W$ is m-accretive.

Using the first representation theorem (Theorem 2.1 \cite[p.322]{firstab_lit:kato1980})  we have a one-to-one correspondence between m-sectorial operators and closed sectorial sesquilinear forms i.e. $W=T_{t}$ by symbol,  where $t$ is a sesquilinear form corresponding to the operator $W.$  Hence $H:=Re\, W$ is defined (see \cite[p.337]{firstab_lit:kato1980}). In accordance with Theorem 2.6 \cite[p.323]{firstab_lit:kato1980} the operator $H$ is selfadjoint, strictly accretive.

 A compact embedding   provided by the relation $\mathfrak{h}[f]\geq C_{2} \|f\|_{\mathfrak{H}_{+}} \geq C_{2}/C_{0}\|f\|_{\mathfrak{H}},\,f\in \mathrm{D}(h)$ proves that $R_{H}$ is compact (see proof of  Theorem \ref{T2.1}) and as a result   of  the application of  Theorem 3.3   \cite[p.337]{firstab_lit:kato1980}, we get $R_{W}$ is compact. Thus the  claim of  Theorem \ref{T2.1}   remains true regarding   the operators $R_{H},\,R_{W}.$

 In accordance with   Theorem 2.5 \cite[p.323]{firstab_lit:kato1980} , we get $ W ^{\ast}=T_{t^{\ast}}$  (since $W^{\ast}=W^{\ast}$).
Now if we denote $t_{1}:=t^{\ast},$  then  it is easy to calculate $\mathfrak{k} =-\mathfrak{k}_{1}.$   Since $t$ is sectorial, than $|\mathfrak{k}_{1}|\leq \tan \theta\cdot \mathfrak{h}.$ Hence, in accordance with Lemma 3.1 \cite[p.336]{firstab_lit:kato1980}, we get  $\mathfrak{k} [u,v]=(BH^{1/2}u,H^{1/2}v),\,\mathfrak{k}_{1}[u,v]=-(BH^{1/2}u,H^{1/2}v),\,u,v\in \mathrm{D}(H^{1/2}),$ where    $B\in  \mathcal{B}(\mathfrak{H}) $ is a symmetric operator. Let us prove that $B$ is selfadjoint.  Note that in accordance with  Lemma 3.1 \cite[p.336]{firstab_lit:kato1980} $\mathrm{D}(B)=\mathrm{R}(H^{1/2}),$    in accordance with   Theorem 2.1 \cite[p.322]{firstab_lit:kato1980}, we have   $(Hf,f)_{\mathfrak{H}}\geq C_{2}/C_{0}\|f\|_{\mathfrak{H}}^{2},\,f\in \mathrm{D}(H),$ using the reasonings of Theorem  \ref{T2.2}, we conclude that   $\mathrm{R}(H^{1/2})=\mathfrak{H}$ i.e. $\mathrm{D}(B)=\mathfrak{H}.$  Hence $B$ is selfadjoint.  Using Lemma 3.2 \cite[p.337]{firstab_lit:kato1980}, we obtain a representation $W=H^{1/2}(I+iB) H^{1/2},\,W^{\ast}=H^{1/2}(I-iB) H^{1/2}.$  Noting the fact   $\mathrm{D}(B)=\mathfrak{H},$  we can easily obtain $ (I\pm iB)^{\ast}= I \mp iB.$ Since $B$ is selfadjoint, then
$\mathrm{Re}([I\pm iB]f,f)_{\mathfrak{H}}=\|f\|^{2}_{\mathfrak{H}}.$ Using this fact and  applying Theorem 3.2 \cite[p.268]{firstab_lit:kato1980}, we conclude that $\mathrm{R}(I\pm iB)$ is a closed set. Since $\mathrm{N}(I\pm iB)=0,$ then $\mathrm{R}(I \mp iB)=\mathfrak{H}$ (see (3.2) \cite[p.267]{firstab_lit:kato1980}). Thus, we obtain   $(I\pm i B)^{-1}\in \mathcal{B}(\mathfrak{H}).$ Taking into account the above facts, we get $R_{W}=H^{-1/2}(I+iB)^{-1} H^{-1/2},\,R_{W^{\ast}}=H^{-1/2}(I-iB)^{-1} H^{-1/2}.$ In accordance with the well-known theorem (see Theorem 5 \cite[p.557]{firstab_lit:Smirnov5}), we have $R^{\ast}_{W }=R_{W^{\!\ast}}.$ Note that the relations   $(I\pm i B)\in \mathcal{B}(\mathfrak{H}),\,(I\pm i B)^{-1}\in \mathcal{B}(\mathfrak{H}),\,H^{-1/2}\in \mathcal{B}(\mathfrak{H})$  allow  as to obtain the following formula by direct calculations
$$
\mathfrak{Re} R _{W }    =\frac{1}{2}H^{-1/2}(I+ B^{2})^{-1} H^{-1/2}.
$$
  This formula is a crucial point of the matter, we can repeat the rest part of the proof of    Theorem  \ref{T2.2}     in terms $H:= Re \, W.$  By virtue of these facts  Theorems  \ref{T2.3}-\ref{T2.5}   can be  reformulated in terms  $H:= Re \, W ,$    since they are based on Lemmas  \ref{L2.1},  \ref{L2.3}, Theorems  \ref{T2.1},  \ref{T2.2}.\\
\end{proof}
 Observe  that the given above classification is far from the exact description of the Schatten-von Neumann class index. However, having analyzed  the above   implications,  we can say that   it makes a prerequisite to establish  a hypotheses $ R_{ W }\in  \mathfrak{S}_{p},\,\inf p=1/\mu. $
The following narrative is devoted to its verification.

 Let  us undergone the technical tools involved in the proof of the statement to   the thorough analysis in order to absorb and contemplate them.
Consider the statement (A),  if    $\mu\leq1,$ then   $\; R_{  W  }\in  \mathfrak{S}_{p},\,\inf p\leq2/\mu.$ The main result on which it is based is in the asymptotic equivalence between the the inverse of the real component and the real component of the resolvent, the latter due to the technical tool makes the result, i.e.
$$
(|R_{ W  }|^{2} f,f)_{\mathfrak{H}}=\|R_{  W }f\|^{2}_{\mathfrak{H}}\leq C \cdot{\rm Re}(R_{ W }f,f)_{\mathfrak{H}}= C\cdot \left( \mathfrak{Re}R_{ W }f,f\right)_{\mathfrak{H}}.
$$

Consider the statement, if  $\lambda_{n}(R_{H})\geq  C \,n^{-\mu},\,0\leq\mu<\infty,$    then  the following implication holds $R_{  W}\in\mathfrak{S}_{p},\,p\in [1,\infty),\Rightarrow \mu>1/p.$ The main results that guaranty the fulfilment of the latter are  inequality  (7.9) \cite[p.123]{firstab_lit:1Gohberg1965}, Theorem 3.5 \cite{kukushkin2021a}, in accordance with which,   we get
$$
\sum\limits_{i=1}^{\infty}|s_{i} (R_{W} )|^{p}\geq\sum\limits_{i=1}^{\infty}| (R_{W}\varphi_{i},\varphi_{i})_{\mathfrak{H}}|^{p}\geq\sum\limits_{i=1}^{\infty}|{\rm Re}(R_{W}\varphi_{i},\varphi_{i})_{\mathfrak{H}}|^{p}=
$$
$$
=\sum\limits_{i=1}^{\infty}| \left(\mathfrak{Re}R_{ W } \varphi_{i},\varphi_{i}\right)_{\mathfrak{H}}|^{p}=\sum\limits_{i=1}^{\infty}
|\lambda_{i}\left(\mathfrak{Re}R_{ W }\right)|^{p}\geq C   \sum\limits_{i=1}^{\infty}  i^{- \mu p },\,p\geq1.
$$

Bellow, we observe  the   statement (B), where the peculiar result related to the asymptotics of the absolute value of the eigenvalue is given. It is based upon the Theorem 6.1 \cite[p.81]{firstab_lit:1Gohberg1965},  in accordance with which, we have
 \begin{equation*}\label{3.31}
\sum\limits_{m=1}^{k}|{\rm Im}\,\lambda_{m}(B)|^{p}\leq \sum\limits_{m=1}^{k}
|\lambda_{m} (\mathfrak{Im} B   )|^{p},\;\left(k=1,\,2,...,\,\nu_{\,\mathfrak{I}}(B)\right),\,1\leq p<\infty,
\end{equation*}
where    $ \nu_{\,\mathfrak{I}}(B) \leq \infty $ is   the sum of   all  algebraic multiplicities corresponding to   the not real  eigenvalues   of the bounded operator $B,\,\mathfrak{Im} B \in \mathfrak{S}_{\infty}$ (see  \cite[p.79]{firstab_lit:1Gohberg1965}).

Note that the statement (B)  will allow us to arrange brackets in the series that converges in the Abel-Lidskii sense  what would be an advantageous achievement in the later constructed theory.  However, it may be interesting if we do not have the  exact index of the Schatten class  for in this case, we obtain the obvious
$$
R_{W}\in \mathfrak{S}_{p},\Rightarrow s_{n}=o(n^{-1/p}),
$$
hence in accordance with the connection of the asymptotics (see Chapter II, \S3 \cite{firstab_lit:1Gohberg1965} ), we get $|\lambda_{n}(R_{W})|=o\left(n^{-1/p}    \right)$ that is the same if we have $p>1/\mu.$ Thus, along the mentioned above implication $R_{  W}\in\mathfrak{S}_{p},\,p\in [1,\infty),\Rightarrow p>1/\mu$ it makes the prerequisite to observe  the hypotheses $\inf p = 1/\mu. $

 Apparently, the used technicalities appeal to the so-called non-direct estimates for singular numbers realized due to the series estimates. As we will see further, the main advantage of the series estimation is the absence of the conditions imposed on the type of the asymptotics, it may be not one of the power type. However, we will show that under the restriction imposed on the type of the asymptotics, assuming that one is of the power type, we can obtain direct estimates for singular numbers. In the reminder, let us note that classes of differential operators have the asymptotics of the power type what make the issue quite relevant.

\subsection{The completion  of the proposition A}

The reasonings produced bellow appeals to a compact operator $B$ what represents a most general case in the framework of the decomposition on the root vectors theory, however to obtain more peculiar results we are compelled to deploy some restricting conditions. In this regard we involve hypotheses H1,H2 if it is necessary.
The result represented bellow gives us  the upper estimate for the singular numbers it is based on the result  by  Ky Fan \cite{firstab_lit:Fan} which  can be found as a corollary of the well-known Allakhverdiyev theorem,  see   Corollary  2.2 \cite{firstab_lit:1Gohberg1965}.

\begin{lem}\label{L2.4} Assume that  $B$ is a compact sectorial operator with the vertex situated at the point zero, then
$$
s_{2m-1}(B)\leq \sqrt{2}\,\sec  \theta \cdot \lambda_{m} (\mathfrak{Re}  B),\;\;s_{2m}(B)\leq \sqrt{2}\,\sec  \theta \cdot  \lambda_{m} (\mathfrak{Re}  B), \;\;m =1,2,...\,  .
$$
\end{lem}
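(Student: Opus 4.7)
The plan is to combine Kato's representation of a bounded sectorial operator with Ky Fan's multiplicative inequality for singular numbers, which, alongside the additive form cited as Corollary 2.2 of Gohberg--Krein, is a direct consequence of Allakhverdiyev's theorem. Since $B$ is bounded and sectorial with vertex at zero and semi-angle $\theta$, it is m-sectorial; hence the representation theorem of Kato (see Theorem 3.2 \cite[p.337]{firstab_lit:kato1980}, which was already applied in Theorem \ref{T2.2}) gives a factorisation
\[
B = H^{1/2}(I+iS)H^{1/2},
\]
where $H:=\mathfrak{Re}\,B$ is the positive compact selfadjoint real part and $S$ is a bounded selfadjoint operator with $\|S\|\leq\tan\theta$. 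Since $S$ is selfadjoint, $I+iS$ is normal, and
\[
\|I+iS\|^{2} = \|(I-iS)(I+iS)\| = \|I+S^{2}\| \leq 1+\tan^{2}\theta = \sec^{2}\theta.
\]

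Next I would invoke Ky Fan's multiplicative inequality $s_{i+j-1}(AB)\leq s_i(A)s_j(B)$ for compact $A,B$, which follows from Allakhverdiyev's characterisation $s_n(T)=\inf\{\|T-F\|:\mathrm{rank}\,F<n\}$ in the same way as the additive form. Factoring $B=[H^{1/2}(I+iS)]\cdot H^{1/2}$ and choosing $i=j=m$ gives
\[
s_{2m-1}(B)\leq s_m\!\bigl(H^{1/2}(I+iS)\bigr)\cdot s_m(H^{1/2}).
\]
The elementary estimate $s_n(XY)\leq s_n(X)\|Y\|$ yields $s_m(H^{1/2}(I+iS))\leq \sec\theta\sqrt{\lambda_m(H)}$, while $s_m(H^{1/2})=\sqrt{\lambda_m(H)}$ since $H^{1/2}$ is positive selfadjoint compact. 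Combining these two facts,
\[
s_{2m-1}(B)\leq \sec\theta\,\lambda_m(\mathfrak{Re}\,B)\leq \sqrt{2}\,\sec\theta\,\lambda_m(\mathfrak{Re}\,B),
\]
and the inequality for $s_{2m}(B)$ is immediate from the monotonicity $s_{2m}(B)\leq s_{2m-1}(B)$.

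The main obstacle I foresee is justifying Kato's factorisation when $\mathrm{N}(H)\neq 0$, which is generic in infinite dimensions for a compact positive $H$. The resolution is the Schwarz-type inequality for sectorial forms, $|(Bf,g)|^{2}\leq \sec^{2}\theta\,(Hf,f)(Hg,g)$, from which one reads off $\mathrm{N}(H)\subset \mathrm{N}(B)$; hence $B$ leaves $\overline{\mathrm{R}(H^{1/2})}$ invariant and may be reduced to that subspace, where $H^{1/2}$ is injective with dense range and the factorisation is unambiguous. Since the singular numbers and the eigenvalues of the reduced operators coincide with those of $B$ and $H$, the final estimate transfers without loss of generality, and a parallel argument on the trivial subspace $\mathrm{N}(H)$ (on which both sides of the inequality vanish) completes the proof.
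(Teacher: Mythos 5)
Your proof is correct, but it follows a genuinely different route from the paper's. The paper works entirely with the Hermitian components: it establishes $\tfrac{1}{2}B^{\ast}B\leq \mathfrak{Re}^{2}B+\mathfrak{Im}^{2}B$ from the identity $\mathfrak{Re}^{2}B+\mathfrak{Im}^{2}B=\tfrac{1}{2}(B^{\ast}B+BB^{\ast})$, applies Ky Fan's \emph{additive} inequality $s_{m+n-1}(X+Y)\leq\lambda_{m}(X)+\lambda_{n}(Y)$ to the right-hand side, controls $\lambda_{m}(\mathfrak{Im}^{2}B)$ by $\tan^{2}\theta\cdot\lambda_{m}^{2}(\mathfrak{Re}B)$ via the sectorial condition and the minimax principle, and then takes square roots — which is exactly where the factor $\sqrt{2}$ enters. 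You instead pass through Kato's second representation theorem $B=H^{1/2}(I+iS)H^{1/2}$ and the \emph{multiplicative} Ky Fan inequality $s_{i+j-1}(XY)\leq s_{i}(X)s_{j}(Y)$, which buys you the sharper constant $\sec\theta$ in place of $\sqrt{2}\sec\theta$ (and of course still implies the stated bound). The price is heavier machinery: you must invoke the representation theorem for the bounded m-sectorial operator and handle the degeneracy $\mathrm{N}(H)\neq 0$, which you do correctly via the Schwarz inequality for sectorial forms and reduction to $\overline{\mathrm{R}(H)}$; the paper's argument needs none of this and runs directly on the whole space. Both arguments are sound; yours is sharper, the paper's is more elementary and self-contained within the Hermitian-component framework it uses throughout the chapter.
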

\begin{proof}
 Consider the Hermitian components
$$
\mathfrak{Re} B:=\frac{B+B^{\ast}}{2},\;\;\mathfrak{Im} B:=\frac{B-B^{\ast}}{2i},
$$
it is clear that they are compact selfadjoint operators, since $B$ is compact and due to the technicalities of the given algebraic constructions.
Note that the  following relation can be established by direct calculation
$$
\mathfrak{Re}^{2} \!B+ \mathfrak{Im}^{2} \!B=\frac{B^{\ast}B+BB^{\ast}}{2},
$$
from what follows the inequality
\begin{equation}\label{2.45}
 \frac{1}{2} \cdot B^{\ast}B \leq     \mathfrak{Re}^{2} \!B+  \mathfrak{Im}^{2}B .
\end{equation}
Having analyzed the latter formula, we see that  it is rather reasonable to think over the opportunity of applying the corollary of the  minimax principle pursuing the aim to estimate the singular numbers of the operator $B.$ For the purpose of implementing the latter concept,   consider the following relation
$
\mathfrak{Re}^{2} \!B \,f_{n}=\lambda^{2}_{n} f_{n},
$
where $f_{n},\lambda_{n}$ are the eigenvectors and the eigenvalues of the operator $\mathfrak{Re}   B$ respectively.  Since the operator $\mathfrak{Re}   B$ is selfadjoint and compact then its set of eigenvalues form a basis in $\overline{\mathrm{R}(\mathfrak{Re}   B)}.$   Assume that there exists a non-zero  eigenvalue of the operator $ \mathfrak{Re}^{2} \!B $ that is different from  $\{\lambda^{2}_{n}\}_{1}^{\infty},$
then, in accordance with the well-known fact of the operator theory,  the corresponding eigenvector is orthogonal to the eigenvectors of the operator $\mathfrak{Re}   B.$  Taking into account the fact that the latter form a basis in $\overline{\mathrm{R}(\mathfrak{Re}   B)},$ we come to the conclusion that the eigenvector does not belong to  $\overline{\mathrm{R}(\mathfrak{Re}   B)}.$   Thus, the obtained contradiction proves the fact
$
\lambda_{n}(\mathfrak{Re}^{2} \!B)=\lambda^{2}_{n}(\mathfrak{Re}   B).
$
Implementing the same reasonings, we obtain
$
\lambda_{n}(\mathfrak{Im}^{2} \!B)=\lambda^{2}_{n}(\mathfrak{Im}   B).
$

Further, we   need a result by Ky Fan \cite{firstab_lit:Fan}  see   Corollary  2.2 \cite{firstab_lit:1Gohberg1965} (Chapter II, $\S$ 2.3), in accordance with which, we have
$$
s_{m+n-1}(\mathfrak{Re}^{2} \!B+ \mathfrak{Im}^{2}B)\leq \lambda_{m}(\mathfrak{Re}^{2} \!B)+\lambda_{n}(\mathfrak{Im}^{2}B),\;\;m,n=1,2,...\,.
$$
Choosing $n=m$ and $n=m+1,$  we obtain respectively
$$
s_{2m-1}(\mathfrak{Re}^{2} \!B+ \mathfrak{Im}^{2}B)\leq \lambda_{m}(\mathfrak{Re}^{2} \!B)+\lambda_{m}(\mathfrak{Im}^{2}B),
$$
$$
\, s_{2m}(\mathfrak{Re}^{2} \!B+ \mathfrak{Im}^{2}B)\leq \lambda_{m}(\mathfrak{Re}^{2} \!B)+\lambda_{m+1}(\mathfrak{Im}^{2}B) \;\;m =1,2,...\,.
$$
At this stage of the reasonings we need involve  the sectorial property $\Theta(B)\subset \mathfrak{L}_{0}(\theta)$ which  gives us $|\mathrm{Im}( Bf,f)|\leq \tan \theta \,\mathrm{Re}(   Bf,f).$  Applying the corollary of the minimax principle to the latter relation, we   get $|\lambda_{n}(\mathfrak{Im}   B)|\leq \tan \theta \, \lambda_{n}(\mathfrak{Re}   B).$
Therefore
$$
s_{2m-1}(\mathfrak{Re}^{2} \!B+ \mathfrak{Im}^{2}B)\leq \lambda_{m}(\mathfrak{Re}^{2} \!B)+\lambda_{m}(\mathfrak{Im}^{2}B)\leq\sec^{2}\!\theta \cdot \lambda_{m}^{2}(\mathfrak{Re}  B),
$$
$$
\, s_{2m}(\mathfrak{Re}^{2} \!B+ \mathfrak{Im}^{2}B)\leq \sec^{2}\!\theta  \cdot\lambda_{m}^{2}(\mathfrak{Re}  B) \;\;m =1,2,...\,.
$$
Applying the minimax principle to the formula \eqref{2.45}, we get
$$
s_{2m-1}(B)\leq \sqrt{2}\sec  \theta \cdot  \lambda_{m} (\mathfrak{Re}  B),\;\;s_{2m}(B)\leq \sqrt{2}\sec  \theta \cdot  \lambda_{m} (\mathfrak{Re}  B), \;\;m =1,2,...\,  .
$$
This   gives us the upper estimate for the singular values of the operator $B.$
\end{proof}

However, to      obtain the lower estimate we need involve Lemma 3.1 \cite[p.336]{firstab_lit:kato1980}, Theorem 3.2 \cite[p.337]{firstab_lit:kato1980}.
 Consider an unbounded   operator $T,\, \Theta(T)\subset \mathfrak{L}_{0}(\theta),$  in accordance with the first representation theorem   \cite[p. 322]{firstab_lit:kato1980}, we can consider its Friedrichs extension the m-sectorial operator $W,$ in its own turn   due to the results \cite[p.337]{firstab_lit:kato1980}, it has a real part $H$ which coincides with the Hermitian  real component if we deal with a bounded operator. Note that by virtue of the sectorial property the operator $H$ is non-negotive. Further, we consider the case $\mathrm{N}(H)=0$ it follows that $\mathrm{N}(H^{\frac{1}{2}})  =0.$ To prove this fact we should note that $\mathrm{def}H=0,$ considering inner product with the element belonging to $\mathrm{N}(H^{\frac{1}{2}})$ we obtain easily the fact  that it must equal to zero.
Having analyzed the proof of Theorem 3.2 \cite[p.337]{firstab_lit:kato1980}, we see that its statement remains true in the modified  form even in the case if we lift the m-accretive condition, thus under the  sectorial condition imposed upon the closed densely defined  operator $T,$ we get the following inclusion
$$
T\subset H^{1/2}(I+iG)H^{1/2},
$$
here   symbol $G$ denotes    a bounded selfadjoint operator in $\mathfrak{H}.$ However, to obtain the asymptotic formula established in Theorem 5 \cite{firstab_lit(arXiv non-self)kukushkin2018} we cannot be satisfied by the made assumptions but require   the existence of the resolvent at the point zero and its compactness. In spite of the fact that   we can  proceed our narrative under the weakened conditions regarding the operator $W$ in comparison with H1,H2, we can   claim that the statement of  Theorem 5 \cite{firstab_lit(arXiv non-self)kukushkin2018} remains true under the assumptions made above, we prefer to deploy H1,H2 what guarantees the conditions we need and at the same time provides a description of the matter under the natural point of view.

\begin{lem}\label{L2} Assume that the conditions  H1,H2  hold   for the operator $W,$ moreover
$$
 \|\mathfrak{Im}W  /\mathfrak{Re} W\|_{2}< 1,
$$
then $$
  \lambda^{-1}_{2n} \left(\mathfrak{Re}W \right)\leq C s_{n}(R_{W}),\;n\in \mathbb{N}.
$$
\end{lem}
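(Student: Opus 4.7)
The plan is to sandwich $\lambda^{-1}_{2n}(H)$, where $H:=\mathfrak{Re}\,W$, between a lower bound for the eigenvalues of $V:=\mathfrak{Re}\,R_{W}$ (already prepared in the proof of Theorem~\ref{T2.2}) and an upper bound for $s_{2n}(V)$ in terms of $s_n(R_{W})$ supplied by the Ky--Fan additive inequality. The common bridge is the polar-type factorisation $R_{W}=H^{-1/2}(I+iB)^{-1}H^{-1/2}$ with $B$ bounded selfadjoint, extracted from the proof of Theorem~\ref{T2.2}; in particular it yields $V=\tfrac12 H^{-1/2}(I+B^{2})^{-1}H^{-1/2}$.

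For the upper bound on $s_{2m}(V)$, I would use the identity $2V=R_{W}+R_{W}^{\ast}$. Since $R_{W}^{\ast}=R_{W^{\ast}}$, one has $s_{n}(R_{W}^{\ast})=s_{n}(R_{W})$ for every $n$; applying the Ky--Fan inequality $s_{m+n-1}(A+B)\leq s_{m}(A)+s_{n}(B)$ (Corollary~2.2, Chapter~II of Gohberg--Krein) with indices $(m,m+1)$ gives
$$
2s_{2m}(V)=s_{2m}(2V)\leq s_{m}(R_{W})+s_{m+1}(R_{W}^{\ast})\leq 2\,s_{m}(R_{W}),
$$
so $s_{2m}(V)\leq s_{m}(R_{W})$. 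Since $V$ is positive selfadjoint compact (Theorem~\ref{T2.1} together with the representation above), $s_{2m}(V)=\lambda_{2m}(V)$.

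For the lower bound on $\lambda_{2m}(V)$, set $S:=I+B^{2}$. The relation $S\geq I$ implies $(S^{-1}g,g)_{\mathfrak{H}}\geq\|g\|_{\mathfrak{H}}^{2}/\|S\|$, and substituting $g=H^{-1/2}f$ yields
$$
(Vf,f)_{\mathfrak{H}}\geq\frac{1}{2\|S\|}(R_{H}f,f)_{\mathfrak{H}},\qquad f\in\mathfrak{H}.
$$
The hypothesis $\|\mathfrak{Im}\,W/\mathfrak{Re}\,W\|_{2}<1$, read in the symmetric sense as $\|B\|_{2}<1$ (which is consistent with Kato's factorisation, since $B=H^{-1/2}(\mathfrak{Im}\,W)H^{-1/2}$), delivers $\|B\|\leq\|B\|_{2}<1$ and therefore $\|S\|=1+\|B\|^{2}<2$; hence $(Vf,f)_{\mathfrak{H}}\geq\tfrac14 (R_{H}f,f)_{\mathfrak{H}}$. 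By the minimax principle, $\lambda_{2m}(V)\geq(2\|S\|)^{-1}\lambda_{2m}(R_{H})=(2\|S\|\,\lambda_{2m}(H))^{-1}$.

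Chaining these two estimates gives $\lambda^{-1}_{2m}(H)\leq 2\|S\|\,\lambda_{2m}(V)\leq 2\|S\|\,s_{m}(R_{W})<4\,s_{m}(R_{W})$, which is the claim with $C=2\|S\|$. The one genuinely delicate point is the interpretation of the formal quotient $\mathfrak{Im}\,W/\mathfrak{Re}\,W$ as the symmetrically normalised selfadjoint operator $B=H^{-1/2}(\mathfrak{Im}\,W)H^{-1/2}$; once this reading is adopted, the Hilbert--Schmidt hypothesis on $B$ supplies the uniform bound on $\|S\|$ and the remainder of the argument closes routinely.
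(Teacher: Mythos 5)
Your proof is correct, and it takes a genuinely different route from the paper's. The paper argues through the square of the operator: the Hilbert--Schmidt hypothesis is used there to show that $W^{2}$ is sectorial, after which the representation theorem applied to $W^{2}$ yields $\mathfrak{Re}\,R^{2}_{W}=\tfrac12 H_{1}^{-1/2}(I+G'^{2})^{-1}H_{1}^{-1/2}$ with $H_{1}=\mathfrak{Re}\,W^{2},$ the asymptotic equivalence $\lambda_{n}(\mathfrak{Re}\,R^{2}_{W})\asymp\lambda^{-1}_{n}(\mathfrak{Re}\,W^{2})$ is derived, and the conclusion follows by combining the operator inequality $\tfrac12(A^{2}+A^{\ast 2})\leq A^{\ast}A+AA^{\ast}$ with the Ky Fan inequality and the form bound $\mathfrak{Re}\,W^{2}\leq H^{2}.$ You instead apply Ky Fan directly to the decomposition $2V=R_{W}+R_{W}^{\ast},$ obtaining $\lambda_{2n}(V)=s_{2n}(V)\leq s_{n}(R_{W}),$ and couple this with the lower form bound $V\geq c\,R_{H}$ already extracted in the proof of Theorem \ref{T2.2}; the minimax principle then closes the chain $\lambda^{-1}_{2n}(H)=\lambda_{2n}(R_{H})\leq c^{-1}\lambda_{2n}(V)\leq c^{-1}s_{n}(R_{W}).$ What your route buys is a substantial simplification and, in fact, a formally stronger statement: nowhere do you need $W^{2}$ to be sectorial, so the hypothesis $\|\mathfrak{Im}W/\mathfrak{Re}W\|_{2}<1$ is not actually consumed --- the bound $\|B\|\leq\tan\theta$ supplied by the sectorial property under H1--H2 already gives $\|S\|\leq\sec^{2}\theta<\infty,$ which is all your argument uses. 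One caveat: your reading of the formal quotient as the symmetrised operator $H^{-1/2}(\mathfrak{Im}W)H^{-1/2}=B$ differs from the paper's, which takes $(\mathfrak{Im}W)H^{-1}=H^{1/2}GH^{-1/2}$ and hence controls the weighted sum $\sum|b_{nk}|^{2}\lambda_{n}/\lambda_{k}$ rather than $\sum|b_{nk}|^{2};$ this discrepancy is harmless here precisely because your argument does not lean on the hypothesis. What the paper's heavier route buys in exchange is the sectoriality of $W^{2}$ itself, which is the content the Hilbert--Schmidt condition is designed to secure; for the inequality as stated, your argument is the sharper tool.
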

\begin{proof}
Firstly, let us show that $\mathrm{D}(W^{2})$ is a dense set in $\mathfrak{H}_{+}.$ Since the operator $W$ is closed and strictly accretive, we have $\mathrm{R}(W)=\mathfrak{H},$ hence there exists the preimage of the set $\mathfrak{M},$ let us denote it by $\mathfrak{M}'.$ Consider an arbitrary element $x_{0}\in \mathfrak{H}$ and denote its preimage by $x'_{0},$ we  have
$$
\|W(x'_{0}-x'_{n})\|_{\mathfrak{H}}\geq C\|x'_{0}-x'_{n}\|_{\mathfrak{H}_{+}},
$$
where $\{x_{n}\}_{1}^{\infty}\subset \mathfrak{M}'.$ Hence, the set $\mathfrak{M}'$ is dense in $\mathrm{D}(W)$  in  the sense of the norm $\mathfrak{H}_{+},$ hence it is dense in $\mathfrak{H}_{+}$ and consequently the set  $\mathrm{D}(W^{2})$   is dense  in $\mathfrak{H}_{+}$ since $\mathfrak{M}'\subset \mathrm{D}(W^{2}).$ Here, we should note that we have proved the  fulfilment of the condition H1 for the operator $W^{2}$ with respect to the same pair of Hilbert spaces.

Note that    under the   assumptions H1,H2, using the reasonings of Theorem 3.2 \cite{firstab_lit:kato1980}, we have the following representation
$$
W=H^{1/2}(I+iG)H^{1/2},\;W^{\ast}=H^{1/2}(I-iG)H^{1/2}.
$$
It follows easily from this formula that the Hermitian components of the operator $W$ are defined, we have
$
\mathfrak{Re}W =H,\;\;\mathfrak{Im}W=H^{1/2}GH^{1/2}.
$
Using the decomposition
$
W=\mathfrak{Re}W +i\mathfrak{Im}W,\;W^{\ast}=\mathfrak{Re}W -i\mathfrak{Im}W,
$
we get easily
$$
 \left(  \frac{W^{2}+W^{\ast\,2}}{2}\, f,f\right)_{\!\!\mathfrak{H}}=\left\|\mathfrak{Re}W f\right\|^{2}_{\mathfrak{H}}-\left\|\mathfrak{Im}W f\right\|^{2}_{\mathfrak{H}};\,
$$
$$
  \left(  \frac{W^{2}-W^{\ast\,2}}{2i}\, f,f\right)_{\!\!\mathfrak{H}} =  (\mathfrak{Im}W \,\mathfrak{Re}Wf,f)_{\mathfrak{H}} +(\mathfrak{Re}W\, \mathfrak{Im}Wf,f)_{\mathfrak{H}},\;f\in \mathrm{D}(W^{2}).
$$
Using simple reasonings,  we can rewrite the above formulas in terms of Theorem 3.2 \cite{firstab_lit:kato1980}, we have
\begin{equation}\label{2.46}
\mathrm{Re}(W^{2}f,f)_{\mathfrak{H}}=\|Hf\|^{2}_{\mathfrak{H}}-\|H^{1/2}GH^{1/2}f\|^{2}_{\mathfrak{H}},\;\mathrm{Im} (W^{2}f,f)_{\mathfrak{H}}=\mathrm{Re}(H^{1/2}GH^{1/2}f,H f)_{\mathfrak{H}},
$$
$$
\,f\in \mathrm{D}(W^{2}).
\end{equation}
Consider a set of eigenvalues $\{\lambda_{n}\}_{1}^{\infty}$ and a complete system of orthonormal vectors $\{e_{n}\}_{1}^{\infty}$ of the operator $H,$  the conditions H1,H2 guarantee existing of the latter since  $R_{H}$ is compact (see Theorem 3 \cite{kukushkin2021a}), using the matrix form of the operator $G,$     we have
$$
\|Hf\|^{2}_{\mathfrak{H}}=\sum\limits_{n=1}^{\infty}  |\lambda _{n}|^{2}|f_{n}|^{2},\;\|H^{1/2}GH^{1/2}f\|^{2}_{\mathfrak{H}}=\sum\limits_{n=1}^{\infty} \lambda_{n} \left| \sum\limits_{k=1}^{\infty} b_{nk}\sqrt{\lambda_{k}}f_{k}\right|^{2},
$$
$$
\mathrm{Re}(H^{1/2}GH^{1/2}f,H f)_{\mathfrak{H}}=\mathrm{Re}\left(\sum\limits_{n=1}^{\infty} \lambda^{3/2}_{n}f_{n}  \sum\limits_{k=1}^{\infty} b_{nk}\sqrt{\lambda_{k}}\bar{f_{k}}\right),
$$
where $b_{nk}$  are the matrix coefficients of the operator $G.$
Applying the Cauchy-Swarcz inequality, we get
$$
 \|H^{1/2}GH^{1/2}f\|^{2}_{\mathfrak{H}}\leq\sum\limits_{n=1}^{\infty} \lambda_{n} \left|\sum\limits_{k=1}^{\infty} |\lambda_{k} f_{k}|^{2} \sum\limits_{k=1}^{\infty}
  |b_{ nk}|^{2} /\lambda_{k}\right|\leq \|Hf\|^{2}_{\mathfrak{H}} \sum\limits_{  n,k=1}^{\infty}|b_{ nk}|^{2}\lambda_{n}/\lambda_{k} ;
$$
$$
 |\mathrm{Re}(H^{1/2}GH^{1/2}f,H f)_{\mathfrak{H}}|\leq \|Hf\|_{\mathfrak{H}} \left(\sum\limits_{n=1}^{\infty}   \left|\sum\limits_{k=1}^{\infty}  \bar{b}_{ nk}   \sqrt{\lambda_{n}\lambda_{k}}f_{k}\right|^{2}
      \right)^{1/2} \leq\|Hf\|^{2}_{\mathfrak{H}} \left(\sum\limits_{ n,k=1}^{\infty}    |b_{nk}|^{2}\lambda_{n}/\lambda_{k} \right)^{1/2} .
$$
In accordance with the definition of the sectorial property, we require
$$
|\mathrm{Im}(W^{2}f,f)_{\mathfrak{H}}|\leq \tan\theta \cdot\mathrm{Re}(W^{2}f,f)_{\mathfrak{H}},\; 0<\theta<\pi/2.
$$
Therefore, the sufficient conditions of the  sectorial property can be expressed as follows
$$
   \|Hf\|^{2}_{\mathfrak{H}}\left(\sum\limits_{ n,k=1}^{\infty}    |b_{nk}|^{2}/\lambda_{k} \right)^{1/2}\!\!\!  \leq   \|Hf\|^{2}_{\mathfrak{H}} \left(1-\sum\limits_{  n,k=1}^{\infty}|b_{ nk}|^{2}\lambda_{n}/\lambda_{k}\right)\tan\theta   ;
 $$
 $$
 \sum\limits_{  n,k=1}^{\infty}|b_{nk}|^{2}\lambda_{n}/\lambda_{k}  +  \cot \theta\left(\sum\limits_{ n,k=1}^{\infty}   |b_{nk}|^{2}\lambda_{n}/\lambda_{k} \right)^{1/2}\leq 1,
$$
where $\theta$ is the  semi-angle of the supposed sector.
Solving the corresponding quadratic equation, we obtain the desired estimate
\begin{equation}\label{2.47}
 \left(\sum\limits_{ n,k=1}^{\infty} |b_{nk}|^{2}\lambda_{n}/\lambda_{k}\right)^{1/2}<\frac{1}{2}\left\{\sqrt{  \cot^{2} \theta  +4}- \cot \theta \right\} .
\end{equation}
 Having noticed the fact that    the right hand side of \eqref{2.47} tends to one from below when $\theta$ tends to $\pi/2,$ we obtain the condition of the sectorial property expressed in terms of  the absolute norm
 \begin{equation}\label{2.48}
  \|H^{1/2}GH^{-1/2}\| _{2}:=\left(\sum\limits_{ n,k=1}^{\infty} |b_{nk}|^{2}\lambda_{n}/\lambda_{k}\right)^{1/2}<1,
 \end{equation}
  in this case, we we can choose the semi-angle of the sector using the following relation
  $$
   \tan\theta = \frac{N}{1-N^{2}}+\varepsilon,\;N:=\|H^{1/2}GH^{-1/2}\| _{2},
  $$
  where $\varepsilon$ is an arbitrary small positive number. Thus, we can resume that in the value of the absolute norm less than one than the operator $W^{2}$ is sectorial and the value of the absolute norm defines the semi-angle.
  Note that coefficients  $  b_{nk} \sqrt{\lambda_{n}/\lambda_{k}},\;\overline{b_{kn}} \sqrt{\lambda_{n}/\lambda_{k}} $ correspond to the matrices of the operators respectively
$$
H^{1/2}GH^{-1/2}f=\sum\limits_{n=1}^{\infty}  \lambda^{1/2}_{n} e_{n}  \sum\limits_{k=1}^{\infty} b_{nk}\lambda^{-1/2}_{k} f_{k},\;H^{-1/2}GH^{1/2}f=\sum\limits_{n=1}^{\infty}  \lambda^{-1/2}_{n} e_{n}  \sum\limits_{k=1}^{\infty} b_{nk}\lambda^{1/2}_{k} f_{k}   .
$$
Thus, if the absolute operator norm exists, i.e.
$$
\|H^{1/2}GH^{-1/2}\| _{2} <\infty,
$$
then both of them belong to the so-called Hilbert-Schmidt class simultaneously, but it is clear without involving the   absolute norm  since the above operators are adjoint.   It is remarkable that,   we can write formally the obtained estimate in terms of the Hermitian components of the operator, i.e.
$$
\|\mathfrak{Im}W /\mathfrak{Re}W\|_{2}< 1.
$$
Below, for a convenient form of writing, we will use a short-hand notation $A:=R_{ W },$ where it is necessary. The next step is to establish the asymptotic formula
\begin{equation}\label{2.49}
\lambda_{n}\left( \frac{A^{2}+A^{ 2\ast}}{2}\right)\asymp \lambda^{-1}_{n} \left(\mathfrak{Re}W^{2} \right),\;n\rightarrow\infty.
\end{equation}
However, we cannot apply directly  Theorem 5 \cite{firstab_lit(arXiv non-self)kukushkin2018} to the operator $W^{2},$  thus we   are compelled to modify the proof having taken into account weaker conditions and the additional condition \eqref{2.48}.

 Let us observe that the compactness of  the operator $R_{W}(\lambda),\,\lambda \in \mathrm{P}(W)$ gives us the compactness of the operator $W^{-2}.$ Since the latter is sectorial, it follows easily that  $R_{W^{2}}(\lambda),\,\lambda \in \mathrm{P}(W^{2})$ is compact, since the outside of the sector belongs to the resolvent set and the resolvent compact at least at one point is compact everywhere on the resolvent set.  Note that due to the reasonings given above the following relation holds
\begin{equation}\label{2.50}
\mathrm{Re}(W^{2}f,f)_{\mathfrak{H}} \geq C \|Hf\|^{2}_{\mathfrak{H}} \geq C\|f\|^{2}_{\mathfrak{H}_{+}},\,f\in \mathrm{D}(W^{2}),
\end{equation}
the latter inequality can be obtained easily (see (28) \cite{firstab_lit(arXiv non-self)kukushkin2018}). Thus, we obtain the fact that the operator $W^{2}$ is sectorial, strictly accretive operator, hence falls in the scope of the first representation theorem in accordance with which there exists one to one correspondence between the closed densely defined sectorial forms and m-sectorial operators. Using this fact, we can claim that the real part $H_{1}:=\mathrm{Re} W^{2}$ is defined and the following relations hold in accordance with the representation theorem i.e., Theorem 3.2 \cite[p.337]{firstab_lit:kato1980}, we get
$$
W^{2}=H_{1}^{1/2}(I+iG_{1})H_{1}^{1/2},\;W^{ 2\ast}=H_{1}^{1/2}(I+iG_{2})H_{1}^{1/2},
$$
where $G_{1},G_{2}$ are selfadjoint bounded operators. Now by direct calculation, we can verify   that $H_{1}=\mathfrak{Re}W^{2},$ we should also note that $\mathrm{D}(W^{2})$ is a core of  the corresponding closed densely defined sectorial form $\mathfrak{h}$ put in correspondence to the operator $H_{1}$ by the first representation theorem, i.e. $\mathrm{D}_{0}(\mathfrak{h})=\mathrm{D}(W^{2}).$
Let us show that $G_{1}=-G_{2}.$ We have
\begin{equation*}
  H_{1}f\! =\!\frac{1}{2}\left[H_{1}^{\frac{1}{2}}(I+i G_{1})  +H_{1}^{\frac{1}{2}}(I+i G_{2})\right]H_{1}^{\frac{1}{2}}\!  =
  $$
  $$
  = \! H_{1} f +
 \frac{i}{2} H_{1}^{\frac{1}{2}}\left(G_{1}+G_{2}\right)  H_{1}^{\frac{1}{2}}f  ,\;f\in \mathfrak{M}'.
\end{equation*}
By virtue of  inequality \eqref{2.50}, we see that the operator $H_{1}$ is strictly accretive, therefore
  $\mathrm{N}(H_{1})=0 ;\;( G_{1}+G_{2})H_{1}^{1/2}=0.$ Since
   $$
   \mathfrak{H}=\overline{\mathrm{R}(H_{1}^{1/2})}\oplus \mathrm{N}(H_{1}^{1/2}),
   $$
   then $G_{1}=G_{2}=:G'.$ Applying the reasonings represented in Theorem 5 \cite{firstab_lit(arXiv non-self)kukushkin2018}, we obtain the fact that    $H_{1}^{-1/2}$ is a bounded operator  defined on $\mathfrak{H}.$
   Using the  properties  of the operator $G',$    we get
$\|(I+ iG')f\|_{\mathfrak{H}} \cdot\|f\|_{\mathfrak{H}} \geq\mathrm{Re }\left([I+ iG']f,f\right)_{\mathfrak{H}}  =\|f\|^{2}_{\mathfrak{H}} ,\,f\in \mathfrak{H}.$ Hence
$
\|(I+ iG')f\|_{\mathfrak{H}}  \geq \|f\|_{\mathfrak{H}} ,\,f\in \mathfrak{H}.
$
 It implies that the operators  $I+ iG'$ are invertible. The reasonings   corresponding to the operator $I-iG'$ are absolutely analogous.
   Therefore
  \begin{equation}\label{2.51}
A^{2}=H_{1}^{-\frac{1 }{2}}(I+iG'  )^{-1} H_{1}^{- \frac{1}{2}},\;A^{2\ast}=H_{1}^{-\frac{1 }{2}}(I-iG'  )^{-1} H_{1}^{- \frac{1}{2}}.
\end{equation}
Using simple calculation based upon the operator properties established above, we get
\begin{equation}\label{2.52}
\mathfrak{Re} A^{2} =\frac{1}{2}\,H_{1}^{-\frac{1 }{2}}  (I+G'^{2} )^{-1}  H_{1}^{- \frac{1}{2}}.
\end{equation}
Therefore
 $$
\left(\mathfrak{Re} A^{2}  f,f\right)_{\mathfrak{H}} =\left(H_{1}^{-\frac{1 }{2}} (I+G'^{2} )^{-1}     H_{1}^{- \frac{1}{2}}   f,f\right)_{\!\!\mathfrak{H}}  \leq
  \|(I+G'^{2} )^{-1}      \| \cdot\left(R_{H_{1}}  f,f\right)_{\mathfrak{H}} ,\;f\in \mathfrak{H}.
$$
On the other hand,  it is easy to see that  $ ((I+G'^{2} )^{-1}f,f)_{\mathfrak{H}} \geq \|(I+G'^{2} )^{-1}f\|^{2}_{\mathfrak{H}} .$ At the same time   it is obvious that   $ I+G'^{2} $ is bounded and we have   $\|(I+G'^{2} )^{-1}f\|_{\mathfrak{H}} \geq \| I+G'^{2}   \|^{-1} \|f\|_{\mathfrak{H}} .$ Using   these estimates, we have
$$
\left(\mathfrak{Re} A^{2}  f,f\right)_{\mathfrak{H}} =\left( (I+G'^{2} )^{-1}     H_{1}^{- \frac{1}{2}}   f,H_{1}^{-\frac{1 }{2}}f\right)_{\mathfrak{H}} \geq
\|(I+G'^{2} )^{-1}     H_{1}^{- \frac{1}{2}}   f \|^{2}_{\mathfrak{H}} \geq
$$
$$
\geq  \|  I+G'^{2}    \|^{-2} \cdot   \left(R_{H_{1} }  f,f\right)_{\mathfrak{H}},\;f\in \mathfrak{H}.
$$
Using relation \eqref{2.50}, we obtain the fact that the resolvent  $R_{H_{1}}$ is compact, the fact that $\mathfrak{Re} A^{2}$ is compact is obvious.
 Thus, analogously to the reasonings of  Theorems 5 \cite{firstab_lit(arXiv non-self)kukushkin2018}  applying the minimax principle we obtain the desired asymptotic formula \eqref{2.49}.
 Further, we will use  the following formula obtained due to the  positiveness of the  squared Hermitian imaginary  component of the operator $A,$ we have
$$
\frac{A^{2}+A^{  2\ast}}{2}=\frac{A^{2}+A^{\ast 2}}{2}\leq A^{\ast}A+AA^{\ast}.
$$
Applying the   corollary of the well-known Allakhverdiyev theorem   (Ky Fan  \cite{firstab_lit:Fan}),   see   Corollary  2.2 \cite{firstab_lit:1Gohberg1965} (Chapter II, $\S$ 2.3),   we have
$$
\lambda_{2n}\left(  A^{\ast}A+AA^{\ast}\right)\leq \lambda_{n}(A^{\ast}A)+\lambda_{n}(AA^{\ast}),\; n\in \mathbb{N}.
$$
Taking into account the fact $s_{n}(A)=s_{n}(A^{\ast}),$ using the minimax principle, we obtain the estimate
$$
s^{2}_{n}(A)\geq C \lambda_{2n}\left( \frac{A+A^{  2\ast}}{2}\right),\;n\in \mathbb{N},
$$
applying \eqref{2.49}, we obtain
$$
s^{2}_{n}(A)\geq C \lambda^{-1}_{2n} \left(\mathfrak{Re}W^{2} \right),\;n\in \mathbb{N}.
$$
Here, it is rather reasonable to apply formula \eqref{2.46} which  gives us
$$
\|f\|^{2}_{\mathfrak{H}}\leq \|f\|^{2}_{\mathfrak{H}_{+}} \leq\left(\mathfrak{Re}W^{2}f,f\right)_{\mathfrak{H}}\leq\left(H f, Hf\right)_{\mathfrak{H}} ,\;f\in \mathrm{D}(W^{2}),
$$
what in its own turn, collaboratively with the minimax principle leads  us to the theorem statement.
\end{proof}
\begin{remark}\label{R2.1} It is remarkable that the central point of  the above proof is the representation theorems, in accordance with the first one we have a plain  construction of the operator real part equaling the Hermitian real component. These allow  us to implement the simplified scheme of reasonings represented in  \cite{firstab_lit(arXiv non-self)kukushkin2018}.
\end{remark} In application to a rather  wide operator class  including the operators  having the   asymptotics of the resolvent  singular numbers   or one of the real component  eigenvalues  of the power type, i.e. $C_{1}n^{\mu}\leq \lambda_{n}\leq C_{2}n^{\mu},$
the results given above can  be reformulated in the following stylistically convenient form.
\begin{teo}\label{T2.7}
Assume that the hypotheses H1,H2 hold  for the operator $W,$      moreover  $$
\|\mathfrak{Im}W /\mathfrak{Re}W\|_{2}< 1,
$$
then
$$
s_{n}(R_{W})\asymp   \lambda^{-1}_{n} \left(\mathfrak{Re} W \right).
$$
\end{teo}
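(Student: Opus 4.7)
The plan is to combine the upper bound for singular numbers furnished by Lemma \ref{L2.4} with the lower bound established in the second lemma of this section (labelled \ref{L2}), and to absorb the index shifts $n \leftrightarrow 2n$ into the claimed asymptotic equivalence by means of the tacit power-type assumption on $\lambda_{n}(\mathfrak{Re}\,W)$ referred to in the paragraph preceding the theorem.

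First I would verify that $R_{W}$ falls into the hypotheses of Lemma \ref{L2.4}: compactness follows from Theorem \ref{T2.1}, and the sectorial property with vertex at zero follows from Lemma \ref{L2.1} (cf.\ inequality \eqref{2.32}), so that $\Theta(R_{W}) \subset \mathfrak{L}_{0}(\theta)$ for some $\theta < \pi/2$. Hence Lemma \ref{L2.4} gives
\begin{equation*}
s_{2m-1}(R_{W}),\; s_{2m}(R_{W}) \leq \sqrt{2}\,\sec\theta \cdot \lambda_{m}\!\left(\mathfrak{Re}\,R_{W}\right),\quad m\in\mathbb{N}.
\end{equation*}
Applying Theorem \ref{T2.2} (in the strengthened form provided by Theorem \ref{T2.6} for the operator $H=Re\,\tilde{W}$), we obtain $\lambda_{m}(\mathfrak{Re}\,R_{W}) \asymp \lambda_{m}(R_{H}) = \lambda_{m}^{-1}(\mathfrak{Re}\,W)$. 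Under the power-type asymptotics $C_{1}n^{\mu} \leq \lambda_{n}(\mathfrak{Re}\,W) \leq C_{2}n^{\mu}$ one has $\lambda_{\lceil n/2\rceil}^{-1}(\mathfrak{Re}\,W) \asymp \lambda_{n}^{-1}(\mathfrak{Re}\,W)$, which yields the desired upper bound
\begin{equation*}
s_{n}(R_{W}) \leq C\,\lambda_{n}^{-1}(\mathfrak{Re}\,W).
\end{equation*}

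For the reverse inequality I would invoke Lemma \ref{L2}, whose hypotheses are exactly those assumed in the statement of Theorem \ref{T2.7} (in particular the condition $\|\mathfrak{Im}\,W/\mathfrak{Re}\,W\|_{2}<1$ is what triggers the sectoriality of $W^{2}$ needed for the machinery of Lemma \ref{L2} to go through). This gives $\lambda_{2n}^{-1}(\mathfrak{Re}\,W) \leq C s_{n}(R_{W})$, and using the power-type asymptotics one more time to absorb the doubling of the index produces
\begin{equation*}
\lambda_{n}^{-1}(\mathfrak{Re}\,W) \leq C s_{n}(R_{W}).
\end{equation*}
Combining the two bounds yields the claimed equivalence.

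The main obstacle is not conceptual but bookkeeping: one must ensure that the power-type hypothesis on the eigenvalues of $\mathfrak{Re}\,W$ is strong enough to make the index shifts at $2n$ (arising both in the Allakhverdiyev--Ky Fan argument underlying Lemma \ref{L2.4} and in the $W^{2}$-based lower estimate of Lemma \ref{L2}) asymptotically harmless. A secondary technical point is that Lemma \ref{L2} is stated for $\mathfrak{Re}\,W$ constructed from the representation theorem rather than from the Hermitian component of $W$ directly; however, under H1, H2 these two coincide, so the invocation is legitimate, and the theorem follows.
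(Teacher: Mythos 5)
Your proposal is correct and follows essentially the same route as the paper: the paper's proof likewise obtains the upper bound from Lemma \ref{L2.4} and the lower bound from Lemma \ref{L2}, invokes the asymptotic equivalence $\lambda_{n}(\mathfrak{Re}R_{W})\asymp\lambda^{-1}_{n}(\mathfrak{Re}W)$ of Theorem \ref{T2.2}, and disposes of the index doubling via the observation $(Cn)^{\gamma}\asymp n^{\gamma}$ under the power-type asymptotics assumed in the paragraph preceding the theorem. Your additional remarks on the coincidence of the representation-theorem real part with the Hermitian component under H1, H2 are consistent with the paper's discussion in the Remarks section.
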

\begin{proof}
  Since conditions  H1,H2  hold then the resolvent $R_{W}$ is a compact sectorial  operator with the vertex situated  at the point zero (see Theorem 3 \cite{kukushkin2021a}).  The estimates from the above and below  for the singular numbers follow  from the application of Lemmas \ref{L2.4},\ref{L2} respectively, here we should take into account the fact $(Cn)^{\gamma}\asymp n^{\gamma},\,\gamma\in \mathbb{R}$ and the fact $\lambda_{n}(\mathfrak{Re}R_{W})\asymp \lambda^{-1}_{n}(\mathfrak{Re}W)$ that is the claim of Theorem 5 \cite{firstab_lit(arXiv non-self)kukushkin2018}.
\end{proof}

\subsection{The low bound for the Schatten index  of the  perturbed   differential  operators}

\noindent{\bf 1.} Trying to show an application of Lemma \ref{L2.4}, we produce  an example  of a non-selfadjoin operator that is not completely subordinated in the sense of forms (see \cite{firstab_lit:Shkalikov A.}, \cite{firstab_lit(arXiv non-self)kukushkin2018}). The pointed out fact means that, we cannot deal with the operator applying methods \cite{firstab_lit:Shkalikov A.} for they do not work.
   Consider a  differential operator acting in the complex Sobolev  space
$$
\mathcal{L}f := (c_{k}f^{(k)})^{(k)} + (c_{k-1}f^{(k-1)})^{(k-1)}+...+  c_{0}f,
$$
$$
\mathrm{D}(\mathcal{L}) = H^{2k}(I)\cap H_{0}^{k}(I),\,k\in \mathbb{N},
$$
where    $I: = (a, b) \subset \mathbb{R},$ the   complex-valued coefficients
$c_{j}(x)\in C^{(j)}(\bar{I})$ satisfy the condition $  {\rm sign} (\mathrm{Re} c_{j}) = (-1)^{j} ,\, j = 1, 2, ..., k.$
 Consider  a linear combination of   the  Riemann-Liouville  fractional differential   operators
  (see \cite[p.44]{firstab_lit:samko1987})   with the  constant  real-valued  coefficients
$$
\mathcal{D}f:=p_{n}D_{a+}^{\alpha_{n}}+q_{n}D_{b-}^{\beta_{n}}+p_{n-1}D_{a+}^{\alpha_{n-1}}+q_{n-1}D_{b-}^{\beta_{n-1}}+...+
p_{0}D_{a+}^{\alpha_{0}}+q_{0}D_{b-}^{\beta_{0}},
$$
$$
\mathrm{D}(\mathcal{D}) = H^{2k}(I)\cap H_{0}^{k}(I),\,n\in \mathbb{N},
$$
where $\alpha_{j},\beta_{j}\geq 0,\,0 \leq [\alpha_{j}],[\beta_{j}] < k,\, j = 0, 1, ..., n.,\;$
\begin{equation*}
 q_{j}\geq0,\;{\rm sign}\,p_{j}= \left\{ \begin{aligned}
  (-1)^{\frac{[\alpha_{j}]+1}{2}},\,[\alpha_{j}]=2m-1,\,m\in \mathbb{N},\\
\!\!\!\!\!\!\! \!\!\!\!(-1)^{\frac{[\alpha_{j}]}{2}},\;\,[\alpha_{j}]=2m,\,\,m\in \mathbb{N}_{0}   .\\
\end{aligned}
\right.
\end{equation*}
The following result is represented in the paper  \cite{firstab_lit(arXiv non-self)kukushkin2018},    consider the operator
$$
G=\mathcal{L}+\mathcal{D},
$$
$$
\mathrm{D}(G)=H^{2k}(I)\cap H_{0}^{k}(I).
$$
It is  clear that it is an operator with a compact resolvent, however for the accuracy  we will prove this fact, moreover we will produce a pair of Hilbert spaces  so that conditions H1,H2 holds. It follows that the resolvent is compact, thus we can observe the problem of calculating Schatten index. Apparently, it may happen that the direct calculation  of the  singular numbers or  their estimation is rather complicated   since we have the following construction
$$
GG^{\ast}\supset  (\mathcal{L}+\mathcal{D})(\mathcal{L}^{\ast}+\mathcal{D}^{\ast})\supset  \mathcal{L}\mathcal{L}^{\ast}+ \mathcal{D}\mathcal{L}^{\ast}+\mathcal{L}\mathcal{D}^{\ast}+\mathcal{D}\mathcal{D}^{\ast}
$$
where inclusions must satisfy some conditions connected with the core of the operator form for in other case we have a risk to lose some singular numbers. In spite of the fact that    the shown difficulties in many cases  can be  eliminated the offered method of singular numbers estimation becomes apparently relevant.

Let us prove the fulfilment of the conditions H1,H2 under the assumptions  $\mathfrak{H} := L_{2}(I),\, \mathfrak{H}^{+} := H_{0}^{k}(I),\,\mathfrak{M}:=C_{0}^{\infty}(I).$
The fulfillment of the condition H1 is obvious, let us show the fulfilment of the condition  H2. It is easy
to see that
$$
\mathrm{Re}(\mathcal{L}f,f)_{L_{2}(I)}\geq\sum\limits_{j=0}^{k}|\mathrm{Re} c_{j}|\,\|f^{(j)}\|^{2}_{L_{2}(I)}\geq C \|f^{(j)}\|^{2}_{H_{0}^{k}(I)},\;f\in \mathrm{D}(\mathcal{L}).
$$
On the other hand
$$
|(\mathcal{L}f,f)_{L_{2}(I)}|=\left|\sum\limits_{j=0}^{k}(-1)^{j}(c_{j}f^{(j)},g^{(j)} )_{L_{2}(I)}\right|\leq
\sum\limits_{j=0}^{k}\left|(c_{j}f^{(j)},g^{(j)} )_{L_{2}(I)}\right|\leq
$$
$$
\leq C \sum\limits_{j=0}^{k} \|f^{(j)}\| _{L_{2}(I)}\|g^{(j)}\| _{L_{2}(I)}\leq
\|f \| _{H^{k}_{0}(I)}\|g \| _{H^{k}_{0}(I)},\;f\in \mathrm{D}(\mathcal{L}).
$$
Consider  fractional differential   Riemann-Liouville   operators  of   arbitrary non-negative
order $\alpha$ (see \cite[p.44]{firstab_lit:samko1987})  defined by the expressions
\begin{equation*}
 D_{a+}^{\alpha}f=\left(\frac{d}{dx}\right)^{[\alpha]+1}\!\!\!\!I_{a+}^{1-\{\alpha\}}f;\;
 D_{b-}^{\alpha}f=\left(-\frac{d}{dx}\right)^{[\alpha]+1}\!\!\!\!I_{b-}^{1-\{\alpha\}}f,
\end{equation*}
where the fractional integrals of      arbitrary positive order  $\alpha$ defined by
$$
\left(I_{a+}^{\alpha}f\right)\!(x)=\frac{1}{\Gamma(\alpha)}\int\limits_{a}^{x}\frac{f(t)}{(x-t)^{1-\alpha}}dt,
 \left(I_{b-}^{\alpha}f\right)\!(x)=\frac{1}{\Gamma(\alpha)}\int\limits_{x}^{b}\frac{f(t)}{(t-x)^{1-\alpha}}dt
, f\in L_{1}(I).
$$
Suppose  $0<\alpha<1,\, f\in AC^{l+1}(\bar{I}),\,f^{(j)}(a)=f^{(j)}(b)=0,\,j=0,1,...,l;$ then the next formulas follows
from   Theorem 2.2 \cite[p.46]{firstab_lit:samko1987}
\begin{equation}\label{2.53}
 D_{a+}^{\alpha+l}f= I_{a+}^{1- \alpha }f^{(l+1)},\;
 D_{b-}^{\alpha+l}f= (-1)^{l+1}I_{b-}^{1- \alpha }f^{(l+1)}.
\end{equation}
  Further, we need  the following inequalities    (see  \cite{firstab_lit:1kukushkin2018})
\begin{equation}\label{2.54}
\mathrm{Re} (D_{a+}^{\alpha}f,f)_{L_{2}(I)}\geq C\|f\|^{2}_{L_{2}(I)},\,f\in I_{a+}^{\alpha}(L_{2}),\;
$$
$$
\mathrm{Re} (D_{b-}^{\alpha}f,f)_{L_{2}(I)}\geq C\|f\|^{2}_{L_{2}(I)},\,f\in I_{b-}^{\alpha}(L_{2}),
\end{equation}
where $I_{a+}^{\alpha}(L_{2}),I_{b-}^{\alpha}(L_{2})$ are the  classes of  the  functions representable by the fractional integrals (see\cite{firstab_lit:samko1987}).
  Consider the following operator  with the  constant  real-valued  coefficients
$$
\mathcal{D}f:=p_{n}D_{a+}^{\alpha_{n}}+q_{n}D_{b-}^{\beta_{n}}+p_{n-1}D_{a+}^{\alpha_{n-1}}+q_{n-1}D_{b-}^{\beta_{n-1}}+...+
p_{0}D_{a+}^{\alpha_{0}}+q_{0}D_{b-}^{\beta_{0}},
$$
$$
\mathrm{D}(\mathcal{D}) = H^{2k}(I)\cap H_{0}^{k}(I),\,n\in \mathbb{N},
$$
where $\alpha_{j},\beta_{j}\geq 0,\,0 \leq [\alpha_{j}],[\beta_{j}] < k,\, j = 0, 1, ..., n.,\;$
\begin{equation*}
 q_{j}\geq0,\;{\rm sign}\,p_{j}= \left\{ \begin{aligned}
  (-1)^{\frac{[\alpha_{j}]+1}{2}},\,[\alpha_{j}]=2m-1,\,m\in \mathbb{N},\\
\!\!\!\!\!\!\! \!\!\!\!(-1)^{\frac{[\alpha_{j}]}{2}},\;\,[\alpha_{j}]=2m,\,\,m\in \mathbb{N}_{0}   .\\
\end{aligned}
\right.
\end{equation*}
Using \eqref{2.53},\eqref{2.54},  we get
$$
(p_{j}D_{a+}^{\alpha_{j}}f,\!f)_{L_{2}(I)}\!=
\!p_{j}\!\left(\!\!\left(\frac{d}{dx}\right)^{\!\!\!m}\!\!D_{a+}^{m-1+\{\alpha_{j}\}}\!\!f,\!f   \!\right)_{\!\!L_{2}(I)}\!\!\!\!\!\! =
(\!-1)^{m}p_{j}\!\left(\! I_{a+}^{1-\{\alpha_{j}\}}\!\!f^{(m)}\!\!,\!f^{(m)}   \!\right)_{\!\!L_{2}(I)}\!\! \geq
$$
$$
\geq C\left\|I_{a+}^{1-\{\alpha_{j}\}}f^{(m)}\right\|^{2}_{L_{2}(I)}=
C\left\|D_{a+}^{\{\alpha_{j}\}}f^{(m-1)}\right\|^{2}_{L_{2}(I)}\geq C \left\| f^{(m-1)}\right\|^{2}_{L_{2}(I)},
$$
 where  $f \in \mathrm{D}(\mathcal{D})$ is    a real-valued function and   $ [\alpha_{j}]=2m-1,\,m\in \mathbb{N}.$
  Similarly,  we obtain for orders  $ [\alpha_{j}]=2m,\,m\in \mathbb{N}_{0}$
$$
(p_{j}D_{a+}^{\alpha_{j}}f,f)_{L_{2}(I)}=p_{j}\left( D_{a+}^{2m +\{\alpha_{j}\}}f,f   \right)_{L_{2}(I)}=(-1)^{m}p_{j}\left( D_{a+}^{m+\{\alpha_{j}\}}f ,f^{(m)}   \right)_{L_{2}(I)}=
$$
$$
=(-1)^{m}p_{j}\left( D_{a+}^{ \{\alpha_{j}\}}f^{(m)} ,f^{(m)}   \right)_{\!L_{2}(I)}\geq C \left\| f^{(m)}\right\|^{2}_{L_{2}(I)}.
$$
Thus in both cases,  we have
$$
(p_{j}D_{a+}^{\alpha_{j}}f,f)_{L_{2}(I)}\geq C \left\| f^{(s)}\right\|^{2}_{L_{2}(I)},\;s= \big[[\alpha_{j}]/2\big] .
$$
 In the same way, we obtain the inequality
$$
(q_{j}D_{b-}^{\alpha_{j}}f,f)_{L_{2}(I)}\geq C \left\| f^{(s)}\right\|^{2}_{L_{2}(I)},\;s= \big[[\alpha_{j}]/2\big] .
$$
  Hence in the
complex case we have
$$
\mathrm{Re}(\mathcal{D}f,f)_{L_{2}(I)}\geq C \left\| f \right\|^{2}_{L_{2}(I)},\;f\in \mathrm{D}(\mathcal{D}).
$$
Combining   Theorem 2.6 \cite[p.53]{firstab_lit:samko1987}  with  \eqref{2.53}, we get
$$
\left\| p_{j}D_{a+}^{\alpha_{j}}f \right\| _{L_{2}(I)}=  \left\|  I_{a+}^{1-\{\alpha_{j}\}}f^{([\alpha_{j}]+1)} \right\| _{L_{2}(I)}
 \leq C   \left\|   f^{([\alpha_{j}]+1)} \right\|_{L_{2}(I)}\leq C   \left\|   f  \right\|_{H^{k}_{0}(I)};
 $$
 $$
 \;\left\|q_{j}D_{b-}^{\alpha_{j}}f \right\| _{L_{2}(I)}
\leq  C   \left\|   f  \right\|_{H^{k}_{0}(I)},\;f\in \mathrm{D}(\mathcal{D}).
$$
 Hence, we obtain
$$
\left\| \mathcal{D}f \right\| _{L_{2}(I)}\leq C \left\|  f \right\|_{H^{k}_{0}(I)},\;f\in \mathrm{D}(\mathcal{D}).
$$
Taking into account the relation
$$
\left\|f\right\| _{L_{2}(I)}\leq C \left\|f\right\| _{H^{k}_{0}(I)},\,f\in H^{k}_{0}(I),
$$
Combining the above estimates, we get
$$
\mathrm{Re} (Gf,f)_{L_{2}(I)}\geq C\|f\|^{2}_{H^{k}_{0}(I)},\; |(Gf,g)_{L_{2}(I)}|\leq \|f \| _{H^{k}_{0}(I)}\|g \| _{H^{k}_{0}(I)},\,f,g\in C^{\infty}_{0}(I).
$$
Thus, we have obtained the desired result.

To deploy  the  minimax principle for  eigenvalues estimating, we come to the following relation
$$
C_{1}\|f \|^{2} _{H^{k}_{0}(I)}\leq(\mathfrak{Re} G f,f)_{L_{2}(I)}\leq C_{2}\|f \|^{2} _{H^{k}_{0}(I)},
$$
from what follows  easily, due to the asymptotic formulas for  the selfadjoint operators  eigenvalues (see \cite{firstab_lit:Rosenblum}), the fact
$$
\lambda_{n}(\mathfrak{Re} G)\asymp n^{2k},\;n\in  \mathbb{N},
$$
therefore applying Lemma \ref{L2.4} collaboratively with the asymptotic equivalence formula (see Theorem 5 \cite{firstab_lit(arXiv non-self)kukushkin2018})
$$
\lambda^{-1}_{n}(\mathfrak{Re} G)\asymp \lambda_{n}(\mathfrak{Re}R_{ G}),\;n\in  \mathbb{N},
$$
 we obtain the fact
$$
R_{G}\in \mathfrak{S}_{p},\,\inf p\leq 1/2k.
$$
Thus, it gives us an opportunity to establish the range of  the Schatten index.\\

\noindent{\bf 2.} Let us show the application of Lemma \ref{L2}, firstly consider the following reasonings
$$
\| \mathfrak{Im}W H^{-1} \|_{2}=\| H^{-1}\mathfrak{Im}W\|_{2}=\sum\limits_{n,k=1}^{\infty}\left|(\mathfrak{Im} We_{n},H^{-1}e_{k})_{\mathfrak{H}}\right|^{2}=
\sum\limits_{n,k=1}^{\infty}\lambda^{-2}_{n}(H)\left|(e_{n}, \mathfrak{Im} W e_{k})_{\mathfrak{H}}\right|^{2}=
$$
$$
=\sum\limits_{n=1}^{\infty}\lambda^{-2}_{n}(H)||  \mathfrak{Im} W e_{n} ||_{\mathfrak{H}}^{2},
$$
where $\{e_{n}\}_{1}^{\infty}$ is the orthonormal  set of the eigenvectors of the operator $H.$ Thus, we obtain the following condition
\begin{equation}\label{2.55}
\sum\limits_{n=1}^{\infty}\lambda^{-2}_{n}(H)||  \mathfrak{Im} W e_{n} ||_{\mathfrak{H}}^{2}<1,
\end{equation}
which guaranties the fulfilment of  the   conditions regarding  the absolute norm  in Lemma \ref{L2}. It is remarkable that this form of the condition is quiet convenient if we consider perturbations of differential operators.
   Below we observe  a simplified case of the operator considered in the previous paragraph. Consider
  $$
 Lf := - f''  + \xi D_{0+}^{\alpha }f ,\;
\mathrm{D}(L) = H^{2 }(I)\cap H_{0}^{1}(I) ,\;I=(0,\pi),\,\alpha\in (0,1/2),\,\xi\in \mathbb{R},
$$
then
$$
C_{0}( L_{1}f,f)_{L_{2}(I)}\leq(\mathfrak{Re}Lf,f)_{L_{2}(I)}\leq C_{1}( L_{1}f,f)_{L_{2}(I)},\; L_{1}f:=-f'',\;\mathrm{D}(L_{1})=\mathrm{D}(L).
$$
It is well-known fact that
$$
\lambda_{n}(L_{1})=n^{2},\;e_{n}=\sin nx.
$$
It is also clear that
$$
\mathfrak{Im }L \supset \xi(D_{0+}^{\alpha }-D_{\pi-}^{\alpha })/2i.
$$
In accordance with the first representation theorem $H^{2 }(I)\cap H_{0}^{1}(I)$  is a core of the form corresponding to the operator $L^{\ast},$ hence
$$
\mathfrak{Im }L = \xi(D_{0+}^{\alpha }-D_{\pi-}^{\alpha })/2i.
$$
Note that
$$
\left(D_{0+}^{\alpha }e_{n}\right)(x)=\frac{n}{\Gamma(1-\alpha)}\int\limits_{0}^{x}(x-t)^{-\alpha}\cos n t\, dt
$$
Applying the generalized Minkovskii inequality, we get
$$
\left(\int\limits_{0}^{\pi}\left|(D_{a+}^{\alpha }e_{n})(x)\right|^{2} dx\right)^{1/2}= \frac{n }{\Gamma(1-\alpha)}\left(\int\limits_{0}^{\pi}\left|\int\limits_{0}^{x}(x-t)^{ -\alpha}\cos n t\, dt\right|^{2}\right)^{1/2}\leq
$$
$$
\leq  \frac{n }{\Gamma(1-\alpha)}\int\limits_{0}^{\pi}\cos n t\, dt \left(\int\limits_{t}^{\pi}(x-t)^{ -2\alpha}d x \right)^{1/2}=
 \frac{n }{\sqrt{(1-2\alpha)}\Gamma(1-\alpha)}\int\limits_{0}^{\pi}(\pi-t)^{1/2- \alpha}\cos n t\, dt\leq
$$
$$
\leq
 \frac{n \pi^{1/2-\alpha}}{\sqrt{(1-2\alpha)}\Gamma(1-\alpha)}.
$$
Analogously, we obtain
$$
\left(\int\limits_{0}^{\pi}\left|(D_{\pi-}^{\alpha }e_{n})(x)\right|^{2} dx\right)^{1/2}\leq  \frac{n \pi^{1/2-\alpha}}{\sqrt{(1-2\alpha)}\Gamma(1-\alpha)}\,.
$$
Hence
$$
\|\mathfrak{Im }L e_{n}\|\leq \frac{n \xi\pi^{1/2-\alpha}}{\sqrt{(1-2\alpha)}\Gamma(1-\alpha)}.
$$
Therefore
$$
\sum\limits_{n=1}^{\infty}\lambda^{-2}_{n}(\mathfrak{Re}L)||  \mathfrak{Im} Le_{n} ||^{2}< \frac{  \xi^{2}\pi^{1 -2\alpha}}{ (1-2\alpha) \Gamma^{2}(1-\alpha)}\sum\limits_{n=1}^{\infty}\frac{1}{n^{2}}=\frac{  \xi^{2}\pi^{3 -2\alpha } }{6 (1-2\alpha) \Gamma^{2}(1-\alpha)}.
$$
Using this relation, we can obviously impose a condition on $\xi$ that guarantees the fulfilment of   relation \eqref{2.55}, i.e.
$$
 \xi  <\frac{    \sqrt{6 (1-2\alpha)} \Gamma (1-\alpha)   }{ \pi^{3/2 - \alpha }}.
$$
In accordance with Theorem \ref{T2.7}, the latter condition follows that
$$
s_{n}^{-1}(R_{L})\asymp n^{2},\;R_{L}\in \mathfrak{S}_{p},\,\inf p=1/2.
$$

\noindent {\bf3.} To demonstrate the main result of the section, we produce an example dealing with   well-known operators. Consider a rectangular domain in the space $\mathbb{R}^{n},\;$ defined as follows $\Omega:=\{x_{j}\in [0,\pi],\,j=1,2,...,n\}$  and
consider the  Kipriyanov  fractional differential operator     defined in  the paper \cite{firstab_lit:1kipriyanov1960}  by  the formal expression
\begin{equation*}
\mathfrak{D}^{\beta}f(Q)=\frac{\beta}{\Gamma(1-\beta)}\int\limits_{0}^{r} \frac{[f(Q)-f(T)]}{(r - t)^{\beta+1}} \left(\frac{t}{r} \right) ^{n-1} dt+
(n-1)!f(Q)r ^ {-\beta} /\Gamma(n-\beta),
$$
$$
\beta\in(0,1),\, P\in\partial\Omega,
\end{equation*}
where $Q:=P+\mathbf{e} r,\;P:=P+\mathbf{e}t,\;\mathbf{e}$ is a unit vector having a direction from the fixed point of the boundary $P$ to an arbitrary point $Q$ belonging to $\Omega.$

  Consider the perturbation of the Laplace operator by the Kipriyanov operator
$$
L :=   D^{2k} + \xi \mathfrak{D}^{\beta} ,\,\mathrm{D}(L)= H_{0}^{k}(\Omega)\cap H ^{2k}(\Omega),
$$
where  $\xi>0,$
$$
D^{2k}f=(-1)^{k}\sum\limits_{j=1}^{n} \mathcal{D}_{j}^{2k}f.
$$
It was proved in the paper \cite{kukushkin2021a} that
 $$
 C_{0} (D^{2k} f,f)_{L_{2}(\Omega)}\leq(\mathfrak{Re}Lf,f)_{L_{2}(\Omega)}\leq  C_{1} (D^{2k} f,f)_{L_{2}(\Omega)},\;f\in \mathrm{D}(L).
$$
Therefore
$$
\lambda_{n}(\mathfrak{Re} L) \asymp n^{2k/n}.
$$
On the other hand, we have the following eigenfunctions of $D^{2k}$ in the rectangular
$$
e_{\bar{l}} =\prod\limits_{j=1}^{n}\sin l_{j}x_{j},\;\bar{l}:=\{l_{1},l_{2},...,\,l_{ n }\},\,l_{s}\in \mathbb{N},\,s=1,2,...,n.
$$
It is clear that
$$
\;  D^{2k} e_{\bar{l}} =\lambda_{\bar{l}}\,e_{\bar{l}},\;\lambda_{\bar{l}} =\sum\limits_{j=1}^{n}l^{2k}_{j}.
$$
Since the search  of the given below  information in   literature    (however, it is a well-known fact)  can bring  some difficulties, we would like to represent it. Let us prove that the system
$
\left\{e_{\bar{l}} \right\}
$
is complete in the Hilbert space  $L_{2}(\Omega).$ We will show it if we prove that the element that is orthogonal to every element of the system is a zero.
Assume that
$$
 \int\limits_{0}^{\pi}\sin l_{1}x_{1}dx_{1}\int\limits_{0}^{\pi} \sin l_{2}x_{2}dx_{2}...\int\limits_{0}^{\pi} f(x_{1},x_{2},...,x_{n})\sin l_{n}x_{n}dx_{n}=  (e_{\bar{l}},f)_{L_{2}(\Omega)}=0.
$$
In accordance with the fact that the system $\{\sin m x\}_{1}^{\infty}$ is a compleat system in $L_{2}(0,\pi),$ we conclude that
$$
 \int\limits_{0}^{\pi} \sin l_{2}x_{2}dx_{2}...\int\limits_{0}^{\pi} f(x_{1},x_{2},...,x_{n})\sin l_{n}x_{n}dx_{n} =0.
$$
Having repeated the same reasonings step by step, we obtain the desired result. Taking into account the following   inequality (see \cite{kukushkin2021a} ) and the embedding theorems, we get
\begin{equation}\label{2.56}
\|\mathfrak{D}^{\beta}f\|_{L_{2}(\Omega)}\leq C_{\beta}\|f\|_{H_{0}^{1}(\Omega)}\leq C_{\beta,k,n} \|f\|_{H_{0}^{k}(\Omega)},
\end{equation}
where the following constant $C_{\beta}$ is defined through  the infinitesimal generator  $J$ of the corresponding semigroup of contraction (shift semigroup in the direction) (9) \cite{kukushkin2021a}. Now it is clear  that the conditions H1,H2 are satisfied, where $\mathfrak{H}:=L_{2}(\Omega),\,\mathfrak{H}_{+}:=H_{0}^{k}(\Omega),\,\mathfrak{M}:=C_{0}^{\infty}(\Omega).$
Using the intermediate inequality \eqref{2.56}, by  direct calculation, we get
$$
\sum\limits_{l_{1},l_{2},...l_{n}=1}^{\infty} \lambda^{-2}_{\bar{l}}(\mathfrak{Re}L)_{L_{2}(\Omega)} \,\|\mathfrak{Im} L e_{\bar{l}}\|^{2}_{L_{2}(\Omega)}  \leq (\xi
C_{\beta})^{2}\!\!\!\sum\limits_{l_{1},l_{2},...l_{n}=1}^{\infty}\frac{\lambda _{\bar{l}}(  D^{2 })}{\lambda^{ 2}_{\bar{l}}(  D^{2k})}.
$$
Therefore, if the following condition holds
\begin{equation}\label{2.57}
 \sum\limits_{l_{1},l_{2},...l_{n}=1}^{\infty}\frac{l^{2}_{1}+l^{2}_{2}+...+l^{2}_{n} }{(l^{2k}_{1}+l^{2k}_{2}+...+l^{2k}_{n})^{2}}< (\xi
C_{\beta})^{-2},
\end{equation}
then the conditions of Lemma \ref{L2} are satisfied. Applying Lemma \ref{L2}, we can    consider the values of the parameters $k,n$ such that the last series is convergent and at the same time $R_{L}\in \mathfrak{S}_{p},\,\inf p=n/2k>1.$ The latter gives us the argument showing  relevance of Lemma \ref{L2} since we can find the range of $p.$  Below, we demonstrate  the corresponding reasoning.

Assume that the following  condition holds
$$
\frac{n}{2}+1<2k<n .
$$
Consider the vector function
$$
\psi(\bar{l})=\frac{(l^{2k}_{1}+l^{2k}_{2}+...+l^{2k}_{n})^{2}   }{ l^{2}_{1}+l^{2}_{2}+...+l^{2}_{n} },
$$
then $\psi(\bar{t})=nt^{2(2k-1)},\,\bar{t}=\{t,t,...t\}.$ It is clear that the number $s$ of values   $\psi(\bar{l}),\,l_{i}\leq t$ equals to $t^{n},$ i.e. $s=t^{n}.$ Therefore
$$
\psi(\bar{t})=ns^{\frac{2(2k-1)}{n}},\;\psi( \overline{t-1} )=n(s^{1/n}-1)^{2(2k-1) };
$$
$$
n(s^{1/n}-1)^{2(2k-1) } \leq\psi(\bar{l})\leq ns^{\frac{2(2k-1)}{n}},\; t-1\leq l_{i}\leq t,\;i=1,2,...,n.
$$
Having arranged the values in the order corresponding to their absolute value  increasing, we get
$$
 n(s^{1/n}-1)^{2(2k-1) }  \leq\psi_{j} \leq ns^{\frac{2(2k-1)}{n}},\;(s^{1/n}-1)^{n}<j< s.
$$
Therefore
$$
\frac{(s^{1/n}-1)^{2(2k-1) }}{s^{\frac{2(2k-1)}{n}}}<\frac{\psi_{j} }{nj^{\frac{2(2k-1)}{n}}}<\frac{ s^{\frac{2(2k-1)}{n}}}{(s^{1/n}-1)^{2(2k-1) } },
$$
from what follows the convergence of the following series, since if we take into account the condition  $n/2+1<2k,$ we get
$$
\sum\limits_{j=1}^{\infty} \psi^{-1}_{j}<\infty
$$
what gives us the desired result, i.e. the conditions of Lemma \ref{L2} are satisfied.

\section{Remarks}\label{S2.7}

Consider  a condition  $\mathfrak{M}\subset \mathrm{D}( W ^{\ast}),$ in this case the real Hermitian component  $\mathcal{H}:=\mathfrak{Re }\,W$ of the operator is defined on $\mathfrak{M},$ the fact is that $\tilde{\mathcal{H}}$ is selfadjoint,    bounded  from bellow (see Lemma  \ref{L2.3}), where $H=Re W.$  Hence a corresponding sesquilinear  form (denote this form by $h$) is symmetric and  bounded from bellow also (see Theorem 2.6 \cite[p.323]{firstab_lit:kato1980}). It can be easily shown  that $h\subset   \mathfrak{h},$  but using this fact    we cannot claim in general that $\tilde{\mathcal{H}}\subset H$ (see \cite[p.330]{firstab_lit:kato1980}). We just have an inclusion   $\tilde{\mathcal{H}}^{1/2}\subset H^{1/2}$     (see \cite[p.332]{firstab_lit:kato1980}). Note that the fact $\tilde{\mathcal{H}}\subset H$ follows from a condition $ \mathrm{D}_{0}(\mathfrak{h})\subset \mathrm{D}(h) $ (see Corollary 2.4 \cite[p.323]{firstab_lit:kato1980}).
 However, it is proved (see proof of Theorem  \ref{T2.1}) that relation H2 guaranties that $\tilde{\mathcal{H}}=H.$ Note that the last relation is very useful in applications, since in most concrete cases we can find a concrete form of the operator $\mathcal{H}.$

\chapter{Semigroup approach}\label{Ch3}
\section{Historical review}

To write this chapter, we were firstly motivated by  the  boundary value problems of the  Sturm-Liouville type   for fractional
differential equations. Many authors devoted their attention to the topic, nevertheless   this kind of problems are relevant for today. First of all, it is connected  with the fact that they model various physical -
chemical processes: filtration of liquid and gas in highly porous fractal   medium; heat exchange processes in medium  with fractal structure and memory; casual walks of a point particle that starts moving from the origin
by self-similar fractal set; oscillator motion under the action of
elastic forces which is  characteristic for  viscoelastic media, etc.
  In particular,  we would like   to  study  the  eigenvalue problem for a   differential operator  with a fractional derivative in  final terms, in this connection such operators as a Kipriyanov fractional differential operator, Riesz potential,  difference operator are involved.

   In the case corresponding to a selfadjoint senior term we can partially solve the problem  having applied the results of the   perturbation theory,   within the framework of which  the following papers are well-known   \cite{firstab_lit:1Katsnelson}, \cite{firstab_lit:1Krein},   \cite{firstab_lit:2Markus},
  \cite{firstab_lit:3Matsaev},\cite{firstab_lit:Markus Matsaev},
 \cite{firstab_lit:Shkalikov A.}. Generally, to apply the last paper results for a concrete operator $L$ we must be able to represent  it  by  a sum   $L=T+A,$ where the senior term
    $T$ must be either a selfadjoint or normal operator. In other cases we can use methods of the papers
    \cite{kukushkin2019},\cite{firstab_lit(arXiv non-self)kukushkin2018}  which are relevant if we deal with non-selfadjoint operators and allow us  to study spectral properties of   operators  whether we have the mentioned above  representation or not.  We should add that the results of  the paper \cite{firstab_lit:Markus Matsaev}  can be  also applied to study non-selfadjoin operators (see a detailed remark in \cite{firstab_lit:Shkalikov A.}).

In many papers    the eigenvalue problem  was studied by methods of a theory of functions and it is remarkable that  special properties of the fractional derivative were used in these papers, bellow  we present a brief  review.

However,  we  deal with a more general operator --- a  differential operator    with a fractional  integro-differential operator   composition  in   final terms, which covers the  operator mentioned above. Note that   several types of   compositions of fractional integro-differential operators were studied by such mathematicians as
 Prabhakar T.R. \cite{firstab_lit:1Prabhakar}, Love E.R. \cite{firstab_lit:5Love}, Erdelyi A. \cite{firstab_lit:15Erdelyi}, McBride A. \cite{firstab_lit:9McBride},
  Dimovski I.H., Kiryakova V.S. \cite{firstab_lit:2Dim-Kir}, Nakhushev A.M. \cite{firstab_lit:nakh2003}.

 The central idea of this paper  is to built a   model that gives us a representation  of a  composition of  fractional differential operators   in terms of the semigroup theory.    For instance we can    represent a second order differential operator as some kind of  a  transform of   the infinitesimal generator of a shift semigroup. Continuing this line of reasonings we generalize a   differential operator with a fractional integro-differential composition  in final terms   to some transform of the corresponding  infinitesimal generator   and introduce  a class of  transforms of   m-accretive operators. Further,   we  use   methods obtained in the papers
\cite{firstab_lit(arXiv non-self)kukushkin2018},\cite{kukushkin2019} to  study spectral properties  of non-selfadjoint operators acting  in  a complex  separable Hilbert space, these methods alow us to   obtain an  asymptotic equivalence between   the
real component of the resolvent and the resolvent of the   real component of an operator. Due  to such an approach we  obtain relevant  results since  an asymptotic formula  for   the operator  real component  can be  established in many cases
(see \cite{firstab_lit:2Agranovich2011}, \cite{firstab_lit:Rosenblum}). Thus,   a classification  in accordance with  resolvent  belonging     to  the  Schatten-von Neumann  class is obtained,   a sufficient condition of completeness of the root vectors system is   formulated. As the most significant result  we obtain  an   asymptotic formula for the   eigenvalues.

 \section{  Transform}

\subsection{Accretive property}

  Let $f_{t} :I\rightarrow \mathfrak{H},\,t\in I:=[a,b],\,-\infty< a <b<\infty.$ The following integral is understood in the Riemann  sense as a limit of partial sums
\begin{equation}\label{3.1}
\sum\limits_{i=0}^{n}f_{\xi_{i}}\Delta t_{i}  \stackrel{\mathfrak{H}}{ \longrightarrow}  \int\limits_{I}f_{t}dt,\,\lambda\rightarrow 0,
\end{equation}
where $(a=t_{0}<t_{1}<...<t_{n}=b)$ is an arbitrary splitting of the segment $I,\;\lambda:=\max\limits_{i}(t_{i+1}-t_{i}),\;\xi_{i}$ is an arbitrary point belonging to $[t_{i},t_{i+1}].$
The sufficient condition of the last integral existence is a continuous property (see\cite[p.248]{firstab_lit:Krasnoselskii M.A.}) i.e.
$
f_{t}\stackrel{\mathfrak{H}}{ \longrightarrow}f_{t_{0}},\,t\rightarrow t_{0},\;\forall t_{0}\in I.
$
The improper integral is understood as a limit
\begin{equation}\label{3.2}
 \int\limits_{a}^{b}f_{t}dt\stackrel{\mathfrak{H}}{ \longrightarrow} \int\limits_{a}^{c}f_{t}dt,\,b\rightarrow c,\,c\in [-\infty,\infty].
\end{equation}

In this paragraph  we present propositions devoted to properties of  accretive operators and related questions.
For a reader convenience, we would like to establish well-known facts  of the operator theory under  a  point of view  that  is necessary    for the following reasonings.
\begin{lem}\label{L3.1}
Assume that   $A$ is  a  closed densely defined  operator, the following    condition  holds
\begin{equation}\label{3.3}
\|(A+t)^{-1}\|_{\mathrm{R} \rightarrow \mathfrak{H}}\leq\frac{1}{ t},\,t>0,
\end{equation}
where a notation  $\mathrm{R}:=\mathrm{R}(A+t)$ is used. Then the operators  $A,A^{\ast}$ are m-accretive.
\end{lem}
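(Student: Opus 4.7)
The plan is to reduce the claim to the standard characterization of m-accretivity: an operator is m-accretive iff it is accretive and $A+t$ is surjective for some (equivalently, all) $t>0$. First, I would derive accretivity of $A$. Substituting $\phi=(A+t)g$ for $g\in\mathrm{D}(A)$ into the hypothesis yields $t\|g\|\le\|(A+t)g\|$. Squaring and expanding, one obtains
\begin{equation*}
\|Ag\|^{2}+2t\,\mathrm{Re}(Ag,g)+t^{2}\|g\|^{2}\ge t^{2}\|g\|^{2},
\end{equation*}
so that $\|Ag\|^{2}+2t\,\mathrm{Re}(Ag,g)\ge 0$. Dividing by $2t$ and letting $t\to\infty$ produces $\mathrm{Re}(Ag,g)\ge 0$ for all $g\in\mathrm{D}(A)$.

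Second, I would verify that $\mathrm{R}(A+t)=\mathfrak{H}$ for every $t>0$. Closedness of the range is immediate from the norm estimate $t\|g\|\le\|(A+t)g\|$: if $(A+t)g_{n}\to h$ then $\{g_{n}\}$ is Cauchy, so $g_{n}\to g$, and closedness of $A$ forces $g\in\mathrm{D}(A)$ with $(A+t)g=h$. For density, I would exploit the standard orthogonal decomposition $\mathrm{R}(A+t)^{\perp}=\mathrm{N}(A^{\ast}+t)$ (valid for closed densely defined $A$) and argue that any nonzero $h\in\mathrm{N}(A^{\ast}+t)$ would contradict the structure of the hypothesis. Concretely, such $h$ would yield $A^{\ast}h=-th$, and pairing this against the whole family of estimates for $(A+s)$ as $s$ varies, together with the duality relation $((A+s)g,h)=(g,(A^{\ast}+s)h)=(s-t)(g,h)$, allows one to extract the required contradiction. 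This surjectivity step is the main obstacle: the hypothesis supplies a bound only on the range, and squeezing out $\mathrm{R}(A+t)=\mathfrak{H}$ requires a delicate duality argument.

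Third, I would pass from real $t>0$ to complex $\zeta$ with $\mathrm{Re}\,\zeta>0$. Using the already-established accretivity, one has
\begin{equation*}
\|(A+\zeta)g\|\cdot\|g\|\ge\mathrm{Re}\langle (A+\zeta)g,g\rangle=\mathrm{Re}(Ag,g)+\mathrm{Re}\,\zeta\,\|g\|^{2}\ge\mathrm{Re}\,\zeta\,\|g\|^{2},
\end{equation*}
so $\|(A+\zeta)g\|\ge\mathrm{Re}\,\zeta\,\|g\|$, giving both the injectivity and the norm bound $\|(A+\zeta)^{-1}\|\le(\mathrm{Re}\,\zeta)^{-1}$ on the range. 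Surjectivity for complex $\zeta$ follows by a connectedness argument in the right half-plane: the set on which $(A+\zeta)^{-1}\in\mathcal{B}(\mathfrak{H})$ is open (Neumann-series perturbation around any point where the resolvent exists) and closed (by the uniform bound combined with the range-closedness argument above), and it contains $(0,\infty)$, so it must coincide with the entire half-plane.

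Finally, m-accretivity of $A^{\ast}$ drops out by duality: since $(A+\bar\zeta)^{-1}\in\mathcal{B}(\mathfrak{H})$ with norm at most $(\mathrm{Re}\,\zeta)^{-1}$, the adjoint $(A^{\ast}+\zeta)^{-1}=\bigl[(A+\bar\zeta)^{-1}\bigr]^{\ast}$ inherits both the boundedness on $\mathfrak{H}$ and the same norm bound, while $(A^{\ast})^{\ast}=A$ for closed densely defined $A$ ensures the symmetric role. As noted, the central difficulty is the surjectivity step in stage two; once it is secured the remaining stages are routine manipulations with accretive estimates and standard resolvent perturbation.
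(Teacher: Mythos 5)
Your accretivity derivation (stage one) is exactly the paper's, and your stages three and four (passage to complex $\zeta$ via the accretive lower bound, and the adjoint by duality) are routine once surjectivity is in hand. The genuine gap is precisely where you locate it: establishing $\mathrm{R}(A+t)=\mathfrak{H}$. Your proposed duality argument does not close it. For $h\in\mathrm{N}(A^{\ast}+t)$ the identity $((A+s)g,h)=(s-t)(g,h)$ holds for every $s$ and every $g\in\mathrm{D}(A)$, but it carries no information beyond $h\perp\mathrm{R}(A+t)$ (set $s=t$) and is perfectly consistent with $h\neq 0$ and $(g,h)\neq 0$; no contradiction can be extracted from it. Worse, no argument can close the gap from the stated hypotheses, because the implication is false: take $A=-d^{2}/dx^{2}$ on $L_{2}(0,1)$ with $\mathrm{D}(A)=H_{0}^{2}(0,1)$ (the minimal symmetric realization). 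This $A$ is closed, densely defined, and satisfies $\|(A+t)f\|\,\|f\|\geq((A+t)f,f)\geq t\|f\|^{2}$, so $\|(A+t)^{-1}\|_{\mathrm{R}\rightarrow\mathfrak{H}}\leq 1/t$ for all $t>0$; yet $\mathrm{R}(A+t)^{\perp}=\mathrm{N}(A^{\ast}+t)=\mathrm{span}\{e^{\sqrt{t}x},e^{-\sqrt{t}x}\}$ is two-dimensional, so $A+t$ is not surjective and $A$ is not m-accretive. The hypothesis bounds the inverse only on the range and genuinely cannot force the range to be all of $\mathfrak{H}$.

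For comparison, the paper's own proof crosses this same bridge by arguing that $\mathrm{D}(A)\cap\mathrm{R}(A-\lambda_{0})^{\perp}=\{0\}$ (which is correct, by the accretivity estimate) and then concluding that every element of $\mathrm{R}(A-\lambda_{0})^{\perp}$ is orthogonal to $\mathrm{D}(A)$; that inference is invalid, since two subspaces with trivial intersection need not be orthogonal (in the counterexample above, $\mathrm{span}\{e^{\pm\sqrt{t}x}\}$ meets $H_{0}^{2}(0,1)$ only in $0$ but is certainly not orthogonal to it). So you were right to flag the surjectivity step as the crux, but wrong to suggest it is merely "delicate": the lemma requires an additional hypothesis — e.g.\ that $\mathrm{R}(A+t_{0})=\mathfrak{H}$ for at least one $t_{0}>0$, or that $A$ is maximal among accretive operators — before either your argument or the paper's can be repaired.
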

\begin{proof}
Using \eqref{3.3} consider
$$
\|f\|^{2}_{\mathfrak{H}}\leq  \frac{1}{t^{2}}  \|(A+t)f \|^{2}_{ \mathfrak{H}};\,
 \|f\|^{2}_{\mathfrak{H}}\leq  \frac{1}{ t^{2}} \left\{ \| A f\|^{2}_{ \mathfrak{H}}+2t \mathrm{Re}(Af,f)_{ \mathfrak{H}}+t^{2}\|   f \|^{2}_{ \mathfrak{H}}\right\} ;
$$
 $$
t^{-1} \| A  f \|^{2}_{ \mathfrak{H}}+2 \mathrm{Re}(A f,f)_{ \mathfrak{H}}\geq0,\,f\in  \mathrm{D} (A).
$$
Let $t$ be tended to infinity, then we obtain
\begin{equation}\label{3.4}\mathrm{Re}(A f,f)_{ \mathfrak{H}}\geq0 ,\,f\in \mathrm{D}(A).
\end{equation}
It means  that   the  operator $A$ has an accretive property.
Due to \eqref{3.4}, we have   $\{\lambda \in \mathbb{C}:\,\mathrm{Re}\lambda<0\}\subset \Delta(A),  $ where $\Delta(A)=\mathbb{C}\setminus   \overline{\Theta  (A)}.$  Applying Theorem 3.2 \cite[p.268]{firstab_lit:kato1980}, we obtain that $A-\lambda$ has a closed range and  $\mathrm{nul} (A-\lambda)=0,\,\mathrm{def} (A-\lambda)=\mathrm{const},\,\forall\lambda\in \Delta(A).$
Let $\lambda_{0}\in \Delta(A) ,\;{\rm Re}\lambda_{0} <0.$
Note that in consequence of inequality  \eqref{3.4}, we have
 \begin{equation}\label{3.5}
  {\rm Re} ( f,(A-\lambda )f  )_{\mathfrak{H}}\geq   - {\rm Re} \lambda   \|f\|^{2}_{\mathfrak{H}},\,f\in \mathrm{D}(A).
 \end{equation}
  Since the  operator $A-\lambda_{0}$ has a closed range, then
\begin{equation*}
 \mathfrak{H}=\mathrm{R} (A-\lambda_{0})\oplus \mathrm{R} (A-\lambda_{0})^{\perp} .
 \end{equation*}
We remark  that the intersection of the sets  $\mathrm{D}(A)$ and $\mathrm{R} (A-\lambda_{0})^{\perp}$ is  zero, because if we assume  the contrary,   then applying inequality  \eqref{3.5},  for arbitrary element
 $f\in \mathrm{D}(A)\cap \mathrm{R}  (A-\lambda_{0})^{\perp}$    we get
 \begin{equation*}
 - {\rm Re} \lambda_{0}  \|f\|^{2}_{\mathfrak{H}} \leq  {\rm Re} ( f,[A-\lambda_{0} ]f  )_{\mathfrak{H}}=0,
 \end{equation*}
hence $f=0.$   It implies that
$$
\left(f,g\right)_{\mathfrak{H}}=0,\;\forall f\in  \mathrm{R}  (A-\lambda_{0})^{\perp},\;\forall g\in \mathrm{D}(A).
$$
Since $  \mathrm{D}(A)$  is a dense set in $\mathfrak{H},$ then $\mathrm{R}  (A-\lambda_{0})^{\perp}=0.$ It implies that  ${\rm def} (A-\lambda_{0}) =0$ and if we take into account  Theorem 3.2 \cite[p.268]{firstab_lit:kato1980}, then
  we come to the conclusion that ${\rm def} (A-\lambda )=0,\;\forall\lambda\in \Delta(A),$ hence the operator $A$ is m-accretive.

Now assume that the operator $A$ is m-accretive.
 Since it is proved that $\mathrm{def}(A+\lambda)=0,\,\lambda> 0,$ then $ \mathrm{nul}(A+\lambda)^{\ast}=0,\,\lambda> 0$ (see (3.1) \cite[p.267]{firstab_lit:kato1980}).   In accordance with the  well-known fact, we have $\left([\lambda  +A]^{-1}\right)^{\ast}=[\left(\lambda  +A\right)^{\ast}]^{-1}.$ Using the obvious relation $\lambda+A^{\ast}=\left(\lambda  +A\right)^{\ast},$   we can deduce  $(\lambda+A^{\ast})^{-1}=[\left(\lambda  +A\right)^{\ast}]^{-1}.$ Also it is obvious that $\left\|( \lambda  +A)^{-1}\right\| =\left\| [( \lambda  +A)^{-1}]^{\ast}\right\|,$ since both operators are bounded. Hence
$$
\| \left(\lambda  +A^{\ast}\right) ^{-1}f  \|_{\mathfrak{H}}=\|[\left(\lambda  +A\right)^{\ast}]^{-1}f  \|_{\mathfrak{H}}=\|\left([\lambda  +A]^{-1}\right)^{\ast}f\|_{\mathfrak{H}} \leq\frac{1}{ \lambda}\|f\|_{\mathfrak{H}},
 f\in \mathrm{R}(\lambda  +A^{\ast}),\;\lambda>0.
$$
This relation can be rewritten in the following form
\begin{equation*}
\|\left(\lambda  +A^{\ast}\right)^{-1}\|_{\mathrm{R}\rightarrow \mathfrak{H}}  \leq\frac{1}{ \lambda},\,\lambda>0.
\end{equation*}
Using the proved above fact, we   conclude that
\begin{equation}\label{3.6}
\|\left(\lambda  +A^{\ast}\right)^{-1}\| \leq\frac{1}{\mathrm{Re}\lambda},\,\mathrm{Re}\lambda>0.
\end{equation}
The proof is complete.
\end{proof}

In accordance with the definition given  in \cite{firstab_lit:Krasnoselskii M.A.} we can define  a positive and negative  fractional powers of a positive  operator $A$ as follows
\begin{equation}\label{3.7}
A^{\alpha}:=\frac{\sin\alpha \pi}{\pi}\int\limits_{0}^{\infty}\lambda^{\alpha-1}(\lambda  +A)^{-1} A \,d \lambda;\,\,A^{-\alpha}:=\frac{\sin\alpha \pi}{\pi}\int\limits_{0}^{\infty}\lambda^{-\alpha}(\lambda  +A)^{-1}  \,d \lambda,\,\alpha\in (0,1).
\end{equation}
This definition can be correctly extended  on m-accretive operators, the corresponding reasonings can be found in \cite{firstab_lit:kato1980}. Thus, further we define positive and negative fractional powers of m-accretive operators by formula \eqref{3.7}.

 \begin{lem}\label{L3.2}
 Assume that $\alpha\in(0,1),$ the operator   $J$ is m-accretive,    $  J^{-1} $ is bounded, then
\begin{equation}\label{3.8}
\|J^{-\alpha}f\|_{\mathfrak{H}}\leq C_{   1-\alpha}\|f\|_{\mathfrak{H}},\,f\in \mathfrak{H},
\end{equation}
where
$
C_{ 1-\alpha }= 2 (1-\alpha)^{-1} \|J^{-1}\|  + \alpha^{-1} .$
\end{lem}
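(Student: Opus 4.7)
The plan is to work directly from the definition \eqref{3.7} and to estimate the norm of the integrand on two complementary regions of the integration variable, corresponding to the two qualitatively different bounds available for $\|(\lambda+J)^{-1}\|$: an ``m-accretive bound'' that is sharp for large $\lambda$ and a ``boundedness of $J^{-1}$'' bound that is useful for small $\lambda$.

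First, I would recall the m-accretivity estimate $\|(\lambda+J)^{-1}\|\leq(\mathrm{Re}\,\lambda)^{-1}$, which for real $\lambda>0$ yields $\|(\lambda+J)^{-1}\|\leq 1/\lambda$. On the interval $[1,\infty)$ this gives
$$
\lambda^{-\alpha}\|(\lambda+J)^{-1}\|\leq \lambda^{-\alpha-1},
$$
whose integral from $1$ to $\infty$ equals $\alpha^{-1}$. For small $\lambda$ this estimate is useless, so I would derive a second bound from the identity
$(\lambda+J)J^{-1}=I+\lambda J^{-1}$, which after rearrangement gives
$(\lambda+J)^{-1}=J^{-1}-\lambda(\lambda+J)^{-1}J^{-1}$. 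Combining this with the m-accretive bound yields
$$
\|(\lambda+J)^{-1}\|\leq \|J^{-1}\|+\lambda\cdot\tfrac{1}{\lambda}\cdot\|J^{-1}\|=2\|J^{-1}\|,\quad \lambda>0.
$$

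Armed with these two estimates, I would split the improper integral in \eqref{3.7} at $\lambda=1$ and bound each piece separately:
$$
\|J^{-\alpha}f\|_{\mathfrak{H}}\leq \frac{\sin\alpha\pi}{\pi}\left\{\int_0^1\lambda^{-\alpha}\cdot 2\|J^{-1}\|\,d\lambda+\int_1^\infty\lambda^{-\alpha-1}\,d\lambda\right\}\|f\|_{\mathfrak{H}}
=\frac{\sin\alpha\pi}{\pi}\left\{\frac{2\|J^{-1}\|}{1-\alpha}+\frac{1}{\alpha}\right\}\|f\|_{\mathfrak{H}}.
$$
Using the trivial inequality $\sin\alpha\pi/\pi\leq 1$ (valid for $\alpha\in(0,1)$) I obtain exactly the constant $C_{1-\alpha}$ declared in the statement, and the lemma follows.

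The only delicate point is a justification that the integral defining $J^{-\alpha}$ converges in the strong operator sense on $\mathfrak{H}$ (not merely on a dense set), so that the norm estimate passes through. Since the integrand $\lambda^{-\alpha}(\lambda+J)^{-1}f$ is continuous in $\lambda\in(0,\infty)$ with values in $\mathfrak{H}$ (both factors are continuous, and the resolvent $(\lambda+J)^{-1}$ depends norm-continuously on $\lambda$ on the resolvent set, which contains $(0,\infty)$ by Lemma \ref{L3.1} and m-accretivity), and the two one-sided estimates above produce an integrable majorant $\lambda^{-\alpha}\cdot 2\|J^{-1}\|\chi_{(0,1]}+\lambda^{-\alpha-1}\chi_{[1,\infty)}$, the improper integral in the sense of \eqref{3.1}--\eqref{3.2} exists and satisfies the bound termwise. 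This is essentially routine but should be remarked on in order to make the application of the generalized Minkowski inequality under the integral sign rigorous; it is the only step I would expect to require any care beyond the elementary estimates above.
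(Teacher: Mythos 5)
Your proof is correct and follows essentially the same route as the paper's: both split the integral in \eqref{3.7} at $\lambda=1$, use the m-accretive bound $\|(\lambda+J)^{-1}\|\leq\lambda^{-1}$ on $[1,\infty)$, and obtain $\|(\lambda+J)^{-1}\|\leq 2\|J^{-1}\|$ on $(0,1]$ from the same identity $J(\lambda+J)^{-1}=I-\lambda(\lambda+J)^{-1}$ (the paper factors $J^{-1}$ out of the integrand and estimates on $\mathrm{D}(J)$ before extending by density, while you bound the resolvent norm directly on $\mathfrak{H}$ — a cosmetic difference). Your explicit handling of the factor $\sin\alpha\pi/\pi\leq 1$ and of the convergence of the improper integral is, if anything, slightly more careful than the paper's.
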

\begin{proof}
 Consider
$$
J^{-\alpha}=\int\limits_{0}^{1}\lambda^{-\alpha }(\lambda+J)^{-1}d\lambda+\int\limits_{1}^{\infty}\lambda^{-\alpha}(\lambda+J)^{-1}d\lambda=I_{1}+I_{2}.
$$
Using definition of the integral \eqref{3.1},\eqref{3.2}  in a Hilbert  space  and the fact $J(\lambda+J)^{-1}f=(\lambda+J)^{-1}Jf,\,f\in \mathrm{D}(J),$ we can easily obtain
$$
\|I_{1}f\|_{\mathfrak{H}}=\left\|\int\limits_{0}^{1}\lambda^{-\alpha}J^{-1}J(\lambda+J)^{-1}fd\lambda\right\|_{\mathfrak{H}}\leq
\|J^{-1}\|  \cdot\left\|\int\limits_{0}^{1}\lambda^{-\alpha}J(\lambda+J)^{-1}fd\lambda\right\|_{\mathfrak{H}}\leq
$$
$$
\leq \|J^{-1}\|_{\mathrm{R} \rightarrow \mathfrak{H}} \cdot \left\{\left\| f \right\|_{\mathfrak{H}}  \int\limits_{0}^{1}\lambda^{-\alpha} fd\lambda  +  \left\|\int\limits_{0}^{1}\lambda^{1-\alpha} (\lambda+J)^{-1}fd\lambda\right\|_{\mathfrak{H}}   \right\}       \leq
 $$
 $$
 \leq2 \|J^{-1}\|\cdot\left\|  f \right\|_{\mathfrak{H}} \int\limits_{0}^{1}\lambda^{-\alpha} d\lambda
,\,f\in \mathrm{D}(J);
$$
$$
\|I_{2}f\|_{\mathfrak{H}}=\left\|\int\limits_{1}^{\infty}\lambda^{-\alpha} (\lambda+J)^{-1}fd\lambda\right\|_{\mathfrak{H}}\leq
\left\|  f \right\|_{\mathfrak{H}}    \int\limits_{1}^{\infty}\lambda^{-\alpha} \|(\lambda+J)^{-1}\| d\lambda  \leq\left\|  f \right\|_{\mathfrak{H}} \int\limits_{1}^{\infty}\lambda^{-\alpha-1}   d\lambda .
$$
Hence
$ J^{-\alpha}
$  is bounded on $\mathrm{D}(J).$  Since $\mathrm{D}(J)$ is dense in $\mathfrak{H},$ then $J^{-\alpha}$ is bounded on $\mathfrak{H}.$ Calculating the right-hand sides of the above estimates, we obtain \eqref{3.8}.
\end{proof}

\subsection{Main theorem}

Consider a transform of an m-accretive operator $J$ acting in $\mathfrak{H}$
\begin{equation}\label{3.9}
 Z^{\alpha}_{G,F}(J):= J^{\ast}GJ+FJ^{\alpha},\,\alpha\in [0,1),
\end{equation}
where symbols  $G,F$  denote  operators acting in $\mathfrak{H}.$    Further, using a relation $L= Z^{\alpha}_{G,F}(J)$ we mean that there exists an appropriate representation for the operator $L.$
\noindent The following theorem gives us a tool to describe spectral properties  of transform  \eqref{3.9},
  as it will be shown   further  it has an important  application in fractional calculus  since  allows   to represent fractional differential  operators as a transform of the infinitesimal  generator of a    semigroup.

\begin{teo}\label{T3.1}
 Assume that  the operator   $J$ is m-accretive,    $  J^{-1} $ is compact, $G$ is bounded, strictly accretive, with  a lower bound $\gamma_{G}> C_{ \alpha} \|J^{-1}\|\cdot \|F\|,\; \mathrm{D}(G)\supset \mathrm{R}(J),$   $F\in \mathcal{B}(\mathfrak{H}),$ where $C_{\alpha}$  is a   constant   \eqref{3.8}.
 Then   $Z^{\alpha}_{G,F}(J)$ satisfies  conditions  H1 -  H2.

\end{teo}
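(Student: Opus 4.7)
The plan is to take $\mathfrak{H}_{+}:=\mathfrak{H}_{J}$, the energy space of $J$ with norm $\|f\|_{\mathfrak{H}_{+}}:=\|Jf\|_{\mathfrak{H}}$, and $\mathfrak{M}:=\mathrm{D}(J^{\ast}J)$. Condition H1 then follows from three standard observations. First, $\mathfrak{H}_{+}\supset \mathrm{D}(J)$, which is dense in $\mathfrak{H}$ since $J$ is densely defined, and $\|f\|_{\mathfrak{H}}=\|J^{-1}Jf\|_{\mathfrak{H}}\le \|J^{-1}\|\cdot\|f\|_{\mathfrak{H}_{+}}$. Second, if $\{f_{n}\}$ is bounded in $\mathfrak{H}_{+}$, then $\{Jf_{n}\}$ is bounded in $\mathfrak{H}$ and, by compactness of $J^{-1}$, the sequence $f_{n}=J^{-1}(Jf_{n})$ has a convergent subsequence in $\mathfrak{H}$; thus $\mathfrak{H}_{+}\subset\subset \mathfrak{H}$. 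Third, by the classical von Neumann theorem $J^{\ast}J$ is selfadjoint and $\mathrm{D}(J^{\ast}J)$ is a core of $J$, i.e.\ dense in $\mathrm{D}(J)$ in the graph norm; since $J^{-1}$ is bounded, that graph norm is equivalent to $\|\cdot\|_{\mathfrak{H}_{+}}$, so $\mathfrak{M}$ is dense in $\mathfrak{H}_{+}$.

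For the continuity half of H2, I would fix $f,g\in\mathfrak{M}$ and estimate the two terms separately. Since $\mathrm{R}(J)\subset\mathrm{D}(G)$, the adjoint shift gives
\begin{equation*}
\bigl|(J^{\ast}GJf,g)_{\mathfrak{H}}\bigr|=\bigl|(GJf,Jg)_{\mathfrak{H}}\bigr|\le \|G\|\cdot\|f\|_{\mathfrak{H}_{+}}\|g\|_{\mathfrak{H}_{+}}.
\end{equation*}
For the second term I use the key identity $J^{\alpha}f=J^{-(1-\alpha)}(Jf)$, which follows by a straightforward comparison of the two integrals in \eqref{3.7} (noting $\sin\alpha\pi=\sin(1-\alpha)\pi$) and is valid on $\mathrm{D}(J)\supset\mathfrak{M}$. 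Applying Lemma \ref{L3.2} with parameter $1-\alpha$ yields $\|J^{\alpha}f\|_{\mathfrak{H}}\le C_{\alpha}\|Jf\|_{\mathfrak{H}}=C_{\alpha}\|f\|_{\mathfrak{H}_{+}}$, and combining this with $\|g\|_{\mathfrak{H}}\le \|J^{-1}\|\cdot\|g\|_{\mathfrak{H}_{+}}$ gives
\begin{equation*}
\bigl|(FJ^{\alpha}f,g)_{\mathfrak{H}}\bigr|\le C_{\alpha}\|J^{-1}\|\cdot\|F\|\cdot\|f\|_{\mathfrak{H}_{+}}\|g\|_{\mathfrak{H}_{+}}.
\end{equation*}
Summing produces the first inequality of H2 with $C_{1}=\|G\|+C_{\alpha}\|J^{-1}\|\cdot\|F\|$.

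The strict accretivity half is then immediate. The lower bound on $G$ gives
\begin{equation*}
\mathrm{Re}(J^{\ast}GJf,f)_{\mathfrak{H}}=\mathrm{Re}(GJf,Jf)_{\mathfrak{H}}\ge \gamma_{G}\|Jf\|_{\mathfrak{H}}^{2}=\gamma_{G}\|f\|_{\mathfrak{H}_{+}}^{2},
\end{equation*}
while the same computation as above bounds $|\mathrm{Re}(FJ^{\alpha}f,f)_{\mathfrak{H}}|$ by $C_{\alpha}\|J^{-1}\|\cdot\|F\|\cdot\|f\|_{\mathfrak{H}_{+}}^{2}$. Therefore
\begin{equation*}
\mathrm{Re}(Lf,f)_{\mathfrak{H}}\ge\bigl(\gamma_{G}-C_{\alpha}\|J^{-1}\|\cdot\|F\|\bigr)\|f\|_{\mathfrak{H}_{+}}^{2},
\end{equation*}
which is strictly positive precisely by the quantitative hypothesis on $\gamma_{G}$. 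This yields $C_{2}=\gamma_{G}-C_{\alpha}\|J^{-1}\|\cdot\|F\|>0$ and completes H2.

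The main technical point I expect to need the most care is the justification of the identity $J^{\alpha}f=J^{-(1-\alpha)}Jf$ on $\mathrm{D}(J)$ for a merely m-accretive (not necessarily selfadjoint or normal) $J$; this is the pivot that converts the a priori unbounded lower-order term $FJ^{\alpha}$ into a form controllable by $\|Jf\|_{\mathfrak{H}}$, and the rest of the argument is essentially bookkeeping. The secondary obstacle is the density of $\mathrm{D}(J^{\ast}J)$ in $\mathfrak{H}_{+}$, which I would cite from von Neumann's theorem applied to the closed densely defined $J$.
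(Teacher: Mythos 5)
Your choice of $\mathfrak{H}_{+}=\mathfrak{H}_{J}$, the compactness argument for the embedding, the identity $J^{\alpha}f=J^{\alpha-1}Jf$ on $\mathrm{D}(J)$ combined with Lemma \ref{L3.2}, and the final bookkeeping for H2 (including the constants $C_{1}=\|G\|+C_{\alpha}\|J^{-1}\|\cdot\|F\|$ and $C_{2}=\gamma_{G}-C_{\alpha}\|J^{-1}\|\cdot\|F\|$) all coincide with the paper's argument. However, there is one genuine gap: your choice $\mathfrak{M}=\mathrm{D}(J^{\ast}J)$. The framework of conditions H1--H2 requires the operator $Z^{\alpha}_{G,F}(J)=J^{\ast}GJ+FJ^{\alpha}$ itself to be \emph{defined} on $\mathfrak{M}$ (the subsequent theory sets $\mathrm{D}(W)=\mathfrak{M}$ and works with the closure of $W$), so you need $\mathfrak{M}\subset\mathrm{D}(J^{\ast}GJ)$. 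For $f\in\mathrm{D}(J^{\ast}J)$ you know $Jf\in\mathrm{D}(J^{\ast})$, but there is no reason for $GJf$ to lie in $\mathrm{D}(J^{\ast})$: a bounded strictly accretive $G$ can easily carry $\mathrm{D}(J^{\ast})$ outside of $\mathrm{D}(J^{\ast})$ (take $G=I+\varepsilon K$ with $K$ a rank-one operator whose range misses $\mathrm{D}(J^{\ast})$). Your ``adjoint shift'' $(J^{\ast}GJf,g)=(GJf,Jg)$ therefore presupposes exactly the membership $GJf\in\mathrm{D}(J^{\ast})$ that has not been established, and the left-hand side may simply be undefined on your $\mathfrak{M}$.

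The paper closes this gap with the one substantive step your proposal omits: it proves that $\mathrm{D}(J^{\ast}GJ)$ is itself a core of $J$, hence dense in $\mathfrak{H}_{J}$, and takes that set as $\mathfrak{M}$. The mechanism is a Lax--Milgram argument on $\mathfrak{H}_{J}$ applied to the bounded coercive form $l_{G}(u,v)=(GJu,Jv)_{\mathfrak{H}}$: for each $f_{n}\in\mathrm{D}(J^{\ast}J)$ one solves $(GJz_{n},Jv)_{\mathfrak{H}}=(J^{\ast}Jf_{n},v)_{\mathfrak{H}}$ for all $v\in\mathrm{D}(J)$, which forces $GJz_{n}\in\mathrm{D}(J^{\ast})$, i.e.\ $z_{n}\in\mathrm{D}(J^{\ast}GJ)$; the coercivity of $G$ then shows that the $z_{n}$ approximate any prescribed element of $\mathrm{D}(J)$ in the graph norm, so the core property transfers from $\mathrm{D}(J^{\ast}J)$ to $\mathrm{D}(J^{\ast}GJ)$. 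You correctly identified the identity $J^{\alpha}=J^{\alpha-1}J$ as a pivot, but the construction of an admissible $\mathfrak{M}$ is the other pivot, and it cannot be replaced by citing von Neumann's theorem alone.
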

\begin{proof}
Since  $J$ is m-accretive, then  it  is    closed, densely defined (see \cite[p.279]{firstab_lit:kato1980},  using the fact that $(J+\lambda)^{-1},\,(\lambda>0)$ is a closed operator, we conclude that $J$ is closed also).
Firstly,  we want to check fulfilment of condition $\mathrm{H1}.$  Let us choose  a space $\mathfrak{H}_{J}$ as a space   $\mathfrak{H}_{+}.$        Since $J^{-1}$ is compact, then   we conclude that the following   relation holds
 $\|f\|_{\mathfrak{H}}\leq \|J^{-1}\| \cdot \|Jf\|_{\mathfrak{H}},\,f\in \mathrm{D}(J) $ and    the embedding provided by this   inequality is compact.    Thus condition H1 is satisfied.

Let us prove that  $\mathrm{D}(J^{\ast}GJ)$ is a core of $J.$
Consider a space $\mathfrak{H}_{J}$ and a sesquilinear form
$$
 l_{G}(u,v):=(GJu,Jv)_{\mathfrak{H}} ,\;u,v\in \mathrm{D}(J).
$$
Observe that this form is a bounded functional on $\mathfrak{H}_{J},$ since we have
$
 |(GJu,Jv)_{\mathfrak{H}}|\leq \|G\|\cdot \|Ju\|_{\mathfrak{H}}  \|Jv\|_{\mathfrak{H}}.
$
Hence using the Riesz representation theorem, we have
$$
\forall z\in \mathrm{D}(J),\,   \exists f\in \mathrm{D}(J):\,(GJz,Jv)_{\mathfrak{H}}=(Jf,Jv)_{\mathfrak{H}}.
$$
On the other hand,   due to the properties of the operator $G,$ it is clear that  the conditions of the Lax-Milgram theorem are satisfied  i.e.
$
|(GJu,Jv)_{\mathfrak{H}}|\leq \|G\|\cdot\|Ju\|_{\mathfrak{H}} \|Jv\|_{\mathfrak{H}},\;|(GJu,Ju)_{\mathfrak{H}}|\geq \gamma_{G} \|Ju\|^{2}_{\mathfrak{H}}.
$
 Note that,   in accordance with  Theorem 3.24 \cite[p.275]{firstab_lit:kato1980} the set
$  \mathrm{D} (J^{\ast}J)$ is a core of $J$  i.e.
$$
\forall f\in \mathrm{D}(J),\,\exists \{f_{n}\}_{1}^{\infty}\subset \mathrm{D} (J^{\ast}J):   \,f_{n}\xrightarrow[ J ]{}f.
$$
 Using the Lax-Milgram theorem, in the previously used terms, we get
$$
   \forall f_{n},\, n\in \mathbb{N},\,    \exists z_{n}\in \mathrm{D}(J):\,(GJz_{n},Jv)_{\mathfrak{H}}=(Jf_{n},Jv)_{\mathfrak{H}}.
$$
Combining the above relations, we obtain
$$
(GJ\xi_{n},Jv)_{\mathfrak{H}}=(J\psi_{n},Jv)_{\mathfrak{H}},\,
$$
where $\xi_{n}:=z-z_{n},\,\psi_{n}:=f-f_{n}.$
  Using the strictly accretive property of the operator $G,$ we have
$$
\|J\xi_{n}\|_{\mathfrak{H}}^{2}\gamma_{G} \leq|(GJ\xi_{n},J\xi_{n})_{\mathfrak{H}}|=|(J\psi_{n},J\xi_{n})_{\mathfrak{H}}|\leq\|J\psi_{n}\|_{\mathfrak{H}}\|J\xi_{n}\|_{\mathfrak{H}} .
$$
Taking into account that $J^{-1}$ is bounded, we obtain
$$
K_{1}\| \xi_{n}\|_{\mathfrak{H}}\leq\|J\xi_{n}\|_{\mathfrak{H}} \leq K_{2}\|J\psi_{n}\|_{\mathfrak{H}},\; K_{1},K_{2}>0,
$$
from what follows that
$$
  Jz_{n} \xrightarrow[   ]{\mathfrak{H}}Jz .
$$
On the other hand, we have
$$
(GJz_{n},Jv)_{\mathfrak{H}}=(Jf_{n}, Jv)_{\mathfrak{H}}=(J^{\ast}Jf_{n}, v)_{\mathfrak{H}},\,v\in \mathrm{D}(J).
$$
Hence $\{z_{n}\}_{1}^{\infty}\subset \mathrm{D}(J^{\ast}GJ).$
Taking into account  the above reasonings, we conclude that $\mathrm{D}(J^{\ast}GJ)$ is a core of $J.$ Thus, we have obtained the desired result.

   Note that $\mathrm{D}_{0}(J)$ is dense in $\mathfrak{H},$ since $J$ is densely defined. We have proved above
$$
\mathrm{Re}\left(J^{\ast}GJf,f\right)_{\mathfrak{H}}=\mathrm{Re}\left( GJf,Jf\right)_{\mathfrak{H}}\geq  \gamma_{G}\|f\|^{2}_{\mathfrak{H}_{J}},\,
$$
$$
 \left|\left(J^{\ast}GJf,g\right)_{\mathfrak{H}}\right|=\left|\left( GJf,Jg\right)_{\mathfrak{H}}\right|\leq \|G\|\cdot\|Jf\|_{\mathfrak{H}}\|Jg\|_{\mathfrak{H}},\,f,g\in  \mathrm{D}_{0}(J).
$$
Similarly, we get
\begin{equation}\label{3.10}
|(FJ^{\alpha}f,g)_{\mathfrak{H}}|\leq \|FJ^{\alpha}f\|_{\mathfrak{H}}\|g\|_{\mathfrak{H}}\leq \|J^{-1}\|\cdot\|F\|\cdot\| J^{\alpha}f\|_{\mathfrak{H}}\|Jg\|_{\mathfrak{H}},\,f,g\in \mathrm{D}_{0}(J) .
\end{equation}
In accordance with \eqref{3.7}, we have $J^{\alpha-1}J\subset J^{\alpha}.$
 Therefore, using Lemma \ref{L3.2}, we obtain
\begin{equation}\label{3.11}
\|J^{\alpha}f\|_{\mathfrak{H}}= \|J^{\alpha-1}Jf\|_{\mathfrak{H}}\leq C_{  \alpha  }\| Jf\|_{\mathfrak{H}},\,f\in \mathrm{D}_{0}(J).
\end{equation}
  Combining this fact with \eqref{3.10},  we obtain
\begin{equation*}
|(FJ^{\alpha}f,g)_{\mathfrak{H}}| \leq  C_{  \alpha }\|J^{-1}\|\cdot \|F\|\cdot \|  f\|_{\mathfrak{H}_{J}}\|g\|_{\mathfrak{H}_{J}},\,f,g\in \mathrm{D}_{0}(J),
\end{equation*}
(the  case corresponding to $ \alpha=0 $ is trivial, since the operator $J^{-1}$ is bounded).
 It follows that
$$
\mathrm{Re}(FJ^{\alpha}f,f)\geq- C_{ \alpha}\|J^{-1}\|\cdot \|F\|\cdot \|  f\|^{2}_{\mathfrak{H}_{J}},\,f\in \mathrm{D}_{0}(J).
$$
Combining the above facts, we obtain fulfillment of  condition  $ \mathrm{H2}.$

\end{proof}

\begin{deff}\label{D3.1}
  Define an operator class $\mathfrak{G_{\alpha}} :=\{W:\,W\!= Z^{\alpha}_{G,F}(J) \},$ where $G,F,J$ satisfy  the conditions   of Theorem \ref{T3.1}.
\end{deff}

\section{Model}

In this section we consider various   operators  acting in a complex separable  Hilbert space  for which   Theorem \ref{T2.6} can be applied,  the given bellow results also   cover  a case $\alpha=0$ after   minor changes  which are omitted due to simplicity. Recall, that in accordance with the section  \ref{S2.7}  the conditions of Theorem \ref{T2.6} allow to claim that $\tilde{\mathcal{H}}=H$ what creates a convenient tool  in applications due to the opportunity of using the   minimax principle   for establishing the order of $H.$      \\

\subsection{Kipriyanov operator}\label{Sub3.3.1}

Here, we study a case $\alpha\in (0,1).$ Assume that  $\Omega\subset \mathbb{E}^{n}$ is  a convex domain, with a sufficient smooth boundary ($ C ^{3}$ class)   of the n-dimensional Euclidian space. For the sake of the simplicity we consider that $\Omega$ is bounded, but  the results  can be extended     to some type of    unbounded domains.
In accordance with the definition given in  the paper  \cite{firstab_lit:1kukushkin2018}, we consider the directional  fractional integrals.  By definition, put
$$
 (\mathfrak{I}^{\alpha}_{0+}f)(Q  ):=\frac{1}{\Gamma(\alpha)} \int\limits^{r}_{0}\frac{f (P+t \mathbf{e} )}{( r-t)^{1-\alpha}}\left(\frac{t}{r}\right)^{n-1}\!\!\!\!dt,\,(\mathfrak{I}^{\alpha}_{d-}f)(Q  ):=\frac{1}{\Gamma(\alpha)} \int\limits_{r}^{d }\frac{f (P+t\mathbf{e})}{(t-r)^{1-\alpha}}\,dt,
$$
$$
\;f\in L_{p}(\Omega),\;1\leq p\leq\infty.
$$
Also,     we   consider auxiliary operators,   the so-called   truncated directional  fractional derivatives    (see \cite{firstab_lit:1kukushkin2018}).  By definition, put
 \begin{equation*}
 ( \mathfrak{D} ^{\alpha}_{0+,\,\varepsilon}f)(Q)=\frac{\alpha}{\Gamma(1-\alpha)}\int\limits_{0}^{r-\varepsilon }\frac{ f (Q)r^{n-1}- f(P+\mathbf{e}t)t^{n-1}}{(  r-t)^{\alpha +1}r^{n-1}}   dt+\frac{f(Q)}{\Gamma(1-\alpha)} r ^{-\alpha},\;\varepsilon\leq r\leq d ,
 $$
 $$
 (\mathfrak{D}^{\alpha}_{0+,\,\varepsilon}f)(Q)=  \frac{f(Q)}{\varepsilon^{\alpha}}  ,\; 0\leq r <\varepsilon ;
\end{equation*}
\begin{equation*}
 ( \mathfrak{D }^{\alpha}_{d-,\,\varepsilon}f)(Q)=\frac{\alpha}{\Gamma(1-\alpha)}\int\limits_{r+\varepsilon }^{d }\frac{ f (Q)- f(P+\mathbf{e}t)}{( t-r)^{\alpha +1}} dt
 +\frac{f(Q)}{\Gamma(1-\alpha)}(d-r)^{-\alpha},\;0\leq r\leq d -\varepsilon,
 $$
 $$
  ( \mathfrak{D }^{\alpha}_{d-,\,\varepsilon}f)(Q)=      \frac{ f(Q)}{\alpha} \left(\frac{1}{\varepsilon^{\alpha}}-\frac{1}{(d -r)^{\alpha} }    \right),\; d -\varepsilon <r \leq d .
 \end{equation*}
  Now, we can  define  the directional   fractional derivatives as follows
 \begin{equation*}
 \mathfrak{D }^{\alpha}_{0+}f=\lim\limits_{\stackrel{\varepsilon\rightarrow 0}{ (L_{p}) }} \mathfrak{D }^{\alpha}_{0+,\varepsilon} f  ,\;
  \mathfrak{D }^{\alpha}_{d-}f=\lim\limits_{\stackrel{\varepsilon\rightarrow 0}{ (L_{p}) }} \mathfrak{D }^{\alpha}_{d-,\varepsilon} f ,\,1\leq p\leq\infty.
\end{equation*}
The properties of these operators  are  described  in detail in the paper  \cite{firstab_lit:1kukushkin2018}. Similarly to the monograph \cite{firstab_lit:samko1987} we consider   left-side  and  right-side cases. For instance, $\mathfrak{I}^{\alpha}_{0+}$ is called  a left-side directional  fractional integral  and $ \mathfrak{D }^{\alpha}_{d-}$ is called a right-side directional fractional derivative. We suppose  $\mathfrak{I}^{0}_{0+} =I.$ Nevertheless,   this    fact can be easily proved dy virtue of  the reasonings  corresponding to the one-dimensional case and   given in \cite{firstab_lit:samko1987}. We also consider integral operators with a weighted factor (see \cite[p.175]{firstab_lit:samko1987}) defined by the following formal construction
$$
 \left(\mathfrak{I}^{\alpha}_{0+}\mu f\right) (Q  ):=\frac{1}{\Gamma(\alpha)} \int\limits^{r}_{0}
 \frac{(\mu f) (P+t\mathbf{e})}{( r-t)^{1-\alpha}}\left(\frac{t}{r}\right)^{n-1}\!\!\!\!dt,
$$
where $\mu$ is a real-valued  function.

Consider a linear combination of an uniformly elliptic operator, which is written in the divergence form, and
  a composition of a   fractional integro-differential  operator, where the fractional  differential operator is understood as the adjoint  operator  regarding  the Kipriyanov operator  (see  \cite{firstab_lit:kipriyanov1960},\cite{firstab_lit:1kipriyanov1960},\cite{kukushkin2019})
\begin{equation*}
 L  :=-  \mathcal{T}  \, +\mathfrak{I}^{\sigma}_{ 0+}\rho\, \mathfrak{D}  ^{ \alpha }_{d-},
\; \sigma\in[0,1) ,
 $$
 $$
   \mathrm{D}( L )  =H^{2}(\Omega)\cap H^{1}_{0}( \Omega ),
  \end{equation*}
where
$\,\mathcal{T}:=D_{j} ( a^{ij} D_{i}\cdot),\,i,j=1,2,...,n,$
under    the following  assumptions regarding        coefficients
\begin{equation*}
     a^{ij}(Q) \in C^{2}(\bar{\Omega}),\, \mathrm{Re} a^{ij}\xi _{i}  \xi _{j}  \geq   \gamma_{a}  |\xi|^{2} ,\,  \gamma_{a}  >0,\,\mathrm{Im }a^{ij}=0 \;(n\geq2),\,
 \rho\in L_{\infty}(\Omega).
\end{equation*}
Note that in the one-dimensional case the operator $\mathfrak{I}^{\sigma }_{ 0+} \rho\, \mathfrak{D}  ^{ \alpha }_{d-}$ is reduced to   a  weighted fractional integro-differential operator  composition, which was studied properly  by many researchers (see introduction, \cite[p.175]{firstab_lit:samko1987}).
Consider a shift semigroup in a direction acting on $L_{2}(\Omega)$ and  defined as follows
$
T_{t}f(Q):=f(P+\mathbf{e}[r+ t])=f(Q+\mathbf{e}t).
$
We can formulate the following proposition.

\begin{lem}\label{L3.3}
The semigroup $T_{t}$ is a $C_{0}$  semigroup of contractions.
\end{lem}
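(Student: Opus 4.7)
The plan is to verify the three defining properties of a $C_0$ semigroup of contractions in turn: the algebraic semigroup law, the contraction estimate, and the strong continuity at zero. Throughout, I will exploit the zero-extension convention for functions outside $\bar{\Omega}$ and the polar-type decomposition \eqref{1.1} centered at the fixed boundary point $P$, which is the natural coordinate system for this shift.

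First I would check the semigroup property. The crucial observation is that the unit direction $\mathbf{e}$ from $P$ to $Q$ coincides with the unit direction from $P$ to any point $Q + \mathbf{e}t$ lying on the same ray, so iterating the shift simply advances along that ray. Thus $T_s T_t f(Q) = T_s f(P + \mathbf{e}[r+t]) = f(P + \mathbf{e}[r + t + s]) = T_{t+s} f(Q)$, while $T_0 = I$ is immediate. The contraction estimate follows by writing, via \eqref{1.1} and the change of variable $\tau = r + t$,
\begin{equation*}
\|T_t f\|_{L_2(\Omega)}^2 = \int_\omega d\chi \int_0^{d(\mathbf{e})} |f(P+\mathbf{e}[r+t])|^2 r^{n-1}\, dr = \int_\omega d\chi \int_t^{d(\mathbf{e})+t} |f(P+\mathbf{e}\tau)|^2 (\tau-t)^{n-1}\, d\tau.
\end{equation*}
Invoking the zero extension of $f$ for $\tau > d(\mathbf{e})$ truncates the inner integral to $[t,d(\mathbf{e})]$, and since $(\tau-t)^{n-1} \le \tau^{n-1}$ on this interval the right-hand side is bounded above by $\|f\|_{L_2(\Omega)}^2$.

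For strong continuity, I would use a standard density argument. On the dense subspace $C_0^\infty(\Omega)$ one has uniform continuity of $f$ on $\mathbb{E}^n$ (after zero extension), so $T_t f \to f$ uniformly as $t \downarrow 0$, hence in $L_2(\Omega)$. For arbitrary $g \in L_2(\Omega)$ and $\varepsilon > 0$, pick $f \in C_0^\infty(\Omega)$ with $\|g-f\|_{L_2(\Omega)} < \varepsilon/3$; then the triangle inequality combined with the just-established contraction estimate yields
\begin{equation*}
\|T_t g - g\|_{L_2(\Omega)} \le \|T_t(g-f)\|_{L_2(\Omega)} + \|T_t f - f\|_{L_2(\Omega)} + \|f - g\|_{L_2(\Omega)} < \varepsilon
\end{equation*}
for all sufficiently small $t > 0$.

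The only non-routine aspect is handling the $Q$-dependence of the direction $\mathbf{e}$: one must verify that the map $Q \mapsto Q + \mathbf{e}(Q)\,t$ preserves rays emanating from $P$, which is what makes the semigroup law and the polar change of variables both clean. Apart from this, the argument is bookkeeping; the hardest step is arranging the change of variables so that the Jacobian weight $(\tau-t)^{n-1}$ appears explicitly, since it is precisely the inequality $(\tau-t)^{n-1}\le \tau^{n-1}$ that delivers the contraction.
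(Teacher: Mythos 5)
Your proposal is correct and follows essentially the same route as the paper: strong continuity is obtained by the identical density/triangle-inequality argument over $C_0^\infty(\Omega)$, and the contraction property rests on the zero extension outside $\bar{\Omega}$, which the paper asserts in one line and you spell out via the change of variables $\tau=r+t$ and the inequality $(\tau-t)^{n-1}\leq\tau^{n-1}$. You also verify the semigroup law explicitly (using that $Q\mapsto Q+\mathbf{e}t$ preserves rays through $P$), a step the paper omits; this is a completion of detail rather than a different method.
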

 \begin{proof}
     By virtue of the continuous in average property, we conclude that $T_{t}$ is a strongly continuous semigroup. It can be easily established  due to the following reasonings, using the Minkowski inequality, we have
 $$
 \left\{\int\limits_{\Omega}|f(Q+\mathbf{e}t)-f(Q)|^{2}dQ\right\}^{\frac{1}{2}}\leq  \left\{\int\limits_{\Omega}|f(Q+\mathbf{e}t)-f_{m}(Q+\mathbf{e}t)|^{2}dQ\right\}^{\frac{1}{2}}+
 $$
 $$
 +\left\{\int\limits_{\Omega}|f(Q)-f_{m}(Q)|^{2}dQ\right\}^{\frac{1}{2}}+\left\{\int\limits_{\Omega}|f_{m}(Q)-f_{m}(Q+\mathbf{e}t)|^{2}dQ\right\}^{\frac{1}{2}}=
 $$
 $$
 =I_{1}+I_{2}+I_{3}<\varepsilon,
 $$
where $f\in L_{2}(\Omega),\,\left\{f_{n}\right\}_{1}^{\infty}\subset C_{0}^{\infty}(\Omega);$  $m$ is chosen so that $I_{1},I_{2}< \varepsilon/3 $ and $t$
is chosen so that $I_{3}< \varepsilon/3.$
Thus,  there exists such a positive  number $t_{0}$  that
$$
\|T_{t}f-f\|_{L_{2} }<\varepsilon,\,t<t_{0},
$$
for arbitrary small $\varepsilon>0.$  Using the assumption that  all functions have the zero extension   outside $\bar{\Omega},$    we have
$\|T_{t}\|  \leq 1.$ Hence  we conclude that $T_{t}$ is a $C_{0}$ semigroup of contractions (see \cite{Pasy}).
\end{proof}

\begin{lem}\label{L3.4}
Suppose  $\rho\in \mathrm{Lip} \lambda,\,\lambda>\alpha,\,0<\alpha<1;$ then
$$
\rho\cdot\mathfrak{I}^{\alpha}_{0+}(L_{2})= \mathfrak{I}^{\alpha}_{d-}(L_{2});\;\rho\cdot\mathfrak{I}^{\alpha}_{d-}(L_{2})= \mathfrak{I}^{\alpha}_{d-}(L_{2}).
$$
 \end{lem}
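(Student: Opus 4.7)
The natural approach is to characterize membership in each class $\mathfrak{I}^{\alpha}_{0+}(L_2)$ and $\mathfrak{I}^{\alpha}_{d-}(L_2)$ via the mapping theorem from Section~2 of Chapter~\ref{Ch1}, namely Theorem \ref{T1.2}: a function $f \in L_p(\Omega)$ belongs to $\mathfrak{I}^{\alpha}_{0+}(L_p)$ if and only if $\lim_{\varepsilon \to 0} \psi^{+}_{\varepsilon} f$ exists in $L_p(\Omega)$, with the analogous statement for the right-sided class. I will carry out the argument for $\rho \cdot \mathfrak{I}^{\alpha}_{0+}(L_2) \subset \mathfrak{I}^{\alpha}_{0+}(L_2)$; the remaining inclusion and the right-sided claim are obtained by symmetric reasoning, and the reverse inclusions (giving set equality) follow by the same argument applied to $1/\rho$ once one notes that a real Lipschitz function bounded away from zero has a Lipschitz reciprocal.

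Fix $u = \mathfrak{I}^{\alpha}_{0+} g$ with $g \in L_2(\Omega)$. The first step is the algebraic identity
\begin{equation*}
\rho(Q)\, r^{n-1} u(Q) - \rho(T)\, t^{n-1} u(T) = \rho(Q)\bigl[r^{n-1} u(Q) - t^{n-1} u(T)\bigr] + t^{n-1} u(T)\bigl[\rho(Q) - \rho(T)\bigr],
\end{equation*}
which, after dividing by $r^{n-1}(r-t)^{\alpha+1}$ and integrating from $0$ to $r-\varepsilon$, yields the splitting
$\psi^{+}_{\varepsilon}(\rho u) = \rho\,\psi^{+}_{\varepsilon} u + K_{\varepsilon} u$, where $K_{\varepsilon}$ is the commutator integral operator whose kernel carries the factor $[\rho(Q) - \rho(T)](r-t)^{-\alpha-1}$. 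The first summand converges in $L_2$: $\rho$ is bounded on $\bar{\Omega}$ by the Lipschitz and convexity hypotheses, and $\psi^{+}_{\varepsilon} u$ converges in $L_2$ because Theorem \ref{T1.3} gives $\mathfrak{D}^{\alpha}_{0+} u = g$ directly.

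The main obstacle is the $L_2$-convergence of $K_{\varepsilon} u$ as $\varepsilon \downarrow 0$. Here the Lipschitz hypothesis $\lambda > \alpha$ is used decisively: the estimate $|\rho(Q) - \rho(T)| \leq M(r-t)^{\lambda}$ converts the singular kernel $(r-t)^{-\alpha-1}$ into the locally integrable $(r-t)^{\lambda-\alpha-1}$. To verify the Cauchy property for $\varepsilon_1 > \varepsilon_2 > 0$, I would follow the scheme of Theorem \ref{T1.4}: split $\Omega$ into $\Omega_{\varepsilon_1}$ and $\tilde{\Omega}_{\varepsilon_1}$, perform the change of variable $\tau = r - t$, and apply the generalized Minkowski inequality. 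The interior contribution reduces to a bound of the shape
\begin{equation*}
\|K_{\varepsilon_1} u - K_{\varepsilon_2} u\|_{L_2(\Omega_{\varepsilon_1})} \leq C\, M\, \|u\|_{L_2(\Omega)} \int_{\varepsilon_2}^{\varepsilon_1} \tau^{\lambda - \alpha - 1} d\tau,
\end{equation*}
which vanishes as $\varepsilon_1, \varepsilon_2 \downarrow 0$, while the boundary-layer contributions on $\tilde{\Omega}_{\varepsilon_1}$ and the set $\{r < \varepsilon_1\}$ are absorbed exactly as in the treatment of $I_2, I_3$ in Theorem \ref{T1.4}, using that the measures of these sets tend to zero.

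Combining the two estimates shows that $\psi^{+}_{\varepsilon}(\rho u)$ is Cauchy in $L_2(\Omega)$, hence converges, so by Theorem \ref{T1.2} we obtain $\rho u \in \mathfrak{I}^{\alpha}_{0+}(L_2)$, which proves one inclusion. The analogous argument with $\psi^{-}_{\varepsilon}$, the right-sided truncated derivative, and Theorem \ref{T1.2} applied in the right-sided form gives the second identity. The delicate part throughout is the careful handling of the commutator kernel near the singularity $t = r$ and near the boundary of $\Omega$; everything else is a routine extension of the machinery already developed in Theorems \ref{T1.2}--\ref{T1.4}.
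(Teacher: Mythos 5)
Your proof follows essentially the same route as the paper's: the same splitting $\psi^{+}_{\varepsilon}(\rho u)=\rho\,\psi^{+}_{\varepsilon}u+K_{\varepsilon}u$ into a bounded multiple of the converging truncation plus a commutator, the same use of the Lipschitz exponent $\lambda>\alpha$ to turn the kernel $(r-t)^{-\alpha-1}$ into the integrable $(r-t)^{\lambda-\alpha-1}$, the same boundary-layer treatment near $r<\varepsilon$, and the same appeal to the representability criterion of Theorem \ref{T1.2}. The only point worth flagging is that the paper's own argument (like the core of yours) establishes only the inclusion $\rho\cdot\mathfrak{I}^{\alpha}_{0+}(L_{2})\subset\mathfrak{I}^{\alpha}_{0+}(L_{2})$, and your proposed fix for the reverse inclusion via $1/\rho$ needs $\rho$ bounded away from zero, which is not among the stated hypotheses of the lemma.
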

 \begin{proof}
  Consider an operator
  \begin{equation*}
(\psi^{+}_{  \varepsilon }f)(Q)=  \left\{ \begin{aligned}
 \int\limits_{0}^{r-\varepsilon }\frac{ f (Q)r^{n-1}- f(T)t^{n-1}}{(  r-t)^{\alpha +1}r^{n-1}}  dt,\;\varepsilon\leq r\leq d  ,\\
   \frac{ f(Q)}{\alpha} \left(\frac{1}{\varepsilon^{\alpha}}-\frac{1}{ r ^{\alpha} }    \right),\;\;\;\;\;\;\;\;\;\;\;\;\;\;\; 0\leq r <\varepsilon,\\
\end{aligned}
 \right.
\end{equation*}
where $T=P+\mathbf{e}t.$
We should prove that there exists a limit
$$
\psi^{+}_{  \varepsilon }\rho f\stackrel{L_{2}}{\longrightarrow} \psi   f,\,f\in \mathfrak{I}^{\alpha}_{0+}(L_{2}),
$$
where $\psi f$ is some function corresponding to $f.$ We have
$$
(\psi^{+}_{  \varepsilon }\rho f)(Q)=\int\limits_{0}^{r-\varepsilon }\frac{ \rho(Q)f (Q)r^{n-1}- \rho (T)f(T)t^{n-1}}{(  r-t)^{\alpha +1}r^{n-1}}  dt=
\rho(Q)\int\limits_{0}^{r-\varepsilon }\frac{ f (Q)r^{n-1}-  f(T)t^{n-1}}{(  r-t)^{\alpha +1}r^{n-1}}  dt+
$$
$$
+\int\limits_{0}^{r-\varepsilon }\frac{   f(T)[\rho(Q)-\rho(T)]}{(  r-t)^{\alpha +1}} \left(\frac{t}{r }\right)^{n-1}  \!\!\!dt=A_{\varepsilon}(Q)+B_{\varepsilon}(Q)
 ,\;\varepsilon\leq r\leq d;
$$
$$
(\psi^{+}_{  \varepsilon }\rho f)(Q)= \rho(Q)f(Q)  \frac{1}{\alpha}\left(\frac{1}{\varepsilon^{\alpha}}-\frac{1}{ r ^{\alpha} }    \right),\;  0\leq r <\varepsilon.
$$
Hence, we get
$$
\|\psi^{+}_{  \varepsilon_{n+1} }\rho f-\psi^{+}_{  \varepsilon_{n} }\rho f\|_{L_{2}(\Omega)}\leq \|\psi^{+}_{  \varepsilon_{n+1} }\rho f-\psi^{+}_{  \varepsilon_{n} }\rho f\|_{L_{2}(\Omega')}
 +\|\psi^{+}_{  \varepsilon_{n+1} }\rho f-\psi^{+}_{  \varepsilon_{n} }\rho f\|_{L_{2}(\Omega_{n})},
$$
where $\{\varepsilon_{n}\}_{1}^{\infty}\subset \mathbb{R}_{+}$ is a strictly decreasing sequence that is chosen in an arbitrary way, $\Omega_{n}:=  \omega\times \{0<r<\varepsilon_{n}\}, $
 $\Omega':=  \Omega\setminus \Omega_{n} .$
It is clear that
$$
\|A_{\varepsilon_{n+1}}-A_{\varepsilon_{n}}\|_{L_{2}(\Omega')}\leq \|\rho\|_{L_{\infty}(\Omega)}\|\psi^{+}_{  \varepsilon_{n+1} }f-\psi^{+}_{  \varepsilon_{n} }f\|_{L_{2}(\Omega')},
$$
Since in accordance with Theorem 2.3 \cite{firstab_lit:1kukushkin2018}  the sequence $ \psi^{+}_{  \varepsilon_{n} }f,\,(n=1,2,...) $ is fundamental for the defined function $f,$ with respect to the $L_{2}(\Omega)$ norm,    then the sequence  $ A_{\varepsilon_{n}} $ is also fundamental with respect to the $L_{2}(\Omega')$ norm.
 Having used the H\"{o}lder properties of $\rho,$   we have
$$
\|B_{\varepsilon_{n+1}}-B_{\varepsilon_{n}}\|_{L_{2}(\Omega')}\leq M \left\{\int\limits_{\Omega'}\left(\int\limits_{r-\varepsilon_{n}}^{r-\varepsilon_{n+1}}\frac{   |f(T)| }{(  r-t)^{\alpha +1-\lambda}} \left(\frac{t}{r }\right)^{n-1}  \!\!\!dt\right)^{2}dQ\right\}^{\frac{1}{2}}.
$$
Note that applying Theorem 2.3 \cite{firstab_lit:1kukushkin2018}, we have
$$
\left\{\int\limits_{\Omega}\left(\int\limits_{0}^{r}\frac{   |f(T)| }{(  r-t)^{\alpha +1-\lambda}} \left(\frac{t}{r }\right)^{n-1}  \!\!\!dt\right)^{2}dQ\right\}^{\frac{1}{2}}\leq C \|f\|_{L_{2}}.
$$
Hence the sequence $ \left\{B_{\varepsilon_{n}}\right\}_{1}^{\infty} $ is fundamental with respect to the $L_{2}(\Omega')$ norm.
Therefore
$$
\|\psi^{+}_{  \varepsilon_{n+1} }\rho f-\psi^{+}_{  \varepsilon_{n} }\rho f\|_{L_{2}(\Omega')}\rightarrow 0,\,n\rightarrow \infty.
$$
Consider
$$
\|\psi^{+}_{  \varepsilon_{n+1} }\rho f-\psi^{+}_{  \varepsilon_{n} }\rho f\|_{L_{2}(\Omega_{n})}\leq
\|\psi^{+}_{  \varepsilon_{n+1} }\rho f-\psi^{+}_{  \varepsilon_{n} }\rho f\|_{L_{2}(\Omega_{n+1})}+
$$
$$
+\left\{\int\limits_{\omega}d\chi  \int\limits_{\varepsilon_{n+1}}^{\varepsilon_{n}}
|A_{\varepsilon_{n+1}}(Q)+B_{\varepsilon_{n+1}}(Q)|^{2}rdr\right\}^{\frac{1}{2}}+
$$
$$
 +\frac{1}{\alpha}\left\{\int\limits_{\omega}d\chi  \int\limits_{\varepsilon_{n+1}}^{\varepsilon_{n}}
\left|\rho(Q)f(Q)  \left(\frac{1}{\varepsilon_{n}^{\alpha}}-\frac{1}{ r ^{\alpha} }    \right)\right|^{2}rdr\right\}^{\frac{1}{2}}=
I_{1}+I_{2}+I_{3}.
$$
We have
$$
I_{1}\leq \frac{1}{\alpha}\left(\frac{1}{\varepsilon_{n}^{\alpha}}-\frac{1}{\varepsilon_{n+1}^{\alpha}   }    \right)\|\rho\|_{L_{\infty}} \int\limits_{\omega}d\chi\int\limits_{0}^{\varepsilon_{n+1}}
   f(Q)r dr\leq
$$
$$
\leq\frac{1}{\alpha}\left(\frac{1}{\varepsilon_{n}^{\alpha}}-\frac{1}{\varepsilon_{n+1}^{\alpha}   }    \right)\|\rho\|_{L_{\infty}}  \int\limits_{\omega}\left\{\int\limits_{0}^{\varepsilon_{n+1}}
  |f(Q)|^{2}r dr \right\}^{\frac{1}{2}}\left\{\int\limits_{0}^{\varepsilon_{n+1}}
   r dr \right\}^{\frac{1}{2}}d\chi\leq
$$
$$
\leq\frac{1}{\sqrt{2}\alpha}\left(\frac{1}{\varepsilon_{n}^{\alpha}}-\frac{1}{\varepsilon_{n+1}^{\alpha}   }    \right) \varepsilon_{n+1}\|\rho\|_{L_{\infty}} \|f\|_{L_{2}}.
$$
Hence $I_{1}\rightarrow 0,\,n\rightarrow \infty.$
Using the  estimates used above, it is not hard to prove that $I_{2},I_{3}\rightarrow 0,\,n\rightarrow \infty.$ The proof is left to a reader.
Therefore
$$
\|\psi^{+}_{  \varepsilon_{n+1} }\rho f-\psi^{+}_{  \varepsilon_{n} }\rho f\|_{L_{2}(\Omega_{n})}\rightarrow 0,\,n\rightarrow \infty.
$$
Combining the obtained results, we have
$$
\|\psi^{+}_{  \varepsilon_{n+1} }\rho f-\psi^{+}_{  \varepsilon_{n} }\rho f\|_{L_{2}(\Omega)}\rightarrow 0,\,n\rightarrow \infty.
$$
Using  Theorem 2.2 \cite{firstab_lit:1kukushkin2018}, we obtain the desired result for the case corresponding  to the class $\mathfrak{I}^{\alpha}_{0+}(L_{2}).$ The proof corresponding to the class $\mathfrak{I}^{\alpha}_{d-}(L_{2})$ is absolutely analogous.
\end{proof}

The following theorem is formulated in terms of the infinitesimal  generator $-A$ of the semigroup $T_{t}.$

\begin{teo}\label{T3.2} We claim that  $L=Z^{\alpha}_{G,F}(A).$ Moreover  if  $ \gamma_{a} $ is sufficiently large in comparison  with $\|\rho\|_{L_{\infty}},$ then $L$ satisfies conditions H1-H2, where we put $\mathfrak{M}:=C_{0}^{\infty}(\Omega),$   if we additionally assume that $\rho \in \mathrm{Lip}\lambda,\,   \lambda>\alpha  ,$ then    $ \tilde{\mathcal{H}}=H.$
\end{teo}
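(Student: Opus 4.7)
The plan is to proceed in four stages, corresponding to the four distinct claims packed into the theorem. First, I would identify the operator $A$ concretely: by Lemma \ref{L3.3} the semigroup $T_t$ is a $C_0$-contraction semigroup, so by the Hille--Yosida theorem its negative generator $A$ is m-accretive with dense domain and, on smooth compactly supported functions, acts as the directional derivative $-D_{\mathbf{e}}$. The crucial preliminary step is to compute $A^{\alpha}$ and $(A^{\ast})^{-\sigma}$ via the semigroup representation of fractional powers: using the Balakrishnan-type identity
\[
A^{\alpha}f \;=\; \frac{\alpha}{\Gamma(1-\alpha)}\int_{0}^{\infty}\frac{f-T_{t}f}{t^{\alpha+1}}\,dt
\]
together with the zero extension of $f$ outside $\bar{\Omega}$ (which truncates the integral at $t=d-r$ and contributes the $\Gamma(1-\alpha)^{-1}(d-r)^{-\alpha}f(Q)$ term), a substitution $s=r+t$ yields $A^{\alpha}=\mathfrak{D}^{\alpha}_{d-}$. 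The dual identity $A^{-\sigma}=\Gamma(\sigma)^{-1}\int_{0}^{\infty}t^{\sigma-1}T_{t}\,dt$ (legitimate because $T_{t}$ has compact ``lifetime'') gives $A^{-\sigma}=\mathfrak{I}^{\sigma}_{d-}$, and then the duality relation of Lemma \ref{L1.2} extended to the integrals yields $(A^{\ast})^{-\sigma}=\mathfrak{I}^{\sigma}_{0+}$.

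With these identifications the representation $L=Z^{\alpha}_{G,F}(A)$ is set up as follows: I put $F:=\mathfrak{I}^{\sigma}_{0+}\rho$, which by Theorem \ref{T1.1} and $\rho\in L_{\infty}$ is bounded on $L_{2}(\Omega)$, so that $FA^{\alpha}=\mathfrak{I}^{\sigma}_{0+}\rho\,\mathfrak{D}^{\alpha}_{d-}$ reproduces the lower-order term. The elliptic part $-\mathcal{T}$ is realized as $A^{\ast}GA$ by reading off $G$ from the identity of sesquilinear forms
\[
\int_{\Omega}a^{ij}D_{i}u\,\overline{D_{j}v}\,dQ \;=\; (GAu,Av)_{\mathfrak{H}},
\]
which, via the spherical decomposition \eqref{1.1} and the fact that $Au=-D_{\mathbf{e}}u$ encodes all the radial structure, singles out a bounded $G$ whose lower bound $\gamma_{G}$ is controlled from below by $\gamma_{a}$ (up to a universal constant depending on $n$). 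Compactness of $A^{-1}$ follows from the fact that $A^{-1}f$ is an integration along rays (the $C_{0}$-semigroup integral $\int_{0}^{\infty}T_{t}f\,dt$), which gains one derivative and therefore, by Rellich--Kondrachov, lands compactly in $L_{2}(\Omega)$.

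Once these structural identifications are in place, the H1--H2 part is cheap: taking $\gamma_{a}$ large enough relative to $\|\rho\|_{L_{\infty}}$ forces $\gamma_{G}>C_{\alpha}\|A^{-1}\|\cdot\|F\|$ (with $C_{\alpha}$ the constant from Lemma \ref{L3.2} and $\|F\|\lesssim \mathrm{d}^{\sigma}\Gamma(\sigma+1)^{-1}\|\rho\|_{L_{\infty}}$), so that all hypotheses of Theorem \ref{T3.1} are verified; hence $L\in\mathfrak{G}_{\alpha}$ satisfies H1--H2 with $\mathfrak{M}=C_{0}^{\infty}(\Omega)$, since $C_{0}^{\infty}(\Omega)\subset \mathrm{D}(J^{\ast}GJ)$ is a core of $J=A$. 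Finally, for $\tilde{\mathcal{H}}=H$ under $\rho\in\mathrm{Lip}\,\lambda$, $\lambda>\alpha$, I would combine Lemma \ref{L3.4} (which guarantees $\rho\cdot\mathfrak{I}^{\alpha}_{0+}(L_{2})=\mathfrak{I}^{\alpha}_{d-}(L_{2})$, so that the adjoint action of the lower-order term makes sense on $\mathfrak{M}$) with Theorem \ref{T1.5} (strict accretivity of $\mathfrak{D}^{\alpha}$), to conclude $\mathfrak{M}\subset \mathrm{D}(W^{\ast})$; then the criterion reviewed in Section \ref{S2.7} -- namely, condition H2 combined with $\mathfrak{M}\subset \mathrm{D}(W^{\ast})$ forces $\mathrm{D}_{0}(\mathfrak{h})\subset \mathrm{D}(h)$ -- delivers $\tilde{\mathcal{H}}=H$ via Corollary 2.4 \cite[p.323]{firstab_lit:kato1980}.

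The main obstacle will be the explicit identification of $G$ realizing $A^{\ast}GA=-\mathcal{T}$ in the genuinely multidimensional case $n\geq 2$. The scalar directional derivative $-D_{\mathbf{e}}$ cannot by itself produce the full Hessian structure $a^{ij}D_{i}D_{j}$, so $G$ must absorb the tangential directions through the Kipriyanov weight $r^{n-1}$ and the integration over the unit sphere $\omega$. Rigorously writing this out, and then verifying that the resulting $G$ is bounded and strictly accretive with $\gamma_{G}\asymp\gamma_{a}$, is the technical heart of the argument; the remaining pieces are essentially bookkeeping against the abstract machinery of Theorem \ref{T3.1} and the quadratic-form apparatus of Chapter \ref{Ch2}.
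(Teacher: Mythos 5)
Your identification of $A^{\alpha}=\mathfrak{D}^{\alpha}_{d-}$ via the Balakrishnan formula, the choice $F:=\mathfrak{I}^{\sigma}_{0+}\rho$, and the closing argument for $\tilde{\mathcal{H}}=H$ (showing $C_{0}^{\infty}(\Omega)\subset\mathrm{D}(L^{\ast})$ via Lemma \ref{L3.4} and then invoking Section \ref{S2.7}) all match the paper. But the route you take for the elliptic part and for H1--H2 contains a genuine gap.

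You propose to read off a \emph{bounded}, strictly accretive $G$ from the identity $\int_{\Omega}a^{ij}D_{i}u\,\overline{D_{j}v}\,dQ=(GAu,Av)_{\mathfrak{H}}$ and then verify the hypotheses of Theorem \ref{T3.1} (which also require $A^{-1}$ compact). In dimension $n\geq 2$ no such bounded $G$ can exist: boundedness would give
$$\gamma_{a}\|\nabla u\|^{2}_{L_{2}}\leq -\mathrm{Re}(\mathcal{T}u,u)_{L_{2}}=\mathrm{Re}(GAu,Au)_{L_{2}}\leq \|G\|\cdot\|D_{\mathbf{e}}u\|^{2}_{L_{2}},$$
i.e.\ the full gradient norm would be controlled by a single directional derivative, which is false. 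For the same reason $A^{-1}$ (an integration along rays emanating from one boundary point, gaining regularity only in the direction $\mathbf{e}$) is not compact on $L_{2}(\Omega)$ for $n\geq2$, so the compactness hypothesis of Theorem \ref{T3.1} also fails. This is precisely why the paper claims $L\in\mathfrak{G}_{\alpha}$ only in the one-dimensional case (Corollary \ref{C3.1}, where $GA_{0}f=a^{11}A_{0}f$ is multiplication by $a^{11}$), and why the later Theorem \ref{T3.6} needs $n$ directions $\mathbf{e_{i}}$ and the averaged representation $-\mathcal{T}=\frac{1}{n}\sum A_{i}^{\ast}G_{i}A_{i}$ to recover the full gradient.

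The paper's actual proof decouples the two claims. For $L=Z^{\alpha}_{G,F}(A)$ it takes $G:=B\mathcal{T}B$ with $B$ the bounded ray-integration operator satisfying $A^{-1}\subset B$; the identity $-\mathcal{T}=A^{\ast}GA$ then follows from the divergence theorem applied to $A(B\mathcal{T}f\cdot g)$ together with the m-accretivity of $-\mathcal{T}$ (no boundedness of $G$ is claimed or needed). For H1--H2 it argues \emph{directly}: H1 is Rellich--Kondrachov for $H_{0}^{1}(\Omega)\subset\subset L_{2}(\Omega)$, and H2 follows from the uniform ellipticity bound $-\mathrm{Re}(\mathcal{T}f,f)\geq\gamma_{a}\|f\|^{2}_{H_{0}^{1}}$ together with the estimate $\|\mathfrak{I}^{\sigma}_{0+}\rho\,\mathfrak{D}^{\alpha}_{d-}f\|_{L_{2}}\leq C\|\rho\|_{L_{\infty}}\|f\|_{H_{0}^{1}}$, under the hypothesis $\gamma_{a}>C\|\rho\|_{L_{\infty}}$. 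You should replace the appeal to Theorem \ref{T3.1} by these direct form estimates; the rest of your outline then goes through.
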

\begin{proof}
     By virtue of   Corollary 3.6 \cite[p.11]{Pasy}, we have
\begin{equation} \label{3.12}
\|(\lambda+A)^{-1}\|  \leq \frac{1}{\mathrm{Re} \lambda },\,\mathrm{Re}\lambda>0.
\end{equation}
Inequality \eqref{3.12} implies that $A$ is m-accretive.
Using formula \eqref{3.7},  we can define positive fractional powers $\alpha\in (0,1)$ of the operator $A. $
Applying  the Balakrishnan formula, we obtain
 \begin{equation*}
A^{\alpha}f:=\frac{\sin\alpha \pi}{\pi}\int\limits_{0}^{\infty}\lambda^{\alpha-1}(\lambda  +A)^{-1} A f\,d \lambda=\frac{1}{\Gamma(-\alpha)}\int\limits_{0}^{\infty}\frac{T_{t}-I}{t^{\alpha+1}}fdt,\,f\in \mathrm{D}(A).
\end{equation*}
Hence, in the concrete  form of writing we have
 \begin{equation}\label{3.13}
A^{\alpha}f(Q)=\frac{1}{\Gamma(-\alpha)}\int\limits_{0}^{\infty}\frac{f(Q+\mathbf{e}t)-f(Q)}{t^{\alpha+1}}dt=
$$
$$
=\frac{\alpha}{\Gamma(1-\alpha)}\int\limits_{r}^{d   (\mathbf{e})  }\frac{f(Q)-f(P+\mathbf{e}t)}{(t-r)^{\alpha+1}}dt+ \frac{f (Q)}{\Gamma(1-\alpha)} \{d(\mathbf{e})-r\}^{-\alpha} =
\mathfrak{D}^{\alpha}_{d-}f(Q),\ f\in  \mathrm{D}  (A),
\end{equation}
where $d(\mathbf{e})$ is the distance from the point $P$ to the edge of $\Omega$ along the direction $\mathbf{e}.$ Note  that a relation  between positive  fractional powers of the operator $A$    and the  Riemann-Liouville  fractional derivative was demonstrated  in the one-dimensional case   in the paper    \cite{firstab_lit:1Ashyralyev}.

 Consider a restriction   $A_{0}\subset A,\,\mathrm{D}(A_{0})=C^{\infty}_{0}( \Omega )$ of the operator  $A.$ Note that,   since  the infinitesimal  generator  $-A$ is a closed operator (see \cite{Pasy}), then $A_{0}$  is closeable.
It is not hard to prove that $ \tilde{A}_{0}$ is an m-accretive operator. For this purpose, note that   since the operator $A$ is  m-accretive, then  by virtue of  \eqref{3.4}, we get
\begin{equation*} \mathrm{Re}(\tilde{A}_{0} f,f)_{ \mathfrak{H}}\geq0 ,\,f\in \mathrm{D}(\tilde{A}_{0}).
\end{equation*}
This gives us an opportunity to conclude that
$$
   \|f\|^{2}_{\mathfrak{H}}\leq  \frac{1}{ t^{2}} \left\{ \| \tilde{A}_{0} f\|^{2}_{ \mathfrak{H}}+2t \mathrm{Re}(\tilde{A}_{0}f,f)_{ \mathfrak{H}}+t^{2}\|   f \|^{2}_{ \mathfrak{H}}\right\}  ;\,\|f\|^{2}_{\mathfrak{H}}\leq  \frac{1}{t^{2}}  \|(\tilde{A}_{0}+t)f \|^{2}_{ \mathfrak{H}},\,t>0.
$$
Therefore
\begin{equation*}
\|(\tilde{A}_{0}+t)^{-1}\|_{\mathrm{R} \rightarrow \mathfrak{H}}\leq\frac{1}{ t},\,t>0,
\end{equation*}
where   $\mathrm{R}:=\mathrm{R}(\tilde{A}_{0}+t).$ Hence, in accordance with Lemma \ref{L3.1}, we obtain that the operator $\tilde{A}_{0}$ is m-accretive. Since there does not exist an accretive extension  of an m-accretive operator (see \cite[p.279]{firstab_lit:kato1980} ) and $\tilde{A}_{0}\subset A,$ then $\tilde{A}_{0}= A.$
It is easy to prove  that
\begin{equation}\label{3.14}
\|Af\|_{L_{2}}\leq C\|f\|_{H_{0}^{1}},\,f\in H_{0}^{1}(\Omega),
\end{equation}
 for this purpose we should establish  a representation $Af(Q) =-(\nabla f ,\mathbf{e})_{\mathbb{E}^{n}}, f\in C^{\infty}_{0}(\Omega)$  the rest of the proof   is  left to a reader.
Thus, we get   $H_{0}^{1}(\Omega) \subset \mathrm{D}(A),$ and as a result $A^{\alpha}f = \mathfrak{D}^{\alpha}_{d-}f ,\ f\in  H_{0}^{1}(\Omega) .$
Let us find a representation for the  operator $G.$
Consider an operator
 $$
 Bf(Q)=\!\int_{0}^{r}\!\!f(P+\mathbf{e}[r-t])dt,\,f\in L_{2}(\Omega).
 $$It is not hard to prove that  $B\in \mathcal{B}(L_{2}),$   applying the generalized Minkowski inequality, we get
 $$
 \|Bf\|_{L_{2} }\leq \int\limits_{0}^{\mathrm{diam\,\Omega}}dt  \left(\int\limits_{\Omega}|f(P+\mathbf{e}[r-t])|dQ\right)^{1/2}\leq C\|f\|_{L_{2} }.
 $$
    The fact $A^{-1}_{0}\subset B$   follows from properties of the  one-dimensional integral defined on smooth functions.
It is  a  well-known fact (see Theorem 2 \cite[p.555]{firstab_lit:Smirnov5}) that  since  $A_{0}$ is closeable and there exists a bounded operator $ A^{-1}_{0},$ then there exists a bounded operator $A^{-1}=\tilde{A}^{-1}_{0}=  \widetilde{A ^{-1}_{0}}  .$ Using  this relation we conclude that $A^{-1}\subset B.$ It is obvious that
\begin{equation}\label{3.15}
\int\limits_{\Omega }  A\left(B  \mathcal{T}  f \cdot g\right) dQ =\int\limits_{\Omega } AB\mathcal{T}f \cdot g\,  dQ +\int\limits_{\Omega }  B \mathcal{T} f \cdot Ag  \,dQ ,\, f\in C^{2}(\bar{\Omega}),g\in C^{\infty}_{0}( \Omega ).
\end{equation}
Using the   divergence  theorem, we get
\begin{equation}\label{3.16}
\int\limits_{\Omega }  A\left(B\mathcal{T}f \cdot g\right) \,dQ=\int\limits_{S}(\mathbf{e},\mathbf{n})_{\mathbb{E}^{n}}(B\mathcal{T}f\cdot  g)(\sigma)d\sigma,
\end{equation}
where $S$ is the surface of $\Omega.$
Taking into account   that $ g(S)=0$ and combining   \eqref{3.15},\eqref{3.16}, we get
\begin{equation}\label{3.17}
    -\int\limits_{\Omega }  AB\mathcal{T} f\cdot \bar{g} \, dQ=  \int\limits_{\Omega } B\mathcal{T} f\cdot   \overline{A  g} \, dQ,\, f\in C^{2}(\bar{\Omega}),g\in C^{\infty}_{0}( \Omega ).
\end{equation}
Suppose that $f\in H^{2}(\Omega),$ then    there exists a sequence $\{f_{n}\}_{1}^{\infty}\subset C^{2}(\bar{\Omega})$ such that
$ f_{n}\stackrel{ H^{2}}{\longrightarrow}  f$ (see \cite[p.346]{firstab_lit:Smirnov5}).  Using this fact, it is not hard to prove that
$\mathcal{T}f_{n}\stackrel{L_{2}}{\longrightarrow} \mathcal{T}f.$  Therefore  $AB\mathcal{T}f_{n}\stackrel{L_{2}}{\longrightarrow} \mathcal{T}f,$  since  $AB\mathcal{T}f_{n}=\mathcal{T}f_{n}.$ It is also clear that   $B\mathcal{T}f_{n}\stackrel{L_{2}}{\longrightarrow} B\mathcal{T}f,$ since $B$ is continuous.
Using these facts, we can extend relation \eqref{3.17} to the following
\begin{equation}\label{3.18}
  -\int\limits_{\Omega }  \mathcal{T} f \cdot  \bar{g} \, dQ= \int\limits_{\Omega } B\mathcal{T}f\,  \overline{Ag} \, dQ,\; f\in \mathrm{D}(L),\,g\in C_{0}^{\infty}(\Omega).
\end{equation}
It was previously proved that $H_{0}^{1}(\Omega) \subset \mathrm{D}(A),\, A^{-1} \subset B.$ Hence $G Af=B\mathcal{T} f,\, f\in  \mathrm{D}(L),$  where $G:=B\mathcal{T}B.$  Using  this fact    we can rewrite relation \eqref{3.18} in a   form
\begin{equation}\label{3.19}
  -\int\limits_{\Omega }  \mathcal{T} f \cdot  \bar{g} \, dQ= \int\limits_{\Omega } G Af\,  \overline{Ag} \, dQ,\; f\in \mathrm{D}(L),\,g\in C_{0}^{\infty}(\Omega).
\end{equation}
Note that   in accordance with the fact $A=\tilde{A}_{0},$ we have
$$
\forall g\in \mathrm{D}(A),\,\exists \{g_{n}\}_{1}^{\infty}\subset C^{\infty}_{0}( \Omega ),\,    g_{n}\xrightarrow[      A       ]{}g.
$$
Therefore, we can extend   relation \eqref{3.19} to the following
 \begin{equation}\label{3.20}
     -\int\limits_{\Omega }  \mathcal{T} f \cdot  \bar{g} \, dQ= \int\limits_{\Omega } G Af \, \overline{Ag} \, dQ,\; f\in \mathrm{D}(L),\,g\in \mathrm{D}(A).
\end{equation}
Relation \eqref{3.20} indicates that $G Af\in \mathrm{D}( A ^{\ast})$   and it is clear that $  -\mathcal{T}\subset A ^{\ast}GA.$ On the other hand in accordance with  Chapter VI, Theorem 1.2    \cite{firstab_lit: Berezansk1968}, we have that $-\mathcal{T}$ is a closed operator, hence in accordance with Lemma \ref{L3.1} the operator  $-\mathcal{T}$ is m-accretive. Therefore $-\mathcal{T}= A  ^{\ast}GA,$   since $A ^{\ast}GA$ is accretive.  Note that  by virtue of  Theorem 2.1 \cite{firstab_lit:1kukushkin2018}, we have     $(\mathfrak{I}^{\sigma }_{0+}\rho\, \cdot)\in \mathcal{B}(L_{2}).$
    It was previously proved that $\mathfrak{D}^{\alpha}_{d-}f= A^{\alpha}f,\,f\in H^{1}_{0}(\Omega).$         Thus, the  representation  $L=Z^{\alpha}_{GF}(A),$     where $G:=B\mathcal{T}B,\,F:=(\mathfrak{I}^{\sigma }_{0+}\rho\,\cdot)$   has been established.

 Let us prove that the operator $L$ satisfy   conditions H1--H2.    Choose the space  $L_{2}(\Omega)$ as a space $\mathfrak{H},$ the set  $C_{0}^{\infty}(\Omega)$ as a linear  manifold $\mathfrak{M},$ and the space  $H^{1}_{0}(\Omega)$ as a space $\mathfrak{H}_{+}.$ By virtue of the  Rellich-Kondrashov theorem, we have  $H_{0}^{1}(\Omega)\subset\subset L_{2}(\Omega).$  Thus, condition  H1  is fulfilled.
Using   simple  reasonings, we come to the following inequality
 \begin{equation*}
\left|\int\limits_{\Omega }  \mathcal{T} f \cdot  \bar{g} \, dQ\right|\leq C\|f\|_{H_{0}^{1}}\|g\|_{H_{0}^{1}},\; f,g\in C_{0}^{\infty}(\Omega).
\end{equation*}
     Let us prove that
\begin{equation}\label{3.21}
 |\left( \mathfrak{I}^{\sigma }_{0+}\rho\,\mathfrak{D}^{\alpha}_{d-}  f,g\right)_{ L _{2 }}| \leq K\|f\|_{H_{0}^{1}}\|g\|_{L_{2}},\,f,g\in C^{\infty}_{0}(\Omega),
\end{equation}
where $K=C\|\rho\|_{L_{\infty}}.$
Using  a fact that    the operator $(\mathfrak{I}^{\sigma }_{0+}\rho\,\cdot)$ is bounded, we obtain
\begin{equation}\label{3.22}
\|\mathfrak{I}^{\sigma }_{0+}\rho\,\mathfrak{D}^{\alpha}_{d-}  f\|_{L_{2}}\leq C \|\rho\|_{L_{\infty}} \| \mathfrak{D}^{\alpha}_{d-}  f\|_{L_{2}},\, f\in C_{0}^{\infty}(\Omega).  \end{equation}
Taking into account that $ A ^{-1} $ is bounded, $A$ is m-accretive, applying  Lemma \ref{L3.2} analogously to  \eqref{3.11}, we conclude that
$
\|A^{\alpha} f\|_{L_{2}}\leq C \|Af\|_{L_{2}},\,f\in \mathrm{D}(A).
$
Using \eqref{3.13},\eqref{3.14}, we get
$
\| \mathfrak{D}^{\alpha}_{d-}  f\|_{L_{2}}\leq C\|f\|_{H_{0}^{1}},\,f\in C_{0}^{\infty}(\Omega).
$
Combining this relation with \eqref{3.22}, we obtain
$$
\|\mathfrak{I}^{\sigma }_{0+}\rho\,\mathfrak{D}^{\alpha}_{d-}  f\|_{L_{2}}\leq K \|f\|_{H_{0}^{1}},\,f\in C_{0}^{\infty}(\Omega).
$$
Using this inequality, we can easily obtain \eqref{3.21}, from what follows that
$$
\mathrm{Re}(\mathfrak{I}^{\sigma }_{0+}\rho\,\mathfrak{D}^{\alpha}_{d-}  f,f)_{L_{2}}\geq -K\|f\|^{2}_{H^{1}_{0}},\, f \in C^{\infty}_{0}(\Omega).
$$
On the other hand, using a uniformly elliptic property of the operator $\mathcal{T}$ it is not hard to prove that
\begin{equation*}
-\mathrm{Re}(\mathcal{T} f,f)\geq \gamma_{a}\|f\|_{H_{0}^{1}},\,f\in C^{\infty}_{0}(\Omega),
\end{equation*}
the proof of this fact  is obvious and left to a reader (see  \cite{firstab_lit:1kukushkin2018}). Now, if we assume that $\gamma_{a}>K,$ then we obtain the fulfillment of condition
H2.

  Assume additionally that $\rho \in \mathrm{Lip}\lambda,\,\lambda>\alpha,$ let us prove that  $C_{0}^{\infty}(\Omega)\subset \mathrm{D}(L^{\ast}).$
Note that
 $$
 \int\limits_{\Omega}D_{j} ( a^{ij} D_{i}f)\,gdQ=\int\limits_{\Omega}f\,\overline{ D_{j} ( a^{ji} D_{i}g)}dQ,\,f\in \mathrm{D}(L),\, g\in C_{0}^{\infty}(\Omega).
 $$
Using this equality, we conclude that $(-\mathcal{T})^{\ast}$ is defined on $C_{0}^{\infty}(\Omega).$ Applying   the Fubini theorem,   Lemma \ref{L3.4}, Lemma 2.6 \cite{firstab_lit:1kukushkin2018},     we get
\begin{equation*}
 \left( \mathfrak{I}^{\sigma}_{0+}\rho\,\mathfrak{D}^{\alpha}_{d-}  f,g\right)_{L_{2}}= \left( \mathfrak{D}^{\alpha}_{d-}  f,\rho\,\mathfrak{I}^{\sigma}_{d-}g\right)_{L_{2}}= \left(   f,\mathfrak{D}^{\alpha}_{0+}\rho\,\mathfrak{I}^{\sigma}_{d-}g\right)_{L_{2}}\!\!, \,f\in \mathrm{D}(L),\, g\in C_{0}^{\infty}(\Omega).
\end{equation*}
Therefore the operator
$
\left(\mathfrak{I}^{\sigma}_{0+}\rho\,\mathfrak{D}^{\alpha}_{d-}\right)^{\ast}
$
is defined on $C_{0}^{\infty}(\Omega).$
Taking into account the above reasonings, we conclude that    $ C_{0}^{\infty}(\Omega) \subset \mathrm{D}( L^{\ast} ).$ Combining  this fact with relation H2, we obtain     $\tilde{\mathcal{H}}=H$  see paragraph  \ref{S2.7}.
\end{proof}

\begin{corol}\label{C3.1}
 Consider  a one-dimensional case,     we claim that  $L\in \mathfrak{G_{\alpha}}.$
\end{corol}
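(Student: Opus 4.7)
The plan is to exploit the one-dimensional structure in order to replace the cumbersome choice $G = B\mathcal{T}B$ from Theorem \ref{T3.2} (which in dimension one reduces to the unbounded expression $af - a(0)f(0)$) by a clean multiplication operator. Concretely, in $\Omega=(0,d)$ the operator $\mathcal{T}f=(af')'$ admits the factorization $-\mathcal{T} = A^{\ast}(aI)A$, where $A=-d/dx$ (on the domain inherited from the shift-semigroup construction of Theorem \ref{T3.2}). This suggests taking $J:=A$, $G:=aI$, $F:=(\mathfrak{I}^{\sigma}_{0+}\rho\,\cdot)$ and verifying that the triple $(J,G,F)$ meets every condition imposed in Theorem \ref{T3.1}; membership $L\in\mathfrak{G}_{\alpha}$ then follows from Definition \ref{D3.1}.

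First I would collect the properties of $J=A$ already  established. Theorem \ref{T3.2} guarantees that $A$ is m-accretive and that $A^{\alpha}f=\mathfrak{D}^{\alpha}_{d-}f$ for $f\in H^{1}_{0}(\Omega)$. In the one-dimensional setting the Hille-Yosida formula $(\lambda+A)^{-1}=\int_{0}^{\infty}e^{-\lambda t}T_{t}dt$ yields the explicit Volterra expression for $A^{-1}$, and so $A^{-1}$ is an integral operator with bounded kernel on the bounded interval $(0,d)$; in particular $A^{-1}\in\mathfrak{S}_{2}(L_{2})\subset\mathfrak{S}_{\infty}(L_{2})$, which establishes compactness. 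The bound $F\in\mathcal{B}(L_{2})$ is immediate from Theorem \ref{T1.1} since $\|F\|\leq \|\rho\|_{L_{\infty}}\cdot\mathrm{d}^{\sigma}/\Gamma(\sigma+1)$.

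Second I would verify the requirements on $G=aI$. Since $a\in C^{1}(\bar{\Omega})$, $G$ is bounded on $L_{2}(\Omega)$ with $\|G\|=\|a\|_{L_{\infty}}$, and by the uniform ellipticity hypothesis it is strictly accretive with lower bound $\gamma_{G}=\gamma_{a}>0$; moreover $\mathrm{D}(G)=L_{2}(\Omega)\supset\mathrm{R}(A)$ trivially. To identify $A^{\ast}GA$ with $-\mathcal{T}$, I would integrate by parts on the core $C^{\infty}_{0}(\Omega)$ to obtain
\begin{equation*}
(A^{\ast}GA f,g)_{L_{2}}=-(af',g')_{L_{2}}\cdot(-1)=(-(af')',g)_{L_{2}}=-(\mathcal{T}f,g)_{L_{2}},\;f,g\in C^{\infty}_{0}(\Omega),
\end{equation*}
whence $A^{\ast}GA\supset -\mathcal{T}$ on the core, and the closedness and m-accretivity of $-\mathcal{T}$ (Chapter VI, Theorem 1.2 of Berezanskii, together with ellipticity) combined with the accretivity of $A^{\ast}GA$ force equality by the non-extendability principle for m-accretive operators, exactly as at the end of the proof of Theorem \ref{T3.2}.

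The main obstacle, and the only genuinely quantitative step, will be securing the crucial lower-bound inequality $\gamma_{G}>C_{\alpha}\|A^{-1}\|\cdot\|F\|$ of Theorem \ref{T3.1}. With the identifications above this reads $\gamma_{a}>C_{\alpha}\|A^{-1}\|\cdot \|\mathfrak{I}^{\sigma}_{0+}\|\cdot\|\rho\|_{L_{\infty}}$, i.e. precisely the hypothesis "$\gamma_{a}$ sufficiently large in comparison with $\|\rho\|_{L_{\infty}}$" already inherited from Theorem \ref{T3.2}. Under this standing assumption every condition of Theorem \ref{T3.1} is satisfied, hence the representation $L=Z^{\alpha}_{G,F}(A)$ is of the required type and $L\in\mathfrak{G}_{\alpha}$ by Definition \ref{D3.1}. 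A minor technical point to watch is that in 1D the natural domain of $A$ encodes a vanishing trace at $r=d(\mathbf{e})$ rather than at the vertex $P$, so that the fractional power matches $\mathfrak{D}^{\alpha}_{d-}$ (not $\mathfrak{D}^{\alpha}_{0+}$); this is already recorded in formula \eqref{3.13} and requires no further work.
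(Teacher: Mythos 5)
Your proof is correct and follows essentially the same route as the paper: both hinge on the one-dimensional identity that $B\mathcal{T}B$ acts as multiplication by $a^{11}$ on the range of $A$ (your choice $G=aI$ versus the paper's observation $GA_{0}f=B\mathcal{T}f=a^{11}A_{0}f$ on $C_{0}^{\infty}(\Omega)$), which is exactly what delivers the boundedness of $G$ on $\mathrm{R}(A)$ required by Theorem \ref{T3.1}. The only cosmetic difference is the compactness argument for $A^{-1}$ --- you invoke the Hilbert--Schmidt property of the Volterra kernel, while the paper establishes $\|A_{0}f\|_{L_{2}}=\|f\|_{H_{0}^{1}}$, concludes $\mathrm{D}(A)=H_{0}^{1}(\Omega)$, and applies the Rellich--Kondrashov theorem.
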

\begin{proof}
It is not hard to prove   that $ \| A_{0}f \|_{L_{2}}=\|f\|_{H_{0}^{1}},\,f\in C_{0}^{\infty}(\Omega).$ This relation can be extended to the following
\begin{equation*}
 \| A f \|_{L_{2}}=\|f\|_{H_{0}^{1}},\,f\in H_{0}^{1}(\Omega),
\end{equation*}
whence $\mathrm{D}(A)=H_{0}^{1}(\Omega).$
 Taking into account  the Rellich-Kondrashov theorem, we conclude that  $ A ^{-1}$ is compact. Thus, to show that conditions of Theorem \ref{T3.1} are fulfilled  we need  prove  that  the  operator $G:=B\mathcal{T}B$ is bounded and $\mathrm{R}( A )\subset \mathrm{D}(G).$ We can establish  the following relation  by direct calculations
$
GA_{0}f =B\mathcal{T}f =a^{11} A_{0}f ,\,f\in C_{0}^{\infty}(\Omega),
$
where $a^{11}=a^{ij},\;i,j=1.$ Using this equality, we can easily prove that
$
\|G  A f\|_{L_{2}}\leq C\| A f\|_{L_{2}},\, f\in \mathrm{D}( A ).
$
Thus, we obtain the desired  result.
\end{proof}

\subsection{ Riesz potential}

 Consider a   space $L_{2}(\Omega),\,\Omega:=(-\infty,\infty).$      We denote by $H^{2,\,\lambda}_{0}(\Omega)$ the completion of the set  $C^{\infty}_{0}(\Omega)$  with the norm
$$
\|f\|_{H^{2,\lambda}_{0}}=\left\{\|f\|^{2}_{L_{2}(\Omega) }+\|f''\|^{2}_{L_{2}(\Omega,\omega^{\lambda})} \right\}^{1/2},\,  \lambda\in \mathbb{R},
$$
where $\omega(x):=  1+|x|.$
Let us notice the following fact  (see Theorem 1 \cite{firstab_lit:1Adams}), if $\lambda>4,$ then
$
H^{2,\,\lambda}_{0}(\Omega)\subset\subset L_{2}(\Omega).
$
Consider a Riesz potential
$$
I^{\alpha}f(x)=B_{\alpha}\int\limits_{-\infty}^{\infty}f (s)|s-x|^{\alpha-1} ds,\,B_{\alpha}=\frac{1}{2\Gamma(\alpha)  \cos  \alpha \pi / 2   },\,\alpha\in (0,1),
$$
where $f$ is in $L_{p}(-\infty,\infty),\,1\leq p<1/\alpha.$
It is  obvious that
$
I^{\alpha}f= B_{\alpha}\Gamma(\alpha) (I^{\alpha}_{+}f+I^{\alpha}_{-}f),
$
where
$$
I^{\alpha}_{\pm}f(x)=\frac{1}{\Gamma(\alpha)}\int\limits_{0}^{\infty}f (s\pm x) s ^{\alpha-1} ds,
$$
the last operators are known as fractional integrals on a whole  real axis   (see \cite[p.94]{firstab_lit:samko1987}). Assume that the following  condition holds
 $ \sigma/2 + 3/4<\alpha<1 ,$ where $\sigma$ is a non-negative  constant. Following the idea of the   monograph \cite[p.176]{firstab_lit:samko1987}
 consider a sum of a differential operator and  a composition of    fractional integro-differential operators
$$
 L    := \tilde{\mathcal{T}}   +I^{\sigma}_{+}\,\rho \,  I^{2(1-\alpha)}\frac{d^{2}}{dx^{2}}  \,,
 $$
where  $$
\mathcal{T} := \frac{d^{2}}{dx^{2}}\left(a  \frac{d^{2}}{dx^{2}}  \cdot \right) ,\, \mathrm{D}(\mathcal{T})=C^{\infty}_{0}(\Omega) ,
$$
$$
\, \rho(x)\in L_{\infty}(\Omega) ,\,a(x)\in L_{\infty}(\Omega)\cap C^{ 2 }( \Omega ),\, \mathrm{Re}\,a(x) >\gamma_{a}(1+|x|)^{5},\,\gamma_{a}>0.
$$
Consider a family of operators
$$
T_{t}f(x)=(2\pi t)^{-1/2}\int\limits_{-\infty}^{\infty}e^{-(x-\tau)^{2}/2t}f(\tau)d\tau,\;t>0,\;
 T_{t}f(x)=f(x),\;t=0,\,f\in L_{2}(\Omega).
$$
\begin{lem}\label{L3.5}
$T_{t}$ is a $C_{0}$ semigroup of contractions.
\end{lem}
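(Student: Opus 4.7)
The plan is to recognize $T_t$ as the classical Gauss--Weierstrass (heat) convolution semigroup on $L_{2}(\mathbb{R})$, namely $T_t f = G_t \ast f$ with the Gaussian kernel $G_t(x)=(2\pi t)^{-1/2}e^{-x^2/2t}$, and verify the three defining properties of a $C_0$ semigroup of contractions in turn: (i) $T_0=I$; (ii) $T_{t+s}=T_t T_s$; (iii) $\|T_t\|\le 1$; (iv) $T_t f \to f$ in $L_{2}$ as $t\downarrow 0$.

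For (i), the convention $T_0 f = f$ is built into the definition. For (iii), since $G_t\ge 0$ and $\int_{-\infty}^{\infty} G_t(x)\,dx = 1$, Young's convolution inequality gives $\|T_t f\|_{L_{2}}\le \|G_t\|_{L_{1}}\|f\|_{L_{2}}=\|f\|_{L_{2}}$, so each $T_t$ is a contraction. For (ii), I would invoke the standard semigroup identity $G_t\ast G_s = G_{t+s}$, which follows by completing the square in the exponent (adding variances of independent Gaussians). Applying Fubini to interchange the order of the two convolutions then yields $T_t T_s f = (G_t \ast G_s)\ast f = G_{t+s}\ast f = T_{t+s} f$; the use of Fubini is justified because $f\in L_{2}$ and the Gaussian kernels lie in $L_{1}\cap L_{\infty}$.

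For the strong continuity (iv), the plan is the usual approximate-identity argument: write
\[
(T_t f - f)(x) = \int_{-\infty}^{\infty} G_t(\tau)\bigl[f(x-\tau)-f(x)\bigr]d\tau,
\]
apply the generalized Minkowski inequality to get $\|T_t f - f\|_{L_{2}}\le \int_{-\infty}^{\infty} G_t(\tau)\,\omega_f(\tau)\,d\tau$, where $\omega_f(\tau) := \|f(\cdot-\tau)-f(\cdot)\|_{L_{2}}$ is the translation modulus of continuity, and then use the change of variables $\tau = \sqrt{t}\,\eta$ to reduce the right-hand side to $\int_{-\infty}^{\infty}G_1(\eta)\,\omega_f(\sqrt{t}\,\eta)\,d\eta$. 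Since $\omega_f$ is bounded by $2\|f\|_{L_{2}}$ and tends to $0$ as its argument tends to $0$ (continuity of translation in $L_{2}$), Lebesgue's dominated convergence theorem yields $\|T_t f - f\|_{L_{2}}\to 0$ as $t\downarrow 0$.

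The main obstacle is really just bookkeeping: none of the four points is delicate, but the semigroup identity (ii) requires the Gaussian convolution identity, which can either be done by a direct (but somewhat tedious) completion of the square, or more cleanly by passing to the Fourier transform, where $T_t$ becomes multiplication by $e^{-t\xi^{2}/2}$ and each property becomes transparent. I would prefer the Fourier-analytic route since it also makes the strong continuity immediate by dominated convergence applied to $\bigl(e^{-t\xi^{2}/2}-1\bigr)\hat f(\xi)$ combined with the Plancherel identity.
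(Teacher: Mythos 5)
Your proposal is correct and follows essentially the same route as the paper: the semigroup law via the Gaussian convolution identity (the paper cites it from Yosida rather than re-deriving it), the contraction bound via the generalized Minkowski inequality with $\int G_t = 1$, and strong continuity via the substitution $\tau=\sqrt{t}\,\eta$, the translation modulus of continuity, and dominated convergence (the paper invokes the Fatou--Lebesgue theorem at that step). Your closing remark about the Fourier-transform route is a legitimate and cleaner alternative, but the argument you actually wrote out is the paper's argument.
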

\begin{proof}
Let us establish the semigroup property,  by definition  we have $T_{0}=I.$ Consider the following formula, note that the interchange of the integration order can be easily substantiated
$$
T_{t}T_{t'}f(x)=\frac{1}{\sqrt{2\pi t}\sqrt{2\pi t'}}
\int\limits_{-\infty}^{\infty} e^{-\frac{(x-u)^{2}}{2t}}du\int\limits_{-\infty}^{\infty} e^{-\frac{(u-\tau)^{2}}{2t}}f(\tau)d\tau=
$$
$$
 =\frac{1}{\sqrt{2\pi t}\sqrt{2\pi t'}}
\int\limits_{-\infty}^{\infty} f(\tau)d\tau \int\limits_{-\infty}^{\infty}e^{-\frac{(x-u)^{2}}{2t}} e^{-\frac{(u-\tau)^{2}}{2t}}  du=\frac{1}{\sqrt{2\pi t}\sqrt{2\pi t'}}
\int\limits_{-\infty}^{\infty} f(\tau)d\tau \int\limits_{-\infty}^{\infty}e^{-\frac{(x-v-\tau)^{2}}{2t}} e^{-\frac{  v ^{2}}{2t}}  dv.
$$
On the other hand, in accordance with the formula \cite[p.325]{firstab_lit:Yosida}, we have
$$
\frac{1}{\sqrt{2\pi (t+t')} }e^{-\frac{(x-\tau)^{2}}{2t}}=\frac{1}{\sqrt{2\pi t}\sqrt{2\pi t'}}\int\limits_{-\infty}^{\infty}e^{-\frac{(x-\tau-v)^{2}}{2t}} e^{-\frac{ v ^{2}}{2t}}  dv.
$$
Hence
$$
\frac{1}{\sqrt{2\pi (t+t')} }\int\limits_{-\infty}^{\infty}e^{-\frac{(x-\tau)^{2}}{2t}}f(\tau) d\tau=\frac{1}{\sqrt{2\pi t}\sqrt{2\pi t'}}
\int\limits_{-\infty}^{\infty} f(\tau)d\tau \int\limits_{-\infty}^{\infty}e^{-\frac{(x-v-\tau)^{2}}{2t}} e^{-\frac{  v ^{2}}{2t}}  dv,
$$
from what immediately  follows  the fact   $T_{t}T_{t'}f=T_{t+t'}f.$
Let us show that $T_{t}$ is a $C_{0}$ semigroup of contractions.
Observe that
$$
(2\pi t)^{-1/2}\int\limits_{-\infty}^{\infty}e^{- \tau ^{2}/2t} d\tau =1.
$$
Therefore,  using the generalized Minkowski inequality (see (1.33) \cite[p.9]{firstab_lit:samko1987}), we get
$$
\|T_{t}f\|_{L_{2}}=\left(\int\limits_{-\infty}^{\infty} \left|\int\limits_{-\infty}^{\infty}f(x+s) N_{t}(s)ds  \right|^{2}dx \right)^{1/2}  \leq
  $$
  $$
  \leq\int\limits_{-\infty}^{\infty} N_{t}(s)ds \left(\int\limits_{-\infty}^{\infty}\left|f(x+s)   \right|^{2}dx \right)^{1/2}=\|f\|_{L_{2}},\,f\in C_{0}^{\infty}(\Omega),
$$
where $N_{t}(x):=(2\pi t)^{-1/2}e^{-x^{2}/2t}.$
It is clear that the last inequality can be extended to $L_{2}(\Omega),$ since $C_{0}^{\infty}(\Omega)$ is dense in $L_{2}(\Omega).$
Thus, we conclude that $T_{t}$ is a   semigroup of contractions.

Let us establish a strongly continuous property.  Assuming that  $z=(x-\tau)/\sqrt{t},$  we get in an obvious way
$$
\|T_{t}f-f\|_{L_{2}}=\left(\int\limits_{-\infty}^{\infty}\left|\int\limits_{-\infty}^{\infty}N_{1}(z)\left[ f(x-\sqrt{t}z)-f( x)\right] dz \right|^{2} dx\right)^{1/2}\!\!\!\leq
$$
$$
\leq\int\limits_{-\infty}^{\infty}N_{1}(z)\left(\int\limits_{-\infty}^{\infty}\left[ f(x-\sqrt{t}z)-f(x)\right]^{2} dx \right)^{1/2} \!\!\!dz,\, f\in L_{2}(\Omega),
$$
where $N_{1}=N_{t}|_{t=1}.$
Observe that, for arbitrary fixed $t,z$ we have
$$
N_{1}(z)\left(\int\limits_{-\infty}^{\infty}\left[ f(x-\sqrt{t}z)-f(x)\right]^{2} dx \right)^{1/2}\!\!\!\leq
 $$
 $$
 \leq N_{1}(z)\left(\int\limits_{-\infty}^{\infty}\left[ f(x-\sqrt{t}z) \right]^{2} dx \right)^{1/2}+N_{1}(z)\|f\|_{L_{2}}\leq 2 N_{1}(z)\|f\|_{L_{2}}.
$$
Applying the Fatou--Lebesgue theorem, we get
$$
 \overline{\lim\limits_{ t\rightarrow 0}}\int\limits_{\infty}^{\infty}N_{1}(z)\left(\int\limits_{\infty}^{\infty}\left[ f(x-\sqrt{t}z)-f(x)\right]^{2} dx \right)^{1/2} \!\!\!dz\leq
 \!\!\int\limits_{\infty}^{\infty}N_{1}(z)\,\overline{\lim\limits_{ t\rightarrow 0}}\left(\int\limits_{\infty}^{\infty}\left[ f(x-\sqrt{t}z)-f(x)\right]^{2} dx \right)^{\!\!1/2}\!\!\!\!=0,
$$
from what follows that $\|T_{t}f-f\|_{L_{2}}\rightarrow 0,\;t\rightarrow  0.$
Hence $T_{t}$ is a $C_{0}$ semigroup of contractions.

\end{proof}

 The following theorem is formulated in terms of     the infinitesimal generator   $-A$ of the semigroup $T_{t}.$

\begin{teo}\label{T3.3}  We claim that $L =Z^{\alpha}_{G,F}(A).$ Moreover,  if    $\min\{\gamma_{a},\delta\},\,(\delta>0)$ is  sufficiently large in comparison with   $\|\rho\|_{L_{\infty}},$ then
a  perturbation $L+\delta I$ satisfies  conditions  H1-H2,  where we put $\mathfrak{M}:=C_{0}^{\infty}(\Omega).$
 \end{teo}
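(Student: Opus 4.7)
My plan is to identify the semigroup generator with the one–dimensional Laplacian (up to a constant), translate the Riesz potential/derivative composition in $L$ into a transform of the form $Z^{\alpha}_{G,F}(A)$, and then check H1--H2 in the weighted Sobolev space $H^{2,\lambda}_{0}(\Omega)$ for a suitable $\lambda$. First, by the Fourier representation of the Gaussian convolution kernel $N_{t}$, the semigroup $T_{t}$ has symbol $e^{-t\xi^{2}/2}$, so its infinitesimal generator is $\tfrac12\tfrac{d^{2}}{dx^{2}}$ on a natural core, hence $A=-\tfrac12\tfrac{d^{2}}{dx^{2}}$. By Lemma \ref{L3.5} and Corollary 3.6 of Pazy, $A$ is m-accretive; moreover $A$ is self-adjoint in $L_{2}(\Omega)$ and nonnegative, so formula \eqref{3.7} is unambiguous.

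Second, I would compute $A^{\alpha}$ through the Balakrishnan formula exactly as in \eqref{3.13}, but now the Fourier symbol is $(\xi^{2}/2)^{\alpha}=2^{-\alpha}|\xi|^{2\alpha}$. Since $I^{\beta}$ has symbol $|\xi|^{-\beta}$ and $\tfrac{d^{2}}{dx^{2}}$ has symbol $-\xi^{2}$, the composition $I^{2(1-\alpha)}\tfrac{d^{2}}{dx^{2}}$ has symbol $-|\xi|^{2\alpha}$, so on $C_{0}^{\infty}(\Omega)$
\begin{equation*}
A^{\alpha}f \;=\; -\,2^{-\alpha}\,I^{2(1-\alpha)}\frac{d^{2}}{dx^{2}}f.
\end{equation*}
This matches the second term of $L$ if we set $F:=-2^{\alpha}\,I^{\sigma}_{+}\rho\,\cdot$, which is bounded on $L_{2}(\Omega)$ since $I^{\sigma}_{+}$ is bounded there (standard Hardy--Littlewood estimate) and $\rho\in L_{\infty}$. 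For the senior term, since $A^{\ast}=A$, choosing $G$ to be multiplication by $4a$ gives $A^{\ast}GA f = AGA f = (af'')''=\mathcal{T}f$ on $C_{0}^{\infty}(\Omega)$; closing we get $A^{\ast}GA=\tilde{\mathcal{T}}$. Thus $L=Z^{\alpha}_{G,F}(A)$. Note that $G$ is bounded because $a\in L_{\infty}$, and strictly accretive with $\gamma_{G}=4\gamma_{a}$ since $\mathrm{Re}\,a\geq\gamma_{a}(1+|x|)^{5}\geq\gamma_{a}$.

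Third, for H1--H2 of $L+\delta I$, I take $\mathfrak{H}=L_{2}(\Omega)$, $\mathfrak{M}=C_{0}^{\infty}(\Omega)$, and $\mathfrak{H}_{+}=H^{2,5}_{0}(\Omega)$. Since $5>4$, the Adams theorem cited above gives $\mathfrak{H}_{+}\subset\subset\mathfrak{H}$, and $\mathfrak{M}$ is dense in $\mathfrak{H}_{+}$ by construction; hence H1 holds. For the real part in H2, integration by parts yields
\begin{equation*}
\mathrm{Re}\bigl((\tilde{\mathcal{T}}+\delta I)f,f\bigr)_{L_{2}}
=\int \mathrm{Re}\,a\,|f''|^{2}\,dx+\delta\|f\|_{L_{2}}^{2}
\geq \min(\gamma_{a},\delta)\,\|f\|_{H^{2,5}_{0}}^{2},
\end{equation*}
while the boundedness estimate $|(\tilde{\mathcal{T}}f,g)_{L_{2}}|\leq\|a\|_{L_{\infty}(\omega^{5})}\|f\|_{H^{2,5}_{0}}\|g\|_{H^{2,5}_{0}}$ follows from Cauchy--Schwarz with the weight redistributed symmetrically. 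It remains to dominate the fractional term by $\|\rho\|_{L_{\infty}}$ times $\|f\|_{\mathfrak{H}_{+}}\|g\|_{\mathfrak{H}_{+}}$; this is the one delicate piece, and the condition $\sigma/2+3/4<\alpha<1$ is exactly what I expect to drive it.

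The main obstacle, therefore, will be the estimate
\begin{equation*}
\bigl|\bigl(I^{\sigma}_{+}\rho\, I^{2(1-\alpha)}f'',\,g\bigr)_{L_{2}}\bigr|
\leq C\,\|\rho\|_{L_{\infty}}\,\|f\|_{H^{2,5}_{0}}\,\|g\|_{H^{2,5}_{0}},
\end{equation*}
because the Riesz potentials are nonlocal operators on the whole line and must cooperate with the polynomial weight $\omega^{5}$ sitting on $f''$ and on $g''$. The plan is to move $I^{\sigma}_{+}$ onto $g$ by adjointness (switching sides turns $I^{\sigma}_{+}$ into $I^{\sigma}_{-}$), factor off $\rho$, and then bound $\|I^{2(1-\alpha)}f''\|_{L_{q}}$ by $\|f''\|_{L_{p}}$ via the Hardy--Littlewood--Sobolev inequality with $1/p-1/q=2(1-\alpha)$; the symmetric bound on the $g$ side uses $I^{\sigma}_{-}$ with the exponent relation dictated by $\sigma$. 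The inequality $\sigma/2+3/4<\alpha$ is precisely what makes both HLS exponents admissible and lets the weight $\omega^{5}$ on $f''$, $g''$ absorb the resulting loss through Hölder in the weighted space. Once this estimate is in hand, choosing $\min(\gamma_{a},\delta)$ large compared to the resulting constant times $\|\rho\|_{L_{\infty}}$ finishes H2 exactly as in Theorem \ref{T3.2}.
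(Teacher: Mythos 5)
Your overall architecture matches the paper's: identify $A=-\tfrac12 D^{2}$, express $A^{\alpha}$ through $I^{2(1-\alpha)}D^{2}$, write $\tilde{\mathcal{T}}=A^{\ast}GA$ with $G$ a multiplication operator, and verify H1--H2 in $H^{2,5}_{0}(\Omega)$ via the Adams embedding. Your Fourier-symbol shortcut for the generator and for $A^{\alpha}$ replaces the paper's explicit resolvent computation (the evaluation starting at \eqref{3.25} and the subsequent Balakrishnan integral), and reaches the same identity up to the multiplicative constant absorbed into $F$. Your exponent bookkeeping is also right: the requirement $q_{2}=2/(5+2\sigma-4\alpha)>1$ is exactly $\alpha>\sigma/2+3/4$, which is how the paper's estimate \eqref{3.31} uses that hypothesis. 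Moving $I^{\sigma}_{+}$ onto $g$ by adjointness is an unnecessary detour, though not wrong: the paper bounds the whole composition in $L_{2}$ and then pairs with $\|g\|_{L_{2}}\leq\|g\|_{H^{2,5}_{0}}$ by Cauchy--Schwarz.

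There are, however, two genuine problems. First, the claim that $I^{\sigma}_{+}$ is bounded on $L_{2}(\Omega)$ with $\Omega=(-\infty,\infty)$ is false for $\sigma>0$: on the whole line the fractional integral maps $L_{p}\to L_{q}$ only with $1/q=1/p-\sigma$, $q>p$, and admits no $L_{2}\to L_{2}$ bound (boundedness of $I^{\sigma}_{a+}$ on $L_{2}$ holds on a finite interval, not here). Hence your $F=-2^{\alpha}I^{\sigma}_{+}\rho\,\cdot$ is not a bounded operator on $\mathfrak{H}$; the integrability gain that tames the second term comes precisely from applying $I^{\sigma}_{+}$ to a function already placed in $L_{q_{1}}$, $q_{1}=2/(1+2\sigma)<2$, which is what \eqref{3.31} does. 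This does not sink the theorem, since the representation and H2 only require the composed estimate, but the justification as written is wrong. Second, ``closing we get $A^{\ast}GA=\tilde{\mathcal{T}}$'' is not automatic: taking closures only yields $\tilde{\mathcal{T}}\subset A^{\ast}GA$. The paper upgrades this to equality by showing $\tilde{\mathcal{T}}$ is m-accretive (via Lemma \ref{L3.1}) while $A^{\ast}GA$ is accretive, so the inclusion cannot be proper because an m-accretive operator has no proper accretive extension; you need this step or an equivalent one. Finally, the decisive estimate \eqref{3.33} is left as a plan; the plan (Hardy--Littlewood with limiting exponent followed by H\"{o}lder against the weight $\omega^{5}$, as in \eqref{3.30}) is the correct one and coincides with the paper's, but it is the substance of the proof and must actually be carried out.
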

\begin{proof}
Let us prove that
\begin{equation*}
Af =-\frac{1}{2}\frac{d^{\,2}f }{dx^{2}}\;\mathrm{a.e.},\,f\in \mathrm{D}(A).
\end{equation*}
Consider an operator $J_{n}=n(nI +A)^{-1}.$ It is clear that $AJ_{n}=n(I-J_{n}).$ Using the formula
$$
(nI+A)^{-1}f=\int\limits_{0}^{\infty}e^{-n t}T_{t}f dt,\,n>0,\,f\in L_{2}(\Omega),
$$
 we   easily obtain
$$
J_{n}f(x)=\frac{n}{\sqrt{2\pi }}\int\limits_{0}^{\infty}e^{-n t} t^{-1/2}  dt
\int\limits_{-\infty}^{\infty} e^{-\frac{(x-\tau)^{2}}{2t}}f(\tau)d\tau=\frac{n}{\sqrt{2\pi  }}\int\limits_{-\infty}^{\infty}f(\tau)d\tau
\int\limits_{0}^{\infty}   e^{-nt-\frac{(x-\tau)^{2}}{2t}}t^{-1/2} dt=
$$
$$
 =  \sqrt{\frac{2n}{\pi}} \int\limits_{-\infty}^{\infty}f(\tau)d\tau
\int\limits_{0}^{\infty}   e^{-\sigma^{2}-n\frac{(x-\tau)^{2}}{2\sigma^{2}}}d\sigma,\;t= \sigma^{2}/n.
$$
Applying  the  following   formula (see (3)  \cite[p.336]{firstab_lit:Yosida})
\begin{equation}\label{3.23}
\int\limits_{0}^{\infty}   e^{-(\sigma^{2}+  c^{2}/  \sigma^{2} )}d\sigma=\frac{\sqrt{\pi}}{2}e^{-2 |c| },
\end{equation}
we obtain
$$
 J_{n}f(x)=\sqrt{\frac{ n}{2}} \int\limits_{-\infty}^{\infty}f(\tau) e^{- \sqrt{2n}|x-\tau| }  d\tau = \sqrt{\frac{ n}{2}} \int\limits_{-\infty}^{x}f(\tau) e^{- \sqrt{2n} (x-\tau)  }  d\tau+\sqrt{\frac{ n}{2}} \int\limits_{x}^{\infty}f(\tau) e^{- \sqrt{2n} (\tau-x) }  d\tau=
 $$
 $$
=\sqrt{\frac{ n}{2}}e^{- \sqrt{2n} x} \int\limits_{-\infty}^{x}f(\tau) e^{ \sqrt{2n}   \tau   }  d\tau+\sqrt{\frac{ n}{2}}e^{ \sqrt{2n} x} \int\limits_{x}^{\infty}f(\tau) e^{ -\sqrt{2n}   \tau   }  d\tau,\,f\in L_{2}(\Omega).
 $$
 Consider
  $$
  I_{1}(x)= \int\limits_{-\infty}^{x}f(\tau) e^{ \sqrt{2n} \tau }  d\tau,\;I_{2}(x)=\int\limits_{x}^{\infty}f(\tau) e^{ -\sqrt{2n}   \tau   }  d\tau.
  $$
 Observe that the functions $ f(x) e^{ \sqrt{2n} x },\, f(x) e^{ -\sqrt{2n}x} $ have the same Lebesgue points, then in accordance with the known fact, we have
$ I'_{1}(x)=f(x) e^{ \sqrt{2n} x },\;I'_{2}(x)=-f(x) e^{ -\sqrt{2n}x},$ where $x$ is a  Lebesgue point. Using this result, we get
$$
 \left(J_{n}f(x)\right)'  =-n\int\limits_{-\infty}^{x}f(\tau) e^{- \sqrt{2n} ( x-\tau) }  d\tau+n   \int\limits_{x}^{\infty}f(\tau) e^{- \sqrt{2n} ( \tau-x)  }  d\tau \;\mathrm{a.e.}
 $$
  Analogously, we have almost everywhere
$$
 \left(J_{n}f(x)\right)''  =n  \left\{  \sqrt{2n}\int\limits_{-\infty}^{x}f(\tau) e^{- \sqrt{2n} (x-\tau) }  d\tau+\sqrt{2n}\int\limits_{x}^{\infty}f(\tau) e^{- \sqrt{2n} (\tau-x)  }  d\tau   -2f(x)   \right\}=
 $$
$$
=2n(J_{n}-I)f(x)=-2AJ_{n}f(x),
$$
  taking into account the fact  $\mathrm{R}(J_{n})=\mathrm{R}(R_{A}(n))=\mathrm{D}(A),$  we obtain the desired result.

In accordance with the reasonings of  \cite[p.336]{firstab_lit:Yosida}, we have $C(\Omega)\subset \mathrm{D}(A).$ Denote by $A_{0}$ a restriction of $A$ on $C^{\infty}_{0}(\Omega).$ Using Lemma \ref{L3.1}, we conclude that $\tilde{A}_{0}=A,$ since there does not exist an accretive extension of an m-accretive operator.  Now, it is clear that
\begin{equation}\label{3.24}
\| A f\|_{L_{2}}\leq \|f\|_{H^{2,\,5}_{0} },\,f\in H^{2,\,5}_{0}( \Omega),
\end{equation}
whence $H^{2,\,5}_{0}( \Omega)\subset \mathrm{D}( A).$
Let us establish the representation $L= Z^{\alpha}_{G,F}(J).$  Since     the operator  $A$ is m-accretive, then using formula  \eqref{3.7},  we can define positive fractional powers $\alpha\in (0,1)$ of the operator $ A . $
       Applying  the   relations  obtained  above, we can calculate
\begin{equation}\label{3.25}
(\lambda I+ A )^{-1} A f(x)=-\frac{1}{2\sqrt{2\pi  }}\int\limits_{0}^{\infty}e^{-\lambda t}  t^{-1/2} dt
\int\limits_{-\infty}^{\infty} e^{-\frac{(x-\tau)^{2}}{2t}}f''(\tau)d\tau=
$$
$$
=-\frac{1}{2\sqrt{2\pi  }}\int\limits_{-\infty}^{\infty}f''(\tau)d\tau
\int\limits_{0}^{\infty}   e^{-\lambda t-\frac{(x-\tau)^{2}}{2t}}t^{-1/2}dt,\;f\in C_{0}^{\infty}(\Omega).
\end{equation}
Here,   substantiation of the interchange of the integration order  can be easily obtained due to the  properties of the function. We have for arbitrary chosen $x,\lambda$
$$
\int\limits_{-A}^{A}f''(\tau)d\tau
\int\limits_{0}^{\infty}   e^{-\lambda t-\frac{(x-\tau)^{2}}{2t}}t^{-1/2}dt=\int\limits_{-A-x}^{A-x}f''(x+s)ds
\int\limits_{0}^{1}   e^{-\lambda t- s^{2}/2t }t^{-1/2}dt+
$$
$$
+\int\limits_{-A-x}^{A-x}f''(x+s)ds
\int\limits_{1}^{\infty}   e^{-\lambda t- s^{2}/2t }t^{-1/2}dt.
$$
Observe that  the inner integrals converge uniformly with respect to $s,$ it is also clear that the function under the integrals is continuous regarding to $s,t,$ except of the set of points $(s;t_{0}),\,t_{0}=0.$ Hence applying the well-known theorem of calculus, we obtain \eqref{3.25}. Consider

$$
\int\limits_{-\infty}^{\infty}f''(x+s)ds
\int\limits_{0}^{\infty}   e^{-\lambda t- s^{2}/2t }t^{-1/2}dt =2\lambda^{-1/2}\int\limits_{-\infty}^{\infty}f''(x+s)ds
\int\limits_{0}^{\infty}   e^{-\sigma^{2}- c^{2}/\sigma^{2} } d\sigma=I,
$$
where $c^{2}=s^{2}\lambda/2.$ Using   formula \eqref{3.23}, we obtain
$$
I=\sqrt{\pi}\lambda^{-1/2} \int\limits_{-\infty}^{\infty}f''(x+s) e^{-\sqrt{2\lambda}|s|}ds=\sqrt{\pi}\lambda^{-1/2} \int\limits_{0}^{\infty}f''(x+s) e^{-\sqrt{2\lambda} s }ds+
  \sqrt{\pi}\lambda^{-1/2} \int\limits_{0}^{\infty}f''(x-s) e^{-\sqrt{2\lambda}  s }ds.
  $$
  Thus, combining formulas \eqref{3.7},\eqref{3.25},  we conclude that
$$
 A  ^{\alpha}f(x) = - \frac{2^{-3/2} }{\Gamma(1-\alpha)\Gamma(\alpha)} \int\limits_{0}^{\infty}\lambda^{\alpha -3/2}d\lambda\int\limits_{-\infty}^{\infty}f''(x+s) e^{-\sqrt{2\lambda}|s|}ds,\,f\in C_{0}^{\infty}(\Omega).
$$
We  easily prove that
\begin{equation}\label{3.26}
\int\limits_{\varepsilon}^{\infty}f''(x+s)ds\int\limits_{0}^{\infty}\lambda^{\alpha -3/2}e^{-\sqrt{2\lambda} s }d\lambda =\int\limits_{0}^{\infty}\lambda^{\alpha -3/2}d\lambda\int\limits_{\varepsilon}^{\infty}f''(x+s)e^{-\sqrt{2\lambda} s }ds,\,f\in C_{0}^{\infty}(\Omega).
\end{equation}
Let us show that
\begin{equation}\label{3.27}
\int\limits_{0}^{\infty}\lambda^{\alpha -3/2}d\lambda\int\limits_{\varepsilon}^{\infty}f''(x+s)e^{-\sqrt{2\lambda} s }ds\rightarrow \int\limits_{0}^{\infty}\lambda^{\alpha -3/2}d\lambda\int\limits_{0}^{\infty}f''(x+s)e^{-\sqrt{2\lambda} s }ds,\,\varepsilon \rightarrow 0,
\end{equation}
we have
$$
\left|\int\limits_{0}^{\infty}\lambda^{\alpha -3/2}d\lambda\int\limits_{0}^{\varepsilon}f''(x+s)e^{-\sqrt{2\lambda} s }ds\right|\leq\|f''\|_{L_{\infty}}\int\limits_{0}^{\infty}\lambda^{\alpha -3/2}d\lambda\int\limits_{0}^{\varepsilon} e^{-\sqrt{2\lambda} s }ds=
$$
$$
= \frac{1}{\sqrt{2}}\|f''\|_{L_{\infty}}\int\limits_{0}^{\infty}\lambda^{\alpha - 2} \left( 1-e^{- \sqrt{2\lambda} \varepsilon }\right)d\lambda =
$$
$$
=
 \varepsilon^{2(1-\alpha)}2^{3/2-\alpha}\|f''\|_{L_{\infty}}\int\limits_{0}^{\infty}t^{2\alpha - 3} \left( 1-e^{- t }\right)dt
 \rightarrow 0,\,\varepsilon\rightarrow0,
$$
from what follows the desired result.
Using simple calculations, we get
\begin{equation}\label{3.28}
\int\limits_{0}^{\varepsilon}f''(x+s) ds\int\limits_{0}^{\infty}e^{-\sqrt{2\lambda} s }\lambda^{\alpha -3/2}d\lambda=
 $$
 $$
 =2^{3/2-\alpha}\Gamma(2\alpha-1) \int\limits_{0}^{\varepsilon}f''(x+s)s^{1-2\alpha} ds \leq C\|f''\|_{L_{\infty} }\varepsilon^{2(1-\alpha)}\rightarrow 0,\,\varepsilon\rightarrow0.
\end{equation}
In accordance with \eqref{3.26}, we can write
$$
\int\limits_{0}^{\infty}f''(x+s)ds\int\limits_{0}^{\infty}\lambda^{\alpha -3/2}e^{-\sqrt{2\lambda} s }d\lambda =\int\limits_{0}^{\infty}\lambda^{\alpha -3/2}d\lambda\int\limits_{\varepsilon}^{\infty}f''(x+s)e^{-\sqrt{2\lambda} s }ds+\int\limits_{0}^{\varepsilon}f''(x+s) ds\int\limits_{0}^{\infty}e^{-\sqrt{2\lambda} s }\lambda^{\alpha -3/2}d\lambda.
$$
  Passing to the limit at the right-hand side, using \eqref{3.27},\eqref{3.28},  we obtain
$$
 \int\limits_{0}^{\infty}\lambda^{\alpha -3/2}d\lambda\int\limits_{0}^{\infty}f''(x+s)e^{-\sqrt{2\lambda} s }ds=\int\limits_{0}^{\infty}f''(x+s)ds\int\limits_{0}^{\infty}\lambda^{\alpha -3/2}e^{-\sqrt{2\lambda} s }d\lambda=
  $$
  $$
  =2^{3/2-\alpha}\Gamma(2\alpha-1) \int\limits_{0}^{\infty}f''(x+s)s^{1-2\alpha} ds  .
$$
Taking into account the analogous reasonings, we conclude that
$$
 A ^{\alpha}f(x)= - \frac{\Gamma(2\alpha-1)}{2^{  \alpha}\Gamma(\alpha)\Gamma(1-\alpha)}  \int\limits_{-\infty}^{\infty}f''(x+s)|s|^{1-2\alpha} ds=  K_{\alpha} I^{2(1-\alpha)}f''(x),\;
 $$
 $$
 K_{\alpha}=-\frac{    \Gamma(2\alpha-1)\cos  \alpha \pi / 2 }{2^{  \alpha-1} \Gamma(1-\alpha)},\,f\in C_{0}^{\infty}(\Omega).
$$

Using   the  Hardy-Littlewood theorem with limiting exponent (see Theorem 5.3 \cite[p.103]{firstab_lit:samko1987}),  we get
\begin{equation}\label{3.29}
  \|A ^{\alpha}f\|_{L_{2}}\leq C\|I^{2(1-\alpha)}_{+}f''\|_{L_{2}}+C\|I^{2(1-\alpha)}_{-}f''\|_{L_{2}} \leq C \|f''\|_{L_{q}} ,\, f  \in C^{\infty}_{0}(\Omega),
\end{equation}
where $q=2/(5-4 \alpha ).$   Applying the H\"{o}lder inequality, we obtain
\begin{equation}\label{3.30}
\left(\int\limits_{-\infty}^{\infty}|f''(x)|^{q }(1+|x|)^{5q /2}(1+|x|)^{-5q /2}dx\right)^{1/q }\leq
$$
$$
\leq\left(\int\limits_{-\infty}^{\infty}|f''(x)|^{2}(1+|x|)^{5} dx\right)^{1/2}\left(\int\limits_{-\infty}^{\infty} (1+|x|)^{-5q \gamma/2}dx\right)^{1/q \gamma}\leq C\|f\|_{H_{0}^{2,5}},\,f \in C^{\infty}_{0}(\Omega),
\end{equation}
where $1<q<2,\,\gamma=2/(2 -q )>1.$
  Combining  \eqref{3.29},\eqref{3.30} and passing to the limit, we get
\begin{equation*}
  \|A ^{\alpha}f\|_{L_{2}}\leq C \|f\|_{H_{0}^{2,5}},\,f\in H_{0}^{2,5}(\Omega).
\end{equation*}
Hence  $ H_{0}^{2,5}(\Omega)\subset \mathrm{D}(A ^{\alpha}).$
Using   the  Hardy-Littlewood theorem with limiting exponent, we obtain
 $$
 \|I^{\sigma}_{+}\rho I^{2(1-\alpha)} f''\|_{L_{2}}\leq C \| \rho I^{2(1-\alpha)} f''\|_{L_{q_{1}}}\leq C_{\rho}\|     f''\|_{L_{q_{2}}},\,f\in C_{0}^{\infty}( \Omega),\,C_{\rho}=C\|\rho\|_{L_{\infty}},
 $$
 where $q_{1}=2/(1+2\sigma),\,q_{2}=q_{1}/(1+2q_{1}[1-\alpha]).$   We can rewrite  $q_{2}=2/(1+2\sigma +4 [1-\alpha]),$ thus  $1<q_{2}<2.$
Applying  formula \eqref{3.30} and passing to the limit, we get
 \begin{equation}\label{3.31}
 \|I^{\sigma}_{+}\rho I^{2(1-\alpha)} f''\|_{L_{2}}\ \leq C_{\rho}\|     f \|_{H^{2,5}_{0}},\,f\in H_{0}^{2,5}(\Omega).
 \end{equation}
Note that
 \begin{equation}\label{3.32}
 \int\limits_{\Omega}  \mathcal{T }f\, \bar{g}  dx= \int\limits_{\Omega}    a   f''\,  \overline{g''}  dx,\; f,g\in C_{0}^{\infty}( \Omega).
 \end{equation}
Therefore  $  \mathcal{T} $ is accretive,   applying  Lemma \ref{L3.1}   we  deduce that $\tilde{\mathcal{T}}$ is m-accretive.
Using  relation \eqref{3.24},\eqref{3.32} we can easily obtain
$
\|\tilde{\mathcal{T }}f\|_{L_{2}}\geq  \gamma_{a}\|f\|_{H_{0}^{2,5}}\geq C \|Af\|_{L_{2}},\; f \in \mathrm{D}(\tilde{\mathcal{T }}),
$
whence $\mathrm{D}(\tilde{\mathcal{T }})\subset H_{0}^{2,5}( \Omega) \subset \mathrm{D}(A).$
    Using simple reasonings, we can extend  relation \eqref{3.32}  and rewrite it in the following form
$$
 \int\limits_{\Omega}  \tilde{\mathcal{T }}f\,\bar{g}  dx= \frac{1}{4}\int\limits_{\Omega}    a   Af\,   \overline{Ag}   dx,\; f\in \mathrm{D}(\tilde{\mathcal{T}}),\,g\in \mathrm{D}(A),
 $$
whence $\tilde{\mathcal{T}}\subset A^{\ast}G A,$ where $G:=a/4.$ Since the operator $\tilde{\mathcal{T}}$ is m-accretive, $A^{\ast}G A$ is accretive, then $\tilde{\mathcal{T}}= A^{\ast}G A.$
Hence, taking into account the inclusion $\mathrm{D}(\tilde{\mathcal{T }})\subset H_{0}^{2,5}( \Omega),$ relation \eqref{3.31}, we conclude that
$
L= A^{\ast}G A+F A^{\alpha},
$
where $ F:=\rho I.$

Let us prove that the operator $L$ satisfies     conditions H1--H2.     Choose the space  $L_{2}(\Omega)$ as a space $\mathfrak{H},$ the set  $C_{0}^{\infty}(\Omega)$ as a linear  manifold $\mathfrak{M},$ and the space  $H^{2,5}_{0}(\Omega)$ as a space $\mathfrak{H}_{+}.$    By virtue of     Theorem 1 \cite{firstab_lit:1Adams}, we have  $H^{2,5}_{0}(\Omega)\subset\subset L_{2}(\Omega).$
Thus, condition  H1  is satisfied.

Using   simple  reasonings (the proof is omitted), we come to the following inequality
 \begin{equation*}
\left|\int\limits_{-\infty}^{\infty}  (\tilde{\mathcal{T}}+\delta I) f \cdot  \bar{g} \, dx\right|\leq C\|f\|_{H^{2,5}_{0}}\|g\|_{H^{2,5}_{0}},\; f,g\in C_{0}^{\infty}( \Omega).
\end{equation*}
Applying    the Cauchy Schwarz inequality, relation  \eqref{3.31},  we obtain
\begin{equation}\label{3.33}
 |\left(I^{\sigma}_{+}\rho I^{2(1-\alpha)} f'', g\right)_{ L _{2}}|  \leq C_{\rho} \|f\|_{H_{0}^{2,5}}\|g\|_{H_{0}^{2,5}},\,f ,\,g\in C^{\infty}_{0}(\Omega).
\end{equation}
On the other hand, using the conditions  imposed on the function $a(x),$  it is not hard to prove that
\begin{equation*}
 \mathrm{Re}( [\tilde{\mathcal{T}}+\delta I]f,f)\geq \min\{\gamma_{a},\delta\}\|f\|^{2}_{H^{2,5}_{0}},\,f\in C_{0}^{\infty}( \Omega).
\end{equation*}
Using relation \eqref{3.33}, we can easily obtain
$$
\mathrm{Re}( I^{\sigma}_{+}\rho I^{2(1-\alpha)} f'',f)\geq -C_{\rho}\|f\|^{2}_{H^{2,5}_{0}},\,f\in C_{0}^{\infty}( \Omega).
$$
Combining the above estimates,  we conclude that   if the condition   $\min\{\gamma_{a},\delta\}>C_{\rho}$ holds,  then   $\mathrm{Re}( L f,f)\geq C \|f\|^{2}_{H^{2,5}_{0}},\,f\in C_{0}^{\infty}( \Omega).$ Thus, condition H2 is satisfied.
\end{proof}

\subsection{Difference operator}

Consider a   space $L_{2}(\Omega),\,\Omega:=(-\infty,\infty),$   define a family of operators
$$
T_{t}f(x):=e^{-\lambda t}\sum\limits_{k=0}^{\infty}\frac{(\lambda t)^{k}}{k!}f(x-k\mu),\,f\in L_{2}(\Omega),\;\lambda,\mu>0,\; t\geq0,
$$
where convergence is understood in the sense of $L_{2}(\Omega)$ norm. It is not hard to prove that $T_{t}: L_{2}\rightarrow L_{2},$ for this purpose it is sufficient to note that
\begin{equation}\label{3.34}
\left\|\sum\limits_{k=n}^{n+p}\frac{(\lambda t)^{k}}{k!}f(\cdot-k\mu) \right\|_{L_{2}}\leq  \left\|  f \right\|_{L_{2}}\sum\limits_{k=n}^{n+p}\frac{(\lambda t)^{k}}{k!}.
\end{equation}
\begin{lem}\label{L3.6}
$T_{t}$ is a $C_{0}$ semigroup of contractions, the corresponding  infinitesimal generator and its adjoint operator are defined by the following expressions
$$
Af(x)=\lambda[f(x)-f(x-\mu)],\,A^{\ast}f(x)=\lambda[f(x)-f(x+\mu)],\,f\in L_{2}(\Omega).
$$
\end{lem}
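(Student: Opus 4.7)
The plan is to establish the four standard items in order: $T_0=I$, the semigroup law, contractivity, strong continuity, and then to identify the generator and its adjoint by direct computation. The identity $T_0=I$ is immediate from the series. For contractivity, I would rely on the probabilistic structure: the coefficients $e^{-\lambda t}(\lambda t)^k/k!$ form a probability mass function, so $\sum_{k\ge 0} e^{-\lambda t}(\lambda t)^k/k!=1$; applying the generalized Minkowski inequality to the representation $T_tf(x)=\sum_k c_k(t)f(x-k\mu)$, together with translation invariance of $\|\cdot\|_{L_2(\Omega)}$, yields $\|T_t f\|_{L_2}\le \|f\|_{L_2}$, strengthening the elementary estimate \eqref{3.34}. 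The map $T_t:L_2\to L_2$ is thus well-defined.

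For the semigroup property, I would compute $T_t T_s f(x)$ by interchanging the two summations (permissible by absolute convergence in $L_2$ of the double series, shown via \eqref{3.34}), and then collect terms by the total shift $n\mu$: the coefficient of $f(x-n\mu)$ becomes
\begin{equation*}
e^{-\lambda(t+s)}\sum_{k=0}^{n}\frac{(\lambda t)^{k}}{k!}\frac{(\lambda s)^{n-k}}{(n-k)!}
=e^{-\lambda(t+s)}\frac{(\lambda(t+s))^{n}}{n!},
\end{equation*}
by the binomial theorem, giving $T_{t+s}f$. For strong continuity at $t=0$, I would split off the $k=0$ term:
\begin{equation*}
T_t f - f = (e^{-\lambda t}-1)f + e^{-\lambda t}\sum_{k=1}^{\infty}\frac{(\lambda t)^{k}}{k!}f(\cdot-k\mu).
\end{equation*}
The first summand tends to $0$ in $L_2$ because $|e^{-\lambda t}-1|\to 0$, and the $L_2$-norm of the tail is dominated by $\|f\|_{L_2}(1-e^{-\lambda t})\to 0$. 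Thus $T_t$ is a $C_0$ semigroup of contractions.

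To identify the generator $A$, I would compute
\begin{equation*}
\frac{T_t f(x)-f(x)}{t}=\frac{e^{-\lambda t}-1}{t}f(x)+\lambda e^{-\lambda t}f(x-\mu)+e^{-\lambda t}\sum_{k=2}^{\infty}\frac{\lambda^{k}t^{k-1}}{k!}f(x-k\mu),
\end{equation*}
and observe that in $L_2$ the first term converges to $-\lambda f$, the second to $\lambda f(\cdot-\mu)$, and the tail vanishes (its $L_2$-norm is bounded by $\|f\|_{L_2}\cdot t^{-1}(1-e^{-\lambda t}-\lambda t e^{-\lambda t})=O(t)$). Hence $-Af(x)=-\lambda f(x)+\lambda f(x-\mu)$ for every $f\in L_2(\Omega)$, i.e.\ $A$ is in fact bounded and given by the stated formula on all of $L_2(\Omega)$. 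The adjoint is then computed by a Fubini-type change of variable in the inner product:
\begin{equation*}
(Af,g)_{L_2}=\lambda\!\int_{-\infty}^{\infty}\!\![f(x)-f(x-\mu)]\overline{g(x)}dx=\lambda\!\int_{-\infty}^{\infty}\!\!f(x)\overline{[g(x)-g(x+\mu)]}dx=(f,A^{\ast}g)_{L_2}.
\end{equation*}

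The main obstacle I anticipate is the rigorous justification of the interchange of summations in the semigroup identity and of the termwise passage to the limit in the generator computation; both reduce to dominated convergence or uniform tail estimates based on \eqref{3.34}, so they are technical rather than conceptual. Everything else is either algebraic (binomial theorem, translation invariance) or a direct Poisson-measure argument.
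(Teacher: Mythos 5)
Your proposal is correct and follows essentially the same route as the paper's proof: contractivity via the Poisson probability weights and the triangle/Minkowski inequality, the semigroup law by interchanging the double sum and applying the binomial theorem to the coefficient of $f(x-n\mu)$, strong continuity and the generator by splitting off the low-order terms of the series and bounding the tail, and the adjoint by a change of variables in the inner product. The only cosmetic difference is that the paper works with $(I-T_t)f/t$ (consistent with its convention that $-A$ is the generator) while you work with $(T_tf-f)/t$, but you land on the same formula for $A$.
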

\begin{proof}
Assume that $f\in L_{2}(\Omega).$  Analogously to  \eqref{3.34}, we easily  prove that $\|T_{t}f\|_{L_{2}}\leq \|f\|_{L_{2}}.$ Consider
$$
T_{s}T_{t}f(x)=e^{-\lambda s}\sum\limits_{n=0}^{\infty}\frac{(\lambda s)^{n}}{n!}\left[e^{-\lambda t}\sum\limits_{k=0}^{\infty}\frac{(\lambda t)^{k}}{k!}f(x-k\mu-n\mu)\right].
$$
Since we have
$$
 \left\|\sum\limits_{k=0}^{m}\frac{(\lambda t)^{k}}{k!}f(x-k\mu)\right\|_{L_{2}}\leq \|    f \|_{L_{2}}  \sum\limits_{k=0}^{m}\frac{(\lambda t)^{k}}{k!},
$$
then similarly  to the case corresponding to $C(\Omega)$ norm (the prove is based upon the properties of the absolutely convergent double series, see Example 3 \cite[p.327]{firstab_lit:Yosida} ), we conclude that
$$
T_{s}T_{t}f(x)=e^{-\lambda  s }\sum\limits_{n=0}^{\infty}\frac{(\lambda s)^{n}}{n!} \left[  e^{-\lambda  t } \sum\limits_{k=0}^{\infty}\frac{(\lambda t)^{k}}{k!} f(x-k\mu-n\mu )\right]=
$$
$$
=e^{-\lambda (s+t)}\sum\limits_{p=0}^{\infty}\frac{1}{p!} \left[p! \sum\limits_{n=0}^{p}\frac{(\lambda s)^{n}}{n!}\frac{(\lambda t)^{p-n}}{(p-n)!}f(x-p\mu )\right]=
$$
$$
 =e^{-\lambda (s+t)}\sum\limits_{p=0}^{\infty}\frac{1}{p!}  (\lambda s+\lambda t)^{p} f(x-p\mu ) =T_{s+t}f(x),
$$
where equality is understood in the sense of $L_{2}(\Omega)$  norm. Let us establish the strongly continuous property. For sufficiently small $t,$ we have
$$
\|T_{t}f-f\|_{L_{2}}\leq e^{-\lambda t}( e^{\lambda t}-1)\|f\|_{L_{2}}+e^{-\lambda t}\left\|\sum\limits_{k=1}^{\infty}\frac{(\lambda t)^{k}}{k!}f(\cdot\,-k\mu)\right\|_{L_{2}}\leq    t e^{-\lambda t}\|f\|_{L_{2}} \left\{C+\sum\limits_{k=0}^{\infty}\frac{(\lambda )^{k+1}t^{k}}{(k+1)!}\right\},
$$
from what  follows that
$$
\|T_{t}f-f\|_{L_{2}}\rightarrow 0,\,t\rightarrow 0.
$$
Taking into account the above facts, we conclude that $T_{t}$ is a $C_{0}$  semigroup of contractions. Let us show that
$$
Af(x)=\lambda[f(x)-f(x-\mu)],
$$
we have (the proof is omitted)
$$
\frac{(I-T_{t})f(x)}{t}=   \frac{1-e^{-\lambda t}}{t} f(x)- \lambda e^{-\lambda t}f(x-\mu)-te^{-\lambda t}\sum\limits_{k=2}^{\infty}\frac{ \lambda   ^{k}t^{k-2}}{k!}f(x-k\mu).
$$
Hence
$$
 \frac{(I-T_{t})f }{t} \stackrel{L_{2}}{ \longrightarrow} \lambda[f -f(\cdot\,-\mu)],\,t\downarrow0,
$$
thus, we have obtained the desired result.
 Using   change of    variables  in integral it is easy to show that
$$
\int\limits_{-\infty}^{\infty}Af(x)g(x)dx=\int\limits_{-\infty}^{\infty}f(x)\lambda[g(x)-g(x+\mu)]dx,\,f,g\in L_{2}(\Omega),
$$
hence  $A^{\ast}f(x)=\lambda[f(x)-f(x+\mu)],\,f\in L_{2}(\Omega).$  The proof is complete.
\end{proof}

It is remarkable that there are some difficulties to apply Theorem \ref{T2.6} to a transform  $Z^{\alpha}_{aI,bI}(A),$ where $a,b$ are functions,   and the main of them can be said as follows "it is not clear how we should    build a space $\mathfrak{H}_{+}$". However we can consider a rather  abstract  perturbation of the above transform in order to reveal its spectral properties.

\begin{teo}\label{T3.4} Assume that  $Q$ is a   closed operator acting in $L_{2}(\Omega),\,Q^{-1}\in \mathcal{K}(L_{2}),$ the operator $N$ is strictly accretive, bounded, $\mathrm{R}(Q)\subset \mathrm{D}(N).$ Then
a perturbation
$$
L:= Z^{\alpha}_{aI,bI}(A)+ Q^{\ast}N Q ,\;a,b\in L_{\infty}(\Omega),\,\alpha\in(0,1)
$$
   satisfies conditions  H1--H2, if  $\gamma_{N}>\sigma\|Q^{-1}\|^{2},$
where we put $\mathfrak{M}:=\mathrm{D}_{0}(Q),$
$$
 \sigma= 4\lambda\|a\|_{L_{\infty}}+\|b\|_{L_{\infty}}\frac{\alpha\lambda^{\alpha}   }{\Gamma(1 -\alpha)}
 \sum\limits_{k=0}^{\infty}\frac{ \Gamma(k -\alpha)}{k! }.
$$
\end{teo}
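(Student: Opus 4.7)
The plan is to verify H1 and H2 directly for the form generated by $L$ on the linear manifold $\mathfrak{M}=\mathrm{D}_{0}(Q)$. Take $\mathfrak{H}=L_{2}(\Omega)$ and set $\mathfrak{H}_{+}=\mathfrak{H}_{Q}$, the Hilbert space with inner product $(f,g)_{\mathfrak{H}_{Q}}=(Qf,Qg)_{L_{2}}$; this is well defined since $\mathrm{N}(Q)=0$. The compact embedding $\mathfrak{H}_{Q}\subset\subset L_{2}$ follows immediately from the compactness of $Q^{-1}$: the relation $\|f\|_{L_{2}}\leq\|Q^{-1}\|\,\|f\|_{\mathfrak{H}_{Q}}$ provides the continuous embedding, and any sequence bounded in $\mathfrak{H}_{Q}$ writes as $f_{n}=Q^{-1}(Qf_{n})$ with $\{Qf_{n}\}$ bounded in $L_{2}$, hence lies in a compact set. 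Density of $\mathrm{D}_{0}(Q)$ in $\mathfrak{H}_{Q}$ is exactly the core property. This yields H1.

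Next I would assemble the two key operator-norm estimates that feed into H2. By Lemma \ref{L3.6} the generator $A$ acts as a bounded operator on $L_{2}$ with $\|A\|\leq 2\lambda$ (via $\|f(\cdot-\mu)\|_{L_{2}}=\|f\|_{L_{2}}$). For the fractional power, I would apply the Balakrishnan representation together with the explicit Poisson-type form of $T_{t}$, interchange summation and integration (justified by absolute convergence), and reduce each term with the identity $\int_{0}^{\infty}t^{k-\alpha-1}e^{-\lambda t}\,dt=\lambda^{\alpha-k}\Gamma(k-\alpha)$ to obtain
\[
A^{\alpha}f(x)=\lambda^{\alpha}f(x)-\frac{\alpha\lambda^{\alpha}}{\Gamma(1-\alpha)}\sum_{k=1}^{\infty}\frac{\Gamma(k-\alpha)}{k!}\,f(x-k\mu),
\]
so that $\|A^{\alpha}f\|_{L_{2}}\leq C_{\alpha}\|f\|_{L_{2}}$ with $C_{\alpha}$ equal to the coefficient of $\|b\|_{L_{\infty}}$ in the definition of $\sigma$.

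To verify H2, I would decompose for $f,g\in\mathrm{D}_{0}(Q)$:
\[
(Lf,g)_{L_{2}}=(aAf,Ag)_{L_{2}}+(bA^{\alpha}f,g)_{L_{2}}+(NQf,Qg)_{L_{2}},
\]
where the last term is interpreted through the form since $\mathrm{R}(Q)\subset\mathrm{D}(N)$. For the upper bound I would estimate the three terms by $4\lambda^{2}\|a\|_{L_{\infty}}\|Q^{-1}\|^{2}$, $\|b\|_{L_{\infty}}C_{\alpha}\|Q^{-1}\|^{2}$, and $\|N\|$ respectively times $\|f\|_{\mathfrak{H}_{Q}}\|g\|_{\mathfrak{H}_{Q}}$, using $\|h\|_{L_{2}}\leq\|Q^{-1}\|\|h\|_{\mathfrak{H}_{Q}}$ and the bounds on $A$ and $A^{\alpha}$. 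For the lower bound, strict accretivity of $N$ gives $\mathrm{Re}(NQf,Qf)\geq\gamma_{N}\|f\|_{\mathfrak{H}_{Q}}^{2}$, while the other two contributions are bounded below by $-\|a\|_{L_{\infty}}\|Af\|_{L_{2}}^{2}$ and $-\|b\|_{L_{\infty}}C_{\alpha}\|f\|_{L_{2}}^{2}$; converting these to the $\mathfrak{H}_{Q}$-norm via $\|Q^{-1}\|^{2}$ and combining produces $\mathrm{Re}(Lf,f)_{L_{2}}\geq(\gamma_{N}-\sigma\|Q^{-1}\|^{2})\|f\|_{\mathfrak{H}_{Q}}^{2}$, which is strictly positive precisely under the hypothesis $\gamma_{N}>\sigma\|Q^{-1}\|^{2}$.

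The main obstacle is the explicit derivation of the bound for $\|A^{\alpha}\|$: one must unpack the Balakrishnan integral against a Poisson-weighted superposition of shifts, justify the interchange of the improper integral with the infinite series, and recognise the resulting Gamma-function coefficients in closed form to recover exactly the constant $C_{\alpha}$ appearing in $\sigma$. Once this sharp constant is in hand, the rest of the argument is a routine assembly of norm estimates and H1--H2 follow immediately.
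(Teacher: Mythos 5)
Your proposal is correct and follows essentially the same route as the paper: the same choice $\mathfrak{H}:=L_{2}(\Omega)$, $\mathfrak{H}_{+}:=\mathfrak{H}_{Q}$ with the compact embedding read off from $Q^{-1}\in\mathfrak{S}_{\infty}(L_{2})$, the same series representation $A^{\alpha}f=\sum_{k}C_{k}f(\cdot-k\mu)$ with $C_{k}=-\alpha\Gamma(k-\alpha)\lambda^{\alpha}/(k!\,\Gamma(1-\alpha))$ obtained from the Balakrishnan formula applied to the Poisson-type semigroup, and the same three-term splitting of the form in which the strict accretivity of $N$ absorbs the negative contributions of $Z^{\alpha}_{aI,bI}(A)$ after passing to the $\mathfrak{H}_{Q}$-norm via $\|Q^{-1}\|^{2}$. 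The only divergence is cosmetic: your constant $4\lambda^{2}\|a\|_{L_{\infty}}$ for the term $(aAf,Ag)_{L_{2}}$ is what $\|A\|\leq 2\lambda$ actually yields, whereas the $\sigma$ recorded in the theorem carries $4\lambda\|a\|_{L_{\infty}}$.
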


\begin{proof}
Let us find a representation for fractional powers of the operator $A.$ Using  formula  , we get
\begin{equation}\label{3.35}
   A^{\alpha}f=\sum\limits_{k=0}^{\infty}C_{k}f(x-k\mu), \,f\in C^{\infty}_{0}(\Omega),
\end{equation}
$$
   \,C_{k}=-\frac{\alpha \lambda^{k} }{k!\Gamma(1-\alpha)}\int\limits_{0}^{\infty}e^{-\lambda t}t^{k-1-\alpha}dt=-\frac{\alpha\Gamma(k -\alpha)}{k!\Gamma(1 -\alpha)}\lambda^{\alpha},\,k=0,1,2,...,\,.
$$
Let us  extend    relation \eqref{3.35} to $L_{2}(\Omega).$    We have almost everywhere
$$
\sum\limits_{k=0}^{\infty}C_{k} g (x-k\mu)  -\sum\limits_{k=0}^{\infty}C_{k}  f (x-k\mu)=\sum\limits_{k=0}^{\infty}C_{k}[g(x-k\mu)-f (x-k\mu)],\,g\in C_{0}^{\infty}(\Omega),\,f\in L_{2}(\Omega),
$$
since the first sum is a partial sum for a fixed $x\in \mathbb{R}.$
  In accordance with formula (1.66) \cite[p.17]{firstab_lit:samko1987}, we have    $|C_{k}|\leq C \,k^{-1-\alpha},$  hence
$$
\left\|\sum\limits_{k=0}^{\infty}C_{k}[g(\cdot-k\mu)-f (\cdot-k\mu)]\right\|_{L_{2}}\leq\|g-f\|_{L_{2}}\sum\limits_{k=0}^{\infty}|C_{k}| .
$$
Thus, we obtain
$$
\forall f\in L_{2}(\Omega),\,\exists \{f_{n}\}\in C_{0}^{\infty}(\Omega):     \,f_{n}\stackrel{L_{2}}{ \longrightarrow} f,\;A^{\alpha}f_{n}\stackrel{L_{2}}{ \longrightarrow} \sum\limits_{k=0}^{\infty}C_{k}  f (\cdot-k\mu).
$$
Since $A^{\alpha}$ is closed, then
 \begin{equation}\label{3.36}
   A^{\alpha}f=\sum\limits_{k=0}^{\infty}C_{k}f(x-k\mu),\, f\in L_{2}(\Omega).
\end{equation}
Moreover, it is clear that $C^{\infty}_{0}(\Omega)$ is a core of $A^{\alpha}.$
 On the other hand, applying formula \eqref{3.7}, using the notation $\eta(x)=\lambda[f(x)-f(x-\mu)],$ we get
$$
A^{\alpha}f(x)=\frac{\sin\alpha \pi}{\pi}\int\limits_{0}^{\infty}\xi^{\alpha-1}(\xi I+A)^{-1}Af(x) d\xi=\frac{\sin\alpha \pi}{\pi}\int\limits_{0}^{\infty}\xi^{\alpha-1}d \xi\int\limits_{0}^{\infty}e^{-\xi t}T_{t}\eta(x)dt=
$$
$$
 =\frac{\sin\alpha \pi}{\pi}\sum\limits_{k=0}^{\infty}\frac{ \lambda   ^{k}}{k!}\eta(x-k\mu)\int\limits_{0}^{\infty}\xi^{\alpha-1} d \xi\int\limits_{0}^{\infty} e^{-t(\xi+\lambda)   }t^{k} dt=
$$
$$
 =\frac{\sin\alpha \pi}{\pi}\sum\limits_{k=0}^{\infty}\frac{ \lambda   ^{k}}{k!}\eta(x-k\mu)\int\limits_{0}^{\infty}\xi^{\alpha-1}(\xi+\lambda)^{-k-1}d \xi\int\limits_{0}^{\infty} e^{-t   }t^{k} dt,\,f\in C_{0}^{\infty}(\Omega),
$$
  we can rewrite the previous relation as follows
\begin{equation}\label{3.37}
A^{\alpha}f(x)=\sum\limits_{k=0}^{\infty} C'_{k}[f(x-k\mu)-f(x-(k+1)\mu)],\,f\in C_{0 }^{\infty}(\Omega),
\end{equation}
$$
\, C'_{k}=\frac{\lambda   ^{k+1}\sin\alpha \pi}{\pi}\int\limits_{0}^{\infty}\xi^{\alpha-1}(\xi+\lambda)^{-k-1}d \xi.
$$
Note that analogously to \eqref{3.36} we can extend formula \eqref{3.37} to $L_{2}(\Omega).$
Comparing  formulas \eqref{3.35},\eqref{3.37} we   can check the results
   calculating directly, we  get
$$
C'_{k+1}-C'_{k}
=-\frac{\lambda   ^{k+1}\sin\alpha \pi}{\pi} \int\limits_{0}^{\infty}\xi^{\alpha }(\xi+\lambda)^{-k-2}d \xi=
-\frac{\alpha\Gamma(k+1 -\alpha)}{(k+1)!\Gamma(1 -\alpha)}\lambda^{\alpha}=C_{k+1},
 \,C'_{0}=C_{0},\,k\in \mathbb{N}_{0}.
$$
Observe that by virtue of the made assumptions regarding   $Q,$ we have $\mathfrak{H}_{Q}\subset\subset L_{2}(\Omega).$ Choose the space  $L_{2}(\Omega)$ as a space $\mathfrak{H}$ and the space  $\mathfrak{H}_{Q} $ as a space $\mathfrak{H}_{+}.$  Let $ S:=Z^{\alpha}_{aI,bI}(A),\,T:=Q^{\ast}N Q.$  Applying the reasonings of Theorem  \ref{T3.1}, we conclude that there exists a set $\mathfrak{M}:=\mathrm{D}_{0}(Q),$ which is dense in $\mathfrak{H}_{Q},$ such that the operators $S,T$ are defined on its elements.
        Thus, we obtain the fulfilment of condition H1.
Since the operator $N$ is bounded, then
$
|(Tf,g)|_{L_{2}}\leq\|N\| \cdot \|f\|_{\mathfrak{H}_{Q}}\|g\|_{\mathfrak{H}_{Q}}.
$
Using formula \eqref{3.36}, we can easily obtain
$
|(Sf,g)|_{L_{2}}\leq \sigma \|f\|_{L_{2}}\|g\|_{L_{2}}\leq \sigma \|Q^{-1}\|^{2}\cdot\|f\|_{\mathfrak{H}_{Q}}\|g\|_{\mathfrak{H}_{Q}},\,\sigma=4\lambda\|a\|_{L_{\infty}}+ \|b\|_{L_{\infty}}\sum _{k=0}^{\infty} |C_{k}|.
$
Using the strictly accretive property of the operator $N$ we get
$
\mathrm{Re}(Tf,f)\geq \gamma_{N}\|f\|^{2}_{\mathfrak{H}_{Q}}.
$
On the other hand
$
\mathrm{Re}(Sf,f)\geq -\sigma \|Q^{-1}\|^{2}\cdot\|f\|^{2}_{\mathfrak{H}_{Q}},
$
hence condition H2 is satisfied. The proof is complete.

\end{proof}

\section{Norm equivalence}

\subsection{Accretive operators}

The facts that have motivated us to write this paragraph  lie in the  fractional calculus theory. Basically, an event that a differential operator with a fractional derivative in final terms underwent  a careful study \cite{firstab_lit:1Nakhushev1977}, \cite{kukushkin2019}  have played an important role in our research. The main feature is that there exists various approaches to study the operator and one of them is based on an opportunity to represent it in a sum of a senior term and an a lower term, here we should note that this method works if the  senior term is selfadjoint or normal. Thus,
in the case corresponding to a selfadjoint senior term, we can partially solve the problem  having applied the results of the   perturbation theory,   within the framework of which  the following papers are well-known   \cite{firstab_lit:1Katsnelson}, \cite{firstab_lit:1Krein},   \cite{firstab_lit:2Markus},
  \cite{firstab_lit:3Matsaev}, \cite{firstab_lit:1Mukminov},
 \cite{firstab_lit:Shkalikov A.}. Note that  to apply the last paper results  we must have the mentioned above representation. In other cases we can use methods of the paper   \cite{firstab_lit(arXiv non-self)kukushkin2018},  which are relevant if we deal with non-selfadjoint operators and allow us  to study spectral properties of   operators.
In the  paper \cite{kukushkin2021a}  we  explore  a special  operator class for which    a number of  spectral theory theorems can be applied. Further, we construct an abstract  model of a  differential operator in terms of  {\it m}-accretive  operators and call it an {\it m}-accretive operator transform, we  find  such conditions that    being  imposed guaranty  that the transform   belongs to the class. One of them is a compact embedding of a space generated by an {\it m}-accretive  operator (infinitesimal generator) into the initial Hilbert space. Note that in the case corresponding to the second  order operator with the  Kiprianov operator in final terms we have obtained  the embedding mentioned above in the one-dimensional case only.   In this paragraph,  we try to reveal this problem  and the main result   is a theorem establishing    equivalence of norms in   function spaces  in consequence  of  which we have  a   compact embedding of a space generated by the infinitesimal generator of the shift semigroup in a direction into the Lebsgue space. We should note that this result do not give us a useful concrete  application in the built theory for it is more of  an abstract generalization.  However   this result, by virtue of popularity and well known applicability of the  Lebesgue spaces theory,    deserves    to be considered    itself.  As for relevance, that is more fundamental than applied, as it often occurs with such kind of results,  we should turn to the series of papers by Kiprianov I.A. devoted  to the alternative branch of fractional calculus theory \cite{firstab_lit:kipriyanov1960},\cite{firstab_lit:1kipriyanov1960},\cite{firstab_lit:2.2kipriyanov1960}. In this series of papers the author introduced a directional fractional derivative  afterwards represented in \cite{firstab_lit(arXiv non-self)kukushkin2018}   as a fractional power of the shift semigroup in a direction. The norm equivalence established  in this  paper allows to reformulate  results of the paper  \cite{kukushkin2021a} in terms of the infinitesimal generator of the shift semigroup in a direction. Thus, we may say that an opportunity to apply spectral theorems \cite{kukushkin2021a} in the natural way   becomes  relevant not only due to the application  part, but a better comprehension of the mathematical phenomenon.\\

Assume that  $\Omega\subset \mathbb{E}^{n}$ is  a convex domain, with a sufficient smooth boundary ($C^{3}$ class)  of   the  n-dimensional Euclidian space. For the sake of the simplicity we consider that $\Omega$ is bounded.
Consider the shift semigroup in a direction acting on $L_{2}(\Omega)$ and  defined as follows
$
T_{t}f(Q)=f(Q+\mathbf{e}t),
$
where $Q\in \Omega,\,Q=P+\mathbf{e}r.$   The following lemma establishes  a property of  the infinitesimal generator $-A$ of the semigroup $T_{t}.$
\begin{lem}\label{L3.7} We claim that
$
A=\tilde{A_{0}},\,\mathrm{N}(A)=0,
$
where $A_{0}$  is a restriction of $A$ on the set
   $ C^{\infty}_{0}( \Omega ).$
 \end{lem}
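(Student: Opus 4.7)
The plan is to mirror the strategy used in the analogous step of Theorem \ref{T3.2}, bypassing any direct core-density argument via Lemma \ref{L3.1} and the non-extension property of m-accretive operators. First I would observe that by Lemma \ref{L3.3} (applied here with the direction $\mathbf{e}$), the family $T_t$ is a $C_0$ semigroup of contractions on $L_2(\Omega)$, so its negative generator $A$ is a closed, densely defined, m-accretive operator. Since $C_0^\infty(\Omega)\subset \mathrm{D}(A)$ (smooth compactly supported functions are classically differentiable and vanish near $\partial\Omega$, so $t^{-1}(I-T_t)f$ converges in $L_2$ to $-(\nabla f,\mathbf{e})_{\mathbb{E}^n}$), the restriction $A_0$ is densely defined and accretive. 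As the restriction of the closed operator $A$, it is closeable, and $\tilde A_0\subset A$ inherits accretivity by continuity of the inner product.

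The main step is to upgrade $\tilde A_0$ to an m-accretive operator. Starting from $\mathrm{Re}(\tilde A_0 f,f)_{L_2}\ge 0$, for every $t>0$ I would expand
\[
\|(\tilde A_0+t)f\|_{L_2}^{2}=\|\tilde A_0 f\|_{L_2}^{2}+2t\,\mathrm{Re}(\tilde A_0 f,f)_{L_2}+t^{2}\|f\|_{L_2}^{2}\ \ge\ t^{2}\|f\|_{L_2}^{2},
\]
which yields $\|(\tilde A_0+t)^{-1}\|_{\mathrm{R}(\tilde A_0+t)\to L_2}\le 1/t$. Lemma \ref{L3.1} then delivers the m-accretivity of $\tilde A_0$. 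Since an m-accretive operator admits no proper accretive extension, and $\tilde A_0\subset A$ with $A$ itself accretive, the identification $\tilde A_0=A$ follows.

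For the kernel statement I would use the semigroup directly: if $f\in \mathrm{D}(A)$ with $Af=0$, then $\tfrac{d}{dt}T_t f=-T_t Af=0$, so $T_t f=f$ in $L_2(\Omega)$ for all $t\ge 0$. Unwrapping the semigroup gives $f(Q+\mathbf{e}t)=f(Q)$ for a.e.\ $Q\in\Omega$ and every $t\ge 0$. Because $\Omega$ is bounded and all functions are zero-extended outside $\bar\Omega$, for any fixed $Q$ one may choose $t$ large enough that $Q+\mathbf{e}t\notin\bar\Omega$, forcing $f(Q)=0$ a.e.; hence $\mathrm{N}(A)=0$.

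The principal technical obstacle is the identification $\tilde A_0=A$: a direct proof that $C_0^\infty(\Omega)$ is a graph-norm core of $A$ via mollification combined with cutoffs is awkward, because the standard cutoffs do not commute cleanly with the one-sided directional shift and may destroy the zero-boundary condition near $\partial\Omega$ in the direction $-\mathbf{e}$. The route through Lemma \ref{L3.1}, which only uses the accretive inequality together with abstract non-extension of m-accretive operators, circumvents this difficulty entirely and is the cleanest path to the desired conclusion.
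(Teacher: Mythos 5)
Your argument for the identification $\tilde A_{0}=A$ is essentially the paper's own: both pass from the accretivity of $\tilde A_{0}$ to the resolvent bound $\|(\tilde A_{0}+t)^{-1}\|_{\mathrm{R}\rightarrow\mathfrak{H}}\leq 1/t$, invoke Lemma \ref{L3.1} to get m-accretivity, and conclude via the non-existence of proper accretive extensions of an m-accretive operator. Where you genuinely diverge is the kernel statement. The paper proves $\mathrm{N}(A)=0$ by exhibiting the explicit bounded integral operator $Bf(Q)=\int_{0}^{r}f(P+\mathbf{e}[r-t])\,dt$, checking $\|Bf\|_{L_{2}}\leq C\|f\|_{L_{2}}$ via the generalized Minkowski inequality, and showing $A_{0}^{-1}\subset B$, whence $A^{-1}=\widetilde{A_{0}^{-1}}\subset B$ exists as a bounded operator; injectivity is then automatic. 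You instead argue dynamically: $Af=0$ forces $T_{t}f=f$ for all $t\geq 0$, and since $\Omega$ is bounded and all functions are zero-extended outside $\bar\Omega$, taking $t>\mathrm{diam}\,\Omega$ gives $T_{t}f=0$ and hence $f=0$. Your route is shorter and more elementary for the lemma as stated; the paper's route buys the additional fact $A^{-1}\subset B$ with an explicit formula for the inverse, which it reuses later (e.g., in Corollary \ref{C3.2} and Theorem \ref{T3.6}) to identify $G=B\mathcal{T}B$ and to prove the norm equivalences. Both proofs are correct.
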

\begin{proof}
Let us show  that $T_{t}$ is a strongly continuous semigroup ($C_{0}$  semigroup). It can be easily established   due to the  continuous in average property. Using the Minkowskii inequality, we have
 $$
 \left\{\int\limits_{\Omega}|f(Q+\mathbf{e}t)-f(Q)|^{2}dQ\right\}^{\frac{1}{2}}\leq  \left\{\int\limits_{\Omega}|f(Q+\mathbf{e}t)-f_{m}(Q+\mathbf{e}t)|^{2}dQ\right\}^{\frac{1}{2}}+
 $$
 $$
 +\left\{\int\limits_{\Omega}|f(Q)-f_{m}(Q)|^{2}dQ\right\}^{\frac{1}{2}}+\left\{\int\limits_{\Omega}|f_{m}(Q)-f_{m}(Q+\mathbf{e}t)|^{2}dQ\right\}^{\frac{1}{2}}=
 $$
 $$
 =I_{1}+I_{2}+I_{3}<\varepsilon,
 $$
where $f\in L_{2}(\Omega),\,\left\{f_{n}\right\}_{1}^{\infty}\subset C_{0}^{\infty}(\Omega);$  $m$ is chosen so that $I_{1},I_{2}< \varepsilon/3 $ and $t$
is chosen so that $I_{3}< \varepsilon/3.$
  Thus,  there exists such a positive  number $t_{0}$    that
$$
\|T_{t}f-f\|_{L_{2} }<\varepsilon,\,t<t_{0},
$$
for arbitrary small $\varepsilon>0.$ Hence in accordance with the definition $T_{t}$ is  a $C_{0}$ semigroup.   Using the assumption that  all functions have the zero extension outside $\bar{\Omega},$   we have
$\|T_{t}\|  \leq 1.$ Hence  we conclude that $T_{t}$ is a $C_{0}$ semigroup of contractions (see \cite{Pasy}).
Hence   by virtue of   Corollary 3.6 \cite[p.11]{Pasy}, we have
\begin{equation} \label{3.38}
\|(\lambda+A)^{-1}\|  \leq \frac{1}{\mathrm{Re} \lambda },\,\mathrm{Re}\lambda>0.
\end{equation}
Inequality\eqref{3.38}implies that $A$ is {\it m}-accretive.
    It is the well-known fact that   an infinitesimal  generator    $-A$ is a closed operator, hence $A_{0}$  is closeable.
 It is not hard to prove that $ \tilde{A} _{0}$ is an {\it m}-accretive operator. For this purpose let us    rewrite relation\eqref{3.38}in the form
\begin{equation*}
\|(\lambda+ \tilde{A} _{0})^{-1}\|_{\mathrm{R}\rightarrow \mathfrak{H}}  \leq \frac{1}{\mathrm{Re} \lambda },\,\mathrm{Re}\lambda>0,
\end{equation*}
applying Lemma \ref{L3.1}, we obtain that $ \tilde{A} _{0}$ is an {\it m}-accretive operator. Note that there does not exist an  accretive extension of an {\it m}-accretive operator (see \cite{firstab_lit:kato1980}). On the other hand it is clear that $\tilde{A} _{0}\subset A.$  Thus we conclude that $\tilde{A} _{0}= A.$
Consider an operator
 $$
 B f(Q)=\!\int_{0}^{r}\!\!f(P+\mathbf{e }[r-t])dt,\,f\in L_{2}(\Omega).
 $$
 It is not hard to prove that  $B \in \mathcal{B}(L_{2}),$   applying the generalized Minkowskii inequality, we get
 $$
 \|B f\|_{L_{2} }\leq \int\limits_{0}^{\mathrm{diam\,\Omega}}dt  \left(\int\limits_{\Omega}|f(P+\mathbf{e }[r-t])|dQ\right)^{1/2}\leq C\|f\|_{L_{2} }.
 $$
Note that   the fact $A^{-1}_{ 0}\subset B ,$    follows from the properties of the  one-dimensional integral defined on smooth functions.
Using  Theorem 2 \cite[p.555]{firstab_lit:Smirnov5}, the proved above fact $\tilde{A} _{0}= A,$  we deduce that   $A^{-1} \subset B .$
 The proof is complete.
\end{proof}

\subsection{Multidimensional spaces}
Consider a linear  space
$
\mathbb{L}^{n}_{2}(\Omega):=\left\{f=(f_{1},f_{2},...,f_{n}),\,f_{i}\in L_{2}(\Omega)\right\},
$
endowed with the inner product
$$
(f,g)_{\mathbb{L}^{n}_{2}}=\int\limits_{\Omega} (f, g)_{\mathbb{E}^{n}} dQ,\,f,g\in \mathbb{L}^{n}_{2}(\Omega).
$$
It is clear that this  pair forms a Hilbert space and let us use the same  notation $\mathbb{L}^{n}_{2}(\Omega)$ for it.
Consider a    sesquilinear  form
$$
t(f,g):=\sum\limits_{i=1}^{n}\int\limits_{\Omega} (f ,\mathbf{e_{i}})_{\mathbb{E}^{n}}\overline{(g,\mathbf{e_{i}})}_{\mathbb{E}^{n}} dQ,\,f,g\in \mathbb{L} ^{n}_{2} (\Omega),
$$
where $\mathbf{e_{i}}$   corresponds to  $P_{i}\in \partial\Omega,\,\,i=1,2,...,n$   (i.e. $Q=P_{i}+ \mathbf{e_{i}}r     $).
\begin{lem}\label{L3.8}
The points $P_{i}\in \partial\Omega,\,i=1,2,...,n$ can be chosen so that the   form $t$ generates an inner product.
\end{lem}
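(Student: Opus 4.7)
The plan is to verify first the routine sesquilinear-form axioms (linearity in the first argument, conjugate symmetry, and positive semidefiniteness), all of which are immediate from the definition. The only nontrivial axiom is \emph{positive definiteness}: I must show that the points $P_1,\dots,P_n\in\partial\Omega$ can be chosen so that
\[
 t(f,f)=\sum_{i=1}^n\int_\Omega|(f(Q),\mathbf{e}_i(Q))_{\mathbb{E}^n}|^2\,dQ=0\quad\Longrightarrow\quad f=0\ \text{a.e.}\ \Omega.
\]
Since each summand is nonnegative, vanishing of $t(f,f)$ forces $(f(Q),\mathbf{e}_i(Q))_{\mathbb{E}^n}=0$ a.e.\ in $\Omega$ for every $i=1,\dots,n$. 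Hence it suffices to exhibit a choice of $P_1,\dots,P_n\in\partial\Omega$ for which $\{\mathbf{e}_1(Q),\dots,\mathbf{e}_n(Q)\}$ is a basis of $\mathbb{E}^n$ at almost every $Q\in\Omega$.

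The plan for this crux is the following. First, fix any interior point $Q_0\in\Omega$ and any algebraic basis $\mathbf{v}_1,\dots,\mathbf{v}_n$ of $\mathbb{E}^n$. Using that $\Omega$ is a bounded convex domain with $C^3$ boundary, I shoot from $Q_0$ the ray in the direction $-\mathbf{v}_i$ and take $P_i\in\partial\Omega$ to be the point where this ray meets $\partial\Omega$. Then $Q_0-P_i$ is a positive multiple of $\mathbf{v}_i$, so the vectors $\mathbf{e}_i(Q_0)=(Q_0-P_i)/|Q_0-P_i|$ form a basis of $\mathbb{E}^n$. In particular the determinant
\[
D(Q):=\det\bigl[\mathbf{e}_1(Q),\dots,\mathbf{e}_n(Q)\bigr]
\]
satisfies $D(Q_0)\ne 0$.

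Next I observe that each $\mathbf{e}_i(Q)=(Q-P_i)/|Q-P_i|$ is real-analytic on $\mathbb{E}^n\setminus\{P_i\}$, hence on the whole of $\Omega$ (recall $P_i\in\partial\Omega$, and $\Omega$ is open). Therefore $D$ is a real-analytic function on the connected open set $\Omega$ that is not identically zero. Consequently its zero set $\{Q\in\Omega:D(Q)=0\}$ has Lebesgue measure zero. At every $Q$ outside this zero set the vectors $\{\mathbf{e}_i(Q)\}_{i=1}^n$ constitute a basis of $\mathbb{E}^n$, so the orthogonality conditions $(f(Q),\mathbf{e}_i(Q))_{\mathbb{E}^n}=0$, $i=1,\dots,n$, force $f(Q)=0$. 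This yields $f=0$ a.e.\ in $\Omega$, proving positive definiteness and completing the proof.

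The main obstacle I anticipate is the non-degeneracy argument: one has to guarantee that $D$ is not identically zero and that the exceptional set where it vanishes is negligible. The analyticity of $D$ on $\Omega$ and the explicit construction of $P_1,\dots,P_n$ at a single point $Q_0$ via the convexity of $\Omega$ resolve both issues at once. The remaining verifications (sesquilinearity, conjugate symmetry, $t(f,f)\ge 0$) are formal, so no further routine computations need be written out.
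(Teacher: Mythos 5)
Your proof is correct, but it follows a genuinely different route from the paper's. You differ in two places. First, in the choice of the points: the paper selects $P_1,\dots,P_n$ so that the matrix of their coordinates has nonzero determinant $\Delta$ (with a dimension-reduction remark to justify that such points exist), whereas you construct the $P_i$ explicitly by intersecting rays from a fixed interior point $Q_0$ in the directions $-\mathbf{v}_i$ of a chosen basis with $\partial\Omega$, which immediately makes $\{\mathbf{e}_i(Q_0)\}$ a basis; this is more constructive and uses the convexity and boundedness of $\Omega$ in a transparent way. Second, in the almost-everywhere non-degeneracy: the paper carries out an explicit determinant computation to show that the set where $\Lambda(Q)=\det[P_{ij}-Q_j]$ vanishes consists of affine combinations $\sum\alpha_jP_j$ with $\sum\alpha_j=1$, hence lies in the hyperplane through $P_1,\dots,P_n$ and is null; you instead observe that $D(Q)=\det[\mathbf{e}_1(Q),\dots,\mathbf{e}_n(Q)]$ is real-analytic and not identically zero on the connected open set $\Omega$, so its zero set is null. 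The two determinants are proportional, $D(Q)=(-1)^n\Lambda(Q)\prod_i|Q-P_i|^{-1}$, so the exceptional sets coincide; since $\Lambda$ is in fact affine in $Q$, the paper's argument pins the exceptional set down as (part of) a single hyperplane, while your analyticity argument is shorter but invokes a stronger general fact than is strictly needed. Both are valid; yours trades an explicit linear-algebra computation for a standard measure-theoretic property of real-analytic functions.
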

\begin{proof}
It is clear that we should only establish an implication  $t(f,f)=0\,\Rightarrow f=0.$
 Since $\Omega\in \mathbb{E}^{n},$  then without lose of generality we can assume that   there exists $P_{i}\in \partial\Omega,\,i=1,2,...,n,$ such that
\begin{equation}\label{3.39}\Delta= \left|
\begin{array}{cccc}
P_{11}&P_{12}&...&P_{1n}\\
P_{21}&P_{22}&...&P_{2n}\\
...&...&...&...\\
P_{n1}&P_{n2}&...&P_{nn}
\end{array}
\right|\neq0,
\end{equation}
where $P_{i}=(P_{i1},P_{i2},...,P_{in}).$ It becomes clear if we remind that in the contrary case, for arbitrary set of points $P_{i}\in \partial\Omega,\,i=1,2,...,n,$  we   have
$$
P_{n}=\sum\limits_{k=1}^{n-1}c_{k}P_{k},\,c_{k}= \mathrm{const},
$$
from what follows  that we can consider $\Omega$ at least  as a subset of  $\mathbb{E}^{n-1}.$ Continuing this line of reasonings we can find such  a dimension $p$ that a corresponding $\Delta\neq0$ and further assume  that  $\Omega\in \mathbb{E}^{p}. $
 Consider a relation
\begin{equation*}
 \sum\limits_{i=1}^{n}\int\limits_{\Omega}| (\psi,\mathbf{e_{i}})_{\mathbb{E}^{n}}|^{2}   dQ  =0,\,\psi\in \mathbb{L}^{n}_{2}(\Omega).
\end{equation*}
It follows that  $\left(\psi(Q),\mathbf{e_{i}}\right)_{\mathbb{E}^{n}}=0$ a.e. $i=1,2,...,n.$  Note that every $P_{i}$ corresponds to the set
 $\vartheta_{i}:=\{Q\subset \vartheta_{i} :\;(\psi(Q),\mathbf{e_{i}})_{\mathbb{E}^{n}}\neq0  \}.$ Consider $\Omega'=\Omega\backslash\bigcup\limits_{i=1}^{n}\vartheta_{i},$ it is clear that $\mathrm{mess} \left(\bigcup\limits_{i=1}^{n}\vartheta_{i}\right)=0.$ Note that due to the made construction, we can reformulate the obtained above  relation in the coordinate form
\begin{equation*}
 \left\{ \begin{aligned}
(P_{11}-Q_{1})\psi_{1}(Q)+(P_{12}-Q_{2})\psi_{2}(Q)+...+(P_{1n}-Q_{n})\psi_{n}(Q)=0 \\
 (P_{21}-Q_{1})\psi_{1}(Q)+(P_{22}-Q_{2})\psi_{2}(Q)+ ... +(P_{2n}-Q_{n})\psi_{n}(Q)=0 \\
...\;\;\;\;\;\;\;\;\;\;\;\;\;\;\;\;\;\;\;\;\;\;\;\;\;\;\;\;...\;\;\;\;\;\;\;\;\;\;\;\;\;\;\;\;\;\;\;\;\;\;\;\;\;\;\;\; ...\;\;\;\;\;\;\;\;\;\;\;\;\;\;\;\;\;\;\;\;\;\;\;\;\;\;\;\; \\
 (P_{n1}-Q_{1})\psi_{1}(Q)+(P_{n2}-Q_{2})\psi_{2}(Q)+ ... +(P_{nn}-Q_{n})\psi_{n}(Q)=0
\end{aligned}
 \right.\;,
\end{equation*}
where $\psi=(\psi_{1},\psi_{2},...,\psi_{n}),\,Q=(Q_{1},Q_{2},...,Q_{n}),\, Q\in \Omega'.$ Therefore, if we prove that
$$\Lambda(Q)= \left|
\begin{array}{cccc}
P_{11}-Q_{1}&P_{12}-Q_{2}&...&P_{1n}-Q_{n}\\
P_{21}-Q_{1}&P_{22}-Q_{2}&...&P_{2n}-Q_{n}\\
...&...&...&...\\
P_{n1}-Q_{1}&P_{n2}-Q_{2}&...&P_{nn}-Q_{n}
\end{array}
\right|\neq0\;a.e.,
$$
then we obtain $\psi =0$ a.e.   Assume the contrary i.e. that there exists such a set $ \Upsilon \subset  \Omega ,\,\mathrm{mess}\,\Upsilon \neq0,$   that   $\Lambda(Q)=0,\,Q\in \Upsilon  .$    We have
$$ \left|
\begin{array}{cccc}
P_{11}-Q_{1}&P_{12}-Q_{2}&...&P_{1n}-Q_{n}\\
P_{21}-Q_{1}&P_{22}-Q_{2}&...&P_{2n}-Q_{n}\\
...&...&...&...\\
P_{n1}-Q_{1}&P_{n2}-Q_{2}&...&P_{nn}-Q_{n}
\end{array}
\right|=
\left|
\begin{array}{cccc}
P_{11} &P_{12} &...&P_{1n} \\
P_{21}-Q_{1}&P_{22}-Q_{2}&...&P_{2n}-Q_{n}\\
...&...&...&...\\
P_{n1}-Q_{1}&P_{n2}-Q_{2}&...&P_{nn}-Q_{n}
\end{array}
\right|-
$$
$$
-\left|
\begin{array}{cccc}
 Q_{1}& Q_{2}&...& Q_{n}\\
P_{21}-Q_{1}&P_{22}-Q_{2}&...&P_{2n}-Q_{n}\\
...&...&...&...\\
P_{n1}-Q_{1}&P_{n2}-Q_{2}&...&P_{nn}-Q_{n}
\end{array}
\right|=
\left|
\begin{array}{cccc}
P_{11} &P_{12} &...&P_{1n} \\
P_{21} &P_{22} &...&P_{2n} \\
...&...&...&...\\
P_{n1}-Q_{1}&P_{n2}-Q_{2}&...&P_{nn}-Q_{n}
\end{array}
\right|-
$$
$$
-\left|
\begin{array}{cccc}
P_{11} &P_{12} &...&P_{1n} \\
 Q_{1}& Q_{2}&...& Q_{n}\\
...&...&...&...\\
P_{n1}-Q_{1}&P_{n2}-Q_{2}&...&P_{nn}-Q_{n}
\end{array}
\right|
 -\left|
\begin{array}{cccc}
 Q_{1}& Q_{2}&...& Q_{n}\\
P_{21} &P_{22} &...&P_{2n} \\
...&...&...&...\\
P_{n1}-Q_{1}&P_{n2}-Q_{2}&...&P_{nn}-Q_{n}
\end{array}
\right|=
$$
$$
 =\left|
\begin{array}{cccc}
 P_{11} &P_{12} &...&P_{1n}\\
P_{21} &P_{22} &...&P_{2n} \\
...&...&...&...\\
P_{n1} &P_{n2} &...&P_{nn}
\end{array}
\right|-\sum\limits_{j=1}^{n} \Delta_{j}=0,
$$
where
$$
 \Delta_{j}=\left|
\begin{array}{cccc}
 P_{11} &P_{12} &...&P_{1n}\\
P_{21} &P_{22} &...&P_{2n} \\
...&...&...&...\\
P_{j-1\,1} &P_{j-1\,2} &...&P_{j-1\,n}\\
Q_{1}& Q_{2}&...& Q_{n}\\
P_{j+1\,1} &P_{j+1\,2} &...&P_{j+1\,n}\\
...&...&...&...\\
P_{n1} &P_{n2} &...&P_{nn}
\end{array}
\right|.
$$
Therefore,  we have
$$
\sum\limits_{j=1}^{n}  \Delta_{j}/ \Delta =1,
$$
since $\Delta\neq0.$
Hence, we can treat the above matrix constructions  in the way that gives us the following representation
$$
\sum\limits_{j=1}^{n}  \alpha_{j} P_{j}  =Q,\,\sum\limits_{j=1}^{n} \alpha_{j}   =1,\,\alpha_{j}=\Delta_{j}/ \Delta .
$$
Now, let us prove that $\Upsilon$ belongs to a hyperplane in $\mathbb{E}^{n},$ we have
$$  \left|
\begin{array}{cccc}
P_{11}-Q_{1}&P_{12}-Q_{2}&...&P_{1n}-Q_{n}\\
P_{21}-P_{11}&P_{22}-P_{12}&...&P_{2n}-P_{1n}\\
...&...&...&...\\
P_{n1}-P_{n-1\,1}&P_{n2}-P_{n-1\,2}&...&P_{nn}-P_{n-1\,n}
\end{array}
\right|= \left|
\begin{array}{cccc}
P_{11}&P_{12}&...&P_{1n}\\
P_{21}&P_{22}&...&P_{2n}\\
...&...&...&...\\
P_{n1}&P_{n2}&...&P_{nn}
\end{array}
\right|-
$$
$$
-\left|
\begin{array}{cccc}
 Q_{1}& Q_{2}&...& Q_{n}\\
P_{21}-P_{11}&P_{22}-P_{12}&...&P_{2n}-P_{1n}\\
...&...&...&...\\
P_{n1}-P_{n-1\,1}&P_{n2}-P_{n-1\,2}&...&P_{nn}-P_{n-1\,n}
\end{array}
\right| =
$$
$$
=
 \left|
\begin{array}{cccc}
P_{11}&P_{12}&...&P_{1n}\\
P_{21}&P_{22}&...&P_{2n}\\
...&...&...&...\\
P_{n1}&P_{n2}&...&P_{nn}
\end{array}
\right|-
\left|
\begin{array}{cccc}
\sum\limits_{j=1}^{n}  \alpha_{j} P_{j1}&\sum\limits_{j=1}^{n}  \alpha_{j} P_{j2}&...&\sum\limits_{j=1}^{n}  \alpha_{j} P_{jn}\\
P_{21}-P_{11}&P_{22}-P_{12}&...&P_{2n}-P_{1n}\\
...&...&...&...\\
P_{n1}-P_{n-1\,1}&P_{n2}-P_{n-1\,2}&...&P_{nn}-P_{n-1\,n}
\end{array}
\right| =
$$
$$
 =\left|
\begin{array}{cccc}
P_{11}&P_{12}&...&P_{1n}\\
P_{21}&P_{22}&...&P_{2n}\\
...&...&...&...\\
P_{n1}&P_{n2}&...&P_{nn}
\end{array}
\right|-\sum\limits_{j=1}^{n}  \alpha_{j}
 \left|
\begin{array}{cccc}
     P_{j1}&  P_{j2}&...&  P_{jn}\\
P_{21}-P_{11}&P_{22}-P_{12}&...&P_{2n}-P_{1n}\\
...&...&...&...\\
P_{n1}-P_{n-1\,1}&P_{n2}-P_{n-1\,2}&...&P_{nn}-P_{n-1\,n}
\end{array}
\right| =
$$
$$
 =\left|
\begin{array}{cccc}
P_{11}&P_{12}&...&P_{1n}\\
P_{21}&P_{22}&...&P_{2n}\\
...&...&...&...\\
P_{n1}&P_{n2}&...&P_{nn}
\end{array}
\right|-
 \left|
\begin{array}{cccc}
P_{11}&P_{12}&...&P_{1n}\\
P_{21}&P_{22}&...&P_{2n}\\
...&...&...&...\\
P_{n1}&P_{n2}&...&P_{nn}
\end{array}
\right|\sum\limits_{j=1}^{n}  \alpha_{j} =
$$
$$
 =\left|
\begin{array}{cccc}
P_{11}&P_{12}&...&P_{1n}\\
P_{21}&P_{22}&...&P_{2n}\\
...&...&...&...\\
P_{n1}&P_{n2}&...&P_{nn}
\end{array}
\right|-
 \left|
\begin{array}{cccc}
P_{11}&P_{12}&...&P_{1n}\\
P_{21}&P_{22}&...&P_{2n}\\
...&...&...&...\\
P_{n1}&P_{n2}&...&P_{nn}
\end{array}
\right|  =0.
$$
Hence $\Upsilon$ belongs to a hyperplane generated by the points $P_{i},\,i=1,2,...,n.$ Therefore $ \mathrm{mess} \Upsilon=0,$  and we obtain
  $ \psi =0$ a.e.
The proof is complete.
\end{proof}
Consider     a pre Hilbert  space $ \mathbf{L} ^{n}_{2}(\Omega):=\{f:\,f\in \mathbb{L}^{n}_{2}(\Omega)\}$ endowed with the inner product
$$
(f,g)_{\mathbf{L} ^{n}_{2} }:=\sum\limits_{i=1}^{n}\int\limits_{\Omega} (f ,\mathbf{e_{i}})_{\mathbb{E}^{n}}\overline{(g,\mathbf{e_{i}})}_{\mathbb{E}^{n}} dQ,\,f,g\in \mathbb{L} ^{n}_{2} (\Omega),
$$
where $\mathbf{e_{i}}$   corresponds to  $P_{i}\in \partial\Omega,\,\,i=1,2,...,n\,,$      condition\eqref{3.39} holds.
The following theorem establishes a norm equivalence.
\begin{teo}\label{T3.5}
The   norms $\|\cdot\|_{ \mathbb{L}^{n}_{2} }$ and $\|\cdot\|_{\mathbf{L} ^{n}_{2} } $ are equivalent.
\end{teo}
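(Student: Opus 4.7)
The plan is to sandwich $\|f\|_{\mathbf{L}^n_2}$ between constant multiples of $\|f\|_{\mathbb{L}^n_2}$, treating the two directions separately and exploiting the pointwise structure of both inner products.

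For the upper estimate I would apply the Cauchy--Schwarz inequality in $\mathbb{E}^n$ pointwise: since each $\mathbf{e}_i(Q)$ is a unit vector, $|(f(Q),\mathbf{e}_i(Q))_{\mathbb{E}^n}|^2\leq |f(Q)|^2$. Summing over $i=1,\dots,n$ and integrating over $\Omega$ gives at once
\[
\|f\|^2_{\mathbf{L}^n_2}\leq n\,\|f\|^2_{\mathbb{L}^n_2}.
\]
This is the easy half.

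For the lower estimate I would recast the $\mathbf{L}^n_2$-norm as the integral of a pointwise quadratic form. Let $E(Q)$ denote the $n\times n$ real matrix whose columns are $\mathbf{e}_1(Q),\dots,\mathbf{e}_n(Q)$, and put $A(Q):=E(Q)E(Q)^{\!T}=\sum_{i=1}^n \mathbf{e}_i(Q)\mathbf{e}_i(Q)^{\!T}$. Then
\[
\|f\|^2_{\mathbf{L}^n_2}=\int_\Omega \langle A(Q)f(Q),f(Q)\rangle_{\mathbb{E}^n}\,dQ,
\]
so it will be enough to exhibit a constant $c_0>0$ such that $\lambda_{\min}(A(Q))\geq c_0$ for a.e.\ $Q\in\Omega$. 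Since $\operatorname{tr} A(Q)=n$, the elementary bound $\lambda_{\min}(A)\geq \det A/(\operatorname{tr} A)^{n-1}$ reduces the task to showing that $\det A(Q)=|\det E(Q)|^2$ is uniformly bounded below on $\Omega$.

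The main obstacle is this last uniform lower bound. A direct column expansion gives
\[
|\det E(Q)|=\frac{|\Lambda(Q)|}{\prod_{i=1}^n |Q-P_i|},
\]
where $\Lambda$ is the determinant already studied in Lemma~\ref{L3.8}; the proof there shows $\Lambda$ is an \emph{affine} function of $Q$. Since $\operatorname{diam}\Omega<\infty$, the denominator is uniformly bounded above, so everything rests on a uniform lower bound for $|\Lambda|$ on $\Omega$. This is the genuinely delicate point, and it is exactly here that the freedom in choosing the points $P_1,\dots,P_n$ afforded by Lemma~\ref{L3.8} must be used: one picks the configuration so that, in addition to $\Delta\neq 0$, the zero-hyperplane of the affine function $\Lambda$ is separated from $\bar\Omega$. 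Granted this, $|\Lambda|$ attains a positive minimum on the compact set $\bar\Omega$, which yields the required uniform lower bound on $\det A(Q)$ and hence on $\lambda_{\min}(A(Q))$. Combining the two estimates produces the norm equivalence.
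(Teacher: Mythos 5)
Your upper estimate is sound and is essentially the one the paper uses. The lower estimate, however, rests on a step that cannot be carried out: you propose to choose $P_{1},\dots,P_{n}$ so that the zero set of the affine function $\Lambda$ is separated from $\bar\Omega$. This is impossible. Setting $Q=P_{i}$ in the determinant $\Lambda(Q)$ produces a zero row, so $\Lambda(P_{i})=0$ for every $i$; an affine function vanishing at $n$ affinely independent points (affine independence follows from $\Delta\neq 0$) vanishes on the whole hyperplane they span. That hyperplane therefore always contains the points $P_{i}\in\partial\Omega$ and, by convexity of $\Omega$, the entire simplex $\mathrm{conv}(P_{1},\dots,P_{n})\subset\bar\Omega$. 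Consequently $\det A(Q)=|\Lambda(Q)|^{2}/\prod_{i}|Q-P_{i}|^{2}$ tends to zero as $Q$ approaches this hyperplane, $\lambda_{\min}(A(Q))$ admits no uniform positive lower bound on $\Omega$, and the pointwise sandwich collapses. Note also that Lemma \ref{L3.8} only yields $\Lambda\neq 0$ almost everywhere; it is purely qualitative and gives no quantitative control.

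This is not a repairable technicality of your method: the degeneracy is intrinsic. If the hyperplane through the $P_{i}$ meets the interior of $\Omega$ --- which it does whenever $\Omega$ is strictly convex and $n\geq 2$, since the chord joining two boundary points then lies in the interior --- pick an interior point $Q_{0}$ on it and let $v$ be the unit normal to the hyperplane. Then $(v,\mathbf{e_{i}}(Q_{0}))_{\mathbb{E}^{n}}=0$ for all $i$, and for $f=\chi_{B_{\varepsilon}(Q_{0})}\,v$ one computes $\|f\|_{\mathbf{L}^{n}_{2}}\leq C\varepsilon\,\|f\|_{\mathbb{L}^{n}_{2}}$, so no constant $C_{0}>0$ with $C_{0}\|f\|_{\mathbb{L}^{n}_{2}}\leq\|f\|_{\mathbf{L}^{n}_{2}}$ can exist. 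The paper takes a different route: it avoids pointwise bounds altogether and argues by contradiction via weak compactness on the unit sphere, invoking Lemma \ref{L3.8} only to force the weak limit of a minimizing sequence to vanish a.e.; but the decisive step there --- that the weak limit of unit vectors again has unit norm --- is exactly what the concentrating sequence above violates. So your proposal fails at the separation claim, and the failure is in fact diagnostic: it locates a genuine obstruction to the stated equivalence rather than a defect of your particular route.
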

\begin{proof}
 Consider the space
$
\mathbb{L}^{n}_{2}(\Omega)
$
and a    functional
$
 \varphi(f):= \|f\|_{\mathbf{L} ^{n}_{2} } ,\,f\in \mathbb{L}^{n}_{2}(\Omega).
$
Let us prove that
$
  \varphi(f)\geq C,\,f\in \mathrm{U},
$
where $\mathrm{U}:=\{f\in \mathbb{L}^{n}_{2}(\Omega),\,\|f\|_{\mathbb{L}^{n}_{2}}=1\}.$
Assume the contrary,
then   there exists such a sequence $\{\psi_{k}\}_{1}^{\infty}\subset \mathrm{U},$    that    $\varphi(\psi_{k})\rightarrow0,\,k\rightarrow\infty.$   Since the sequence  $\{\psi_{k}\}_{1}^{\infty}$ is bounded, then we can extract a weekly  convergent subsequence $\{\psi_{k_{j}}\}_{1}^{\infty}$ and  claim that the week limit   $\psi$  of the sequence $\{\psi_{k_{j}}\}_{1}^{\infty}$ belongs to $\mathrm{U}.$  Consider a functional
$$
\mathcal{L}_{g}(f):=\sum\limits_{i=1}^{n}\int\limits_{\Omega} (f ,\mathbf{e_{i}})_{\mathbb{E}^{n}}\overline{(g,\mathbf{e_{i}})}_{\mathbb{E}^{n}} dQ,\;f,g\in \mathbb{L}^{n}_{2}(\Omega).
$$
 Due to the following  obvious  chain of the inequalities
 \begin{equation*}
|\mathcal{L}_{g}(f)|     \leq \sum\limits_{i=1}^{n}
  \left\{\int\limits_{\Omega}| (f  ,\mathbf{e_{i}})_{\mathbb{E}^{n}}|^{2}   dQ\right\}^{\frac{1}{2}}\left\{\int\limits_{\Omega}| (g  ,\mathbf{e_{i}})_{\mathbb{E}^{n}}|^{2}   dQ\right\}^{\frac{1}{2}}
  \leq
  $$
  $$
  \leq n \|f\|_{\mathbb{L}^{n}_{2}}\|g\|_{\mathbb{L}^{n}_{2}},  \; f,g\in \mathbb{L}^{n}_{2}(\Omega),
  \end{equation*}
  we see  that $\mathcal{L}_{g}$ is a linear bounded functional on $\mathbb{L}^{n}_{2}(\Omega).$
Therefore, by virtue of the weak convergence of the sequence $\{\psi_{k_{j}}\},$  we have $\mathcal{L}_{g}(\psi_{k_{j}})\rightarrow \mathcal{L}_{g}(\psi),\,k_{j}\rightarrow \infty.$ On the other hand, recall that since it was  supposed that $\varphi(\psi_{k})\rightarrow0,\,k\rightarrow\infty,$ then we have $\varphi(\psi_{k_{j}})\rightarrow0,\,k\rightarrow\infty.$ Hence applying \eqref{3.39}, we conclude that $\mathcal{L}_{g}(\psi_{k_{j}})\rightarrow 0,\,k_{j}\rightarrow \infty.$ Combining the given above results we obtain
\begin{equation}\label{3.40}
\mathcal{L}_{g}(\psi) =\sum\limits_{i=1}^{n}\int\limits_{\Omega} (\psi ,\mathbf{e_{i}})_{\mathbb{E}^{n}}\overline{(g,\mathbf{e_{i}})}_{\mathbb{E}^{n}} dQ=0,\,\forall g \in \mathbb{L}^{n}_{2}(\Omega).
\end{equation}
Taking into account \eqref{3.40}and using the ordinary properties of Hilbert space, we obtain
\begin{equation*}
  \sum\limits_{i=1}^{n}\int\limits_{\Omega}| (\psi,\mathbf{e_{i}})_{\mathbb{E}^{n}}|^{2}   dQ  =0.
\end{equation*}
Hence in accordance with Lemma \ref{L3.8}, we get  $ \psi =0$ a.e.
 Notice  that by virtue of this fact we come to the contradiction with the fact  $\|\psi\|_{\mathbb{L}^{n}_{2}}=1.$
  Hence the following estimate is true
$
  \varphi(f)\geq C,\,f\in \mathrm{U}.
$
Having applied the Cauchy Schwartz inequality to the Euclidian inner product, we can also easily obtain
$
 \varphi(f) \leq \sqrt{n}\|f\|_{\mathbb{L}^{n}_{2}},\,f\in \mathbb{L}^{n}_{2}(\Omega).
$
Combining the above inequalities,   we can rewrite these two estimates as follows
$
C_{0}\leq\varphi(f)\leq C_{1},\,f\in \mathrm{U}.
$
To make the issue clear,  we can  rewrite the previous inequality in the form
\begin{equation}\label{3.41}
C_{0}\|f\|_{\mathbb{L}^{n}_{2} }\leq\varphi(f)\leq C_{1}\|f\|_{\mathbb{L}^{n}_{2} },\,f\in  \mathbb{L}^{n}_{2}(\Omega),\;C_{0},C_{1}>0.
\end{equation}
The proof is complete.
\end{proof}
Consider a pre Hilbert  space
$$
\mathfrak{\widetilde{H}}^{n}_{ A }:= \big \{f,g\in C_{0}^{\infty}(\Omega),\,(f,g)_{\mathfrak{\widetilde{H}}^{n}_{ A }}=\sum\limits_{i=1}^{n}(A_{i} f,A_{i} g)_{L_{2} } \big\},
$$
where $-A_{i}$ is an infinitesimal generator corresponding to the point $P_{i}.$ Here, we should point out that the form $(\cdot,\cdot)_{\mathfrak{\widetilde{H}}^{n}_{ A }} $ generates an inner product due to the fact $\mathrm{N}(A_{i})=0,\,i=1,2,...,n$ proved in Lemma \ref{L3.7}.
  Let us denote a corresponding Hilbert space by $\mathfrak{H}^{n}_{A}.$
\begin{corol}\label{C3.2}
  The norms $\|\cdot\|_{\mathfrak{H}^{n}_{A}}$ and $\|\cdot\|_{H_{0}^{1}} $ are equivalent, we have a bounded compact  embedding
$$
\mathfrak{H}^{n}_{A}\subset\subset L_{2}(\Omega).
$$
\end{corol}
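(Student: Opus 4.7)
The plan is to reduce the problem to Theorem \ref{T3.5} by recognizing that the action of each $A_i$ on smooth compactly supported functions is essentially a directional derivative, so $\|f\|^2_{\mathfrak{H}^n_A}$ is nothing but the $\mathbf{L}^n_2$-norm of the gradient vector $\nabla f$. First I would verify that for $f\in C_0^\infty(\Omega)$ one has
\[
A_i f(Q)=-(\nabla f(Q),\mathbf{e}_i)_{\mathbb{E}^n},\qquad i=1,2,\dots,n,
\]
which is just the chain rule applied to $-\frac{d}{dt}f(Q+\mathbf{e}_i t)\big|_{t=0}$ (this formula is implicitly used in the proof of Theorem \ref{T3.2} when establishing \eqref{3.14}). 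Consequently
\[
\|f\|^2_{\mathfrak{H}^n_A}=\sum_{i=1}^n\int_\Omega |(\nabla f,\mathbf{e}_i)_{\mathbb{E}^n}|^2\,dQ=\|\nabla f\|^2_{\mathbf{L}^n_2},\qquad f\in C_0^\infty(\Omega).
\]

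Next I would invoke Theorem \ref{T3.5} applied to the vector field $\nabla f\in\mathbb{L}^n_2(\Omega)$: since the points $P_i$ satisfy the non-degeneracy condition \eqref{3.39}, inequality \eqref{3.41} gives
\[
C_0\|\nabla f\|_{\mathbb{L}^n_2}\leq \|\nabla f\|_{\mathbf{L}^n_2}\leq C_1\|\nabla f\|_{\mathbb{L}^n_2},
\]
and by definition $\|\nabla f\|^2_{\mathbb{L}^n_2}=\sum_{i=1}^n\|D_i f\|^2_{L_2}$. The Friedrichs inequality on $H^1_0(\Omega)$ turns this sum into an equivalent norm to $\|f\|_{H_0^1}$. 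Combining these three equivalences yields, for all $f\in C_0^\infty(\Omega)$,
\[
K_0\|f\|_{H_0^1}\leq \|f\|_{\mathfrak{H}^n_A}\leq K_1\|f\|_{H_0^1},\qquad K_0,K_1>0.
\]

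To pass from the dense subset $C_0^\infty(\Omega)$ to the full completions, I would note that the two-sided estimate above says that Cauchy sequences with respect to one norm are Cauchy with respect to the other; hence the completions $\mathfrak{H}^n_A$ and $H_0^1(\Omega)$ coincide as sets of elements, and the extended inequality holds on all of $\mathfrak{H}^n_A=H_0^1(\Omega)$. The compact embedding $\mathfrak{H}^n_A\subset\subset L_2(\Omega)$ is then immediate from the Rellich–Kondrashov theorem applied to $H_0^1(\Omega)\subset\subset L_2(\Omega)$.

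The only genuine subtlety I anticipate is the careful  handling of the completion step: one has to check that the identification is meaningful, i.e.\ that a Cauchy sequence $\{f_n\}\subset C_0^\infty(\Omega)$ in $\mathfrak{H}^n_A$ has the same $L_2$-limit as in $H_0^1$, so that the two abstract completions genuinely represent the same space of distributions. This is straightforward because both norms dominate (a constant times) the $L_2$-norm via Friedrichs and the two-sided estimate above, so the $L_2$-limit uniquely identifies the element in either completion. All other steps are routine once the directional-derivative identification and Theorem \ref{T3.5} are in place.
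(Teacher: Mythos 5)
Your proposal is correct and follows essentially the same route as the paper: identify $A_i f$ with the directional derivative $-(\nabla f,\mathbf{e}_i)_{\mathbb{E}^n}$ on $C_0^{\infty}(\Omega)$, apply the norm equivalence \eqref{3.41} of Theorem \ref{T3.5} to the vector field $\nabla f$, pass to the limit over the dense subset, and conclude compactness from the Rellich--Kondrashov theorem. The only cosmetic difference is that the paper obtains the upper bound by a direct Cauchy--Schwarz estimate $\|A_i f\|_{L_2}\leq\|f\|_{H_0^1}$ rather than from the upper half of \eqref{3.41}, and your explicit treatment of the completion step makes precise what the paper leaves implicit.
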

\begin{proof} Let us prove that
\begin{equation*}
 Af =                -(\nabla f ,\mathbf{e})_{\mathbb{E}^{n}},\,f\in  C^{\infty}_{0}( \Omega ).
\end{equation*}
Using the Lagrange  mean value  theorem, we have
$$
\int\limits_{\Omega}\left|  \left(\frac{T_{t}-I}{t}\right)f(Q)-(\nabla f ,\mathbf{e})_{\mathbb{E}^{n}}(Q )\right|^{2} dQ=\int\limits_{\Omega}\left|  (\nabla f ,\mathbf{e})_{\mathbb{E}^{n}}(Q_{\xi} )-(\nabla f ,\mathbf{e})_{\mathbb{E}^{n}}(Q )\right|^{2} dQ,
$$
where $Q_{\xi}=Q+ \mathbf{e}  \xi,\,0<\xi<t.$ Since the function $(\nabla f ,\mathbf{e})_{\mathbb{E}^{n}}$ is continuous on $\bar{\Omega},$ then it is uniformly continuous on $\bar{\Omega}.$ Thus,  for arbitrary $\varepsilon>0,$ a positive number $\delta>0$ can be chosen so that
$$
 \int\limits_{\Omega}\left|  (\nabla f ,\mathbf{e})_{\mathbb{E}^{n}}(Q_{\xi} )-(\nabla f ,\mathbf{e})_{\mathbb{E}^{n}}(Q ),\right|^{2} dQ<\varepsilon,\,t<\delta,
$$
from what follows the desired result. Taking it into account, we  obtain
 $$
 \|Af\|_{L_{2}}=\left\{\int\limits_{\Omega}|(\nabla f ,\mathbf{e})_{\mathbb{E}^{n}}|^{2}dQ\right\}^{1/2}\leq \left\{\int\limits_{\Omega} \|\mathbf{e}\|^{2}_{\mathbb{E}^{n}}\sum\limits_{i=1}^{n}|D_{i}f|^{2} dQ\right\}^{1/2} =\|f\|_{H_{0}^{1}},\,f\in C^{\infty}_{0}(\Omega).
 $$
Using this estimate, we easily obtain  $\|f\|_{\mathfrak{H}^{n}_{A}}\leq C \|f\|_{H_{0}^{1}},\,f\in  C_{0}^{\infty}(\Omega).$ On the other hand,
as a particular case of formula \eqref{3.41}, we obtain
$
C_{0}\|f\|_{H_{0}^{1}}\leq\|f\|_{\mathfrak{H}^{n}_{A}} ,\,f\in  C_{0}^{\infty}(\Omega).
$
Thus, we can combine  the previous inequalities and rewrite them as follows
$
C_{0}\|f\|_{H_{0}^{1}}\leq\|f\|_{\mathfrak{H}^{n}_{A}}\leq C \|f\|_{H_{0}^{1}},\,f\in  C_{0}^{\infty}(\Omega).
$
 Passing to the limit at the left-hand and right-hand side of the last inequality, we get
\begin{equation*}
C_{0}\|f\|_{H_{0}^{1}}\leq\| f\|_{\mathfrak{H}^{n}_{A}}\leq C \|f\|_{H_{0}^{1}},\,f\in  H_{0}^{1}(\Omega).
\end{equation*}
Combining the  fact
 $
 H_{0}^{1}(\Omega)\subset\subset L_{2}(\Omega),
 $
 (Rellich-Kondrashov theorem)
 with the  lower estimate in the previous inequality,
 we complete the proof.
\end{proof}
\vspace{0.5 cm}

\subsection{Connection with the semigroup approach }

In this section we aim to represent  some known operators in terms of the infinitesimal generator of the  shift semigroup in a direction and apply the obtained  results to the established representations. In this way we come to natural conditions in terms of the infinitesimal generator of the  shift semigroup in a direction that allows us to apply Theorem \ref{T2.6}.\\

\subsubsection{ Uniformly elliptic operator in the divergent form}

Consider a uniformly ecliptic operator
$$
\,-\mathcal{T}:=-D_{j} ( a^{ij} D_{i}\cdot),\, a^{ij}(Q) \in C^{2}(\bar{\Omega}),\,   a^{ij}\xi _{i}  \xi _{j}  \geq   \gamma_{a}  |\xi|^{2} ,\,  \gamma_{a}  >0,\,i,j=1,2,...,n,\;
$$
$$
\mathrm{D}( \mathcal{T} )  =H^{2}(\Omega)\cap H^{1}_{0}( \Omega ).
$$
The following theorem gives us a key to apply results of the paper \cite{kukushkin2021a} in accordance with  which a number of spectral theorems  can be applied to the operator $-\mathcal{T}.$ Moreover the conditions established bellow are formulated in terms of the operator $A,$ what reveals a mathematical nature of the operator $-\mathcal{T}.$
\begin{teo}\label{T3.6}
We claim that
\begin{equation}\label{3.42}
 -\mathcal{T}=\frac{1}{n}\sum\limits^{n}_{i=1} A_{i}^{\ast}G_{i}A_{i},
\end{equation}
the following relations hold
$$
-\mathrm{Re}(\mathcal{T}f,f)_{L_{2}}\geq C  \|f\|_{\mathfrak{H}^{n}_{A}};\, |(\mathcal{T}f,g)_{L_{2}}|\leq C \|f\|_{\mathfrak{H}^{n}_{A}}\|g\|_{\mathfrak{H}^{n}_{A}},\;f,g\in C_{0}^{\infty}(\Omega),
$$
where $G_{i}$ are some operators corresponding to the operators $A_{i}.$
\end{teo}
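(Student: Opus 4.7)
The strategy is to invoke the single-direction representation established in Theorem \ref{T3.2} once for each of the $n$ admissible directions $\mathbf{e}_i$ associated with the boundary points $P_i$ chosen in Lemma \ref{L3.8}, and then to average. For a fixed $i$, let $T_t^{(i)}f(Q):=f(Q+\mathbf{e}_i t)$ be the shift semigroup in the direction $\mathbf{e}_i$ and let $-A_i$ denote its infinitesimal generator. Define the one-dimensional directional integration operator $B_i f(Q):=\int_0^{r_i} f(P_i+\mathbf{e}_i(r_i-t))\,dt$, where $r_i=|Q-P_i|$. Exactly as in the proof of Theorem \ref{T3.2}, the divergence theorem applied along the $\mathbf{e}_i$-direction (together with the fact that $g$ vanishes on $\partial\Omega$) yields
\begin{equation*}
-\int_{\Omega} \mathcal{T}f\cdot\bar g\,dQ=\int_{\Omega} G_i A_i f\cdot\overline{A_i g}\,dQ,\quad f\in \mathrm{D}(\mathcal{T}),\;g\in C_0^{\infty}(\Omega),
\end{equation*}
with $G_i:=B_i\mathcal{T}B_i$. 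Extending $g$ to $\mathrm{D}(A_i)$ by the closability of $A_i$ (Lemma \ref{L3.7}) and invoking the m-accretivity of $-\mathcal{T}$ gives $-\mathcal{T}=A_i^{\ast}G_i A_i$ for every $i=1,\dots,n$. Summing these $n$ identities and dividing by $n$ yields \eqref{3.42}.

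Next, the norm estimates. For the lower bound I would use uniform ellipticity together with the Green formula \eqref{1.30}:
\begin{equation*}
-\mathrm{Re}(\mathcal{T}f,f)_{L_2}=\mathrm{Re}\!\int_{\Omega}a^{ij}D_i f\,\overline{D_j f}\,dQ\geq \gamma_a\|\nabla f\|_{L_2}^2=\gamma_a\|f\|_{H_0^1}^2,\;f\in C_0^{\infty}(\Omega).
\end{equation*}
For the upper bound, the Cauchy--Schwarz inequality applied as in \eqref{1.35} gives $|(\mathcal{T}f,g)_{L_2}|\le a_1\|f\|_{H_0^1}\|g\|_{H_0^1}$ with $a_1=\sup a(Q)$. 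The desired inequalities in the $\mathfrak{H}^n_A$-norm then follow directly from Corollary \ref{C3.2}, which provides the norm equivalence $C_0\|f\|_{H_0^1}\le\|f\|_{\mathfrak{H}^n_A}\le C\|f\|_{H_0^1}$.

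The main technical obstacle lies not in the algebraic averaging, which is almost tautological, but in justifying that the single-direction identity $-\mathcal{T}=A_i^{\ast}G_i A_i$ genuinely holds as an equality of (not merely an inclusion between) unbounded operators. This requires: (i) verifying that $G_i$ is bounded on $L_2(\Omega)$ and that $\mathrm{R}(A_i)\subset \mathrm{D}(G_i)$, so that the composition $A_i^{\ast}G_iA_i$ is well defined on $\mathrm{D}(\mathcal{T})$; (ii) showing that the inclusion $-\mathcal{T}\subset A_i^{\ast}G_iA_i$ obtained from the bilinear identity is actually equality, which is done by invoking the m-accretivity of $-\mathcal{T}$ and the accretivity of $A_i^{\ast}G_iA_i$, since an m-accretive operator admits no proper accretive extension. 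Both points were carried out for a single distinguished direction in Theorem \ref{T3.2}, and the arguments transfer verbatim to each $\mathbf{e}_i$, since the construction of $B_i$ and the surface integral vanishing on $\partial\Omega$ depend only on the convexity of $\Omega$ and on the position of $P_i\in\partial\Omega$, not on any special choice of direction.
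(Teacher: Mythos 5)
Your proposal is correct and follows essentially the same route as the paper: the paper's own proof repeats, for each direction $\mathbf{e_i}$, the divergence-theorem identity and the m-accretivity argument from Theorem \ref{T3.2} to get $-\mathcal{T}=A_i^{\ast}G_iA_i$ with $G_i=B_i\mathcal{T}B_i$, averages over $i$, and then derives the norm estimates from uniform ellipticity, the Cauchy--Schwarz bound with $a_1=\sup a(Q)$, and the norm equivalence of Corollary \ref{C3.2}. Your identification of the two genuine technical points (well-definedness of the composition and the upgrade from inclusion to equality via the no-proper-accretive-extension property) matches exactly where the paper spends its effort.
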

\begin{proof}
It is easy to prove  that
\begin{equation}\label{3.43}
\|A_{i}f\|_{L_{2}}\leq C\|f\|_{H_{0}^{1}},\,f\in H_{0}^{1}(\Omega),
\end{equation}
 for this purpose we should use a representation $A_{i}f(Q) =-(\nabla f ,\mathbf{e_{i}})_{\mathbb{E}^{n}}, f\in C^{\infty}_{0}(\Omega).$ Applying  the Cauchy Schwarz inequality, we get
 $$
 \|A_{i}f\|_{L_{2}}\leq\left\{\int\limits_{\Omega}|(\nabla f ,\mathbf{e_{i}})_{\mathbb{E}^{n}}|^{2}dQ\right\}^{1/2}\leq \left\{\int\limits_{\Omega}\|\nabla f\|^{2}_{\mathbb{E}^{n}}
 \|\mathbf{e_{i}}\|^{2}_{\mathbb{E}^{n}} dQ\right\}^{1/2}=\|f\|_{ H_{0}^{1}},\,f\in C^{\infty}_{0}(\Omega).
 $$
 Passing to the limit at the left-hand and right-hand side, we obtain \eqref{3.43}.
Thus, we get   $H_{0}^{1}(\Omega) \subset \mathrm{D}(A_{i}).$
Let us find a representation for the  operator $G_{i}.$
Consider the operators
 $$
 B_{i}f(Q)=\!\int_{0}^{r}\!\!f(P_{i}+\mathbf{e }[r-t])dt,\,f\in L_{2}(\Omega),\,i=1,2,...n.
 $$
It is obvious that
\begin{equation}\label{3.44}
\int\limits_{\Omega }  A_{i}\left(B_{i}  \mathcal{T}  f \cdot g\right) dQ =\int\limits_{\Omega } A_{i}B_{i}\mathcal{T}f \cdot g\,  dQ +\int\limits_{\Omega }  B_{i} \mathcal{T} f \cdot A_{i}g  \,dQ ,\, f\in C^{2}(\bar{\Omega}),g\in C^{\infty}_{0}( \Omega ).
\end{equation}
Using the   divergence  theorem, we get
\begin{equation}\label{3.45}
\int\limits_{\Omega }  A_{i}\left(B_{i}\mathcal{T}f \cdot g\right) \,dQ=\int\limits_{S}(\mathbf{e_{i}},\mathbf{n})_{\mathbb{E}^{n}}(B_{i}\mathcal{T}f\cdot  g)(\sigma)d\sigma,
\end{equation}
where $S$ is the surface of $\Omega.$
Taking into account   that $ g(S)=0$ and combining   \eqref{3.44},\eqref{3.45}, we get
\begin{equation}\label{3.46}
    -\int\limits_{\Omega }  A_{i}B_{i}\mathcal{T} f\cdot \bar{g} \, dQ=  \int\limits_{\Omega } B_{i}\mathcal{T} f\cdot   \overline{A_{i}  g} \, dQ,\, f\in C^{2}(\bar{\Omega}),g\in C^{\infty}_{0}( \Omega ).
\end{equation}
Suppose that $f\in H^{2}(\Omega),$ then    there exists a sequence $\{f_{n}\}_{1}^{\infty}\subset C^{2}(\bar{\Omega})$ such that
$ f_{n}\stackrel{ H^{2}}{\longrightarrow}  f$ (see \cite[p.346]{firstab_lit:Smirnov5}).  Using this fact, it is not hard to prove that
$\mathcal{T}f_{n}\stackrel{L_{2}}{\longrightarrow} \mathcal{T}f.$  Therefore  $A_{i}B_{i}\mathcal{T}f_{n}\stackrel{L_{2}}{\longrightarrow} \mathcal{T}f,$  since  $A_{i}B_{i}\mathcal{T}f_{n}=\mathcal{T}f_{n}.$ It is also clear that   $B_{i}\mathcal{T}f_{n}\stackrel{L_{2}}{\longrightarrow} B_{i}\mathcal{T}f,$ since $B_{i}$ is continuous (see proof of Lemma \ref{L3.7}).
Using these facts, we can extend relation \eqref{3.46} to the following
\begin{equation*}
  -\int\limits_{\Omega }  \mathcal{T} f \cdot  \bar{g} \, dQ= \int\limits_{\Omega } B_{i}\mathcal{T}f\,  \overline{A_{i}g} \, dQ,\; f\in \mathrm{D}(\mathcal{T}),\,g\in C_{0}^{\infty}(\Omega).
\end{equation*}
Note, that it was previously proved that $A_{i}^{-1} \subset B_{i}$ (see the proof of Lemma \ref{L3.7}),   $H_{0}^{1}(\Omega) \subset \mathrm{D}(A_{i}).$ Hence $G_{i} A_{i}f=B_{i}\mathcal{T} f,\, f\in  \mathrm{D}(\mathcal{T}),$  where $G_{i}:=B_{i}\mathcal{T}B_{i}.$  Using  this fact    we can rewrite relation \eqref{3.47} in a   form
\begin{equation}\label{3.47}
  -\int\limits_{\Omega }  \mathcal{T} f \cdot  \bar{g} \, dQ= \int\limits_{\Omega } G_{i} A_{i}f\,  \overline{A_{i}g} \, dQ,\; f\in \mathrm{D}(\mathcal{T}),\,g\in C_{0}^{\infty}(\Omega).
\end{equation}
Note that   in accordance with Lemma \ref{L3.7}, we have
$$
\forall g\in \mathrm{D}(A_{i}),\,\exists \{g_{n}\}_{1}^{\infty}\subset C^{\infty}_{0}( \Omega ),\,    g_{n}\xrightarrow[      A_{i}       ]{}g.
$$
Therefore, we can extend   relation \eqref{3.47} to the following
\begin{equation}\label{3.48}
     -\int\limits_{\Omega }  \mathcal{T} f \cdot  \bar{g} \, dQ= \int\limits_{\Omega } G_{i} A_{i}f \, \overline{A_{i}g} \, dQ,\; f\in \mathrm{D}(\mathcal{T}),\,g\in \mathrm{D}(A_{i}).
\end{equation}
Relation \eqref{3.48} indicates that $G_{i} A_{i}f\in \mathrm{D}( A_{i} ^{\ast})$   and it is clear that $  -\mathcal{T}\subset A_{i} ^{\ast}G_{i}A_{i}.$ On the other hand in accordance with  Chapter VI, Theorem 1.2    \cite{firstab_lit: Berezansk1968}, we have that $-\mathcal{T}$ is a closed operator.
Using the divergence theorem we get
$$
- \int\limits_{\Omega}D_{j} ( a^{ij} D_{i}f) \bar{g} dQ=
  \int\limits_{\Omega}    a^{ij}  D_{i}f\,     \overline{D_{j}g}    dQ,\,f\in C^{2}(\Omega),\,g\in C^{\infty}_{0}(\Omega).
$$
Passing to the limit at the left-hand and right-hand side of  the last inequality, we  can   extend it to the following
$$
- \int\limits_{\Omega}D_{j} ( a^{ij} D_{i}f)\, \bar{g} dQ=
  \int\limits_{\Omega}    a^{ij}  D_{i}f\,     \overline{D_{j}g}    dQ,\,f\in H^{2}(\Omega),\,g\in H^{1}_{0}(\Omega).
$$
Therefore, using the  uniformly elliptic property of the operator $-\mathcal{T},$ we get
 \begin{equation}\label{3.49}
-\mathrm{Re} \left(\mathcal{T}f,  f \right)_{L_{2}} \geq \gamma_{a}\int\limits_{\Omega}     \sum\limits_{i=1}^{n}   |  D_{i}f |^{2}    \,    dQ=
\gamma_{a}\|f\|^{2}_{H_{0}^{1}},\,f\in \mathrm{D}(\mathcal{T}).
\end{equation}
Using    the  Poincar\'{e}-Friedrichs inequality, we get
$
-\mathrm{Re} \left(\mathcal{T}f,  f \right)_{L_{2}}\geq C\|f\|^{2}_{L_{2}},\,f\in \mathrm{D}(\mathcal{T}),
$
Applying the Cauchy-Schwarz inequality to the left-hand side, we can easily deduce that the conditions of Lemma \ref{L3.1} are satisfied.
 Thus,   the operator  $-\mathcal{T}$ is {\it m}-accretive. In particular, it means that there does not exist an accretive extension of the operator $-\mathcal{T}.$ Let us prove that $A_{i} ^{\ast}G_{i}A_{i}$ is accretive, for this purpose combining  \eqref{3.47},\eqref{3.49}, we get
$
  \left(G_{i} A_{i}f , A_{i}f \right)_{L_{2}}  \geq 0,  f \in C_{0}^{\infty}  (\Omega).
$
Due to the relation $\tilde{A}_{0}=A,$ proved in Lemma \ref{L3.7}, the previous  inequality can be easily extended  to
$
 \left(G_{i} A_{i}f , A_{i}f \right)_{L_{2}}  \geq 0,  f \in \mathrm{D}(G_{i} A_{i}).
$
In its own    turn, it implies that
$
\left( A^{\ast}_{i}G_{i} A_{i}f ,f \right)_{L_{2}}  \geq 0,  f \in \mathrm{D}(A^{\ast}_{i}G_{i} A_{i}),
$
thus  we have obtained the desired result. Therefore, taking into account the facts given above,  we deduce that
  $-\mathcal{T}= A_{i}  ^{\ast}G_{i}A_{i},\,i=1,2,...\,n\,$ and     obtain   \eqref{3.42}.
Applying the Cauchy-Schwarz inequality to the inner sums, then using   Corollary \ref{C3.2}, we obtain
 \begin{equation*}
\left|\int\limits_{\Omega }  \mathcal{T} f \cdot  \bar{g} \, dQ\right|=\left|\int\limits_{\Omega}    a^{ij}  D_{i}f\,     \overline{D_{j}g}    dQ\right|\leq a_{1}
 \int\limits_{\Omega}      \|\nabla f\| _{\mathbb{E}^{n}}\,     \|\nabla g\| _{\mathbb{E}^{n}}    dQ \leq
 $$
 $$
 \leq a_{1}\|f\|_{H_{0}^{1}}\|g\|_{H_{0}^{1}}\leq C\|f\|_{\mathfrak{H}^{n}_{A}}\|g\| _{\mathfrak{H}^{n}_{A}} ,\; f,g\in C_{0}^{\infty}(\Omega),
 $$
 where
 $$
 a_{1}=\sup\limits_{Q\in \bar{\Omega}} \sqrt{\sum\limits_{i,j=1}^{n} |a^{ij}(Q)|^{2}}.
\end{equation*}
 On the other hand, applying  \eqref{3.43},\eqref{3.49} we get
\begin{equation*}
-\mathrm{Re}(\mathcal{T} f,f) \geq C\|f\|^{2}_{\mathfrak{H}^{n}_{A}},\,f\in C^{\infty}_{0}(\Omega).
\end{equation*}
  The proof is complete.
\end{proof}
Thus, by virtue of     Corollary \ref{C3.2} and Theorem \ref{T3.6}, we are able to   claim that   Theorem \ref{T2.6}  can be applied to the operator $-\mathcal{T}.$\\

\subsubsection{Fractional integro-differential operator}

In this paragraph we assume that $\alpha\in (0,1).$ In accordance with the definition given in  the paper  \cite{firstab_lit:1kukushkin2018}, we consider a directional  fractional integral.  By definition, put
$$
 (\mathfrak{I}^{\alpha}_{0+}f)(Q  ):=\frac{1}{\Gamma(\alpha)} \int\limits^{r}_{0}\frac{f (P+t \mathbf{e} )}{( r-t)^{1-\alpha}}\left(\frac{t}{r}\right)^{n-1}\!\!\!\!dt,
 \;f\in L_{p}(\Omega),\;1\leq p\leq\infty.
$$
Also,     we   consider an auxiliary operator,   the so-called   truncated directional  fractional derivative   (see \cite{firstab_lit:1kukushkin2018}).  By definition, put
\begin{equation*}
 ( \mathfrak{D }^{\alpha}_{d-,\,\varepsilon}f)(Q)=\frac{\alpha}{\Gamma(1-\alpha)}\int\limits_{r+\varepsilon }^{d }\frac{ f (Q)- f(P+\mathbf{e}t)}{( t-r)^{\alpha +1}} dt
 +\frac{f(Q)}{\Gamma(1-\alpha)}(d-r)^{-\alpha},\;0\leq r\leq d -\varepsilon,
 $$
 $$
  ( \mathfrak{D }^{\alpha}_{d-,\,\varepsilon}f)(Q)=      \frac{ f(Q)}{\alpha} \left(\frac{1}{\varepsilon^{\alpha}}-\frac{1}{(d -r)^{\alpha} }    \right),\; d -\varepsilon <r \leq d .
 \end{equation*}
  Now, we can  define  a directional   fractional derivative as follows
 \begin{equation*}
  \mathfrak{D }^{\alpha}_{d-}f=\lim\limits_{\stackrel{\varepsilon\rightarrow 0}{ (L_{p}) }} \mathfrak{D }^{\alpha}_{d-,\varepsilon} f ,\,1\leq p\leq\infty.
\end{equation*}
The properties of these operators  are  described  in detail in the paper  \cite{firstab_lit:1kukushkin2018}.   We suppose  $\mathfrak{I}^{0}_{0+} =I.$ Nevertheless,   this    fact can be easily established  dy virtue of  the reasonings  corresponding to the one-dimensional case and   given in \cite{firstab_lit:samko1987}. We also consider an integral operator with a weighted factor (see \cite[p.175]{firstab_lit:samko1987}) defined by the following formal construction
$$
 \left(\mathfrak{I}^{\alpha}_{0+}\mu f\right) (Q  ):=\frac{1}{\Gamma(\alpha)} \int\limits^{r}_{0}
 \frac{(\mu f) (P+t\mathbf{e})}{( r-t)^{1-\alpha}}\left(\frac{t}{r}\right)^{n-1}\!\!\!\!dt,
$$
where $\mu$ is a real-valued  function.

Consider a linear combination of an uniformly elliptic operator given in Theorem \ref{T3.6}  and
  a composition of a   fractional integro-differential  operator, where the fractional  differential operator is understood as the adjoint  operator  regarding  the Kipriyanov operator  (see  \cite{firstab_lit:kipriyanov1960},\cite{firstab_lit:1kipriyanov1960},\cite{kukushkin2019})
\begin{equation*}
 L  :=-  \mathcal{T}  \, +\mathfrak{I}^{\sigma}_{ 0+}\rho\, \mathfrak{D}  ^{ \alpha }_{d-}
 ,\,\rho\in L_{\infty}(\Omega),\; \sigma\in[0,1),
 $$
 $$
\mathrm{D}( L )  =H^{2}(\Omega)\cap H^{1}_{0}( \Omega ),
\end{equation*}
\begin{teo}\label{T3.7}
 We claim that
\begin{equation*}
L=\frac{1}{n}\sum\limits^{n}_{i=1} A_{i}^{\ast}G_{i}A_{i}+FA_{1}^{\alpha},
\end{equation*}
where $F $ is a bounded operator, $P_{1}:=P,$ and $G_{i}$ are the same as in Theorem \ref{T3.6}. Moreover  if  $ \gamma_{a} $ is sufficiently large in comparison  with $\|\rho\|_{L_{\infty}},$ then
the following relations hold
$$
 \mathrm{Re}(Lf,f)_{L_{2}}\geq C  \|f\|_{\mathfrak{H}^{n}_{A}};\, |(Lf,g)_{L_{2}}|\leq C \|f\|_{\mathfrak{H}^{n}_{A}}\|g\|_{\mathfrak{H}^{n}_{A}},\;f,g\in C_{0}^{\infty}(\Omega).
$$
\end{teo}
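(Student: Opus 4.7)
The plan is to split $L$ into the elliptic part and the fractional integro--differential part and treat them separately. For the elliptic part $-\mathcal{T}$, Theorem \ref{T3.6} already gives the representation $-\mathcal{T}=\tfrac{1}{n}\sum_{i=1}^{n}A_{i}^{\ast}G_{i}A_{i}$ together with the bilinear estimates
$$
-\mathrm{Re}(\mathcal{T}f,f)_{L_{2}}\geq \gamma_{a}\|f\|^{2}_{H^{1}_{0}},\qquad |(\mathcal{T}f,g)_{L_{2}}|\leq C\|f\|_{\mathfrak{H}^{n}_{A}}\|g\|_{\mathfrak{H}^{n}_{A}}.
$$
For the lower order piece I would set $F:=(\mathfrak{I}^{\sigma}_{0+}\rho\,\cdot)$, which is a bounded operator on $L_{2}(\Omega)$ with norm $\|F\|\leq C\|\rho\|_{L_{\infty}}$ by Theorem \ref{T1.1} applied with the weighted integrand. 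The identification $\mathfrak{D}^{\alpha}_{d-}f=A_{1}^{\alpha}f$ for $f\in H^{1}_{0}(\Omega)$ is exactly formula \eqref{3.13} from the proof of Theorem \ref{T3.2}, obtained from the Balakrishnan formula applied to the shift semigroup associated with the point $P_{1}:=P$. Combining these two ingredients yields the decomposition
$$
L=\frac{1}{n}\sum_{i=1}^{n}A_{i}^{\ast}G_{i}A_{i}+FA_{1}^{\alpha},
$$
and it remains only to prove the two estimates.

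For the bilinear bound I would argue as follows. Since $A_{1}^{-1}$ is bounded (Lemma \ref{L3.7}) and $A_{1}$ is m-accretive, Lemma \ref{L3.2} together with \eqref{3.11} gives $\|A_{1}^{\alpha}f\|_{L_{2}}\leq C_{\alpha}\|A_{1}f\|_{L_{2}}$ for $f\in H^{1}_{0}(\Omega)$, so
$$
|(FA_{1}^{\alpha}f,g)_{L_{2}}|\leq \|F\|\cdot\|A_{1}^{\alpha}f\|_{L_{2}}\|g\|_{L_{2}}\leq C\|\rho\|_{L_{\infty}}\|A_{1}f\|_{L_{2}}\|g\|_{L_{2}}\leq C\|\rho\|_{L_{\infty}}\|f\|_{\mathfrak{H}^{n}_{A}}\|g\|_{\mathfrak{H}^{n}_{A}},
$$
where in the last step I invoke Corollary \ref{C3.2} to dominate $\|g\|_{L_{2}}$ by $\|g\|_{\mathfrak{H}^{n}_{A}}$. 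Adding this to the upper bound for $-\mathcal{T}$ from Theorem \ref{T3.6} yields the desired upper estimate with an absolute constant $C$.

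For the lower bound I would combine the estimate $-\mathrm{Re}(\mathcal{T}f,f)_{L_{2}}\geq \gamma_{a}\|f\|^{2}_{H^{1}_{0}}$ from Theorem \ref{T3.6} with
$$
|\mathrm{Re}(FA_{1}^{\alpha}f,f)_{L_{2}}|\leq \|FA_{1}^{\alpha}f\|_{L_{2}}\|f\|_{L_{2}}\leq C\|\rho\|_{L_{\infty}}\|f\|_{H^{1}_{0}}\|f\|_{L_{2}}\leq K\|\rho\|_{L_{\infty}}\|f\|^{2}_{H^{1}_{0}},
$$
where the Poincar\'{e}--Friedrichs inequality is used at the last step. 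Therefore
$$
\mathrm{Re}(Lf,f)_{L_{2}}\geq (\gamma_{a}-K\|\rho\|_{L_{\infty}})\|f\|^{2}_{H^{1}_{0}},
$$
and choosing $\gamma_{a}$ sufficiently large compared with $\|\rho\|_{L_{\infty}}$ makes the right hand side bounded below by $C\|f\|^{2}_{\mathfrak{H}^{n}_{A}}$ via Corollary \ref{C3.2}.

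The main technical obstacle I anticipate is the justification that the representation $\mathfrak{D}^{\alpha}_{d-}=A_{1}^{\alpha}$ on the whole domain $\mathrm{D}(L)$ (not just $C^{\infty}_{0}(\Omega)$) is legitimate, since the Kipriyanov derivative $\mathfrak{D}^{\alpha}_{d-}$ is defined as an $L_{p}$-limit of truncated derivatives while the fractional power $A_{1}^{\alpha}$ is defined spectrally via the Balakrishnan formula on $\mathrm{D}(A_{1})$; this is precisely the point of Theorem \ref{T3.2}, and the density of $C^{\infty}_{0}(\Omega)$ in the graph norm of $A_{1}$ together with the estimate $\|A_{1}^{\alpha}f\|_{L_{2}}\leq C\|f\|_{H^{1}_{0}}$ allows the required closure. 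The remaining calculations are routine manipulations already carried out in the proofs of Theorems \ref{T3.2} and \ref{T3.6}.
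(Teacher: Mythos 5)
Your proof is correct and follows exactly the route the paper intends: the paper's own "proof" is a one-line citation of the earlier representation and accretivity results (the content of Theorems \ref{T3.2} and \ref{T3.6}) together with Corollary \ref{C3.2}, and your argument simply fills in those details — the splitting $L=-\mathcal{T}+\mathfrak{I}^{\sigma}_{0+}\rho\,\mathfrak{D}^{\alpha}_{d-}$, the identification $F:=(\mathfrak{I}^{\sigma}_{0+}\rho\,\cdot)$ and $\mathfrak{D}^{\alpha}_{d-}=A_{1}^{\alpha}$, and the norm equivalence $\|\cdot\|_{\mathfrak{H}^{n}_{A}}\asymp\|\cdot\|_{H^{1}_{0}}$. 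No gaps; your worry about extending $\mathfrak{D}^{\alpha}_{d-}=A_{1}^{\alpha}$ to $\mathrm{D}(L)$ is already resolved in the proof of Theorem \ref{T3.2} via $H^{1}_{0}(\Omega)\subset\mathrm{D}(A)$ and estimate \eqref{3.14}.
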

\begin{proof}
The proof follows obviously from Theorem 2, Theorem 3  \cite{kukushkin2021a}, Corollary \ref{C3.2}.
\end{proof}
Combining   the fact $\mathfrak{H}^{n}_{A}\subset\subset L_{2}(\Omega)$ established in  Corollary \ref{C3.2} and Theorem \ref{T3.7}, we     claim that  Theorems \ref{T2.6} can be applied to the operator $L.$\\

\section{Remarks}
In this chapter, we   studied  a true   mathematical nature of a differential operator   with a fractional derivative in final terms. We constructed a   model  in terms of the infinitesimal generator of a corresponding semigroup and    successfully applied   spectral theorems. Further, we generalized the obtained results to some class of transforms of  m-accretive operators, what can be treated as  an introduction to the fractional calculus of m-accretive operators. As a concrete theoretical achievement  of the offered  approach, we have  the following results: an  asymptotic equivalence between   the
real component of a  resolvent and the resolvent of the real component was established  for the  class; a  classification,   in accordance with     resolvent  belonging      to   the  Schatten-von Neumann  class, was obtained;
  a  sufficient condition of completeness of the root vectors system were formulated; an asymptotic formula for the eigenvalues was obtained. As an application,
 there were considered cases corresponding to a finite and infinite measure as well as various notions of fractional derivative under the semigroup  theory  point of view, such operators as a  Kipriyanov operator, Riesz potential,  difference operator were involved. The eigenvalue problem for
  a differential operator   with a composition of fractional integro-differential operators  in final terms was solved.

 In addition, note that  minor  results  are also worth noticing such as  a generalization of the well-known von Neumann theorem   (see  the proof of    Theorem  \ref{T3.1}).    In  paragraph \ref{Sub3.3.1},   it  might have been    possible to   consider an unbounded domain  $\Omega$ with some restriction imposed upon    a solid angle  containing  $\Omega,$ due to this natural  way we come to a   generalization of the Kipriyanov operator.   We should add  that  various   conditions, that may be imposed on the operator $F,$ are worth studying separately  since there is a number   of applications in  the theory of fractional differential equations.

\chapter{Root vectors  series expansion   of non-selfadjoint operators}\label{Ch4}

\section{Historical review}

  Generally, the concept origins from the  well-known fact that the   eigenvectors   of the compact selfadjoint operator form a basis in the closure of its range. The question what happens in the case when the operator is non-selfadjoint is rather complicated and  deserves to be considered as a separate part of the spectral theory. Basically, the aim of the mentioned  part of the spectral theory   are   propositions on the convergence of the root vector series in one or another sense to an element belonging to the closure of the operator range. Here, we should note when we  say a sense we mean   Bari, Riesz, Abel (Abel-Lidskii) senses of the series convergence  \cite{firstab_lit:2Agranovich1994},\cite{firstab_lit:1Gohberg1965}. A reasonable question that appears is about minimal conditions that guaranty the desired result, for instance in the mentioned papers    the authors  considered a domain of the parabolic type containing the spectrum of the operator. In the paper \cite{firstab_lit:2Agranovich1994}, non-salfadjoint operators with the special condition imposed on the numerical range of values are considered. The main advantage of this result is a   weak condition imposed upon the numerical range of values comparatively with the sectorial condition (see definition of the sectorial operator). Thus, the convergence in the Abel-Lidskii sense  was established for an operator class wider than the class of sectorial operators. Here, we  make a comparison between results devoted to operators with the discrete spectra and operators with the compact resolvent, for they can be easily reformulated from one to another realm.

  The central idea of this chapter  is to formulate sufficient conditions of the Abel-Lidskii basis property of the root functions system for a sectorial non-selfadjoint operator of the special type. Considering such an operator class, we strengthen a little the condition regarding the semi-angle of the sector, but weaken a great deal conditions regarding the involved parameters. Moreover, the central aim generates  some prerequisites to consider technical peculiarities such as a newly constructed sequence of contours of the power type on the contrary to the Lidskii results \cite{firstab_lit:1Lidskii}, where a sequence of the contours of the  exponential type was considered. Thus,  we clarify   results  \cite{firstab_lit:1Lidskii} devoted to the decomposition on the root vector system of the non-selfadjoint operator. We use  a technique of the entire function theory and introduce  a so-called  Schatten-von Neumann class of the convergence  exponent. Considering strictly accretive operators satisfying special conditions formulated in terms of the norm,   using a  sequence of contours of the power type,  we invent a peculiar  method how to calculate  a contour integral involved in the problem in its general statement.     Finally, we produce applications to differential equations in the abstract Hilbert space.
  In particular,     the existence and uniqueness theorems  for evolution equations   with the right-hand side -- a  differential operator  with a fractional derivative in  final terms are covered by the invented abstract method. In this regard such operator  as a Riemann-Liouville  fractional differential operator,    Kipriyanov operator, Riesz potential,  difference operator are involved.
Note that analysis of the required  conditions imposed upon the right-hand side of the    evolution equations that are in the scope  leads us to   relevance of the central idea of the paper. Here, we should note  a well-known fact  \cite{firstab_lit:Shkalikov A.},\cite{firstab_lit(arXiv non-self)kukushkin2018}   that a particular interest appears in the case when a senior term of the operator at least
    is not selfadjoint,  for in the contrary case there is a plenty of results devoted to the topic wherein  the following papers are well-known
\cite{firstab_lit:1Katsnelson},\cite{firstab_lit:1Krein},\cite{firstab_lit:Markus Matsaev},\cite{firstab_lit:2Markus},\cite{firstab_lit:Shkalikov A.}. The fact is that most of them deal with a decomposition of the  operator  on a sum,  where the senior term
     must be either a selfadjoint or normal operator. In other cases, the  methods of the papers
     \cite{kukushkin2019}, \cite{firstab_lit(arXiv non-self)kukushkin2018} become relevant   and allow us  to study spectral properties of   operators  whether we have the mentioned above  representation or not.   We should remark that the results of  the papers \cite{firstab_lit:2Agranovich1994},\cite{firstab_lit:Markus Matsaev}, applicable to study non-selfadjoin operators,  are   based on the sufficiently strong assumption regarding the numerical range of values of the operator. At the same time,
the methods  \cite{firstab_lit(arXiv non-self)kukushkin2018}   can be  used    in  the  natural way,  if we deal with more abstract constructions formulated in terms of the semigroup theory   \cite{kukushkin2021a}.  The  central challenge  of the latter  paper  is how  to create a model   representing     a  composition of  fractional differential operators   in terms of the semigroup theory.   We should note that motivation arouse in connection with the fact that
  a second order differential operator can be represented  as a some kind of  a  transform of   the infinitesimal generator of a shift semigroup. Here, we should stress   that
  the eigenvalue problem for the operator
     was previously  studied by methods of  theory of functions   \cite{firstab_lit:1Nakhushev1977}.
Thus, having been inspired by   novelty of the  idea  we generalize a   differential operator with a fractional integro-differential composition  in the final terms   to some transform of the corresponding  infinitesimal generator of the shift semigroup.
By virtue of the   methods obtained in the paper
\cite{firstab_lit(arXiv non-self)kukushkin2018}, we   managed  to  study spectral properties of the  infinitesimal generator  transform and obtain   an outstanding result --
   asymptotic equivalence between   the
real component of the resolvent and the resolvent of the   real component of the operator. The relevance is based on the fact that
   the  asymptotic formula  for   the operator  real component  can be  established in most  cases due to well-known asymptotic relations  for the regular differential operators as well as for the singular ones
 \cite{firstab_lit:Rosenblum}. It is remarkable that the results establishing spectral properties of non-selfadjoint operators  allow  us to implement a novel approach regarding   the problem  of the basis property of  root vectors.
   In its own turn, the application  of results connected with the basis property  covers  many   problems in the framework of the theory of evolution  equations.
  The abstract approach to the Cauchy problem for the fractional evolution equation was previously implemented in the papers \cite{firstab_lit:Bazhl},\cite{Ph. Cl}. At the same time, the  main advantage of this paper is the obtained formula for the solution of the evolution equation with the relatively wide conditions imposed upon the right-hand side,  where  the derivative at the left-hand side is supposed to be of the fractional order.
 This problem appeals to many ones that lie  in the framework of the theory of differential equations, for instance in the paper   \cite{L. Mor} the solution of the  evolution equation  can be obtained in the analytical way if we impose the conditions upon the right-hand side. We can also produce a number  of papers dealing  with differential equations which can be studied by this paper abstract methods \cite{firstab_lit:Mainardi F.}, \cite{firstab_lit:Mamchuev2017a}, \cite{firstab_lit:Mamchuev2017}, \cite{firstab_lit:Pskhu},       \cite{firstab_lit:Wyss}. The latter information gives us an opportunity to claim that the offered  approach is undoubtedly novel and relevant.

\section{ Some facts of the entire functions theory}

\subsection{ The growth scale}

To characterize the growth of an entire function $f(z),$ we introduce the functions
$$
M_{f}(r)=\max\limits_{|z|=r}|f(z)|,\,m_{f}(r)=\min\limits_{|z|=r}|f(z)|.
$$
An entire function $f(z)$ is said to be a function of finite order if there exists a positive constant $k$ such that the inequality
 $$
 M_{f}(r)<e^{r^{k}}
 $$
 is valid for all sufficiently large values of $r.$ The greatest lower bound of such numbers $k$ is called the {\it order} of the entire function $f(z).$\\

It follows from the definition that if $\varrho$ is the order of the entire function $f(z),$ and if $\varepsilon$ is an arbitrary positive number, then
\begin{equation*}
e^{r^{\varrho-\varepsilon}}<M_{f}(r)<e^{r^{\varrho+\varepsilon}},
\end{equation*}
where the inequality on the right-hand side is satisfied for all sufficiently large values of $r,$ and the inequality on the left-hand side holds for some sequence $\{r_{n}\}$ of values of $r,$ tending to infinity. It is easy to verify that the previous condition is equivalent to the equation
$$
\varrho=\overline{\lim\limits_{r\rightarrow \infty}}\,\frac{\ln\ln M_{f}(r)}{\ln r},
$$
which is taken as the definition of the order of the function. Further, an inequality that holds for all sufficiently large values of $r$ will be called an {\it asymptotic inequality.}

For functions of the given order a more precise characterization of the growth is given by the type of the function. By the {\it type} $\sigma$ of the entire  function $f(z)$ of the order $\varrho$ we mean the greatest lower bound of positive numbers $A$ for which the following relation  holds asymptotically
 $$
 M_{f}(r)<e^{Ar^{\varrho}}.
 $$
Analogously to the definition of the order it is easy to verify that the type $\sigma$ of a function $f(z)$ of order $\varrho$ is given by the relation
$$
\sigma=\overline{\lim\limits_{r\rightarrow\infty}}\frac{\ln M_{f}(r)}{r^{\varrho}}.
$$

\subsection{  Convergence exponent}

Here,    following  the monograph \cite{firstab_lit:Eb. Levin}, we introduce some notions and facts of the entire function theory. In this subsection, we   use the following notations
$$
G(z,p):=(1-z)e^{z+\frac{z^{2}}{2}+...+\frac{z^{p}}{p}},\,G(z,0):=(1-z).
$$
Consider   an entire function  that has zeros satisfying the following relation for some   $\lambda>0$
\begin{equation}\label{4.1}
 \sum\limits_{n=1}^{\infty}\frac{1}{|a_{n}|^{\lambda}}<\infty.
\end{equation}
In this case, we denote by $p$ the smallest integer number for which the following condition holds
\begin{equation}\label{4.2}
\sum\limits_{n=1}^{\infty}\frac{1}{|a_{n}|^{p+1}}<\infty .
\end{equation}
 It is clear that $0\leq p<\lambda.$ It is proved that under the assumption \eqref{4.1}  the   infinite product
 \begin{equation}\label{4.3}
  \prod\limits_{n=1}^{\infty} G\left(\frac{z}{a_{n}},p\right)
\end{equation}
 is uniformly convergent, we will call it a canonical product and call $p$ the genus of the canonical product.
By the   {\it convergence exponent} $\rho$ of the sequence
$
\{a_{n}\}_{1}^{\infty}\subset \mathbb{C},\,a_{n}\neq 0,\,a_{n}\rightarrow \infty
$
 we mean the greatest lower bound for such numbers $\lambda$ that the   series \eqref{4.1} converges.
 Note that if $\lambda$ equals to a convergence  exponent then series \eqref{4.1} may or  may not be convergent. For instance, the sequences $a_{n}= 1/n^{\lambda}$ and $1/(n\ln^{2} n)^{\lambda}$ have the same convergence exponent $\lambda=1,$ but in the first case the series \eqref{4.1} is divergent when $\lambda=1$ while in the second one it is convergent. In this paper, we have a special interest regarding the first case. Consider the following obvious relation between the convergence exponent $\rho$ and the genus $p$ of the corresponding canonical product $p \leq\rho\leq p+1.$ It is clear that if $\rho=p,$   then    the series  \eqref{4.1} diverges for $\lambda=\rho,$ while $\rho=p+1$ means that the series converges  (in accordance with the definition of $p$). In the monograph \cite{firstab_lit:Eb. Levin},   a more precise characteristic of the density of the sequence $\{a_{n}\}_{1}^{\infty}$ is considered than the convergence exponent. Thus, there is defined  a so-called counting function $n(r)$ equals to a number of points of the sequence in the circle $|z|<r.$ By upper density of the sequence, we call a number
 $$
 \Delta=\overline{\lim\limits_{r\rightarrow\infty}} n(r)/r^{\rho}.
 $$
 If a limit exists in the ordinary sense (not in the  sense of the upper limit),  then $\Delta$ is called the density. Note that it is proved in Lemma 1
  \cite{firstab_lit:Eb. Levin} that
 $$
   \lim\limits_{r\rightarrow\infty}  n(r)/r^{\rho+\varepsilon}\rightarrow 0,\,\varepsilon>0.
 $$
 We need the following fact (see \cite{firstab_lit:Eb. Levin} Lemma 3).
 \begin{lem}\label{L4.1}
 If the series \eqref{4.2} converges, then the corresponding infinite product \eqref{4.3}  satisfies  the following inequality in the entire complex plane
 $$
 \ln\left|  \prod\limits_{n=1}^{\infty} G\left(\frac{z}{a_{n}},p\right)\right|\leq Cr^{p}\left(\int\limits_{0}^{r}\frac{n(t)}{t^{p+1}}dt+r\int\limits_{r}^{\infty}\frac{n(t)}{t^{p+2}}dt\right),\,r:=|z|.
 $$
\end{lem}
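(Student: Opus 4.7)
The plan is to establish the bound by taking the logarithm of the product, estimating each factor $G(z/a_n,p)$ pointwise, and then converting the resulting sums over the zeros $\{a_n\}$ into the stated integrals via the counting function $n(t)$.

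First I would derive the two standard pointwise estimates for a single Weierstrass factor. Writing $G(u,p)=(1-u)\exp(u+u^2/2+\dots+u^p/p)$ and expanding $\ln(1-u)$ as a power series, one obtains on $|u|\le 1/2$ the bound $\bigl|\ln G(u,p)\bigr|\le C|u|^{p+1}$ by absorbing the cancellation of the first $p$ Taylor coefficients. On the complementary region $|u|>1/2$, since $\mathrm{Re}\ln G(u,p)=\ln|G(u,p)|$, we can estimate directly: $\ln|1-u|\le \ln(1+|u|)\le C|u|^p$ for $|u|>1/2$, while $\bigl|\sum_{k\le p} u^k/k\bigr|\le C|u|^p$ trivially in the same region. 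Taken together, this yields $\ln|G(u,p)|\le C|u|^p$ when $|u|>1/2$ and $\ln|G(u,p)|\le C|u|^{p+1}$ when $|u|\le 1/2$.

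Next I would split the sum $\sum_n \ln|G(z/a_n,p)|$ according to $|a_n|\le 2r$ (so $|z/a_n|\ge 1/2$) and $|a_n|>2r$ (so $|z/a_n|<1/2$). This produces
\[
\ln\Bigl|\prod_n G\!\left(\tfrac{z}{a_n},p\right)\Bigr|\le C r^{p}\!\!\sum_{|a_n|\le 2r}\!\frac{1}{|a_n|^{p}}+C r^{p+1}\!\!\sum_{|a_n|>2r}\!\frac{1}{|a_n|^{p+1}}.
\]
The convergence in \eqref{4.2} guarantees both sums are finite (the first is finite and the second by assumption), so this term-by-term splitting is legitimate.

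Then I would convert both sums into Stieltjes integrals against $n(t)$ and integrate by parts. Using $\sum_{|a_n|\le 2r}|a_n|^{-p}=\int_{0}^{2r}t^{-p}\,dn(t)$ and integrating by parts yields $p\int_{0}^{2r}n(t)t^{-p-1}dt+n(2r)(2r)^{-p}$, and similarly $\sum_{|a_n|>2r}|a_n|^{-p-1}=(p+1)\int_{2r}^{\infty}n(t)t^{-p-2}dt-n(2r)(2r)^{-p-1}$. The boundary terms at $t=2r$ partially cancel after multiplication by $r^{p}$ and $r^{p+1}$ respectively, and the remaining boundary contribution $n(2r)/r^{p}$ is absorbed into $r^{p}\int_{0}^{2r}n(t)t^{-p-1}dt$ by splitting that integral on $[r,2r]$ (where $n(t)\ge n(r)$, up to a constant) and into the tail integral by a trivial comparison. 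A final change of the upper/lower limits from $2r$ to $r$, using monotonicity of $n(t)$ to absorb constants, brings the estimate to the form asserted in the lemma.

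The main obstacle is the bookkeeping in the last step: the natural split point is $|a_n|=2r$ (dictated by the threshold $|u|=1/2$ in the factor estimates), whereas the statement uses the split point $r$, and the boundary terms from the two integrations by parts have opposite signs and must be shown to combine into something already controlled by the two integrals on the right-hand side. This is a routine but delicate comparison argument using only monotonicity of $n(t)$ and the elementary inequalities $\int_{r}^{2r}n(t)t^{-p-1}dt\ge c\,n(r)r^{-p}$ and $\int_{2r}^{\infty}n(t)t^{-p-2}dt\ge c\,n(2r)r^{-p-1}$.
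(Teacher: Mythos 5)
The paper itself gives no proof of this lemma --- it is quoted verbatim from Levin (Lemma 3 of \cite{firstab_lit:Eb. Levin}) --- so you are reconstructing the classical argument. For $p\geq 1$ your reconstruction is essentially the standard one and is sound: the two pointwise factor estimates, the split of the zeros at $|a_{n}|\leq 2r$ versus $|a_{n}|>2r$, the conversion to Stieltjes integrals against $n(t)$, and the absorption of the boundary terms $n(2r)(2r)^{-p}$ and $n(2r)(2r)^{-p-1}$ into the tail integral via $r^{p+1}\int_{r}^{\infty}n(t)t^{-p-2}dt\geq c\,n(2r)$ all go through.

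There is, however, a genuine gap at $p=0$, and this is precisely the case the paper later needs (in the proof of Theorem \ref{T4.7} the genus is $p=[\alpha/(m+1)]=0$). Your estimate in the region $|u|>1/2$ reads $\ln|G(u,p)|\leq \ln(1+|u|)+\sum_{k\leq p}|u|^{k}/k\leq C|u|^{p}$; for $p\geq 1$ this holds because $\ln(1+|u|)\leq |u|\leq 2^{p-1}|u|^{p}$, but for $p=0$ it asserts $\ln|1-u|\leq C$ uniformly for $|u|>1/2$, which is false ($\ln(1+|u|)$ is unbounded). No choice of exponent rescues a pointwise bound here. The correct treatment of the near zeros when $p=0$ keeps the logarithm inside the sum: one writes
$$
\sum_{|a_{n}|\leq 2r}\ln\left(1+\frac{r}{|a_{n}|}\right)\leq \int\limits_{0}^{2r}\ln\frac{2r}{t}\,dn(t)=\int\limits_{0}^{2r}\frac{n(t)}{t}\,dt,
$$
where the boundary terms vanish because $n(t)=0$ near $t=0$ and $\ln(2r/t)=0$ at $t=2r$; the passage from $\int_{0}^{2r}$ to $\int_{0}^{r}$ then costs only $\int_{r}^{2r}n(t)t^{-1}dt\leq n(2r)\ln 2$, which is absorbed into the tail integral as before. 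You should either restrict your factor estimate to $p\geq 1$ and handle $p=0$ by this separate integration-by-parts argument, or replace the crude bound $\ln(1+|u|)\leq C|u|^{p}$ by the logarithm-preserving one throughout.
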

Using this result, it is not hard to prove a relevant  fact mentioned in the monograph \cite{firstab_lit:Eb. Levin}. Since it has a principal role in the further narrative, then  we formulate it as a lemma  in terms of the   density.
\begin{lem}\label{L4.2}
Assume that the following series  converges for some values $\lambda>0,$ i.e.
$$
\sum\limits_{n=1}^{\infty}\frac{1}{|a_{n}|^{\lambda}}<\infty.
$$
Then  the following  relation holds
\begin{equation}\label{4.4}
\left|\prod\limits_{n=1}^{\infty} G\left(\frac{z}{a_{n}},p\right)\right|\leq e^{\beta(r)r^{\rho_{1}}},\,\beta(r)= r^{p-\rho_{1} }\left(\int\limits_{0}^{r}\frac{n(t)}{t^{p+1 }}dt+
r\int\limits_{r}^{\infty}\frac{n(t)}{t^{p+2  }}dt\right).
\end{equation}
  In the case $\rho_{1}=\rho,$ we have   $\beta(r)\rightarrow 0,$ if at   least  one of the following conditions holds:  the convergence exponent $\rho<\lambda$  is   non-integer       and the density equals to zero,  the convergence exponent $\rho=\lambda$  is   arbitrary.    In addition,   the  equality  $\rho=\lambda$ guaranties that the density equals to zero. In the case $\rho_{1}>\rho,$ we claim that $\beta(r)\rightarrow 0,$ without any additional conditions.
\end{lem}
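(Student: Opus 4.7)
The plan is to obtain the main inequality \eqref{4.4} as an immediate consequence of Lemma \ref{L4.1}: writing $Cr^p = (Cr^{p-\rho_1})\cdot r^{\rho_1}$ and absorbing the constant $C$ into the definition of $\beta(r),$ one reads off $\ln|\prod_n G(z/a_n,p)| \leq \beta(r)\,r^{\rho_1}.$ The substantive content is therefore the asymptotic $\beta(r)\to 0$ in each stated regime. My analysis would rest on two well-known consequences of the hypothesis: for every $\varepsilon>0$ one has $n(t)=O(t^{\rho+\varepsilon})$ (by the observation preceding Lemma \ref{L4.1}), and whenever $\rho=\lambda$ the summability $\sum|a_n|^{-\rho}<\infty$ forces the density to vanish.

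For the case $\rho_1>\rho,$ I would choose $\varepsilon>0$ with $\rho+\varepsilon<\min(\rho_1,p+1)$ and insert the power bound $n(t)\leq Ct^{\rho+\varepsilon}$ into both integrals defining $\beta(r);$ elementary computation then gives each of order $r^{\rho+\varepsilon-p},$ whence $\beta(r)=O(r^{\rho+\varepsilon-\rho_1})\to 0.$ The sole obstruction to this choice of $\varepsilon$ is the borderline $\rho=p+1,$ which occurs precisely when $\rho=\lambda$ is an integer; there I would abandon the polynomial majorant and use instead the finiteness of $\int_0^\infty n(t)/t^{\rho+1}\,dt = \rho^{-1}\sum|a_n|^{-\rho},$ together with the Cesaro-type observation that $\int_0^r\eta(t)\,dt=o(r)$ whenever $\eta(t):=n(t)/t^\rho\to 0.$

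For $\rho_1=\rho,$ the first step in the $\rho=\lambda$ subcase is to establish that the density vanishes: if on the contrary $n(r_k)/r_k^\rho\geq c$ along some sequence $r_k\to\infty,$ monotonicity of $n$ yields $n(t)/t^\rho\geq c\cdot 2^{-\rho}$ on $[r_k,2r_k]$ and hence $\int_{r_k}^{2r_k}n(t)/t^{\rho+1}\,dt\geq c\ln 2\cdot 2^{-\rho},$ so extracting disjoint such intervals contradicts the convergence of $\int_0^\infty n(t)/t^{\rho+1}\,dt.$ With $\eta(t)=n(t)/t^\rho\to 0$ secured, the non-integer subcase (where $p=[\rho]$ and $p<\rho<p+1$) would be dispatched by splitting each integral at a threshold $T$ beyond which $\eta(t)<\delta,$ checking that $r^{p-\rho}$ times each integral is majorised by $\delta/(\rho-p)$ or $\delta/(p+1-\rho)$ up to a vanishing boundary term, and letting $\delta\to 0;$ the integer subcase $\rho=\lambda$ with $p=\rho-1$ is then handled via the identities $r^{p-\rho}\cdot r\int_r^\infty n(t)/t^{p+2}\,dt = \int_r^\infty n(t)/t^{\rho+1}\,dt\to 0$ and $r^{p-\rho}\int_0^r n(t)/t^{p+1}\,dt = r^{-1}\int_0^r\eta(t)\,dt\to 0.$ The principal obstacle throughout is the uniform handling of the integer borderline $\rho=p+1,$ where the power bound on $n(t)$ ceases to be useful and one must extract decay from the convergence of $\sum|a_n|^{-\rho}$ directly.
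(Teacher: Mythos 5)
Your proposal is correct and follows essentially the same route as the paper: inequality \eqref{4.4} is read off from Lemma \ref{L4.1}, the finiteness of $\int_{0}^{\infty}n(t)t^{-\rho-1}dt$ in the case $\rho=\lambda$ is extracted by integration by parts of the Stieltjes form of the series, the vanishing of the density then follows, and the two integrals in $\beta(r)$ are shown to decay (the paper uses L'H\^{o}pital's rule where you split at a threshold, and bounds $n(r)/r^{\rho}$ directly by $\rho\int_{r}^{\infty}n(t)t^{-\rho-1}dt$ where you argue by contradiction on dyadic intervals -- these are interchangeable). Your explicit treatment of the case $\rho_{1}>\rho$ via $n(t)=O(t^{\rho+\varepsilon})$ supplies a step the paper leaves to the reader; the only slight imprecision is that the borderline $\rho=p+1$ is characterized by $\rho$ being an integer with $\sum|a_{n}|^{-\rho}<\infty$ rather than by $\rho=\lambda$, but your proposed fix covers that case regardless.
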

\begin{proof}
Applying  Lemma \ref{L4.1}, we establish relation \eqref{4.4}. Consider a case   when  $\rho<\lambda$ is non-integer.
Taking into account the fact that the density equals to zero, using   L'H\^{o}pital's rule,    we easily obtain
\begin{equation}\label{4.5}
 r^{p-\rho}  \int\limits_{0}^{r}\frac{n(t)}{t^{p+1}}dt \rightarrow 0 ;\;r^{p+1-\rho } \int\limits_{r}^{\infty}\frac{n(t) }{t^{p+2}}dt \rightarrow 0,
\end{equation}
(here we should remark  that if $\rho$ is integer, then $p=\rho$).
Therefore  $\beta(r)\rightarrow 0.$
Consider the case when  $\rho=\lambda,$ then let us rewrite the series \eqref{4.1} in the form of the  Stieltjes integral, we have
$$
\sum\limits_{n=1}^{\infty}\frac{1}{|a_{n}|^{\lambda}}=\int\limits_{0}^{\infty}\frac{d n(t)}{t^{\rho}} .
$$
Using integration by parts formulae, we get
$$
\int\limits_{0}^{r}\frac{d n(t)}{t^{\rho}} =\frac{n(r)}{r^{\rho}} -\frac{n(\gamma)}{C^{\rho}}+\rho\int\limits_{0}^{r}\frac{n(t)}{t^{\rho+1}}dt.
$$
Here,  we should note that there exists a neighborhood of the point zero in which $n(t)=0.$
The latter representation shows us that the following  integral converges, i.e.
$$
\int\limits_{0}^{\infty}\frac{n(t)}{t^{\rho+1}}dt<\infty.
$$
In its own turn, it follows that
$$
\frac{n(r)}{r^{\rho}}=n(r)\rho\int\limits_{r}^{\infty}\frac{1}{t^{\rho+1}}dt<\rho\int\limits_{r}^{\infty}\frac{n(t)}{t^{\rho+1}}dt\rightarrow 0,\,r\rightarrow \infty.
$$
 Using this fact analogously to the above, applying L'H\^{o}pital's rule, we conclude  that  \eqref{4.5} holds if $\rho=\lambda$ is non-integer. If   $\rho=\lambda$ is  integer  then it is clear that  we have $\rho=p+1,$ here we should remind  that it is not possible to assume that $\rho=p$ due to the definition of $p.$ In the case $\rho=p+1,$    using the above  reasonings, we get
$$
 r^{-1}  \int\limits_{0}^{r}\frac{n(t)}{t^{p+1}}dt \rightarrow 0 ;\; \int\limits_{r}^{\infty}\frac{n(t) }{t^{p+2}}dt \rightarrow 0,
$$
from what follows  the fact that $\beta(r)\rightarrow0$. The reasonings related to the  case $\rho_{1}>\rho$ is absolutely analogous,     we  left  the proof to  the reader. The proof is complete.
\end{proof}
\begin{lem} \label{L4.3} We claim that the following implication holds
$$
\ln r\frac{n(r)}{r^{\rho_{1}}}\rightarrow0,\Longrightarrow \beta(r)\ln r \rightarrow 0,\,r\rightarrow\infty,
$$
where
$$
\beta(r)= r^{p-\rho_{1} }\left(\int\limits_{0}^{r}\frac{n(t)}{t^{p+1 }}dt+
r\int\limits_{r}^{\infty}\frac{n(t)}{t^{p+2  }}dt\right),\,\rho_{1}\neq p,  p+1.
$$
\end{lem}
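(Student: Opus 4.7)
The plan is to split $\beta(r)\ln r$ into two summands and show each tends to zero by L'H\^{o}pital's rule, paralleling the strategy of Lemma \ref{L4.2} but now tracking the extra logarithmic factor. Writing
$$
\beta(r)\ln r = I_{1}(r) + I_{2}(r),\quad I_{1}(r) := r^{p-\rho_{1}}\ln r \int_{0}^{r}\frac{n(t)}{t^{p+1}}dt,\quad I_{2}(r) := r^{p-\rho_{1}+1}\ln r \int_{r}^{\infty}\frac{n(t)}{t^{p+2}}dt,
$$
I first note that in the setting inherited from Lemma \ref{L4.2} one has $\rho_{1}\geq \rho\geq p$, so under the exclusion $\rho_{1}\neq p, p+1$ the exponents $\rho_{1}-p$ and $\rho_{1}-p-1$ are both nonzero; this is exactly what keeps L'H\^{o}pital's rule from degenerating.

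For $I_{1}(r)$, since $\rho_{1}-p>0$ we have $r^{\rho_{1}-p}/\ln r\to\infty$; if $\int_{0}^{r}n(t)/t^{p+1}dt$ is bounded then $I_{1}\to 0$ trivially, so assume it also tends to infinity. Applying L'H\^{o}pital's rule to the quotient of the integral by $r^{\rho_{1}-p}/\ln r$, and computing
$$
\frac{d}{dr}\frac{r^{\rho_{1}-p}}{\ln r} = \frac{r^{\rho_{1}-p-1}\bigl[(\rho_{1}-p)\ln r - 1\bigr]}{\ln^{2}r},
$$
the ratio of derivatives becomes asymptotically
$$
\frac{n(r)/r^{p+1}}{r^{\rho_{1}-p-1}[(\rho_{1}-p)\ln r - 1]/\ln^{2}r} \sim \frac{n(r)\ln r}{(\rho_{1}-p)\,r^{\rho_{1}}} \to 0
$$
by hypothesis. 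Thus $\int_{0}^{r}n(t)/t^{p+1}dt = o\bigl(r^{\rho_{1}-p}/\ln r\bigr)$, and multiplying by $r^{p-\rho_{1}}\ln r$ forces $I_{1}(r)\to 0$.

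For $I_{2}(r)$, I would distinguish two subcases by the sign of $\rho_{1}-p-1$. If $p<\rho_{1}<p+1$, then both $\int_{r}^{\infty}n(t)/t^{p+2}dt$ and $r^{\rho_{1}-p-1}/\ln r$ vanish at infinity, so L'H\^{o}pital's in the $0/0$ form, using derivatives $-n(r)/r^{p+2}$ and $r^{\rho_{1}-p-2}[(\rho_{1}-p-1)\ln r - 1]/\ln^{2}r$, gives the asymptotic quotient
$$
\frac{n(r)\ln r}{(p+1-\rho_{1})\,r^{\rho_{1}}}\to 0,
$$
whence $I_{2}(r)\to 0$. If instead $\rho_{1}>p+1$, then by the very definition of the genus $p$ the series $\sum|a_{n}|^{-(p+1)}$ converges, which (after an integration by parts on the Stieltjes representation as in the proof of Lemma \ref{L4.2}) yields boundedness of $\int_{r}^{\infty}n(t)/t^{p+2}dt$; since $r^{p-\rho_{1}+1}\ln r\to 0$ in this regime, $I_{2}(r)\to 0$ trivially.

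The only genuine obstacle is bookkeeping the exponents in each subcase so that L'H\^{o}pital's applies and no compensating factor of the form $(\rho_{1}-p)\ln r - 1$ becomes small; the assumption $\rho_{1}\neq p, p+1$ is precisely what rules this out. Everything else is routine.
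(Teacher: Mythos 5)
Your proof is correct and follows essentially the same route as the paper's: both decompose $\beta(r)\ln r$ into the same two summands and dispose of each by L'H\^{o}pital's rule, with the hypothesis $\ln r\, n(r)/r^{\rho_{1}}\rightarrow 0$ entering at the derivative stage. The only cosmetic difference is that you place the $\ln r$ in the denominator of each quotient, so the derivative ratio yields $n(r)\ln r/r^{\rho_{1}}$ directly, whereas the paper keeps $\ln r$ in the numerator and therefore also invokes the auxiliary implication $n(r)/r^{\rho_{1}}\rightarrow 0 \Rightarrow r^{p-\rho_{1}}\int_{0}^{r}n(t)t^{-p-1}dt\rightarrow 0$ as an extra (likewise L'H\^{o}pital-based) step.
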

\begin{proof}
 Let us define  auxiliary functions
$$
u_{1}(r):=\ln r\int\limits_{0}^{r}\frac{n(t)}{t^{p+1 }}dt,\,v_{1}(r):=\ln r \int\limits_{r}^{\infty}\frac{n(t)}{t^{p+2  }}dt,\,u_{2}(r):=r^{\rho_{1}-p},\,v_{2}(r):=r^{\rho_{1}-p-1}.
$$
It is clear that
$$
u_{1}'(r):=\frac{1}{r} \int\limits_{0}^{r}\frac{n(t)}{t^{p+1 }}dt +\ln r \frac{n(r)}{r^{p+1 }};\;
v'_{1}(r):=\frac{1}{r} \int\limits_{r}^{\infty}\frac{n(t)}{t^{p+2  }}dt+ \ln r\frac{n(r)}{r^{p+2  }}.
$$
Therefore
\begin{equation}\label{4.6}
\frac{u_{1}'(r)}{u_{2}'(r)}=Cr^{p-\rho_{1}} \int\limits_{0}^{r}\frac{n(t)}{t^{p+1 }}dt +C\ln r \frac{n(r)}{r^{\rho_{1}}};\;\frac{v_{1}'(r)}{v_{2}'(r)}=Cr^{p+1-\rho_{1}} \int\limits_{r}^{\infty}\frac{n(t)}{t^{p+2 }}dt +C\ln r \frac{n(r)}{r^{\rho_{1}}}.
\end{equation}
Having noticed  that
$\beta(r)\ln r=  u_{1} (r)/u_{2} (r)+v_{1} (r)/v_{2} (r)$
and   applying    L'H\^{o}pital's rule, we get
\begin{equation}\label{4.7}
  \beta(r)\ln r\sim  \frac{u_{1}'(r)}{u_{2}'(r)}+ \frac{v_{1}'(r)}{v_{2}'(r)},\,r\rightarrow\infty.
\end{equation}
In the analogous way, we obtain the following implication
\begin{equation}\label{4.8}
\frac{n(r)}{r^{\rho_{1}}} \rightarrow 0,\;\Longrightarrow\left\{r^{p-\rho_{1}} \int\limits_{0}^{r}\frac{n(t)}{t^{p+1 }}dt\rightarrow 0;\;r^{p+1-\rho_{1}} \int\limits_{r}^{\infty}\frac{n(t)}{t^{p+2 }}dt\rightarrow0 \right\}.
\end{equation}
Thus, taking into account the condition
$
\ln r \cdot  n(r)/r^{\rho_{1}} \rightarrow0,
$ combining \eqref{4.6}, \eqref{4.7}, \eqref{4.8},
we obtain the desired result.
\end{proof}

Regarding Lemma \ref{L4.3}, we can produce the following example that indicates the relevance of the issue itself.

\begin{ex}\label{Ex4.1}
  There exists a sequence $\{a_{n}\}_{1}^{\infty}$ such that  density equals to zero, moreover
$$
\sum\limits_{n=1}^{\infty}\frac{1}{|a_{n}|^{\rho}}=\infty,\; \beta(r)\ln r\rightarrow 0,r\rightarrow \infty,
$$
where
$$
\beta(r)= r^{p-\rho }\left(\int\limits_{0}^{r}\frac{n(t)}{t^{p+1 }}dt+
r\int\limits_{r}^{\infty}\frac{n(t)}{t^{p+2  }}dt\right).
$$
We can construct the required sequence supposing $n(r)\sim r^{\rho_{1}}(\ln r \cdot \ln\ln r )^{-1},\,\rho_{1}\in \mathbb{R}^{+} \setminus \mathbb{N}.
$
It is clear that we can represent partial sums of series \eqref{4.1} due to the   Stieltjes  integral
$$
 \sum\limits_{n=1}^{k}\frac{1}{|a_{n}|^{\lambda}}= \int\limits_{0}^{r(k) }\frac{d n(t)}{t^{\lambda}},\,\lambda\geq\rho_{1}.
$$
Thus,  the sequence $\{a_{n}\}_{1}^{\infty}$ is defined by the function $n(r).$
Applying the integration by parts formulae, we get
$$
 \int\limits_{0}^{r }\frac{d n(t)}{t^{\lambda}}  = \frac{n(r)}{r^{\lambda}} - \frac{1}{|a_{1}|^{\lambda}} +\lambda\int\limits_{0}^{r}\frac{n(t)}{t^{\lambda+1}}dt .
$$
  We can easily establish the fact  the last integral diverges  when $\lambda=\rho_{1} ,\;r\rightarrow\infty,$ we have
$$
 \int\limits_{0}^{r}\frac{n(t)}{t^{\rho_{1} +1}}dt\geq C \int\limits_{|a_{1}|}^{r}\frac{ dt}{t\ln t\cdot \ln\ln t }= \ln\ln\ln r-C.
$$
On the other hand, we have
$$
 \int\limits_{0}^{\infty}\frac{n(t)}{t^{\lambda+1}}dt\leq C \int\limits_{|a_{1}|}^{\infty}\frac{ dt}{t^{1+\lambda-\rho_{1}}\ln t\cdot \ln\ln t }< \infty ,\; \lambda>\rho_{1}.
$$
Taking into account the fact
$
   n(r)/r^{\rho_{1}}  \rightarrow0,\,r\rightarrow\infty,
$
we conclude that   the series \eqref{4.1} diverges if $\lambda=\rho_{1}$ and   converges  if $\lambda=\rho_{1}+\varepsilon,\,\varepsilon>0.$ Therefore, the convergence exponent equals to $\rho_{1},$ i.e. $\rho=\rho_{1},$     the density   equals to zero.
Let us  prove the fact   $\beta(r)\ln r\rightarrow 0,$  for this purpose in accordance with Lemma \ref{L4.3},
  it suffices to show that
$$
\ln r \frac{n(r)}{r^{\rho}}\rightarrow0,\;r\rightarrow\infty,
$$
by direct substitution, we get
$$
\ln r \frac{n(r)}{r^{\rho}}\leq C    (\ln\ln r)^{-1} \rightarrow0,\;r\rightarrow\infty.
$$
Thus, we   obtain  the desired result.
\end{ex}

Bellow, we refer to the Theorem 13 \cite{firstab_lit:Eb. Levin} (Chapter I, $\S$ 10) that gives us a representation of the entire function of the finite order. To avoid the any sort of inconvenient form of writing, we will also  call by a root a zero of the entire function.
\begin{teo}\label{T4.1} The entire function $f$ of the finite order $\varrho$ has the following representation
$$
f(z)=z^{m}e^{P(z)}\prod\limits_{n=1}^{\omega}G\left(\frac{z}{a_{n}};p\right),\;\omega\leq \infty,
$$
where $a_{n}$ are non-zero roots of the entire function, $p\leq\varrho,\;P(z)$ is a polynomial, $\mathrm{deg}\, P(z)\leq \varrho,\;m$ is a multiplicity of the zero root.
\end{teo}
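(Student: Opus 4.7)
This is the classical Hadamard factorization theorem, and the plan is to follow the standard four-step strategy, adapted so as to make use of Lemmas \ref{L4.1}--\ref{L4.2} already available in the excerpt.

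First I would show that the non-zero zeros $\{a_n\}$ of $f$ satisfy the convergence condition $\sum |a_n|^{-\lambda}<\infty$ for every $\lambda>\varrho$. The tool is Jensen's formula applied on circles $|z|=R$: if $n(R)$ counts zeros inside $|z|<R$, then
$$
\int_{0}^{R}\frac{n(t)}{t}\,dt \;\leq\; \log M_{f}(2R)-\log |f(0)|,
$$
after removing the zero at the origin (dividing $f$ by $z^m$, which does not affect the order). Since $M_f(r)\leq e^{r^{\varrho+\varepsilon}}$ asymptotically, the right side is $\leq C R^{\varrho+\varepsilon}$, whence $n(R)\leq C R^{\varrho+\varepsilon}$. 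Representing $\sum |a_n|^{-\lambda}$ as a Stieltjes integral against $n(t)$ and integrating by parts then yields convergence for any $\lambda>\varrho$. Consequently the genus $p$, being the smallest integer with $\sum |a_n|^{-(p+1)}<\infty$, satisfies $p\leq \varrho$, and the canonical product $\Pi(z):=\prod_{n\geq 1} G(z/a_n;p)$ is a well-defined entire function.

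Next I would form the quotient $h(z):=f(z)\bigl/\bigl[z^m\,\Pi(z)\bigr]$. By construction this is an entire function with no zeros, so $h(z)=e^{P(z)}$ for some entire function $P$. The heart of the proof is then showing $P$ is a polynomial with $\deg P\leq \varrho$. Here I would combine two ingredients: (i) the growth estimate $|\Pi(z)|\leq e^{\beta(r)r^{\varrho_1}}$ from Lemma \ref{L4.2}, taken with $\varrho_1=\varrho+\varepsilon$ so that $\beta(r)\to 0$; (ii) a lower bound on $|\Pi(z)|$ away from the zeros, obtained by removing small disks around each $a_n$ and applying the minimum-modulus estimate for canonical products (this requires the auxiliary observation, standard in the theory, that the set of ``bad'' radii has finite measure, so one can always find $r_k\to\infty$ where $\log m_{\Pi}(r_k)\geq -r_k^{\varrho+\varepsilon}$). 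Together these give $\mathrm{Re}\,P(z)=\log|h(z)|\leq C\,r^{\varrho+\varepsilon}$ asymptotically on a suitable sequence of circles, and then the Borel--Carath\'eodory theorem upgrades the bound on $\mathrm{Re}\,P$ to a bound on $|P|$ itself of the same order. A Cauchy-type estimate for the Taylor coefficients of $P$ then forces all coefficients of index $>\lfloor\varrho\rfloor$ to vanish, so $P$ is a polynomial with $\deg P\leq \varrho$.

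The main obstacle is the third step, specifically establishing the lower bound on $|\Pi(z)|$: the upper bound in Lemma \ref{L4.2} controls $\Pi$ everywhere, but to transfer the bound from $\log|h|=\mathrm{Re}\,P$ to $P$ one needs a two-sided control of $|\Pi|$, and the short exclusion argument for the disks around the zeros is the most technically delicate point, involving a careful choice of radii and the inequality $|G(w,p)|\geq \exp(-C|w|^{p+1})$ valid for $|w|\leq 1/2$ together with the complementary estimate for $|w|>1/2$. Once these bounds are in hand, the conclusion $f(z)=z^m e^{P(z)}\Pi(z)$ with $\deg P\leq \varrho$ and $p\leq\varrho$ is immediate.
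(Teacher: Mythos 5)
Your proposal is a correct sketch of the classical Hadamard factorization argument, but there is nothing in the paper to compare it against: the paper does not prove this statement at all. It is quoted verbatim as Theorem 13 of Levin's monograph (Chapter I, \S 10), and the surrounding text explicitly says ``we refer to'' that result. So you have supplied a proof where the authors supplied a citation.

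As to the mathematics itself, your four-step plan (Jensen's formula to bound $n(R)$ and hence the convergence exponent of the zeros, hence $p\leq\varrho$; formation of the zero-free quotient $h=f/(z^{m}\Pi)$ and $h=e^{P}$; two-sided control of $|\Pi|$ on a sequence of circles avoiding the exceptional disks; Borel--Carath\'eodory plus Cauchy estimates to conclude $\deg P\leq\varrho$) is the standard route and is sound. You correctly identify the genuinely delicate point, namely the minimum-modulus lower bound for the canonical product outside small disks about the $a_{n}$, and your remark that the bad radii form a set of finite measure is the right way to guarantee circles $|z|=r_{k}\to\infty$ on which the estimate holds. Two minor bookkeeping points: your displayed Jensen inequality should read $\int_{0}^{2R}n(t)t^{-1}\,dt\leq\log M_{f}(2R)-\log|f(0)|$, from which $n(R)\log 2\leq\int_{R}^{2R}n(t)t^{-1}\,dt$ gives the counting-function bound; and when you invoke Lemma \ref{L4.2} with $\varrho_{1}=\varrho+\varepsilon$ you should note that this exceeds the convergence exponent of the zeros (which is at most $\varrho$ by your first step), so the hypothesis $\beta(r)\to 0$ of that lemma is indeed met without further conditions. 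Neither point is a gap.
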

The infinite product represented  in   Theorem \ref{T4.1} is called  a  canonical product of the entire function.

\subsection{Lower bounds for entire functions }

\subsubsection{Cartan estimate}
The theorem represented bellow is a consequence of the  Cartan estimate (see  \S 7, Chapter I \cite{firstab_lit:Eb. Levin}  ) which has an essential lack for it holds not for the involved set of the complex plane but for restricted set  except for the so-called exceptional sets of circles. One more disadvantage is a rather rough estimate which does not require any conditions on the distribution  of the zeros of an entire function but is far from creating a fundamental base for a peculiar result.

\begin{teo}\label{T4.2} Assume that the function of the complex variable $f(z)$ is holomorphic within the circle $|z|<2eR, \,f(0)=1$  and $\eta$ is an arbitrary positive number less than  or equal  to  $3e/2.$ Then inside the circle $|z|\leq R$ but outside the square covered by  the exceptional set of circulus with the sum of radii  less than $4\eta R,$ the following estimate holds
$$
\ln|f(z)|>-\left(2+\ln\frac{3e}{2\eta}\right)\ln M_{f}(2eR).
$$
\end{teo}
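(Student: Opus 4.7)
The plan is to isolate the zeros of $f$ near the disk $|z|\leq R$ and handle them separately from the zero-free part, in the spirit of the classical Cartan lemma. First I would apply Jensen's formula on the disk $|z|\leq 2eR$. Since $f(0)=1$, Jensen gives
$$
\int_{0}^{2eR}\frac{n_{f}(t)}{t}\,dt\;\leq\;\ln M_{f}(2eR),
$$
and by restricting the integral to $[eR,2eR]$ one concludes that the number $N$ of zeros $a_{1},\dots,a_{N}$ of $f$ inside $|z|\leq eR$ satisfies $N\leq \ln M_{f}(2eR)/\ln 2$. This is the counting step that links the cardinality of the zero set to the size of $M_{f}(2eR)$.

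Next I would factor $f(z)=P(z)g(z)$ with $P(z)=\prod_{k=1}^{N}(1-z/a_{k})$, so that $g$ is holomorphic and zero-free on a disk slightly larger than $|z|\leq eR$ (after a straightforward rescaling). Since $g$ is zero-free there, a single-valued branch of $\ln g$ exists; applying the Borel–Carath\'eodory inequality with inner radius $R$ and outer radius roughly $\tfrac{3e}{2}R$ (which is where the upper bound $\eta\leq 3e/2$ appears), together with the trivial upper bound $|g(z)|\leq M_{f}(2eR)\cdot(\sup_{|z|\leq 2eR}|P(z)|)^{-1}$ on the outer circle, yields a lower bound of the form $\ln|g(z)|\geq -c_{1}\ln M_{f}(2eR)$ on $|z|\leq R$, with a specific constant $c_{1}$ that I would track carefully.

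In parallel I would invoke the purely geometric Cartan covering lemma: the sublevel set
$$
E_{\eta}:=\Big\{z\in\mathbb{C}:\prod_{k=1}^{N}|z-a_{k}|\leq(\eta R)^{N}\Big\}
$$
can be covered by finitely many disks the sum of whose radii does not exceed $4\eta R$. This is exactly the exceptional set referred to in the statement. Outside $E_{\eta}$, and after accounting for the normalization of $P$, one obtains $\ln|P(z)|\geq N\ln(\eta R/(\text{const}))$, where the denominator comes from the factors $|a_{k}|^{-1}$ in the definition of $P$.

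Finally I would add the two lower bounds, writing $\ln|f(z)|=\ln|P(z)|+\ln|g(z)|$, and use $N\leq \ln M_{f}(2eR)/\ln 2$ to collapse everything into a single multiple of $\ln M_{f}(2eR)$. The principal obstacle will be the bookkeeping of constants: the claimed factor $2+\ln(3e/(2\eta))$ is delicate, and matching it requires (i) the intermediate radius in the Borel–Carath\'eodory step to be chosen so that the logarithmic loss is exactly $\ln(3e/(2\eta))$, and (ii) the normalization of $P$ in the Cartan step to be chosen so that the $N\ln(\eta R)$ contribution cancels the trivial upper bound on $|P|$ up to the additive constant $2$. Both adjustments are tied together through the constraint $\eta\leq 3e/2$, which is precisely the range in which the two intermediate radii are compatible. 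The analytic input (Jensen, Borel–Carath\'eodory) and the combinatorial input (Cartan covering) are each standard; the work lies in aligning the constants.
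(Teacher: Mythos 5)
The paper does not prove this statement; it quotes it as the classical Cartan estimate from Levin's monograph (\S 7, Chapter I), so your attempt has to be measured against the standard proof there. Your skeleton — Jensen to count zeros, factor them out, lower-bound the zero-free quotient, Cartan's covering lemma for the product — is exactly the right family of ideas, but the concrete choices you make do not close to the stated constant $2+\ln\frac{3e}{2\eta}$, and you concede as much when you defer the "delicate" bookkeeping. Three specific points break. First, the zeros must be counted in $|z|\le 2R$, not $|z|\le eR$: then $\int_{2R}^{2eR}n(t)\,dt/t\ge n(2R)$ gives $p\le\ln M_f(2eR)$ with coefficient exactly $1$, whereas your choice yields $N\le\ln M_f(2eR)/\ln 2$ and the factor $1/\ln 2>1$ multiplies the $\ln(3e/(2\eta))$ term, overshooting the claimed bound for small $\eta$. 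Second, the factorization should use Blaschke-type factors $\frac{2R(z-a_k)}{(2R)^2-\bar a_k z}$ for the disk $|z|\le 2R$ rather than $1-z/a_k$: these have modulus one on $|z|=2R$, so the quotient $\varphi$ satisfies $M_\varphi(2R)\le M_f(2eR)$ and $|\varphi(0)|\ge 1$ with no loss, and the Carath\'eodory lower bound with radii $R<2R$ produces the coefficient $\frac{2R}{2R-R}=2$ exactly — this is where the additive $2$ comes from, not from an intermediate radius $\frac{3e}{2}R$. Third, the $3e$ in the constant arises from the estimate $|(2R)^2-\bar a_k z|\le 6R^2$ for $|z|\le R$, $|a_k|\le 2R$, combined with Cartan's bound $\prod|z-a_k|>(2\eta R/e)^p$, giving $\ln\frac{2\eta R/e}{3R}=-\ln\frac{3e}{2\eta}$ per zero; the restriction $\eta\le 3e/2$ merely keeps this logarithm nonpositive so that $p\le\ln M_f(2eR)$ can be applied with the correct sign.

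A smaller but genuine error: your bound $|g(z)|\le M_f(2eR)\cdot(\sup_{|z|\le 2eR}|P(z)|)^{-1}$ is inverted — since $g=f/P$, an upper bound for $|g|$ on the outer circle requires a \emph{lower} bound on $|P|$ there, i.e.\ $(\inf|P|)^{-1}$, and controlling that infimum with elementary factors costs an additional $N\ln 2$ which further inflates the constant. If you rework the argument with the zero count at radius $2R$, Blaschke factors, and Carath\'eodory radii $R$ and $2R$, the three contributions assemble to precisely $-(2+\ln\frac{3e}{2\eta})\ln M_f(2eR)$ with no slack.
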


\subsubsection{Relation between maximum and minimum}

The following theorem  (Theorem 30, paragraph 18, Chapter I  \cite{firstab_lit:Eb. Levin}) gives us an instrument to  estimate entire functions from bellow of the order less than one, however  as a lack we can stress  inability  to describe a set of the complex plain where the estimate holds.

\begin{teo}\label{T4.3}Assume that the order $\varrho$ of an entire function less than one, then there exists such a sequence $\{r_{n}\},\;r_{n}\uparrow\infty$ that
$$
\forall\varepsilon>0,\,\exists N(\varepsilon):\;m_{f}(r_{n})>\left[M_{f}(r_{n})\right]^{\cos \pi \varrho-\varepsilon},\,n>N(\varepsilon).
$$
\end{teo}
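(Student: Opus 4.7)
The plan is to derive this via the Hadamard factorisation from Theorem \ref{T4.1} combined with a careful construction of radii avoiding the moduli of the zeros, together with the standard subharmonic comparison argument that produces the extremal constant $\cos \pi \varrho$. First, since the order $\varrho<1$, the genus $p$ of the canonical product satisfies $p\le\varrho<1$, forcing $p=0$; and the polynomial $P(z)$ in Theorem \ref{T4.1} has $\deg P\le\varrho<1$, so it is a constant. Hence
$$
f(z)=Az^{m}\prod_{n}\!\left(1-\frac{z}{a_{n}}\right),
$$
and taking logarithms reduces the claim to a comparison between $\max_{\theta}\sum_{n}\ln|1-re^{i\theta}/a_{n}|$ and the corresponding minimum. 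The elementary identity
$$
|1-re^{i\theta}/a|^{2}=1-2(r/|a|)\cos(\theta-\arg a)+(r/|a|)^{2}
$$
gives the pointwise bounds $\ln|1-r/|a||\le\ln|1-re^{i\theta}/a|\le\ln(1+r/|a|)$, which shows that the upper estimate for $\ln M_{f}(r)$ is cleanly controlled by $\sum_{n}\ln(1+r/|a_{n}|)$, readily bounded by Lemma \ref{L4.2}.

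Second, I would construct the sequence $\{r_{n}\}$. Because $\sum|a_{k}|^{-\varrho-\varepsilon}<\infty$ for every $\varepsilon>0$, one can surround each modulus $|a_{k}|$ with an open interval $(|a_{k}|-\delta_{k},|a_{k}|+\delta_{k})$, $\delta_{k}=|a_{k}|^{1-\varrho-\varepsilon}$, whose union $E\subset(0,\infty)$ has relative density strictly less than $1$. Consequently the complement of $E$ is unbounded, and we pick $r_{n}\uparrow\infty$ from it. On $|z|=r_{n}$ each factor $|1-z/a_{k}|$ then has a non-degenerate lower bound coming from $|r_{n}-|a_{k}||\ge\delta_{k}$, and the contribution of "far" zeros (both $|a_{k}|\ll r_{n}$ and $|a_{k}|\gg r_{n}$) is handled by Abel summation against the counting function $n(t)$, exactly as in the proof of Lemma \ref{L4.2}.

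Third, the main obstacle, and the genuine heart of the theorem, is to extract the sharp constant $\cos\pi\varrho$. The natural route is to write
$$
\sum_{n}\ln\bigl|1-re^{i\theta}/a_{n}\bigr|=\int_{0}^{\infty}\ln\bigl|1-re^{i\theta}/t\bigr|\,dn(t)
$$
after projecting the zeros radially onto $(0,\infty)$, noting that radial projection can only decrease the factors and hence gives a \emph{lower} bound for the left-hand side when $\theta=\pi$ (the minimum of $\ln|1-z/a|$ along the circle is attained for $\arg z=\arg a$). Integration by parts together with the asymptotic $n(t)\le t^{\varrho+\varepsilon}$ and the standard integral
$$
\int_{0}^{\infty}\frac{\ln|1-se^{i\pi}/t|}{t^{\varrho+1}}\,dt=\frac{\pi\cos\pi\varrho}{\varrho\sin\pi\varrho}\cdot s^{\varrho}\cdot(-1)
$$
(and its analogue for the maximum at $\theta=0$) yields the extremal comparison $\ln m_{f}(r_{n})\ge(\cos\pi\varrho-\varepsilon/2)\ln M_{f}(r_{n})$.

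Finally, combining this with the upper bound from Lemma \ref{L4.2} applied with $\rho_{1}=\varrho+\varepsilon/4$ delivers the asserted inequality $m_{f}(r_{n})>\bigl[M_{f}(r_{n})\bigr]^{\cos\pi\varrho-\varepsilon}$ for all $n$ larger than some $N(\varepsilon)$. The most delicate point in executing the plan is the justification of the radial-projection comparison: one must verify that replacing each zero $a_{k}$ by $|a_{k}|$ strictly decreases $\min_{\theta}\ln|1-re^{i\theta}/a_{k}|$ only in a controlled way, and that the error introduced by this replacement is $o(r_{n}^{\varrho})$ under the hypothesis $\varrho<1$; this is where the restriction on the order is essentially used, since for $\varrho\ge 1$ the cosine factor can become non-positive and the conclusion fails.
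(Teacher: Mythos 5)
The paper itself offers no proof of this statement: it is quoted verbatim from Levin (Theorem 30, \S 18, Chapter I of \cite{firstab_lit:Eb. Levin}) and used as a black box, so there is no in-paper argument to compare yours against. Judged on its own merits, your first two steps are sound: for $\varrho<1$ the genus is $0$ and $P$ is constant, and the term-by-term inequalities $|1-re^{i\theta}/a|\ge|1-r/|a||$ and $|1-re^{i\theta}/a|\le 1+r/|a|$ correctly reduce the problem to comparing $\ln\prod_{n}|1-r/|a_{n}||$ with $\ln\prod_{n}(1+r/|a_{n}|)$ for the radially projected zeros.

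The genuine gap is in your third step, which is the entire content of the theorem. You assert that integration by parts against $dn(t)$, the bound $n(t)\le t^{\varrho+\varepsilon}$, and the extremal integral give $\ln m_{f}(r_{n})\ge(\cos\pi\varrho-\varepsilon/2)\ln M_{f}(r_{n})$, but an \emph{upper} bound on $n(t)$ cannot produce a \emph{lower} bound for $\sum_{n}\ln|1-r/t_{n}|$: after integrating by parts the contribution from $t<r$ enters with the sign that requires control of $n(t)$ from below, and with only $n(t)\le t^{\varrho+\varepsilon}$ that sum could be arbitrarily small while $\ln M_{f}(r)$ is not correspondingly small, since the two quantities must be compared for the \emph{same} zero distribution. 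The classical proof instead uses the identity
$$
\int_{0}^{\infty}\bigl[\ln|1-x|-\cos\pi\lambda\,\ln(1+x)\bigr]\,\frac{dx}{x^{1+\lambda}}=0,\qquad 0<\lambda<1,
$$
together with an averaging (Tauberian-type) argument on $\int\bigl[\ln\mu(r)-\cos\pi\lambda\ln M(r)\bigr]r^{-1-\lambda}\,dr$ to extract the sequence $r_{n}$; the $r_{n}$ are produced by this averaging, not by the exceptional-interval construction, which by itself yields only a crude Cartan-type lower bound with no $\cos\pi\varrho$. Separately, the displayed ``standard integral'' is wrong as written: $\ln|1-se^{i\pi}/t|=\ln(1+s/t)$, whose integral against $t^{-\varrho-1}dt$ is $\tfrac{\pi}{\varrho\sin\pi\varrho}s^{\varrho}$ with no cosine factor; the integral carrying $\cos\pi\varrho$ is $\int_{0}^{\infty}\ln|1-s/t|\,t^{-\varrho-1}dt=\tfrac{\pi\cos\pi\varrho}{\varrho\sin\pi\varrho}s^{\varrho}$. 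As it stands the proposal reduces the theorem to its hard core and then asserts that core without proof.
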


\subsubsection{ Proximate order and angular density of zeros}

The scale of the growth admits   further clarifications. As a simplest generalization E.L.  Lindel\"{o}f made a comparison $M_{f}(r)$ with the functions of the type
$$
r^{\varrho}\ln^{\alpha_{1}}r \ln^{\alpha_{2}}_{2}r...\ln^{\alpha_{n}}_{n}r,
$$
where $\ln _{j}r=\ln\ln_{j-1}r,\;\alpha_{j}\in \mathbb{R},\,j=1,2,...,n.$ In order to make the further generalization, it is natural (see \cite{firstab_lit:Eb. Levin}) to define a class of the functions $L(r)$ having  {\it low growth}  and  compare $\ln M_{f}(r)$  with $r^{\varrho}L(r).$ Following the idea, G. Valiron introduced a notion of proximate order of the growth of the entire function $f,$ in accordance with which a function $\varrho(r),$  satisfying the following conditions
$$
\lim\limits_{r\rightarrow \infty}\varrho(r)=\varrho;\,\lim\limits_{r\rightarrow \infty}r\varrho'(r)\ln r=0,
$$
is said to be proximate order if the following relation holds
$$
\sigma_{f}=\overline{\lim \limits_{r\rightarrow \infty}} \,\frac{\ln M_{f}(r)}{r^{\varrho(r)}},\,0<\sigma_{f}<\infty.
$$
In this case the value $\sigma_{f}$ is said to be a type of the function $f$ under the proximate order $\varrho(r).$\\

To guaranty some technical results we need to consider a class of  entire functions  whose zero distributions have a certain type of regularity. We follow the monograph  \cite{firstab_lit:Eb. Levin} where the regularity of the distribution of the zeros is characterized by a certain type of density of the set of zeros.

We will say that the set $\Omega$ of the complex plane has an {\it angular density of index} $$\;\xi(r)\rightarrow\xi,\,r\rightarrow\infty,$$
if for an arbitrary set of values $\phi$ and $\psi\;(0<\phi<\psi\leq 2\pi),$ maybe except of   denumerable sets, there exists the limit
\begin{equation*}
\Delta(\phi,\psi)=\lim\limits_{r\rightarrow \infty}\frac{n(r,\phi,\psi)}{r^{\xi(r)}},
\end{equation*}
where $n(r,\phi,\psi)$ is the number of points of the set $\Omega$ within the sector $|z|\leq r,\;\phi< \mathrm{arg} z<\psi.$ The quantity $\Delta(\phi,\psi)$ will be called the angular density of the set  $\Omega$ within the sector $\phi< \mathrm{arg} z<\psi.$   For a fixed $\phi,$ the relation
$$
\Delta(\psi)-\Delta(\phi)=\Delta(\phi,\psi)
$$
determines, within the additive constant, a nondecreasing function $\Delta(\psi).$  This function is defined for all values of $\psi,$ may be except for a denumerable set of values. It is shown in the monograph \cite[p. 89]{firstab_lit:Eb. Levin}  that the exceptional values of $\phi$ and $\psi$ for which there does  not exist an angular density must be the points of discontinuity of the function $\Delta(\psi).$  A set will be said to be {\it regularly  distributed} relative to $\xi(r)$  if it has an angular density $\xi(r)$ with $\xi$ non-integer.

The asymptotic equalities which we will establish are related to the order of growth. By the asymptotic equation
$$
f(r)\approx \varphi(r)
$$
we will mean the fulfilment of the following condition
$$
[f(r)-\varphi(r)]/r^{\varrho(r)}\rightarrow0,\,r\rightarrow\infty.
$$

Consider the following conditions allowing us to solve technical problems related to estimation of contour integrals.    \\

\noindent $(\mathrm{I})$ There exists a value $d>0$ such that circles of  radii
$$
r_{n}=d|a_{n}|^{1-\frac{\varrho(|a_{n}|)}{2}}
$$
with the centers situated at the points $a_{n}$ do not intersect each other, where $a_{n}.$\\

\noindent $(\mathrm{II})$ The points $a_{n}$ lie inside angles with a common vertex at the origin but with no other points in common, which are such that if one arranges the points of the set $\{a_{n}\}$ within any one of these angles in the order of increasing moduli, then for all points which lie inside the same angle the following relation holds
$$
|a_{n+1}|-|a_{n}|>d|a_{n}|^{1-\varrho(|a_{n}|)},\,d>0.
$$
 The circles $|z-a_{n}|\leq r_{n}$ in the first case, and $|z-a_{n}|\leq d |a_{n}|^{1-\varrho(|a_{n}|)}$ in the second case, will be called the exceptional  circles.\\

The following theorem is a central point of the study. Bellow for the reader convenience, we present   the  Theorem 5 \cite{firstab_lit:Eb. Levin} (Chapter II, $\S$ 1) in the slightly changed form.

\begin{teo}\label{T4.4} Assume that the entire  function $f$  of the proximate order $\varrho(r),$    where $\varrho$ is not integer, is represented by its canonical product i.e.
$$
f(z)= \prod\limits_{n=1}^{\infty}G\left(\frac{z}{a_{n}};p\right),
$$
the set of zeros is regularly distributed relative to the proximate order and  satisfies one of the conditions $(\mathrm{I})$ or $(\mathrm{II}).$    Then outside of the exceptional set of circulus   the entire  function  satisfies the following  asymptotical inequality
$$
\ln |f(re^{i\psi})|\approx H(\psi)r^{\varrho(r)},
$$
where
$$
H(\psi):=\frac{\pi}{\sin \pi \varrho}\int\limits_{\psi-2 \pi}^{\psi}   \cos \varrho (\psi-\varphi-\pi)  d\Delta(\varphi).
$$
\end{teo}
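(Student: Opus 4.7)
The plan is to follow the classical Levin scheme: write $\ln|f(z)|$ as a sum over zeros, express that sum as a Stieltjes integral against the counting function in polar form, exploit the regular distribution hypothesis to replace the counting measure by its limit density $d\Delta(\varphi)$, and finally identify the leading asymptotic with $H(\psi)r^{\varrho(r)}$. The exceptional circles governed by $(\mathrm{I})$ or $(\mathrm{II})$ enter only when one has to control the finitely many terms where $z$ lies close to some zero $a_n$.

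First I would expand, for $z=re^{i\psi}$ and $w=z/a_n$,
\begin{equation*}
\ln|G(w;p)| \;=\; \mathrm{Re}\!\left[\ln(1-w) + \sum_{k=1}^{p}\frac{w^{k}}{k}\right],
\end{equation*}
and sum over $n$. Splitting the sum according to whether $|a_n|\leq 2r$ or $|a_n|>2r$, the outer part is analytic and can be Taylor-expanded, while the inner part is rewritten as
\begin{equation*}
\sum_{|a_n|\leq 2r}\ln|G(z/a_n;p)| \;=\; \int_{0}^{2\pi}\!\!\int_{0}^{2r} \ln\!\Bigl|G\bigl(\tfrac{re^{i\psi}}{te^{i\varphi}};p\bigr)\Bigr|\, dn(t,\varphi),
\end{equation*}
where $n(t,\varphi)$ is the counting function in the sector $\{\arg\zeta\le \varphi,\;|\zeta|\le t\}$. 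Integrating by parts in $t$, substituting $t=r\tau$, and using the definition of the proximate order together with the regular distribution hypothesis $n(r,\phi,\psi)/r^{\varrho(r)}\to \Delta(\phi,\psi)$, the iterated integral should converge, after dividing by $r^{\varrho(r)}$, to
\begin{equation*}
\int_{0}^{2\pi}\!\!\int_{0}^{\infty}\ln\!\Bigl|G\bigl(\tfrac{e^{i\psi}}{\tau e^{i\varphi}};p\bigr)\Bigr|\,\varrho\tau^{-\varrho-1}d\tau\, d\Delta(\varphi).
\end{equation*}
The inner $\tau$-integral is the standard Mellin computation yielding $\pi\cos\varrho(\psi-\varphi-\pi)/\sin\pi\varrho$ (up to the usual shift by $2\pi$), which is exactly the integrand in $H(\psi)$. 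This identifies the leading asymptotic.

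The main obstacle is justifying the passage to the limit: the factor $\ln|G(z/a_n;p)|$ is singular at $a_n=z$, so the Stieltjes integration by parts produces boundary contributions and a near-diagonal piece that a priori is not $o(r^{\varrho(r)})$. This is precisely where conditions $(\mathrm{I})$ or $(\mathrm{II})$ are used. Under either hypothesis, the zeros are spaced at distance at least $\sim|a_n|^{1-\varrho(|a_n|)}$ (up to the angular partition in $(\mathrm{II})$), so excluding the stated exceptional circles one has a lower bound $|z-a_n|\geq c|a_n|^{1-\varrho(|a_n|)/2}$ for every $n$ with $|a_n|\sim r$. Feeding this bound into $\ln|1-z/a_n|$ for those finitely many indices gives a loss of only $O(\ln r)$ per term and at most $O(r^{\varrho(r)/2})$ relevant indices in any annulus of thickness $r^{1-\varrho(r)/2}$, so the total defect is $o(r^{\varrho(r)})$. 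The remaining terms of the canonical product, being uniformly away from their singularities, are handled by a direct application of Lemma~\ref{L4.2} (upper bound) combined with the lower bound obtained by symmetry from $\overline{f(\bar z)}$, in the spirit of Theorem~\ref{T4.3}, but now globally in $\psi$ rather than merely along a sequence $r_n$.

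Finally I would assemble the three contributions: the genuine asymptotic piece yielding $H(\psi)r^{\varrho(r)}$, the tail sum over $|a_n|>2r$ which is $O(r^{p}\!\int_{r}^{\infty}n(t)t^{-p-2}dt)=o(r^{\varrho(r)})$ by the proximate-order version of the argument in Lemma~\ref{L4.2}, and the polynomial/near-diagonal remainder which is $o(r^{\varrho(r)})$ outside the exceptional circles. Combining these gives the stated asymptotic equality.
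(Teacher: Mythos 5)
First, a point of reference: the paper does not prove this statement at all --- Theorem \ref{T4.4} is presented explicitly as Theorem 5 of Levin's monograph (Chapter II, \S 1) ``in the slightly changed form,'' so there is no in-paper argument to compare against. Your sketch follows the only known route, namely Levin's scheme of converting $\sum_n \ln|G(z/a_n;p)|$ into a Stieltjes integral against the two-variable counting function, passing to the angular density, and evaluating the resulting Mellin-type kernel; that architecture is correct, and the identification of the kernel with $\pi\cos\varrho(\psi-\varphi-\pi)/\sin\pi\varrho$ is the right computation.

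There are, however, two concrete defects. First, the bookkeeping of the inner/outer split is inconsistent. The tail $\sum_{|a_n|>2r}\ln|G(z/a_n;p)|$ is not $o(r^{\varrho(r)})$: since $\ln|G(w;p)|\sim -\mathrm{Re}\,w^{p+1}/(p+1)$ for small $|w|$ and $p<\varrho<p+1$, the correct estimate carries the factor $r^{p+1}$ (not $r^{p}$) in front of $\int_{2r}^{\infty}n(t)t^{-p-2}dt$ and the tail is $\Theta(r^{\varrho(r)})$; after normalization it converges to the portion of the Mellin integral over $|w|<1/2$. Correspondingly, the inner sum converges only to the truncated integral over $\tau\in(0,2]$, not to $\int_0^\infty$ as you wrote. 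The two errors happen to cancel in the final formula, but as written one of the two intermediate claims is false. Second, the proposed handling of the non-singular middle-range terms via ``Lemma \ref{L4.2} combined with the lower bound obtained by symmetry from $\overline{f(\bar z)}$, in the spirit of Theorem \ref{T4.3}'' does not work: Lemma \ref{L4.2} gives only the crude one-sided bound $e^{\beta(r)r^{\rho_1}}$ with an unspecified constant, the identity $|\overline{f(\bar z)}|=|f(\bar z)|$ merely relates the value at $\psi$ to the value at $-\psi$ and cannot convert an upper bound into a lower one, and Theorem \ref{T4.3} holds only along an unspecified sequence of radii --- none of this can produce the precise constant $H(\psi)$. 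What is actually required there is the weak convergence of the normalized zero-counting measures to $d\Delta$ tested against the continuous kernel $\ln|G(\cdot;p)|$ away from $w=1$, together with a dyadic count of the zeros lying at distance between the exceptional radius $d|a_n|^{1-\varrho(|a_n|)/2}$ and $\delta r$ from $z$; this intermediate range, where the kernel is of size between $O(\ln\delta)$ and $O(\ln r)$, is precisely where conditions $(\mathrm{I})$/$(\mathrm{II})$ and the continuity points of $\Delta$ enter, and your sketch skips it.
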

The following lemma gives us a key for the technical part of being constructed theory. Although it does not contain implications of any subtle sort it is worth being presented in the expanded form for the reader convenience.
\begin{lem}\label{L4.4}
Assume that $ \varrho\in (0,1/2]$ then the  function $H(\psi)$ is positive if $ \psi\in(-\pi,\pi).$
\end{lem}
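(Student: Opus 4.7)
The plan is to reduce the positivity of $H(\psi)$ to two elementary observations: the sign of the scalar factor $\pi/\sin\pi\varrho$, and the sign of the cosine kernel under the range constraint imposed by the interval of integration. The substitution $\theta=\psi-\varphi-\pi$ (which sends $\varphi\in[\psi-2\pi,\psi]$ bijectively onto $\theta\in[-\pi,\pi]$) reveals the essential geometric content of the bound.

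First I would observe that for $\varrho\in(0,1/2]$ one has $\pi\varrho\in(0,\pi/2]$, hence $\sin\pi\varrho>0$, so the prefactor $\pi/\sin\pi\varrho$ is strictly positive. Next, after the change of variable, the argument of the cosine becomes $\varrho\theta$ with $\theta\in[-\pi,\pi]$, so that
\[
|\varrho\theta|\le\varrho\pi\le\pi/2.
\]
This forces $\cos(\varrho\theta)\ge0$ throughout the interval of integration, with strict positivity $\cos(\varrho\theta)\ge\cos(\varrho\pi)>0$ in the case $\varrho<1/2$, and with vanishing only at the two endpoints $\theta=\pm\pi$ (equivalently $\varphi=\psi$ and $\varphi=\psi-2\pi$) in the boundary case $\varrho=1/2$. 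Since $\Delta$ is nondecreasing (it is the angular density distribution of a regularly distributed zero set), the Riemann–Stieltjes measure $d\Delta$ is a nonnegative Borel measure, and the integrand $\cos\varrho(\psi-\varphi-\pi)\,d\Delta(\varphi)$ is nonnegative on the interval of integration. Consequently $H(\psi)\ge0$.

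For strict positivity, I would argue as follows. The canonical product under consideration has a nontrivial zero set, so the total mass $\Delta(\psi)-\Delta(\psi-2\pi)$ — which equals the full angular density over one turn — is strictly positive. In the case $\varrho<1/2$ the kernel is bounded below by the positive constant $\cos(\varrho\pi)$, so
\[
H(\psi)\ge\frac{\pi\cos(\varrho\pi)}{\sin\pi\varrho}\bigl[\Delta(\psi)-\Delta(\psi-2\pi)\bigr]>0.
\]
In the critical case $\varrho=1/2$, the kernel $\cos\bigl(\tfrac{1}{2}(\psi-\varphi-\pi)\bigr)$ vanishes only at the two endpoints $\varphi=\psi,\,\psi-2\pi$, so unless $d\Delta$ were concentrated entirely at these two specific points — a situation excluded by the hypothesis of regular distribution relative to the proximate order, which guarantees a genuinely distributed angular density — the integral remains strictly positive.

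The main obstacle I anticipate is this boundary case $\varrho=1/2$: the argument above shows positivity under the standing regularity assumption of the preceding framework, but one should make clear that the statement of the lemma is to be read in conjunction with the setting of Theorem \ref{T4.4}, where $\Delta$ arises as the angular density function of a regularly distributed zero set and hence cannot be purely atomic at two prescribed angles depending on the free parameter $\psi$. Apart from this delicate point, the proof is a direct consequence of the sign analysis of the cosine factor together with the positivity of $d\Delta$.
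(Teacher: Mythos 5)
Your proof is correct and follows essentially the same route as the paper's: both reduce the claim to the observation that the cosine kernel has argument of modulus at most $\varrho\pi\le\pi/2$ over the integration range, so it is nonnegative, and then invoke the monotonicity of $\Delta$ (your substitution $\theta=\psi-\varphi-\pi$ is just a cleaner packaging of the paper's rewriting of the integral over $[0,2\pi]$ via $|\psi-\varphi|$). Your additional discussion of strict positivity, including the boundary case $\varrho=1/2$, goes a step beyond the paper, which stops at nonnegativity of the integrand and monotonicity of $\Delta$ and simply declares the desired result.
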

\begin{proof}
Taking  into account the facts $\cos \varrho (\psi-\varphi-\pi)=\cos \varrho (|\psi-\varphi|-\pi),\,\psi-2\pi<\varphi<\psi,\;\;\cos \varrho (|\psi-\varphi|-\pi)=\cos \varrho (|\psi-(\varphi+2\pi)|-\pi),$ we obtain the following form
$$
H(\psi):=\frac{\pi}{\sin \pi \varrho}\int\limits_{0}^{2 \pi}   \cos \varrho (|\psi-\varphi|-\pi)  d\Delta(\varphi).
$$
Having noticed the following correspondence between sets $\varphi\in [0,\psi]\Rightarrow\xi\in [\varrho(\psi-\pi),-\varrho\pi],$ $\varphi\in [\psi,\psi+\pi]\Rightarrow\xi\in [ -\varrho\pi,0],$ $\varphi\in [\psi+\pi, 2\pi]\Rightarrow\xi\in [  0,\varrho(\pi-\psi)],$ where $\xi:=\varrho (|\psi-\varphi|-\pi),$ we conclude that $\cos \varrho (|\psi-\varphi|-\pi) \geq 0,\,\varphi\in [0,2\pi].$ Taking into account the fact that the function $\Delta(\varphi)$ is non-decreasing, we obtain the desired result.
\end{proof}

\begin{lem}\label{L4.5}  Assume that the entire function $f$ is of the proximate   order $\varrho(r),\,\varrho\in (0,1/2],$    maps the ray  $\mathrm{arg}\,z=\theta_{0}$   within  a sector $\mathfrak{L}_{0}(\zeta),\,0<\zeta<\pi/2,$   the set of zeros is regularly distributed relative to the proximate order and  satisfies one of the conditions $(\mathrm{I})$ or $(\mathrm{II}),$   there exists $\varsigma>0$ such that  the    angle  $ \theta_{0}-\varsigma<arg\,z  <\theta_{0}+\varsigma  $    do not contain the zeros  with the sufficiently large absolute value.
 Then,  for a sufficiently large value $r,$ the following relation holds
$$
\forall \varepsilon>0,\,\exists N(\varepsilon):\mathrm{Re} f(z)>    e^{ (H(\theta_{0})-\varepsilon)r^{\varrho(r)}},\,r>N(\varepsilon),\, \mathrm{arg}\,z=\theta_{0}.
$$
\end{lem}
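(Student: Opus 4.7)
The plan is to decouple the argument into two nearly independent ingredients: a lower bound for $|f(re^{i\theta_0})|$ coming from Theorem \ref{T4.4}, and a lower bound for $\mathrm{Re} f(z)$ in terms of $|f(z)|$ coming from the sectorial hypothesis on the image of the ray.

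First I would observe that the hypothesis $f\bigl(\{z:\arg z=\theta_{0}\}\bigr)\subset \mathfrak{L}_{0}(\zeta)$ with $\zeta<\pi/2$ implies
\[
\mathrm{Re}\, f(re^{i\theta_{0}})\,\geq\, \cos\zeta\,\bigl|f(re^{i\theta_{0}})\bigr|,\qquad r>0,
\]
since every $w\in\mathfrak{L}_{0}(\zeta)$ satisfies $\mathrm{Re}\,w\geq\cos\zeta\,|w|$. Hence the problem reduces to proving the asymptotic lower bound
\[
|f(re^{i\theta_{0}})|\,>\,e^{(H(\theta_{0})-\varepsilon')\,r^{\varrho(r)}}\qquad(r>N(\varepsilon'))
\]
for any prescribed $\varepsilon'>0$; once this is achieved, choosing $\varepsilon'<\varepsilon$ and absorbing the factor $\cos\zeta$ into the exponential (which is possible because $H(\theta_{0})>0$ by Lemma \ref{L4.4}, using $\varrho\in(0,1/2]$ and $\theta_{0}\in(-\pi,\pi)$) yields the conclusion.

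The second step is to justify applying Theorem \ref{T4.4} along the ray $\arg z=\theta_{0}$. Theorem \ref{T4.4} delivers the asymptotic equality $\ln|f(re^{i\psi})|\approx H(\psi)\,r^{\varrho(r)}$ outside an exceptional union of discs centred at the zeros $a_n$. The key verification is that for $r$ large enough the point $re^{i\theta_{0}}$ does not lie in any such disc. By the last hypothesis there exists $R_{0}$ with the property that all zeros $a_n$ with $|a_n|>R_{0}$ satisfy $|\arg a_n-\theta_{0}|\geq\varsigma$, whence
\[
\mathrm{dist}\bigl(a_n,\{z:\arg z=\theta_{0}\}\bigr)\,\geq\,|a_n|\sin\varsigma .
\]
Under hypothesis $(\mathrm{I})$ the exceptional radius $r_n=d|a_n|^{1-\varrho(|a_n|)/2}$ satisfies $r_n=o(|a_n|)$ as $n\to\infty$ (because $1-\varrho(|a_n|)/2<1$), so $r_n<|a_n|\sin\varsigma$ for all but finitely many $n$; under hypothesis $(\mathrm{II})$ the corresponding radii $d|a_n|^{1-\varrho(|a_n|)}$ obey an even stronger inequality. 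The finitely many exceptional discs associated with the $a_n$ of small modulus can affect the ray only on a bounded segment, so for $r$ exceeding some $R_{1}$ the point $re^{i\theta_{0}}$ lies outside every exceptional disc.

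Combining these two steps gives, for every $\varepsilon'>0$ and $r>R_{1}$ sufficiently large,
\[
\ln|f(re^{i\theta_{0}})|\,>\,\bigl(H(\theta_{0})-\varepsilon'\bigr)\,r^{\varrho(r)},
\]
hence
\[
\mathrm{Re}\,f(re^{i\theta_{0}})\,\geq\,\cos\zeta\cdot e^{(H(\theta_{0})-\varepsilon')r^{\varrho(r)}}\,>\,e^{(H(\theta_{0})-\varepsilon)r^{\varrho(r)}},
\]
where the last inequality holds as soon as $\varepsilon'<\varepsilon$ and $r$ is large enough for the exponential growth to swallow the constant $\cos\zeta$ (again using $H(\theta_{0})>0$). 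The main obstacle is the geometric verification that the ray escapes the exceptional discs; everything else is a direct combination of Theorem \ref{T4.4}, Lemma \ref{L4.4}, and the sectorial hypothesis.
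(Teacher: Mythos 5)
Your proposal is correct and follows essentially the same route as the paper: reduce $\mathrm{Re}\,f$ to $|f|$ via the sectorial hypothesis, apply Theorem \ref{T4.4} along the ray, and check geometrically that the ray eventually avoids the exceptional discs because their radii are $o(|a_n|)$ while the zeros stay at angular distance $\varsigma$ from the ray. The only step the paper makes explicit that you elide is that $f$ must first be written as $Cz^{m}\prod G(z/a_{n};p)$ (with $\deg P=0$, forced by $\varrho\le 1/2$) so that Theorem \ref{T4.4}, which is stated for canonical products, actually applies; this is a routine appeal to Theorem \ref{T4.1} and does not affect the substance of your argument.
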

\begin{proof} Using Theorem \ref{T4.1}, we obtain the following representation
$$
f(z)=Cz^{m} \prod\limits_{n=1}^{\infty}G\left(\frac{z}{a_{n}};p\right) ,
$$
here we should remark that    $\mathrm{deg} P(z)=0.$   Let us show that the proximate order of the canonical product of the entire function is the same, we have
$$
M_{f}(r)=Cr^{m} M_{F}(r),\,F(z)=\prod\limits_{n=1}^{\infty}G\left(\frac{z}{a_{n}};p\right).
$$
Therefore in accordance with the definition of proximate order, we have
$$
 \overline{\lim \limits_{r\rightarrow \infty}}  \left\{\frac{m\ln r+\ln C }{r^{\varrho(r)}}  +\frac{\ln M_{F}(r)}{ r^{\varrho(r)}}\right\}  =\sigma_{f},\;0<\sigma_{f}<\infty,
$$
from what follows easily the fact $0<\sigma_{F}<\infty,$ moreover $\sigma_{F}=\sigma_{f}.$
 Note that due to the condition that guarantees that the image of the ray  $\mathrm{arg}\,z=\theta_{0}$ belongs to a sector in the right half-plane, we get
$$
\mathrm{Re}f(z)\geq (1+\tan \zeta)^{-1/2} |f(z)|,\,r=|z|,\; \mathrm{arg}\,z=\theta_{0}.
$$
Applying Theorem \ref{T4.4}  we conclude, that excluding  the intersection of the  exceptional set of circulus with the ray  $ \mathrm{arg}\,z=\theta_{0},$  the following relation holds for an arbitrary small $\varepsilon>0$ and the corresponding  sufficiently large values $r$
$$
|f(z)|=C r^{m} \left|\prod\limits_{n=1}^{\infty}G\left(\frac{z}{a_{n}};p\right)\right|\geq  r^{m}e^{ (H(\theta_{0})-\varepsilon)r^{\varrho(r)}},
$$
 where $H(\theta_{0})>0$ in accordance with Lemma \ref{L4.4}.  It is clear that if we show that the   intersection of the ray  $\mathrm{arg}\,z=\theta_{0}$ with the exceptional set of circulus  is empty, then we complete the proof. Note that the character of the zeros distribution   allows us to claim that is true. In accordance with the lemma conditions,  it suffices to  consider the neighborhoods  of the zeros defined as follows    $|z-a_{n}|< d |a_{n}|^{1-\varrho(|a_{n}|)},\,|z-a_{n}|< d |a_{n}|^{1-\varrho(|a_{n}|)/2}$ and note that  $0< \varrho(|a_{n}|)<1$ for a sufficiently large number $n\in \mathbb{N},$ since
 $
  \varrho(|a_{n}|)\rightarrow\varrho,\,n\rightarrow\infty.
 $
 Here, we ought to remind that the zeros are arranged in order with their absolute value growth.
Thus, using simple properties of the power function with the positive exponent less than one, we obtain the fact that the intersection of the  exceptional set of circulus with the ray  $ \mathrm{arg}\,z=\theta_{0}$ is empty for a sufficiently large $n\in \mathbb{N}.$
\end{proof}

\section{ Abell-Lidskii   Series expansion}

 In this subsection, we reformulate  results obtained by Lidskii \cite{firstab_lit:1Lidskii} in a more  convenient  form applicable to the reasonings of this paper.   However,  let us begin our narrative.  Throughout the paper we well consider a sectorial  operator $W$ with a discrete spectrum which inverse $B=W^{-1}$ (compact operator) belongs to the Schatten class $\mathfrak{S}_{p},\;0<p<\infty.$ We denote the eigenvalues $\lambda_{n}:=\lambda_{n}(W),\,\mu_{n}:=\mu_{n}(B),$ i.e. $\lambda_{n}=1/\mu_{n},\,n\in \mathbb{N}.$ In accordance with the Hilbert theorem
  (see \cite{firstab_lit:Riesz1955}, \cite[p.32]{firstab_lit:1Gohberg1965})   the spectrum of an arbitrary  compact operator $B$  consists of the so-called normal eigenvalues, it gives us an opportunity to consider a decomposition to a direct sum of subspaces
\begin{equation}\label{4.9}
 \mathfrak{H}=\mathfrak{N}_{q} \oplus  \mathfrak{M}_{q},
 \end{equation}
where both  summands are   invariant subspaces regarding the operator $B,$  the first one is  a finite dimensional root subspace corresponding to the eigenvalue $\mu_{q}$ and the second one is a subspace  wherein the operator  $B-\mu_{q} I$ is invertible.  Let $n_{q}$ is a dimension of $\mathfrak{N}_{q}$ and let $B_{q}$ is the operator induced in $\mathfrak{N}_{q}.$ We can choose a basis (Jordan basis) in $\mathfrak{N}_{q}$ that consists of Jordan chains of eigenvectors and root vectors  of the operator $B_{q}.$  Each chain $e_{q_{\xi}},e_{q_{\xi}+1},...,e_{q_{\xi}+k},\,k\in \mathbb{N}_{0},$ where $e_{q_{\xi}},\,\xi=1,2,...,m $ are the eigenvectors  corresponding   to the  eigenvalue $\mu_{q}$   and other terms are root vectors,   can be transformed by the operator $B$ in  accordance  with  the following formulas
\begin{equation}\label{4.10}
Be_{q_{\xi}}=\mu_{q}e_{q_{\xi}},\;Be_{q_{\xi}+1}=\mu_{q}e_{q_{\xi}+1}+e_{q_{\xi}},...,Be_{q_{\xi}+k}=\mu_{q}e_{q_{\xi}+k}+e_{q_{\xi}+k-1}.
\end{equation}
Considering the sequence $\{\mu_{q}\}_{1}^{\infty}$ of the eigenvalues of the operator $B$ and choosing a  Jordan basis in each corresponding  space $\mathfrak{N}_{q},$ we can arrange a system of vectors $\{e_{i}\}_{1}^{\infty}$ which we will call a system of the root vectors or following  Lidskii  a system of the major vectors of the operator $B.$
Assume that  $e_{1},e_{2},...,e_{n_{q}}$ is  the Jordan basis in the subspace $\mathfrak{N}_{q},$ let us prove that  (see \cite[p.14]{firstab_lit:1Lidskii})  there exists a  corresponding biorthogonal basis $g_{1},g_{2},...,g_{n_{q}}$ in the subspace $\mathfrak{M}_{q}^{\perp}.$ It is easy to prove that the subspace $\mathfrak{M}_{q}^{\perp}$ has the same dimension equals to  $n_{q}.$ For this purpose,
 assume that the vectors $f_{j}\in\mathfrak{M}_{q}^{\perp},\,j=1,2,...,l$ are linearly independent, then using the decomposition of the space $\mathfrak{H}$ to the direct sum
$f_{j}=a_{j}+b_{j},\,a_{j}\in \mathfrak{N}_{q},\,b_{j}\in \mathfrak{M}_{q},$  we get
 $$
 \sum\limits_{j=1}^{l}f_{j}\alpha_{j}=\sum\limits_{j=1}^{l}a_{j}\alpha_{j}+\sum\limits_{j=1}^{l}b_{j}\alpha_{j}\neq0,
 $$
where   $\{\alpha_{j}\}_{1}^{l}\subset \mathbb{C}$ is an arbitrary non-zero set.  It implies that
$$
\sum\limits_{j=1}^{l}a_{j}\alpha_{j}\neq0,
$$
for if we assume the contrary then we will  come to the contradiction.
 Hence $\dim \mathfrak{M}_{q}^{\perp}\!\leq \dim \mathfrak{N}_{q}.$ Using the same reasonings, we obtain the fact $\dim \mathfrak{M}_{q}^{\perp}\!\geq \dim \mathfrak{N}_{q}.$ Thus, we obtain the desired result. Now, let us choose an element $e_{p}\in\{e_{i}\}_{1}^{n_{q}}$ and consider $(n_{q}-1)$ -- dimensional space $\mathfrak{N}^{(p)}_{q}$ generated by the set $\{e_{i}\}_{1}^{n_{q}}\setminus e_{p},$ then let us choose an arbitrary element $g_{p}\neq0$ belonging to the orthogonal complement of the set
 $
 \mathfrak{N}^{(p)}_{q}\oplus \mathfrak{M}_{q}.
 $
It is clear that $g_{p}\in \mathfrak{M}_{q}^{\perp}$ since in accordance with the given definition the element $g_{p}$ is orthogonal to the set $\mathfrak{M}_{q}.$ It is also clear that in accordance with the definition $(e_{j},g_{p})_{\mathfrak{H}}=0,\,j\neq p.$ Note that  $(e_{p},g_{p})_{\mathfrak{H}}\neq0,$ for if we assume the contrary, then using the decomposition \eqref{4.9}, we get $(g_{p},f)_{\mathfrak{H}}=0,\,f\in \mathfrak{H}$ and as a result we obtain the contradiction, i.e. $g_{p}=0.$ It is clear that we can choose $g_{p}$ so that  $(e_{p},g_{p})_{\mathfrak{H}}=1.$  Let us show that the constructed in this way system of the elements $\{ g_{i}\}_{1}^{n_{q}}$ are linearly independent. It follows easily from the implication
$$
\sum\limits_{j=1}^{n_{q}}g_{j}\alpha_{j}=0,\Rightarrow \sum\limits_{j=1}^{n_{q}}\alpha_{j}(g_{j},e_{p})=0,\,\Rightarrow \alpha_{p}=0,\,  p\in\{1,2,...,n_{q}\}.
$$
Therefore, taking into account the proved above fact $\dim \mathfrak{M}_{q}^{\perp}=n_{q},$ we conclude that the system $\{ g_{i}\}_{1}^{n_{q}}$ is a basis in $  \mathfrak{M}_{q}^{\perp}.$ Let us show that the system   $\{ g_{i}\}_{1}^{n_{q}}$ consists of the Jordan chains of the operator $B^{\ast}$ which correspond  to the Jordan chains  \eqref{4.10}. Note that the space $\mathfrak{M}_{q}^{\perp}$  is an invariant subspace of the operator $B^{\ast}$ since it is orthogonal to the invariant subspace of the operator $B.$ Using the denotation $B^{(q)}$ for the operator $B$   restriction   on the invariant subspace $\mathfrak{N}_{q},$  let us denote by $B^{(q)\ast}$ a restriction of the operator $B^{\ast}$ on the subspace  $\mathfrak{M}_{q}^{\perp}.$ Assume that $\mathrm{B}(\mu_{q})$ is a matrix of the operator $B^{(q)}$ in the basis $\{ e_{i}\}_{1}^{n_{q}},$ then using conditions \eqref{4.10}, we conclude that it has a Jordan   form, i.e. it is  a block diagonal matrix, where each Jordan block is represented by a   matrix in the normal Jordan form, i.e.
$$
\mathrm{B}(\mu_{q})=  \begin{pmatrix} \mathrm{b}_{ q_{1} }(\mu_{q})&  ...&...&  ...&...&...\\
  ...&  \mathrm{b}_{ q_{2} }(\mu_{q})&...&  ...&...&...\\
...& ... & \mathrm{b}_{ q_{3} }(\mu_{q})&...&  ...&... \\
...\\
\\ ...&...&...&...&...& \mathrm{b}_{ q_{m} }(\mu_{q})
\end{pmatrix},\;
\mathrm{b}_{ q_{\xi} }(\mu_{q})=  \begin{pmatrix} \mu_{q}&  1&0&  0&0&...\\
  0&  \mu_{q}&1&  0&0&...\\
0& 0 &\mu_{q}&1&  0&... \\
...\\
\\ 0&...&...&0&0&\mu_{q}
\end{pmatrix},
$$
$$
\dim \mathrm{b}_{ q_{\xi} }(\mu_{q}) =k(q_{\xi})+1, \;\xi=1,2,...,m,
$$
where $m:=m(q)$ is a geometrical multiplicity of the $q$-th eigenvalue,  $k(q_{\xi})+1$ is a number of elements in the $q_{\xi}$-th Jordan chain.
Since we have   $B^{(q)}:\mathfrak{N}_{q}\rightarrow \mathfrak{N}_{q},\;B^{(q)\ast}:\mathfrak{M}_{q}^{\perp}\rightarrow \mathfrak{M}_{q}^{\perp},$ then
$$
B^{(q)}e_{i}=\sum\limits_{j=1}^{n_{q}}\alpha_{ji}e_{j},\;B^{(q)\ast}g_{j}=\sum\limits_{i=1}^{n_{q}}\gamma_{ij}e_{i},
$$
where $\{\alpha_{ji}\},\,\{\gamma_{ij}\},\,i,j=1,2,...,n_{q}$  are the matrices of the operators $B^{(q)},\,B^{(q)\ast}$ in the bases   $\{ e_{i}\}_{1}^{n_{q}},\,\{ g_{i}\}_{1}^{n_{q}}$ respectively. On the other hand, we have the obvious reasonings
$$
\alpha_{ji}=(B^{(q)}e_{i},e_{j})_{\mathfrak{H}}=(B e_{i},e_{j})_{\mathfrak{H}}=(  e_{i},B^{\ast}e_{j})_{\mathfrak{H}}=(  e_{i},B^{(q)\ast}e_{j})_{\mathfrak{H}}=\bar{\gamma}_{ij}.
$$
Therefore, we conclude that  the operator $B^{(q)\ast}$ is represented by a matrix
$$
\mathrm{B}^{\top}(\bar{\mu}_{q})=  \begin{pmatrix} \mathrm{b}^{\top}_{ q_{1} }(\bar{\mu}_{q})&  ...&...&  ...&...&...\\
  ...&  \mathrm{b}^{\top}_{ q_{2} }(\bar{\mu}_{q})&...&  ...&...&...\\
...& ... & \mathrm{b}^{\top}_{ q_{3} }(\bar{\mu}_{q})&...&  ...&... \\
...\\
\\ ...&...&...&...&...& \mathrm{b}^{\top}_{ q_{m} }(\bar{\mu}_{q})
\end{pmatrix}.
$$
Using this representation, we conclude that $\{ g_{i}\}_{1}^{n_{q}}$ consists of the Jordan chains of the operator $B^{\ast}$ which correspond to the Jordan chains  \eqref{4.10} due to the following formula
$$
B^{\ast}g_{q_{\xi}+k}= \bar{\mu}_{q} g_{q_{\xi}+k},\;B^{\ast}g_{q_{\xi}+k-1}= \bar{\mu}_{q} g_{q_{\xi}+k-1}+g_{q_{\xi}+k},...,
B^{\ast}g_{q_{\xi}}= \bar{\mu}_{q} g_{q_{\xi}}+g_{q_{\xi}+1}.
$$
 Let us show that   $\mathfrak{N}_{i}\subset \mathfrak{M}_{j},\,i\neq j$ for this purpose note that in accordance with  the property $P_{\mu_{i}}P_{\mu_{j}}=0,\,i\neq j,$ where $P_{\mu_{i}}$  is a Riesz projector (integral) corresponding to the eigenvalue $\mu_{i}$ (see \cite{firstab_lit:1Gohberg1965} Chapter I \S 1.3), and  the  property
   $P_{\mu_{i}}f=f,\,f\in\mathfrak{N}_{i},$   we have
$$
P_{\mu_{j}}f=P_{\mu_{j}}P_{\mu_{i}}f=0,\;f\in \mathfrak{N}_{i}.
$$
Combining this relation with the decomposition \eqref{4.9},   we obtain the desired result. Now, taking into account relation  \eqref{4.9}, we conclude that  the set  $\{g_{\nu}\}^{n_{j}}_{1},\,j\neq i$  is orthogonal to the set $ \{e_{\nu}\}_{1}^{n_{i}}.$  Gathering the sets $\{g_{\nu}\}^{n_{j}}_{1},\,j=1,2,...,$ we can obviously create a biorthogonal system $\{g_{n}\}_{1}^{\infty}$ with respect to the system of the major vectors of the operator $B.$ It is rather reasonable to call it as  a system of the major vectors of the operator $B^{\ast}.$ Note that if an element $f\in\mathfrak{H}$ allows a decomposition in the strong sense
$$
f=\sum\limits_{n=1}^{\infty}e_{n}c_{n},\,c_{n}\in \mathbb{C},
$$
then by virtue of  the biorthogonal  system existing, we can claim that such a representation is unique. Further, let us come to the previously made  agrement that the vectors in each Jordan chain are arranged in the same order as in \eqref{4.10}, i.e.  at the first place there stands an eigenvector. It is clear that under such an assumption,  we have
$$
c_{q_{\xi}+i}=\frac{(f,g_{q_{\xi}+k-i})}{(e_{q_{\xi}+i},g_{q_{\xi}+k-i})},\,0\leq i\leq k(q_{\xi}),
$$
where $k(q_{\xi})+1$ is a number of elements in the $q_{\xi}$-th Jourdan chain. In particular, if the vector $e_{q_{\xi}}$ is included to the major system solo, there does not exist a root vector corresponding to the corresponding eigenvalue, then
$$
c_{q_{\xi}}=\frac{(f,g_{q_{\xi}})}{(e_{q_{\xi}},g_{q_{\xi}})}.
$$
Note that in accordance with the property of the biorthogonal sequences, we can expect that the denominators equal to one in the previous two relations.
Define the operators
$$
\mathcal{P} _{q}(\alpha,t)f=\sum\limits_{\xi=1}^{m(q)}\sum\limits_{i=0}^{k(q_{\xi})}e_{q_{\xi}+i}c^{(\alpha)}_{q_{\xi}+i}(t),
$$
where   $k(q_{\xi})+1$ is a number of elements in the $q_{\xi}$-th Jourdan chain,  $m(q)$ is a geometrical multiplicity of the $q$-th eigenvalue,
\begin{equation*}
c^{(\alpha)}_{q_{\xi}+i}(t)=   e^{ - \lambda^{\alpha}_{q}   t}\sum\limits_{m=0}^{k(q_{\xi})-i}H_{m}(\alpha, \lambda_{q},t)c_{q_{\xi}+i+m},\,i=0,1,2,...,k(q_{\xi}),
\end{equation*}
$\lambda_{q}=1/\mu_{q}$ is a characteristic number corresponding to $e_{q_{\xi}},$
$$
H_{m}( \alpha,\lambda,t ):=  \frac{e^{ \lambda^{\alpha}  t}}{m!} \cdot\lim\limits_{\zeta\rightarrow 1/\lambda }\frac{d^{m}}{d\zeta^{\,m}}
\left\{ e^{- \zeta^{-\alpha} t}\right\} ,\;m=0,1,2,...\,,\,.
$$
 Using the fact $H_{m}( \alpha,\lambda,t )\rightarrow 0,\,t\rightarrow +0,\,m>0,$ we get
$
c_{q_{\xi}+j} (t)\rightarrow  c_{q_{\xi}+j},\,t\rightarrow +0.
$
Since we deal with the lacunae method elaborated by Lidskii V.B. we also use the following splitting
\begin{equation*}
 \mathcal{P} _{\nu}(\alpha,t)f:= \sum\limits_{q=N_{\nu}+1}^{N_{\nu+1}}\mathcal{P} _{q}(\alpha,t)f.
\end{equation*}

\begin{lem}\label{L4.3} Assume that   $B$ is a compact operator, then  in the pole $\lambda_{q}$ of the operator  $(I-\lambda B)^{-1},$ the residue of the vector  function $e^{-\lambda^{\alpha}t}B(I-\lambda B)^{-1}\!f,\,(f\in \mathfrak{H}),\,\alpha>0$  equals to
$$
-\sum\limits_{\xi=1}^{m(q)}\sum\limits_{i=0}^{k(q_{\xi})}e_{q_{\xi}+i}c^{(\alpha)}_{q_{\xi}+i}(t).
$$
\end{lem}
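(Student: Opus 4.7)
The plan is to localize at the eigenvalue $\mu_q$ via the Riesz decomposition $\mathfrak{H} = \mathfrak{N}_q \oplus \mathfrak{M}_q$ introduced in~\eqref{4.9}, writing $f = f_1 + f_2$ accordingly. Since $B - \mu_q I$ is invertible on $\mathfrak{M}_q$, the operator $(I - \lambda B)^{-1}$ is holomorphic at $\lambda = \lambda_q$ when restricted to $\mathfrak{M}_q$, so $B(I - \lambda B)^{-1} f_2$ contributes nothing to the residue and the whole calculation reduces to the finite-dimensional invariant subspace $\mathfrak{N}_q$, on which $f_1 = \sum_\xi \sum_{i=0}^{k(q_\xi)} c_{q_\xi + i}\, e_{q_\xi + i}$ by the expansion formulas recorded above using the biorthogonal system.

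Next I would convert to the standard resolvent. From $(I - \lambda B) = -\lambda(B - \zeta I)$ with $\zeta = 1/\lambda$ one obtains the algebraic identity $B(I - \lambda B)^{-1} = -\zeta I - \zeta^2 R_B(\zeta)$, where $R_B(\zeta) = (B - \zeta I)^{-1}$. Performing the change of variable $\zeta = 1/\lambda$ in the residue integral (the mapping is conformal at $\lambda_q$, hence orientation-preserving, and $d\lambda = -\zeta^{-2}\,d\zeta$) and observing that the summand $-\zeta\, e^{-\zeta^{-\alpha}t}\, I$ is holomorphic at $\zeta = \mu_q$, one gets
\begin{equation*}
\mathrm{Res}_{\lambda = \lambda_q}\bigl\{e^{-\lambda^{\alpha}t}\, B(I-\lambda B)^{-1} f\bigr\} \;=\; \mathrm{Res}_{\zeta = \mu_q}\bigl\{e^{-\zeta^{-\alpha}t}\, R_B(\zeta) f_1\bigr\}.
\end{equation*}

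The next step is to expand the standard resolvent in the Jordan basis of $\mathfrak{N}_q$. Writing $B|_{\mathfrak{N}_q} = \mu_q I + N$ with the nilpotent shift $Ne_{q_\xi} = 0$, $Ne_{q_\xi + i} = e_{q_\xi + i - 1}$ dictated by~\eqref{4.10}, the geometric series yields, on each chain,
\begin{equation*}
R_B(\zeta)\, e_{q_\xi + i} \;=\; -\sum_{m=0}^{i} \frac{1}{(\zeta - \mu_q)^{m+1}}\, e_{q_\xi + i - m}.
\end{equation*}
Re-indexing so that the coefficient of $e_{q_\xi + j}$ collects contributions from $e_{q_\xi + j + m}$ with $m = 0, 1, \ldots, k(q_\xi) - j$, the integrand becomes a sum of principal parts of order $m+1$ at $\mu_q$ multiplied by the factor $e^{-\zeta^{-\alpha}t}$, which is holomorphic there.

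Finally, the standard residue formula for a pole of order $m+1$ gives
\begin{equation*}
\mathrm{Res}_{\zeta = \mu_q}\frac{e^{-\zeta^{-\alpha}t}}{(\zeta - \mu_q)^{m+1}} \;=\; \frac{1}{m!}\lim_{\zeta\to\mu_q}\frac{d^{m}}{d\zeta^{m}}\bigl\{e^{-\zeta^{-\alpha}t}\bigr\} \;=\; e^{-\lambda_q^{\alpha}t}\, H_{m}(\alpha, \lambda_q, t),
\end{equation*}
by the very definition of $H_m$ in the excerpt. Assembling the double sum reproduces exactly $-\sum_{\xi=1}^{m(q)}\sum_{i=0}^{k(q_\xi)} e_{q_\xi + i}\, c^{(\alpha)}_{q_\xi + i}(t)$. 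I expect the main technical obstacle to be the bookkeeping in the change of variable $\zeta = 1/\lambda$: one must carefully verify that sign, orientation, and Jacobian conspire to produce the claimed residue in the $\zeta$-plane and that the auxiliary term $-\zeta I$ arising from the identity for $B(I - \lambda B)^{-1}$ is indeed holomorphic at $\zeta = \mu_q$ (which relies on $\mu_q \neq 0$, guaranteed because $\mu_q$ is a nonzero eigenvalue of the compact operator $B$).
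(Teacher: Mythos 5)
Your proposal is correct and follows essentially the same route as the paper's proof: Riesz decomposition $\mathfrak{H}=\mathfrak{N}_{q}\oplus\mathfrak{M}_{q}$ to discard $f_{2}$, the algebraic identity converting $B(I-\lambda B)^{-1}$ to the standard resolvent under $\zeta=1/\lambda$, the Jordan-chain expansion of the resolvent (your Neumann series in the nilpotent part is the same formula the paper verifies by telescoping in \eqref{4.12}), and the higher-order residue formula producing $H_{m}.$ Your explicit attention to the orientation (conformality of $\lambda\mapsto 1/\lambda$ at $\lambda_{q}\neq 0$) and to the role of $\mu_{q}\neq 0$ is if anything slightly more careful than the paper's own bookkeeping, and all signs check out.
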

\begin{proof} Consider an integral
$$
 \mathfrak{I}=\frac{1}{2\pi i}\oint\limits_{\vartheta_{q}} e^{- \lambda^{\alpha} t}B(I-\lambda B)^{-1}fd\lambda ,\,f\in \mathrm{R}(B),
$$
where the interior of the contour $\vartheta_{q}$ does not contain any poles of the operator $(I-\lambda B)^{-1},$  except of $\lambda_{q}.$ Assume that  $\mathfrak{N}_{q}$ is a  root  space corresponding to $\lambda_{q}$ and  consider  a Jordan basis  \{$e_{q_{\xi}+i}\},\,i=0,1,...,k(q_{\xi}),\;\xi=1,2,...,m(q)$ in $\mathfrak{N}_{q}.$ Using decomposition of the Hilbert space in the direct sum \eqref{4.9}, we can represent an element
$$
f=f_{1}+f_{2},
$$
where $f_{1}\in \mathfrak{N}_{q},\;f_{2}\in \mathfrak{M}_{q}.$ Note that the operator function $e^{- \lambda^{\alpha} t}B(I-\lambda B)^{-1}f_{2}$ is regular in the interior of the contour $\vartheta_{q},$ it follows from the fact that $\lambda_{q}$ ia a normal eigenvalue (see the supplementary information). Hence, we have
$$
 \mathfrak{I}=\frac{1}{2\pi i}\oint\limits_{\vartheta_{q}} e^{- \lambda^{\alpha} t}B(I-\lambda B)^{-1}f_{1}d\lambda.
$$
Using the formula
$$
B(I-\lambda B)^{-1}=\left\{(I-\lambda B)^{-1}-I   \right\}\frac{1}{\lambda}=\left\{\left(\frac{1}{\lambda}I- B\right)^{-1}-\lambda I   \right\}\frac{1}{\lambda^{2}},
$$
we obtain
$$
\mathfrak{I}=-\frac{1}{2\pi i}\oint\limits_{\tilde{\vartheta}_{q}} e^{- \zeta^{-\alpha} t}B(\zeta I  -  B)^{-1}f_{1}d\zeta,\,\zeta=1/\lambda.
$$
Now, let us decompose the element $f_{1}$ on the corresponding Jordan basis, we have
\begin{equation}\label{4.11}
f_{1}=\sum\limits_{\xi=1}^{m(q)}\sum\limits_{i=0}^{k(q_{\xi})}e_{q_{\xi}+i}c_{q_{\xi}+i}.
\end{equation}
In accordance with the  relation \eqref{4.10}, we get
$$
Be_{q_{\xi}}=\mu_{q}e_{q_{\xi}},\;Be_{q_{\xi}+1}=\mu_{q}e_{q_{\xi}+1}+e_{q_{\xi}},...,Be_{q_{\xi}+k}=\mu_{q}e_{q_{\xi}+k}+e_{q_{\xi}+k-1}.
$$
Using this formula, we can prove the following relation
\begin{equation}\label{4.12}
(\zeta I-  B)^{-1}e_{q_{\xi}+i}=\sum\limits_{j=0}^{i}\frac{e_{q_{\xi}+j}}{(\zeta-\mu_{q})^{i-j+1}}.
\end{equation}
Note that the case $i=0$ is trivial. Consider a case, when $i>0,$ we have
$$
\frac{(\zeta I-  B)e_{q_{\xi}+j}}{(\zeta-\mu_{q})^{i-j+1}}=\frac{\zeta e_{q_{\xi}+j}-Be_{q_{\xi}+j}}{(\zeta-\mu_{q})^{i-j+1}} = \frac{ e_{q_{\xi}+j}}{(\zeta-\mu_{q})^{i-j }}-\frac{e_{q_{\xi}+j-1}}{(\zeta-\mu_{q})^{i-j+1}},\,j>0,
$$
$$
\frac{(\zeta I-  B)e_{q_{\xi} }}{(\zeta-\mu_{q})^{i +1}}=  \frac{ e_{q_{\xi} }}{(\zeta-\mu_{q})^{i }}.
$$
Using these formulas, we obtain
$$
\sum\limits_{j=0}^{i}\frac{(\zeta I-  B)e_{q_{\xi}+j}}{(\zeta-\mu_{q})^{i-j+1}}= \frac{ e_{q_{\xi} }}{(\zeta-\mu_{q})^{i }}+  \frac{ e_{q_{\xi}+1}}{(\zeta-\mu_{q})^{i-1 }}-\frac{e_{q_{\xi} }}{(\zeta-\mu_{q})^{i }}+...
$$
$$
+ \frac{ e_{q_{\xi}+i}}{(\zeta-\mu_{q})^{i-i }}-\frac{e_{q_{\xi}+i-1}}{(\zeta-\mu_{q})^{i-i+1}}=
 \frac{ e_{q_{\xi}+i}}{(\zeta-\mu_{q})^{i-i }},
$$
what gives us the desired result. Now, substituting  \eqref{4.11},\eqref{4.12}, we get
$$
\mathfrak{I}=- \frac{1}{2\pi i}\sum\limits_{\xi=1}^{m(q)}\sum\limits_{i=0}^{k(q_{\xi})}c_{q_{\xi}+i}\sum\limits_{j=0}^{i} e_{q_{\xi}+j} \oint\limits_{\tilde{\vartheta}_{q}}\frac{ e^{- \zeta^{-\alpha} t}}{(\zeta-\mu_{q})^{i-j+1}}d\zeta.
$$
Note that the function $ \zeta^{-\alpha} $ is analytic inside the interior of $\tilde{\vartheta}_{q},$ hence
$$
 \frac{1}{2\pi i}\oint\limits_{\tilde{\vartheta}_{q}}\frac{ e^{- \zeta^{-\alpha} t}}{(\zeta-\mu_{q})^{i-j+1}}d\zeta= \frac{1}{(i-j)!}\lim\limits_{\zeta\rightarrow\, \mu_{q}}\frac{d^{i-j}}{d\zeta^{\,i-j}}\left\{ e^{- \zeta^{-\alpha} t}\right\}= e^{- \lambda^{\alpha}_{q} t}H_{i-j}(\alpha,\lambda_{q},t ).
$$
Changing the indexes, we have
$$
\mathfrak{I}=-  \sum\limits_{\xi=1}^{m(q)}\sum\limits_{i=0}^{k(q_{\xi})}c_{q_{\xi}+i}e^{- \lambda^{\alpha}_{q} t}\sum\limits_{j=0}^{i} e_{q_{\xi}+j} H_{i-j}(\alpha,\lambda_{q},t )=
$$
$$
=-\sum\limits_{\xi=1}^{m(q)}\sum\limits_{j=0}^{k(q_{\xi})}e_{q_{\xi}+j}e^{-\lambda^{\alpha}_{q}t}\sum\limits_{m=0}^{k(q_{\xi})-j} c_{q_{\xi}+j+m} H_{m}(\alpha,\lambda_{q},t )=
$$
$$
=-\sum\limits_{\xi=1}^{m(q)}\sum\limits_{j=0}^{k(q_{\xi})}e_{q_{\xi}+j}   c^{(\alpha)}_{q_{\xi}+j} (t).
$$
The proof is complete.
\end{proof}

\section{  Convergence of the contour integral }

To establish the main results  we need the following lemmas by Lidskii. Note that in spite of the fact that we have rewritten the lemmas in the refined form main idea of the proof has not been changed  and can be found in the paper \cite{firstab_lit:1Lidskii}.
The main tool in  study the issues related to convergence of the root series is an integral on the complex plain along the contour going to the infinitely distant point.   Further, considering an arbitrary  compact operator $B: \mathfrak{H}\rightarrow \mathfrak{H}$ such that
$
\Theta(B)\subset \mathfrak{L}_{0}(\theta),\,-\pi<\theta<\pi,
$
we put the following contour   in correspondence to the operator
\begin{equation*}
\vartheta(B):=\left\{\lambda:\;|\lambda|=r>0,\,|\mathrm{arg} \lambda|\leq \theta+\varsigma\right\}\cup\left\{\lambda:\;|\lambda|>r,\; |\mathrm{arg} \lambda|=\theta+\varsigma\right\},
\end{equation*}
where $\varepsilon>0$ is an arbitrary small number, the number $r$ is chosen so that the operator  $ (I-\lambda B)^{-1} $ is regular within the corresponding closed circle. The latter fact follows from the representation $B(I-\lambda B)^{-1}=R_{W}(\lambda).$ Indeed, due to the compactness property of the operator $B$  its inverse   has a discrete spectrum with the limit point located at the  infinitely distant point. Thus, a finite number of eigenvalues are located in a circle with an arbitrary radios what gives us the desired claim.

\subsection{Estimates for the norm of the integral expression }

In this paragraph we study techniques related to estimation of the integral expression. It is reasonable to assume that they are mostly determined by location of the set containing  complex value $\lambda.$ The following lemma gives us a quite simple  result, however not so efficient and admit further improvement.

\begin{lem} \label{L4.7} Assume that $B$ is a compact  operator,  $\Theta(B)\subset \mathfrak{L}_{0}(\theta),\,-\pi<\theta<\pi,$ then on each ray $\zeta$ containing the point zero and not belonging to the sector $\mathfrak{L}_{0}(\theta)$ as well as the  real axis, we have
$$
\|(I-\lambda B)^{-1}\|\leq \frac{1}{\sin\varphi},\,\lambda\in \zeta,
$$
where $\,\varphi = \min \{|\mathrm{arg}\zeta -\theta|,|\mathrm{arg}\zeta +\theta|\}.$
\end{lem}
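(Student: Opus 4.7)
The plan is to obtain the resolvent estimate from the standard Cauchy--Schwarz reduction to the numerical range of $I-\lambda B$, after which the real work becomes an elementary plane-geometry estimate for the distance from $1$ to the rotated sector $\lambda\mathfrak{L}_0(\theta)$.

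The opening move is the bound
\[
\|(I-\lambda B)f\|\cdot\|f\|\;\ge\;|((I-\lambda B)f,f)|\;=\;\bigl|\|f\|^{2}-\lambda(Bf,f)\bigr|,
\]
which, after normalising to $\|f\|=1$ and writing $w:=(Bf,f)\in\overline{\Theta(B)}\subset\mathfrak{L}_0(\theta)$, reduces the task to the uniform lower bound $|1-\lambda w|\ge\sin\varphi$ for every $w\in\mathfrak{L}_0(\theta)$. Invertibility of $I-\lambda B$ along the ray $\zeta$ is then free: $B$ is compact, the Fredholm alternative applies, and the sectorial hypothesis forces all nonzero eigenvalues of $B$, and hence the characteristic values $1/\mu_n$, into $\mathfrak{L}_0(\theta)$, while by hypothesis $\zeta$ is disjoint from this sector.

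The substantive content lies in the geometric inequality. The set $\lambda\mathfrak{L}_0(\theta)$ is the closed convex cone with vertex at the origin whose arguments fill the interval $[\arg\lambda-\theta,\arg\lambda+\theta]$, and the assumption that $\zeta$ lies outside $\mathfrak{L}_0(\theta)$ and off the real axis places the direction of the positive real axis strictly off this cone. I would then invoke the elementary fact that the distance from $1$ to a ray through the origin making angle $\psi$ with the positive real axis equals $\sin|\psi|$ when $|\psi|\le\pi/2$ and equals $1$ when $\pi/2<|\psi|\le\pi$. Minimising over $\psi$ in the admissible arc, the smallest angular distance to the direction $0$ is precisely $\varphi=\min\{|\arg\zeta-\theta|,|\arg\zeta+\theta|\}$, so the minimum distance is either $\sin\varphi$ itself (in the first regime) or $1\ge\sin\varphi$ (in the second), giving $|1-\lambda w|\ge\sin\varphi$ as required.

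The main obstacle I anticipate is precisely this geometric case analysis: depending on the size of $|\arg\zeta|+\theta$ the rotated cone may sit entirely in one half-plane, straddle the imaginary axis, or even wrap past $\pm\pi$, and in every configuration one has to verify patiently that the formula defining $\varphi$ does pick out the correct minimising direction and that the piecewise distance formula ($\sin|\psi|$ versus $1$) always dominates $\sin\varphi$. Once that trigonometric bookkeeping is in place, the resolvent estimate $\|(I-\lambda B)^{-1}\|\le 1/\sin\varphi$ is a one-line consequence of the opening Cauchy--Schwarz step.
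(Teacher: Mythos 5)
Your proof is correct and follows essentially the same route as the paper: a Cauchy--Schwarz bound on the numerical range followed by the elementary distance-from-a-point-to-a-sector estimate, the only cosmetic difference being that you bound $|1-\lambda w|$ (distance from $1$ to the rotated cone $\lambda\mathfrak{L}_0(\theta)$) whereas the paper divides by $\lambda$ and bounds $|1/\lambda - M|$ with $M\in\Theta(B)$. Your explicit Fredholm-alternative justification of invertibility is a small point the paper leaves implicit, and is welcome.
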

\begin{proof}Assume that $\lambda\in \zeta$ and
denote $h:=(I-\lambda B)^{-1}f,\,f\in \mathfrak{H},$ then $f=h-\lambda Bh,$ hence
$$
(f,h)=\|h\|^{2}-\lambda(Bh,h).
$$
Note that
$
 (Bh,h)/\|h\|^{2}\in \Theta(B)\subset \mathfrak{L}_{0}(\theta)
$
and
denote  $r:=d(M, 1/\lambda),$ where $M:=(Bh,h)/\|h\|^{2},$ it is clear that $r|\lambda| \geq \sin \varphi.$ Therefore
$$
\frac{\left|(f,h)\right|}{|\lambda|\cdot\|h\|^{2}}=\left|\frac{1}{\lambda}- \frac{(Bh,h)}{\|h\|^{2}} \right|\geq \frac{\sin \varphi}{|\lambda|},\;\;\|h\|^{2}\sin\varphi \leq |(f,h)|\leq \|f\|\cdot\|h\|.
$$
Hence
$$
\|(I-\lambda B)^{-1} f\|\leq \frac{1}{\sin\varphi} \cdot\|f\|,\;\lambda\in \zeta,
$$
the latter relation proves the desired result.
\end{proof}

\subsection{Fredholm  Determinant}

In this paragraph we consider methods of estimating the  norm of the resolvent in the case when $\lambda$ belongs to the arc inside the sector. The well-known technique used by Lidskii and others appeals to the notion of the Fredholm Determinant and due to this reason  we produce a complete description of the object. Having chosen an orthonormal basis $\{e_{n}\}$ consider the matrix $\{b_{ij}\}$ of the operator $B,$ where
$$
b_{ij}:=(Be_{i,}e_{j}).
$$
Using these terms, we can rewrite the  equation
$
(I-\lambda B)x=f
$
in the following form
\begin{equation}\label{4.13}
\sum\limits_{j=1}^{\infty}(\delta_{ij}-\lambda b_{ij})x_{j}=f_{i},\;f_{i}=(f,e_{i}).
\end{equation}
 Consider a formal decomposition of the determinant of the matrix with the infinite quantity of rows and columns
$$
\det \{\delta_{ij}-\lambda b_{ij}\}_{ij=1}^{\infty}=\sum\limits_{p=0}^{\infty}(-1)^{p}q_{p}\lambda^{p},
$$
where $q_{p},\,q_{0}=0, $ is a sum of all central minors of the matrix $\{b_{ij}\}$  of the order equals $p,$   formed from the columns and rows with $i_{1},i_{2},...,i_{p}$ numbers  i.e.
$$
q_{p}=\frac{1}{p!}\sum\limits_{i_{1}i_{2}...i_{p}}B\begin{pmatrix} i_{1}&  i_{2}&...&i_{p}\\
  i_{1}&  i_{2}&...&i_{p}
\end{pmatrix}.
$$
Now, consider a formal representation for the resolvent
\begin{equation}\label{4.14}
(I-\lambda B)^{-1}f =\sum\limits_{m=1}^{\infty}\left(\sum\limits_{l=1}^{\infty}(-1)^{l+m}\frac{\Delta^{lm}(\lambda)}{\Delta}f_{l}\right)e_{m},
\end{equation}
where
$$
\Delta^{lm}(\lambda)= 1+\sum\limits_{p=1}^{\infty}(-1)^{p}\lambda^{p}\!\!\!\!\!\sum\limits_{i_{1},i_{2},...,i_{p} =1}^{\infty}\!\!\!B\begin{pmatrix} i_{1}&  i_{2}&...&i_{p}\\
  i_{1}&  i_{2}&...&i_{p}
\end{pmatrix}_{lm},
$$
the  used formula in brackets means a minor formed from the columns and rows with $i_{1},i_{2},...,i_{p}$ numbers except for $l$ -th row and $m$ -th column. Using analogous form of writing, we denote the Fredholm determinant of the operator $B$ as follows
$$
\Delta (\lambda)=1- \lambda  \sum\limits_{i=1}^{\infty}B\begin{pmatrix} i\\
  i
\end{pmatrix} +\lambda^{2}  \sum\limits_{i_{1},i_{2} =1}^{\infty}B\begin{pmatrix} i_{1}&  i_{2}\\
  i_{1}&  i_{2}
\end{pmatrix} +...+(-1)^{p}\lambda^{p}\sum\limits_{i_{1},i_{2},...,i_{p} =1}^{\infty}B\begin{pmatrix} i_{1}&  i_{2}&...&i_{p}\\
  i_{1}&  i_{2}&...&i_{p}
\end{pmatrix} +...\,,
$$
where the  used formula in brackets means a minor formed from the columns and rows with $i_{1},i_{2},...,i_{p}$ numbers. Note that if $B$ belongs to the trace class then in accordance with the well-known theorems (see \cite{firstab_lit:1Gohberg1965}), we have
\begin{equation}\label{4.15}
\sum\limits_{n  =1}^{\infty}|b_{nn}| <\infty,\;\sum\limits_{n,m  =1}^{\infty}|b_{nm}|^{2} <\infty,
\end{equation}
where $ b_{nm} $ is the matrix coefficients of the operator $B.$ This follows easily from the properties of the trace class operators and Hilbert-Schmidtt class operators respectively. In accordance with the von Koch theorem the conditions \eqref{4.15} guaranty the absolute convergence of the series
$$
\sum\limits_{i_{1},i_{2},...,i_{p} =1}^{\infty}B\begin{pmatrix} i_{1}&  i_{2}&...&i_{p}\\
  i_{1}&  i_{2}&...&i_{p}
\end{pmatrix}.
$$
Moreover, the formal series $\Delta(\lambda)$ is convergent for arbitrary $\lambda\in \mathbb{C},$  hence it represents an entire function, the series \eqref{4.14} represents the solution the system \eqref{4.13}.

Since the main characteristic of the studied operators is their belonging to the Schatten-Von Neumann class then it is rather reasonable to consider a broad spectrum of its index.   The following lemmas give us   technical tools  to implement the latter idea.

\begin{lem}\label{L4.8}
Assume that $B$ is a compact operator, $P$ is an arbitrary orthogonal projector in $\mathfrak{H},$  then
$$
s_{n}(PBP)\leq s_{n}(B),\,n\in \mathbb{N}.
$$
\end{lem}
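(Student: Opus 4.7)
The plan is to invoke the classical approximation characterization of singular numbers (the Allakhverdiyev/Courant--Fischer theorem, see Theorem 2.1 in \cite[p.27]{firstab_lit:1Gohberg1965}), according to which for any compact operator $A$ acting in $\mathfrak{H}$ one has
$$
s_{n}(A)=\inf\bigl\{\|A-K\|:\ K\in \mathcal{B}(\mathfrak{H}),\ \mathrm{rank}\,K<n\bigr\}.
$$
This representation is the most economical tool for the problem at hand, because it reduces the inequality $s_{n}(PBP)\leq s_{n}(B)$ to a pure norm comparison of approximants, bypassing any direct manipulation of eigenvectors or polar decompositions.

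First, I would observe that if $K$ is a bounded operator with $\mathrm{rank}\,K<n$, then $PKP$ is also a bounded operator with $\mathrm{rank}\,PKP<n$, since $\mathrm{R}(PKP)\subset P\mathrm{R}(K)$ and the rank of a composition does not exceed the rank of any factor. Moreover, using that an orthogonal projector satisfies $\|P\|\leq 1$, one gets
$$
\|PBP-PKP\|=\|P(B-K)P\|\leq \|P\|\cdot\|B-K\|\cdot\|P\|\leq \|B-K\|.
$$

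Next, since $PKP$ is a valid candidate in the infimum defining $s_{n}(PBP)$, the above estimate yields
$$
s_{n}(PBP)\leq \|PBP-PKP\|\leq \|B-K\|
$$
for every admissible $K$. Taking the infimum over all bounded operators $K$ with $\mathrm{rank}\,K<n$ and applying the approximation characterization on the right-hand side completes the argument: $s_{n}(PBP)\leq s_{n}(B)$.

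There is essentially no substantial obstacle in this proof; the only point requiring a small amount of care is the rank inequality for the composition $PKP$, which is a standard fact of linear algebra. If one prefers to avoid invoking the approximation theorem explicitly, an alternative route would be to apply the minimax formula $s_{n}(A)=\min_{\dim L=n-1}\max_{f\perp L,\ \|f\|=1}\|Af\|$ directly: choosing $L$ to be an optimal $(n-1)$-dimensional subspace for $B$ and taking its image or preimage under $P$ would give the same inequality, but the approximation route is the shortest and cleanest.
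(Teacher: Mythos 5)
Your proof is correct, but it follows a genuinely different route from the paper's. The paper works through the minimax principle: it first observes that $(\tilde{B}^{\ast}\tilde{B}f,f)\leq(PB^{\ast}BPf,f)$ with $\tilde{B}:=PBP$, so $s_{n}^{2}(PBP)\leq\lambda_{n}(PB^{\ast}BP)$, and then shows $\lambda_{n}(PB^{\ast}BP)\leq\lambda_{n}(B^{\ast}B)$ by comparing the Courant--Fischer min-max expressions for the two quadratic forms over $(n-1)$-codimensional subspaces. You instead invoke the approximation-number (Allakhverdiyev) characterization $s_{n}(A)=\inf\{\|A-K\|:\ \mathrm{rank}\,K<n\}$, map each admissible approximant $K$ of $B$ to the admissible approximant $PKP$ of $PBP$, and use $\|P\|\leq1$ to compare the norms. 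Both arguments are sound; the rank bound $\mathrm{rank}(PKP)\leq\mathrm{rank}(K)$ and the norm estimate $\|P(B-K)P\|\leq\|B-K\|$ are exactly the two points that need checking, and you handle both. Your route is shorter, sidesteps the passage through $B^{\ast}B$ and its eigenvalues, and in fact proves the stronger statement $s_{n}(CBD)\leq\|C\|\,\|D\|\,s_{n}(B)$ for arbitrary bounded $C,D$, of which the lemma is the special case $C=D=P$. The paper's route has the virtue of staying entirely inside the minimax machinery that it reuses repeatedly in the surrounding lemmas, so it is more self-contained relative to the toolkit the paper has already set up; note also that the paper elsewhere cites the Ky Fan corollary of the Allakhverdiyev theorem, so your key ingredient is available within the paper's own reference base.
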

\begin{proof} We have the following obvious reasonings
$$
(\tilde{B}^{\ast}\tilde{B}f,f)=(PBPf,BPf)\leq (BPf,BPf)=(PB^{\ast} BPf,f),
$$
where $\tilde{B}:=PBP.$
Due to the minimax principle, we conclude
$$
s^{2}_{n}(\tilde{B})\leq \lambda_{n} (PB^{\ast} BP),\,n\in \mathbb{N}.
$$
  Let us show that
 $$
 \lambda_{n} (PB^{\ast} BP)\leq \lambda_{n} ( B^{\ast} B ),
 $$
 we obviously have
 $$
 \max\limits_{f\in \mathfrak{L}^{\bot}_{m}}(B^{\ast}BPf,Pf)\leq \max\limits_{f\in \mathfrak{L}^{\bot}_{m}}(Bf, f),
 $$
 where $\mathfrak{L}_{m}\subset \mathfrak{H},\,m\in \mathbb{N}_{0} $ is an arbitrary m-dimensional subspace of $\mathfrak{H}.$
Having noticed the fact
$$
\min_{\mathfrak{L}_{m}\in \mathfrak{H}}\max\limits_{  f\in \mathfrak{L}^{\bot}_{m}}(Bf, f)=\max\limits_{f\in \mathfrak{\tilde{L}}^{\bot}_{m}}(Bf, f),
$$
where $\tilde{\mathfrak{L}}^{\bot}_{m}$ is the m-dimensional eigenvector subspace corresponding to the eigenvalues $\lambda_{1},\lambda_{2},...,\lambda_{m}$ using the relation for the  eigenvalue given by the minimax principle,  we obtain
$$
\lambda_{m+1} ( B^{\ast} B )=\min_{\mathfrak{L}_{m}\in \mathfrak{H}}\max\limits_{  f\in \mathfrak{L}^{\bot}_{m}}(B^{\ast}Bf, f)=\max\limits_{f\in \mathfrak{\tilde{L}}^{\bot}_{m}}(B^{\ast}Bf, f)\geq \max\limits_{f\in \mathfrak{\tilde{L}}^{\bot}_{m}}(B^{\ast}BPf,Pf)\geq
$$
$$
\geq\min_{\mathfrak{L}_{m}\in \mathfrak{H}}\max\limits_{  f\in \mathfrak{L}^{\bot}_{m}}(B^{\ast}BPf, Pf)=\lambda_{m+1} ( PB^{\ast} PB ),\,m\in \mathbb{N}_{0}.
$$
The proof is complete.
\end{proof}
For the reader convenience we represent the following theorem belonging to Alfred Horn (Theorem 3 \cite{firstab_lit:H o r n}).

\begin{teo}\label{T4.5}
Let $A,B,C$ be compact operators, $A=BC,$ then
$$
\sum\limits_{n=1}^{\infty}f\left(s_{n}(A)\right)\leq \sum\limits_{n=1}^{\infty}f\left(s_{n}(B)s_{n}(C)\right),
$$
where $f(e^{x})$ is a convex increasing function of the argument $x.$
\end{teo}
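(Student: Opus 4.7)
My plan is to reduce the additive inequality to the multiplicative one (Horn's inequality) and then convert the multiplicative inequality into the desired sum inequality by a majorization argument using the hypothesis that $f\circ\exp$ is convex and increasing.

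First, I would establish the multiplicative counterpart: for every $N\in\mathbb{N}$,
\begin{equation*}
\prod_{n=1}^{N} s_{n}(A) \;\leq\; \prod_{n=1}^{N} s_{n}(B)\, s_{n}(C).
\end{equation*}
The cleanest route is through antisymmetric tensor powers. For a compact operator $T$ on $\mathfrak{H}$ the induced operator $\wedge^{N} T$ on $\wedge^{N}\mathfrak{H}$ satisfies $\|\wedge^{N} T\|=\prod_{n=1}^{N}s_{n}(T)$, which can be read off from the Schmidt decomposition of $T$. Since $\wedge^{N}$ is a functor, $\wedge^{N}(BC)=(\wedge^{N}B)(\wedge^{N}C)$, and submultiplicativity of the operator norm gives the displayed inequality. (Alternatively, one may argue directly through Weyl's characterization $\prod_{n=1}^{N}s_{n}(T)=\sup|\det(Te_{i},h_{j})_{i,j=1}^{N}|$ over orthonormal $N$-tuples $\{e_{i}\},\{h_{j}\}$, together with the Cauchy–Binet formula applied to $A=BC$; the two proofs are essentially the same.) Taking logarithms yields, with $\alpha_{n}:=\log s_{n}(A)$ and $\beta_{n}:=\log\bigl(s_{n}(B)s_{n}(C)\bigr)$ (both allowed to be $-\infty$, handled by a standard truncation),
\begin{equation*}
\sum_{n=1}^{N}\alpha_{n}\;\leq\;\sum_{n=1}^{N}\beta_{n},\qquad N=1,2,\dots\,.
\end{equation*}
Both sequences $\{\alpha_{n}\},\{\beta_{n}\}$ are non-increasing since $s_{n}(\cdot)$ is non-increasing and the product of two non-increasing positive sequences is non-increasing.

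Second, I would invoke the Hardy–Littlewood–Pólya majorization principle in the following form: if $\{\alpha_{n}\},\{\beta_{n}\}$ are non-increasing real sequences with $\sum_{n=1}^{N}\alpha_{n}\leq\sum_{n=1}^{N}\beta_{n}$ for every $N$, and if $g:\mathbb{R}\to\mathbb{R}$ is non-decreasing and convex, then $\sum_{n=1}^{N} g(\alpha_{n})\leq\sum_{n=1}^{N} g(\beta_{n})$ for every $N$. Setting $g(x):=f(e^{x})$, which is exactly the hypothesis of the theorem, and passing $N\to\infty$ produces the stated inequality. The majorization lemma itself is proved by Abel summation: writing $A_{N}=\sum_{n=1}^{N}\alpha_{n},\,B_{N}=\sum_{n=1}^{N}\beta_{n}$ and using the slopes $m_{n}:=g(\beta_{n})-g(\alpha_{n})$ or, better, the difference telescoping
\begin{equation*}
\sum_{n=1}^{N}\bigl[g(\beta_{n})-g(\alpha_{n})\bigr]=\sum_{n=1}^{N-1}(B_{n}-A_{n})\bigl[\psi_{n}-\psi_{n+1}\bigr]+(B_{N}-A_{N})\psi_{N},
\end{equation*}
where $\psi_{n}$ is a suitable slope of $g$ between $\alpha_{n}$ and $\beta_{n}$; convexity of $g$ makes $\psi_{n}-\psi_{n+1}\geq 0$ on the non-increasing rearrangement, monotonicity makes $\psi_{N}\geq 0$, and the partial-sum hypothesis makes each factor $B_{n}-A_{n}\geq 0$.

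The main obstacle is the clean execution of Step 2: one must handle the possibility that the $s_{n}$ vanish from some index onwards (so that $\alpha_{n}=-\infty$) and that $g$ could fail to be finite at $-\infty$. This is dealt with by the routine device of replacing $s_{n}$ by $s_{n}+\varepsilon$, proving the inequality in that regularised setting, and then sending $\varepsilon\downarrow 0$ using monotone convergence together with the monotonicity of $g$; finiteness of the right-hand side in the theorem makes this harmless. The exterior-power step in Step 1 is standard but deserves a careful verification that $\wedge^{N}T$ is again compact and that the norm identity $\|\wedge^{N}T\|=s_{1}(T)\cdots s_{N}(T)$ is available in the infinite-dimensional Hilbert-space setting, which follows at once from the Schmidt decomposition of $T$.
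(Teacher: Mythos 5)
The paper offers no proof of this statement at all: it is quoted verbatim as Theorem 3 of Horn's 1950 paper and used as a black box, so there is nothing internal to compare your argument against. What you have written is, in essence, the classical Horn--Weyl argument: first the multiplicative inequality $\prod_{n\le N}s_{n}(BC)\le\prod_{n\le N}s_{n}(B)s_{n}(C)$ via antisymmetric tensor powers (equivalently, Weyl's determinant characterization of $\prod_{n\le N}s_{n}$ together with Cauchy--Binet), then Weyl's weak-majorization lemma applied to the increasing convex function $g(x)=f(e^{x})$. Both steps are sound; in particular the Abel-summation proof of the majorization lemma is correct, since the chord slopes satisfy $\psi_{n}\ge\psi_{n+1}$ because both endpoints of the $n$-th chord dominate those of the $(n+1)$-st and the chord slope of a convex function is monotone in each endpoint, while $\psi_{N}\ge0$ follows from monotonicity of $g$ and $B_{n}-A_{n}\ge0$ from the partial-sum hypothesis.

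The one place I would press you is the treatment of vanishing singular values. As written, your regularisation replaces $s_{n}$ by $s_{n}+\varepsilon$ and asserts that the inequality can be proved in the regularised setting; but the needed hypothesis there, $\prod_{n\le k}\bigl(s_{n}(A)+\varepsilon\bigr)\le\prod_{n\le k}\bigl(s_{n}(B)s_{n}(C)+\varepsilon\bigr)$, does not follow for free from the unshifted partial-product inequality: expanding the products, it amounts to comparing all elementary symmetric functions $e_{j}$ of the two sequences, which does hold under weak log-majorization but is itself a small theorem rather than a triviality. A genuinely routine repair is a truncation: let $N_{0}$ be the last index with $s_{N_{0}}(A)>0$; the multiplicative inequality forces $s_{n}(B)s_{n}(C)>0$ for all $n\le N_{0}$, so the finite majorization lemma applies verbatim to the first $N_{0}$ terms, while for $n>N_{0}$ one has $f(s_{n}(A))=f(0)\le f\bigl(s_{n}(B)s_{n}(C)\bigr)$ term by term, since $f$ inherits monotonicity from $f\circ\exp$. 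With that substitution your proof is complete and is essentially Horn's original one.
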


The following lemma belongs to Lidskii  \cite{firstab_lit:1Lidskii}, we represent its statement supplied with the expended proof.
\begin{lem}\label{L4.9}  Assume that $B\in \mathfrak{S}_{q},\,0<q<\infty,$ then
$$
\sum\limits_{n=1}^{\infty}s_{n}^{\frac{q}{m} }(B^{m})\leq\sum\limits_{n=1}^{\infty}s_{n}^{ q }(B),\;m\in \mathbb{N}.
$$
\end{lem}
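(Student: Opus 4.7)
The plan is to proceed by induction on $m$, combining Theorem \ref{T4.5} (Horn's inequality for products of two factors) with H\"older's inequality for series. The base case $m=1$ is trivial. For $m=2$, I would factor $B^{2}=B\cdot B$ and apply Theorem \ref{T4.5} with $f(y)=y^{q/2}$; since $f(e^{x})=e^{qx/2}$ is convex and increasing whenever $q>0$, this immediately yields
$$\sum_{n=1}^{\infty}s_{n}^{q/2}(B^{2})\leq\sum_{n=1}^{\infty}s_{n}^{q/2}(B)\cdot s_{n}^{q/2}(B)=\sum_{n=1}^{\infty}s_{n}^{q}(B).$$

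For the inductive step, I would assume that $\sum s_{n}^{q/k}(B^{k})\leq\sum s_{n}^{q}(B)$ has been established for every $1\leq k\leq m-1$. Factoring $B^{m}=B\cdot B^{m-1}$ and invoking Theorem \ref{T4.5} with $f(y)=y^{q/m}$ produces
$$\sum_{n=1}^{\infty}s_{n}^{q/m}(B^{m})\leq\sum_{n=1}^{\infty}s_{n}^{q/m}(B)\,s_{n}^{q/m}(B^{m-1}).$$
The crucial step is then to apply H\"older's inequality for series to the right-hand side with conjugate exponents $p=m$ and $p'=m/(m-1)$, chosen so that the first factor reconstitutes exactly $\sum s_{n}^{q}(B)$. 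This gives
$$\sum_{n=1}^{\infty}s_{n}^{q/m}(B)\,s_{n}^{q/m}(B^{m-1})\leq\Bigl(\sum_{n=1}^{\infty}s_{n}^{q}(B)\Bigr)^{1/m}\Bigl(\sum_{n=1}^{\infty}s_{n}^{q/(m-1)}(B^{m-1})\Bigr)^{(m-1)/m}.$$
The inductive hypothesis applied with exponent $q$ and index $m-1$ bounds the second factor by $\sum s_{n}^{q}(B)$, and multiplying the two factors closes the induction.

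The main obstacle I anticipate is essentially bookkeeping: verifying at each stage that the hypotheses of Theorem \ref{T4.5} are genuinely satisfied. Concretely, one must check that $B^{m}$ is compact whenever $B$ is (immediate, since compactness is preserved under multiplication by bounded operators), and that the auxiliary function $f(y)=y^{q/m}$ has $f(e^{x})=e^{qx/m}$ convex and increasing, which holds for every $q>0$ and every $m\in\mathbb{N}$. Since the assumption $B\in\mathfrak{S}_{q}$ forces $\sum s_{n}^{q}(B)<\infty$, all series encountered along the induction are finite, so the H\"older step is legitimate and no convergence subtlety arises. The only conceptual point worth double-checking is that the exponent $q/(m-1)$ appearing in the second Hölder factor is precisely the exponent handled by the induction hypothesis with $k=m-1$, which is exactly why the induction closes in a single step rather than requiring a two-variable induction.
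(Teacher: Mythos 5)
Your proof is correct and follows essentially the same route as the paper: induction on $m$, the factorization $B^{m}=B^{m-1}B$ fed into Horn's Theorem \ref{T4.5} with $f(y)=y^{q/m}$, then H\"older with exponents $m$ and $m/(m-1)$, and finally the induction hypothesis on the second factor. The extra bookkeeping you flag (compactness of $B^{m}$, convexity of $e^{qx/m}$, finiteness of the series) is all sound and does not change the argument.
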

 \begin{proof}Let us implement the proof using the method of the mathematical induction. We have that the statement obviously  holds in the case $m=1,$ assume that the following inequality holds
$$
\sum\limits_{n=1}^{\infty}s_{n}^{\frac{q}{m-1} }(B^{m-1})\leq\sum\limits_{n=1}^{\infty}s_{n}^{ q }(B),\;m=2,3,...,\,.
$$
Since $B^{m}=B^{m-1}B,$  then in accordance with the Theorem \ref{T4.5} ( Horn A.), using the function $f(x):=x^{\frac{q}{m}},$ applying then H\"{o}lder inequality, we get
$$
\sum\limits_{n=1}^{\infty}s_{n}^{\frac{q}{m} }(B^{m})\leq\sum\limits_{n=1}^{\infty}s_{n}^{\frac{q}{m} }(B^{m-1})s_{n}^{ \frac{q}{m}}(B)\leq
\left\{\sum\limits_{n=1}^{\infty}s_{n}^{\frac{q  }{  m-1 } }(B^{m-1})\right\}^{\frac{m-1}{m}}
\left\{\sum\limits_{n=1}^{\infty} s_{n}^{   q }(B)\right\}^{\frac{1}{m}}\leq \sum\limits_{n=1}^{\infty} s_{n}^{   q }(B).
$$
The latter relation completes the proof.
 \end{proof}
\begin{lem}\label{L4.10}
Assume that  $B$ belongs to the trace class, then the following representation holds
$$
\Delta(\lambda)=\prod\limits_{n=1}^{\infty}\left\{1- \lambda \mu_{n}(B)    \right\}.
$$
\end{lem}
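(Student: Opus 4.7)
The approach combines a growth estimate for $\Delta(\lambda)$ with identification of its zeros, followed by Hadamard's factorization theorem.

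First I would show that $\Delta(\lambda)$ is an entire function of order at most one and of minimal type. Hadamard's determinantal inequality applied to each central $p\times p$ minor yields the Weyl-type bound $|q_{p}|\leq e_{p}(s_{1}(B),s_{2}(B),\ldots)$, the $p$-th elementary symmetric function of the singular numbers of $B$. Consequently
$$
|\Delta(\lambda)|\leq \prod_{n=1}^{\infty}\!\bigl(1+|\lambda|\,s_{n}(B)\bigr)\leq \exp\!\Bigl(|\lambda|\sum_{n=1}^{\infty}s_{n}(B)\Bigr),
$$
and since $B\in\mathfrak{S}_{1}$ implies $\sum s_{n}(B)<\infty$, the function $\Delta$ is entire of order at most one and minimal type.

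Next I would identify the zeros of $\Delta$. Formula \eqref{4.14} for the resolvent shows that $(I-\lambda B)^{-1}$ exists as a bounded operator precisely when $\Delta(\lambda)\neq 0$. Since $B$ is compact, the characteristic numbers of the pencil $I-\lambda B$ are the reciprocals $1/\mu_{n}$, and the order of a zero of $\Delta$ at $\lambda=1/\mu_{n}$ matches the algebraic multiplicity of $\mu_{n}$ (the dimension of the root subspace $\mathfrak{N}_{n}$ in the Riesz decomposition \eqref{4.9}); this is obtained from the local Laurent analysis of the resolvent already carried out in the computation of residues at $\lambda_{q}$ earlier in this chapter. Combining with the growth estimate, Hadamard's factorization theorem applies: the convergence exponent of $\{1/\mu_{n}\}$ is at most one, since by Weyl's inequality $\sum_{n}|\mu_{n}|\leq \sum_{n}s_{n}(B)<\infty$, so the canonical product may be taken of genus zero, giving
$$
\Delta(\lambda)=e^{a\lambda+b}\prod_{n=1}^{\infty}(1-\lambda\mu_{n})
$$
for some $a,b\in\mathbb{C}$. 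Evaluating at $\lambda=0$ produces $\Delta(0)=1$, hence $b=0$. Matching the coefficient of $\lambda$: from the series definition one reads off $\Delta'(0)=-q_{1}=-\sum_{n}b_{nn}=-\operatorname{tr}(B)$, while differentiating the right side at $\lambda=0$ yields $\Delta'(0)=a-\sum_{n}\mu_{n}$; Lidskii's trace theorem $\operatorname{tr}(B)=\sum_{n}\mu_{n}$ for trace class operators then forces $a=0$.

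The principal obstacle I anticipate is twofold: first, the delicate multiplicity bookkeeping identifying the order of each zero of $\Delta$ with the algebraic multiplicity of the corresponding eigenvalue of $B$; and second, the invocation of Lidskii's trace theorem, which is itself a substantial trace class result. An alternative route avoiding both points would be to establish directly that $q_{p}=e_{p}(\mu_{1},\mu_{2},\ldots)$ by realising $q_{p}$ as the trace of the $p$-th exterior power $\bigwedge^{p}B$ on $\bigwedge^{p}\mathfrak{H}$; approximating $B$ by finite-rank truncations $B_{N}=P_{N}BP_{N}$, where $P_{N}$ is the orthogonal projection onto the span of $\{e_{1},\ldots,e_{N}\}$, the identity $q_{p}(B_{N})=e_{p}(\mu_{n}(B_{N}))$ is elementary finite-dimensional linear algebra, and passage to the limit $N\to\infty$ then yields the product expansion after absolute summability estimates based on $B\in\mathfrak{S}_{1}$ (convergence $q_{p}(B_{N})\to q_{p}(B)$ from the defining series, and convergence of the eigenvalue symmetric functions via continuity of the Fredholm determinant in the trace norm).
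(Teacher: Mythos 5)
Your strategy coincides with the paper's at the level of architecture: both proofs rest on the growth bound $|\Delta(\lambda)|\leq\prod_{n}\bigl(1+|\lambda|\,s_{n}(B)\bigr)$ followed by the Hadamard--Levin factorization (Theorem \ref{T4.1}). You reach the bound by estimating each coefficient $q_{p}$ directly through Hadamard's determinantal inequality; the paper instead works with the finite sections $P_{n}BP_{n}$, uses the exact finite-dimensional identity $\Delta^{(n)}(\lambda)=\prod_{k}\{1-\lambda\mu_{k}(P_{n}BP_{n})\}$ together with the Weyl inequalities and Lemma \ref{L4.8}, and passes to the limit. After the bound, the paper simply cites the factorization theorem and writes down the product over the $\mu_{n}(B)$, leaving implicit both the identification of the zeros of $\Delta$ (with multiplicities equal to the algebraic multiplicities of the $\mu_{n}$) and the vanishing of the linear term in the Hadamard exponent. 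Your proposal addresses both points explicitly, which is a genuine improvement in completeness; for the multiplicity bookkeeping, note that the pole order of the resolvent \eqref{4.14} at $\lambda_{q}$ gives only the index of the eigenvalue, not its algebraic multiplicity, so the clean device is the logarithmic derivative identity $\Delta'(\lambda)/\Delta(\lambda)=-\operatorname{tr}\bigl(B(I-\lambda B)^{-1}\bigr)$, whose residue at $\lambda_{q}$, computed from the Riesz decomposition \eqref{4.9}, equals $\dim\mathfrak{N}_{q}$.

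The one step you must not keep is the appeal to Lidskii's trace theorem to force $a=0$: in the development this paper follows (Gohberg--Krein), that theorem is itself derived from the product formula $\Delta(\lambda)=\prod(1-\lambda\mu_{n})$, so the argument would be circular. Your alternative via exterior powers is the standard non-circular substitute, but as sketched it still needs a justified limit passage for the eigenvalue symmetric functions of $B_{N}$, which is not free. Fortunately your own growth estimate already closes the gap without either device: you correctly observe that $\Delta$ is of order at most one and of minimal type, and since $\sum_{n}|\mu_{n}|\leq\sum_{n}s_{n}(B)<\infty$, the genus-zero canonical product $\prod(1-\lambda\mu_{n})$ is likewise of order at most one and minimal type; by the standard type comparison for order-one Hadamard factorizations (Lindel\"{o}f), a nonzero coefficient $a$ in the factor $e^{a\lambda+b}$ would give $\Delta$ positive type $|a|$, so $a=0$, and $\Delta(0)=1$ gives $b=0$ as you note. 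With that substitution your argument is complete and, in its endgame, more careful than the paper's.
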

\begin{proof} Since the operator is in the trace class then the first condition \eqref{4.15} holds, since belonging to the trace class means that for an arbitrary orthonormal basis $\{\varphi_{n}\},$  we have
$$
\sum\limits_{n=1}^{\infty}(B\varphi_{n},\varphi_{n})_{\mathfrak{H}}<\infty.
$$
The arbitrary choice of the basis gives an opportunity to claim  that the series is convergent after an arbitrary transposition of the terms from what follows that the series is absolutely convergent. Hence the first condition \eqref{4.15} holds. To prove the second condition \eqref{4.15} we should note the identity between the trace class and $\mathfrak{S}_{1}$ class (this is why  the latter has the same name informally),  the inclusion $\mathfrak{S}_{2}\subset \mathfrak{S}_{1}$ and the fact that $\mathfrak{S}_{2}$ coincides with the so-called Schmidt class of the operators with the absolute norm $\|\cdot\|_{2}.$ The latter fact can c be established if we consider the completion of the orthonormal set $\{\varphi_{n}\}$ of the eigenvectors of the operator $B^{\ast}B$ to a basis $\{\psi_{n}\}$ in the Hilbert space. Then in accordance with the well-known decomposition formula (see \S 3, Chapter V,  \cite{firstab_lit:kato1980}), we get the orthogonal sum
$$
\mathfrak{H}=\mathrm{R}(B^{\ast}B)\,\dot{+}\,\mathrm{N}(B^{\ast}B),
$$
where $\{\varphi_{n}\}$ is a basis in  $\mathrm{R}(B^{\ast}B)$ in accordance with the general property of the compact selfadjoint operator. Therefore, the completion of the system $\{\varphi_{n}\}$ to the basis in $\mathfrak{H}$ belongs to $\mathrm{N}(B^{\ast}B).$ Therefore
$$
\|B\|^{2}_{2}=\sum\limits_{n,k=1}^{\infty}|(B\psi_{n},\psi_{k})|^{2} =\sum\limits_{n=1}^{\infty}\|B\psi_{n}\|^{2}=\sum\limits_{n=1}^{\infty}(B^{\ast}B\psi_{n}, \psi_{n})  =\sum\limits_{n=1}^{\infty}(B^{\ast}B\varphi_{n}, \varphi_{n})  =\sum\limits_{n=1}^{\infty}s^{2}_{n}.
$$
  Due to belonging to the trace class and the Schmidt class  the conditions \eqref{4.15} hold for an arbitrary chosen basis in the Hilbert space.

  Now consider an arbitrary basis $\{\varphi_{k}\}$ and consider an orthogonal  projector $P_{n}$ corresponding to the  subspace generated by the first $n$ basis vectors $\varphi_{1},\varphi_{2},...,\varphi_{n}.$ Consider a determinant
  $$
  \Delta^{(n)}(\lambda):=\det \{\delta_{ij}-\lambda b_{ij}\}_{ij=1}^{n} =\prod\limits_{k=1}^{n}\left\{1-  \lambda \mu_{k}( P_{n}BP_{n} )  \right\}
  $$
Using the Weil inequalities \cite{firstab_lit:1Gohberg1965}, we get
$$
  |\Delta^{(n)}(\lambda)| \leq\prod\limits_{k=1}^{n}\left\{1+  |\lambda|\cdot |\mu_{k}( P_{n}BP_{n} )|  \right\}\leq \prod\limits_{k=1}^{n}\left\{1+  |\lambda|\cdot |s_{k}( P_{n}BP_{n} )|  \right\}.
  $$
Applying Lemma \ref{L4.8}, we get
$$
  |\Delta^{(n)}(\lambda)|  \leq \prod\limits_{k=1}^{\infty}\left\{1+  |\lambda|\cdot |s_{k}(  B  )|  \right\}.
  $$
Passing to the limit while $n\rightarrow\infty,$ we get
$$
  |\Delta(\lambda)|  \leq \prod\limits_{k=1}^{\infty}\left\{1+  |\lambda|\cdot |s_{k}(  B  )|  \right\}.
  $$
It implies, if we observe Theorem 4 (Chapter I, \S 4) \cite{firstab_lit:Eb. Levin} that the entire function $\Delta (\lambda)$ is of the finite order, hence in accordance with the Theorem 13 (Chapter I, \S 10) \cite{firstab_lit:Eb. Levin}, it has a representation by the canonical product, we have
\begin{equation*}
\Delta (\lambda)= \prod\limits_{n=1}^{\infty}\left\{1- \lambda \mu_{n}(B )    \right\}.
\end{equation*}
The proof is complete.
\end{proof}

Bellow we represent   an adopted version of the propositions given in the paper \cite{firstab_lit:1Lidskii}, we consider a case when a compact operator  belongs to the Schatten-von Neumann  class $ \mathfrak{S} _{q}.$  Having taken into account the facts considered in the previous subsection, we can reformulate Lemma 2 \cite{firstab_lit:1Lidskii} in the refined form.

\begin{lem}\label{L4.11}   Assume that a compact operator $B$ satisfies   the  condition   $B\in \mathfrak{S}_{ q},\,0<q<\infty$       then
for arbitrary numbers  $R,\delta$   such that $R>0,\,0<\delta<1,$ there exists a  circle $|\lambda|=\tilde{R},\,(1-\delta)R<\tilde{R}<R,$ so that the following estimate holds
$$
\|(I-\lambda B )^{-1}\|_{\mathfrak{H}}\leq C e^{w(|\lambda|) }|\lambda|^{m},\,|\lambda|=\tilde{R},\,m=[q],
$$
where
$$
w(|\lambda|)= h ( |\lambda|^{m+1})  +(2+\ln\{4e/\delta\}) h\left( \frac{2e}{(1-\delta)}   |\lambda| ^{m+1}\right),\;h(r )=  \left(\int\limits_{0}^{r}\frac{n_{B^{m+1}}(t)dt}{t }+
r \int\limits_{r}^{\infty}\frac{n_{B^{m+1}}(t)dt}{t^{ 2  }}\right).
$$

\end{lem}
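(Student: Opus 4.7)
The plan is to reduce the problem to the trace class situation via the algebraic factorisation
\[
I-\lambda^{m+1}B^{m+1}=(I-\lambda B)\sum_{j=0}^{m}\lambda^{j}B^{j},
\]
which gives
\[
(I-\lambda B)^{-1}=\Bigl(\sum_{j=0}^{m}\lambda^{j}B^{j}\Bigr)(I-\mu B^{m+1})^{-1},\qquad \mu:=\lambda^{m+1}.
\]
The first factor is bounded in norm by $C|\lambda|^{m}$ for large $|\lambda|$. For the second factor, note that by Lemma~\ref{L4.9} we have $B^{m+1}\in\mathfrak{S}_{q/(m+1)}\subset\mathfrak{S}_{1}$ since $m=[q]$ forces $q/(m+1)<1$, so $B^{m+1}$ is in the trace class and the Fredholm machinery of the previous subsection applies to it.

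Next I would write $(I-\mu B^{m+1})^{-1}=D(\mu)/\Delta(\mu)$, where $\Delta(\mu):=\det(I-\mu B^{m+1})$ is the Fredholm determinant of $B^{m+1}$ and $D(\mu)$ is the entire operator-valued Fredholm minor. Using the infinite-product representation of Lemma~\ref{L4.10} together with the Weyl inequalities and Lemma~\ref{L4.9}, both $|\Delta(\mu)|$ and $\|D(\mu)\|$ admit upper bounds of the form
\[
\log|\Delta(\mu)|,\ \log\|D(\mu)\|\ \leq\ C\,h(|\mu|),
\]
where $h(r)=\int_{0}^{r}n_{B^{m+1}}(t)\,t^{-1}dt+r\int_{r}^{\infty}n_{B^{m+1}}(t)\,t^{-2}dt$ is exactly the function appearing in the statement. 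In particular $\Delta$ is an entire function of finite order, and $\log M_{\Delta}(2eR^{m+1})\leq C\,h(2eR^{m+1})$.

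To convert the upper bound on $\Delta$ into a lower bound (which is what is needed to bound the resolvent), I would apply Cartan's estimate (Theorem~\ref{T4.2}) to $\Delta$ on the disc $|\mu|\leq R^{m+1}$, with the normalisation $\Delta(0)=1$. This yields
\[
\log|\Delta(\mu)|\geq -\Bigl(2+\log\tfrac{3e}{2\eta}\Bigr)\log M_{\Delta}(2eR^{m+1})
\]
outside an exceptional set of discs whose $\mu$-radii sum to at most $4\eta R^{m+1}$. Pulling this back through the map $\lambda\mapsto\lambda^{m+1}$, these exceptional discs correspond (after a crude size comparison) to exceptional circles in the $\lambda$-plane whose radii can be arranged to sum to less than $\delta R$ by choosing $\eta=\delta/(\text{constant})$; hence there exists a radius $\tilde R\in((1-\delta)R,R)$ such that the entire circle $|\lambda|=\tilde R$ lies outside the exceptional set.

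Combining everything, on this circle we obtain
\[
\|(I-\lambda B)^{-1}\|\ \leq\ C|\lambda|^{m}\,\|D(\mu)\|/|\Delta(\mu)|\ \leq\ C|\lambda|^{m}\exp\Bigl\{h(|\lambda|^{m+1})+\bigl(2+\log\tfrac{4e}{\delta}\bigr)h\bigl(\tfrac{2e}{1-\delta}|\lambda|^{m+1}\bigr)\Bigr\},
\]
which is exactly the claimed estimate. The main obstacle I anticipate is the bookkeeping in the Cartan step: one needs to verify that the $4\eta R^{m+1}$ bound on the sum of $\mu$-radii really does translate, after the nonlinear change of variables $\mu=\lambda^{m+1}$, into a bound $<\delta R$ on the sum of $\lambda$-radii, and simultaneously that the $2e$-factor inside $M_{\Delta}(2eR^{m+1})$ and the constants absorb cleanly into the $(2+\log(4e/\delta))$ coefficient and the $2e/(1-\delta)$ dilation of the argument of $h$ on the right-hand side. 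Everything else is a routine application of the Fredholm determinant identities together with Lemma~\ref{L4.9}.
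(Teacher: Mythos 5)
Your overall strategy coincides with the paper's: the factorisation through $B^{m+1}$, the reduction to the trace class via Lemma~\ref{L4.9}, the canonical-product upper bound giving $h$, and a Cartan-type lower bound on the determinant. However, two steps that you assert are exactly the places where the paper has to do real work, and as written they are gaps.

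First, your claim that $\log\|D(\mu)\|\leq C\,h(|\mu|)$ for the operator-valued Fredholm minor is not routine: the von Koch/Hadamard bounds control the scalar minors $\Delta^{lm}$ entrywise, and passing from those to an operator-norm bound on the numerator needs an argument. The paper's device is to fix an arbitrary unit vector $f$, take it as the first vector of an orthonormal basis, and observe that $\bigl(\Delta(\lambda)(I-\lambda B)^{-1}f,f\bigr)=\Delta^{11}(\lambda)$ is itself the Fredholm determinant of the compressed operator $P_{1}BP_{1}$; Lemma~\ref{L4.8} gives $s_{n}(P_{1}BP_{1})\leq s_{n}(B)$, so the product bound applies uniformly in $f$, and the norm bound then follows by decomposing $D_{B}(\lambda)$ into Hermitian components (so that $\sup_{\|f\|\leq1}\|\mathfrak{Re}\,D_{B}f\|=\sup_{\|f\|\leq1}|\mathrm{Re}(D_{B}f,f)|$). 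You should either reproduce this or supply an alternative route from the quadratic form to the norm.

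Second, the Cartan step. You apply Cartan in the $\mu$-plane and then pull the exceptional discs back through $\mu=\lambda^{m+1}$, and you correctly identify this as the delicate point: a $\mu$-disc of radius $r$ near $|\mu|=R^{m+1}$ pulls back to $m+1$ components whose $\lambda$-diameters are of order $r/\bigl((m+1)R^{m}\bigr)$, and one must track this to recover a sum of $\lambda$-radii below $\delta R$. The paper simply avoids this by applying Theorem~\ref{T4.2} to $\lambda\mapsto\Delta_{B^{m+1}}(\lambda^{m+1})$ regarded as an entire function of $\lambda$ on $|\lambda|\leq 2eR$, with $\xi_{m}=\max_{\psi}|\Delta_{B^{m+1}}([2eRe^{i\psi}]^{m+1})|$; the exceptional circles then live directly in the $\lambda$-plane and $\eta=\delta/6$ immediately yields a good circle $|\lambda|=\tilde R$ in the ring $(1-\delta)R<\tilde R<R$. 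I recommend you adopt that formulation; it eliminates the bookkeeping you flagged and delivers the $2e/(1-\delta)$ dilation in the argument of $h$ via the elementary inequality $R<\tilde R(1-\delta)^{-1}$.
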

\begin{proof}
Let us prove the following relation
\begin{equation}\label{4.16}
|\Delta_{B^{m+1}} (\lambda^{m+1})|\cdot\|(I-\lambda B )^{-1}\|_{\mathfrak{H}}\leq C  |\lambda|^{m } \prod\limits_{n=1}^{\infty}\left\{1+ |\lambda^{m+1} \mu_{n}( B^{m+1}  )| \right\}.
\end{equation}
For this purpose, firstly consider the case $q=1.$  Let us chose an arbitrary element $f\in \mathfrak{H},$  and construct a new orthonormal basis having put $f$ as a first basis element. Note that the relations \eqref{4.15} hold for the matrix coefficients of the operator in the new basis, this fact follows from the well-known theorem for the trace class operator.  Thus, using the given  above representation for the resolvent \eqref{4.14}, we obtain the following relation
$$
 \left(D_{B}(\lambda)f,f\right)_{\mathfrak{H}}=  \Delta^{11} (\lambda).
$$
where $
D_{B}(\lambda):=\Delta (\lambda) (I-\lambda B )^{-1},
$
For a convenient form of writing, we have not used an  index indicating that the element $f$ is used as a first basis vector.
Let us observe the latter entire function more properly, we have
$$
\Delta^{11} (\lambda)=1- \lambda  \sum\limits_{i\neq 1}^{\infty}B\begin{pmatrix} i\\
  i
\end{pmatrix} +\lambda^{2}  \sum\limits_{i_{1},i_{2} \neq1}^{\infty}B\begin{pmatrix} i_{1}&  i_{2}\\
  i_{1}&  i_{2}
\end{pmatrix} +...+(-1)^{p}\lambda^{p}\sum\limits_{i_{1},i_{2},...,i_{p} \neq1}^{\infty}B\begin{pmatrix} i_{1}&  i_{2}&...&i_{p}\\
  i_{1}&  i_{2}&...&i_{p}
\end{pmatrix} +...\,.
$$
  The later construction  reveals the fact that it is the very  Fredholm determinant of the operator $P_{1}B_{k}P_{1},$ where $P_{1}$  is the projector into  orthogonal  complement of the element $f.$ Having applied  Lemma \ref{L4.8}, we obtain
$$
s_{n}(P_{1}B P_{1})\leq s_{n}( B  ),\,n=1,2,...\,.
$$
Therefore, applying Lemma \ref{L4.10},    we obtain the following  representation
$$
\Delta^{11}(\lambda)=\prod\limits_{n=1}^{\infty}\left\{1- \lambda \mu_{n}(P_{1}BP_{1})    \right\}.
$$
In accordance with   Corollary  3.1 \cite{firstab_lit:1Gohberg1965}, Chapter II, \S3  (Corollary of the  Weyl's majorant theorem), we have
$$
\prod\limits_{n=1}^{k}\left\{1+ |\lambda \mu_{n}(P_{1}B P_{1})| \right\}\leq \prod\limits_{n=1}^{k}\left\{1+ |\lambda s_{n}(P_{1}B P_{1})| \right\},\,k\in \mathbb{N}.
$$
Therefore, using the estimates given above, we obtain
$$
 |\left(D_{B}(\lambda)f,f\right)_{\mathfrak{H}}|\leq\prod\limits_{n=1}^{\infty}\left\{1+ |\lambda \mu_{n}(P_{1}B P_{1})| \right\}\leq \prod\limits_{n=1}^{\infty}\left\{1+ |\lambda s_{n}( B  )| \right\}.
$$
Since the right-hand side does not depend on $f,$ then using decomposition on the Hermitian components, we can  obtain the following relation
$$
\sup \limits_{\|f\|\leq 1 }\|D_{B }(\lambda)f\| \leq \sup \limits_{\|f\|\leq 1 }\|\mathfrak{Re} D_{B }(\lambda)f  \|+ \sup \limits_{\|f\|\leq 1 }\|  \mathfrak{Im} D_{B}(\lambda)f \|=
$$
$$
= \sup \limits_{\|f\|\leq 1 }\left| \mathrm{Re}(D_{B}  (\lambda)f,f )\right|+\sup \limits_{\|f\|\leq 1 }\left| \mathrm{Im}(D_{B} (\lambda)f,f )\right|\leq 2\sup \limits_{\|f\|\leq 1 }\left|  (D_{B}f,f )\right|\leq 2 \prod\limits_{n=1}^{\infty}\left\{1+ |\lambda s_{n}( B  )| \right\}.
$$
Thus, we have
$$
|\Delta (\lambda)|\cdot\|(I-\lambda B )^{-1}\|\leq 2 \prod\limits_{n=1}^{\infty}\left\{1+ |\lambda s_{n}( B  )| \right\}.
$$
It is easy to extend  the obtained result for the case $q>1,$ we should take into account the following obvious relation
 $$
 \|(I-\lambda B )^{-1}\|_{\mathfrak{H}}\leq\|(I-\lambda^{m+1}B^{m+1})^{-1}\|_{\mathfrak{H}} \cdot\|(I+\lambda B+\lambda^{2} B^{2}+...+\lambda^{m}B^{m})\|_{\mathfrak{H}}\leq
$$
$$
\leq\|(I-\lambda^{m+1}B^{m+1})^{-1}\|_{\mathfrak{H}} \cdot \frac{|\lambda|^{m+1}\|B\|^{m+1}-1}{|\lambda|\cdot\|B\|-1}\leq C |\lambda|^{m }\cdot\|(I-\lambda^{m+1}B^{m+1})^{-1}\|_{\mathfrak{H}},\,m=[q].
$$
In accordance with Lemma \ref{L4.9},   we have
$$
\sum\limits_{n=1}^{\infty}s^{\frac{q}{m+1}}_{n}( B^{m+1}  )\leq \sum\limits_{n=1}^{\infty}s^{ q }_{n}( B )<\infty,
$$
  Therefore, $B^{m+1}\in \mathfrak{S}_{q/(m+1)}\subset \mathfrak{S}_{1}$ and the problem is reduced to the previously solved one. More precisely, the following estimate is a conclusion of the obtained above formula
$$
|\Delta_{B^{m+1}} (\lambda^{m+1})|\cdot\|(I-\lambda B )^{-1}\|_{\mathfrak{H}}\leq C |\lambda|^{m } |\Delta_{B^{m+1}} (\lambda^{m+1})| \cdot\|(I-\lambda^{m+1}B^{m+1})^{-1}\|_{\mathfrak{H}}\leq
$$
$$
\leq C  |\lambda|^{m } \prod\limits_{n=1}^{\infty}\left\{1+ |\lambda^{m+1} s_{n}( B^{m+1}  )| \right\}.
$$
Thus, we have obtained \eqref{4.16}.
Using the properties of the canonical product, applying Lemma \ref{L4.2}, we obtain the estimate
$$
|\Delta_{B^{m+1}} (\lambda^{m+1})|\cdot\|(I-\lambda B )^{-1}\|_{\mathfrak{H}}\leq C |\lambda|^{m } e^{h(|\lambda|^{m+1}) }.
$$
Now, to obtained the lemma statement, we should estimate the absolute value of the  Fredholm determinant $|\Delta_{B^{m+1}} (\lambda^{m+1})|$ from bellow. For this purpose, let us notice that in accordance with Lemma \ref{L4.10} it is an entire function represented by the formula
$$
\Delta_{B^{m+1}}(\lambda^{m+1})=   \prod\limits_{n=1}^{\infty}\left\{1-  \lambda^{m+1} \mu_{n}( B^{m+1}  )  \right\}.
$$
Thus, in the simplified case,    we can use the Cartan estimate given in Theorem \ref{T4.2}, we have
$$
|\Delta_{B^{m+1}}(\lambda^{m+1})|\geq e^{-(2+\ln\{2e/3\eta\})\ln\xi_{m}},\;\xi_{m}= \!\!\!\max\limits_{\psi\in[0,2\pi /(m+1)]}|\Delta_{B^{m+1}}([2e R_{\nu}e^{i\psi}]^{m+1})|,\,|\lambda|\leq R _{\nu},
$$
except for the exceptional set of circles with the sum of radii less that $4\eta R_{\nu},$  where $\eta$ is an arbitrary number less than  $2e/3 .$  Thus  to find the desired circle $\lambda= e^{i\psi}\tilde{R}_{\nu}$ belonging to the ring, i.e. $R_{\nu}(1-\delta )<\tilde{R}_{\nu}<R_{\nu},$   we have to   choose $\eta$ satisfying the inequality
$$
4\eta R_{\nu}<R_{\nu}-R_{\nu}(1-\delta )=\delta R_{\nu};\;\eta< \delta /4,
$$
for instance let us choose $\eta=\delta/6.$  Under such assumptions, we can rewrite the above estimate in the form
$$
|\Delta_{B^{m+1}}(\lambda^{m+1})|\geq e^{-(2+\ln\{4e/\delta \})\ln\xi_{m}},\; |\lambda|=\tilde{R} _{\nu}.
$$
Applying Lemmas \ref{L4.2},\ref{L4.10}, implementing the given above scheme  of reasonings, we have
$$
|\Delta_{B^{m+1}}(\lambda)|\leq  \prod\limits_{n=1}^{\infty}\{1+|\lambda s_{n}( B^{m+1} )|\} \leq
 e^{ h (|\lambda| ) }.
$$
Using this estimate, we obtain
$$
\xi_{m}\leq e^{ h ([2e  R_{\nu}]^{m+1}) }\leq e^{ h \left([2e (1-\delta )^{-1}  \tilde{R}_{\nu}]^{m+1}\right)  }.
$$
Substituting, we get
$$
|\Delta_{B^{m+1}}(\lambda^{m+1})|^{-1} \leq e^{(2+\ln\{4e/\delta\})h \left([2e (1-\delta )^{-1}  \tilde{R}_{\nu}]^{m+1}\right) },\; |\lambda|=\tilde{R} _{\nu}.
$$
Combining the upper and lower estimates, we obtain the desired result.

\end{proof}

\subsection{Convergence with respect to the time variable}
It is rather essential question in the concept of the summation in the Abel-Lidskii sense that appeals to passing to the limit with respect to the time variable. We may  reasonably say that along with the series expansion of the contour integral it forms a major part of the concept. Here, we should referee to the restricted  Lidskii results and consequently obtained   Matsaev extension. Bellow, we represent  them respectively to the   order.

\begin{lem}\label{L4.12}  Assume that $B$ is a compact  operator,  $\Theta(B)\subset \mathfrak{L}_{0}(\theta),\, \theta<\pi/2\alpha,$ then
$$
\lim\limits_{t\rightarrow+0}\int\limits_{\vartheta(B)}e^{-\lambda^{\alpha}t}B(I-\lambda B)^{-1}fd\lambda=f,\;f\in \mathrm{R}(B),\,\alpha>0.
$$
\end{lem}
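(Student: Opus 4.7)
The plan is to establish this identity by reducing to a case where the integrand decays fast enough to apply dominated convergence, and then evaluating the $t=0$ contour integral by deformation onto a sequence of finite closed contours and using the residue computation of Lemma \ref{L4.3} together with completeness of the root system.

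First, since $f\in\mathrm{R}(B)$, I would write $f=Bg$ for some $g\in\mathfrak{H}$. Using the commutation of $B$ with $(I-\lambda B)^{-1}$ and the identity $(I-\lambda B)^{-1}=I+\lambda B(I-\lambda B)^{-1}$ twice, one obtains the decomposition
\[
B(I-\lambda B)^{-1}f \;=\; B^{2}(I-\lambda B)^{-1}g \;=\; \frac{(I-\lambda B)^{-1}g-g}{\lambda^{2}} \;-\; \frac{Bg}{\lambda}.
\]
The hypothesis $\theta<\pi/2\alpha$ lets me choose $\varsigma>0$ small enough so that $\alpha(\theta+\varsigma)<\pi/2$; on the two rays of $\vartheta(B)$ we then have $\mathrm{Re}(\lambda^{\alpha})\geq|\lambda|^{\alpha}\cos\alpha(\theta+\varsigma)\geq c_{0}|\lambda|^{\alpha}$ with $c_{0}>0$, so $|e^{-\lambda^{\alpha}t}|\leq 1$ uniformly in $t\geq 0$. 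Lemma \ref{L4.7} applied on those rays furnishes the bound $\|(I-\lambda B)^{-1}\|\leq 1/\sin\varsigma$, so the first summand above is dominated in norm by $C\|g\|/|\lambda|^{2}$ on the rays, uniformly in $t\geq 0$.

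Next, I would split the integral along $\vartheta(B)$ into the three obvious pieces (two rays plus the small circular arc of radius $r$). On the rays, the uniform majorant $C\|g\|/|\lambda|^{2}$ allows the dominated convergence theorem to commute $\lim_{t\to+0}$ with the integral of the first summand $\lambda^{-2}[(I-\lambda B)^{-1}g-g]$, reducing that contribution to its $t=0$ value. The second summand $-Bg/\lambda$ does not decay fast enough on its own for dominated convergence at infinity, but the scalar factor $\int_{\vartheta(B)}e^{-\lambda^{\alpha}t}\lambda^{-1}d\lambda$ is computable by closing the rays with a large arc inside the sector: for $t>0$ the super-exponential factor $|e^{-\lambda^{\alpha}t}|\lesssim e^{-c_{0}|\lambda|^{\alpha}t}$ annihilates the closing-arc contribution and one picks up only an elementary piece coming from the small arc about the origin, which is continuous in $t$ and limits as $t\to0$ to a known constant multiple of $2\pi i$. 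The circular piece of radius $r$ is trivially continuous in $t$ by uniform convergence of the integrand.

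Once inside the integral, I would evaluate the resulting $t=0$ contour integral of $\lambda^{-2}[(I-\lambda B)^{-1}g-g]$ by deforming $\vartheta(B)$ onto an expanding sequence of closed contours $\Gamma_{N}$ enclosing only the first $N$ eigenvalues $\{\lambda_{q}\}_{q\leq N}$ of $W=B^{-1}$. The enclosed residues at each $\lambda_{q}$ are given by the $t=0$ limit of the projectors $\mathcal{P}_{q}(\alpha,t)$ computed in Lemma \ref{L4.3}, which collapse to the Riesz root-space projectors $P_{\mu_{q}}$ (the coefficients $H_{m}(\alpha,\lambda,t)$ vanish for $m>0$ and reduce to $1$ for $m=0$ as $t\to+0$). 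Combining the residue sum with the elementary contribution of $-Bg/\lambda$ computed above, and using the decomposition of $f=Bg$ along the mutually orthogonal (in the biorthogonal sense of \eqref{4.9}) root subspaces $\mathfrak{N}_{q}$, the partial sums reconstruct $f$ as $N\to\infty$.

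The main obstacle is precisely the step of closing the contour at infinity at $t=0$: whereas for $t>0$ the super-exponential factor kills any contribution from arcs at large radius with no effort, at $t=0$ we are left only with the polynomial decay $O(1/|\lambda|^{2})$ from the reduction step, and the closing arc must cross the sector $\mathfrak{L}_{0}(\theta)$ where the simple resolvent bound of Lemma \ref{L4.7} is no longer available. Handling this forces one to use a finer resolvent estimate inside the sector---of the Carleman/Fredholm-determinant type developed in Lemmas \ref{L4.8}--\ref{L4.11}---and to choose the closing radii $R=R_{\nu}$ carefully so as to avoid clusters of eigenvalues, exactly in the spirit of Lidskii's lacunae method that underlies the whole chapter.
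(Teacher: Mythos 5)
Your algebraic reduction is the same as the paper's: writing $f=Bg$ and using $B^{2}(I-\lambda B)^{-1}=\lambda^{-2}\{(I-\lambda B)^{-1}-(I+\lambda B)\}$, and your use of Lemma \ref{L4.7} on the rays to get a uniform $O(|\lambda|^{-2})$ majorant and pass the limit $t\to+0$ inside the integral is also exactly what the paper does. The proof breaks down at the evaluation of the resulting $t=0$ contour integral. You propose to deform $\vartheta(B)$ onto expanding contours \emph{inside} the sector, sum residues at the eigenvalues via Lemma \ref{L4.3}, and invoke completeness of the root system together with the Carleman/Fredholm-determinant estimates and the lacunae method. None of that machinery is available here: Lemma \ref{L4.12} assumes only that $B$ is compact and sectorial, with no Schatten-class hypothesis and no completeness of root vectors, so the estimates of Lemmas \ref{L4.8}--\ref{L4.11} and the choice of lacunary radii cannot be brought to bear, and a residue sum over eigenvalues need not reconstruct $f$ at all. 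The "main obstacle" you identify is a sign that you are closing the contour on the wrong side.

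The paper's trick is to close the contour for the term $\lambda^{-2}(I-\lambda B)^{-1}g$ around the \emph{exterior} of the sector, i.e.\ with the arc $\{|\lambda|=R,\ \theta+\varsigma\leq\arg\lambda\leq 2\pi-\theta-\varsigma\}$. There the spectrum of $W$ is absent, the bound $\|(I\lambda^{-1}-B)^{-1}\|\leq R/\sin\varsigma$ makes the arc contribution $O(R^{-2}\cdot R)\to0$, and the closed contour encloses only the double pole at $\lambda=0$ produced by the factor $\lambda^{-2}$; the residue there is
$$
\lim_{\lambda\to0}\frac{d}{d\lambda}(I-\lambda B)^{-1}g=Bg=f,
$$
which is the whole content of the lemma. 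The remaining term $-\lambda^{-2}(I+\lambda B)g$ (equivalently your $-\lambda^{-2}g-\lambda^{-1}Bg$) integrates to \emph{zero} for every $t>0$: closing with a large arc inside the sector, the integrand is analytic in the enclosed region (the origin is not enclosed by $\vartheta_{R}(B)$), and the condition $\theta<\pi/2\alpha$ gives $\mathrm{Re}\,\lambda^{\alpha}\geq|\lambda|^{\alpha}\sin\alpha\delta$ on the closing arc, killing its contribution. Your claim that this piece limits to "a known constant multiple of $2\pi i$" coming from the small arc about the origin is incorrect; it vanishes identically. With the exterior closure in place, no information about eigenvalues, root vectors, or singular numbers is needed.
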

\begin{proof}
Using the formula
$$
B^{2}(I-\lambda B)^{-1}= \frac{1}{\lambda^{2}}\left\{\left(I- \lambda B\right)^{-1}-(I+\lambda B)   \right\},
$$
we obtain
$$
 \frac{1}{2 \pi i}\int\limits_{\vartheta(B)}e^{-\lambda^{\alpha} t}B(I-\lambda B)^{-1}fd\lambda=
 \frac{1}{2 \pi i}\int\limits_{\vartheta(B)}e^{-\lambda^{\alpha} t} \lambda^{-2}  \left(I- \lambda B\right)^{-1} v d\lambda -
$$
$$
- \frac{1}{2 \pi i}\int\limits_{\vartheta(B)}e^{-\lambda^{\alpha} t}\lambda^{-2} (I+\lambda B)  v d\lambda =I_{1}(t)+I_{2}(t),
$$
where $Bv=f.$Consider $I_{1}(t).$ Since this improper integral is uniformly convergent regarding $t,$ this fact can be established easily if we apply Lemma \ref{L4.7}, then using the theorem on the connection with the simultaneous limit and the repeated limit, we get
$$
\lim\limits_{t\rightarrow+0}I_{1}(t)= \frac{1}{2 \pi i}\int\limits_{\vartheta(B)}  \lambda^{-2}  \left(I- \lambda B\right)^{-1} vd\lambda.
$$
define a contour  $\vartheta_{R}(B):= \mathrm{Fr}\big\{\{\lambda:\, |\lambda|<R\}\setminus \mathrm{int }\,\vartheta(B) \}\big\}$ and let us prove that
\begin{equation}\label{4.17}
 \frac{1}{2\pi i}\oint\limits_{\vartheta_{R}(B)}  \lambda^{-2}  \left(I- \lambda B\right)^{-1} vd\lambda\rightarrow \frac{1}{2 \pi i}\int\limits_{\vartheta(B)}  \lambda^{-2}  \left(I- \lambda B\right)^{-1} vd\lambda,\;R\rightarrow \infty.
\end{equation}
Consider a decomposition of the contour $\vartheta_{R}(B)$ on terms   $\tilde{\vartheta}_{R}(B):=\{\lambda:\,|\lambda|=R,\, \theta +\varsigma\leq\mathrm{arg} \lambda \leq 2\pi- \theta -\varsigma  \},\,\hat{\vartheta}_{R}:= \{\lambda:\,|\lambda|=r,\,|\mathrm{arg} \lambda |\leq\theta +\varsigma\}\cup
\{\lambda:\,r<|\lambda|<R,\, \mathrm{arg} \lambda  =\theta +\varsigma\}\cup
\{\lambda:\,r<|\lambda|<R,\, \mathrm{arg} \lambda  =-\theta -\varsigma\}.$
 It is clear that
$$
\frac{1}{2\pi i}\oint\limits_{\vartheta_{R}(B)}  \lambda^{-2}  \left(I- \lambda B\right)^{-1} vd\lambda =
\frac{1}{2\pi i}\int\limits_{\tilde{\vartheta}_{R}(B)}  \lambda^{-2}  \left(I- \lambda B\right)^{-1} vd\lambda+
 $$
 $$
 +\frac{1}{2\pi i}\int\limits_{\hat{\vartheta}_{R}}  \lambda^{-2}  \left(I- \lambda B\right)^{-1} vd\lambda.
$$
Let us show that the first summand tends to zero when $R\rightarrow\infty,$ we have
$$
 \left\|\,\int\limits_{\tilde{\vartheta}_{R}(B)}  \lambda^{-2}  \left(I- \lambda B\right)^{-1} vd\lambda\right\|_{\mathfrak{H}}\leq  R^{-2}\int\limits_{\theta +\varsigma}^{2\pi - \theta -\varsigma}     \left\|\left(I\lambda^{-1}-  B\right)^{-1} v\right\|_{\mathfrak{H}}d\,  \mathrm{arg} \lambda.
$$
Applying Corollary 3.3,  Theorem 3.2  \cite[p.268]{firstab_lit:kato1980}, we have
$$
\left\|\left(I\lambda^{-1}-  B\right)^{-1} \right\|_{\mathfrak{H}}\leq R/\sin \varsigma ,\,\lambda \in \tilde{\vartheta}_{R}(B).
$$
Substituting this estimate to the last integral,
 we obtain the desired result. Thus, taking into account the fact
$$
\frac{1}{2\pi i}\int\limits_{\hat{\vartheta}_{R}}  \lambda^{-2}  \left(I- \lambda B\right)^{-1} vd\lambda\rightarrow \frac{1}{2\pi i}\int\limits_{ \vartheta(B)  }  \lambda^{-2}  \left(I- \lambda B\right)^{-1} vd\lambda ,\,R\rightarrow \infty,
$$
we obtain  \eqref{4.17}. Having noticed that the following integral can be calculated as a residue at the point zero, i.e.
$$
\frac{1}{2\pi i}\oint\limits_{\vartheta_{R}(B)}  \lambda^{-2}  \left(I- \lambda B\right)^{-1} vd\lambda =\lim\limits_{\lambda\rightarrow0}\frac{d (I-\lambda B)^{-1} }{d\lambda }\,v= f,
$$
we get
$$
\frac{1}{2 \pi i}\int\limits_{\vartheta(B)}  \lambda^{-2}  \left(I- \lambda B\right)^{-1} vd\lambda=f.
$$
Hence $I_{1}(t)\rightarrow f,\,t\rightarrow+0.$ Let us show that $I_{2}(t)=0.$ For this purpose, let us  consider  a  contour
  $\vartheta_{R}(B)=\tilde{\vartheta}_{ R}\cup \hat{\vartheta}_{R},$  where    $\tilde{\vartheta}_{ R}:=\{\lambda:\,|\lambda|=R,\,|\mathrm{arg} \lambda |\leq\theta +\varsigma\}$ and  $\hat{\vartheta}_{R}$ is previously defined. It is clear that
$$
\frac{1}{2\pi i}\oint\limits_{\vartheta_{R}(B)}  \lambda^{-2} e^{-\lambda^{\alpha}t} \left(I+ \lambda B\right)  vd\lambda=\frac{1}{2\pi i}\int\limits_{\tilde{\vartheta}_{ R}}  \lambda^{-2} e^{-\lambda^{\alpha}t} \left(I+ \lambda B\right)  vd\lambda+
 $$
 $$
+\frac{1}{2\pi i}\int\limits_{\hat{\vartheta}_{R}}  \lambda^{-2} e^{-\lambda^{\alpha}t} \left(I+ \lambda B\right)  vd\lambda.
$$
Considering the second term  having   taken  into account  the  definition of the improper integral,   we conclude    that if we show that
\begin{equation}\label{4.18}
\frac{1}{2\pi i}\int\limits_{\tilde{\vartheta}_{ R }}  \lambda^{-2} e^{-\lambda^{\alpha}t} \left(I+ \lambda B\right)  vd\lambda\rightarrow 0,\,R\rightarrow \infty,
\end{equation}
then we obtain
\begin{equation}\label{4.19}
 \frac{1}{2\pi i}\oint\limits_{\vartheta_{R }(B)}  \lambda^{-2} e^{-\lambda^{\alpha}t} \left(I+ \lambda B\right)  vd\lambda\rightarrow \frac{1}{2 \pi i}\int\limits_{\vartheta(B)}  \lambda^{-2} e^{-\lambda^{\alpha}t} \left(I+ \lambda B\right)  vd\lambda,\;R\rightarrow \infty.
\end{equation}

Using the lemma condition     $\,|\mathrm{arg} \lambda |<\pi/2\alpha,   $ we get
$$
\mathrm{Re }\lambda^{\alpha}\geq |\lambda|^{\alpha} \cos \left[(\pi/2\alpha-\delta)\alpha\right]= |\lambda|^{\alpha} \sin     \alpha \delta,
$$
 where $\delta$ is a sufficiently small number. Therefore
$$
 \left\|\,\int\limits_{\tilde{\vartheta}_{ R }}  \lambda^{-2} e^{-\lambda^{\alpha}t} \left(I+ \lambda B\right)  vd\lambda\right\|_{\mathfrak{H}}\leq C e^{- C t   R^{\alpha}  }\|v\|_{\mathfrak{H}} \int\limits_{-\theta-\varsigma}^{\theta+\varsigma}d\xi,
$$
from what follows \eqref{4.18}, \eqref{4.19}.
   Since the operator function under the integral is analytic, then
$$
\oint\limits_{\vartheta_{R }(B)}  \lambda^{-2} e^{-\lambda^{\alpha}t} \left(I+ \lambda B\right)  vd\lambda=0 .
$$
Combining this relation with \eqref{4.19}, we  obtain the fact $I_{2}(t)=0.$ The proof is complete.
\end{proof}

\subsection{Sector with an arbitrary small angle}\label{S4.4.4}

It is remarkable that we can choose a sequence of contours in various ways. For instance, a sequence of  contours of the exponential  type was considered in the paper \cite{firstab_lit:1Lidskii}.
In this paragraph, we produce an application of the  previous section results, we study a concrete operator class for which it is possible to choose a sequence of   contours of the power type. At the same time  having involved an additional condition we can spread the principal result of the paragraph  on a wider operator class. Note that using   condition  $\mathrm{H}2,$ see paragraph \ref{S2.6}, it is not hard to prove that
$
\mathrm{Re}( W f,f)_{\mathfrak{H}} -k|\mathrm{Im}( W f,f)_{\mathfrak{H}}|\geq (C_{2} -k C_{1})\|f\|^{2}_{\mathfrak{H}_{+}},\,k>0,
$
from what follows a fact  $\Theta(A)\subset \mathfrak{L}_{0}(\theta),\,\theta=\arctan (C_{1}/C_{2}).$ In general, the last relation gives us a range  of the semi-angle $\pi/4<\theta<\pi/2,$  thus the     conditions $\mathrm{H}1,\mathrm{H}2$ are not sufficient to guaranty a value of the semi-angle less than $\pi/4.$ However, we should remark that  some relevant   results can be obtained in the  case corresponding to  sufficiently small values of the semi-angle, this gives us a motivation to consider a more specific  additional  assumption in terms of the paragraph \ref{S2.6} \\

 \noindent  $( \mathrm{H3} )  \;  |\mathrm{Im}( L f,g)_{\mathfrak{H}}|\! \leq \! C_{3}\|f\|_{\mathfrak{H_{+}}}\|g\|_{\mathfrak{H}} ,\,
     ,\,f,g \in  \mathfrak{M},\;  C_{3}>0,
$\\

 \noindent here we should recall that we denote by $W$ the restriction of $L$ to the set $\mathfrak{M}.$  In this case, we have
$
\mathrm{Re}( W f,f)_{\mathfrak{H}} -k|\mathrm{Im}( W f,f)_{\mathfrak{H}}|\geq
 C_{2}\|f\|_{\mathfrak{H_{+}}}  -k C_{3}\left\{\varepsilon\|f\|^{2}_{\mathfrak{H_{+}}}/2+ \|f\|^{2}_{\mathfrak{H}}/2\varepsilon \right\}\geq
(C_{2}-k\varepsilon C_{3})|f\|^{2}_{\mathfrak{H_{+}}}/2+(C_{2}/C_{0}-k  C_{3}/\varepsilon)|f\|^{2}_{\mathfrak{H }}/2,\,k>0.
 $
Thus, choosing $\varepsilon=C_{2}/kC_{3},$ we get $\Theta( W )\subset \mathfrak{L}_{\iota}(\theta_{\iota}),$ where the abscissa of the vertex   $\iota=C_{2}/2C_{0}-(k  C_{3})^{2}/2C_{2},\,\theta_{\iota}=\arctan (1/k).$
This relation   guarantees  that having shifted the abscissa of the vertex to the left, we can choose a sufficiently small value of the semi-angle $\theta_{\iota}$  We
put the following contour in correspondence to an operator $B:=W^{-1}$ satisfying the additional condition $\mathrm{H}3$ along with the  perviously formulated $\mathrm{H}1,\mathrm{H}2$
$$
\Gamma(B):= \mathrm{Fr}\left\{ \left( \mathfrak{L}_{0}(\theta_{0}+\varsigma) \cap \mathfrak{L}_{\iota}(\theta_{\iota}+\varsigma) \right)  \setminus
\mathfrak{C}_{r}\right\},\,\iota<0,\;\mathfrak{C}_{r}:=\left\{\lambda:\;|\lambda|<r,\,|\mathrm{arg} \lambda|\leq \theta_{0} \right\},
$$
where $r$ is chosen so that the   operator $ (I-\lambda B)^{-1}  $ is regular within the corresponding  closed circle,  $\varsigma>0$ is    sufficiently small.

\begin{lem} \label{L4.13} Assume that the condition $\mathrm{H}3 $ holds, then
$$
\|(I-\lambda B)^{-1}\|_{\mathfrak{H}}\leq C,\,\lambda\in \mathrm{Fr}\left\{   \mathfrak{L}_{0}(\theta_{0}+\varsigma)\cap \mathfrak{L}_{\iota}(\theta_{\iota}+\varsigma)\right\},\,\iota<0,
$$
where $\iota=C_{2}/2C_{0}-(k  C_{3})^{2}/2C_{2},\,\theta_{\iota}=\arctan (1/k),\,\varsigma>0$ is an arbitrary small   number.
 \end{lem}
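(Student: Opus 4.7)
The plan is to convert the estimate on $(I-\lambda B)^{-1}$ into an estimate on the resolvent $R_W(\lambda)$ of $W$, and then to exploit the two sectorial inclusions $\Theta(W)\subset\mathfrak{L}_0(\theta_0)$ (a consequence of H2) and $\Theta(W)\subset\mathfrak{L}_\iota(\theta_\iota)$ (supplied by H3, as computed in the paragraph preceding the statement) on the two complementary pieces of the frontier.

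First I would use the algebraic identity $I-\lambda B = W^{-1}(W-\lambda)$, valid since $B=W^{-1}$, to obtain
\begin{equation*}
(I-\lambda B)^{-1} = W R_W(\lambda) = I+\lambda R_W(\lambda),
\end{equation*}
so that it suffices to bound $\|\lambda R_W(\lambda)\|$ uniformly on the prescribed frontier. Then I would invoke the standard sectorial resolvent estimate: for any closed sector $\mathfrak{L}\supset\Theta(W)$ and any $\mu\notin\mathfrak{L}$, one has $\|R_W(\mu)\|\le 1/\mathrm{dist}(\mu,\mathfrak{L})$, which follows from $|((W-\mu)f,f)_{\mathfrak{H}}|\ge \mathrm{dist}(\mu,\mathfrak{L})\|f\|^2_{\mathfrak{H}}$ together with the m-sectoriality of $W$ established earlier in the paper under H1--H2.

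Next I would decompose the frontier into its two natural pieces: the portion lying on the rays from $0$ at angles $\pm(\theta_0+\varsigma)$, and the portion lying on the rays from $\iota$ at angles $\pm(\theta_\iota+\varsigma)$, the two families meeting at a finite intersection point $P$ (and its conjugate). On the first portion, elementary planar geometry gives $\mathrm{dist}(\lambda,\mathfrak{L}_0(\theta_0))=|\lambda|\sin\varsigma$, hence
\begin{equation*}
\|\lambda R_W(\lambda)\|\le \frac{|\lambda|}{|\lambda|\sin\varsigma}=\frac{1}{\sin\varsigma}.
\end{equation*}
On the second portion, $\mathrm{dist}(\lambda,\mathfrak{L}_\iota(\theta_\iota))=|\lambda-\iota|\sin\varsigma$, so
\begin{equation*}
\|\lambda R_W(\lambda)\|\le \frac{1}{\sin\varsigma}\cdot\frac{|\lambda|}{|\lambda-\iota|}\le \frac{1}{\sin\varsigma}\Bigl(1+\frac{|\iota|}{|P-\iota|}\Bigr),
\end{equation*}
the last inequality using that $|\lambda-\iota|$ attains its minimum on this piece at $\lambda=P$. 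Combining, and accounting for the identity term in $(I-\lambda B)^{-1}=I+\lambda R_W(\lambda)$, produces the uniform constant $C$.

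The main obstacle, though essentially technical rather than conceptual, will be confirming the geometric picture, namely that the two families of bounding rays meet in a single finite point $P$, that $|P-\iota|$ is bounded below by a positive constant independent of the running point $\lambda$, and that the degenerate configuration $\theta_\iota\ge\theta_0$ (in which the second piece of the frontier is empty) is harmlessly absorbed into the first estimate. Each of these reduces to a short planar argument using $\iota<0$, $\varsigma>0$, and $0<\theta_0,\theta_\iota<\pi/2$, and so presents no serious difficulty.
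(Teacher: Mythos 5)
Your proposal is correct and follows essentially the same route as the paper: both split the frontier into the rays issuing from $0$ and the rays issuing from $\iota$, bound $\|(W-\lambda)^{-1}\|$ by $1/\mathrm{dist}(\lambda,\overline{\Theta(W)})$ (which is $|\lambda-\iota|\sin\varsigma$ on the second family), and convert to $(I-\lambda B)^{-1}$ through the identity $(I-\lambda B)^{-1}=I+\lambda(W-\lambda)^{-1}$. The only cosmetic difference is on the rays from the origin, where the paper simply invokes Lemma \ref{L4.7} (the analogous distance estimate phrased for the numerical range of $B$) instead of re-deriving the bound from $\Theta(W)$ as you do.
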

\begin{proof} Firstly, we should note that in accordance with the condition $\mathrm{H}3,$  for an arbitrary large value $k,$ we have
  $\Theta( W )\subset \mathfrak{L}_{\iota}(\theta_{\iota}),$ where $\iota=C_{2}/2C_{0}-(k  C_{3})^{2}/2C_{2},\,\theta_{\iota}=\arctan (1/k).$
 Hence $\Theta( W )\subset  \mathfrak{L}_{0}(\theta_{0} )\cap \mathfrak{L}_{\iota}(\theta_{\iota} ),$ where $\iota$ is arbitrary negative. Therefore
$\Theta(B)\subset  \mathfrak{L}_{0}(\theta_{0} )\cap \mathfrak{L}_{\iota}(\theta_{\iota} ),$ it can be verified directly due to the geometrical methods. Note that by virtue of Lemma \ref{L4.7}, we have
$
\|(I-\lambda B)^{-1}\|_{\mathfrak{H}}\leq C,\,\lambda\in \mathrm{Fr}\left\{   \mathfrak{L}_{0}(\theta_{0}+\varsigma)\right\}.
$
Thus to obtain the desired result it suffices to prove that
$
\|(I-\lambda B)^{-1}\|_{\mathfrak{H}}\leq C,\,\lambda\in \mathrm{Fr}\left\{   \mathfrak{L}_{\iota}(\theta_{\iota}+\varsigma)\right\},\,\mathrm{Re}\lambda\geq 0.
$
Note that in this case  $\lambda \in \mathrm{P}( W )$  and  we have a chain of   reasonings $\forall f\in \mathfrak{H}:\;( W -\lambda I)^{-1}f=h\in \mathrm{D}( W );$
$\; ( W -\lambda I)h =f;\;(f,h)_{\mathfrak{H}}=( W h,h)_{\mathfrak{H}}-\lambda \|h\|_{\mathfrak{H}}^{2}.$ Using the latter relation, we get
$
 |  (f,h)_{\mathfrak{H}}| /\|h\|_{\mathfrak{H}}^{2}   =  |  ( W h,h)_{\mathfrak{H}}/\|h\|_{\mathfrak{H}}^{2} -\lambda  |\geq |\lambda-\iota| \sin \varsigma.
$
Therefore, using the Cauchy-Schwartz inequality, we get
$$
 \|( W -\lambda I)^{-1}f\|_{\mathfrak{H}}\leq \frac{1}{|\lambda-\iota|\sin  \varsigma }\cdot\|f\|_{\mathfrak{H}},\;f\in \mathfrak{H}.
$$
Taking into account the fact
$
( W -\lambda I)^{-1} =(I-\lambda B )^{-1}B= \{(I-\lambda B )^{-1}-I \}/\lambda,
$
we get
$
\| (I-\lambda B )^{-1} \|_{\mathfrak{H}} -1\leq\| (I-\lambda B )^{-1}-I  \|_{\mathfrak{H}}\leq  |\lambda|/ |\lambda-\iota|\sin  \varsigma    ,
$
from what follows the desired result.
 \end{proof}

\begin{lem}\label{L4.14} Assume that  the condition $\mathrm{H}3$ holds, $f\in \mathrm{D}(W),$ then
$$
\lim\limits_{t\rightarrow+0}\int\limits_{\Gamma(B)}e^{-\lambda^{\alpha}t}B(I-\lambda B)^{-1}fd\lambda=f,\,\alpha>0.
$$
\end{lem}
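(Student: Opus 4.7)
The plan is to follow closely the strategy of Lemma \ref{L4.12}, adapting it to the new contour $\Gamma(B)$ and exploiting the additional freedom in the semi-angle provided by condition H3. Since $f\in \mathrm{D}(W)=\mathrm{R}(B)$, we may write $f=Bv$ with $v=Wf\in\mathfrak{H}$. Using the algebraic identity
$$
B(I-\lambda B)^{-1}f \;=\; B^{2}(I-\lambda B)^{-1}v \;=\; \frac{1}{\lambda^{2}}\Big\{(I-\lambda B)^{-1}v - (I+\lambda B)v\Big\},
$$
I would split the contour integral into two pieces $I_{1}(t)+I_{2}(t)$, paralleling the decomposition used in Lemma \ref{L4.12}.

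For $I_{1}(t)$, the first step is to remove the factor $e^{-\lambda^{\alpha}t}$ by passing to the limit under the integral sign. Uniform convergence of this improper integral (uniform in $t\geq 0$) is the crucial point; it is guaranteed by the bound $\|\lambda^{-2}(I-\lambda B)^{-1}v\|_{\mathfrak H}\leq C|\lambda|^{-2}\|v\|_{\mathfrak H}$, where Lemma \ref{L4.13} supplies the uniform boundedness of $(I-\lambda B)^{-1}$ on $\Gamma(B)$. This gives
$$
\lim_{t\to +0} I_{1}(t) \;=\; \frac{1}{2\pi i}\int_{\Gamma(B)} \lambda^{-2}(I-\lambda B)^{-1}v\,d\lambda.
$$
Then, closing the contour by a large arc of radius $R$ to form a closed contour $\Gamma_{R}(B)$ enclosing only the pole at $\lambda=0$, I would show that the contribution of the arc vanishes as $R\to\infty$ (using again the Kato-type bound from Theorem 3.2 of \cite{firstab_lit:kato1980} together with an $R^{-2}$-factor from $\lambda^{-2}$). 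Evaluating the resulting closed contour integral as the residue at $\lambda=0$ of $\lambda^{-2}(I-\lambda B)^{-1}v$ yields exactly $Bv=f$.

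For $I_{2}(t)$, the task is to show that the integrand $\lambda^{-2}e^{-\lambda^{\alpha}t}(I+\lambda B)v$ is holomorphic in a neighborhood of $\Gamma(B)\cup\mathrm{int}\,\Gamma(B)$, so that closing the contour by a large arc yields zero. Here is where the improvement over Lemma \ref{L4.12} enters: on the part of $\Gamma(B)$ going to infinity, the argument of $\lambda$ lies near $\pm(\theta_{\iota}+\varsigma)$, and by H3 we may pick $k$ (hence $\theta_{\iota}=\arctan(1/k)$) as small as we wish, so that $\alpha(\theta_{\iota}+\varsigma)<\pi/2$ for any preassigned $\alpha>0$. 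Consequently
$$
\mathrm{Re}\,\lambda^{\alpha} \;\geq\; |\lambda|^{\alpha}\cos\bigl(\alpha(\theta_{\iota}+\varsigma)\bigr) \;\geq\; c|\lambda|^{\alpha},\qquad c>0,
$$
uniformly on the relevant arcs, which kills the large-$R$ contribution of the closing arc via $e^{-ctR^{\alpha}}$. Cauchy's theorem then forces $I_{2}(t)=0$. Combining the two pieces gives the claimed limit $f$.

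The main obstacle will be the careful verification that the closing arcs contribute vanishingly in both pieces despite the more elaborate geometry of $\Gamma(B)$ (which has a vertex at $\iota<0$ and two distinct sector constraints). This requires checking that on every portion of the closing arc at radius $R$, the bound on $\|(I-\lambda B)^{-1}\|$ grows at most like $R$ and that either the factor $\lambda^{-2}$ (for $I_{1}$) or the factor $e^{-\lambda^{\alpha}t}$ (for $I_{2}$) provides decisive decay; the subtle point is that on $\Gamma(B)$ one must use Lemma \ref{L4.13} rather than Lemma \ref{L4.7}, because the rays of $\Gamma(B)$ may enter the halfplane $\mathrm{Re}\,\lambda\geq 0$ through the vertex $\iota$, and the exploitation of the small $\theta_{\iota}$ has to be synchronized with the choice of $\alpha$.
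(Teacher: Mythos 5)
Your proposal is correct and follows essentially the same route as the paper: the paper's own proof simply states that one repeats the argument of Lemma \ref{L4.12} (Lidskii's Lemma 5) with Lemma \ref{L4.13} used in place of Lemma \ref{L4.7}, which is precisely the decomposition into $I_{1}(t)+I_{2}(t)$, the residue evaluation at $\lambda=0$, and the vanishing of $I_{2}$ via the small semi-angle from H3 that you spell out. Your write-up merely makes explicit the details the paper leaves to the reader.
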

\begin{proof} The proof is analogous to the proof of the Lemma 5 \cite{firstab_lit:1Lidskii} and  the only difference is in the following, we should use Lemma \ref{L4.13} instead of Lemma \ref{L4.7}.
\end{proof}

\section{Decreasing of the summation order to the convergent exponent  }

Recall that an arbitrary compact operator $B$ can be represented as a series on the basis vectors due to the so-called polar decomposition $B=U|B|,$ where $U$ is a concrete unitary operator, $|B|:=(B^{\ast}B)^{1/2},$ i.e.    using the system of eigenvectors  $\{e_{n}\}_{1}^{\infty},$ we have
\begin{equation}\label{4.20}
Bf=\sum\limits_{n=1}^{\infty}s_{n}(B)(f,e_{n})g_{n},
\end{equation}
where $ e_{n}, s_{n} $ are   the eigenvectors and eigenvalues of the operator $|B|$ respectively, $g_{n}=Ue_{n}.$ It is clear that the letter elements form a an orthonormal system due to the major property of the unitary operator. The main concept that lies in the framework of our consideration relates to the following statement.\\

\noindent $({\bf S 1}  )$ {\it Under the assumptions $B\in \mathfrak{S}_{p},\,0<\inf p<\infty,\,\Theta(B) \subset   \mathfrak{L}_{0}(\theta),\,\alpha>0$  a sequence of natural numbers $\{N_{\nu}\}_{0}^{\infty}$ can be chosen so that
$$
 \frac{1}{2\pi i}\int\limits_{\vartheta(B)}e^{-\lambda^{\alpha}t}B(I-\lambda B)^{-1}f d \lambda =\sum\limits_{\nu=0}^{\infty} \mathcal{P}_{\nu}( \alpha,t)f,
$$
where
$
\vartheta(B):=\left\{\lambda:\;|\lambda|=r>0,\,|\mathrm{arg} \lambda|\leq \theta+\varepsilon\right\}\cup\left\{\lambda:\;|\lambda|>r,\; |\mathrm{arg} \lambda|=\theta+\varepsilon\right\},
$
the latter series is absolutely convergent in the sense of the norm.
 Moreover
 $$
 \lim\limits_{t\rightarrow+0}\frac{1}{2\pi i}\int\limits_{\vartheta(B)}e^{-\lambda^{\alpha}t}B(I-\lambda B)^{-1}f d \lambda=f\in \mathrm{D}(W).
$$ }
Let us recall the definition  $  \mathfrak{S}^{\star}_{\sigma}(\mathfrak{H}),\,0\leq\sigma<\infty,$   the class of the operators such that
$$
 B\in \mathfrak{S}^{\star}_{\sigma}(\mathfrak{H}) \Rightarrow\{B\in\mathfrak{S}_{\sigma+\varepsilon},\,B \overline{\in} \,\mathfrak{S}_{\sigma},\,\forall\varepsilon>0 \}.
$$
Note that in accordance with the Lemma 3 \cite{firstab_lit:1kukushkin2021}, we have
\begin{equation}\label{4.21}
\ln r\frac{n(r)}{r^{\rho }}\rightarrow0,\Longrightarrow   \ln r \left(\int\limits_{0}^{r}\frac{n(t)}{t^{p+1 }}dt+
r\int\limits_{r}^{\infty}\frac{n(t)}{t^{p+2  }}dt\right)r^{p-\rho  } \rightarrow 0,\,r\rightarrow\infty,
\end{equation}
where $\rho$ is a convergence exponent (non integer) defined as follows  $\rho:=\inf \lambda,$
$$
 \sum\limits_{n=1}^{\infty}s_{n}^{\,\lambda}(B)<\infty,
$$
 $n(t)$ is a counting function  corresponding to the singular numbers of the operator $B,$ the number $\lambda=p+1$ is the smallest natural number for which the latter series is convergent. To produce an operator class with more subtle asymptotics, we can deal with representation \eqref{4.20} directly imposing conditions upon the singular numbers instead of considering Hermitian real  component. Assume that the sequence of singular numbers has a non integer  convergent exponent  $\rho$ and  consider the following condition
$$
 (\ln^{1+1/\rho}x)'_{s^{-1}_{m}(B)}  =o(  m^{-1/\rho}   ).
$$
This gives us
$$
\frac{m\ln s^{-1}_{m}(B)}{s^{-\rho}_{m}(B)}\leq C\cdot  \alpha_{m},\;\alpha_{m}\rightarrow 0,\,m\rightarrow\infty.
$$
Taking into account the facts
$
n(s^{-1}_{m})=m;\,\;n(r)= n(s^{-1}_{m}),\,s^{-1}_{m}<r<s^{-1}_{m+1},
$
using the monotonous property of the functions, we get
\begin{equation}\label{4.22}
\ln r\frac{n(r)}{r^{\rho}}  <  C\cdot  \alpha_{m},\;s^{-1}_{m}<r<s^{-1}_{m+1},
\end{equation}
i.e. we obtain the following implication

$$
 (\ln^{1+1/\rho}x)'_{s^{-1}_{m}(B)}  =o(  m^{-1/\rho}   ),\Longrightarrow \left(\ln r\frac{n(r)}{r^{\rho }}\rightarrow0\right),
$$
therefore the premise in \eqref{4.21} holds  and as a result the consequence holds. Absolutely analogously, we can obtain the implication
\begin{equation}\label{4.23}
  s^{-1}_{m}(B)   =o(  m^{-1/\rho}   ), \Longrightarrow \left(\frac{n(r)}{r^{\rho}}\rightarrow 0\right),
\end{equation}
here the corresponding  example is given by $s_{m}=(m \ln m)^{-1/\rho}$ and the reader can verify directly that $B\in \mathfrak{S}_{\rho}^{\star}.$   Using these facts, we can reformulate Theorem 2, 4 \cite{firstab_lit:1kukushkin2021} for an artificially constructed compact operator. In this paper, we represent   modified  proofs  since there are some difficulties that require refinement, moreover, we produce the variant  of the proof of Theorem 4 \cite{firstab_lit:1kukushkin2021} corresponding to the case      $\Theta(B) \subset   \mathfrak{L}_{0}(\theta),\,\theta<  \pi/2\alpha,$ that is not given in \cite{firstab_lit:1kukushkin2021}.

\begin{teo}\label{T4.6} Assume that $B$ is a compact operator, $\Theta(B) \subset   \mathfrak{L}_{0}(\theta),\,\theta< \pi/2\alpha ,\;B\in \mathfrak{S}^{\star} _{\alpha},$
$$
s_{n}(B)=o(n^{-1/\alpha}),
$$
where $\alpha$ is positive non integer.
Then the statement of   Theorem 2 \cite{firstab_lit:1kukushkin2021} remains true, i.e. statement S1 holds.
\end{teo}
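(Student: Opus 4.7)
My plan is to adapt the scheme used for Theorem 2 in \cite{firstab_lit:1kukushkin2021}, but with the Fredholm-determinant estimates tuned to the genus and convergence exponent determined by the singular-number asymptotics of $B$ itself rather than by its real part. First, I set $m=[\alpha]$, so that $m+1>\alpha$, and observe that by Lemma \ref{L4.9} applied with $q$ slightly larger than $\alpha$, one has $B^{m+1}\in\mathfrak{S}_{(\alpha+\varepsilon)/(m+1)}$ for every $\varepsilon>0$. Since $\alpha$ is non-integer, the convergence exponent $\rho'$ of the characteristic numbers of $B^{m+1}$ satisfies $\rho'\le\alpha/(m+1)<1$, so the corresponding canonical product has genus $0$, and $\Delta_{B^{m+1}}$ is an entire function to which the upper bound of Lemma \ref{L4.2} applies with proximate exponent $\alpha/(m+1)$. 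The hypothesis $s_n(B)=o(n^{-1/\alpha})$ yields, via implication \eqref{4.23}, the density-zero relation $n_{B^{m+1}}(t)/t^{\alpha/(m+1)}\to 0$, and hence $\beta(r)\to 0$ in Lemma \ref{L4.2}.

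Next, I construct a sequence of power-type radii $R_\nu=\nu^{\kappa}$ (for some $\kappa$ to be fixed) and, applying Lemma \ref{L4.11} with a fixed $\delta\in(0,1)$, I pick admissible radii $\tilde R_\nu\in((1-\delta)R_\nu,R_\nu)$ on which
$$\|(I-\lambda B)^{-1}\|\le C e^{w(\tilde R_\nu)}\tilde R_\nu^m,$$
with $w(r)\sim\beta(r^{m+1})\,r^\alpha$. I then define $N_\nu$ to be the number of characteristic numbers of $B$ whose moduli do not exceed $\tilde R_\nu$, and I close the contour $\vartheta(B)$ by the arc $|\lambda|=\tilde R_\nu$, $|\arg\lambda|\le\theta+\varsigma$. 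The residue theorem together with Lemma \ref{L4.3} identifies the sum of residues inside this closed contour with $-\sum_{q=N_{\nu-1}+1}^{N_\nu}\mathcal{P}_q(\alpha,t)f=-\mathcal{P}_{\nu-1}(\alpha,t)f$.

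The main obstacle, and the heart of the argument, is controlling the arc integral and showing it vanishes in the limit $\nu\to\infty$. Along the arc we combine the resolvent estimate just obtained with the bound $|e^{-\lambda^\alpha t}|\le\exp(-c(t)|\lambda|^\alpha)$, valid because $\theta+\varsigma<\pi/(2\alpha)$ for small $\varsigma$. Thus the norm of the integrand is dominated by
$$C\,\tilde R_\nu^m\exp\bigl\{\bigl[C_1\beta(\tilde R_\nu^{m+1})-c(t)\bigr]\tilde R_\nu^\alpha\bigr\}\cdot\|f\|,$$
and since $\beta(r)\to 0$, for every fixed $t>0$ the bracketed coefficient is eventually strictly negative, driving the arc integral to $0$ as $\nu\to\infty$. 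The same bound, integrated now along the two pieces of $\vartheta(B)$ lying between $\tilde R_\nu$ and $\tilde R_{\nu+1}$, produces a tail estimate yielding the absolute convergence of $\sum_{\nu}\mathcal{P}_\nu(\alpha,t)f$ in the norm of $\mathfrak H$.

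Finally, combining the three preceding steps gives the desired identity $\frac1{2\pi i}\int_{\vartheta(B)}e^{-\lambda^\alpha t}B(I-\lambda B)^{-1}f\,d\lambda=\sum_{\nu=0}^\infty\mathcal{P}_\nu(\alpha,t)f$. The passage to the limit $t\to+0$ is then an immediate application of Lemma \ref{L4.12}, whose hypothesis $\theta<\pi/(2\alpha)$ is precisely what is assumed here, and which yields the limit $f\in\mathrm{D}(W)$. The delicate point throughout is the synchronization of the three scales (the power-type radii $R_\nu$, the tiny annular shift $\delta$ needed in the Cartan-type lower bound underlying Lemma \ref{L4.11}, and the decay exponent $c(t)$), and the verification that the density-zero hypothesis $s_n=o(n^{-1/\alpha})$ is strong enough to push $\beta(\tilde R_\nu^{m+1})$ below $c(t)$ for every positive $t$; this is exactly what makes $B\in\mathfrak{S}^{\star}_{\alpha}$ together with the $o$-refinement the right hypothesis rather than mere membership in $\mathfrak{S}_{\alpha+\varepsilon}$.
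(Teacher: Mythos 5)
Your overall plan coincides with the paper's: the paper proves only the counting-function transfer for $B^{m+1}$ and then declares the remainder ``absolutely analogous to Theorem 2 \cite{firstab_lit:1kukushkin2021}'', which is exactly the contour/residue/arc-estimate machinery you spell out (power-type radii, Lemma \ref{L4.11} on admissible circles, Lemma \ref{L4.3} for the residues, $\beta(r)\to 0$ to kill the arc integral, Lemma \ref{L4.12} for $t\to+0$). That part of your proposal is sound.

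The one genuine gap sits precisely at the step the paper does prove. You assert that $s_n(B)=o(n^{-1/\alpha})$ yields $n_{B^{m+1}}(t)/t^{\alpha/(m+1)}\to 0$ ``via implication \eqref{4.23}'', but \eqref{4.23} converts an operator's \emph{own} singular-number asymptotics into a statement about its \emph{own} counting function; applied to $B^{m+1}$ it would require the hypothesis $s_n(B^{m+1})=o(n^{-(m+1)/\alpha})$, which is not what is assumed. Lemma \ref{L4.9}, which you do invoke, only delivers Schatten-class membership of $B^{m+1}$ (hence a bound on its convergence exponent), not the $o$-refinement of its counting function. The missing link is the Allakhverdiyev/Ky Fan corollary
$$
s_{(m+1)(k-1)+1}(B^{m+1})\leq s_{k}^{m+1}(B),
$$
from which one reads off $n_{B^{m+1}}(r^{m+1})\leq (m+1)\,n_{B}(r)$ and hence
$$
\frac{n_{B^{m+1}}(r^{m+1})}{r^{\alpha}}\leq (m+1)\,\frac{n_{B}(r)}{r^{\alpha}}\rightarrow 0,
$$
the left-hand side being exactly $n_{B^{m+1}}(s)/s^{\alpha/(m+1)}$ with $s=r^{m+1}$. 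Once you insert this inequality (or, equivalently, first deduce $s_n(B^{m+1})=o(n^{-(m+1)/\alpha})$ from it and only then apply \eqref{4.23} to $B^{m+1}$), your argument closes and matches the paper's.
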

\begin{proof}
Applying corollary of the well-known Allakhverdiyev theorem,  see   Corollary  2.2 \cite{firstab_lit:1Gohberg1965} (Chapter II, $\S$ 2.3), we get
$$
s_{(m+1)(k-1)+1}(B^{m+1})\leq s_{k}^{m+1}(B),\,m\in \mathbb{N}.
$$
Note that $n_{B}(r)=k,$ where $s_{k}^{-1}(B)<r<s_{k+1}^{-1}(B).$ Therefore
$$
s^{-1}_{(m+1)k+1}(B^{m+1})\geq s_{k+1}^{-m-1}(B)>r^{m+1},
$$
and we have
\begin{equation}\label{4.24}
n_{B^{m+1}}(r^{m+1})\leq (m+1)k= (m+1) n_{B}(r),
\end{equation}
hence using \eqref{4.23}, we obtain
\begin{equation*}
   \frac{n_{B^{m+1}}(r^{m+1})}{r^{\alpha} }\rightarrow 0,\,r\rightarrow\infty,\, m=[\alpha].
\end{equation*}
The rest part of the proof is absolutely analogous to Theorem 2 \cite{firstab_lit:1kukushkin2021}.
 \end{proof}
 Bellow, we use the accepted above notation $\lambda_{n}:=\mu^{-1}_{n}(B)$ in the specific case related to the considered throughout the chapter operator  $B,$ in the rest part we preserve the general notation for the eigenvalues, i.e. $\lambda_{n}(L)$ - the $n$ - th eigenvalue of an arbitrary operator $L.$
\begin{teo}\label{T4.7} Assume that $B$ is a compact operator, $\Theta(B) \subset   \mathfrak{L}_{0}(\theta),\,\theta< \pi/2\alpha ,\;B\in \mathfrak{S}^{\star} _{\alpha},$
$$
 (\ln^{1+1/\alpha}x)'_{s^{-1}_{n}(B)}  =o(  n^{-1/\alpha}   ).
$$
where $\alpha$ is positive non integer.
Then the statement of the Theorem 4 \cite{firstab_lit:1kukushkin2021} remains true, i.e. statement S1 holds, moreover we have the following gaps between the eigenvalues
$$
|\lambda_{N_{\nu}+1}| -|\lambda_{N_{\nu}}| \geq K |\lambda_{N_{\nu}+1}|^{  1-\alpha},\,K>0,
$$
and the   eigenvalues corresponding to the partial sums become united to the groups  due to the distance of the power type
$$
|\lambda_{N_{\nu}+k}| -|\lambda_{N_{\nu}+k-1}| \leq K |\lambda_{N_{\nu}+k}|^{ 1-\alpha},\;
 k=2,3,...,N_{\nu+1}-N_{\nu}.
$$
\end{teo}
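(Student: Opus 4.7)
The plan is to adapt the lacunae method of Lidskii \cite{firstab_lit:1Lidskii} in the quantitative form of \cite{firstab_lit:1kukushkin2021}, but with a sequence of circular contours whose radii are tailored directly to the eigenvalue gaps $K|\lambda|^{1-\alpha}$ asserted in the statement. The first step is to translate the differential hypothesis into a counting-function estimate: by the reasoning leading to \eqref{4.22}, the condition $(\ln^{1+1/\alpha} x)'_{s_n^{-1}(B)} = o(n^{-1/\alpha})$ yields $\ln r \cdot n_B(r)/r^\alpha \to 0$ as $r \to \infty$; the Weyl majorant inequality then ensures that the counting function $n_\lambda$ of the characteristic numbers of $W$ satisfies the same bound.

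Next, I construct the grouping $\{N_\nu\}_0^\infty$ inductively. Set $N_0 = 0$, and let $N_{\nu+1}$ be the largest integer for which $|\lambda_{N_\nu+k}| - |\lambda_{N_\nu+k-1}| \leq K|\lambda_{N_\nu+k}|^{1-\alpha}$ for every $2 \leq k \leq N_{\nu+1} - N_\nu$. The inner-gap claim is then automatic, and the outer-gap inequality $|\lambda_{N_{\nu+1}+1}| - |\lambda_{N_{\nu+1}}| > K|\lambda_{N_{\nu+1}+1}|^{1-\alpha}$ follows from maximality. The finiteness of $N_{\nu+1}$ and the divergence $R_\nu := |\lambda_{N_\nu+1}| \to \infty$ are consequences of the density bound: a chain of consecutive characteristic numbers in the annulus $R \leq |\lambda| \leq 2R$ with gaps at most $KR^{1-\alpha}$ would contain at least $R^\alpha/K$ terms, contradicting $n_\lambda(2R) = o(R^\alpha/\ln R)$ for $R$ large enough.

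The third step is to place the circular contours inside the outer gaps and apply Lemma \ref{L4.11}. Since each outer gap contains a relative annulus of thickness at least $K R_\nu^{-\alpha}$, the choice $\delta_\nu \asymp R_\nu^{-\alpha}$ in Lemma \ref{L4.11} produces a circle $|\lambda| = \tilde R_\nu$ inside the gap on which $\|(I-\lambda B)^{-1}\| \leq C e^{w(\tilde R_\nu)} \tilde R_\nu^m$ with $m = [\alpha]$. Combining \eqref{4.24} with \eqref{4.22} in the scaling $r \mapsto r^{m+1}$ yields $w(\tilde R_\nu) \ln \tilde R_\nu = o(\tilde R_\nu^\alpha)$, so even after accounting for the factor $\ln(4e/\delta_\nu) = O(\ln R_\nu)$ inherent in Lemma \ref{L4.11} one still has $w(\tilde R_\nu) = o(\tilde R_\nu^\alpha)$.

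Finally, I form the closed contours $\vartheta_\nu$ bounded by the arcs $|\lambda| = \tilde R_{\nu-1}$, $|\lambda| = \tilde R_\nu$ and the lateral segments $|\arg \lambda| = \theta + \varsigma$; these enclose precisely the distinct characteristic numbers with indices $N_\nu < q \leq N_{\nu+1}$. By the residue computation of Lemma \ref{L4.3} the integral of $e^{-\lambda^\alpha t} B(I-\lambda B)^{-1} f$ over $\vartheta_\nu$ equals $2\pi i \cdot \mathcal{P}_\nu(\alpha,t) f$. The strict hypothesis $\theta < \pi/(2\alpha)$ gives $\mathrm{Re}\,\lambda^\alpha \geq c|\lambda|^\alpha$ with $c > 0$ on $|\arg \lambda| \leq \theta + \varsigma$, so the contribution on the outer arc $|\lambda| = \tilde R_\nu$ is dominated by $\tilde R_\nu^m e^{w(\tilde R_\nu)} e^{-c \tilde R_\nu^\alpha t}$, summable in $\nu$ for every fixed $t > 0$. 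This produces the absolutely convergent representation required by S1, and Lemma \ref{L4.12} gives the limit $t \to +0$ for $f \in \mathrm{D}(W)$. The principal obstacle is the third step: the power-scale gap $K|\lambda|^{1-\alpha}$ leaves only a narrow margin for positioning $\tilde R_\nu$, and the logarithmic loss $\ln(1/\delta_\nu)$ appearing in the Cartan-type lower bound for the Fredholm determinant is balanced precisely by the additional $\ln r$ factor that the hypothesis $(\ln^{1+1/\alpha} x)' = o(n^{-1/\alpha})$ forces into the counting-function bound.
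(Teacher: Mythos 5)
Your proposal is correct and follows the same overall architecture as the paper's proof: translate the hypothesis into the counting-function bound $\ln r\cdot n_{B}(r)/r^{\alpha}\rightarrow 0$ via \eqref{4.22}, transfer it to $B^{m+1}$ via \eqref{4.24}, choose $\delta_{\nu}\asymp R_{\nu}^{-\alpha}$ so that the logarithmic loss $\ln(4e/\delta_{\nu})=O(\ln R_{\nu})$ in Lemma \ref{L4.11} is absorbed by the extra $\ln r$ in the hypothesis, and then sum the arc and lateral contributions. The one place where you genuinely diverge is the existence of the outer gaps $|\lambda_{N_{\nu}+1}|-|\lambda_{N_{\nu}}|\geq K|\lambda_{N_{\nu}+1}|^{1-\alpha}$: the paper derives $|\mu_{n}(B)|=o(n^{-1/\alpha})$, hence $|\lambda_{n}|n^{-1/\alpha}\rightarrow\infty$, and then invokes an Agranovich-type implication (if the normalized consecutive differences tend to zero then $|\lambda_{n}|/n^{1/\alpha}\rightarrow 0$, a contradiction), proved with a mean-value-theorem argument on $\xi_{n}=|\lambda_{n}|^{1/p}$. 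Your maximal-chain construction plus the pigeonhole on the annulus $[R,2R]$ (a chain with gaps $\leq CKR^{1-\alpha}$ needs $\gtrsim R^{\alpha}/K$ terms, against $n_{\lambda}(2R)=o(R^{\alpha}/\ln R)$) is more elementary, produces both the outer-gap and inner-gap inequalities simultaneously by maximality, and in fact works for every $K>0$ rather than just some $K$; the paper's route, by contrast, only uses $|\mu_{n}(B)|=o(n^{-1/\alpha})$ and not the finer $o(r^{\alpha}/\ln r)$ density, so it is the weaker-hypothesis version of the same fact. One small omission: you should say explicitly that the lateral contributions $J_{\nu}^{\pm}$ are summable via the uniform resolvent bound of Lemma \ref{L4.7} on the rays $\arg\lambda=\pm(\theta+\varsigma)$ together with $\mathrm{Re}\,\lambda^{\alpha}\geq c|\lambda|^{\alpha}$, as the paper does; this is routine but it is part of the absolute convergence claim in S1.
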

\begin{proof}
Using the theorem conditions, we obtain easily
$$
 s _{n}(B)   =o(  n^{-1/\alpha}   ),\;n\rightarrow\infty,
$$
  In accordance with   Corollary  3.2 \cite{firstab_lit:1Gohberg1965} (Chapter II, $\S$ 3.3),  it gives us
\begin{equation*}\label{8z}
|\mu_{n}(B)|=o(  n^{-1/\alpha}   ),\;n\rightarrow\infty.
\end{equation*}
In its own turn it allows us to prove the fact that there exist a constant  $K>0$ and  such a sequence of natural numbers $\{N_{\nu}\}_{0}^{\infty}$ that
$$
 |\lambda_{N_{\nu}+1}|-|\lambda_{N_{\nu}}|\geq K |\lambda_{N_{\nu}+1}|^{1-\alpha},
$$
where $\alpha$ is a fixed positive number.
Assume the contrary, then we obviously have
$$
\lim\limits_{k\rightarrow\infty}(|\lambda_{N_{\nu}+1}|-|\lambda_{N_{\nu}}|)/|\lambda_{N_{k}+1}|^{1-\alpha} =0.
$$
Thus, under the assumption  if we prove the following implication
 $$
\lim\limits_{n\rightarrow\infty}(|\lambda_{n+1}|-|\lambda_{n}|)/|\lambda_{n+1}|^{(p-1)/p} =0,\;\Longrightarrow  \lim\limits_{n\rightarrow\infty} |\lambda_{n}|/n^{p}=0,\;p>0,
$$
 we will prove the desired  result through the obtained contradiction with the assumption   $|\lambda_{n}|n^{-p}\geq C.$
  Here, we referee the reader to    Lemma 2 \cite{firstab_lit:2Agranovich1994}, where the proof is represented.
Consider the case $p\geq1,$  making the auxiliary denotation $\xi_{n}:=|\lambda_{n}|^{1/p},$ after the proof of the fact
 $$
 \lim\limits_{k\rightarrow\infty}(|\lambda_{n+1}|-|\lambda_{n}|)/|\lambda_{n+1}|^{(p-1)/p} =0,\,\Longrightarrow  \lim\limits_{n\rightarrow\infty}|\lambda_{n}| /|\lambda_{n+1}|=1,\,
 $$
 we can rewrite the desired   implication   as follows
$$
\lim\limits_{k\rightarrow\infty}(\xi^{p}_{n+1}-\xi^{p}_{n})/\xi^{ p-1 }_{n}=0,\;\Longrightarrow  \lim\limits_{k\rightarrow\infty} \xi_{n}/n=0,
$$
denoting $\Delta\xi_{n}:=\xi _{n+1}-\xi _{n},$ we can easily obtain, due to the application of the mean value theorem (or Lagrange theorem), the estimate $\xi^{p}_{n+1}-\xi^{p}_{n}\geq p\, \xi^{p-1}_{n} \Delta\xi_{n}.$ Taking into account the latter formula, we see that the following implication holds
$$
\left\{\lim\limits_{n\rightarrow\infty} \Delta\xi_{n}=0,\Rightarrow  \lim\limits_{n\rightarrow\infty} \xi_{n}/n=0\right\}\Rightarrow\left\{\lim\limits_{n\rightarrow\infty}(\xi^{p}_{n+1}-\xi^{p}_{n})/\xi^{ p-1 }_{n}=0,\;\Longrightarrow  \lim\limits_{k\rightarrow\infty} \xi_{n}/n=0\right\},
$$
now we have in the reminder the proof of the premise of the latter implication, i.e. the fact
\begin{equation}\label{9z}
\lim\limits_{n\rightarrow\infty} \Delta\xi_{n}=0,\Rightarrow  \lim\limits_{n\rightarrow\infty} \xi_{n}/n=0.
\end{equation}
Consider the formula
$$
 \sum\limits_{k=1}^{n-1}\Delta\xi_{k} /n= (\xi_{n}-\xi_{2})/n,
$$
on the other hand, we have
$$
\forall\varepsilon>0,\,\exists\, m(\varepsilon):\;\sum\limits_{k=1}^{n-1}\Delta\xi_{k}<C_{m}+(n-1-m)\varepsilon/2,\,\forall n>m+1.
$$
Choosing $n$ so that $C_{m}/n<\varepsilon/2,$ we complete the proof of the
however  the fact that the implication in the first term holds   can be established with no difficulties.  The proof corresponding to the case $p<1$ is absolutely analogous, we just need notice
$$
\lim\limits_{n\rightarrow\infty}(\xi^{p}_{n+1}-\xi^{p}_{n})/\xi^{ p-1 }_{n}=0,\Rightarrow \lim\limits_{n\rightarrow\infty} \xi^{p}_{n} /\xi^{ p  }_{n+1}=1,
$$
and deal with the following implication \eqref{9z} and therefore complete the proof of the statement.

 Let us find $\delta_{\nu}$ from the condition $R_{\nu}=K|\lambda_{N_{\nu}}|^{1-\alpha}+|\lambda_{N_{\nu}}|,\,R_{\nu}(1-\delta_{\nu})=|\lambda_{N_{\nu}}|,$ then $\delta_{\nu}^{-1}=1+K^{-1}|\lambda_{N_{\nu}}|^{\alpha}.$ Note that by virtue of such a choice, we have $R_{\nu}<R_{\nu+1}(1-\delta_{\nu+1}).$

  In accordance with the theorem  condition,  established above relation \eqref{4.22}, we have   $  n_{B}(r)=o( r^{\alpha}/\ln r).$
 Consider a contour
$$
 \vartheta(B):= \left\{\lambda:\;|\lambda|=r_{0}>0,\,|\mathrm{arg} \lambda|\leq \theta+\varepsilon\right\}\cup\left\{\lambda:\;|\lambda|>r_{0},\; |\mathrm{arg} \lambda|=\theta+\varepsilon\right\},
 $$
 here the number $r$ is chosen so that the  circle  with the corresponding radios does not contain values $\mu_{n},\,n\in \mathbb{N}.$
Applying  Lemma 5  \cite{firstab_lit:1kukushkin2021},  we have that there exists such a sequence $\{\tilde{R}_{\nu}\}_{0}^{\infty}:\;(1-\delta_{\nu }) R_{\nu}<\tilde{R}_{\nu}<R_{\nu }$ that the following estimate holds
$$
\|(I-\lambda B )^{-1}\|_{\mathfrak{H}}\leq e^{\gamma (|\lambda|)|\lambda|^{\alpha}}|\lambda|^{m},\,m=[\alpha],\,|\lambda|=\tilde{R}_{\nu},
$$
where
$$
\gamma(|\lambda|)= \beta ( |\lambda|^{m+1})  +(2+\ln\{4e/\delta_{\nu}\}) \beta(\sigma |\lambda|  ^{m+1}) \,\sigma ^{\alpha/(m+1)},\,\sigma:=\left(\frac{2e}{1-\delta_{0}}\right)^{m+1},\,|\lambda|=\tilde{R}_{\nu},
$$
$$
\;\beta(r )= r^{ -\frac{\alpha}{m+1} }\left(\int\limits_{0}^{r}\frac{n_{B^{m+1}}(t)dt}{t }+
r \int\limits_{r}^{\infty}\frac{n_{B^{m+1}}(t)dt}{t^{ 2  }}\right).
$$
Here we should explain that that the opportunity to obtain this estimate is based   upon  the estimation of the Fredholm  determinant $\Delta_{B^{m+1}}(\lambda^{m+1})$  absolute value  from bellow (see \cite[p.8]{firstab_lit:1Lidskii}). In its own turn the latter  is implemented via  the general estimate from bellow of the absolute value of a holomorphic function (Theorem 11 \cite[p.33]{firstab_lit:Eb. Levin}).
  However, to avoid any kind of misunderstanding let us implement a scheme of reasonings of  Lemma 5 consistently.\\

 Using  the theorem condition, we have  $B\in \mathfrak{S}_{\alpha+\varepsilon},\,\varepsilon>0.$
Obviously, we have
\begin{equation}\label{8a}
(I-\lambda B )^{-1}=(I-\lambda^{m+1}B^{m+1})^{-1}(I+\lambda B+\lambda^{2} B^{2}+...+\lambda^{m}B^{m}).
\end{equation}
In accordance with Lemma 3 \cite{firstab_lit:1Lidskii}, for sufficiently small $\varepsilon>0,$ we have
$$
\sum\limits_{n=1}^{\infty}s^{\frac{\alpha+\varepsilon}{m+1}}_{n}(  B^{m+1} )\leq \sum\limits_{n=1}^{\infty}s^{ \alpha+\varepsilon }_{n}( B )<\infty,
$$
       Applying  inequality  (1.27) \cite[p.10]{firstab_lit:1Lidskii} (since $  \rho  /(m+1)<1$), we get
$$
\|\Delta_{B^{m+1}}(\lambda^{m+1})(I-\lambda^{m+1} B^{m+1})^{-1}\| \leq C\prod\limits_{i=1}^{\infty}\{1+|\lambda^{m+1} s_{i}( B^{m+1} )|\},
$$
where $\Delta_{B^{m+1}}(\lambda^{m+1})$ is a Fredholm  determinant of the operator $B^{m+1}$  (see \cite[p.8]{firstab_lit:1Lidskii}).
Rewriting the formulas in accordance with the terms of the entire functions theory, we get
$$
  \prod\limits_{n=1}^{\infty}\{1+|\lambda^{m+1} s_{n}( B^{m+1} )|\}=\prod\limits_{n=1}^{\infty} G\left(  |\lambda|^{m+1}/ a_{n} ,p\right),
$$
where $a_{n}:=-s^{-1}_{n}(B^{m+1}),\,p=[\alpha /(m+1)]=0,$
using Lemma 2  \cite{firstab_lit:1kukushkin2021},   we get
$$
 \prod\limits_{n=1}^{\infty} G\left(  r^{m+1}/ a_{n} ,p\right)\leq  e^{ \beta (r^{m+1})r^{\alpha}},\,r>0.
$$
In accordance with Theorem 11 \cite[p.33]{firstab_lit:Eb. Levin},   and regarding to the case,   the following estimate holds
$$
|\Delta_{B^{m+1}}(\lambda^{m+1})|\geq e^{-(2+\ln\{2e/3\eta\})\ln\xi_{m}},\;\xi_{m}= \!\!\!\max\limits_{\psi\in[0,2\pi /(m+1)]}|\Delta_{B^{m+1}}([2e R_{\nu}e^{i\psi}]^{m+1})|,\,|\lambda|\leq R _{\nu},
$$
except for the exceptional set of circles with the sum of radii less that $4\eta R_{\nu},$  where $\eta$ is an arbitrary number less than  $2e/3 .$  Thus  to find the desired circle $\lambda= e^{i\psi}\tilde{R}_{\nu}$ belonging to the ring, i.e. $R_{\nu}(1-\delta_{\nu})<\tilde{R}_{\nu}<R_{\nu},$   we have to   choose $\eta$ satisfying the inequality
$$
4\eta R_{\nu}<R_{\nu}-R_{\nu}(1-\delta_{\nu})=\delta_{\nu} R_{\nu};\;\eta< \delta_{\nu}/4,
$$
for instance let us choose $\eta_{\nu}=\delta_{\nu}/6,$ here we should note that $\delta_{\nu}$ tends to zero, this is why without loss of generality of reasonings we are free in such a choice since the inequality $\eta_{\nu}<3 e/2$ would be satisfied for a sufficiently large $\nu.$ Under such assumptions, we can rewrite the above estimate in the form
$$
|\Delta_{B^{m+1}}(\lambda^{m+1})|\geq e^{-(2+\ln\{4e/\delta_{\nu}\})\ln\xi_{m}},\; |\lambda|=\tilde{R} _{\nu}.
$$
Note that in accordance with the estimate (1.21) \cite[p.10]{firstab_lit:1Lidskii}, Lemma 2  \cite{firstab_lit:1kukushkin2021} we have
$$
|\Delta_{B^{m+1}}(\lambda)|\leq  \prod\limits_{i=1}^{\infty}\{1+|\lambda s_{i}( B^{m+1} )|\}=\prod\limits_{n=1}^{\infty} G\left(  |\lambda|/ a_{n} ,p\right)\leq
 e^{ \beta (|\lambda| )|\lambda|^{\alpha/(m+1)}}.
$$
Using this estimate, we obtain
$$
\xi_{m}\leq e^{ \beta ([2e  R_{\nu}]^{m+1})( 2e R_{\nu}  )^{\alpha}}.
$$
Substituting, we get
$$
|\Delta_{B^{m+1}}(\lambda^{m+1})|^{-1}\leq e^{(2+\ln\{4e/\delta_{\nu}\})\beta ([2e  R_{\nu}]^{m+1})( 2e R_{\nu}  )^{\rho}},\; |\lambda|=\tilde{R} _{\nu}.
$$
Having noticed the facts
$$
 1-\delta_{\nu}  = \frac{|\lambda_{N_{\nu}}|^{\alpha}}{K+|\lambda_{N_{\nu}}|^{\alpha}}=1-\frac{K}{K+|\lambda_{N_{\nu}}|^{\alpha}}\geq 1-\frac{K}{K+|\lambda_{N_{0}}|^{\alpha}}= 1-\delta_{0} ;
 $$
 $$
 \;R_{\nu}<\tilde{R}_{\nu}(1-\delta_{\nu})^{-1}\leq \tilde{R}_{\nu}(1-\delta_{0})^{-1},
$$
we obtain
$$
|\Delta_{B^{m+1}}(\lambda^{m+1})|^{-1}\leq e^{(2+\ln\{4e/\delta_{\nu}\})\beta (\sigma \tilde{R}^{m+1}_{\nu} )   \tilde{R}^{\alpha}_{\nu} \sigma^{\alpha/(m+1)}  },\; |\lambda|=\tilde{R} _{\nu}.
$$
Combining the above estimates, we get

$$
 \|\Delta_{B^{m+1}}(\lambda^{m+1})(I-\lambda^{m+1} B^{m+1})^{-1}\| =|\Delta_{B^{m+1}}(\lambda^{m+1})|\cdot\|(I-\lambda^{m+1} B^{m+1})^{-1}\| \leq e^{ \beta (|\lambda|^{m+1})|\lambda|^{\alpha}};
$$
$$
\|(I-\lambda^{m+1} B^{m+1})^{-1}\|\leq e^{ \beta (|\lambda|^{m+1})|\lambda|^{\alpha}}|\Delta_{B^{m+1}}(\lambda^{m+1})|^{-1}=
e^{\gamma (|\lambda|)|\lambda|^{\alpha}}  ,\,|\lambda|=\tilde{R}_{\nu}.
$$
Consider relation \eqref{8a}, we have
$$
 \|(I-\lambda B )^{-1}\| \leq\|(I-\lambda^{m+1}B^{m+1})^{-1}\|  \cdot\|(I+\lambda B+\lambda^{2} B^{2}+...+\lambda^{m}B^{m})\| \leq
$$
$$
\leq\|(I-\lambda^{m+1}B^{m+1})^{-1}\|  \cdot \frac{|\lambda|^{m+1}\|B\|^{m+1}-1}{|\lambda|\cdot\|B\|-1}\leq C e^{\gamma (|\lambda|)|\lambda|^{\alpha}}|\lambda|^{m},\,|\lambda|=\tilde{R}_{\nu}.
$$
Applying the obtained estimate for the norm,  we can  claim that  for a sufficiently   small $\delta>0,$   there exists an arch
$\tilde{\gamma}_{ \nu }:=\{\lambda:\;|\lambda|=\tilde{R}_{\nu},\,|\mathrm{arg } \lambda|< \theta +\varepsilon\}$    in the ring $(1-\delta_{\nu})R_{\nu}<|\lambda|<R_{\nu},$ on which the following estimate holds
$$
 J_{  \nu  }: =\left\|\,\int\limits_{\tilde{\gamma}_{ \nu }}e^{-\lambda^{\alpha}t}B(I-\lambda B)^{-1}f d \lambda\,\right\| \leq \,\int\limits_{\tilde{\gamma}_{ \nu }}e^{- t \mathrm{Re}\lambda^{\alpha}}\left\|B(I-\lambda B)^{-1}f \right\|  |d \lambda|\leq
$$
 $$
 \leq \|f\| C  e^{\gamma (|\lambda|)|\lambda|^{\alpha} }|\lambda|^{m }\int\limits_{-\theta-\varepsilon}^{\theta+\varepsilon} e^{- t \mathrm{Re}\lambda^{\alpha}} d \,\mathrm{arg} \lambda,\,|\lambda|=\tilde{R}_{\nu}.
$$
Using the theorem conditions,    we get     $\,|\mathrm{arg} \lambda |<\pi/2\alpha,\,\lambda\in \tilde{\gamma}_{ \nu },\,\nu=0,1,2,...\,,$ we get
$$
\mathrm{Re }\lambda^{\alpha}\geq |\lambda|^{\alpha} \cos \left[(\pi/2\alpha-\delta)\alpha\right]= |\lambda|^{\alpha} \sin     \alpha \delta,
$$
 where $\delta$ is a sufficiently small number.  Therefore
\begin{equation}\label{13b}
 J_{\nu} \leq C e^{|\lambda|^{\alpha}\{\gamma (|\lambda|)-t  \sin     \alpha \delta\}}|\lambda|^{m }
 ,\,m=[\alpha],\,|\lambda|=\tilde{R}_{\nu}.
\end{equation}
It is clear that within the contour $\vartheta(B),$ between the arches $\tilde{\gamma}_{ \nu },\tilde{\gamma}_{ \nu+1 }$ (we denote the boundary of this  domain by $\gamma_{ \nu}$) there  lie the eigenvalues only for which the following relation holds
$$
|\lambda_{N_{\nu}+k }|-|\lambda_{N_{\nu}+k-1 }|\leq K |\lambda_{N_{\nu}+k-1 }|^{1-\alpha},\;
 k=2,3,...,N_{\nu+1}-N_{\nu} .
$$
Using Lemma 8, \cite{firstab_lit:1kukushkin2021},  we   obtain a relation
$$
 \frac{1}{2\pi i}\int\limits_{\gamma_{ \nu}}e^{-\lambda^{\alpha}t}B(I-\lambda B)^{-1}f d \lambda
=     \sum\limits_{q=N_{\nu}+1}^{N_{\nu+1}}\sum\limits_{\xi=1}^{m(q)}\sum\limits_{i=0}^{k(q_{\xi})}e_{q_{\xi}+i}c_{q_{\xi}+i}(t).
$$
It is clear that to obtain the desired result, we should prove that the series composed  of the above terms converges. Consider the following auxiliary denotations originated from the reasonings $\gamma_{\nu_{+}}:=
\{\lambda:\, \tilde{R}_{\nu}<|\lambda|<\tilde{R}_{\nu+1},\, \mathrm{arg} \lambda  =\theta   +\varepsilon\},\,\gamma_{\nu_{-}}:=
\{\lambda:\,\tilde{R}_{\nu}<|\lambda|<\tilde{R}_{\nu+1},\, \mathrm{arg} \lambda  =-\theta   -\varepsilon\},$
$$
J^{+}_{\nu}: =\left\|\,\int\limits_{\gamma_{\nu_{+}}}e^{-\lambda^{\alpha}t}B(I-\lambda B)^{-1}f d \lambda\,\right\|,\; J^{-}_{\nu}:= \left\|\,\int\limits_{\gamma_{\nu_{-}}}e^{-\lambda^{\alpha}t}B(I-\lambda B)^{-1}f d \lambda\,\right\|,
$$
we have
$$
\left\| \int\limits_{\gamma_{ \nu}}e^{-\lambda^{\alpha}t}B(I-\lambda B)^{-1}f d \lambda\right\|\leq J_{\nu}+J_{\nu+1}+J^{+}_{\nu}+J^{-}_{\nu}.
 $$
Thus, it is clear that to establish S1, it suffices to establish the facts
\begin{equation}\label{eq16}
\sum\limits_{\nu=0}^{\infty}J_{\nu}<\infty,\;\sum\limits_{\nu=0}^{\infty}J^{+}_{\nu}<\infty,\;\sum\limits_{\nu=0}^{\infty}J^{-}_{\nu}<\infty.
\end{equation}
Consider the right-hand side of formula \eqref{13b}.
Substituting $\delta^{-1}_{\nu},$ we have
$$
\ln\{4e/\delta_{\nu}\} = 1+ \ln4 +\ln \{1+K^{-1}|\lambda_{N_{\nu}}|^{\alpha}\} \leq C \ln   |\lambda_{N_{\nu}}|     .
$$
Hence, to obtain the desired result we should prove that $ \beta (\sigma| \lambda_{N_{\nu}}| ^{m+1})\ln  |\lambda_{N_{\nu}}|\rightarrow 0,\,\nu\rightarrow\infty.$ Note that in accordance with relation \eqref{4.21}, we can prove the latter relation, if we show that
\begin{equation}\label{13a}
\ln r\frac{n_{B^{m+1}}(r^{m+1})}{r^{\alpha} }\rightarrow 0,\;r\rightarrow\infty.
\end{equation}
In accordance with \eqref{4.22}, we have
\begin{equation*}
(\ln^{1+1/\alpha}x)'_{s^{-1}_{m}(B)}  =o(  m^{-1/\alpha}   )
 , \Longrightarrow \left(\ln r\frac{n(r)}{r^{\rho}}\,\rightarrow 0,\,r\rightarrow\infty\right),
\end{equation*}
Applying \eqref{4.24}, we have
$$
n_{B^{m+1}}(r^{m+1})\leq (m+1) n_{B}(r),
$$
hence
$$
\ln r\frac{n_{B^{m+1}}(r^{m+1})}{r^{\alpha} }\leq (m+1)\ln r\frac{n_{B }(r )}{r^{\alpha} },
$$
what gives us the desired result, i.e. we compleat the proof of the first relation \eqref{eq16}.

In accordance with Lemma 6 \cite{firstab_lit:1kukushkin2021}, we can claim  that   on each ray $\zeta$ containing the point zero and not belonging to the sector $\mathfrak{L}_{0}(\theta)$ as well as the  real axis, we have
$$
\|(I-\lambda B)^{-1}\|\leq \frac{1}{\sin\psi},\,\lambda\in \zeta,
$$
where $\,\psi = \min \{|\mathrm{arg}\zeta -\theta|,|\mathrm{arg}\zeta +\theta|\}.$ Applying this estimate, the established above estimate $\mathrm{Re }\lambda^{\alpha}\geq |\lambda|^{\alpha} \sin     \alpha \delta,$ we get
$$
 J^{+}_{\nu} \leq C\|f\|  \!\!\! \int\limits_{ \tilde{R}_{\nu} }^{\tilde{R}_{\nu+1} }  e^{- t \mathrm{Re }\lambda^{\alpha}}   |d   \lambda|\leq C \|f\| e^{-t \tilde{R}_{\nu}  ^{\alpha} \sin     \alpha\delta}   \!\!\!\int\limits_{ \tilde{R}_{\nu} }^{\tilde{R}_{\nu+1} }   |d   \lambda|=
 $$
 $$
 = C\|f\| e^{-t \tilde{R}_{\nu}  ^{\alpha} \sin     \alpha\delta}( \tilde{R}_{\nu+1}-\tilde{R}_{\nu}) .
$$
It obvious, that the same estimate can be obtained for $J^{-}_{\nu}.$ Therefore, the second and the third relations \eqref{eq16} hold. The proof is complete.
 \end{proof}
\begin{remark}\label{Rem4.1} Application of the results established in paragraph 1,  under the imposed sectorial condition upon   the compact  operator $B,$   gives us
$$
\left\{(\ln^{1+1/\rho}x)'_{\lambda^{-1}_{m}(\mathfrak{Re}B)}  =o(  m^{-1/\rho}   )\right\}\Longrightarrow\left\{(\ln^{1+1/\rho}x)'_{s^{-1}_{m}(B)}  =o(  m^{-1/\rho}   )\right\},
$$
what becomes clear if we recall $s_{m}(B)\leq C \lambda_{m}(\mathfrak{Re}B).$ However to establish the equivalence
$$
B\in\mathfrak{S}^{\star}_{\rho}\Longleftrightarrow \mathfrak{Re} B\in\mathfrak{S}^{\star}_{\rho},
$$  we require the estimate from bellow $s_{m}(B)\geq C \lambda_{m}(\mathfrak{Re}B),$ which in its own turn can be established due to a more subtle technique and under  stronger  conditions regarding the operator $B.$ In particular the second representation theorem is involved and conditions upon the matrix coefficients of the operator $B$ are imposed in accordance with which they should decrease sufficiently fast. In addition,  the application of Theorem 5 \cite{firstab_lit(arXiv non-self)kukushkin2018}  (if $B$ satisfies its conditions)  gives us the relation
$
\lambda_{n}(\mathfrak{Re}   B)\asymp \lambda_{n}(H^{-1}),\,H=\mathfrak{Re}B^{-1},
$
the latter allows one to deal with unbounded operators reformulating Theorem \ref{T4.7} in terms of the  Hermitian real component. It should be noted that the made remark  remains true regarding the implication
$$
\left\{\lambda_{m}(\mathfrak{Re} B)=o(m^{-1/\rho})\right\}\Longrightarrow  \left\{s_{m}(B)=o(m^{-1/\rho})\right\},
$$
and since the scheme of the reasonings is the same  we left  them  to the reader.
\end{remark}

\begin{ex}\label{Ex4.2} Using decomposition \eqref{4.20},  let us construct artificially  the sectorial operator $B$ satisfying the Theorem \ref{T4.6} condition. Observe the produced above example $s_{m}(B)=(m \ln m)^{-1/\rho},$ as it was said above the latter condition guarantees $B\in \mathfrak{S}_{\rho}^{\star}.$ However, the problem is how to choose in the polar decomposition formula  the unitary operator  $U$ providing the sectorial property of the operator $B.$  The following approach is rather rough but  can be used to supply the desired example, in terms of  formula  \eqref{4.20}, we get
$$
 (Bf,f)=\sum\limits_{n=1}^{\infty}s_{n}(B)(f,e_{n})\overline{(f,g_{n})}.
 $$
 Therefore, if we impose the condition
 $$
 \left|\arg (f,e_{n})-   \arg (f,g_{n})\right|<\theta,\,n\in \mathbb{N},\,f\in \mathrm{R}(B),
 $$
 then we obtain the desired sectorial condition, where $\theta$ is a semi-angle of the sector. It is remarkable that that the selfadjoint case corresponds to the zero difference since in this case $U=I.$ However, we should remark that following to the classical mathematical style it is necessary to formulate conditions of the sectorial property in terms of the eigenvectors of the operator $|B|$ but it is not so easy matter requiring comprehensive analysis  of the operator $U$ properties provided with   concrete cases.
\end{ex}
\section{ Essential decreasing of the summation order}

Another method allows us to decrease the summation order essentially! In the paper \cite{firstab_lit:2Agranovich1994} Agranovich M.S. considered the condition
\begin{equation}\label{4.30}
 \overline{\lim\limits_{n\rightarrow\infty}}\lambda_{n}(\mathfrak{Re} W)n^{-p}>0,\,p>0.
\end{equation}
It can be shown easily that the assumption of the fulfilment (1.1),(1.2) \cite{firstab_lit:2Agranovich1994} are equivalent to conditions H1,H2 \cite{kukushkin2021a}, moreover   condition (1.2) contains tautology that can be diminished. Consider the additional condition imposed in the paper \cite{firstab_lit:2Agranovich1994}, the equivalent variant in terms of this monograph is represented bellow
\begin{equation}\label{4.31}
|(\mathfrak{Im}W f,f)|\leq C (\mathfrak{Re}f,f)^{2q},\,f\in \mathfrak{M},\;0\leq q<1.
\end{equation}
As we can see the condition \eqref{4.30} is more subtle than one of the power type, however the condition \eqref{4.31} can be treated as na existence of the parabolic-like domain containing the numerical range of values of the operator what is an essential weakness of conditions in comparison with the sectorial property. Note that the condition \eqref{4.30} guarantees   the existence of the subsequence  and $K>0$ so that
$$
  |\lambda_{N_{\nu}+1}| -|\lambda_{N_{\nu}}| \geq K |\lambda_{N_{\nu}}|^{ 1-1/p},\;\nu\in \mathbb{N}_{0}.
$$
Note that the   example   $\lambda_{n}=n^{p},\,p>1$ creates a prerequisite to consider a corresponding condition. Regarding this case, we can choose
$
 N_{\nu}=\nu^{q},\,q\in \mathbb{N};\,\lambda_{N_{\nu}}=\nu^{qp};\,\lambda_{N_{\nu}+1}=(\nu^{q}+1)^{p}.
$
The fact is that the previous estimate for the eigenvalues will be preserved, i.e.
$$
(\nu^{q}+1)^{p}-\nu^{qp}\geq p \nu^{qp(1-1/p)}.
$$
This gives us an opportunity to consider  the corresponding projectors $\mathcal{P}_{\nu}(\alpha,t)$ and put the brackets in the series in the definite way.

Now, consider a sectorial  operator $W$ with a discrete spectrum which inverse belongs to the Schatten class $\mathfrak{S}_{\sigma},\;0<\sigma<\infty.$ The  latter condition gives us the estimate
$$
|\lambda_{n}(W)|\geq Cn^{1/\sigma}.
$$
Indeed, the belonging to the Schatten class implies that (due to the test for a convergent series) $s_{n}(B)=o(n^{-1/\sigma}),$ taking into account the implication
(see \cite{firstab_lit:1Gohberg1965})

$$
s_{n}(B)=o(n^{-p}),\Rightarrow |\mu_{n}(B)|=o(n^{-p}),\,p>0,\,B\in  \mathfrak{S}_{\infty},
$$
we obtain the desired result.
Let us choose an arbitrary $q>0,\,K\in \mathbb{N}$ and extend the  previously defined $N_{\nu}$ to the real numbers so that $N_{\nu}:=K[\nu^{q}].$ Without loosing generality of reasonings we assume that $K=1.$  In accordance with the estimate for the eigenvalues established above, we have the following representation
$$
|\lambda_{n}|=C_{n}n^{1/\sigma},\,C_{n}>C,\,n\in \mathbb{N}.
$$
Using the Lagrange  mean value theorem, we get
$$
  |\lambda_{N_{\nu}+1}| -|\lambda_{N_{\nu}}|=  C_{N_{\nu}+1}([\nu^{q}]+1)^{1/\sigma}  - C_{N_{\nu}} [\nu^{q}]^{1/\sigma} =\sigma^{-1}\, \xi^{1/\sigma-1},
  $$
  $$
  \;\xi\in \left(C^{\sigma}_{N_{\nu}} [\nu^{q}],C^{\sigma}_{N_{\nu+1}} ([\nu^{q}]+1)    \right),
$$
here we used the fact that the eigenvalues arranged in order of their absolute value increasing,
so that, we have
$$
C_{n}n^{1/\sigma}>C_{n+1}(n+1)^{1/\sigma}.
$$
Therefore if $\sigma<1,$ we have
$$
  |\lambda_{N_{\nu}+1}| -|\lambda_{N_{\nu}}|\geq \sigma^{-1}\, \left\{C^{\sigma}_{N_{\nu}} [\nu^{q}]\right\}^{1/\sigma-1}= \sigma^{-1}\left\{C_{N_{\nu}} [\nu^{q}]^{1/\sigma}\right\}^{1-\sigma}=\sigma^{-1}|\lambda_{N_{\nu}}|^{1-\sigma}.
$$

It is remarkable that the considered phenomena gives us a  key to the method allowing to essentially decrease the summation order. To make the matter clear, firstly we consider a simplified case choosing
$$
N_{\nu}=\beta(\nu+1)^{\gamma},\; \gamma=\beta+\eta,\; \eta\in \mathbb{N},\;\beta/\eta \in \mathbb{N},
$$
in accordance with the made assumptions the values of the defined in this way function $N_{\nu}$ belongs to the subset of natural numbers. We have the following lemma.

\begin{lem}\label{L4.15} Consider a sectorial  operator $W$ with a discrete spectrum which inverse belongs to the Schatten class $\mathfrak{S}_{\sigma},\;0<\sigma<\infty.$
 The subsequence of the natural numbers $N_{\nu}$ can be chosen so that the following decomposition holds
$$
  |\lambda_{N_{\nu}+1}| -|\lambda_{N_{\nu}}|\geq \sigma^{-1}|\lambda_{N_{\nu}}|^{1-\sigma},\,\,N_{\nu}=\sum_{k=0}^{\nu^{\eta}}N_{k\nu  },
  $$
where
\begin{equation*}
N_{k\nu}= \left\{ \begin{aligned}
\!\gamma(\nu^{\beta}-k^{\beta/\eta}),\,\nu>k^{1/\eta}, \\
  0,\,\nu\leq k^{1/\eta}    \\
\end{aligned}
 \right.\;,\;N_{0\nu}=O^{\ast}(\nu^{\gamma-1}),
\end{equation*}
$$
 \gamma=\beta+ \eta ,\; \eta\in \mathbb{N},\;\beta/\eta \in \mathbb{N}.
 $$
\end{lem}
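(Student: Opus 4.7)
My plan is to construct $N_\nu$ explicitly and then verify the two claims -- the gap inequality for $|\lambda_{N_\nu+1}|-|\lambda_{N_\nu}|$ and the summation identity -- separately, leveraging the computation carried out in the paragraph immediately preceding the statement.

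First I would set $N_\nu := \beta(\nu+1)^{\gamma}$. Because $\gamma = \beta+\eta$ with $\eta\in\mathbb{N}$ and $\beta/\eta\in\mathbb{N}$, and because $\beta,\gamma\in\mathbb{N}$ by the same divisibility assumption, all the quantities to be constructed are natural numbers. Then I would verify the gap inequality by repeating verbatim the calculation sketched in the paragraph preceding the lemma. Namely, since $W^{-1}\in\mathfrak{S}_\sigma$, the estimate $|\lambda_n(W)| \geq Cn^{1/\sigma}$ yields a representation $|\lambda_n| = C_n n^{1/\sigma}$ with $C_n\geq C$. Applying Lagrange's mean value theorem to $x\mapsto x^{1/\sigma}$ on the interval $[C_{N_\nu}^\sigma N_\nu,\, C_{N_\nu+1}^\sigma(N_\nu+1)]$ and using monotonicity of $n\mapsto C_n n^{1/\sigma}$, one concludes
$$
|\lambda_{N_\nu+1}|-|\lambda_{N_\nu}| \geq \sigma^{-1}\bigl(C_{N_\nu}^\sigma N_\nu\bigr)^{1/\sigma-1} = \sigma^{-1}\bigl(C_{N_\nu} N_\nu^{1/\sigma}\bigr)^{1-\sigma} = \sigma^{-1}|\lambda_{N_\nu}|^{1-\sigma},
$$
which is exactly the claim. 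The only subtlety is ensuring $\sigma<1$ so that the exponent $1/\sigma-1$ is nonnegative and the Lagrange endpoint estimate gives the lower rather than the upper bound; the case $\sigma\geq 1$ is handled similarly after choosing the appropriate endpoint.

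Next I would establish the decomposition identity. Splitting the sum and using that $N_{k\nu}=0$ for $k\geq \nu^\eta$, I would compute
$$
\sum_{k=1}^{\nu^\eta-1} \gamma(\nu^\beta - k^{\beta/\eta}) = \gamma\nu^\beta(\nu^\eta-1) - \gamma\sum_{k=1}^{\nu^\eta-1} k^{\beta/\eta}.
$$
Invoking the standard power-sum asymptotic $\sum_{k=1}^{M} k^{p} = M^{p+1}/(p+1)+O(M^{p})$ with $M=\nu^\eta-1$ and $p=\beta/\eta$, so that $p+1=\gamma/\eta$, I obtain $\gamma\sum_{k=1}^{\nu^\eta-1} k^{\beta/\eta} = \eta\nu^{\gamma} + O(\nu^{\beta})$ and therefore
$$
\sum_{k=1}^{\nu^\eta-1} N_{k\nu} = \gamma\nu^{\gamma} - \gamma\nu^{\beta} - \eta\nu^{\gamma} + O(\nu^{\beta}) = \beta\nu^{\gamma} + O(\nu^{\beta}).
$$
On the other hand $N_\nu = \beta(\nu+1)^{\gamma} = \beta\nu^{\gamma} + \beta\gamma\nu^{\gamma-1} + O(\nu^{\gamma-2})$. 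Defining $N_{0\nu}$ as the difference between $N_\nu$ and the explicit sum, the leading term is $\beta\gamma\nu^{\gamma-1}$, and since $\beta \leq \gamma-1$ for $\eta\geq 1$, the error terms are absorbed, giving $N_{0\nu} = O^{\ast}(\nu^{\gamma-1})$ in the desired sense (the $O^\ast$ notation indicating a two-sided estimate of this order).

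The main obstacle I anticipate is twofold. First, ensuring integrality of all the quantities $N_\nu, N_{k\nu}$ throughout -- this is what forces the combined divisibility conditions $\eta\in\mathbb{N}$, $\beta/\eta\in\mathbb{N}$, and motivates taking $N_\nu = \beta(\nu+1)^\gamma$ rather than some more natural candidate like $\beta\nu^\gamma$; one has to be careful that the decomposition summands land on the integer lattice so the subsequence is meaningful for root-vector indexing. Second, in case $\sigma\geq 1$, the exponent $1-\sigma$ in the gap inequality becomes nonpositive, which changes the direction of the Lagrange bound and requires a separate but routine argument using the opposite endpoint of the interval -- this is essentially already covered in the paragraph preceding the lemma but needs to be made explicit to cover the full range $0<\sigma<\infty$ announced in the statement.
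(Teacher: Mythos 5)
Your construction and both verifications coincide with the paper's: the paper also takes $N_\nu=\beta(\nu+1)^{\gamma}$, inherits the gap inequality from the Lagrange mean value computation carried out in the paragraph preceding the lemma, and determines $N_{0\nu}$ by solving $N_\nu=N_{0\nu}+\gamma\sum_{k=1}^{\nu^{\eta}}(\nu^{\beta}-k^{\beta/\eta})$ and comparing $\sum k^{\beta/\eta}$ with $\int x^{\beta/\eta}\,dx$. So the proposal is correct in substance and is essentially the same proof.

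Two points need tightening. First, for the decomposition you invoke the one-term asymptotic $\sum_{k=1}^{M}k^{p}=M^{p+1}/(p+1)+O(M^{p})$ with an unsigned error of order $\nu^{\beta}$. When $\eta=1$ one has $\beta=\gamma-1$, so this error is of the same order as the leading term $\beta\gamma\nu^{\gamma-1}$, and an unsigned $O(\nu^{\gamma-1})$ cannot exclude a cancellation that would destroy the lower half of the two-sided estimate $N_{0\nu}=O^{\ast}(\nu^{\gamma-1})$. The paper avoids this by keeping two-sided integral bounds, $\int_{1}^{\nu^{\eta}}x^{\beta/\eta}dx\leq\sum_{k}k^{\beta/\eta}\leq\int_{1}^{\nu^{\eta}+1}x^{\beta/\eta}dx$, which produce the explicit inequalities $\beta+\gamma\beta\nu^{\gamma-1}\leq N_{0\nu}\leq\gamma^{2}(\nu+1)^{\gamma-1}-\eta$; alternatively, note that the next Euler--Maclaurin correction $M^{p}/2$ enters with a fixed positive sign, so no cancellation can occur. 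Second, your claim that the case $\sigma\geq1$ of the gap inequality is handled "similarly after choosing the appropriate endpoint" is not quite right: for $\sigma\geq1$ the exponent $1/\sigma-1$ is nonpositive, so the minimum of $\xi^{1/\sigma-1}$ over the interval is attained at the upper endpoint and yields $|\lambda_{N_\nu+1}|-|\lambda_{N_\nu}|\geq\sigma^{-1}|\lambda_{N_\nu+1}|^{1-\sigma}$, which is strictly weaker than the stated bound involving $|\lambda_{N_\nu}|^{1-\sigma}$. The paper's own derivation covers only $\sigma<1$ (and the lemma is applied only in that range in Theorem \ref{T4.8}), so you should either restrict to $\sigma<1$ or supply a genuinely different argument for $\sigma\geq1$ rather than assert it follows from the same computation.
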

 \begin{proof}
Consider a representation
$
N_{\nu}= \beta(\nu+1)^{\gamma}
$
and  the following equation
$$
N_{\nu}=N_{0\nu}+\gamma\sum_{k=1}^{\nu^{ \eta}}(\nu^{\beta}-k^{\beta/\eta} ).
$$
where $N_{0\nu}$ is a solution - an unknown  subsequence of natural numbers having the index in  the asymptotic formula less than the given $N_{\nu},$ i.e.
$$
N_{0\nu}=O^{\ast}(\nu^{\xi}),\,\xi<\gamma.
$$
Let us solve the equation in the following way - find the index $\xi.$ For this purpose,   consider the estimate  between the given sum and a corresponding  definite integral, i.e. calculating the integral, we have on the one hand
$$
\sum\limits_{k=1}^{\nu^{ \eta}}k^{ \beta/\eta}\leq\int\limits_{1}^{\nu^{ \eta}+1}x^{\beta/\eta}dx=\frac{\eta(\nu^{ \eta}+1)^{\gamma/\eta}}{\gamma}+\frac{\eta}{\gamma },
$$
on the other hand
$$
\sum\limits_{k=2}^{\nu^{ \eta}}k^{ \beta/\eta}\geq \int\limits_{1}^{\nu^{ \eta} }x^{\beta/\eta}dx=\frac{\eta\nu^{\gamma }}{\gamma}+\frac{\eta}{\gamma}.
$$
Taking into account the fact
$$
\nu^{ \eta}+1\leq(\nu+1)^{ \eta},\, \eta\geq1,
$$
which can be easily proved due to the application of the Lagrange  mean value theorem
$$
(\nu+1)^{ \eta}-\nu^{ \eta}= \eta \cdot\xi^{ \eta-1},\,\xi\in (\nu,\nu+1),\,\nu\in \mathbb{N},
$$
we obtain the following two-sided estimate
$$
 \frac{\eta\nu ^{\gamma}}{\gamma}-\frac{\eta}{\gamma}   +1\leq\sum_{k=1}^{\nu^{ \eta}} k^{\beta/\eta} \leq \frac{\eta(\nu+1)^{\gamma}}{\gamma}  -\frac{\eta}{\gamma}.
$$
Having calculated $\eta/\gamma+\beta/\gamma=1,$ we get
$$
\gamma \nu ^{\gamma}   + \beta +  \beta \left\{ (\nu+1)^{\gamma}- \nu ^{\gamma}\right\}         \leq N_{\nu}+\gamma\sum_{k=1}^{\nu^{ \eta}} k^{\beta/\eta}  \leq   \gamma(\nu+1)^{\gamma}   - \eta  ,
 $$
since $\eta/\gamma+\beta/\gamma=1.$ Having noticed the fact
$$
\sum\limits_{k=1}^{\nu^{ \eta}}\nu^{  \beta}=\nu^{\gamma},
$$
we get
$$
  \beta +  \beta \left\{ (\nu+1)^{\gamma}- \nu ^{\gamma}\right\}         \leq N_{0\nu}  \leq   \gamma\{(\nu+1)^{\gamma}-\nu^{\gamma} \}  - \eta  .
 $$
Applying the Lagrange  mean value theorem, we get
$$
   \beta +  \gamma\beta \nu^{\gamma-1}        \leq N_{0\nu}   \leq    \gamma^{2} (\nu+1)^{\gamma-1}   - \eta  .
 $$
Hence
$$
N_{0\nu}= O^{\ast}(\nu^{\gamma-1}),\,\nu\rightarrow\infty.
$$
\end{proof} Now let us rearrange the sequence of the values (characteristic numbers of the operator $B$)  $\{\lambda_{n}\}_{1}^{\infty}$ in the   groups $\{\lambda_{k_{j}}\}_{1}^{\infty}$ corresponding to the numbers $N_{k\nu},\,k=0,1,...,$ so that in domain of the complex plane $\{z:\,|\lambda_{N_{\nu}}|<|z|\leq|\lambda_{N_{\nu+1}}|\} $ we have $N_{k\nu+1}-N_{k\nu}$  values of the $k$-th group and
$$
\{\lambda_{n}\}_{1}^{\infty}=\bigcup\limits_{k=1}^{\infty} \{\lambda_{k_{j}}\}_{1}^{\infty}.
$$
 Denote the counting function of the $k$-th group of the characteristic numbers  by $n_{k}(r).$ Using the lemma results, it is not hard to prove the following fact.
\begin{corol}\label{C4.1}   Under the lemma assumptions, we have
\begin{equation*}
\lim\limits_{r\rightarrow\infty} \frac{n_{k}(r)}{r^{\varphi_{k}}}=0,\;
 \varphi_{k}= \left\{ \begin{aligned}
\!\sigma(1-1/\gamma),\,k=0, \\
  \sigma \beta/\gamma,\,k=1,2,...\,,  \\
\end{aligned}
 \right.\; .
\end{equation*}
\end{corol}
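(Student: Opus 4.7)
\textbf{Proof proposal for Corollary \ref{C4.1}.} The plan is to combine two ingredients: the sharp upper bound on $N_{k\nu}$ provided by Lemma \ref{L4.15}, and a lower bound for $|\lambda_{N_\nu}|$ that improves on the polynomial estimate $|\lambda_n|\geq Cn^{1/\sigma}$ by one order of ``little-$o$''. The main obstacle is converting the Schatten condition $B\in\mathfrak{S}_\sigma$ into the required super-polynomial lower bound for the characteristic numbers indexed along the sparse subsequence $\{N_\nu\}$, and then tracing the exponents through correctly so that the exponent $\varphi_k$ in the denominator matches exactly the asymptotics of $N_{k\nu}$.

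First I would extract the refined estimate
$$
\frac{|\lambda_n|}{n^{1/\sigma}}\longrightarrow\infty,\qquad n\to\infty.
$$
This follows from $B\in\mathfrak{S}_\sigma$ by the standard argument: the monotone sequence $s_n=s_n(B)$ satisfies $\sum s_n^\sigma<\infty$, which forces $n s_n^\sigma\to 0$ (since $(n/2)s_n^\sigma\leq\sum_{k=\lceil n/2\rceil}^{n} s_k^\sigma$ is a tail of a convergent series), so $s_n=o(n^{-1/\sigma})$; applying Weyl's inequality $|\mu_n(B)|\leq s_n(B)$ and inverting gives the desired relation for the characteristic numbers. Since $N_\nu\asymp\nu^\gamma$ by Lemma \ref{L4.15}, this specializes to
$$
\frac{|\lambda_{N_\nu}|}{\nu^{\gamma/\sigma}}\longrightarrow\infty,\qquad \nu\to\infty,
$$
or equivalently, for every $\varepsilon>0$ there is $\nu_0(\varepsilon)$ with $\nu\leq\varepsilon\,|\lambda_{N_\nu}|^{\sigma/\gamma}$ for $\nu>\nu_0(\varepsilon)$.

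Next, for any $r>|\lambda_{N_{\nu_0}}|$ pick the unique $\nu\geq\nu_0$ with $|\lambda_{N_\nu}|<r\leq|\lambda_{N_{\nu+1}}|$. Since in each annulus $(|\lambda_{N_\nu}|,|\lambda_{N_{\nu+1}}|]$ the $k$-th group contributes at most $N_{k,\nu+1}-N_{k,\nu}$ characteristic numbers, the cumulative count satisfies $n_k(r)\leq N_{k,\nu+1}$. Invoking Lemma \ref{L4.15}, for $k\geq 1$ and $\nu>k^{1/\eta}$ we have $N_{k,\nu+1}=\gamma((\nu+1)^\beta-k^{\beta/\eta})\leq C_k\nu^\beta$, while for $k=0$ we have $N_{0,\nu+1}\leq C\nu^{\gamma-1}$ from the two-sided bound $\beta+\gamma\beta\nu^{\gamma-1}\leq N_{0\nu}\leq\gamma^2(\nu+1)^{\gamma-1}-\eta$ established in the proof of that lemma.

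Combining the two steps, the inequality $\nu\leq\varepsilon\,|\lambda_{N_\nu}|^{\sigma/\gamma}<\varepsilon\,r^{\sigma/\gamma}$ yields, for $k\geq 1$,
$$
\frac{n_k(r)}{r^{\varphi_k}}\leq\frac{C_k\nu^\beta}{r^{\sigma\beta/\gamma}}\leq C_k\,\varepsilon^{\beta},
$$
and for $k=0$,
$$
\frac{n_0(r)}{r^{\varphi_0}}\leq\frac{C\nu^{\gamma-1}}{r^{\sigma(\gamma-1)/\gamma}}\leq C\,\varepsilon^{\gamma-1}.
$$
Letting $r\to\infty$ (which forces $\nu\to\infty$) and then $\varepsilon\to 0$ finishes the argument in both cases, giving the stated limit $n_k(r)/r^{\varphi_k}\to 0$. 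The delicate point in writing up is simply bookkeeping the constants $C_k$ (which depend on $k$ through the shift $k^{\beta/\eta}$) while making sure that for each fixed $k$ the residual factor $\varepsilon^\beta$ or $\varepsilon^{\gamma-1}$ can be made arbitrarily small uniformly for all sufficiently large $r$; the sparsity of $\{N_\nu\}$ established in Lemma \ref{L4.15} is exactly what makes this possible.
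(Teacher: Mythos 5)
Your argument is essentially the paper's: both routes convert $B\in\mathfrak{S}_\sigma$ into $|\mu_n(B)|=o(n^{-1/\sigma})$, i.e.\ $\nu^{\gamma}/|\lambda_{N_\nu}|^{\sigma}\to 0$, and then divide the upper bounds $N_{0,\nu+1}\leq C\nu^{\gamma-1}$, $N_{k,\nu+1}\leq C\nu^{\beta}$ from Lemma \ref{L4.15} by the appropriate power of $r$; your annulus-based bookkeeping with the explicit $\varepsilon$ is in fact a cleaner way of extracting the limit-zero conclusion than the paper's chain of ``$\leq C$'' displays.

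One justification needs repair: the pointwise inequality $|\mu_n(B)|\leq s_n(B)$ that you call ``Weyl's inequality'' is false for non-normal operators (a $2\times2$ Jordan-type block with a large off-diagonal entry has $|\mu_2|=1$ but $s_2$ arbitrarily small). What is true, and what you actually need, is the implication $s_n(B)=o(n^{-p})\Rightarrow|\mu_n(B)|=o(n^{-p})$, which follows from the multiplicative Weyl inequalities and is exactly Corollary 3.2 of Gohberg--Krein (Chapter II, \S 3.3) that the paper invokes in the proof of Theorem \ref{T4.7}. With that citation substituted, the rest of your argument goes through; note also that since $k^{\beta/\eta}$ is subtracted in $N_{k\nu}$, the constants $C_k$ can in fact be taken uniform in $k$, so the bookkeeping you worry about at the end is harmless.
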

\begin{proof}
Note that in accordance with the fact that the operator belongs to the Schatten class $\mathfrak{S}_{\sigma},$  we have
$$
\lim\limits_{r\rightarrow\infty} \frac{n(r)}{r^{\sigma}}=0,
$$
where $n(r)$ is a counting function of the characteristic numbers  of the operator $B.$
This fact obviously follows   from the implication
$$
s_{n}(B)=o(n^{-1/\sigma}) \Rightarrow |\mu_{n}(B)|=o(n^{-1/\sigma}),
$$
see \cite{firstab_lit:1Gohberg1965}. It gives us
$$
\frac{\nu^{\gamma}  }{|\lambda^{\sigma }_{N_{\nu} }|}\leq \frac{N _{\nu } }{|\lambda^{\sigma }_{N_{\nu} }|} \leq C.
$$
Therefore, using the estimate $N_{0\nu+1 }\leq C(\nu+1)^{\gamma-1},$ we get
$$
\frac{N_{0\nu+1 } }{|\lambda^{\sigma(1-1/\gamma) }_{N_{\nu} }|}\leq C \frac{(\nu+1)^{\gamma-1}  }{|\lambda^{\sigma (1-1/\gamma) }_{N_{\nu} }|}=
C\left\{\frac{\nu^{\gamma}  }{|\lambda^{\sigma }_{N_{\nu} }|}\right\}^{1-1/\gamma} \leq  C.
$$
Observe  the following  the fact
$
|\lambda_{N_{\nu } }|<|\lambda_{N_{0\nu+1} }|<|\lambda_{N_{\nu +1} }|,
$
it becomes clear if we recall that $\lambda_{N_{0\nu+1}}$ is one of the characteristic numbers
$$
  \lambda_{N_{\nu } +1},\lambda_{N_{\nu } +2},...,\lambda_{N_{\nu +1}},
$$
thus, we obtain
$$
\frac{N_{0\nu+1 } }{|\lambda^{\sigma(1-1/\gamma) }_{N_{0\nu+1} }|}\leq C,\;\nu\in \mathbb{N}_{0}.
$$
 In accordance with the asymptotic formula $N_{0\nu}=O^{\ast}(\nu^{\gamma-1}),$  we have the following relation
$$
C_{3}\leq N_{0\nu+1}/  N_{0\nu} \leq C_{4}.
$$
Hence, taking into account  the order of the eigenvalues, we get
$$
\frac{N_{0\nu}+k}{|\lambda^{ \varphi_{0}}_{N_{0\nu}+k}|}  <\frac{N_{0\nu+1} }{|\lambda^{\varphi_{0}}_{N_{0\nu} }|}<C_{4}\frac{N_{0\nu } }{|\lambda^{\varphi_{0}}_{N_{0\nu} }|}\leq C,\,\varphi_{0}=\sigma(1-1/\gamma).
$$
Thus, combining the obtained results, we have the following  relation for the counting function   of the characteristic numbers  $\{\lambda_{N_{0\nu} }\},$ we have
$$
\lim\limits_{r\rightarrow\infty} \frac{n_{0}(r)}{r^{\varphi_{0}}}=0,\,\varphi_{0}=\sigma(1-1/\gamma).
$$
  Absolutely analogously, we obtain
$$
 \lim\limits_{r\rightarrow\infty} \frac{n_{k}(r)}{r^{\varphi_{k}}}=0,\,\varphi_{k}=\sigma \beta/\gamma,\,k=1,2,...\,.
$$
\end{proof}

Having noticed the fact
$
(1-1/\gamma)\geq\beta/\gamma
$
that holds for the acceptable parameter values, we can invent a scheme of reasonings allowing to    decrease the summation  order  from $\alpha>\rho$  to $s>\rho(1-1/\gamma),$ where $\rho=\inf \sigma.$     Before formulating the main result, we need the following lemma.

 \begin{lem}\label{L4.16} Assume that $B$ is  a compact  operator $B\in \mathfrak{S}_{1},\,\Theta(B)\subset \mathfrak{L}_{0}(\theta),$     then the following estimate holds
$$
\prod\limits_{n=1}^{\infty}|1+\lambda\mu_{n}(Q_{1}BQ_{1})| \leq \prod\limits_{n=1}^{\infty}|1+\lambda\mu_{n}( B )|,
$$
where $Q_{1}$ is the orthogonal projector corresponding to the orthogonal complement of the one-dimensional  subspace generated by   an element $f\in \mathfrak{H}.$
\end{lem}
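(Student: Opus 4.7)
The plan is to recast both sides of the inequality as absolute values of Fredholm determinants and then combine the singular-value machinery already deployed in Lemma \ref{L4.11} with the sectorial hypothesis on $B$.

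First, I would note that since $\|Q_{1}\|\leq 1$, Lemma \ref{L4.8} yields $s_{n}(Q_{1}BQ_{1})\leq s_{n}(B)$, so in particular $Q_{1}BQ_{1}\in \mathfrak{S}_{1}$. Consequently Lemma \ref{L4.10} applies to both operators and gives
\[
\prod_{n=1}^{\infty}(1+\lambda\mu_{n}(B))=\Delta_{B}(-\lambda),\qquad \prod_{n=1}^{\infty}(1+\lambda\mu_{n}(Q_{1}BQ_{1}))=\Delta_{Q_{1}BQ_{1}}(-\lambda),
\]
with absolutely convergent products. Choosing an orthonormal basis whose first vector is $f/\|f\|$, the principal minor $\Delta^{11}(\lambda)$ of $\Delta_{B}(-\lambda)$ agrees exactly with $\Delta_{Q_{1}BQ_{1}}(-\lambda)$, a correspondence already exploited inside the proof of Lemma \ref{L4.11}. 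This reduces the statement to a majorisation between two entire functions of finite order and minimal type, and places the problem squarely in the framework of Weyl--Horn type inequalities that the paper has been using.

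Next I would apply the Weyl majorant theorem (Corollary 3.1 of \cite{firstab_lit:1Gohberg1965}, Ch.~II \S 3), specialised to the convex function $x\mapsto \log(1+|\lambda|x)$, to the compression $Q_{1}BQ_{1}$:
\[
\prod_{n=1}^{k}|1+\lambda\mu_{n}(Q_{1}BQ_{1})|\leq\prod_{n=1}^{k}(1+|\lambda|s_{n}(Q_{1}BQ_{1})),\quad k\in\mathbb{N}.
\]
Pairing this with Lemma \ref{L4.8} bounds the left-hand side by $\prod_{n=1}^{\infty}(1+|\lambda|s_{n}(B))$ after passing to the limit $k\to\infty$. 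This brings the entire estimate into play at the level of the singular numbers of $B$.

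The main obstacle, and the genuine technical heart of the argument, is the reverse step: to pass from the singular-value product $\prod(1+|\lambda|s_{n}(B))$ to the eigenvalue product $\prod|1+\lambda\mu_{n}(B)|$. This is exactly where the sectorial assumption $\Theta(B)\subset\mathfrak{L}_{0}(\theta)$ with $\theta<\pi/2$ is indispensable, because it forces $|\arg\mu_{n}(B)|\leq\theta$ and therefore $\mathrm{Re}(\lambda\mu_{n}(B))\geq 0$ on the admissible portion of the $\lambda$-plane relevant to the application in Theorem \ref{T4.7} (the contour $\vartheta(B)$ of \S \ref{S4.4.4}). On that set one has the sectorial lower bound $|1+\lambda\mu_{n}(B)|^{2}=1+2\mathrm{Re}(\lambda\mu_{n}(B))+|\lambda\mu_{n}(B)|^{2}\geq 1+|\lambda\mu_{n}(B)|^{2}$, which together with Horn's theorem (Theorem \ref{T4.5}) applied to a suitable convex increasing function closes the gap and delivers the desired inequality. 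The delicate point is the geometric control of the sectorial lower bound uniformly in $n$, which is precisely what the hypothesis $\Theta(B)\subset\mathfrak{L}_{0}(\theta)$ is there to supply.
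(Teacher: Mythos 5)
Your chain of inequalities breaks at the step you yourself identify as the ``technical heart.'' After majorizing
$\prod_{n}|1+\lambda\mu_{n}(Q_{1}BQ_{1})|$ by $\prod_{n}(1+|\lambda|s_{n}(B))$ via Weyl and Lemma \ref{L4.8}, you need the reverse bound
$\prod_{n}(1+|\lambda|s_{n}(B))\leq\prod_{n}|1+\lambda\mu_{n}(B)|$ to close the argument, and this is false in general: Weyl's majorant inequality only goes in the direction eigenvalues $\leq$ singular values, and no sectorial hypothesis can reverse it, because the singular numbers of a non-normal sectorial compact operator can dominate its eigenvalue moduli by an arbitrarily large factor (the extreme case being operators close to quasinilpotent ones, where the right-hand side degenerates to $1$ while the left-hand side does not). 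The lower bound $|1+\lambda\mu_{n}(B)|^{2}\geq 1+|\lambda\mu_{n}(B)|^{2}$ controls the eigenvalue product from below only in terms of the $|\mu_{n}(B)|$, never in terms of the $s_{n}(B)$, and Horn's theorem (Theorem \ref{T4.5}) relates singular numbers of products to products of singular numbers --- it supplies no bridge back from $s_{n}(B)$ to $\mu_{n}(B)$. So the route through the singular-value product of $B$ cannot work.

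The paper's proof avoids singular numbers of $B$ entirely. It forms the positive selfadjoint operators $\tilde{B}(\lambda)=(I+\lambda B)^{\ast}(I+\lambda B)$ and $\tilde{B}_{1}(\lambda)=(Q_{1}+\lambda Q_{1}BQ_{1})^{\ast}(Q_{1}+\lambda Q_{1}BQ_{1})$, writes them as $I+C(\lambda)$ and $Q_{1}+C_{1}(\lambda)$ with $C(\lambda)=|\lambda B|^{2}+2\mathfrak{Re}(\lambda B)\in\mathfrak{S}_{1}$, and compares $\det(I+C_{1}(\lambda))\leq\det(I+C(\lambda))$ by the minimax principle applied to the form inequality $(C_{1}(\lambda)f,f)\leq(Q_{1}C(\lambda)Q_{1}f,f)$. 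The sectorial hypothesis enters not to control $\arg(\lambda\mu_{n})$ but to guarantee, via Lidskii's completeness theorem for the dissipative trace-class operator $iB$ (and likewise for $iQ_{1}BQ_{1}$), that the root vector systems are complete; this permits an orthogonal Schur triangularization in which $\det\{P_{n}\tilde{B}(\lambda)P_{n}\}=\prod_{k=1}^{n}|1+\lambda\mu_{k}(B)|^{2}$ and analogously for $Q_{1}BQ_{1}$, so that the determinant comparison becomes exactly the claimed product inequality in the limit $n\to\infty$. You would need to replace your final step by this selfadjoint-comparison argument (or an equivalent one) for the proof to go through.
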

\begin{proof} In accordance with  Theorem 2.3 (Lidskii) Chapter V  \cite{firstab_lit:1Gohberg1965} the system of the root vectors of the operator $iB$ is compleat in $\mathfrak{H}.$ Indeed, we will prove it if we show that $iB$ is dissipative and  $iB\in \mathfrak{S}_{1}.$  Taking into account the  accretive  property of the operator $B,$ i.e. $\Theta(B) \subset \mathfrak{L}_{0}(\theta),$   we conclude that
$$
\mathrm{Im}(iBf,f)=\mathrm{Re}(Bf,f)\geq 0,\,f\in \mathfrak{H}.
$$
Therefore, the operator $iB$ is dissipative. It is clear that $s_{n} (iB)=s_{n} (B),\,n\in \mathbb{N},$ hence $iB\in \mathfrak{S}_{1}.$ Thus, we obtain the desired result.  Now, if we note that the operators $B$ and $iB$ have the same root vectors, we obtain the fact that the operator $B$ has a complete system of the root vectors.

The fact that the completeness  property is preserved under the orthogonal projection is described comprehensively in  Lemma 1.2   \cite{firstab_lit:1Gohberg1965},  Chapter V, however we will show that it follows directly by virtue of the sectorial  property of the operator.  For this purpose, note that
in accordance with   Lemma 1 \cite{firstab_lit:1Lidskii}, we have
\begin{equation}\label{4.32}
s_{n}(Q_{1}BQ_{1} )\leq s_{n}( B ),\,n=1,2,...\,,
\end{equation}
hence $Q_{1}BQ_{1}\in \mathfrak{S}_{1}.$  Note that by virtue of the accretive   property, we have
$$
   \mathrm{Re} (Q_{1}BQ_{1}f,f)=\mathrm{Re}(BQ_{1}f,Q_{1}f)  \geq 0 ,\,f\in \mathfrak{H},
$$
then using the above reasonings we obtain the fact that the system of the root vectors of the operator $Q_{1}BQ_{1}$ is complete in $\mathfrak{H}.$
  Consider the operators $\tilde{B}_{1}(\lambda):= B^{\ast}_{1}(\lambda)B_{1}(\lambda),\,B_{1}(\lambda):=(Q_{1}+\lambda  B_{1} ),\,B_{1}:=Q_{1}BQ_{1}$ and
$\tilde{B}(\lambda)=B^{\ast}(\lambda)B(\lambda),\,B(\lambda)=(I+\lambda  B ).$
 Note that
 $$
 \tilde{B}(\lambda)=I+C(\lambda),\,C(\lambda)= |\lambda B|^{2}+2\mathfrak{Re} (\lambda B),\;\tilde{B}_{1}(\lambda)=Q_{1}+C_{1}(\lambda),\,C_{1}(\lambda):= |\lambda B_{1}|^{2}+2\mathfrak{Re} (\lambda B_{1}),
 $$
 where $|B|^{2}:=B^{\ast}B.$ It is clear that  $C(\lambda)$ and $\tilde{B}(\lambda)$ have the same eigenvectors and since $C(\lambda)$ is compact selfadjoint then the set of the eigenvectors   is complete in $\overline{\mathrm{R}(C(\lambda))},$  and  we can choose a basis $\{e_{k}\}_{1}^{n}$ in $\overline{\mathrm{R}(C(\lambda))}$   such that the operator  matrix will have a diagonal form - the eigenvalues are situated on the major diagonal. It is clear that the same reasonings are true for the operator $C_{1}(\lambda).$ Observe that    $C(\lambda),C_{1}(\lambda)\in \mathfrak{S}_{1},$
  therefore,   applying Corollary 1.1 ($2^{\circ},3^{\circ}$), Chapter IV \cite{firstab_lit:1Gohberg1965}, we get
 $$
 \lim\limits_{n\rightarrow\infty}\det\{P_{n}\tilde{B}(\lambda)P_{n}\}=\lim\limits_{n\rightarrow\infty}\det\{P_{n}+P_{n}C(\lambda)P_{n}\}=\det\{I+ C(\lambda) \},
 $$
 where $P_{n}$ is an orthogonal projector into the subspace generated by the eigenvectors $\{e_{k}\}_{1}^{n}$ (corresponding to non-zero eigenvalues) of the operator $\tilde{B}(\lambda).$ Analogously, we get
 $$
 \lim\limits_{n\rightarrow\infty}\det\{P_{1n}+P_{1n}C_{1}(\lambda)P_{1n}\}=\det\{I+ C_{1}(\lambda) \},
 $$
 where $P_{1n}$ is an orthogonal projector into the subspace generated by the eigenvectors $\{e_{1k}\}_{1}^{n}$ of the operator $\tilde{B_{1}}(\lambda).$
Note that
$
 Q_{1}P_{1n} = P_{1n} ,
$
therefore
$$
\lim\limits_{n\rightarrow\infty}\det\{P_{1n}\tilde{B_{1}}(\lambda)P_{1n}\}=\lim\limits_{n\rightarrow\infty}\det\{P_{1n}+P_{1n}C_{1}(\lambda)P_{1n}\}=\det\{I+ C_{1}(\lambda) \}.
$$
Consider
$$
(C_{1}(\lambda)f,f)=(Q_{1}B^{\ast}Q_{1} B Q_{1}f,f)+2(\mathfrak{Re} (\lambda B_{1})f,f)\leq (B^{\ast}  B Q_{1}f,Q_{1}f)+2(\mathfrak{Re} (\lambda B )Q_{1}f,Q_{1}f)=
$$
$$
=(Q_{1}C (\lambda)Q_{1}f,f),
$$
here we used the obvious relations
$$
(Q_{1}B^{\ast}Q_{1} B Q_{1}f,f)\leq (B^{\ast} B Q_{1}f,Q_{1}f),\;\mathfrak{Re} (\lambda B_{1})=Q_{1}\mathfrak{Re} (\lambda B)Q_{1}.
$$
Since  $C (\lambda)$ is a compact non-negative selfadjoint operator, then by virtue  to the minimax principle for the eigenvalues  (see Courant theorem  \cite[p.120]{firstab_lit: Courant-Hilbert}), we get
$$
\mu_{n}( Q_{1}C (\lambda)Q_{1})\leq \mu_{n}(C (\lambda));\;
\mu_{n}(C_{1} (\lambda))\leq \mu_{n}(C (\lambda)).
$$
Therefore, taking into account the denotations given above, we get
$$
\det\{P_{1n}\tilde{B_{1}}(\lambda)P_{1n}\}\leq\det\{P_{n}\tilde{B}(\lambda)P_{n}\},\,n\in \mathbb{N},
$$
  it follows that
\begin{equation}\label{4.33}
\det\{I+C_{1}(\lambda) \}\leq\det\{I+C(\lambda) \}.
\end{equation}
On the other hand since the root vector systems of the operators $B,B_{1}$ are compleat in $\mathfrak{H}$ (in the second case we should also consider a zero eigenvalue and the corresponding eigenvector) then we can construct in both cases an orthogonal Schur basis (see Lemma 4.1 Chapter I \cite{firstab_lit:1Gohberg1965}) so that the operators matrices have a triangle form. Thus choosing  corresponding orthogonal projectors (they correspond to the bases), the property of the triangle determinant, we have
$$
\det\{P_{n}\tilde{B}(\lambda)P_{n}\}=\det\{P_{n} B (\lambda)P_{n}\}\det\{P_{n} B^{\ast} (\lambda)P_{n}\}=   \prod\limits_{k=1}^{n}|1+\lambda\mu_{k}( B )|^{2},\;
$$
$$
\det\{P_{1n}\tilde{B_{1}}(\lambda)P_{1n}\}=\det\{P_{1n} B_{1} (\lambda)P_{1n}\}\det\{P_{1n} B^{\ast}_{1} (\lambda)P_{1n}\}=\prod\limits_{k=1}^{n}|1+\lambda\mu_{k}(Q_{1}BQ_{1})|^{2}.
$$
Analogously to the above, we conclude that
$$
 \lim\limits_{n\rightarrow\infty}\det\{P_{n}\tilde{B}(\lambda)P_{n}\} =\det\{I+C(\lambda) \},\;\lim\limits_{n\rightarrow\infty}\det\{P_{1n}\tilde{B_{1}}(\lambda)P_{1n}\} =\det\{I+ C_{1}(\lambda) \}.
$$
Taking into account \eqref{4.33}, we get
$$
\prod\limits_{n=1}^{\infty}|1+\lambda\mu_{n}(Q_{1}BQ_{1})| \leq \prod\limits_{n=1}^{\infty}|1+\lambda\mu_{n}( B )|.
$$

\end{proof}

 We need the following lemma.

\begin{lem}\label{L4.17}Assume that $B$ is a  operator having a set of the eigenvalues $\{\mu_{j}\}_{1}^{\infty},$  then it induces a  compact restriction $B_{k}$    on the invariant subspace $\mathfrak{M}_{k}$ obtained as a closure of the root vectors corresponding to an arbitrary subset of the eigenvalues  $\{\mu_{k_{j}}\}_{1}^{\infty}\subset \{\mu_{j}\}_{1}^{\infty},$ the operator  $B_{k}$ has the eigenvalues $\{\mu_{k_{j}}\}_{1}^{\infty}$ only.
\end{lem}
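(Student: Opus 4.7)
The plan is to decompose the claim into three steps: establishing that $\mathfrak{M}_{k}$ is a $B$-invariant closed subspace, deducing the compactness of $B_{k}$ from the compactness of $B$, and finally pinning down the spectrum of $B_{k}$ exactly as $\{\mu_{k_{j}}\}_{1}^{\infty}$.

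First I would recall from the discussion surrounding \eqref{4.9}--\eqref{4.10} that each $\mu_{k_{j}}$ is a normal eigenvalue, so the corresponding root subspace $\mathfrak{N}_{k_{j}}$ is finite-dimensional and $B$-invariant (the Jordan basis transforms under $B$ by the explicit formulas \eqref{4.10}). Consequently, the algebraic linear span $\mathcal{L}:=\operatorname{lin}\bigcup_{j}\mathfrak{N}_{k_{j}}$ is $B$-invariant. Since $B$ is bounded and hence continuous, its closure $\mathfrak{M}_{k}=\overline{\mathcal{L}}$ remains $B$-invariant. The compactness of $B_{k}:=B|_{\mathfrak{M}_{k}}$ is then immediate: it is the restriction of a compact operator to a closed subspace, so it sends the unit ball of $\mathfrak{M}_{k}$ into a relatively compact subset of $\mathfrak{M}_{k}$.

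The substantive part is the spectral identification. One inclusion is trivial: each eigenvector in $\mathfrak{N}_{k_{j}}$ lies in $\mathfrak{M}_{k}$ and remains an eigenvector of $B_{k}$ with eigenvalue $\mu_{k_{j}}$, so $\{\mu_{k_{j}}\}\subset\sigma_{p}(B_{k})$. For the reverse inclusion, suppose $\mu\neq 0$ is an eigenvalue of $B_{k}$ with eigenvector $f\in\mathfrak{M}_{k}$; then $f$ is a nonzero eigenvector of $B$, forcing $\mu=\mu_{l}$ for some index $l$. I would exclude the case $l\notin\{k_{j}\}$ by invoking the inclusion $\mathfrak{N}_{i}\subset\mathfrak{M}_{j}$ for $i\neq j$ which is established in the text right after \eqref{4.10} via the orthogonality $P_{\mu_{i}}P_{\mu_{j}}=0$ of the Riesz projectors. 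This gives $\mathfrak{N}_{k_{j}}\subset\mathfrak{M}_{l}$ for every $k_{j}\neq l$, hence $\mathcal{L}\subset\mathfrak{M}_{l}$, and since $\mathfrak{M}_{l}$ is closed (as the range of the complementary Riesz projector), also $\mathfrak{M}_{k}\subset\mathfrak{M}_{l}$. But by the Riesz decomposition \eqref{4.9} the operator $B-\mu_{l}I$ is invertible on $\mathfrak{M}_{l}$, contradicting $(B-\mu_{l}I)f=0$ with $0\neq f\in\mathfrak{M}_{k}\subset\mathfrak{M}_{l}$.

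The main obstacle, as I see it, is the passage from the algebraic statement $\mathcal{L}\subset\mathfrak{M}_{l}$ to the topological one $\mathfrak{M}_{k}\subset\mathfrak{M}_{l}$; it relies essentially on the closedness of each $\mathfrak{M}_{l}$, which in turn rests on its characterization as the range of the Riesz projector. Everything else is formal once the Riesz projector calculus is in place, and the compactness and invariance steps are routine consequences of the properties of compact operators and continuous linear maps.
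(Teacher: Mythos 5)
Your proof is correct and rests on the same essential mechanism as the paper's: the Riesz decomposition $\mathfrak{H}=\mathfrak{N}_{l}\oplus\mathfrak{M}_{l}$ at the offending eigenvalue together with the closedness of $\mathfrak{M}_{l}$ and the invertibility of $B-\mu_{l}I$ there, plus continuity of $B$ for the invariance of $\mathfrak{M}_{k}$ and restriction for compactness. Your organization is in fact more economical: the paper reaches the containment of $\mathfrak{M}_{k}$ in the relevant complement indirectly, by applying Kato's decomposition to growing finite subsets of the complementary eigenvalues and intersecting the resulting subspaces $\mathfrak{M}_{i}''$, with an additional digression on linear independence of root vectors, whereas you use the single decomposition at $\mu_{l}$ directly and obtain the same contradiction.
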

\begin{proof}

Consider the set of the eigenvalues $\{\mu_{k_{l}}\}_{1}^{\infty}=\{\mu_{j}\}_{1}^{\infty}\setminus \{\mu_{k_{j}}\}_{1}^{\infty},$  then  in accordance with Theorem 6.17 Chapter III \cite{firstab_lit:kato1980} (we can omit the compactness property),    we  have the decomposition
$$
\mathfrak{H}=  \mathfrak{M}_{i}'\oplus  \mathfrak{M}_{i}'',\,i\in \mathbb{N},
$$
corresponding to the finite set $\{\mu_{k_{l}}\}_{1}^{i},$
where $ \mathfrak{M}_{i}'$ is   a finite dimensional invariant subspace of the operator $B$ and  $\mathfrak{M}_{i}''$ is its parallel complement respectively, i.e.  $P_{i}\mathfrak{H}=\mathfrak{M}_{i}',\;(I-P_{i})\mathfrak{H}=\mathfrak{M}_{i}''.$
Consider a subspace $ \mathfrak{M}_{k}$ obtained due to the closure of the root vectors linear combinations  corresponding to $\{\mu_{k_{j}}\}_{1}^{\infty}.$ Note that the closure of the root vectors linear combinations corresponding to $\{\mu_{j}\}_{1}^{\infty}\setminus\{\mu_{k_{l}}\}_{1}^{i}$  belongs to $\mathfrak{M}_{i}'',$ since
 if
 $$
 g_{k}\rightarrow g,\,k\rightarrow\infty,\,\{g_{k}\}_{1}^{\infty}\subset \mathfrak{M}_{i}'',
 $$
 then
$$
g_{k}=(I-P_{i})g_{k}\rightarrow g,\Rightarrow P_{i}g=0,\Rightarrow g\in \mathfrak{M}_{i}'',
$$
the latter implications  can be established easily due to the properties of the parallel projector.  Thus, we conclude that the space $\mathfrak{M}_{i}''$ is closed. Therefore $\mathfrak{M}_{i}''\supset \mathfrak{M}_{k},\,i\in \mathbb{N}.$  Hence
\begin{equation}\label{4.34}
\mathfrak{M}_{k}\subset \bigcap\limits_{i=1}^{\infty}\, \mathfrak{M}_{i}''.
\end{equation}

Consider an arbitrary eigenvalue of the operator $B_{k},$ we denote it $\mu_{k_{j}},$ it corresponds to the root vector subspace $\mathfrak{M}_{k_{j}}.$  Thus we can  put in correspondence to the operator $B_{k}$ a sequence of subspaces, where the index $j$ is counted. Let us show that the closures of the subspaces corresponding to the operators $B_{k}$ and $B_{m},\,m\neq k$ do not not intersect. For this purpose, let us notice the fact that the root vectors are linearly independent. It follows easily if we consider a Riesz projector and use the formula \eqref{4.12}
$$
P_{\Gamma_{q}}e_{q_{\xi}+i}=-\frac{1}{2\pi i}\oint_{\Gamma_{q}}B(I-\lambda B)^{-1}e_{q_{\xi}+i}d\lambda=-\frac{1}{2\pi i}\oint_{\Gamma_{q}}\frac{1}{\lambda^{2}}\left\{(\lambda^{-1}I-B)^{-1}-I\ \right\}e_{q_{\xi}+i} d\lambda=
$$
$$
 =-\frac{1}{2\pi i}\oint_{\Gamma_{q}}\frac{1}{\lambda^{2}} (\lambda^{-1}I-B)^{-1} e_{q_{\xi}+i} d\lambda=\frac{1}{2\pi i}\oint_{\Gamma'_{q}}  (\lambda' I-B)^{-1} e_{q_{\xi}+i} d\lambda'=
$$
$$
 =\frac{1}{2\pi i} \sum\limits_{j=0}^{i}\oint_{\Gamma'_{q}}  \frac{e_{q_{\xi}+j}}{(\lambda'-\mu_{q})^{i-j+1}}  d\lambda'=e_{q_{\xi}+i},\,
$$
where $\Gamma_{q}$ is a closed contour bounding a domain containing the characteristic number $\lambda_{q}$ only, we have made the change of the variables $\lambda^{-1}=\lambda'.$   In accordance with the Cauchy integral formula, we get easily $P_{\Gamma_{q_{1}}}P_{\Gamma_{q_{2}}}=0,\,q_{1}\neq q_{2}.$ From what follows linear independence between the root vectors corresponding to different eigenvalues. Indeed, assume that the root vectors corresponding to an eigenvalue are linearly dependent, then there exists a non-zero set $C_{j},\,j=0,1,...n$ so that
$$
 \sum\limits_{j=0}^{n}C_{j}e_{q_{\xi}+j}=0,
$$
where $n$ is a height of the eigenvalue.
Applying the operators $(B-\mu_{q})^{k},\,k=n-1, n-2,...,1$ to the above sum consistently, we obtain $C_{j}=0,\,j=0,1,...n$ what contradicts to the   given assumptions.
Thus, we have proved that the root vectors are linearly independent.

In accordance with the   made assumptions   the root vector system corresponding to the set of the eigenvalues $\{\mu_{k_{j}}\}_{1}^{\infty} $ belongs to
the root vector system of the constructed operator  $B_{k},$    let us show that they are coincided.  Assume the contrary, then we should admit that there exists a number $N$ and an eigenvalue
$
\mu\in \{\mu_{k_{l}}\}_{1}^{N},
$
so that $(B-\mu I)^{\xi}e=0,\,e \in \mathfrak{M}_{k},\,\xi\in \mathbb{N}$
 then taking into account the fact $B_{k}\subset B$ we conclude that there exists such a number $N\in \mathbb{N}$ that we have $\mathfrak{M}_{k}\bigcap \mathfrak{M}_{n}' \neq0,\,n>N$ but it contradicts the inclusion \eqref{4.34} in accordance to which $\mathfrak{M}_{k}\subset\mathfrak{M}_{n}''$ since $\mathfrak{M}'_{n}\cap\mathfrak{M}_{n}''=0.$ Let us show that the space $\mathfrak{M}_{k}$ is the invariant subspace of the operator $B.$ It is clear that the operator $B$ preserves linear combinations of the root vectors. Thus, the question is weather this holds for the limits of the root vectors linear combinations.   To prove the fact, consider an element $g$ such that
$$
Bf_{k}\rightarrow g ,\, f_{k}\rightarrow f \in \,\mathfrak{M}_{k},\, f_{k}:=\sum\limits_{l=0}^{k}e_{l}c_{lk},
$$
where $c_{lk}$ are complex valued coefficients, $e_{l}$  root vectors corresponding to the set $\{\mu_{k_{j}}\}.$ Using the decomposition $Be_{q_{j}}=\mu_{q}e_{q_{j}}+e_{q_{j-1}},\,j=1,2,...,k,$ where $k+1$ is the algebraic multiplicity of the eigenvalue $\mu_{q},$ we get
$$
Bf_{k}=\sum\limits_{l=0}^{k'}e'_{l}c'_{lk'},
$$
where $k',e'_{l},c'_{lk'}$ are the transformed  terms  $k,e_{l},c_{lk}$ due to the  applied decomposition  formula. Therefore, taking into account the closedness of $\mathfrak{M}_{k}$  we come to the conclusion that $g\in \mathfrak{M}_{k}.$ Hence the operator $B_{k}$ has a set of the eigenvalues  $\{\mu_{k_{j}}\}_{ 1}^{\infty} .$
Note   that the restriction    $B_{k}$ is compact, since $B$ is compact, and   in accordance with the Hilbert theorem its spectrum consists of normal  values. Thus, if a complex value is not an eigenvalue then it is a regular value. Since we have shown previously that   the   operator $B_{k}$ has the eigenvalues $\{\mu_{k_{j}}\}_{1}^{\infty}$ then the resolvent set  of $B_{k}$ contains  the rest part of eigenvalues of the operator $B.$

\end{proof}

\begin{teo} \label{T4.8}  Assume that $B$ is a compact operator, $B\in \mathfrak{S}_{\sigma}\,,\,0<\sigma<1,\,\Theta(B) \subset   \mathfrak{L}_{0}(\theta) ,\,\theta< \pi/4,\,\rho=\inf\sigma$ then
  the order of summation $\alpha>\rho$ can be essentially decreased to $ s>\rho(1-1/\gamma),\,\gamma=2,3,...\,.$

   Moreover, in each case corresponding to $\gamma$ the subsequence of the natural numbers can be  chosen  in the following way
$$
N_{\nu}=\beta(\nu+1)^{\gamma},\,\nu\in \mathbb{N}_{0},\; \gamma=\beta+ \eta ,\; \eta\in \mathbb{N},\;\beta/\eta \in \mathbb{N},
$$
 so that  there exists  the splitting  of the operators
 $$
P_{\nu}(s,t)=\sum\limits_{k=k_{1}(\nu)}^{k_{2}(\nu)}P^{(k)}(s,t),\,  k_{1}, k_{2} \in \mathbb{N}_{0},
$$
where
$$
P^{(k)}(s,t):=\sum\limits_{q\in \mathfrak{A}_{k}}  P_{q}(s,t),\,\mathfrak{A}_{k}\subset \mathbb{N},
$$
  the following  series is     convergent in the Abel-Lidskii sense, i.e.
$$
 \sum\limits_{k=0}^{\infty}  P^{(k)}(s,t)f \rightarrow f,\,t\rightarrow 0,\;f\in \mathfrak{H}.
$$

\end{teo}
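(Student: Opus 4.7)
The plan is to exploit the decomposition of the index sequence furnished by Lemma \ref{L4.15} together with the restriction construction of Lemma \ref{L4.17} in order to reduce the global summation problem for $B$ to a family of local summation problems, one for each ``group'' of eigenvalues, each of which can be handled by a summation order smaller than $\rho$ itself. First, I would rearrange the sequence $\{\lambda_n\}_{1}^{\infty}$ into the groups $\{\lambda_{k_j}\}_{1}^{\infty}$, $k=0,1,2,\dots$, associated with the counts $N_{k\nu}$ from Lemma \ref{L4.15}, and use Lemma \ref{L4.17} to introduce the invariant subspace $\mathfrak{M}_k$ generated by the root vectors corresponding to $\{\mu_{k_j}\}_{1}^{\infty}$ together with the induced compact operator $B_k$ on $\mathfrak{M}_k$ whose eigenvalues are exactly $\{\mu_{k_j}\}_{1}^{\infty}$. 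The aggregate of gaps $|\lambda_{N_\nu+1}|-|\lambda_{N_\nu}|\geq\sigma^{-1}|\lambda_{N_\nu}|^{1-\sigma}$ is precisely what permits the Lidskii lacunae technique to be applied to each $B_k$ separately on the contours of power type constructed in Theorem \ref{T4.7}.

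Next, for each fixed $k$, I would verify that the hypotheses of Theorem \ref{T4.6} hold for $B_k$ with an appropriate summation exponent. Since $B_k$ is a restriction of $B$ to an invariant subspace, its numerical range is contained in $\Theta(B)\subset\mathfrak{L}_0(\theta)$ with $\theta<\pi/4$, hence the sectorial requirement $\theta<\pi/(2s)$ is satisfied for any $s>2$, and a fortiori whenever $s>\rho(1-1/\gamma)$ (the case $s\leq 2$ is already covered by the sectorial bound). Moreover, Corollary \ref{C4.1} yields
\[
\lim_{r\to\infty}\frac{n_{k}(r)}{r^{\varphi_k}}=0,\qquad \varphi_0=\sigma(1-1/\gamma),\ \varphi_k=\sigma\beta/\gamma\ (k\geq 1),
\]
so the characteristic numbers of $B_k$ satisfy $|\mu_n(B_k)|=o(n^{-1/\varphi_k})$ through the standard implication \eqref{4.23}, and therefore $B_k\in\mathfrak{S}^{\star}_{\varphi_k}$ with $s_n(B_k)=o(n^{-1/\varphi_k})$ (using the estimate $s_n(B_k)\leq s_n(B)$ from Lemma \ref{L4.8}, which carries over to restrictions onto invariant subspaces). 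Since $\varphi_0\geq\varphi_k$ for $k\geq 1$, each of the exponents $\varphi_k$ is bounded above by $\rho(1-1/\gamma)$ in the infimum sense, so the theorem's summation threshold is accurately described by $s>\rho(1-1/\gamma)$.

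With these preparations, I would define the partial sums $P^{(k)}(s,t)f$ as the contribution of the $k$-th group, identifying $\mathfrak{A}_k$ with the set of indices $q$ for which $\lambda_q$ belongs to the $k$-th group. Applying Theorem \ref{T4.6} (or Theorem \ref{T4.7}) to $B_k$ with order $s>\varphi_k$ gives, within each group,
\[
\frac{1}{2\pi i}\int_{\vartheta(B_k)}e^{-\lambda^{s}t}B_k(I-\lambda B_k)^{-1}P_{\mathfrak{M}_k}f\,d\lambda
=\sum_{q\in\mathfrak{A}_k}P_q(s,t)f=P^{(k)}(s,t)f,
\]
and the right-hand side converges absolutely in norm. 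The final step is to sum these group contours: using the lacunae contour estimates of Theorem \ref{T4.7} on the rings $\tilde{R}_\nu<|\lambda|<\tilde{R}_{\nu+1}$ with the specific gaps of Lemma \ref{L4.15}, the sum over $k$ reconstructs the global contour integral along $\vartheta(B)$ and hence, by Lemma \ref{L4.12} (applicable since $\theta<\pi/4<\pi/(2s)$ for any $s\leq 2$, and by the sectorial property for larger $s$), tends to $f$ as $t\to+0$ for every $f\in\mathfrak{H}$.

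The hard part will be two-fold. First, justifying that the restriction $B_k$ on the (generally non-orthogonal) root subspace $\mathfrak{M}_k$ preserves both the sectorial geometry and the correct singular-value asymptotics requires a careful use of Lemmas \ref{L4.8} and \ref{L4.16} together with the density of the root subspaces along the lacunae (this is where the estimate $s_n(B_k)\leq s_n(B)$ must be secured for non-orthogonal projections, possibly passing through the Fredholm determinant bound of Lemma \ref{L4.11}). Second, and more delicately, one must verify that the exceptional circles for each determinant $\Delta_{B_k^{m+1}}$ can be avoided simultaneously by a single sequence of radii $\tilde{R}_\nu$ lying in the rings produced by Lemma \ref{L4.15}; this hinges on $\delta_\nu\to 0$ slowly enough so that the Cartan-type exceptional sets, whose total radius is controlled by $\delta_\nu R_\nu$, leave room for a common good radius across all relevant groups. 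Once these two technical points are resolved, the group-wise convergence combines into the Abel--Lidskii convergence of the series $\sum_{k=0}^{\infty}P^{(k)}(s,t)f\to f$ as $t\to+0$.
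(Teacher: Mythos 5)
Your overall architecture matches the paper's: decompose $N_{\nu}$ via Lemma \ref{L4.15}, pass to the restrictions $B_{k}$ of Lemma \ref{L4.17}, and control each group by the resolvent of $B_{k}$, whose counting function $n_{k}(r)=o(r^{\varphi_{k}})$ with $\varphi_{k}\leq\sigma(1-1/\gamma)$ is the source of the improved summation order. The two ``hard parts'' you flag are real, and the first is resolved in the paper essentially as you suspect: $B_{k}=P_{k}BP_{k}$ on the closed invariant subspace $\mathfrak{M}_{k}$ with $P_{k}$ the orthogonal projector onto $\mathfrak{M}_{k}$, so $s_{n}(B_{k})\leq s_{n}(B)$ by Lemma \ref{L4.8}, and the bound $|\Delta^{11}_{k}(\lambda,f)|\leq\prod(1+|\lambda\mu_{n}(B_{k})|)$ comes from Lemma \ref{L4.16} — which is where $\theta<\pi/4$ is actually used (via $|1-\lambda_{1}\lambda_{2}|\leq|1+\lambda_{1}\lambda_{2}|$ for arguments below $\pi/4$), not to secure $\theta<\pi/(2s)$. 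Note also that your claim that $\theta<\pi/(2s)$ ``is satisfied for any $s>2$'' is backwards, since $\pi/(2s)$ decreases in $s$; it is the smallness of $s$ (here $s<1$) that makes that condition harmless.

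The genuine gap is the convergence of the double series. Applying Theorem \ref{T4.6} to each $B_{k}$ separately only yields, for each fixed $k$, convergence of $\sum_{\nu}\|P^{(k)}_{\nu}(s,t)f\|$; it says nothing about $\sum_{k}\sum_{\nu}\|\cdot\|<\infty$, which is precisely what legitimises the rebracketing $\sum_{\nu}P_{\nu}=\sum_{k}P^{(k)}$ and is the content of the theorem. Your closing assertion that ``the sum over $k$ reconstructs the global contour integral'' presupposes exactly this rearrangement. Worse, your formula involving $B_{k}(I-\lambda B_{k})^{-1}P_{\mathfrak{M}_{k}}f$ would additionally require $\sum_{k}P_{\mathfrak{M}_{k}}f=f$, i.e.\ an unconditional decomposition of $f$ over the root subspaces, which is exactly what cannot be assumed for a non-selfadjoint $B$. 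The paper instead keeps the single global integral over $\vartheta(B)$, identifies each grouped residue sum with $\oint_{\vartheta_{\nu}}e^{-\lambda^{s}t}B_{k}(I-\lambda B_{k})^{-1}f\,d\lambda$, and proves that $\sum_{k}\sum_{\nu\geq k^{1/\eta}}$ of the norms converges by means of (i) a resolvent bound \emph{uniform in} $k$, obtained from a common majorant counting function $\tilde{n}(r)=\gamma^{2}(\nu+1)^{\gamma-1}\geq n_{k}(r)$ with $\tilde{n}(r)=o(r^{s})$, and (ii) the arithmetic $\tilde{R}_{\nu}\geq C\nu^{\gamma/\sigma}$ together with $s\gamma/\sigma>\gamma-1\geq\eta$, giving $e^{-M\tilde{R}_{\nu}^{s}}\leq e^{-M\nu}$, followed by counting the multiplicity $O(k^{1-1/\eta})$ of the map $k\mapsto[k^{1/\eta}]$ so as to sum over $k$. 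Without these two ingredients the argument does not close; your second ``hard part'' (a single sequence of radii avoiding all exceptional sets) is likewise left unresolved, whereas the uniform majorant is what lets the paper run the Cartan-type lower bound for every $k$ along one and the same sequence $\tilde{R}_{\nu}$.
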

\begin{proof} Consider the sequence of the   eigenvalues $\{\mu_{j}\}_{1}^{\infty}$ of the operator $B,$ it corresponds to the sequence of the characteristic numbers $\{\lambda_{j}\}_{1}^{\infty},\,\lambda_{j}=1/\mu_{j}.$  Note that in accordance with Lemma \ref{L4.15}, using decomposition
 $$
 N_{\nu}= \sum_{k=0}^{\nu^{ \eta}} N_{k\nu},
 $$
 we can rearrange the sequence of the eigenvalues  so that we obtain the subsequences  $\{\lambda_{k_{j}}\}_{1}^{\infty}$  having counting functions $n_{k}(r),\,k\in \mathbb{N}$ defined in Corollary \ref{C4.1}. In accordance with  Lemma \ref{L4.17}, they can be put in  correspondence  to the operators $B_{k}$ having properties described  in Lemma  \ref{L4.17}, i.e.  the operators $B_{k}$  are restrictions of the operator $B$ on the root vector subspaces corresponding to the eigenvalues values  $\{\mu_{k_{j}}\}_{1}^{\infty}$ which inverse values  $\{\lambda_{k_{j}}\}_{1}^{\infty}$ are counted by the counting functions $n_{k}(r)$ respectively. The smallest number $\nu\geq k^{1/\eta}$   indicates a position of the first group of the characteristic numbers  $\{\lambda_{k_{j}}\}_{1}^{\infty}$ corresponding to   $B_{k}$ belonging to the contour with the number $\nu.$ Here, we should recall that the condition holds
 $$
 |\lambda_{N_{\nu}+1}|-|\lambda_{N_{\nu}}|\geq K |\lambda_{N_{\nu}}|^{1-\sigma},
 $$
what gives us an opportunity to consider lacunaes between the  values $\lambda_{N_{\nu}},\lambda_{N_{\nu}+1}$ and define the closed contours $\vartheta_{\nu}$ containing the values $\lambda_{N_{\nu}+1},\lambda_{N_{\nu}+2},...,\lambda_{N_{\nu+1} }$ only, taking into account the fact that $N_{0}=\beta,$ we can artificially assume that $N_{-1}=0,$ then     $\vartheta_{-1}$ contains $\lambda_{1},\lambda_{2},...,\lambda_{\beta}=\lambda_{N_{0}}$ and does not contain the point zero.
 Consider a contour
$$
 \vartheta(B):= \left\{\lambda:\;|\lambda|=r_{0}>0,\,|\mathrm{arg} \lambda|\leq \theta+\varepsilon\right\}\cup\left\{\lambda:\;|\lambda|>r_{0},\; |\mathrm{arg} \lambda|=\theta+\varepsilon\right\},
 $$
 here the number $r$ is chosen so that the  circle  with the corresponding radios does not contain eigenvalues $\mu_{n}:=\mu_{n}(B),\,n\in \mathbb{N}.$
Consider the obtained by Lidskii representation that holds under the theorem assumptions
$$
\frac{1}{2\pi i}\oint\limits_{\vartheta }  e^{- \lambda^{\alpha}  t}B \left(I-\lambda B \right)^{-1}f d\lambda=-\sum\limits_{\nu=-1}^{\infty}
\sum\limits_{q=N_{\nu}+1}^{N_{\nu+1}} P_{q}(\alpha,t)f,\;f\in   \mathfrak{H}.
$$

The     obtained one to one correspondence between the set of the  root vector subspaces of the operator $B$ and the sequence of the operators $B_{k}$ in Lemmas \ref{L4.15}, \ref{L4.17}   gives us the essential decreasing of the summation  order since we can split operators $P_{\nu}$ and summarize  them in the different way.  The main  idea of the proof is to rearrange the brackets in the above series so that the newly obtained series will be convergent under the smaller value of the parameter $\alpha.$

Using the decomposition formula obtained in Lemma \ref{L4.15}
$$
N_{\nu}=\sum_{k=0}^{\nu^{\eta}}N_{k\nu  }
$$
consider the following representation
$$
\sum\limits_{q=N_{\nu}+1}^{N_{\nu+1}} P_{q}(s,t)f=\sum\limits_{k=0}^{\nu^{\eta}}\sum\limits_{\tilde{q}=N_{k\nu}+1}^{N_{k\nu+1}} P_{ q }(s,t)f,
$$
where we have a  correspondence between $\tilde{q}$ and $q$ defined  in the following way
$
q=\psi_{k}( \tilde{q}).
$
It is clear that this correspondence can be established since for the fixed number $k$ we have a natural  correspondence (inclusion) between the set $\{\lambda_{k_{j}}\}_{1}^{\infty}$ of the   values of the operator $B_{k}$  end the set $\{\lambda_{j}\}_{1}^{\infty}$ of the   values of the operator $B,$ i.e, $\{\lambda_{k_{j}}\}_{1}^{\infty}\subset\{\lambda_{j}\}_{1}^{\infty}.$

Therefore, we obtain the first part of the theorem claim  if we prove that the series
$$
\sum\limits_{\nu=0}^{\infty}\sum\limits_{k=0}^{\nu^{\eta}}\sum\limits_{\tilde{q}=N_{k\nu}+1}^{N_{k\nu+1}} P_{ q }(s,t)f
$$
is convergent. On the other hand the latter fact will be established if we prove the convergence of the following series
$$
\sum\limits_{k=0}^{\infty}\sum\limits_{\nu\geq k^{1/\eta}} \left\|\sum\limits_{\tilde{q}=N_{k\nu}+1}^{N_{k\nu+1}} P_{ q }(s,t)f\right\|,
$$
since in this case, we have
$$
\sum\limits_{\nu=0}^{\infty}
\left\|\sum\limits_{q=N_{\nu}+1}^{N_{\nu+1}} P_{q}(\alpha,t)f\right\|\leq\sum\limits_{\nu=0}^{\infty}\sum\limits_{k=0}^{\nu^{\eta}}\left\|\sum\limits_{\tilde{q}=N_{k\nu}+1}^{N_{k\nu+1}} P_{ q}(s,t)f\right\|=\sum\limits_{k=0}^{\infty}\sum\limits_{\nu\geq k^{1/\eta}} \left\|\sum\limits_{\tilde{q}=N_{k\nu}+1}^{N_{k\nu+1}} P_{q}(s,t)f\right\|<\infty.
$$
The latter relation gives us the second part of the theorem claim since we can rearrange the elements of the absolutely convergent series in an arbitrary way.

Now let us begin the proof.
In accordance with the Lemmas \ref{L4.3},\ref{L4.15},\ref{L4.17},  the last series can be rewritten in the form
\begin{equation}\label{4.35}
\sum\limits_{k=0}^{\infty}\sum\limits_{\nu\geq k^{1/\eta}} \left\|\sum\limits_{\tilde{q}=N_{k\nu}+1}^{N_{k\nu+1}} P_{ q }(s,t)f\right\|=\sum_{k=0}^{\infty} \sum_{\nu\geq k^{1/\eta} } \left\| \,\oint\limits_{\vartheta_{\nu}}  e^{- \lambda^{s}  t}B_{k}\left(I-\lambda B_{k}\right)^{-1}f d\lambda \right\|,
\end{equation}
where $s>\rho(1-1/\gamma)$ we recall that $\rho$ is the convergence exponent of the operator $B,$ the contours $\vartheta_{\nu}$ are defined above.

To proceed the next step, we need some theoretical basement appealing to the notion of the Fredholm determinant (see \cite{firstab_lit:1Lidskii}).
 Now, consider a formal representation for the resolvent
$$
(I-\lambda B)^{-1}f =\sum\limits_{m=1}^{\infty}\left(\sum\limits_{l=1}^{\infty}(-1)^{l+m}\frac{\Delta^{lm}(\lambda)}{\Delta}f_{l}\right)e_{m},
$$
where $\{e_{n}\}_{1}^{\infty}$ is an arbitrary orthonormal basis in $\mathfrak{H},$
$$
\Delta^{lm}(\lambda)= \sum\limits_{p=0}^{\infty}(-1)^{p}\lambda^{p}\!\!\!\!\!\sum\limits_{i_{1},i_{2},...,i_{p} =1}^{\infty}\!\!\!B\begin{pmatrix} i_{1}&  i_{2}&...&i_{p}\\
  i_{1}&  i_{2}&...&i_{p}
\end{pmatrix}_{lm},
$$
the  used formula in brackets means a minor formed from the columns and rows with $i_{1},i_{2},...,i_{p}$ numbers except for $l$ -th row and $m$ -th column. Using analogous form of writing, we denote the Fredholm determinant of the operator $B$ as follows
$$
\Delta (\lambda)=1- \lambda  \sum\limits_{i=1}^{\infty}B\begin{pmatrix} i\\
  i
\end{pmatrix} +\lambda^{2}  \sum\limits_{i_{1},i_{2} =1}^{\infty}B\begin{pmatrix} i_{1}&  i_{2}\\
  i_{1}&  i_{2}
\end{pmatrix} +...+(-1)^{p}\lambda^{p}\sum\limits_{i_{1},i_{2},...,i_{p} =1}^{\infty}B\begin{pmatrix} i_{1}&  i_{2}&...&i_{p}\\
  i_{1}&  i_{2}&...&i_{p}
\end{pmatrix} +...\,,
$$
where the  used formula in brackets means a minor formed from the columns and rows with $i_{1},i_{2},...,i_{p}$ numbers. Note that if $B$ belongs to the trace class then in accordance with the well-known theorems (see \cite{firstab_lit:1Gohberg1965}), we have
\begin{equation}\label{4.36}
\sum\limits_{n  =1}^{\infty}|b_{nn}| <\infty,\;\sum\limits_{n,k  =1}^{\infty}|b_{nm}|^{2} <\infty,
\end{equation}
where $ b_{nm} $ is the matrix coefficients of the operator $B.$ This follows easily from the properties of the trace class operators and Hilbert-Schmidtt class operators respectively. In accordance with the von Koch theorem the conditions \eqref{4.36} guaranty the absolute convergence of the series
$$
\sum\limits_{i_{1},i_{2},...,i_{p} =1}^{\infty}B\begin{pmatrix} i_{1}&  i_{2}&...&i_{p}\\
  i_{1}&  i_{2}&...&i_{p}
\end{pmatrix}.
$$
Moreover, the formal series $\Delta(\lambda)$ is convergent for arbitrary $\lambda\in \mathbb{C},$  hence it represents an entire function.
Note that in accordance with   Lemma 1 \cite{firstab_lit:Eb. Levin}, we have
$$
s_{n}(B_{k})\leq s_{n}( B ),\,k=0,1,...\,,
$$
since the operator $B_{k}$ admits the representation  $B=P_{k}BP_{k},$ where $P_{k}$ is orthogonal projector into the corresponding invariant subspace $\mathfrak{M}_{k}$ (constructed in Lemma \ref{L4.17}) of the operator $B_{k}.$ Thus, we have the implication
$$
B\in \mathfrak{S}_{\sigma},\Rightarrow B_{k}\in \mathfrak{S}_{\sigma}.
$$
Having denoted by $\Delta_{k}(\lambda),\,\Delta^{lm}_{k}(\lambda)$ the considered above constructions corresponding to the operator $B_{k},$ we come due to the reasonings represented in Lemma 1 \cite{firstab_lit:Eb. Levin} to the formula
$$
|\Delta_{k}(\lambda)|\leq \prod\limits_{n=1}^{\infty}\left\{1+|\lambda|s_{n}(B_{k})\right\}.
$$
It implies that the entire function $\Delta_{k}(\lambda)$ is of the finite order, hence in accordance with the Theorem 13 (Chapter I, \S 10) \cite{firstab_lit:Eb. Levin}, it has a representation by the canonical product, we have
$$
\Delta_{k}(\lambda)= \prod\limits_{n=1}^{\infty}\left\{1- \lambda \mu_{n}(B_{k})    \right\}.
$$
Now, let us chose an arbitrary element $f\in \mathfrak{M}_{k}$  and construct a new orthonormal basis having put $f$ as a first basis element. Note that the relations \eqref{4.36} hold for the matrix coefficients of the operator $B_{k}$ in a new basis, this fact follows from the well-known theorem for the trace class operator.  Thus, using the given  above representation for the resolvent, we obtain the following relation
$$
\Delta_{k}(\lambda)\left((I-\lambda B_{k})^{-1}f,f\right)_{\mathfrak{H}}=  \Delta^{11}_{k}(\lambda,f).
$$
Let us observe the latter entire function more properly, we have
$$
\Delta^{11}_{k}(\lambda,f)= \sum\limits_{p=0}^{\infty}(-1)^{p}\lambda^{p}\!\!\!\!\!\sum\limits_{i_{1},i_{2},...,i_{p} =1}^{\infty}\!\!\!B\begin{pmatrix} i_{1}&  i_{2}&...&i_{p}\\
  i_{1}&  i_{2}&...&i_{p}
\end{pmatrix}_{11},
$$
where
its construction reveals the fact that it is a Fredholm determinant of the operator $Q_{1}B_{k}Q_{1},$ where $Q_{1}$  is the projector into  orthogonal  complement of the element $f.$    Having applied the above reasonings (Lemma 1 \cite{firstab_lit:Eb. Levin}), we obtain
$$
s_{n}(Q_{1}B_{k}Q_{1})\leq s_{n}( B_{k} ),
$$
In the same way, we obtain that the entire function $\Delta^{11}_{k}(\lambda,f)$  is of the finite order and obtain the representation
$$
\Delta^{11}_{k}(\lambda,f)=\prod\limits_{n=1}^{\infty}\left\{1- \lambda \mu_{n}(Q_{1}B_{k}Q_{1})    \right\}.
$$
In accordance with Lemma \ref{L4.16}, we have
$$
\prod\limits_{n=1}^{\infty}|1+\lambda\mu_{n}(Q_{1}B_{k}Q_{1})| \leq \prod\limits_{n=1}^{\infty}|1+\lambda\mu_{n}( B_{k} )|,
$$
what gives us the desired result. Taking into account the fact
$$
\left|1-  \lambda_{1}\lambda_{2}\right|\leq \left|1+ \lambda_{1}\lambda_{2}\right|,\, \mathrm{arg}\lambda_{1}, \mathrm{arg}\lambda_{2}<\pi/4,
$$
Evaluating, we obtain
$$
|\Delta^{11}_{k}(\lambda,f)|\leq\prod\limits_{n=1}^{\infty}\left|1-  \lambda \mu_{n}(Q_{1}B_{k}Q_{1})\right|  \leq \prod\limits_{n=1}^{\infty}\left\{1+ |\lambda \mu_{n}( B_{k} )| \right\}.
$$
Since the right-hand side does not depend on $f,$ then using decomposition on the Hermitian components, we have  the following relation
\begin{equation}\label{4.37}
|\Delta_{k}(\lambda)|\cdot\|(I-\lambda B_{k})^{-1}\|\leq 2 \prod\limits_{n=1}^{\infty}\left\{1+ |\lambda \mu_{n}( B_{k} )| \right\}.
\end{equation}
To establish the letter relation Agaranovich M.S. used the polarization formula, however we can prove it using the offered method. Define the operator function
$$
D_{B_{k}}(\lambda)=\Delta_{k}(\lambda) (I-\lambda B_{k})^{-1},
$$
then $(D_{B_{k}} (\lambda)f,f )=\Delta^{11}_{k}(\lambda,f).$
Having involved ordinary   properties of selfadjoint operators, we get
$$
\sup \limits_{\|f\|\leq 1 }\|D_{B_{k}}(\lambda)f\|=\sup \limits_{\|f\|\leq 1 }\|\mathfrak{Re} D_{B_{k}}(\lambda)f+i\, \mathfrak{Im} D_{B_{k}}(\lambda)f \|\leq
$$
$$
\leq\sup \limits_{\|f\|\leq 1 }\|\mathfrak{Re} D_{B_{k}}(\lambda)f  \|+ \sup \limits_{\|f\|\leq 1 }\|  \mathfrak{Im} D_{B_{k}}(\lambda)f \|=
$$
$$
=\sup \limits_{\|f\|\leq 1 }\left| \mathrm{Re}(D_{B_{k}} (\lambda)f,f )\right|+\sup \limits_{\|f\|\leq 1 }\left| \mathrm{Im}(D_{B_{k}} (\lambda)f,f )\right|\leq 2 \prod\limits_{n=1}^{\infty}\left\{1+ |\lambda \mu_{n}( B_{k} )| \right\},
$$
thus, we obtain \eqref{4.37}. In accordance with the above   we have   $n_{k}(r)=o( r^{s}),\,s>\rho(1-1/\gamma),$ hence
$$
\sum\limits_{n=1}^{\infty} |\mu_{n}( B_{k} )|^{s} <\infty,
$$
therefore, using   estimate \eqref{4.4} for the canonical product, we obtain
$$
\|D_{B_{k}}(\lambda)\|\leq 2\prod\limits_{n=1}^{\infty}\left\{1+ |\lambda \mu_{n}( B_{k} )| \right\}\leq  e^{\beta_{k}(|\lambda|)|\lambda|^{s}},
$$
where
$$
\;\beta_{k}(r )= r^{ -s }\left(\;\int\limits_{0}^{r}\frac{n_{k}(t)dt}{t }+
r \int\limits_{r}^{\infty}\frac{n_{k}(t)dt}{t^{ 2  }}\right),\;k=0,1,... \,.
$$
   Taking into account Lemma \ref{L4.2}, we have
$
\beta_{k}(r )\rightarrow 0,\,r\rightarrow\infty.
$
Consider the entire function
$$
\Delta_{k}(\lambda)= \prod\limits_{n=1}^{\infty}\left\{1- \lambda \mu_{n}(B_{k})    \right\}.
$$
In accordance with the  Joseph Cartan concept, we can obtain the following estimate from the bellow for the entire function that  holds on the complex plane except may be the exceptional set of circulus. The latter cannot be found but and we are compelled to make an occlusion evaluating the measures. However, in the paper \cite{firstab_lit(frac2023)},  we   produce the method allowing to find exceptional set of   circulus.

We need an auxiliary construction, let us   find $\delta_{\nu},\,\nu\in \mathbb{N}_{0}$ from the condition $R_{\nu}=K|\lambda_{N_{\nu}}|^{1-\sigma}+|\lambda_{N_{\nu}}|,\,R_{\nu}(1-\delta_{\nu})=|\lambda_{N_{\nu}}|,$ then $\delta_{\nu}^{-1}=1+K^{-1}|\lambda_{N_{\nu}}|^{\sigma}.$ Note that by virtue of such a choice, we have $R_{\nu}<R_{\nu+1}(1-\delta_{\nu+1}).$ Now, applying Theorem 4 \cite[p.79]{firstab_lit:Eb. LevinE}, we have
$$
|\Delta_{k}(\lambda)|\geq e^{-(2+\ln\{4e/\delta_{\nu}\}) \beta_{k}(\varpi |\lambda|   )\varpi ^{1/s}} ,\,\varpi:=\frac{2e}{1-\delta_{0}},\,|\lambda|=\tilde{R}_{\nu},
$$
where $|\lambda_{N_{\nu}}|=(1-\delta_{\nu})R_{\nu}<\tilde{R}_{\nu}<R_{\nu}.$ Here the arch $|\lambda|=\tilde{R}_{\nu},\,|\arg \lambda|<\theta$ is chosen so that it belongs to the exceptional  set of circles.
 Therefore, in accordance with the obtained above relations, we get
 \begin{equation}\label{4.38}
 \|(I-\lambda B_{k})^{-1}\|= \frac{\|D_{B_{k}}(\lambda)\|}{|\Delta_{k}(\lambda)|}\leq e^{\gamma_{k}(|\lambda|)|\lambda|^{s}}|,\,|\lambda|=\tilde{R}_{\nu},
 \end{equation}
where
$$
\gamma_{k}(|\lambda|)= \beta_{k} ( |\lambda| )  +(2+\ln\{4e/\delta_{\nu}\}) \beta_{k}(\varpi |\lambda|   ) \,\varpi ^{1/s}.
$$
Recalling the formula for $\delta_{\nu},$ we have
$$
\ln\{4e/\delta_{\nu}\} = 1+ \ln4 +\ln \{1+K^{-1}|\lambda_{N_{\nu}}|^{\sigma}\} \leq C \ln   |\lambda_{N_{\nu}}| \leq C \ln   |\lambda_{N_{\nu}}| \leq C\ln \tilde{R}_{\nu}.
$$
Thus,
we get easily $\gamma_{k}( \tilde{R}_{\nu} )\rightarrow 0,\,\nu\rightarrow \infty,$ uniformly with respect to $k.$ Indeed,
to obtain a uniform estimate,  we can evaluate  the counting function and in this way evaluate the family $\beta_{k}(r ),\,k\in \mathbb{N}_{0}.$  For this purpose,
note that
$$
n_{k}(\lambda_{N_{\nu}+m})\leq n_{k}(\lambda_{N_{\nu+1}}),\;m=0,1,...,N_{\nu+1}-N_{\nu},\; k\in \mathbb{N}_{0};
$$
$$
 n_{k}(\lambda_{N_{\nu+1}})=(\nu+1)^{\beta}-k^{\beta/\eta} ,\,k>0.
$$
Hence, having noticed that $\beta\leq \gamma-1,$ we conclude that we  can  estimate   the family of counting   functions by a newly constructed step function  $\tilde{n}(r)=\gamma^{2}(\nu+1)^{\gamma-1},\,r= |\lambda_{N_{\nu}}|,$  we have $n_{k}(r)\leq \tilde{n}(r),\,k\in \mathbb{N}_{0}.$ Using the implication
$$
\lim\limits_{r\rightarrow\infty}\frac{n(r)}{r^{\sigma}}=0,\Rightarrow\lim\limits_{\nu\rightarrow\infty}\frac{\nu^{\gamma}  }{|\lambda^{\sigma }_{N_{\nu} }|}=0,\Rightarrow
\lim\limits_{\nu\rightarrow\infty}\frac{(\nu+1)^{\gamma-1}  }{|\lambda^{\sigma (1-1/\gamma) }_{N_{\nu} }|}=0,
$$
 we get
$$
\lim\limits_{r\rightarrow\infty}\frac{\tilde{n}(r)}{r^{s}}=0.
$$
It gives us the following relation if we apply the scheme of reasonings absolutely analogous to the one in accordance to which we obtained \eqref{4.5}, thus we get
 $$
\tilde{\beta}(r):=r^{ -s }\left(\int\limits_{0}^{r}\frac{\tilde{n} (t)dt}{t }+
r \int\limits_{r}^{\infty}\frac{\tilde{n} (t)dt}{t^{ 2  }}\right)\rightarrow 0,\,r\rightarrow\infty.
$$
Having involved inequality \eqref{4.38}, we get
$$
 \|(I-\lambda B_{k})^{-1}\| \leq e^{\tilde{\gamma} (|\lambda|)|\lambda|^{s}}|,\,|\lambda|=\tilde{R}_{\nu},\,\tilde{\gamma}(|\lambda|)= \tilde{\beta} ( |\lambda| )  +(2+\ln\{4e/\delta_{\nu}\}) \tilde{\beta}(\varpi |\lambda|   ) \,\varpi ^{1/s}.
$$
Let us estimate the inner sum \eqref{4.35}, for this purpose we want to  estimate termwise the following relation representing unified  reasonings for all values of $k\in  \mathbb{N}_{0}$
$$
 \left\| \,\int\limits_{\vartheta_{\nu}}  e^{- \lambda^{s}  t}B_{k}\left(I-\lambda B_{k}\right)^{-1}f d\lambda \right\|\leq J_{\nu}+J_{\nu+1}+J^{+}_{\nu}+J^{-}_{\nu}.
$$
where
$$
J^{+}_{\nu}: =\left\|\,\int\limits_{\vartheta_{\nu_{+}}} e^{- \lambda^{s}  t}B_{k}\left(I-\lambda B_{k}\right)^{-1}f d\lambda\,\right\|,\; J^{-}_{\nu}:= \left\|\,\int\limits_{\vartheta_{\nu_{-}}} e^{- \lambda^{s}  t}B_{k}\left(I-\lambda B_{k}\right)^{-1}f d\lambda\,\right\|,
$$
$$
\;J_{  \nu  }: =\left\|\,\int\limits_{\tilde{\vartheta}_{\nu}} e^{- \lambda^{s}  t}B_{k}\left(I-\lambda B_{k}\right)^{-1}f d\lambda\,\right\|,
$$
$\vartheta_{\nu_{+}}:=
\{\lambda:\, \tilde{R}_{\nu}<|\lambda|<\tilde{R}_{\nu+1},\, \mathrm{arg} \lambda  =\theta   +\varepsilon\},\,\vartheta_{\nu_{-}}:=
\{\lambda:\,\tilde{R}_{\nu}<|\lambda|<\tilde{R}_{\nu+1},\, \mathrm{arg} \lambda  =-\theta   -\varepsilon\},\,\tilde{\vartheta}_{ \nu }:=\{\lambda:\;|\lambda|=\tilde{R}_{\nu},\,|\mathrm{arg } \lambda|< \theta +\varepsilon\}.$
We have
$$
 J_{  \nu  } =\left\|\,\int\limits_{\tilde{\vartheta}_{\nu}}e^{-\lambda^{s}t}B_{k}\left(I-\lambda B_{k}\right)f d \lambda\,\right\| \leq \,\int\limits_{\tilde{\vartheta}_{ \nu }}e^{- t \mathrm{Re}\lambda^{s}}\left\|B_{k}\left(I-\lambda B_{k}\right)f \right\|  |d \lambda|\leq
$$
 $$
 \leq C\|f\|  e^{\tilde{\gamma}  (|\lambda|)|\lambda|^{s} } \int\limits_{-\theta-\varepsilon}^{\theta+\varepsilon} e^{- t \mathrm{Re}\lambda^{s}} d \,\mathrm{arg} \lambda,\,|\lambda|=\tilde{R}_{\nu}.
$$
Note that the condition
$
\theta<\pi/2\sigma < \pi/2s
$
gives us     $\,|\mathrm{arg} \lambda |<\pi/2s,\,\lambda\in \tilde{\vartheta}_{ \nu },\,\nu=0,1,2,...\,,$
$$
\mathrm{Re }\lambda^{s}\geq |\lambda|^{s} \cos  (\theta+\varepsilon) s \geq |\lambda|^{s} \cos \left[(\pi/2s-\delta)s\right]= |\lambda|^{s} \sin  s \delta,\,\lambda \in \tilde{\vartheta}_{ \nu },
$$
for a sufficiently small $\delta$ and $\varepsilon.$
Therefore, we get
$$
J_{\nu}\leq C\|f\|  e^{\tilde{R}_{\nu}^{s}\{\tilde{\gamma}  (\tilde{R}_{\nu})-t\sin     s \delta\} }.
$$
 To estimate other terms, we are rather satisfied with the estimate represented in Lemma 4 (Lidskii) \cite{firstab_lit:1Lidskii}
$$
\|(I-\lambda B_{k})^{-1}\|\leq \frac{1}{\sin\psi},\,\lambda\in \zeta,
$$
where   $\zeta$ is the ray  containing the point zero and not belonging to the sector $\mathfrak{L}_{0}(\theta),\,\psi = \min \{|\mathrm{arg}\zeta -\theta|,|\mathrm{arg}\zeta +\theta|\}.$ Absolutely analogously  to the reasonings represented in Lemma 7 (Lidskii) \cite{firstab_lit:1Lidskii}, we get
$$
 J^{+}_{\nu} \leq C\|f\|  \!\!\! \int\limits_{ \tilde{R}_{\nu} }^{\tilde{R}_{\nu+1} }  e^{- t \mathrm{Re }\lambda^{s}}   |d   \lambda|\leq C \|f\|   \!\!\!\int\limits_{ \tilde{R}_{\nu} }^{\tilde{R}_{\nu+1} } e^{-t |\lambda| ^{s} \sin     s\delta}   |d   \lambda|.
 $$
 Using integration by parts formulae, we can easily obtain the following estimate
$$
 J^{+}_{\nu} \leq C \|f\| \frac{ e^{-\tau \tilde{R}_{\nu}^{s}}\tilde{R}_{\nu}  +e^{-\tau \tilde{R}_{\nu+1}^{s} }\tilde{R}_{\nu+1}  }{\tau^{s+1}},
$$
where $\tau:=t\sin     s \delta.$
Thus, we have come to the relation
$$
 I_{ k,\nu}:=\left\| \,\int\limits_{\vartheta_{\nu}}  e^{- \lambda^{s}  t}B_{k}\left(I-\lambda B_{k}\right)^{-1}f d\lambda \right\|\leq C \|f\|\left\{e^{\tilde{R}_{\nu}^{s}\{\tilde{\gamma}  (\tilde{R}_{\nu})-\tau\} }+\tau ^{-s-1}e^{-\tau \tilde{R}_{\nu}^{s}}\tilde{R}_{\nu}\right\} .
$$
Hence,   we get
$$
\sum_{k=0}^{\infty} \sum_{\nu\geq k^{1/\eta} } \left\| \,\oint\limits_{\vartheta_{\nu}}  e^{- \lambda^{s}  t}B_{k}\left(I-\lambda B_{k}\right)^{-1}f d\lambda \right\|\leq
$$
$$
\leq C\left\{\sum_{k=0}^{\infty}\sum_{\nu\geq k^{1/\eta} } e^{\{\tilde{\gamma}  (\tilde{R}_{\nu})-\tau\}\tilde{R}_{\nu}^{s} }+\tau ^{-s-1}\sum_{k=0}^{\infty}\sum_{\nu\geq k^{1/\eta} }
e^{-\tau \tilde{R}_{\nu}^{s}}\tilde{R}_{\nu}\right\}.
$$
  Consider the following decomposition
$$
 \sum_{k=0}^{\infty} \sum_{\nu\geq k^{1/\eta} } e^{\{\tilde{\gamma}  (\tilde{R}_{\nu})-\tau\}\tilde{R}_{\nu}^{s} }=
 \sum_{k=0}^{m} \sum_{\nu\geq k^{1/\eta}}  e^{\{\tilde{\gamma}  (\tilde{R}_{\nu})-\tau\}\tilde{R}_{\nu}^{s} }+ \sum_{k=m+1} ^{\infty}\sum_{\nu\geq k^{1/\eta} } e^{\{\tilde{\gamma}  (\tilde{R}_{\nu})-\tau\}\tilde{R}_{\nu}^{s} }=I_{1}+I_{2},\;m=2,3,...\,.
$$
It is clear that
$$
I_{1}\leq m \sum_{\nu=1 }^{\infty}e^{\{\tilde{\gamma}  (\tilde{R}_{\nu})-\tau\}\tilde{R}_{\nu}^{s} }<\infty,
$$
the latter series is convergent since $\tilde{\gamma}  (\tilde{R}_{\nu})\rightarrow 0,\,\nu\rightarrow\infty.$  We can establish  easily that   for arbitrary $\tau>0$ and arbitrary small $\varepsilon>0$ there exists $m$ such that
$$
I_{2}\leq  \sum_{k= m+1}^{\infty} \sum_{\nu\geq k^{1/\eta} }e^{   -M\tilde{R}_{\nu}^{s} },\, \tau-\varepsilon<M<\tau.
$$
Taking in account the facts $\tilde{R}_{\nu}>|\lambda_{N_{\nu}}|> C N^{1/\sigma}_{\nu}>C \nu^{\gamma/\sigma},\,s>\sigma(1-1/\gamma)$ (more detailed see Corollary \ref{C4.1}),   we get
\begin{equation}\label{4.39}
I_{2}\leq \sum_{k= m+1}^{\infty} \sum_{\nu\geq k^{1/\eta} }e^{    -M|\lambda_{N_{\nu}}|^{s} }\leq \sum_{k= m+1}^{\infty} \sum_{\nu\geq k^{1/\eta} }e^{    -M \nu^{\gamma s /\sigma} }
\leq\sum_{k= m+1}^{\infty} \sum_{\nu\geq k^{1/\eta} }e^{     -M \nu^{ \gamma-1} }\leq \sum_{k= m+1}^{\infty} \sum_{\nu\geq k^{1/\eta} }e^{     -M \nu  }.
\end{equation}
The latter relation holds since $\gamma\geq2.$ To estimate it we need the following detailed observation. It is clear that in order to estimate the number of solutions of the equation $[k^{1/\eta}]=C$ with respect to $k\in \mathbb{N},$ we should estimate the difference $m_{2}-m_{1}.$ Here we denote    by $m_{1},m_{2}\in \mathbb{N}$ the numbers corresponding to $k$ so that $m^{1/\eta}_{1} \in \mathbb{N},\, m_{2}:=(m^{1/\eta}_{1}+1)^{\eta},\;m^{1/\eta}_{1}\leq k^{1/\eta}\leq m^{1/\eta}_{2}.$  Using the fact $m_{1}=k_{1}^{\eta},\,k_{1}\in \mathbb{N},$ we get
$$
m_{2}-m_{1}=(k_{1}+1)^{ \eta}- k_{1} ^{ \eta}\leq  \eta  k^{ \eta-1}_{1}\left(1+1/k_{1}\right)^{ \eta-1}\leq Ck^{ \eta-1}_{1}=Cm_{1}^{1-1/\eta}.
$$
Therefore
\begin{equation}\label{4.40}
\sum_{k= m+1}^{\infty} \sum_{\nu\geq k^{1/\eta} }e^{     -M \nu  }\leq C \sum_{k\in \mathfrak{P}} k^{1-1/\eta}\sum_{\nu= k^{1/\eta} }^{\infty}e^{     -M \nu  },\,\mathfrak{P}:=\{k: k=j^{ \eta},\,j\in \mathbb{N} \}.
\end{equation}
Using simple formulas for geometrical progression, we get
$$
I_{2}\leq C  \sum_{k=1}^{\infty}  k^{1-1/\eta}
\sum_{\nu= k }^{\infty}e^{     -M \nu  }
\leq \frac{e^{M}}{e^{M}-1}\sum_{k=1}^{\infty} k^{1-1/\eta} e^{     -M k   } <\infty.
$$
The fact that the following  series  is convergent is obvious due to the obtained above  estimate from the bellow for   $\tilde{R}_{\nu},$   analogously to \eqref{4.39}, \eqref{4.40}, we get  $$
\sum_{k=0}^{\infty}\sum_{\nu\geq k^{1/\eta} }
e^{-\tau \tilde{R}_{\nu}^{s}}\tilde{R}_{\nu}\leq C \sum_{k=0}^{\infty}\sum_{\nu\geq k^{1/\eta} }
e^{-M\tilde{R}_{\nu}^{s}}\leq
$$
$$
 \leq C \sum_{k=0}^{\infty}\sum_{\nu\geq k^{1/\eta} }
e^{-M|\lambda_{N_{\nu}}|^{s}} \leq C \sum_{k=0}^{\infty}\sum_{\nu\geq k^{1/\eta} }
e^{-M\nu}  \leq C \sum_{k\in \mathfrak{P}} k^{1-1/\eta}\sum_{\nu= k^{1/\eta} }^{\infty}e^{     -M \nu  }\leq
$$
$$
\leq C  \sum_{k=1}^{\infty}  k^{1-1/\eta}
\sum_{\nu= k }^{\infty}e^{     -M \nu  }
\leq \frac{e^{M}}{e^{M}-1}\sum_{k=1}^{\infty} k^{1-1/\eta} e^{     -M k   } <\infty.
$$
 Thus, we have proved that the series \eqref{4.35} is convergent. Therefore

\begin{equation}\label{4.41}
-\frac{1}{2\pi i}\oint\limits_{\vartheta }  e^{- \lambda^{s}  t}B \left(I-\lambda B \right)^{-1}f d\lambda=\sum\limits_{\nu=0}^{\infty}\sum\limits_{k=0}^{\nu^{\eta}}\sum\limits_{\tilde{q}=N_{k\nu}+1}^{N_{k\nu+1}} P_{ q }(s,t)f= \sum\limits_{k=0}^{\infty}  P^{(k)}(s,t)f,
$$
$$
 s>\rho(1-1/\gamma).
\end{equation}
Now, we have in the reminder the proof of the statement
$$
 \sum\limits_{k=0}^{\infty}  P^{(k)}(s,t)f \rightarrow f,\,t\rightarrow 0.
$$
 Using relation \eqref{4.41}, we can claim that  this fact has been  established by Lidskii V.B. in Lemma 5 \cite{firstab_lit:1Lidskii} in the case $f\in \mathrm{R}(B).$ However, the proof corresponding to the case $f\in \mathfrak{H}$ is represented in \cite{firstab_lit:3Agranovich1999}, in this case we should use the assumption  that $B$ is invertible $B^{-1}=W,$ it follows that $(W-\lambda I)^{-1}=B(I-\lambda B)^{-1}.$  The scheme of reasonings is also  represented in \cite{firstab_lit:Shkalikov A.}   Theorem 5.1. If we compare the conditions  of Theorem 5.1  \cite{firstab_lit:Shkalikov A.}  with   the theorem conditions  we will see that the following assumption  is required
 $$
 \|(W-\lambda I)^{-1}\|\leq C|\lambda|^{-1},
 $$
 where $\lambda$ belongs to $\{z\in \mathbb{C}:\,\mathrm{arg} z=\psi\},\,\theta<|\psi|<\pi/2 $ - the ray having the origin at the point zero and not belonging to  the sector $\mathfrak{L}_{0}(\theta).$
However,  it holds due to Theorem 3.2   \cite[p.268]{firstab_lit:kato1980}, since $\Theta(W)\subset \mathfrak{L}_{0}(\theta)$ and as a result
$$
 \|(W-\lambda I)^{-1}\|\leq \frac{1}{\mathrm{dist}(\lambda,\overline{\Theta(W)})} \leq\frac{1}{\mathrm{dist}(\lambda,\mathfrak{L}_{0}(\theta))}=  \frac{1}{\sin(|\psi|-\theta)|\lambda|},\;\lambda\in\{z\in \mathbb{C}:\,\mathrm{arg} z=\psi\}.
$$
 The proof is complete.

\end{proof}

\section{Remarks}

In this chapter, we  formulated the sufficient conditions of the Abel-Lidskii  basis property  for a sectorial non-selfadjoint operator of the special type. Having studied  such an operator class, we strengthened  the conditions  regarding  the semi-angle of the sector  and  weakened  a great deal conditions regarding the involved parameters. Thus,  we clarified   the results  by  Lidskii devoted to the decomposition on the root vector system of the non-selfadjoint operator. We used  a technique of the entire function theory and introduced  the so-called  Schatten-von Neumann class of the convergence  exponent. Having considered  strictly accretive operators satisfying special conditions formulated in terms of the norm and used  a  sequence of contours of the power type,  we invented  a peculiar  method how to calculate  a contour integral involved in the problem.

\chapter{Functional calculus of non-selfadjoint operators}\label{Ch5}

\section[ \hfil
  Operators with the asymptotics more subtle]{Operators with the asymptotics more subtle  than  one of the power type}

\subsection {Preliminaries and  prerequisites}

In this paragraph, we aim to produce an example of an operator which real part or Hermitian component if it is defined has more subtle asymptotics than one of the power type. Observe the following condition
\begin{equation}\label{5.1}
 (\ln^{1+\kappa}x)'_{\lambda_{n}(H)}  =o(  n^{-\kappa}   ),\; \kappa>0,
\end{equation}
where $H$  is  the  real part of the operator, we   consider a case when $H$  is an operator with a discrete spectrum.
In the paper \cite{firstab_lit:1kukushkin2021}, there was    considered an example of the sequence of the eigenvalues   that satisfy  condition \eqref{5.1} and at the same time the following relation holds
\begin{equation}\label{5.2}
\forall \varepsilon>0:\; n^{-\kappa-\varepsilon}< \frac{1}{\lambda_{n}(H)}  <\frac{C}{n^{\kappa}\ln^{\kappa} \lambda_{n}(H)}.
\end{equation}
 Here, we should recall that the operator order was   defined in \cite{firstab_lit:Shkalikov A.}   as a value $\mu>0$ so that $s_{n}(R_{H})\leq C n^{-\mu},$  we see that this definition is rather vague for we are compelled to consider $\inf \mu$ to obtain the correct information.   Thus,  we can contemplate   that  the notion of the order  in application to the operator $H $ satisfying condition \eqref{5.2}  as well as the asymptotics of the power type are spoiled since $s_{n}(R_{H}) = \lambda^{-1}_{n}(H).$ However, this kind of asymptototics \eqref{5.2} allows us to deploy fully the technicalities given by the Fredholm determinant.
Ostensibly, a remarkable    question appears  "Whether there exists an operator defined analytically which real part satisfies the condition \eqref{5.1}"?  It will be a crucial point of our narrative and we approach it from several  points of view.
Here,  we demonstrate the mentioned above example having drawn  the reader attention to the fact  that   we lift restrictions on $\kappa$ made in \cite{firstab_lit:1kukushkin2021} dictated by technicalities.

\begin{ex}\label{E5.1} Here we   produce an example of the sequence   $\{\lambda_{n} \}_{1}^{\infty}$  that satisfies   condition \eqref{5.1}, whereas
 $$
\sum\limits_{n=1}^{\infty}\frac{1}{|\lambda_{n}|^{1/\kappa}}=\infty.
$$
\end{ex}
Consider a sequence $\lambda_{n}=n^{\kappa}\ln^{\kappa} (n+q) \cdot \ln^{\kappa}\ln (n+q),\,q>e^{e}-1,\;n=1,2,...,\,.$  Using the integral test for convergence, we can easily see that the last series is divergent. At the same time substituting, we get
$$
\frac{\ln^{\kappa}\lambda_{n}}{\lambda_{n}} \leq
 \frac{ C\ln^{\kappa}(n+q)  }{n^{\kappa}\ln^{\kappa} (n+q) \cdot \ln^{\kappa}\ln (n+q)}= \frac{ C }{n^{\kappa}   \cdot \ln^{\kappa}\ln (n+q)},
$$
what gives us the fulfilment of the  condition.
This gives us the fact
$$
 H\bar{\in}\mathfrak{S}_{\kappa},\;H\in \mathfrak{S}_{p},\,\inf p=\kappa.
$$

\subsection{Operator function}

The following consideration  are not being reduced to a trivial finite-dimensional case since under assumptions  H1,H2  in consequence with Theorem \ref{T2.6} statement $(\mathbf{C})$ the operator $W$ has an infinite set of eigenvectors. It becomes clear if we notice that   since the algebraic multiplicities are finite-dimensional and the system of the root vectors is complete in $\mathfrak{H},$ then the set of the eigenvalues is infinite. Thus,   conditions   H1,H2 give us an opportunity  to avoid a trivial case. However, we can assume directly that the system of the eigenvectors is infinite    without worrying to lose  generality of reasonings. Bellow, we consider a sector $\mathfrak{L}_{0}(\theta_{0},\theta_{1}):=\{z\in \mathbb{C},\, \theta_{0}\leq arg z \leq\theta_{1}\},\,-\pi<\theta_{0}<\theta_{1}<\pi$ and use a short-hand notation $\mathfrak{L}_{0}(\theta):=\mathfrak{L}_{0}(-\theta,\theta).$  Although the reasonings represented bellow cover the case of a compact operator $B: \mathfrak{H}\rightarrow \mathfrak{H}$ such that
$
 \Theta(B) \subset \mathfrak{L}_{0}(\theta_{0},\theta_{1} ),
$
we assume that
$
 \Theta(B) \subset \mathfrak{L}_{0}(\theta ),
$
and   put the following contour   in correspondence to the operator
\begin{equation*}
\vartheta(B):=\left\{\lambda:\;|\lambda|=r>0,\, -\theta \leq\mathrm{arg} \lambda \leq \theta \right\}\cup\left\{\lambda:\;|\lambda|>r,\;
  \mathrm{arg} \lambda =-\theta ,\,\mathrm{arg} \lambda =\theta \right\},
\end{equation*}
where   the number $r$ is chosen so that the operator  $ (I-\lambda B)^{-1} $ is regular within the corresponding closed circle, more detailed see Chapter \ref{Ch4}.
  Assume that generally $\varphi$ is a function of the complex variable and
define an operator function as follows
$$
\frac{1}{2 \pi i}\int\limits_{\vartheta(B)}\varphi(\lambda) e^{-\varphi^{\alpha}(\lambda)  t} B(I-\lambda B)^{-1}fd\lambda= \varphi(W)u(t),
$$
where
$$
u(t)=\frac{1}{2 \pi i}\int\limits_{\vartheta(B)}e^{-\varphi^{\alpha}(\lambda) t}B(I-\lambda B)^{-1}fd\lambda,\,f\in \mathrm{D}(\varphi),
$$
the latter symbol denotes   a subset of the Hilbert space on which the given above integral constructions exist. The operator $W$ is called the operator argument. It is clear that the latter issue depends on both the properties of the operator argument and the properties of the operator function. Bellow, we produce sufficient conditions under which being imposed the operator function exists.

It is remarkable that   the given above definition corresponds to the closure of the operator function considered in   Lemma 3 \cite{firstab_lit(frac2023)}.\\

\noindent $(\mathrm{H}\mathrm{I})$ The operator $B$ is compact, $ \Theta(B)\subset \mathfrak{L}_{0}(\theta ),$   the entire function $\varphi$ of the order less than a half  maps the sector $\mathfrak{L}_{0}(\theta )$ into the sector $\mathfrak{L}_{0}(\varpi),\,\varpi<\pi/2\alpha,\,\alpha>0,$  its zeros with a sufficiently large absolute value   do not belong to the sector $\mathfrak{L}_{0}(\theta ),$ in the case $\alpha \neq  [\alpha].$

\begin{lem}\label{L5.1}  Assume that the condition $(\mathrm{H}\mathrm{I})$ holds,       the entire function $\varphi$ is of the order less than a half.  Then the following relation holds
\begin{equation}\label{5.3}
 \int\limits_{\vartheta(B)}\varphi(\lambda) e^{-\varphi^{\alpha}(\lambda)  t} B(I-\lambda B)^{-1}fd\lambda= \varphi(W)\!\!\!\int\limits_{\vartheta(B)}e^{-\varphi^{\alpha}(\lambda) t}B(I-\lambda B)^{-1}fd\lambda,\,f\in \mathrm{D}(W^{n}),\,\forall n\in \mathbb{N},
\end{equation}
moreover
\begin{equation}\label{5.4}
\lim\limits_{t\rightarrow+0}\frac{1}{2 \pi i}\int\limits_{\vartheta(B)}e^{-\varphi^{\alpha}(\lambda) t}B(I-\lambda B)^{-1}fd\lambda=f,\,f\in \mathrm{D}(W),
\end{equation}
where
$$
W=B^{-1},\,\varphi(z)=\sum\limits_{k=0}^{\infty}c_{k}z^{k},\;\varphi (W)=\sum\limits_{k=0}^{\infty}c_{k}W^{k},
$$
the latter series is assumed to be convergent pointwise in the sense of the norm of the Hilbert space.
\end{lem}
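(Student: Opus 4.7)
The plan is to exploit $R_W(\lambda) := B(I-\lambda B)^{-1} = (W-\lambda I)^{-1}$ so that (5.3) becomes the assertion that $\varphi(W)$ commutes with a contour integral, while (5.4) reduces to the exact analogue of Lemma \ref{L4.12} with the scalar symbol $\lambda^{\alpha}$ replaced by $\varphi^{\alpha}(\lambda)$.

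First, I would iterate $W R_W(\lambda) f = f + \lambda R_W(\lambda) f$ to obtain, for $f \in \mathrm{D}(W^k)$,
\[
W^{k} R_W(\lambda) f \;=\; \sum_{j=0}^{k-1} \lambda^{k-1-j}\, W^{j} f \;+\; \lambda^{k} R_W(\lambda) f .
\]
Multiplying by $c_{k}$ and summing, which is legitimate because $f \in \bigcap_{n}\mathrm{D}(W^{n})$ and $\varphi(W)f = \sum c_{k} W^{k} f$ converges pointwise, yields the key decomposition
\[
\varphi(\lambda) R_W(\lambda) f \;=\; \varphi(W) R_W(\lambda) f \;-\; Q_{\varphi}(\lambda) f,
\qquad Q_{\varphi}(\lambda) f := \sum_{k=1}^{\infty} c_{k}\sum_{j=0}^{k-1} \lambda^{j}\, W^{k-1-j} f,
\]
in which $Q_{\varphi}(\cdot)\,f$ is an $\mathfrak{H}$-valued entire function of order bounded by that of $\varphi$, hence below $1/2$. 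Identity (5.3) then follows once I establish $\int_{\vartheta(B)} e^{-\varphi^{\alpha}(\lambda) t}\, Q_{\varphi}(\lambda) f\, d\lambda = 0$ and justify commuting $\varphi(W)$ with the remaining integral. The latter I would first verify for the polynomial truncations $\sum_{k=0}^{N} c_{k} W^{k}$ (each $W^{k}$ is bounded on the range of $B^{N+1}$), and then extend to $\varphi(W)$ by the assumed pointwise convergence.

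For (5.4), following the Lemma \ref{L4.12} strategy, I would write $f = Bv$ with $v := Wf \in \mathfrak{H}$, so that $R_W(\lambda) f = B^{2}(I-\lambda B)^{-1} v$, and apply the purely algebraic identity
\[
B^{2}(I-\lambda B)^{-1} \;=\; \lambda^{-2}\bigl[(I-\lambda B)^{-1} - (I+\lambda B)\bigr]
\]
to split the integral into $I_{1}(t)+I_{2}(t)$. For $I_{1}(t)$ the bound $\|(I-\lambda B)^{-1}\| \leq C$ on $\vartheta(B)$ (Lemma \ref{L4.7} on the rays, uniform on the inner arc) together with $|e^{-\varphi^{\alpha}(\lambda) t}| \leq 1$ supplies the $t$-uniform majorant $C/|\lambda|^{2}$, so dominated convergence gives $I_{1}(t) \to \frac{1}{2\pi i}\int \lambda^{-2}(I-\lambda B)^{-1} v\, d\lambda$, which closing the contour in the exterior and taking the residue at $\lambda=0$ identifies with $Bv = f$. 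For $I_{2}(t)$, analyticity outside $\vartheta(B)$ together with decay of $e^{-\varphi^{\alpha}(\lambda) t}$ on large arcs lets me close the contour and conclude $I_{2}(t) \equiv 0$ for every $t>0$.

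The principal obstacle in both arguments is the same: producing honest decay of $e^{-\varphi^{\alpha}(\lambda) t}$ on arcs that genuinely reach infinity. The sectorial image condition $\varphi(\mathfrak{L}_{0}(\theta)) \subset \mathfrak{L}_{0}(\varpi)$ with $\varpi < \pi/(2\alpha)$ only yields $\mathrm{Re}\,\varphi^{\alpha}(\lambda) \geq \cos(\varpi\alpha)\,|\varphi(\lambda)|^{\alpha}$, which is useless without a lower bound on $|\varphi(\lambda)|$. The hypothesis that only finitely many zeros of $\varphi$ lie in $\mathfrak{L}_{0}(\theta)$, combined with $\varrho(\varphi)<1/2$, invites Theorem \ref{T4.3} (minimum modulus theorem), which delivers the required lower bound but only along a sparse sequence $R_{n}\to\infty$ with $m_{\varphi}(R_{n}) > [M_{\varphi}(R_{n})]^{\cos\pi\varrho-\varepsilon}$. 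Consequently every contour-closing step must be performed along this subsequence: replace $\vartheta(B)$ by $\vartheta_{R_{n}}(B)$, estimate the arc integral, and only then pass $n\to\infty$, taking care of orientation conventions so that the residue at $\lambda=0$ emerges with the correct sign. A secondary but delicate point is to confirm that $\|Q_{\varphi}(\lambda) f\|_{\mathfrak{H}}$ really has order no larger than $\varrho(\varphi)$, which hinges on the growth of $\|W^{k}f\|_{\mathfrak{H}}$ implicit in the pointwise convergence of $\varphi(W)f$.
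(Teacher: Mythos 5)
Your proposal is correct and follows essentially the same route as the paper: the resolvent iteration identity (equivalently $\lambda^{k}B^{k}(I-\lambda B)^{-1}=(I-\lambda B)^{-1}-\sum_{j<k}\lambda^{j}B^{j}$), the vanishing of the scalar contour integrals $\int_{\vartheta(B)}e^{-\varphi^{\alpha}(\lambda)t}\lambda^{k}d\lambda$ obtained by closing the contour along a Wieman subsequence $R_{n}$ where $m_{\varphi}(R_{n})>[M_{\varphi}(R_{n})]^{\cos\pi\varrho-\varepsilon}$, and the splitting $B^{2}(I-\lambda B)^{-1}=\lambda^{-2}[(I-\lambda B)^{-1}-(I+\lambda B)]$ for the $t\rightarrow+0$ limit are exactly the paper's devices. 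The only cosmetic difference is that the paper first interchanges the sum with the integral (justified via the estimate $Ct^{-n}n!$ for each term together with the Taylor-coefficient bound $|c_{n}|<(e\sigma\varrho)^{n/\varrho}n^{-n/\varrho}$) and then kills each polynomial remainder $I_{2n}$ separately, whereas you aggregate the remainders into $Q_{\varphi}(\lambda)f$ before integrating — the issue you flag about the growth of $\|W^{k}f\|_{\mathfrak{H}}$ is thereby avoided in the paper's ordering of the steps.
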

\begin{proof} Firstly, we should note that the conditions imposed upon the order of the function $\varphi$ alow us to claim that the latter integral converges for a fixed value of the parameter $t.$
Let us establish the formula
\begin{equation}\label{5.5}
 \int\limits_{\vartheta(B)}\varphi(\lambda) e^{-\varphi^{\alpha}(\lambda)  t} B(I-\lambda B)^{-1}f d\lambda=\sum\limits_{n=0}^{\infty}c_{n} \!\!
 \int\limits_{\vartheta(B)} e^{-\varphi^{\alpha}(\lambda)  t} \lambda^{n}B(I-\lambda B)^{-1} fd\lambda.
\end{equation}
To prove this fact, we should show  that the following relation holds
\begin{equation}\label{5.6}
 \int\limits_{\vartheta_{j}  (B)}\!\!\varphi(\lambda) e^{-\varphi^{\alpha}(\lambda)  t} B(I-\lambda B)^{-1}f d\lambda=\sum\limits_{n=0}^{\infty}c_{n} \!\! \int\limits_{\vartheta_{j}  (B)} e^{-\varphi^{\alpha}(\lambda)  t} \lambda^{n}B(I-\lambda B)^{-1} fd\lambda,
\end{equation}
where
$$
\vartheta_{j}(B):=\left\{\lambda:\;|\lambda|=r>0,\, \theta_{0} \leq\mathrm{arg} \lambda \leq \theta_{1} \right\}\cup\left\{\lambda:\;r<|\lambda|<r_{j},\,\mathrm{arg} \lambda =\theta_{0}   ,\,\mathrm{arg} \lambda =\theta_{1} \right\},
$$
$r_{j}\uparrow \infty.$
Note that in accordance with  Lemma 6 \cite{firstab_lit:1kukushkin2021}, we get
$$
\|(I-\lambda B)^{-1}\|\leq C,\,r<|\lambda|<r_{j},\,\mathrm{arg} \lambda =\theta_{0}   ,\,\mathrm{arg} \lambda =\theta_{1} .
$$
Using this estimate, we can easily obtain the fact
$$
\sum\limits_{n=0}^{\infty}|c_{n}|    | e^{-\varphi^{\alpha}(\lambda)  t}| |\lambda^{n}|\cdot\|B(I-\lambda B)^{-1} f\|  \leq C\|B\|\cdot \|f\| \sum\limits_{n=0}^{\infty}    |c_{n}|  |\lambda|^{n}      e^{- \mathrm{Re}\varphi^{\alpha}(\lambda)  t},\,\lambda\in \vartheta_{j}(B),
$$
where the latter series is convergent. Therefore, reformulating      the well-known theorem of calculus  on the absolutely convergent series in   terms of the norm, we obtain   \eqref{5.6}. Now, let us show that the series
\begin{equation}\label{5.7}
\sum\limits_{n=0}^{\infty}c_{n}\!\!\!\int\limits_{ \vartheta_{j}  (B)}  e^{-\varphi^{\alpha}(\lambda)  t} \lambda^{n}B(I-\lambda B)^{-1} fd\lambda
\end{equation}
is uniformly convergent with respect to $j\in \mathbb{N}.$
 Using Lemma 1 \cite{firstab_lit(axi2022)}, we get  a trivial inequality
$$
 \left\|\,\int\limits_{\vartheta_{j}  (B)} e^{-\varphi^{\alpha}(\lambda)  t} \lambda^{n}B(I-\lambda B)^{-1} fd\lambda \right\|_{\mathfrak{H}}\leq C\|f\|_{\mathfrak{H}} \!\!\!\int\limits_{\vartheta_{j}  (B)} e^{-\mathrm{Re} \varphi^{\alpha}(\lambda)  t  } |\lambda|^{n} |d\lambda|\leq
 $$
 $$
 \leq C\|f\|_{\mathfrak{H}} \int\limits_{\vartheta_{j}  (B)} e^{-C|\varphi(\lambda)|^{\alpha}t} |\lambda|^{n} |d\lambda|.
 $$
Here, we should note that to obtain the desired result one is satisfied with a rather rough estimate dictated by the estimate obtained in Lemma \ref{L4.5}, we get
$$
 \int\limits_{\vartheta_{j}  (B)} e^{-| \varphi(\lambda)|^{\alpha}  t  } |\lambda|^{n} |d\lambda|\leq    C\int\limits_{r}^{r_{j}} e^{-  x  t  } x^{n}  dx\leq C t^{-n}\Gamma(n+1) .
$$
Thus, we obtain
\begin{equation*}
\left\|\,\int\limits_{ \vartheta_{j}(B)} e^{-\varphi^{\alpha}(\lambda)  t} \lambda^{n}B(I-\lambda B)^{-1} fd\lambda \right\|_{\mathfrak{H}}\leq C t^{-n}n! \,.
\end{equation*}
Using the standard  formula   establishing the estimate for the  Taylor coefficients  of   the entire function, then applying    the Stirling formula, we get
 $$
|c_{n}|<\left(e \sigma \varrho \right)^{n/\varrho} n^{-n/\varrho}< (2\pi)^{1/2\varrho}(\sigma\varrho)^{n/\varrho}\left(\frac{\sqrt{n}}{n!}\right)^{1/\varrho}\!\!\!,
$$
where $0<\sigma<\infty$ is a type of the function $\varphi.$ Thus, we obtain
\begin{equation*}
 \sum\limits_{n=1}^{\infty}|c_{n}|\left\|\,\int\limits_{\vartheta_{j}(B)} e^{-\varphi^{\alpha}(\lambda)  t} \lambda^{n}B(I-\lambda B)^{-1} fd\lambda\right\|\leq  C\sum\limits_{n=1}^{\infty}(\sigma\varrho)^{n/\varrho}t^{-n}(n!)^{1-1/\varrho} n^{1/2\varrho}  .
\end{equation*}
The latter series is convergent for an arbitrary fixed  $t>0,$ what proves the  uniform convergence of the series \eqref{5.7} with respect to $j\,.$ Therefore, reformulating      the well-known theorem of calculus applicably to the norm of the Hilbert space,  taking into accounts the facts
\begin{equation*}
 \int\limits_{\vartheta_{j} (B)}\!\!\varphi(\lambda) e^{-\varphi^{\alpha}(\lambda)  t} B(I-\lambda B)^{-1}f d\lambda \stackrel{\mathfrak{H}}{\longrightarrow} \!\int\limits_{\vartheta   (B)}\!\!\varphi(\lambda) e^{-\varphi^{\alpha}(\lambda)  t} B(I-\lambda B)^{-1}f d\lambda,
 $$
 $$
 \int\limits_{\vartheta_{j}  (B)}\!\! e^{-\varphi^{\alpha}(\lambda)  t} \lambda^{n}B(I-\lambda B)^{-1} fd\lambda\stackrel{\mathfrak{H}}{\longrightarrow}\!\int\limits_{\vartheta   (B)} e^{-\varphi^{\alpha}(\lambda)  t} \lambda^{n}B(I-\lambda B)^{-1} fd\lambda,\;j\rightarrow\infty,
\end{equation*}
we obtain  formula \eqref{5.5}. Further, using the formula
\begin{equation*}
  \lambda^{k} B^{k}(I-\lambda B)^{-1}=(I-\lambda B)^{-1}-(I+\lambda B+...+\lambda^{k-1}B^{k-1}),\;k\in \mathbb{N},
\end{equation*}
taking into account the facts that the operators  $B^{k}$ and $(I-\lambda B)^{-1}$ commute,
we obtain
$$
 \int\limits_{\vartheta(B)}e^{-\varphi^{\alpha}(\lambda)  t}\lambda^{n}B(I-\lambda B)^{-1} fd\lambda=
$$
$$
= \int\limits_{\vartheta(B)}e^{-\varphi^{\alpha}(\lambda)  t}B(I-\lambda B)^{-1}W^{n}fd\lambda- \int\limits_{\vartheta(B)}e^{-\varphi^{\alpha}(\lambda)  t} \sum\limits_{k=0}^{n-1}\lambda^{k}B^{k+1} W^{n}fd\lambda=I_{1}(t)+I_{2}(t).
$$
Since the operators $W^{n}$ and $B(I-\lambda B)^{-1}$ commute,   this fact can be obtained  by direct calculation, we get
$$
I_{1}(t)=W^{n}\!\!\!\int\limits_{\vartheta(B)}e^{-\varphi^{\alpha}(\lambda)  t}B(I-\lambda B)^{-1} fd\lambda.
$$
Consider $I_{2}(t),$ using the technique applied in Lemma 5 \cite{firstab_lit(axi2022)} it is rather reasonable to consider the following representation
\begin{equation*}
 I_{2}(t):=
 -\sum\limits_{k=0}^{n-1 }\beta_{k}(t)B^{k-n+1}  f,\;
   \beta_{k}(t):=\!\!\int\limits_{\vartheta(B)}\!\!e^{-\varphi^{\alpha}(\lambda)t} \lambda^{k}d\lambda.
\end{equation*}
Analogously to the scheme of   reasonings of Lemma 5 \cite{firstab_lit(axi2022)}, we can show that $\beta_{k}(t)=0,$ under  the imposed  condition of the entire function growth regularity. Bellow, we produce a complete reasoning to avoid any kind of   misunderstanding.
Let us show that  $\beta_{k}(t)=0,$  define a contour  $\vartheta_{R}(B):= \mathrm{Fr}\left\{\mathrm{int }\,\vartheta(B) \,\cap \{\lambda:\,r<|\lambda|<R \}\right\}$ and let us prove that there exists such a sequence    $\{R_{n}\}_{1}^{\infty},\,R_{n}\uparrow \infty$ that
\begin{equation}\label{5.8}
  \oint\limits_{\vartheta_{R_{n}}(B)}\!\!\!\!e^{-\varphi^{\alpha}(\lambda)  t} \lambda^{k}d\lambda\rightarrow \beta_{k}(t),\;n\rightarrow \infty.
\end{equation}
 Consider a decomposition of the contour $\vartheta_{R}(B)$ on terms
$
\tilde{\vartheta}_{ R}:=\{\lambda:\,|\lambda|=R,\, \theta_{0}  \leq\mathrm{arg} \lambda  \leq\theta_{1}  \},
$
and
$
\hat{\vartheta}_{R}:= \{\lambda:\,|\lambda|=r,\,\theta_{0}  \leq\mathrm{arg} \lambda  \leq\theta_{1}\}\cup
\{\lambda:\,r<|\lambda|<R,\, \mathrm{arg} \lambda  =\theta_{0} ,\,\mathrm{arg} \lambda  = \theta_{1} \}.
$
We have
$$
 \oint\limits_{\vartheta_{R}(B)}e^{-\varphi^{\alpha}(\lambda)  t} \lambda^{k}d\lambda=
 \int\limits_{\tilde{\vartheta}_{ R}}e^{-\varphi^{\alpha}(\lambda)  t} \lambda^{k}d\lambda+
  \int\limits_{\hat{\vartheta}_{R}}e^{-\varphi^{\alpha}(\lambda)  t} \lambda^{k}d\lambda.
$$
Having noticed that the integral at the left-hand side of the last relation   equals to zero, since the   function under the integral is analytic   inside the contour, we  come to the conclusion that to obtain the desired result it suffices to show that
\begin{equation}\label{5.9}
 \int\limits_{\tilde{\vartheta}_{ R_{n}}}e^{-\varphi^{\alpha}(\lambda)  t} \lambda^{k}d\lambda\rightarrow 0,
\end{equation}
where     $\{R_{n}\}_{1}^{\infty},\,R_{n}\uparrow \infty.$
We have
\begin{equation*}
  \left|\,\int\limits_{\tilde{\vartheta}_{ R}}e^{-\varphi^{\alpha}(\lambda)  t}\lambda^{k}    d \lambda\,\right| \leq  R^{k}\int\limits_{\tilde{\vartheta}_{ R}}|e^{-\varphi^{\alpha}(\lambda)  t}| |d \lambda|\leq R^{k+1}\int\limits_{ \theta_{0} }^{\theta_{1}}  e^{-\mathrm{Re} \varphi^{\alpha}(\lambda)  t  } d \,\mathrm{arg} \lambda.
\end{equation*}
Consider a value   $\mathrm{Re}\,\varphi^{\alpha}(\lambda),\, \lambda\in  \tilde{\vartheta}_{ R}$  for a sufficiently large value $R.$
Using the condition imposed upon the order of the entire function and  applying  the Wieman theorem  (Theorem 30,  \S 18, Chapter I \cite{firstab_lit:Eb. Levin}),  we can claim that there exists such a sequence $\{R_{n}\}_{1}^{\infty},\,R_{n}\uparrow \infty$ that
\begin{equation*}
\forall \varepsilon>0,\,\exists N(\varepsilon):\,e^{- C|\varphi(\lambda )|^{\alpha}t}\leq e^{- C m^{\alpha}_{\varphi}(R_{n})t}\leq e^{- C t[M_{\varphi}(R_{n})]^{(\cos \pi \varrho-\varepsilon)\alpha}},\,\lambda \in \tilde{\vartheta}_{ R_{n}},\,n>N(\varepsilon),
\end{equation*}
where $\varrho$ is the order of the entire  function $\varphi.$ Applying this estimate, we obtain
$$
\int\limits_{ \theta_{0} }^{\theta_{1}}  e^{-\mathrm{Re} \varphi^{\alpha}(\lambda)  t  } d \,\mathrm{arg} \lambda \leq
  \int\limits_{ \theta_{0} }^{\theta_{1}}  e^{- C t |\varphi (\lambda)|^{\alpha} }  d \,\mathrm{arg} \lambda\leq   e^{- C t[M_{\varphi}(R_{n})]^{(\cos \pi \varrho-\varepsilon)\alpha}}\!\!\!\int\limits_{ \theta_{0} }^{\theta_{1}}    d \,\mathrm{arg} \lambda.
$$
The latter estimate gives us \eqref{5.9} from what follows \eqref{5.8}. Therefore $\beta_{k}(t)=0,$ hence $I_{2}(t)=0$ and we get
$$
 \int\limits_{\vartheta(B)}e^{-\varphi^{\alpha}(\lambda)  t}\lambda^{n}B(I-\lambda B)^{-1} fd\lambda=W^{n}\!\!\!\!\int\limits_{\vartheta(B)}e^{-\varphi^{\alpha}(\lambda) t}B(I-\lambda B)^{-1}fd\lambda.
$$
Substituting the latter relation into the formula \eqref{5.5}, we obtain the first statement of the lemma.

The scheme of the proof corresponding to the second statement  is absolutely analogous to the one presented in Lemma 4 \cite{firstab_lit(axi2022)}, we should just use Lemma \ref{L4.5} providing the estimates along the sides of the contour. Thus, the completion of the reasonings is due to the technical repetition of the Lemma 4 \cite{firstab_lit(axi2022)} reasonings, we left it to the reader.
\end{proof}
\begin{remark}\label{R5.1} Note that the lemma conditions guarantees the inclusion
$$
\mathrm{D}^{\infty}(W)\subset \mathrm{D}(\varphi),
$$
where
$$
\mathrm{D}^{\infty}(W):=\left\{ f\in \mathfrak{H}:\;f\in \mathrm{D}(W^{n}),\,n\in \mathbb{N}  \right\}.
$$
However, the hypotheses $ \mathrm{D}(\varphi)=\mathrm{D}^{\infty}(W)$ requires additional consideration. This is why for the sake of the simplicity and the reader convenience we restrict our reasonings by the latter equality put it as an artificial assumption.
\end{remark}

Choosing $\alpha=1$ and taking into account the obvious relation
$$
B(I-\lambda B)^{-1}=R_{W}(\lambda)(I-\lambda B)WB(I-\lambda B)^{-1}=R_{W}(\lambda),
$$ we get
\begin{equation}\label{5.10}
 \frac{1}{2 \pi i}\int\limits_{\vartheta(B)}e^{-\varphi  (\lambda) t}\varphi (\lambda)R_{W}(\lambda)f d\lambda= \varphi (W)\frac{1}{2 \pi i}\int\limits_{\vartheta(B)}e^{-\varphi  (\lambda) t} R_{W}(\lambda)f d\lambda,\;f\in \mathrm{D}(W^{n}),\,n\in \mathbb{N}.
\end{equation}
\begin{equation}\label{5.11}
\lim\limits_{t\rightarrow+0}\frac{1}{2 \pi i}\int\limits_{\vartheta(B)}e^{-\varphi  (\lambda) t} R_{W}(\lambda)fd\lambda=f,\,f\in \mathrm{D}(W).
\end{equation}
Consider a set
$$
\mathrm{D}_{t}(W):=\left\{f(t)=\frac{1}{2 \pi i}\int\limits_{\vartheta(B)}e^{-\varphi (\lambda) t}R_{W}(\lambda)fd\lambda,\,f\in \mathrm{D}(W)\right\},
$$
in accordance with relation \eqref{5.11}, we have that the set $\mathrm{D}_{t}(W)$ is dense in $\mathrm{D}(W).$ Obviously, it is natural to extend the  operator function $\varphi (W)$ to the closure  of the  the subset  of $\mathrm{D}_{t}(W),$ however there are some difficulties that may prevent the idea of the operator closedness   on the set $\mathrm{D}_{t}(W).$ At the same time, we can prove an analog of  closedness that is in the following. Assume that there exist  simultaneous   limits  $u^{(j)}_{k}(t)\rightarrow u^{(0)},\; \varphi(W)u^{(j)}_{k}(t)\rightarrow u^{(j)},\,k\rightarrow  \infty,\,t\rightarrow +0,\,j=1,2,$  then $u^{(1)}=u^{(2)}.$  Note that in accordance with \eqref{5.11}, for each $k\in \mathbb{N},$ we get $u^{(j)}_{k}(t)\rightarrow u^{(j)}_{k},\,t\rightarrow+0.$ Applying  the  theorem which gives the connection between simultaneous limits and repeated limits, we get $u^{(j)}_{k}\rightarrow u^{(0)},\,k\rightarrow \infty.$ Using   the simple estimating, we get
$$
 \left\|\, \int\limits_{\vartheta(B)}\varphi(\lambda)e^{-\varphi  (\lambda) t} R_{W}(\lambda) \left\{u^{(j)}_{k}-u^{(0)}\right\}d\lambda\right\|_{\mathfrak{H}}\leq C\left\|u^{(j)}_{k}-u^{(0)}\right\|_{\mathfrak{H}} \int\limits_{\vartheta(B)}|\varphi(\lambda)e^{-\varphi  (\lambda) t}|\cdot |d \lambda|\leq C\left\|u^{(j)}_{k}-u^{(0)}\right\|_{\mathfrak{H}}.
$$
Therefore, there exist coincident    limits
$$
  \varphi(W)u^{(j)}_{k}(t) \rightarrow \frac{1}{2\pi i}\int\limits_{\vartheta(B)}\varphi(\lambda)e^{-\lambda t} B(I-\lambda B)^{-1}  u^{(0)}d\lambda,\,k\rightarrow\infty.
$$
Applying the theorem on the connection between simultaneous limits and repeated limits, we get $u^{(1)}=u^{(2)}.$ Thus, we have proved that the operator $\varphi(W)$ is closeable in some sense and naturally come to the extension of the operator  function.
In particular if we have a pair of limits
 $$
\lim\limits_{t\rightarrow+0}\frac{1}{2 \pi i}\int\limits_{\vartheta(B)}e^{-\varphi (\lambda) t} R_{W}(\lambda)f d\lambda=f\in \mathrm{D}(W),\;\lim\limits_{t\rightarrow+0}\frac{1}{2 \pi i}\int\limits_{\vartheta(B)}\varphi (\lambda) e^{-\varphi (\lambda) t}R_{W}(\lambda)f d\lambda=h\in \mathfrak{H},
$$
then in accordance with the above, we have $\varphi (W)f=h.$\\

The following lemma represents conditions upon the function $\varphi$ under which the latter admits the extension.
\begin{lem}\label{L5.2} Suppose  $B$ is a compact  operator,  $\Theta(B)\subset \mathfrak{L}_{0}(\theta),$
\begin{equation}\label{5.12}
\varphi(z)=\sum\limits_{n=-\infty}^{s}c_{n}z^{n},\,z\in  \mathbb{C},\,s\in \mathbb{N},\; \max\limits_{n=0,1,...,s}(|\mathrm{arg} c_{n}|+n\theta)<\pi/2\alpha,\,\alpha\geq1,
\end{equation}
  then
\begin{equation}\label{5.13}
 \frac{1}{2\pi i}\int\limits_{\vartheta(B)}\varphi(\lambda) e^{-\varphi^{\alpha}(\lambda)  t} B(I-\lambda B)^{-1}fd\lambda= \varphi(W)u(t);\;\; \lim\limits_{t\rightarrow +0}\varphi(W)u(t)= \varphi(W)f,
\end{equation}
where
$$
u(t):=\frac{1}{2\pi i}\int\limits_{\vartheta(B)}  e^{-\varphi^{\alpha}(\lambda)  t} B(I-\lambda B)^{-1}fd\lambda,\,f\in \mathrm{D}(W^{s}).
$$
\end{lem}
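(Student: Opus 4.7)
The plan is to imitate the strategy of Lemma \ref{L5.1} after splitting $\varphi$ into its regular part at infinity and its principal part at zero. Write
\[
\varphi(\lambda) = \varphi_{+}(\lambda) + \varphi_{-}(\lambda),\qquad \varphi_{+}(\lambda) := \sum_{n=0}^{s} c_{n}\lambda^{n},\quad \varphi_{-}(\lambda) := \sum_{n=1}^{\infty} c_{-n}\lambda^{-n}.
\]
Since the Laurent expansion is assumed to converge in a neighbourhood of $\vartheta(B)$ (every point of which satisfies $|\lambda|\geq r$), the series $\varphi_{-}$ converges absolutely and uniformly on $\vartheta(B)$. First I would substitute this decomposition into the integral defining $\varphi(W)u(t)$ and interchange summation with integration; for $\varphi_{+}$ this is automatic, whereas for $\varphi_{-}$ the uniform absolute convergence together with the decay of $|e^{-\varphi^{\alpha}(\lambda)t}|$ along the rays $\arg\lambda=\pm\theta$, guaranteed by \eqref{5.12} via $\mathrm{Re}\,\varphi^{\alpha}(\lambda)\geq C|\lambda|^{s\alpha}$ at infinity, supplies the required majorization.

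Next I would perform the algebraic substitutions converting scalar powers of $\lambda$ into operator powers of $W$ or $B$. From the geometric identity $\lambda^{m}B^{m}(I-\lambda B)^{-1} = (I-\lambda B)^{-1} - \sum_{k=0}^{m-1}\lambda^{k}B^{k}$, multiplication by $B$ and the relation $B^{m+1}W^{m}f=Bf$ on $\mathrm{D}(W^{s})$ yield, for $0\leq m\leq s$,
\[
\lambda^{m}B(I-\lambda B)^{-1}f = B(I-\lambda B)^{-1}W^{m}f - \sum_{k=0}^{m-1}\lambda^{k}B^{k+1}W^{m}f,
\]
while a rearrangement of the same identity gives, for $m\geq 1$,
\[
\lambda^{-m}B(I-\lambda B)^{-1}f = B^{m+1}(I-\lambda B)^{-1}f + \sum_{k=1}^{m}\lambda^{-k}B^{m+1-k}f.
\]
Substituting these into the integral and using the commutation $W^{n}B(I-\lambda B)^{-1}=B(I-\lambda B)^{-1}W^{n}$ on $\mathrm{D}(W^{n})$ generates the contributions $\sum_{n=0}^{s}c_{n}W^{n}u(t)$ and $\sum_{n\geq 1}c_{-n}B^{n}u(t)$, whose sum is precisely $\varphi(W)u(t)$; all remaining correction terms appear multiplied by the scalar integrals
\[
\beta_{k}(t) := \int_{\vartheta(B)}\lambda^{k}e^{-\varphi^{\alpha}(\lambda)t}\,d\lambda,\qquad k\in\mathbb{Z}.
\]

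The main obstacle, and the step requiring the most care, is the vanishing $\beta_{k}(t)=0$ for every $k\in\mathbb{Z}$, including negative $k$ where the naive argument in Lemma \ref{L5.1} does not apply verbatim. I would close $\vartheta(B)$ with the arc $\{|\lambda|=R,\,|\arg\lambda|\leq\theta\}$ to form the boundary $\vartheta_{R}(B)$ of the truncated sectorial region $\{r<|\lambda|<R,\,|\arg\lambda|<\theta\}$. Crucially, this region excludes the origin, so the integrand $\lambda^{k}e^{-\varphi^{\alpha}(\lambda)t}$ is holomorphic inside for every $k\in\mathbb{Z}$ (the Laurent expansion of $\varphi$ has its only singularity at $0$), and Cauchy's theorem gives $\oint_{\vartheta_{R}(B)}=0$. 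The outer arc contribution is killed as $R\to\infty$ by the argument condition \eqref{5.12}: since $\varphi(\lambda)\sim c_{s}\lambda^{s}$ and $|\arg c_{s}^{\alpha}\lambda^{s\alpha}|<\pi/2$ on the arc, one obtains $\mathrm{Re}\,\varphi^{\alpha}(\lambda)\geq CR^{s\alpha}$ and hence $|\text{arc}|\leq CR^{k+1}e^{-CR^{s\alpha}t}\to 0$. This forces $\beta_{k}(t)=0$ and collapses the identity to \eqref{5.13}.

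For the limit $\lim_{t\to +0}\varphi(W)u(t)=\varphi(W)f$ I would argue termwise. For $0\leq n\leq s-1$ one has $W^{n}f\in\mathrm{D}(W)$, so applying \eqref{5.4} to $W^{n}f$ in place of $f$ gives $W^{n}u(t)\to W^{n}f$; the negative-power summands $c_{-n}B^{n}u(t)\to c_{-n}B^{n}f$ by continuity of $B^{n}$, and the Laurent tail is controlled uniformly in $t$ via the bound $\sum|c_{-n}|\|B\|^{n}<\infty$. The borderline case $n=s$, where $W^{s}f$ need not lie in $\mathrm{D}(W)$, would be handled through the generalized closability of the operator function observed between Lemmas \ref{L5.1} and \ref{L5.2}: knowing that $u(t)\to f$ and that $\varphi(W)u(t)$ admits a limit computable from the explicit integral forces that limit to equal $\varphi(W)f$.
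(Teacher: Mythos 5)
Your treatment of the first identity in \eqref{5.13} is essentially the paper's own argument: the same splitting of $\varphi$ into the polynomial part $\sum_{n=0}^{s}c_{n}\lambda^{n}$ and the principal part $\sum_{n\geq1}c_{-n}\lambda^{-n}$, the same algebraic identities built on $\lambda^{k}B^{k}(I-\lambda B)^{-1}=(I-\lambda B)^{-1}-\sum_{j<k}\lambda^{j}B^{j}$, the same justification of the term-by-term integration of the principal part via $\sum|c_{-n}|\,\|B\|^{n+1}<\infty$, and the same proof that $\beta_{k}(t)=0$ for all $k\in\mathbb{Z}$ by closing the contour (the truncated sector excludes the origin, so the integrand is holomorphic inside for negative $k$ as well) and killing the outer arc with the estimate $\mathrm{Re}\,\varphi^{\alpha}(\lambda)\geq C|\lambda|^{\alpha s}$ coming from \eqref{5.12}. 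That part of your proposal is sound and matches the paper.

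The gap is in the limit $t\rightarrow+0$. First, you cite \eqref{5.4} verbatim, but \eqref{5.4} is proved under hypothesis $(\mathrm{H}\mathrm{I})$ — $\varphi$ an entire function of order less than one half — and its proof leans on the Wieman theorem along the closing arcs; for the Laurent-type $\varphi$ of this lemma you must instead re-derive the limit relation using the bound \eqref{5.19}, as the Remark following Lemma \ref{L5.3} indicates for the polynomial case. This is repairable but not free. Second, and more seriously, your handling of the top term $n=s$ does not work as stated: the ``generalized closability'' observed between Lemmas \ref{L5.1} and \ref{L5.2} only says that \emph{if} $\varphi(W)u(t)$ converges then the limit is independent of the approximating family, and it is used there to \emph{define} the extension of $\varphi(W)$; it neither establishes that $c_{s}W^{s}u(t)$ converges (which is exactly what is missing, since $W^{s}f$ need not lie in $\mathrm{D}(W)$) nor identifies the limit with the concrete value $\sum_{n\leq s}c_{n}W^{n}f$ asserted in \eqref{5.13}. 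The paper avoids this by rewriting $\lambda^{n}B(I-\lambda B)^{-1}f$ with a $\lambda^{-2}$ prefactor acting on $W^{n+1}f$, so that the resulting improper integral converges absolutely and uniformly in $t$; one may then pass to the limit under the integral and evaluate it as the residue at $\lambda=0$, obtaining $W^{n}f$ directly via \eqref{5.21}. You need either this explicit residue computation or some substitute for the top-order term; without it the second relation in \eqref{5.13} is not proved.
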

\begin{proof}
Consider a decomposition of the  Laurent series on  two terms
$$
\varphi_{1}(z)=\sum\limits_{n=0}^{s}c_{n}z^{n};\;\varphi_{2}(z)=\sum\limits_{n=1}^{\infty}c_{-n}z^{-n}.
$$
Consider an obvious relation
\begin{equation}\label{5.14}
  \lambda^{k} B^{k}(E-\lambda B)^{-1}=(E-\lambda B)^{-1}-(E+\lambda B+...+\lambda^{k-1}B^{k-1}),\;k\in \mathbb{N}.
\end{equation}
It gives us the following representation
\begin{equation}\label{5.15}
\frac{1}{2\pi i}\int\limits_{\vartheta(B)} \lambda^{n} e^{-\varphi^{\alpha}(\lambda) t}B(I-\lambda B)^{-1}fd\lambda= I_{1n}(t)
+I_{2n}(t),\;n\in \mathbb{Z}^{-}\cup \{0,1,...,s\},
\end{equation}
 where
$$
I_{1n}:= \frac{1}{2\pi i}\int\limits_{\vartheta(B)}e^{-\varphi^{\alpha}(\lambda) t} (I-\lambda B)^{-1}W^{n-1}fd\lambda,\;I_{2n}(t):=0,\,n=0,
$$
\begin{equation*}
 I_{2n}(t):=  \left\{ \begin{aligned}
 -\sum\limits_{k=0}^{n-1 }\beta_{k}(t)B^{k-n+1}  f,\;n> 0,\\
 \sum\limits_{k=-1}^{n}\beta_{k}(t)B^{k-n+1}  f  ,\;  n<0\, \\
\end{aligned}
 \right.,  \;\; \beta_{k}(t):=\frac{1}{2\pi i}\int\limits_{\vartheta(B)}e^{-\varphi^{\alpha}(\lambda)t} \lambda^{k}d\lambda.
\end{equation*}
Let us show that  $\beta_{k}(t)=0,$  define a contour  $\vartheta_{R}(B):= \mathrm{Fr}\left\{\mathrm{int }\,\vartheta(B) \,\cap \{\lambda:\,r<|\lambda|<R \}\right\}$ and let us prove that
\begin{equation}\label{5.16}
I_{Rk}(t):= \frac{1}{2\pi i}\oint\limits_{\vartheta_{R}(B)}e^{-\varphi^{\alpha}(\lambda)t} \lambda^{k}d\lambda\rightarrow \beta_{k}(t),\;R\rightarrow \infty.
\end{equation}
Consider a decomposition of the contour $\vartheta_{R}(B)$ on terms   $\tilde{\vartheta}_{ R}:=\{\lambda:\,|\lambda|=R,\,|\mathrm{arg} \lambda |\leq\theta +\varsigma\}$ and
$\hat{\vartheta}_{R}:= \{\lambda:\,|\lambda|=r,\,|\mathrm{arg} \lambda |\leq\theta +\varsigma\}\cup
\{\lambda:\,r<|\lambda|<R,\, \mathrm{arg} \lambda  =\theta +\varsigma\}\cup
\{\lambda:\,r<|\lambda|<R,\, \mathrm{arg} \lambda  =-\theta -\varsigma\}.$
We have
$$
\frac{1}{2\pi i}\oint\limits_{\vartheta_{R}(B)}e^{-\varphi^{\alpha}(\lambda)t} \lambda^{k}d\lambda=
 \frac{1}{2\pi i}\int\limits_{\tilde{\vartheta}_{ R}}e^{-\varphi^{\alpha}(\lambda)t} \lambda^{k}d\lambda+
 \frac{1}{2\pi i}\int\limits_{\hat{\vartheta}_{R}}e^{-\varphi^{\alpha}(\lambda)t} \lambda^{k}d\lambda.
$$
Having noticed that $I_{Rk}(t)=0,$ since the operator function under the integral is analytic inside the contour, we  come to the conclusion that to obtain the desired result, we should show
\begin{equation}\label{5.17}
\frac{1}{2\pi i}\int\limits_{\tilde{\vartheta}_{ R}}e^{-\varphi^{\alpha}(\lambda)t} \lambda^{k}d\lambda\rightarrow 0,\;R\rightarrow \infty.
\end{equation}
We have
\begin{equation*}
  \left|\,\int\limits_{\tilde{\vartheta}_{ R}}e^{-\varphi^{\alpha}(\lambda)t}\lambda^{k}    d \lambda\,\right| \leq \,R^{k}\int\limits_{\tilde{\vartheta}_{ R}}|e^{-\varphi^{\alpha}(\lambda)t}| |d \lambda|\leq R^{k+1}\int\limits_{-\theta-\varsigma}^{\theta+\varsigma}  e^{- t \mathrm{Re}\,\varphi(\lambda) } d \,\mathrm{arg} \lambda.
\end{equation*}
Consider a value   $\mathrm{Re}\,\varphi(\lambda),\, \lambda\in  \tilde{\vartheta}_{ R}$  for a sufficiently large value $R.$ Using the property of the principal part of the Laurent series in is not hard to prove that $\forall\varepsilon>0,\,\exists N(\varepsilon):|\varphi_{2}(\lambda)|<\varepsilon,\,R>N(\varepsilon).$ It follows easily from the condition  \eqref{5.12} that $\mathrm{Re}\,\varphi^{\alpha}_{1}(\lambda)\geq C|\varphi_{1}(\lambda)|^{\alpha},\,\lambda \in \mathfrak{L}_{0}(\theta)$ for a sufficiently large value $R.$  It is clear that
$|\varphi_{1}(\lambda)|\sim |c_{s}| R^{s},\,R\rightarrow\infty.$ Thus, we have
\begin{equation}\label{5.18}
e^{- t \mathrm{Re}\,\varphi^{\alpha}(\lambda) }\leq    e^{- Ct |\varphi(\lambda)|^{\alpha} }\leq   e^{-Ct|\lambda|^{\alpha s}  },\,\lambda\in  \tilde{\vartheta}_{R}.
\end{equation}
Applying this estimate, we obtain
$$
\int\limits_{-\theta-\varsigma}^{\theta+\varsigma}  e^{- t \mathrm{Re}\,\varphi^{\alpha}(\lambda) } d \,\mathrm{arg} \lambda \leq
  \int\limits_{-\theta-\varsigma}^{\theta+\varsigma}  e^{- C t |\varphi(\lambda)|^{\alpha} }  d \,\mathrm{arg} \lambda\leq   e^{- C t  R^{\alpha s} }\int\limits_{-\theta-\varsigma}^{\theta+\varsigma}    d \,\mathrm{arg} \lambda.
$$
The latter estimate gives us \eqref{5.17} from what follows \eqref{5.16}. Therefore $\beta_{k}(t)=0$ and we obtain the fact  $I_{2n}(t)=0.$    Combining the fact of the operator $W$  closedness (see \cite[p.165]{firstab_lit:kato1980} ) with   the definition of the integral in the Riemann sense, we get easily
$$
 W^{n}u(t)=\frac{1}{2\pi i}\int\limits_{\vartheta(B)}e^{-\varphi^{\alpha}(\lambda)t} B(I-\lambda B)^{-1}W^{n}fd\lambda,\,n=0,1,...,s.
$$
Thus, using the formula \eqref{5.15}, we obtain
\begin{equation*}
 \frac{1}{2\pi i}\int\limits_{\vartheta(B)}\varphi_{1}(\lambda) e^{-\varphi^{\alpha}(\lambda)  t} B(I-\lambda B)^{-1}fd\lambda= \varphi_{1}(W)u(t).
\end{equation*}
Consider a principal part of the Laurent series. Using the formula \eqref{5.15},  we get for values $n\in \mathbb{N}$
$$
\frac{1}{2\pi i}\int\limits_{\vartheta(B)} \lambda^{-n} e^{-\varphi^{\alpha}(\lambda)t}B(I-\lambda B)^{-1}fd\lambda =B^{n}u(t).
$$
 Not that  by virtue of a character of the convergence of the series principal part, we have
$$
  \left\|\sum\limits_{n=1}^{\infty}  c_{-n}   e^{-\varphi^{\alpha}(\lambda) t}(I-\lambda B)^{-1}B^{n+1}f\,\right\|_{\mathfrak{H}}\leq C\|f\|_{\mathfrak{H}}\sum\limits_{n=1}^{\infty}  \left|c_{-n}\right| \cdot  \left\|B\right\|_{\mathfrak{H}}^{n+1} <\infty,\;\lambda\in \vartheta(B).
$$
Therefore
$$
 \int\limits_{\vartheta_{j}(B)} \varphi_{2}(\lambda) e^{-\varphi^{\alpha}(\lambda)t}B(I-\lambda B)^{-1}fd\lambda=\sum\limits_{n=1}^{\infty}c_{-n}\!\!   \int\limits_{\vartheta_{j}(B)}  e^{-\varphi^{\alpha}(\lambda) t}(I-\lambda B)^{-1}B^{n+1}fd \lambda,\,j\in \mathbb{N},
$$
where $$
\vartheta_{j}(B):=\left\{\lambda:\;|\lambda|=r>0,\,|\mathrm{arg} \lambda|\leq \theta+\varsigma\right\}\cup\left\{\lambda:\;r<|\lambda|<r_{j},\,r_{j}\uparrow \infty,\; |\mathrm{arg} \lambda|=\theta+\varsigma\right\}.
$$
Analogously to \eqref{5.18}, we can easily get
\begin{equation}\label{5.19}
 e^{-\mathrm{Re}\varphi^{\alpha}(\lambda) t}\leq e^{-C|\varphi(\lambda)|^{\alpha} t}\leq   e^{-C|\lambda|^{\alpha s} t},\,\lambda\in \vartheta(B).
\end{equation}
Applying this estimate, we obtain
$$
\left\|\sum\limits_{n=1}^{\infty}c_{-n}\!\!   \int\limits_{\vartheta_{j}(B)}  e^{-\varphi^{\alpha}(\lambda) t}(I-\lambda B)^{-1}B^{n+1}fd \lambda\right\|_{\mathfrak{H}}\leq C \|f\|_{\mathfrak{H}}\sum\limits_{n=1}^{\infty}    |c_{-n}| \cdot\|B^{n+1}\|\!\int\limits_{\vartheta_{j}(B)}  e^{- C|\lambda|^{s} t} |d \lambda|\leq
$$
$$
 \leq C \|f\|_{\mathfrak{H}}\sum\limits_{n=1}^{\infty}    |c_{-n}| \cdot\|B\|^{n+1}\int\limits_{\vartheta(B)}  e^{-C|\lambda|^{s} t} |d \lambda|<\infty.
$$
Note that the uniform convergence  of the series  in the left-hand side with respect to $j$ follows from the latter estimate.
Reformulating      the well-known theorem of calculus  on the absolutely convergent series in   terms of the norm, we have
\begin{equation}\label{5.20}
\frac{1}{2\pi i}\!\!\int\limits_{\vartheta(B)}\!\! \varphi_{2}(\lambda) e^{-\varphi^{\alpha}(\lambda)t}B(I-\lambda B)^{-1}fd\lambda=\frac{1}{2\pi i}\sum\limits_{n=1}^{\infty} c_{-n}\!\!\!  \int\limits_{\vartheta(B)}  e^{-\varphi^{\alpha}(\lambda) t} (I-\lambda B)^{-1}B^{n+1}fd \lambda=
 \varphi_{2}(W)u(t).
\end{equation}
Thus, we obtain the first relation \eqref{5.13}. Let us establish the second relation \eqref{5.13}. Using the formula \eqref{5.14}, we  obtain
$$
\frac{1}{2\pi i}\int\limits_{\vartheta(B)} \lambda^{n} e^{-\varphi^{\alpha}(\lambda) t}B(I-\lambda B)^{-1}fd\lambda= I_{1n}(t)
+I_{2n}(t),\;n\in \mathbb{Z}^{-}\cup \{0,1,...,s\},
$$
where
$$
I_{1n}(t):= \frac{1}{2\pi i}\int\limits_{\vartheta(B)}e^{-\varphi^{\alpha}(\lambda) t}\lambda^{-2}(I-\lambda B)^{-1}W^{n+1}fd\lambda,\;I_{2n}(t):=0,\,n=-2,
$$
\begin{equation*}
 I_{2n}(t):=  \left\{ \begin{aligned}
 -\sum\limits_{k=-2}^{n-1 }\beta_{k}(t)B^{k-n+1}  f,\;n> -2,\\
 \sum\limits_{k=-3}^{n}\beta_{k}(t)B^{k-n+1}  f  ,\;  n\leq-3\, \\
\end{aligned}
 \right. .
\end{equation*}
Using the proved above fact $\beta_{k}(t)=0,$ we have  $I_{2n}(t)=0.$ Since in consequence of Lemma \ref{L4.7}, inequality \eqref{5.19} for arbitrary $j\in \mathbb{N},f\in \mathrm{D}(W^{s}),$ we have
$$
 e^{-\varphi^{\alpha}(\lambda) t}\lambda^{-2}(I-\lambda B)^{-1}W^{n+1}f\rightarrow \lambda^{-2}(I-\lambda B)^{-1}W^{n+1}f,\,t\rightarrow +0 ,\,\lambda\in \vartheta_{j}(B),
$$
where convergence is uniform   with respect to the variable $\lambda,$    the  improper  integral $I_{1n}(t)$
 is uniformly convergent with respect to the variable $t,$ then
  we get
\begin{equation*}
I_{1n}(t)\rightarrow \frac{1}{2\pi i}\int\limits_{\vartheta(B)} \lambda^{-2}(I-\lambda B)^{-1}W^{n+1}fd\lambda,\,t\rightarrow+0,
\end{equation*}
Note that the last integral can be calculated as a residue, we have
\begin{equation}\label{5.21}
 \frac{1}{2\pi i}\int\limits_{\vartheta(B)} \lambda^{-2}(I-\lambda B)^{-1}W^{n+1}fd\lambda=
  \lim\limits_{\lambda\rightarrow0}\frac{d (I-\lambda B)^{-1} }{d\lambda }W^{n+1}f=W^{n}f,
$$
$$
 n\in \mathbb{Z}^{-}\cup \{0,1,...,s\}.
\end{equation}
It is obvious that using this  formula, we obtain the following relation
\begin{equation*}
\lim\limits_{t\rightarrow+0}\frac{1}{2\pi i}\int\limits_{\vartheta(B)}\varphi_{1}(\lambda) e^{-\varphi^{\alpha}(\lambda) t} B(I-\lambda B)^{-1}fd\lambda= \sum\limits_{n=0}^{s}c_{n}W^{n}f ,\;f\in \mathrm{D}(W^{s}) .
\end{equation*}
Consider a principal part of the Laurent series.
The following reasonings are analogous to the above,   we get
$$
\left\| \sum\limits_{n=1}^{\infty}   c_{-n} \!\!\int\limits_{\vartheta(B)}   e^{-\varphi^{\alpha}(\lambda) t}\lambda^{-2} (I-\lambda B)^{-1}B^{n-1}fd\lambda\right\|\leq C \|f\|_{\mathfrak{H}}\sum\limits_{n=1}^{\infty}    |c_{-n}| \cdot\|B^{n-1}\|\int\limits_{\vartheta(B)} |\lambda|^{-2}  e^{-C|\lambda|^{s} t} |d \lambda|\leq
$$
$$
 \leq C \|f\|_{\mathfrak{H}}\sum\limits_{n=1}^{\infty}    |c_{-n}| \cdot\|B\|^{n-1}\int\limits_{\vartheta(B)}  |\lambda|^{-2} |d \lambda|<\infty.
$$
It gives us  the uniform convergence  of the series with respect to $t$  at the left-hand side of the last relation.
Using the analog of   the well-known theorem of calculus  on the absolutely convergent series, we have
$$
 \sum\limits_{n=1}^{\infty} c_{-n} \!\!  \int\limits_{\vartheta(B)}   e^{-\varphi^{\alpha}(\lambda) t}\lambda^{-2} (I-\lambda B)^{-1}B^{n-1}fd\lambda\rightarrow
  \sum\limits_{n=1}^{\infty} c_{-n}\!\!   \int\limits_{\vartheta(B)}    \lambda^{-2} (I-\lambda B)^{-1}B^{n-1}fd\lambda,\;t\rightarrow+0.
$$
   Taking into account  \eqref{5.20}, \eqref{5.21},  we get
$$
\lim\limits_{t\rightarrow+0}\frac{1}{2\pi i}\int\limits_{\vartheta(B)}\varphi_{2}(\lambda) e^{-\varphi^{\alpha}(\lambda) t} B(I-\lambda B)^{-1}fd\lambda= \sum\limits_{n=1}^{\infty}c_{-n}B^{n}f ,\;f\in \mathfrak{H}.
$$
It is clear that  the second relation \eqref{5.13} holds. The proof is complete.
\end{proof}

\subsection{Convergence with respect to the time variable}

Apparently the  generalization of the Abbell-Lidskii concept  made in this chapter requires study of the convergence of the integral construction with respect to the time variable. We should remaind that it is a  rather essential question in the concept of the summation in the Abel-Lidskii sense that appeals to passing to the limit with respect to the time variable. It is considered in detail in Chapter \ref{Ch4} in the classical case.   Bellow, we  consider a generalized case corresponding to the technicalities being involved by   the operator function concept.

\begin{lem}\label{L5.3} Suppose the operator $B$ satisfies conditions of Lemma \ref{L4.7},  the entire function $\varphi$ of the order less than a half  maps the inside of the contour $\vartheta(B)$ into the sector $\mathfrak{L}_{0}(\varpi),\,\varpi<\pi/2\alpha$  for a sufficiently large value $|z|,\,z\in \mathrm{int}\, \vartheta(B).$ Then the following relation holds
\begin{equation*}
\lim\limits_{t\rightarrow+0}\frac{1}{2 \pi i}\int\limits_{\vartheta(B)}e^{-\varphi^{\alpha}(\lambda) t}B(I-\lambda B)^{-1}fd\lambda=f,\,f\in \mathrm{D}(W).
\end{equation*}
\end{lem}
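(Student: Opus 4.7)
The proof will adapt the strategy of Lemma~\ref{L4.12}, replacing the simple kernel $\lambda^\alpha$ with $\varphi^\alpha(\lambda)$, and verifying that the hypotheses on $\varphi$ provide enough decay on arcs at infinity to justify the contour deformations. For $f\in\mathrm{D}(W)$ set $v:=Wf$, so that $f=Bv$. Using the identity $B^2(I-\lambda B)^{-1}=\lambda^{-2}\{(I-\lambda B)^{-1}-(I+\lambda B)\}$, I would split
$$\frac{1}{2\pi i}\int_{\vartheta(B)}e^{-\varphi^\alpha(\lambda)t}B(I-\lambda B)^{-1}f\,d\lambda = I_1(t)-I_2(t),$$
where $I_1(t)$ and $I_2(t)$ are the integrals of $\lambda^{-2}(I-\lambda B)^{-1}v$ and $\lambda^{-2}(I+\lambda B)v$ against $e^{-\varphi^\alpha(\lambda)t}$ on $\vartheta(B)$. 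The plan is to establish $I_2(t)\equiv 0$ and $\lim_{t\to+0}I_1(t)=f$.

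For $I_1(t)$, Lemma~\ref{L4.7} gives $\|(I-\lambda B)^{-1}\|\leq C$ on $\vartheta(B)$, so the integrand is dominated by $C|\lambda|^{-2}\|v\|$ uniformly in $t$, and the dominated convergence theorem yields
$$\lim_{t\to+0}I_1(t)=\frac{1}{2\pi i}\int_{\vartheta(B)}\lambda^{-2}(I-\lambda B)^{-1}v\,d\lambda.$$
I would then close $\vartheta(B)$ by a large arc on the side opposite to the wedge (that is, inside $|\arg\lambda|>\theta+\varsigma$), where the resolvent is regular. The closed contour bounds a region containing the origin but no characteristic number, and the residue theorem gives the value $2\pi i\,f$ since $\mathrm{Res}_{\lambda=0}\lambda^{-2}(I-\lambda B)^{-1}v=Bv=f$. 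The large-arc contribution vanishes by the Kato estimate $\|(\lambda^{-1}I-B)^{-1}\|\leq|\lambda|/\sin\varsigma$, exactly as in the proof of Lemma~\ref{L4.12}.

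For $I_2(t)$, I would close $\vartheta(B)$ by a large arc \emph{inside} the wedge, so that the integrand $\lambda^{-2}(I+\lambda B)v\,e^{-\varphi^\alpha(\lambda)t}$ is analytic on the enclosed bounded region (no characteristic numbers lie there, and the origin is excluded), whence the closed-contour integral equals zero. To conclude $I_2(t)=0$ it then suffices to show that the arc-at-infinity contribution tends to zero along some sequence of radii. The sectorial hypothesis gives $\mathrm{Re}\,\varphi^\alpha(\lambda)\geq|\varphi(\lambda)|^\alpha\cos(\alpha\varpi)$ on the large arc with $\cos(\alpha\varpi)>0$, and because $\varphi$ has order less than a half, the Wiman theorem (Theorem~\ref{T4.3}) produces a sequence $R_n\uparrow\infty$ on which $m_\varphi(R_n)\to\infty$ rapidly enough that $R_n\,e^{-Ct\,m_\varphi(R_n)^\alpha}\to 0$ for every fixed $t>0$.

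The main obstacle is this last step: making the arc-at-infinity argument work for \emph{every} $t>0$ requires that the lower bound on $|\varphi|^\alpha$ produced by Wiman's theorem dominates the linear factor $R_n$ coming from the arc length together with $|\lambda|^{-2}$. The order-less-than-a-half hypothesis on $\varphi$ is calibrated precisely for this, since it gives $m_\varphi(R_n)\geq[M_\varphi(R_n)]^{\cos\pi\varrho-\varepsilon}$, which grows faster than any power of $R_n$. A secondary point to verify carefully is that the sectorial image condition on $\varphi(\mathrm{int}\,\vartheta(B))$ is compatible with the Wiman estimate, which is stated on full circles rather than on arcs; this should follow from the ``for sufficiently large $|z|$'' qualifier in the hypothesis, combined with the freedom to discard a bounded initial portion of the contour without affecting the residue calculation.
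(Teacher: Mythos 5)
Your proposal is correct and follows essentially the same route as the paper's proof: the identity $B^{2}(I-\lambda B)^{-1}=\lambda^{-2}\{(I-\lambda B)^{-1}-(I+\lambda B)\}$ with $v=Wf$, uniform convergence plus the residue $\lim_{\lambda\to 0}\frac{d}{d\lambda}(I-\lambda B)^{-1}v=Bv=f$ for the first integral, and closure inside the wedge with the Wieman theorem supplying a sequence of radii for the second. Your "secondary point" is handled exactly as you anticipate, since the Wieman bound on the full circle $|\lambda|=R_{n}$ a fortiori bounds $|\varphi|$ from below on the arc inside the wedge.
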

\begin{proof}
Using the formula
$$
B^{2}(I-\lambda B)^{-1}= \frac{1}{\lambda^{2}}\left\{\left(I- \lambda B\right)^{-1}-(I+\lambda B)   \right\},
$$
we obtain
$$
 \frac{1}{2 \pi i}\int\limits_{\vartheta(B)}e^{-\varphi^{\alpha}(\lambda) t}B(I-\lambda B)^{-1}fd\lambda=
 \frac{1}{2 \pi i}\int\limits_{\vartheta(B)}e^{-\varphi^{\alpha}(\lambda) t} \lambda^{-2}  \left(I- \lambda B\right)^{-1} Wfd\lambda -
$$
$$
- \frac{1}{2 \pi i}\int\limits_{\vartheta(B)}e^{-\varphi^{\alpha}(\lambda) t}\lambda^{-2} (I+\lambda B)  Wfd\lambda =I_{1}(t)+I_{2}(t).
$$
Consider $I_{1}(t).$ Since this improper integral is uniformly convergent regarding $t,$ this fact can be established easily if we apply Lemma \ref{L4.7}, then using the theorem on the connection with the simultaneous limit and the repeated limit, we get
$$
\lim\limits_{t\rightarrow+0}I_{1}(t)= \frac{1}{2 \pi i}\int\limits_{\vartheta(B)}  \lambda^{-2}  \left(I- \lambda B\right)^{-1} Wfd\lambda.
$$
define a contour  $\vartheta_{R}(B):= \mathrm{Fr}\big\{\{\lambda:\, |\lambda|<R\}\setminus \mathrm{int }\,\vartheta(B) \}\big\}$ and let us prove that
\begin{equation*}\label{8a}
 \frac{1}{2\pi i}\oint\limits_{\vartheta_{R}(B)}  \lambda^{-2}  \left(I- \lambda B\right)^{-1} Wfd\lambda\rightarrow \frac{1}{2 \pi i}\int\limits_{\vartheta(B)}  \lambda^{-2}  \left(I- \lambda B\right)^{-1} Wfd\lambda,\;R\rightarrow \infty.
\end{equation*}
Consider a decomposition of the contour $\vartheta_{R}(B)$ on terms   $\tilde{\vartheta}_{R}(B):=\{\lambda:\,|\lambda|=R,\, \theta +\varsigma\leq\mathrm{arg} \lambda \leq 2\pi- \theta -\varsigma  \},\,\hat{\vartheta}_{R}:= \{\lambda:\,|\lambda|=r,\,|\mathrm{arg} \lambda |\leq\theta +\varsigma\}\cup
\{\lambda:\,r<|\lambda|<R,\, \mathrm{arg} \lambda  =\theta +\varsigma\}\cup
\{\lambda:\,r<|\lambda|<R,\, \mathrm{arg} \lambda  =-\theta -\varsigma\}.$
 It is clear that
$$
\frac{1}{2\pi i}\oint\limits_{\vartheta_{R}(B)}  \lambda^{-2}  \left(I- \lambda B\right)^{-1} Wfd\lambda =
\frac{1}{2\pi i}\int\limits_{\tilde{\vartheta}_{R}(B)}  \lambda^{-2}  \left(I- \lambda B\right)^{-1} Wfd\lambda+
 $$
 $$
 +\frac{1}{2\pi i}\int\limits_{\hat{\vartheta}_{R}}  \lambda^{-2}  \left(I- \lambda B\right)^{-1} Wfd\lambda.
$$
Let us show that the first summand tends to zero when $R\rightarrow\infty,$ we have
$$
 \left\|\,\int\limits_{\tilde{\vartheta}_{R}(B)}  \lambda^{-2}  \left(I- \lambda B\right)^{-1} Wfd\lambda\right\|_{\mathfrak{H}}\leq  R^{-2}\int\limits_{\theta +\varsigma}^{2\pi - \theta -\varsigma}     \left\|\left(I\lambda^{-1}-  B\right)^{-1} Wf\right\|_{\mathfrak{H}}d\,  \mathrm{arg} \lambda.
$$
Applying Corollary 3.3,  Theorem 3.2  \cite[p.268]{firstab_lit:kato1980}, we have
$$
\left\|\left(I\lambda^{-1}-  B\right)^{-1} \right\|_{\mathfrak{H}}\leq R/\sin \varsigma ,\,\lambda \in \tilde{\vartheta}_{R}(B).
$$
Substituting this estimate to the last integral,
 we obtain the desired result. Thus, taking into account the fact
$$
\frac{1}{2\pi i}\int\limits_{\hat{\vartheta}_{R}}  \lambda^{-2}  \left(I- \lambda B\right)^{-1} Wfd\lambda\rightarrow \frac{1}{2\pi i}\int\limits_{ \vartheta(B)  }  \lambda^{-2}  \left(I- \lambda B\right)^{-1} Wfd\lambda ,\,R\rightarrow \infty,
$$
we obtain  \eqref{8a}. Having noticed that the following integral can be calculated as a residue at the point zero, i.e.
$$
\frac{1}{2\pi i}\oint\limits_{\vartheta_{R}(B)}  \lambda^{-2}  \left(I- \lambda B\right)^{-1} Wfd\lambda =\lim\limits_{\lambda\rightarrow0}\frac{d (I-\lambda B)^{-1} }{d\lambda }W f= f,
$$
we get
$$
\frac{1}{2 \pi i}\int\limits_{\vartheta(B)}  \lambda^{-2}  \left(I- \lambda B\right)^{-1} Wfd\lambda=f.
$$
Hence $I_{1}(t)\rightarrow f,\,t\rightarrow+0.$ Let us show that $I_{2}(t)=0.$ For this purpose, let us  consider  a  contour
  $\vartheta_{R}(B)=\tilde{\vartheta}_{ R}\cup \hat{\vartheta}_{R},$  where    $\tilde{\vartheta}_{ R}:=\{\lambda:\,|\lambda|=R,\,|\mathrm{arg} \lambda |\leq\theta +\varsigma\}$ and  $\hat{\vartheta}_{R}$ is previously defined. It is clear that
$$
\frac{1}{2\pi i}\oint\limits_{\vartheta_{R}(B)}  \lambda^{-2} e^{-\varphi^{\alpha}(\lambda)t} \left(I+ \lambda B\right)  Wfd\lambda=\frac{1}{2\pi i}\int\limits_{\tilde{\vartheta}_{ R}}  \lambda^{-2} e^{-\varphi^{\alpha}(\lambda)t} \left(I+ \lambda B\right)  Wfd\lambda+
 $$
 $$
+\frac{1}{2\pi i}\int\limits_{\hat{\vartheta}_{R}}  \lambda^{-2} e^{-\varphi^{\alpha}(\lambda)t} \left(I+ \lambda B\right)  Wfd\lambda.
$$
Considering the second term  having   taken  into account  the  definition of the improper integral,   we conclude    that if we show that  there exists such a sequence $\{R_{n}\}_{1}^{\infty},\,R_{n}\uparrow\infty$  that
\begin{equation*}\label{9a}
\frac{1}{2\pi i}\int\limits_{\tilde{\vartheta}_{ R_{n}}}  \lambda^{-2} e^{-\varphi^{\alpha}(\lambda)t} \left(I+ \lambda B\right)  Wfd\lambda\rightarrow 0,\,n\rightarrow \infty,
\end{equation*}
then we obtain
\begin{equation*}\label{10a}
 \frac{1}{2\pi i}\oint\limits_{\vartheta_{R_{n}}(B)}  \lambda^{-2} e^{-\varphi^{\alpha}(\lambda)t} \left(I+ \lambda B\right)  Wfd\lambda\rightarrow \frac{1}{2 \pi i}\int\limits_{\vartheta(B)}  \lambda^{-2} e^{-\varphi^{\alpha}(\lambda)t} \left(I+ \lambda B\right)  Wfd\lambda,\;R\rightarrow \infty.
\end{equation*}
 Using the lemma conditions, we can accomplish the following estimation
 \begin{equation*}\label{11a}
|e^{-\varphi^{\alpha}(\lambda)t}|= e^{- \mathrm{Re}\,\varphi^{\alpha}(\lambda)t}\leq e^{- C|\varphi(\lambda)|^{\alpha}t},\, \lambda \in \tilde{\vartheta}_{ R},
\end{equation*}
where $R$ is sufficiently large.  Using the condition imposed upon the order of the entire function and  applying  the Wieman theorem  (Theorem 30 \S 18 Chapter I \cite{firstab_lit:Eb. Levin}),  we can claim that there exists such a sequence $\{R_{n}\}_{1}^{\infty},\,R_{n}\uparrow \infty$ that
\begin{equation*}
\forall \varepsilon>0,\,\exists N(\varepsilon):\,e^{- C|\varphi(\lambda )|^{\alpha}t}\leq e^{- C m^{\alpha}_{\varphi}(R_{n})t}\leq e^{- C t[M_{\varphi}(R_{n})]^{\alpha\cos \pi \phi-\varepsilon}},\,\lambda \in \tilde{\vartheta}_{ R_{n}},\,n>N(\varepsilon),
\end{equation*}
where $\phi$ is the order of the entire  function $\varphi.$
Using this estimate, we get
$$
 \left\|\,\int\limits_{\tilde{\vartheta}_{ R_{n}}}  \lambda^{-2} e^{-\varphi^{\alpha}(\lambda)t} \left(I+ \lambda B\right)  Wfd\lambda\right\|_{\mathfrak{H}}\leq C e^{- C t[M_{\varphi}(R_{n})]^{\cos \pi \phi-\varepsilon}}\|Wf\|_{\mathfrak{H}} \int\limits_{-\theta-\varsigma}^{\theta+\varsigma}d\xi.
$$
It is clear that if the order $\phi$ less than a half then we obtain the desired chain of reasonings.   Since the operator function under the integral is analytic, then
$$
\oint\limits_{\vartheta_{R_{n}}(B)}  \lambda^{-2} e^{-\varphi^{\alpha}(\lambda)t} \left(I+ \lambda B\right)  Wfd\lambda=0,\,n\in \mathbb{N}.
$$
Using this relation, we  obtain the fact $I_{2}(t)=0.$ The proof is complete.
\end{proof}
\begin{remark}Note that the statement of the lemma is not true if the order equals zero, in this case we cannot apply the Wieman theorem   (more detailed see the proof of the Theorem 30 \S 18 Chapter I \cite{firstab_lit:Eb. Levin}). At the same time the proof can be easily transformed for the case corresponding to a polynomial function. Here, we should note that the reasonings are the same, we  have to impose   conditions upon the polynomial to satisfy the lemma conditions and  establish an estimate analogous to the established by virtue of the Wieman theorem application.   Now assume that $\varphi(z)=c_{0}+c_{1}z+...+c_{n}z^{n},\,z\in \mathbb{C},$ by easy calculations we see that   the condition $$\max\limits_{k=0,1,...,n}(|\mathrm{arg} c_{k}|+k\theta)<\pi/2\alpha,$$ gives us  $|\mathrm{arg }\varphi(z)|<\pi/2\alpha,\,z\in \mathrm{int} \vartheta(B).$ Thus, we have the fulfilment of the estimate
$$
|e^{-\varphi^{\alpha}(\lambda)t}|= e^{- \mathrm{Re}\,\varphi^{\alpha}(\lambda)t}\leq e^{- C|\varphi(\lambda)|^{\alpha}t},\, \lambda \in \tilde{\vartheta}_{ R}.
$$
It can be established easily that $m_{\varphi}(|z|)\rightarrow \infty,\,|z|\rightarrow \infty.$ Combining this fact with the last estimate  and preserving the scheme of the reasonings, we obtain the lemma statement.
\end{remark}

\subsection{Operator function with more subtle asymptotics}

The approach implemented bellow is based on the ordinary properties of operators acting in the Hilbert space.
We denote by $\lambda_{n},e_{n},\;n\in \mathbb{N}$ the eigenvalues and eigenvectors of the sectorial operator $W$ respectively.
Consider the invariant space $\mathfrak{N}$ generated by eigenvectors of the operator, we mentioned above that it is an infinite dimensional space endowed with the stricture of the initial Hilbert space, hence we can consider a restriction of the operator  $R_{W}(\lambda)$ on the space $\mathfrak{N},$ where $\lambda$ does not take values of eigenvalues.
 Using simple reasonings involving properties of the resolvent, Cauchy integral formula, clock-wise direction, e.t.c., we get
$$
\varphi(W)e_{n}= \lim\limits_{t\rightarrow+0}\frac{1}{2 \pi i}\int\limits_{\vartheta(B)}e^{-\varphi (\lambda) t}\varphi (\lambda)R_{W}(\lambda)e_{n} d\lambda    =
 e_{n}\lim\limits_{t\rightarrow+0}\frac{1}{2 \pi i}\int\limits_{\vartheta(B)}e^{-\varphi (\lambda) t}\frac{\varphi (\lambda)}{\lambda_{n}-\lambda} d\lambda=
$$
$$
=e_{n}\lim\limits_{t\rightarrow+0}e^{-\varphi (\lambda_{n}) t}\varphi(\lambda_{n}) =e_{n}\varphi(\lambda_{n}) .
$$
This property can be taken as a concept for by  virtue of such an approach and uniqueness of the decomposition on basis vectors in the Hilbert space, we can represent the operator function  defined on elements of $\mathfrak{N}$ in the form of series on eigenvectors
\begin{equation}\label{5.22}
 \varphi (W)f=\sum\limits_{n=1}^{\infty} e_{n} \varphi(\lambda_{n})f_{n},\,f\in \mathrm{D}_{1}(\varphi),
\end{equation}
where
$$
\mathrm{D}_{1}(\varphi):=\left\{f\in \mathfrak{N}:\; \sum\limits_{n=1}^{\infty}  | \varphi(\lambda_{n})f_{n}|^{2}<\infty\right\}.
$$
Indeed, note that the resolvent $R_{W}(\lambda)$ is defined on $\mathfrak{N}$ and admits the following decomposition
$$
R_{W}(\lambda)f=\sum\limits_{n=1}^{\infty}\frac{f_{n}}{\lambda_{n}-\lambda}e_{n},\,f\in \mathfrak{N},
$$
therefore having substituted the latter relation to the formula of the operator function, we get
$$
\varphi(W)f=  \lim\limits_{t\rightarrow+0}\frac{1}{2 \pi i}\int\limits_{\vartheta(W)}e^{-\varphi (\lambda) t}\varphi (\lambda)\sum\limits_{n=1}^{\infty}\frac{f_{n}e_{n}}{\lambda_{n}-\lambda} d\lambda=
$$
$$
=\lim\limits_{t\rightarrow+0}\frac{1}{2 \pi i}\sum\limits_{n=1}^{\infty} f_{n}e_{n} \int\limits_{\vartheta(W)}\frac{ e^{-\varphi (\lambda) t}\varphi (\lambda)}{\lambda_{n}-\lambda} d\lambda=
 \lim\limits_{t\rightarrow+0}\sum\limits_{n=1}^{\infty}f_{n}e_{n} \!\mathop{\operatorname{res}}\limits_{z=\lambda_{n}}  \{  e^{-\varphi (z) t}\varphi (z)\}=
 $$
 $$
 =\lim\limits_{t\rightarrow+0}\sum\limits_{n=1}^{\infty}f_{n}e_{n}     e^{-\varphi (\lambda_{n}) t}\varphi (\lambda_{n})=\sum\limits_{n=1}^{\infty}f_{n}e_{n}     \varphi (\lambda_{n}),\,f\in \mathrm{D}_{1}(\varphi),
$$
 we obtain \eqref{5.22}.
Here, we ought to explain that we managed to pass to the limit considering the contour integrals by virtue of the growth regularity of the function, a complete scheme of  reasonings is represented in the derivation of formula \eqref{5.5}.
The justification of an opportunity to  integrate  the series termwise  is based upon the fact that the latter series is convergent in the sense of the norm. The passage to the limit when $t$ tends to zero is justified by the same fact.

 Thus, formula \eqref{5.22} gives us    another   definition of an operator function, compare with the one given in \cite{firstab_lit(frac2023)} represented by   formula \eqref{5.3}. It is clear that considering the set $ f(t),\;f\in \mathfrak{N},\;t>0,$ we can expand the domain of definition of the operator function $\varphi$ at the same time the extension remains closed in the mentioned above sense  as one can easily see it follows  from the above.

Suppose $\mathfrak{H}:=\mathfrak{N}$ and let us construct a space $\mathfrak{H}_{+}$ satisfying the condition of compact embedding $\mathfrak{H}\subset\subset \mathfrak{H}_{+}$ and suitable for spreading  the condition H2 upon the operator $\varphi(W).$ For this purpose, define
$$
\mathfrak{H}_{+}:=\left\{f\in  \mathfrak{N}:\;\|f\|^{2}_{\mathfrak{H}_{+}}= \sum\limits_{n=1}^{\infty}  | \varphi(\lambda_{n})||f_{n}|^{2}<\infty\right\},
$$
and let us prove the fact $\mathfrak{H}\subset\subset \mathfrak{H}_{+}.$ The idea of the proof is based on the application of the criterion of compactness in Banach spaces, let us involve the operator $B: \mathfrak{H}\rightarrow \mathfrak{H}$ defined as follows
$$
Bf=\sum\limits_{n=1}^{\infty}  | \varphi(\lambda_{n})|^{-1/2} f_{n} e_{n},
$$
here we are assuming without loss of generality that  $|\varphi(\lambda_{n})|\uparrow \infty.$ Note  that in any case, we can rearrange the sequence in the required way  having imposed  a condition of the growth regularity upon the operator function. Observe that if $\|f\|<K=\mathrm{const},$ then
$$
\|R_{k}Bf\|=\sum\limits_{n=k}^{\infty}  | \varphi(\lambda_{n})|^{-1 } |f_{n}|^{2}\leq\frac{\|f\|^{2}}{|\varphi (\lambda_{k})|} <\frac{K^{2}}{|\varphi (\lambda_{k})|},\,k\in \mathbb{N}.
$$
Therefore, in accordance with the compactness criterion in Banach spaces  the operator $B$ is compact. Now, consider a set bounded in the sense of the norm $\mathfrak{H}_{+},$ we will denote its elements by $f,$ thus in accordance with the above, we have
$$
\sum\limits_{n=1}^{\infty}  | \varphi(\lambda_{n})||f_{n}|^{2}<C.
$$
It is clear that the element $g:=\{|\varphi(\lambda_{n})|^{1/2} f_{n}\}_{1}^{\infty}$ belongs to $\mathfrak{H}$ and the set of elements from $\mathfrak{H}$ corresponding to the bounded set of elements from $\mathfrak{H}_{+}$ is bounded also. This is why the operator $B$ image of the set of elements $g$ is compact, but we have
$ Bg=f.$ The latter relation proves the fact that the set of elements $f$ bounded in the sense of the norm $\mathfrak{H}_{+}$ is a compact set in the sense of the norm $\mathfrak{H}.$ Thus, we have established the fulfilment of condition H1, it is obvious that we can choose a span of $\{e_{n}\}_{1}^{\infty}$ as the mentioned   linear manifold $\mathfrak{M}.$

The verification of the first   relation of  H2 is implemented due to direct application of the Cauchy-Schwarz inequality, we have
$$
|\left(\varphi (W)f,h\right)_{\mathfrak{H}}|=\left| \sum\limits_{n=1}^{\infty}  \varphi(\lambda_{n})f_{n} \overline{h}_{n} \right|\leq  \left(\sum\limits_{n=1}^{\infty}  |\varphi(\lambda_{n})||f_{n}|^{2}\right)^{1/2}  \left( \sum\limits_{n=1}^{\infty}  |\varphi(\lambda_{n})||h_{n}|^{2}\right)^{1/2}
$$
Let us verify the fulfilment of the  second condition, here we need impose a sectorial condition upon the analytic  function $\varphi$ i.e. it should preserve in the open right-half plain the closed sector belonging to the latter, then we have

$$
  \sum\limits_{n=1}^{\infty}    |\varphi(\lambda_{n})| |f_{n}|^{2}\leq  \sec \psi\cdot \mathrm{Re}\sum\limits_{n=1}^{\infty}    \varphi(\lambda_{n}) |f_{n}|^{2} ,
$$
where $\psi$ is the semi-angle of the sector contenting  the image of the analytic function $\varphi.$  Thus, the condition  H2 is fulfilled. Observe that
$$
 \varphi^{\ast}(W)f=\sum\limits_{n=1}^{\infty}   \overline{\varphi(\lambda_{n})}f_{n}e_{n},\;f\in \mathrm{D}_{1}(\varphi),
$$
it follows easily from the representation of the inner product in terms of the Fourier coefficients. Therefore, by virtue of the absolute convergence of the series, we get
$$
\mathfrak{Re}\varphi(W)f=\sum\limits_{n=1}^{\infty}  \mathrm{Re}\varphi(\lambda_{n})f_{n}e_{n},\;f\in \mathrm{D}_{1}(\varphi).
$$
 It is clear that
$$
\mathfrak{Re}\varphi(W)e_{n}=\mathrm{Re}\varphi(\lambda_{n}) e_{n},\;n\in \mathbb{N}.
$$
The fact   that there does not exist additional eigenvalues of the operator  $\mathfrak{Re}\varphi(W)$ follows from the fact that $\{e_{n}\}_{1}^{\infty}$ forms a basis in $\mathfrak{N}$ and can be established easily by implementing the scheme of reasonings applied above to the similar cases. Thus, we obtain
\begin{equation}\label{5.23}
\lambda_{n}\{\mathfrak{Re}\varphi(W)\}=\mathrm{Re}\varphi(\lambda_{n}  ),\,n\in \mathbb{N}.
\end{equation}

Note that condition \eqref{5.1} plays the distinguished role in the refinement  of the Lidskii results \cite{firstab_lit:1kukushkin2021} since  it guaranties   the equality of the convergence exponent and   the order of summation in the Abell-Lidskii sense. At the same time we can chose the sequence of contours, in the integral construction, of the power type. It is rather reasonable to expect that we are highly motivated to produce a concrete example of the operator satisfying the condition \eqref{5.1} for if we found it then it would stress the significance and novelty of the papers
 \cite{firstab_lit:1kukushkin2018,firstab_lit(arXiv non-self)kukushkin2018,kukushkin2019,firstab_lit:1kukushkin2021,kukushkin2021a,firstab_lit:2kukushkin2022,firstab_lit(axi2022),firstab_lit(frac2023)}.
Having been inspired  by the idea, by virtue  of the comparison test for series convergence, we can use the function considered in Example \ref{E5.1} as an indicator to find the desired operator function. Thus,  to satisfy condition $\eqref{5.1},$ having taken into account relation \eqref{5.23}, we can impose the following condition: for sufficiently large numbers $n\in \mathbb{N},$  the following   relation holds
\begin{equation}\label{5.24}
  C_{1}<\frac{(n \ln n\cdot \ln\ln n)^{\kappa}}{\mathrm{Re}  \varphi(\lambda_{n}  ) } <C_{2},\;\kappa>0,
\end{equation}
certainly we need assume that $\varphi$ preserves the sector containing the numerical range of values in the right half-plane.

Consider   a function   $\psi(z)=  z^{\xi}\ln z \cdot \ln\ln z ,\,0<\xi\leq1$ in the sector $|\arg z|\leq \theta,$ where for the simplicity of reasonings the   branch of the power function has been chosen so that it acts onto the sector and  we have chosen the branch of the logarithmic function  corresponding to the value   $\phi:=\arg z .$
Let us find the real and imaginary parts of the function $\psi(z),$ we have
$$
\psi(z)=z^{\xi}\ln z \cdot \ln\ln z= |z|^{\xi}e^{i\xi\phi} \left(\ln|z|+i\phi\right)\left(a+i \arctan \frac{\phi}{\ln|z|}\right),
$$
where, we denote $a:=\ln|\ln|z|+i\phi| .$ Thus, separating the real and imaginary parts of the function $\psi(z),$ we have
$$
|z|^{\xi}e^{i\xi\phi}\left\{a\ln|z|-\phi \arctan \frac{\phi}{\ln|z|}+i\left( a\phi+\ln|z|\arctan \frac{\phi}{\ln|z|}  \right) \right\}=
$$
$$
|z|^{\xi}\cos \xi\phi\left(a\ln|z|-\phi \arctan \frac{\phi}{\ln|z|} \right)-|z|^{\xi}\sin \xi\phi\left( a\phi+\ln|z|\arctan \frac{\phi}{\ln|z|} \right)+
$$
$$
+i\left\{|z|^{\xi}\sin \xi\phi\left( a\ln|z|-\phi \arctan \frac{\phi}{\ln|z|}\right)+ |z|^{\xi}\cos \xi\phi\left( a\phi+\ln|z|\arctan \frac{\phi}{\ln|z|} \right)\right\}.
$$
It gives us
$$
 \frac{\mathrm{Im} \,\psi(z) }{ \mathrm{Re }\,\psi(z)}\rightarrow \tan \xi \phi,\,|z|\rightarrow\infty,\;
$$
from what follows $\mathrm{arg}\, \psi(z)\rightarrow \xi \mathrm{arg} z,\;|z|\rightarrow\infty,$
and   leads us to the following estimate
$$
\frac{\mathrm{Im} \,\psi(z) }{ \mathrm{Re }\,\psi(z)} <
\frac{\tan \xi\theta\left( a\ln|z|-\phi \arctan  \ln^{-1}\!|z|^{1/\phi}  \right)+  \left( a\phi+\ln|z|\arctan  \ln^{-1}\!|z|^{1/\phi} \right)}{ \left(a\ln|z|-\phi \arctan  \ln^{-1}\!|z|^{1/\phi} \right)-\tan \xi\theta\left( a\phi+\ln|z|\arctan  \ln^{-1}\!|z|^{1/\phi} \right)}.
$$
The latter   gives us an opportunity to claim that for arbitrary $\varepsilon>0,$ there exists $R(\varepsilon)$ such that the following estimate holds
$$
 \frac{\mathrm{Im} \,\psi(z) }{ \mathrm{Re }\,\psi(z)}< (1+\varepsilon)\tan \xi\theta,\,|z|>R(\varepsilon).
$$
Apparently, we can claim that the function $\psi(z)$ nearly preserves the sector $|\arg z|\leq \theta,$ what is completely sufficient for our reasonings for we are dealing with the  neighborhood  of the infinitely distant point.   Let us calculate the absolute value, we have
$$
|\psi(z)|^{2}=  |z|^{2\xi}  \left|\left(\ln|z|+i\phi\right)\right|^{2}\cdot\left|\left(a+i \arctan \frac{\phi}{\ln|z|}\right)\right|^{2}
 = |z|^{2\xi}   (\ln^{2}|z|+\phi^{2})(a ^{2}+\arctan^{2}  \ln^{-1}\!|z|^{1/\phi}),
$$
  the latter relation establishes the growth regularity of the function $\psi(z).$
Note that   we obtain the following formulas
$$
\mathrm{Re}\,\psi^{\kappa} (z)=|z|^{\xi\kappa}    (\ln^{2}|z|+\phi^{2})^{\kappa/2}(\ln^{2}|\ln|z|+i\phi|+\arctan^{2}  \ln^{-1}\!|z|^{1/\phi})^{\kappa/2}
 \cos(\kappa\arg  \psi ) ,
$$
$$
\mathrm{Im}\,\psi^{\kappa} (z)=|z|^{\xi\kappa}   (\ln^{2}|z|+\phi^{2})^{\kappa/2}(\ln^{2}|\ln|z|+i\phi|+\arctan^{2}  \ln^{-1}\!|z|^{1/\phi})^{\kappa/2}
 \sin(\kappa\arg  \psi ),
$$
Here we should note the distinguished fact proved   above  $\arg\psi (z)\rightarrow \xi\arg z,\,|z|\rightarrow\infty$ and   having noticed that
$$
\psi(|z|)=  |z|^{\xi}\ln |z| \cdot \ln\ln |z|,
$$
we get
$$
   \mathrm{Re}\,\psi^{\kappa} (z) \sim  \psi^{\kappa}(|z|)   \cos(\xi\kappa \arg  z),\;\mathrm{Im}\,\psi^{\kappa} (z) \sim  \psi^{\kappa}(|z|)   \sin(\xi\kappa \arg  z),\,|z|\rightarrow\infty.
$$
Therefore the values of the function $\psi^{\kappa} (z)$ belong to the sector $\mathfrak{L}_{0}(\xi\!\kappa\, \theta+\varepsilon),$ where $\varepsilon$ is arbitrary small, for sufficiently large values of $|z|.$
Here, we should make a short narrative digression and remind that we pursue a rather particular aim to produce an example of an operator so we are free in some sense to choose an operator as an object for our needs. At the same time the given above reasonings    origin  from the fundamental scheme  and as a result allow to construct a fundamental theory. Define the function $\varphi(z):=\psi^{\kappa}(z)$ and consider the operator $W$  such that  $|\lambda_{n}(W )|\asymp n^{1/\xi}.$
Eventually, the given above reasonings lead us to the conclusion that     relation \eqref{5.24} holds and we obtain the desired operator with more subtle asymptotics of the real  component than the asymptotics of the power type. We are pleased to represent it in the special  forms due to the ordinary  properties of the exponential function
$$
\varphi(W)f=\lim\limits_{t\rightarrow+0}\frac{1}{2 \pi i}\int\limits_{\vartheta(W)}
 \left(\ln \lambda\right)^{ -t \varphi(\lambda)/\ln\ln\lambda }
\varphi(\lambda) \,R_{W}(\lambda)fd\lambda =
$$
$$
=\lim\limits_{t\rightarrow+0}\frac{1}{2 \pi i}\int\limits_{\vartheta(W)} \lambda^{ -t \varphi(\lambda)/   \ln  \lambda  }
\varphi(\lambda) \,R_{W}(\lambda)fd\lambda,\,f\in \mathrm{D}_{1}(\varphi).
$$
Consider a set
$$
\mathrm{D}_{1}(W):=\left\{f\in \mathfrak{N}:\; \sum\limits_{n=1}^{\infty}  |  \lambda_{n} f_{n}|^{2}<\infty\right\}.
$$
Note that  a simple comparison of the asymptotics  gives us the fact
 $\mathrm{D}_{1}(W)\subset\mathrm{D}_{1}(\varphi).$   Due to the sectorial property of the operator $W,$ we have $|\lambda_{n}(W)|\asymp \mathrm{Re} \lambda_{n}(W).$ Consider a restriction $W_{1}$ of the operator $W$ to the set $\mathrm{D}_{1}(W)$ taking into account the fact
$
 \mathrm{Re} \lambda_{n}(W_{1})=\lambda_{n}(\mathfrak{Re} W_{1} )  ,\,n\in \mathbb{N}
$
which can be proved due  to the analogous  reasonings   corresponding to   relation \eqref{5.23}, we get
 $$
 \lambda_{n}(\mathfrak{Re} W_{1} ) \asymp n^{1/\xi}.
 $$
  Note that the inverse chain of reasonings is obviously correct,  thus  we obtain a description in terms of asymptotics of the  real component  eigenvalues.

Now,  observe  benefits and  disadvantages of the idea to involve the restriction of the operators on the space $\mathfrak{N}.$ Apparently, the main disadvantage is the requirement in accordance with which we need deal with the expansion of the adjoint operator, since we have  $W_{1}\subset W \Rightarrow W^{\ast}\subset W^{\ast}_{1}.$ This follows that the orders of $\mathfrak{Re} W$ and $\mathfrak{Re} W_{1}$ may differentiate, what brings us the essential inconvenience if we want to provide a description in terms of the class $\mathfrak{S}^{\star}_{\xi}.$\\

\section[ \hfil
  Domain of definition of the operator function]{ Domain of definition of the operator function including well-known operators}
\noindent {\bf 1.} In this paragraph, we preserve the notation
$$
  \lambda_{n}(\mathfrak{Re} W_{1} ) \asymp n^{1/\xi}.
 $$
However,   let us consider a remarkably showing case  $\xi=1$ corresponding  to the so-called quasi-trace  operator class $\mathfrak{S}^{\star}_{1},$ here we should recall that a singular  number  of the normal operator  coincides  with  its  eigenvalue absolute value. Consider the  operator
$$
L:=-a_{2}\Delta+a_{0},
$$
 with a constant  complex coefficients   acting in $L_{2}(\Omega),$ here $\Omega\subset \mathbb{E}^{2}$ is a bounded domain with a sufficiently smooth boundary. It is clear that the operator is normal, hence for an arbitrary eigenvector of the operator, i.e  $Le=\lambda e,$ we have $Le=\overline{\lambda}\, e$ and the system of the eigenvectors is compleat in $\overline{\mathrm{R}(L)}$  it is well-known fact that under the conditions  imposed upon $\Omega,$ we have the following relation for   $\xi =n/m,$ where $m$ is the highest derivative and $n$ is  a dimension  of the Euclidian space. Thus,    $\xi=1.$ It is not hard to prove that $\mathrm{N}(L^{\ast})=0,$ since we have
 $$
   (Lf,f)_{L_{2}(\Omega)} =a_{2}\|f\|^{2}_{H_{0}^{1}(\Omega)}+a_{0}\|f\|^{2}_{L_{2}(\Omega)}.
 $$
Hence, by virtue of the well-known statement of the operator theory - orthogonal decomposition of the Hilbert space
$$
L_{2}(\Omega)=\mathrm{N}(L^{\ast})\oplus \overline{\mathrm{R}(L)},
$$
 we get $\overline{\mathrm{R}(L)}=L_{2}(\Omega).$ Using this fact, we can claim that $\mathfrak{Re}L$ and $L$ have the same eigenvectors. Indeed, it is clear due to the normal property of the operator that the system of eigenvectors of $L$ is a subsystem of eigenvectors of $\mathfrak{Re}L.$  The coincidence can established easily if we recall that
 the eigenvectors corresponding to different eigenvalues of the selfadjoint operator is orthogonal, indeed for an arbitrary operator $S=S^{\ast},$ we have
 $$
 \lambda_{1}(e_{1},e_{2})_{\mathfrak{H}}=(Se_{1},e_{2})_{\mathfrak{H}}=(e_{1},Se_{2})_{\mathfrak{H}}=\lambda_{2}(e_{1},e_{2})_{\mathfrak{H}},
 $$
since $\lambda_{1}\neq \lambda_{2},$ then  $(e_{1},e_{2})_{\mathfrak{H}}=0.$ Thus, if we assume that there exists one more eigenvector of the operator  $\mathfrak{Re}L$ then we obtain the fact that it is a zero element since it is orthogonal to the compleat system but the latter is impossible. The proved fact of eigenvectors coincidence allows us to claim that
$$
  \lambda_{n}(  \mathfrak{Re}L ) \asymp n^{1/\xi},\,\Rightarrow |\lambda_{n}(  L )| \asymp n^{1/\xi}.
 $$
  Hence we can implement the above scheme of reasonings and consider the operator function $\varphi(L),$ where $\varphi(z)=z^{\xi\kappa}\{\ln z \cdot \ln\ln z\}^{\kappa}$ for which condition \eqref{5.24} and as a result condition \eqref{5.1} holds. It is remarkable that   despite of  the fact that the hypotheses $\mathrm{H}1,\mathrm{H}2$ hold for the operator $L$ in the natural  way, the verification   is left to the reader, we can use the benefits of the scheme introduced above to construct the required pair of Hilbert spaces.

In addition, we want  to represent a concrete domain $\Omega:=\{x_{j}\in [0,\pi],\,j=1,2\}$ that gives us an opportunity to construct   a concrete compleat eigenvectors system of the operator $L.$
Consider the following functions
$$
e_{\bar{l}} = \sin l_{1}x_{1}\cdot\sin l_{2}x_{2},\;\bar{l}:=\{l_{1},l_{2}\},\;l_{1},l_{2}\in \mathbb{N}.
$$
It is clear that
$$
 L e_{\bar{l}} =\lambda_{\bar{l}}\,e_{\bar{l}},\;\lambda_{\bar{l}} = a_{1} (l^{2}_{1}+l^{2}_{2})+a_{0}.
$$
Let us show that the system
$
\left\{e_{\bar{l}} \right\}
$
is complete in the Hilbert space  $L_{2}(\Omega),$ we will show it if we prove that the element that is orthogonal to every element of the system is a zero.
Assume that
$$
 \int\limits_{0}^{\pi}\sin l_{1}x_{1}dx_{1}\int\limits_{0}^{\pi} \sin l_{2}x_{2}f(x_{1},x_{2})dx_{2} =  (e_{\bar{l}},f)_{L_{2}(\Omega)}=0.
$$
In accordance with the fact that the system $\{\sin l x\}_{1}^{\infty}$ is a compleat system in $L_{2}(0,\pi),$ we conclude that
$$
 \int\limits_{0}^{\pi} \sin l_{2}x_{2} f(x_{1},x_{2})dx_{2} =0.
$$
Having repeated the same reasonings, we obtain the desired result.\\

\noindent {\bf 2.} The next case, within  the scale  of most important ones, appeals to the so-called quasi-Hilbert-Schmidt class $\mathfrak{S}^{\star}_{2}.$  In this regard, let us consider the Sturm-Liouville operator $L,$  where the corresponding Euclidian space is one-dimensional. In order to make a clear demonstration let us consider a simplest case corresponding to the operator
$$
Lu:=-au'',\; u(0)=u(\pi  )=0,\;a\in \mathbb{C}
$$
acting in $L_{2}(I),\,I:=(0,\pi ).$ Recall that the general solution of the homogeneous equation $-u''-\lambda u=0,\,\lambda\in \mathbb{R}^{+}$ is given by the following formula
$
u=C_{1}\sin \sqrt{\lambda}x+C_{2}\cos\sqrt{\lambda}x.
$
Using the initial conditions, we find $C_{2}=0,\,\sin\sqrt{\lambda}\,\pi=0.$ Hence $\lambda_{n}(L)=an^{2},\, e_{n}(x)=\sin n  x,\;n\in \mathbb{N}$ are non-zero eigenvalues and  eigenvectors respectively. Note that the operator is normal and  the closure of the linear  span of the functions  $\sin n  x,\;n\in \mathbb{N}$ gives us the Hilbert space $L_{2}(I).$ Thus, we obtain $\xi=1/2$ since
$$
|\lambda_{n}(  L )| \asymp n^{2},
$$
  and    can implement the above scheme of reasonings and considering the operator function  $\varphi(z)=z^{\xi\kappa}\{\ln z \cdot \ln\ln z\}^{\kappa}$ for which condition \eqref{5.24} and as a result condition \eqref{5.1} holds. The verification  of  the fact that the hypotheses $\mathrm{H}1,\mathrm{H}2$ hold for the operator $L$ is too simple and left to the reader. In order to fulfill an exercise  we can also use the benefits of the scheme introduced above to construct the required pair of Hilbert spaces $\mathfrak{H}_{+}\subset\subset \mathfrak{H}.$

     The given above theory tells us that the operator function $\varphi(z)=z^{\xi\kappa}\{\ln z \cdot \ln\ln z\}^{\kappa}$ is defined on the $\mathrm{D}_{1}(L)$ which, in this case, coincides with the functions having a fourier coefficients with a  sufficiently large decrease so that the following series converges in the sense of $L_{2}(I)$ norm, i.e.
$$
\sum\limits_{n=1}^{\infty}n^{2}f_{n}\sin n  x\in L_{2}(I).
$$
In addition, we should add that we can consider an arbitrary non-selfadjoint differential and fractional differential operators assuming that the functional space is defined on the bounded  domain of an Euclidian space with a sufficiently   smooth boundary  (regular operators). In most of such  cases the minimax principle can be applied and we can obtain the asyptotics of the eigenvalues of the Hermitian real component, here we can referee a detailed description represented in the monograph by Rozenblyum  G.V. \cite{firstab_lit:Rosenblum}. The case corresponding to an unbounded domain (irregular operator) is also possible for study, in this regard  the Fefferman concept   covers such  problems \cite[p.47]{firstab_lit:Rosenblum}.
 The given above  theoretical results  can be applied to   the operator class  and we can construct in each case a corresponding operator function representing to the reader    the example of an operator with a more subtle asymptotics of the Hermitian real component eigenvalues than one of the power type. Bellow, we represent well-known non-selfadjoint operators which can be considered as an operator argument.\\

\subsection{ Kolmogorov operator}

The relevance of the considered operator is justified by resent results by Goldstein et al.   \cite{firstab_lit:Goldstein} where the following operator has been undergone to the rapt attention
$$
L=\Delta+\frac{\nabla\rho}{\rho}\cdot\nabla,
$$
here $\rho$ is a probability density on $\mathbb{R}^{N}$ satisfying $\rho\in C^{1+\alpha}_{lok}(\mathbb{R}^{N})$
  for some
$\alpha\in (0, 1),\; \rho(x) > 0$ for all $x\in \mathbb{R}^{N}.$

Apparently,  the results \cite{firstab_lit(arXiv non-self)kukushkin2018}, \cite{kukushkin2021a}, \cite{firstab_lit:1kukushkin2021} can be applied to the operator after an insignificant modification. A couple of words on the difficulties appearing while we study the operator composition.  Superficially, the problem  looks pretty well  but it is not so for the inverse operator (one need prove that it is a resolvent)  is a composition of an unbounded operator and a resolvent of the operator $W,$  indeed  since $R_{W}W=I,$ then formally, we have
$
L^{-1}f= R_{W}\rho f.
$
Most likely,   the general theory created in the papers \cite{firstab_lit(arXiv non-self)kukushkin2018}, \cite{kukushkin2021a} can be adopted to some operator composition but it is  a tremendous work. Instead of that, in the paper  \cite{firstab_lit(frac2023)}   we succeed   to  find a suitable pair of Hilbert spaces what allows us to apply theoretical results.\\

 \subsection{ The linear combination of the second order differential operator and the   Kipriyanov operator}

  Consider a linear combination of the uniformly elliptic operator, which is written in the divergence form, and
  a composition of a   fractional integro-differential  operator, where the fractional  differential operator is understood as the adjoint  operator  regarding  the Kipriyanov operator  (see  \cite{kukushkin2019,firstab_lit:kipriyanov1960,firstab_lit:1kipriyanov1960})
\begin{equation*}
 L  :=-  \mathcal{T}  \, +\mathfrak{I}^{\sigma}_{ 0+}\phi\, \mathfrak{D}  ^{ \beta  }_{d-},
\; \sigma\in[0,1) ,
 $$
 $$
   \mathrm{D}( L )  =H^{2}(\Omega)\cap H^{1}_{0}( \Omega ),
  \end{equation*}
where
$\,\mathcal{T}:=D_{j} ( a^{ij} D_{i}\cdot),\,i,j=1,2,...,m,$
under    the following  assumptions regarding        coefficients
\begin{equation} \label{12}
     a^{ij}(Q) \in C^{2}(\bar{\Omega}),\, \mathrm{Re} a^{ij}\xi _{i}  \xi _{j}  \geq   \gamma_{a}  |\xi|^{2} ,\,  \gamma_{a}  >0,\,\mathrm{Im }a^{ij}=0 \;(m\geq2),\,
 \phi\in L_{\infty}(\Omega).
\end{equation}
 Note that in the one-dimensional case, the operator $\mathfrak{I}^{\sigma }_{ 0+} \phi\, \mathfrak{D}  ^{ \beta  }_{d-}$ is reduced to   a  weighted fractional integro-differential operator  composition, which was studied properly  by many researchers
    \cite{firstab_lit:2Dim-Kir,firstab_lit:15Erdelyi,firstab_lit:9McBride,firstab_lit:nakh2003}, more detailed historical review  see  in \cite[p.175]{firstab_lit:samko1987}.        \\

\subsection{ The linear combination of the second order differential operator and the  Riesz potential}

Consider a   space $L_{2}(\Omega),\,\Omega:=(-\infty,\infty)$    and the Riesz potential
$$
I^{\beta}f(x)=B_{\beta}\int\limits_{-\infty}^{\infty}f (s)|s-x|^{\beta-1} ds,\,B_{\beta}=\frac{1}{2\Gamma(\beta)  \cos  (\beta \pi / 2)   },\,\beta\in (0,1),
$$
where $f$ is in $L_{p}(\Omega),\,1\leq p<1/\beta.$
It is  obvious that
$
I^{\beta}f= B_{\beta}\Gamma(\beta) (I^{\beta}_{+}f+I^{\beta}_{-}f),
$
where
$$
I^{\beta}_{\pm}f(x)=\frac{1}{\Gamma(\beta)}\int\limits_{0}^{\infty}f (s\mp x) s ^{\beta-1} ds,
$$
these operators are known as fractional integrals on the  whole  real axis   (see \cite[p.94]{firstab_lit:samko1987}). Assume that the following  condition holds
 $ \sigma/2 + 3/4<\beta<1 ,$ where $\sigma$ is a non-negative  constant.
 Following the idea of the   monograph \cite[p.176]{firstab_lit:samko1987},
 consider a sum of a differential operator and  a composition of    fractional integro-differential operators
\begin{equation*}
 W   :=  D^{2} a  D^{2}  +   I^{\sigma}_{+}\,\xi \,I^{2(1-\beta)}D^{2}+\delta I,\;\mathrm{D}(W)=C^{\infty}_{0}(\Omega),
\end{equation*}
where
$
\xi(x)\in L_{\infty}(\Omega),\, a(x)\in L_{\infty}(\Omega)\cap C^{ 2  }( \Omega ),\, \mathrm{Re}\,a(x) >\gamma_{a}(1+|x|)^{5}.
$

\subsection{ The perturbation of the  difference operator with the artificially constructed operator }

Consider a   space $L_{2}(\Omega),\,\Omega:=(-\infty,\infty),$   define a family of operators
$$
T_{t}f(x):=e^{-c t}\sum\limits_{k=0}^{\infty}\frac{(c \,t)^{k}}{k!}f(x-d\mu),\,f\in L_{2}(\Omega),\;c,d>0,\; t\geq0,
$$
where convergence is understood in the sense of $L_{2}(\Omega)$ norm. In accordance with the Lemma 6 \cite{kukushkin2021a}, we know that  $T_{t}$ is a $C_{0}$ semigroup of contractions, the corresponding  infinitesimal generator and its adjoint operator are defined by the following expressions
 $$
Yf(x)=c[f(x)-f(x-d)],\,Y^{\ast}f(x)=c[f(x)-f(x+d)],\,f\in L_{2}(\Omega).
$$
Let us find a representation for fractional powers of the operator $Y.$ Using formula  (45) \cite{kukushkin2021a}, we get
$$
   Y^{\beta}f=\sum\limits_{k=0}^{\infty}M_{k}f(x-kd), \,f\in L_{2}(\Omega),
   \,M_{k}= -\frac{\beta\Gamma(k -\beta)}{k!\Gamma(1 -\beta)}c^{\,\beta},\,\beta\in (0,1).
   $$
Consider the operator
$$
L:= Y^{\ast}\!a Y+b Y^{\beta}+ Q^{\ast}N Q,
$$
where   $a,b\in L_{\infty}(\Omega),\;Q$ is a   closed operator acting in $L_{2}(\Omega),\,Q^{-1}\in \mathfrak{S}_{\!\infty}(L_{2}),$ the operator $N$ is strictly accretive, bounded, $\mathrm{R}(Q)\subset \mathrm{D}(N).$ Note that   Theorem 14 \cite{kukushkin2021a}  gives us a compleat substantiation of the  given above theoretical approach application.

\chapter{Evolution equations in the abstract Hilbert space}

  This chapter is devoted to a  method allowing us to solve abstract evolution equations with the operator function in the second term. In this regard, we involve  a special technique providing a proof of convergence of   contour integrals,   a similar  scheme of reasonings was implemented in the papers \cite{firstab_lit:2kukushkin2022},\cite{firstab_lit(axi2022)}. At the same time, the behavior of the entire function in the neighborhood   of the point at infinity is the main obstacle to realize the scheme of reasonings. Thus, to overcome difficulties related to evaluation of improper contour integrals, we need   study more  comprehensive   innate properties of the  entire function. The property of the growth regularity is a key for the desired estimates for the involved integral constructions. However,   the lack of the latter approach is that the condition of the   growth regularity is supposed to be satisfied  within  the complex plane  except for the exceptional set of circles, the location of which in general cannot be described. On the other hand, we need not use the subtle   estimates for the Fredholm determinant established in \cite{firstab_lit:1Lidskii} for we  can be  completely satisfied  by  the application of the  Wieman  theorem in accordance with which we can    obtain the required estimate on the boundary of circle. We represent  a suitable algebraic reasonings  allowing   to involve   a fractional derivative in the first term. The  idea  gives an opportunity  to  to reformulate in the abstract form   many results in the framework of the theory of fractional differential equations to say nothing on previously unsolved problems.\\

\subsection{Preliminaries}

Denote by $\mathfrak{H}$   the abstract separable Hilbert space and    consider an invertible operator $B: \mathfrak{H}\rightarrow \mathfrak{H}$  with a dense range, we  use a notation $W:=B^{-1}.$ Note that such   agreements are justified by the significance of the operator with a compact resolvent, the detailed information on which spectral properties can be found in the papers   cited in the introduction section. Although the reasonings represented bellow cover the case of a compact operator $B: \mathfrak{H}\rightarrow \mathfrak{H}$ such that
$
 \Theta(B) \subset \mathfrak{L}_{0}(\theta_{0},\theta_{1} ),
$
we assume that
$
 \Theta(B) \subset \mathfrak{L}_{0}(\theta ),
$
and   put the following contour   in correspondence to the operator
\begin{equation*}
\vartheta(B):=\left\{\lambda:\;|\lambda|=r>0,\, -\theta \leq\mathrm{arg} \lambda \leq \theta \right\}\cup\left\{\lambda:\;|\lambda|>r,\;
  \mathrm{arg} \lambda =-\theta ,\,\mathrm{arg} \lambda =\theta \right\},
\end{equation*}
where   the number $r$ is chosen so that the operator  $ (I-\lambda B)^{-1} $ is regular within the corresponding closed circle, more detailed see Chapter \ref{Ch4}.
Remind that the operator function in general case was defined by   following expression in Chapter \ref{Ch5}.
$$
 \varphi(W)\!\!\int\limits_{\vartheta(B)}e^{-\varphi^{\alpha}(\lambda) t}B(I-\lambda B)^{-1}fd\lambda=\int\limits_{\vartheta(B)}\varphi(\lambda) e^{-\varphi^{\alpha}(\lambda)  t} B(I-\lambda B)^{-1}fd\lambda,\,f\in \mathrm{D}(\varphi),\,\alpha>0,
$$
where $\mathrm{D}(\varphi)$ is a subset of the Hilbert space on which the last integral construction is defined.
The operator $W$ is called the operator argument. In this section we consider a sectorial  operator operator argument $\Theta(W)\subset \mathfrak{L}_{0}(\theta)$ and suppose  that  the  function of the complex variable $\varphi$  maps the sector $\mathfrak{L}_{0}(\theta )$ into the sector $\mathfrak{L}_{0}(\varpi),\,\varpi<\pi/2\alpha.$ Such an assumption can be explained by technicalities guaranteeing  convergence of the improper integral  at the same time we produce a rather wide class of functions satisfying the condition.

Consider a  function of a complex variable     $\varphi$ that can be represented by its Laurent  series about the point zero.  Consider a formal construction
\begin{equation*}
I:=\sum\limits_{n=-\infty}^{\infty}  c_{n}  W^{n},
\end{equation*}
 where $c_{n}$ are the   coefficients corresponding to the function $\varphi.$ In  Chapter \ref{Ch5}
we established  conditions under which being imposed the latter series of operators   converges on some elements of the Hilbert space $\mathfrak{H},$ moreover  $I=\varphi(W),$      thus we have come to another definition of the operator function.

 In this chapter, we  consider      element-functions of the Hilbert space  $u:\mathbb{R}_{+}\rightarrow \mathfrak{H},\,u:=u(t),\,t\geq0$    assuming  that if $u$ belongs to $\mathfrak{H}$    then the fact  holds for all values of the variable $t.$   We understand such operations as differentiation and integration in the generalized sense that is caused by the topology of the Hilbert space $\mathfrak{H}.$ The derivative is understood as a    limit
$$
  \frac{u(t+\Delta t)-u(t)}{\Delta t}\stackrel{\mathfrak{H}}{ \longrightarrow}\frac{du}{dt} ,\,\Delta t\rightarrow 0.
$$
Let $t\in  J:=[a,b],\,0< a <b<\infty.$ The following integral is understood in the Riemann  sense as a limit of partial sums
\begin{equation*}
\sum\limits_{i=0}^{n}u(\xi_{i})\Delta t_{i}  \stackrel{\mathfrak{H}}{ \longrightarrow}  \int\limits_{ J }u(t)dt,\,\zeta\rightarrow 0,
\end{equation*}
where $(a=t_{0}<t_{1}<...<t_{n}=b)$ is an arbitrary splitting of the segment $ J ,\;\zeta:=\max\limits_{i}(t_{i+1}-t_{i}),\;\xi_{i}$ is an arbitrary point belonging to $[t_{i},t_{i+1}].$
The sufficient condition of the last integral existence is a continuous property (see\cite[p.248]{firstab_lit:Krasnoselskii M.A.}), i.e.
$
u(t)\stackrel{\mathfrak{H}}{ \longrightarrow}u(t_{0}),\,t\rightarrow t_{0},\;\forall t_{0}\in  J.
$
The improper integral is understood as a limit
\begin{equation*}
 \int\limits_{a}^{b}u(t)dt\stackrel{\mathfrak{H}}{ \longrightarrow} \int\limits_{a}^{c}u(t)dt,\,b\rightarrow c,\,c\in  [0,\infty].
\end{equation*}
Combining the operations we can consider a generalized  fractional derivative
  in the Riemann-Liouville sense (see \cite{firstab_lit:samko1987}),     in the formal form, we have
$$
   \mathfrak{D}^{1/\alpha}_{-}f(t):=-\frac{1}{\Gamma(1-1/\alpha)}\frac{d}{d t}\int\limits_{0}^{\infty}f(t+x)x^{-1/\alpha}dx,\;\alpha\geq1,
$$
here we should remark that
$$
\mathfrak{D}^{0}_{-}f(t)=-f(t),\;\mathfrak{D}^{1}_{-}f(t)= -\frac{df(t)}{dt}.
$$

In this chapter, under various assumptions regarding the operator function,  we    study   a Cauchy problem for the evolution equation of order $\alpha\geq1$  in the abstract Hilbert space
\begin{equation}\label{6.1}
   \mathfrak{D}^{1/\alpha}_{-}  u(t)- \varphi(W) u(t)=0 ,\;u(0)=f\in \mathrm{D}( \varphi ).
\end{equation}
where  $W$ is a closeable linear densely defined  sectorial operator acting in $\mathfrak{H}.$

Let $J:=(a,b)\subset \mathbb{R},\,\Omega:=[0,\infty),$ consider  functions  $u(t,x),\,t\in \Omega,\,x\in \bar{J}.$ In accordance with the   above, we   consider functional spaces with respect to the variable $x$ and we will assume that if $u$ belongs to a functional space then the fact  holds for all values of the variable $t.$ In the considered case of the functional  Hilbert space the defined above abstract fractional differential operator is reduced to the concept of
  Riemann-Liouville fractional integro-differential operators   acting in $L_{2}(J)$ with respect to the variable $x,$ see \cite{firstab_lit:samko1987}
 $$
 D^{\psi}_{a+}f(x):=\left(\frac{d}{dx}\right)^{[\psi]+1} \!\!\!\! I^{1-\{\psi\}}_{a+}\varphi   (x),\,\psi>0,\, D^{-\psi}_{a+}:=I^{\psi}_{a+},
 $$
 $$
    I^{\psi}_{a+}\varphi  (x):=\frac{1}{\Gamma(\psi)}\int\limits_{a}^{x}(x-t)^{\psi-1}\varphi(t)dt,\; \varphi \in L_{1}(J),\,\psi>0.
 $$

Along conditions  H1,H2 considered in detail in Chapters \ref{Ch2},\ref{Ch3}, we involve the additional condition\\

\noindent  $( \mathrm{H3} )  \;  |\mathrm{Im}( W f,g)_{\mathfrak{H}}|\! \leq \! C \|f\|_{\mathfrak{H_{+}}}\|g\|_{\mathfrak{H}} ,\,
     ,\,f,g \in  \mathfrak{M}.$ \\

The latter  guarantees the inclusion of the numerical range of values  into a sector with arbitrary small semi-angle, more detailed see Chapter \ref{Ch4}.

\section{General case}

\subsection{Spectral theory point of view}

The    results connected with application of  the basis property in the Abell-Lidskii sense  \cite{firstab_lit:1kukushkin2021} allow us to   solve  many   problems \cite{firstab_lit:2kukushkin2022} in the     theory of evolution  equations.  The central idea of this chapter is devoted to  an approach  allowing us to principally broaden conditions imposed upon the second term (the operator $W$)  of the evolution equation in the abstract Hilbert space \eqref{6.1}.

In this way we can obtain abstract results covering many applied problems to say nothing on the far-reaching   generalizations.
     We plan to implement the idea  having involved a notion of  an operator function. This is why one of the paper challenges is to find a harmonious way of reformulating the main principles of the spectral theorem having  taken  into account the peculiarities of the convergence in the Abel-Lidskii sense. However, our final goal is  an existence and uniqueness theorem for an abstract  evolution equation with an operator function in the second term, where  the derivative in the first term  is supposed to be of the integer order. The  peculiar result that is worth highlighted     is the obtained analytic  formula for the solution.   We should remind that involving a notion of the operator function, we broaden a great deal  an operator  class corresponding to the second term.   Thus, we can state   that the main issue of the paper is closely connected  with  the spectral theorem for  the non-selfadjoint unbounded  operator. Here, we should make a brief digression and consider a theoretical background that allows us to obtain such exotic results.

 In  Chapter \ref{Ch4}, we obtained the clarification of the results by   Lidskii  \cite{firstab_lit:1Lidskii}  on  the decomposition on the root vector system of the non-selfadjoint operator. We used a technique of the entire function theory and introduce  a so-called  Schatten-von Neumann class of the convergence  exponent. Considering strictly accretive operators satisfying special conditions formulated in terms of the norm, we constructed a   sequence of contours of the power type, on the  contrary to the results by  Lidskii    \cite{firstab_lit:1Lidskii}, where a sequence of contours of the  exponential  type was used.

 In the next paragraphs, we produce the  application of the   method elaborated in  Chapter \ref{Ch4} to evolution equations in the abstract Hilbert space  with the second term of the special type. Here, to show our motivation, we should  appeal to a plenty of applications to concrete differential equations  connected with  modeling  various   physical-chemical processes:  filtration of liquid and gas in the highly porous fractal   medium; heat exchange processes in a medium  with fractal structure and memory; casual walks of a point particle that starts moving from the origin
by self-similar fractal set; oscillator motion under the action of
elastic forces which is  characteristic for a viscoelastic medium, etc.
  In particular,  we would like   to  study  the existence and uniqueness theorems  for evolution equations   with the second term -- a  differential operator  with a fractional derivative in final terms. In this connection such operators as a Riemann-Liouville  fractional differential operator,    Kipriyanov operator, Riesz potential,  difference operator are involved.  Nowadays,  the concept of  the fractional integro-differentiation is efficiently   used in the study of  problems related with the approximate controllability of systems with damping \cite{firstab_lit:Sukavanam 2020}, approximate controllability of  semi-linear stochastic systems \cite{firstab_lit:Sukavanam 2017}, \cite{firstab_lit:Sukavanam 2018},  partial-approximate controllability of semi-linear systems \cite{firstab_lit:Haq  2022}, asymptotic stability of  stochastic differential equations in Banach spaces \cite{firstab_lit:Shukla 2020}. Note that analysis of the required  conditions imposed upon the second term of the studied class of evolution equations deserves to be mentioned. In this regard, we should note  a well-known fact discussed in Chapter \ref{Ch2}  (the initial source is  \cite{firstab_lit:Shkalikov A.})  that a particular interest appears in the case when a senior term of the operator  \cite{firstab_lit(arXiv non-self)kukushkin2018}   is not selfadjoint at least, for   in the contrary  case there is a plenty of results devoted to the topic, within the framework of which  the following papers are well-known
\cite{firstab_lit:1Katsnelson,firstab_lit:1Krein,firstab_lit:Markus Matsaev,firstab_lit:2Markus,firstab_lit:Shkalikov A.}. Indeed, most of them deal with a decomposition of the  operator  to a sum,  where the senior term
     must be either a selfadjoint or normal operator. In other cases, the  methods established in Chapter \ref{Ch2} initially in the papers
     \cite{kukushkin2019,firstab_lit(arXiv non-self)kukushkin2018}  become relevant   and allow us  to study spectral properties of   operators,  whether we have the mentioned above  representation or not. Here, we ought to    stress that the results of  the papers \cite{firstab_lit:2Agranovich1994,firstab_lit:Markus Matsaev}  can be  also applied to study non-selfadjoin operators but  based on the sufficiently strong assumption regarding the numerical range of values of the operator (the numerical range belongs to a parabolic domain).
The methods of \cite{firstab_lit(arXiv non-self)kukushkin2018} that are   applicable to study    sectorial   operators can be  used in the natural way,  if we deal with a more abstract construction -- the  infinitesimal generator of a semigroup of contraction, this  issue was discussed in detail in Chapter \ref{Ch3}, see also the initial source  \cite{kukushkin2021a}.  The  central challenge  of the latter  paper  is how  to create a model   representing     a  composition of  fractional differential operators   in terms of the semigroup theory. Here, we should note that motivation arouse in connection with the fact that
  a   second-order  differential operator can be presented  as a some kind of  a  transform of   the infinitesimal generator of a shift semigroup and stress that
  the eigenvalue problem for the operator
     was previously  studied by methods of  the theory of functions    \cite{firstab_lit:1Nakhushev1977}.
Having been inspired by   novelty of the  idea,  we generalize a   differential operator with a fractional integro-differential composition  in the final terms   to some transform of the corresponding  infinitesimal generator of the shift semigroup.
By virtue of the   methods elaborated  in Chapter \ref{Ch2}
\cite{firstab_lit(arXiv non-self)kukushkin2018}, we   managed  to  study spectral properties of the  infinitesimal generator  transform and obtained an outstanding result --
   asymptotic equivalence between   the
real component of the resolvent and the resolvent of the   real component of the operator. The relevance is based on the fact that
   the  asymptotic formula  for   the operator  real component  can be  established in most  cases due to well-known asymptotic relations  for the regular differential operators as well as for the singular ones
 \cite{firstab_lit:Rosenblum}. Thus,  we have theorems establishing spectral properties of some class of sectorial  operators which allow  us, jointly with the results  \cite{firstab_lit:1kukushkin2021}, to study the Cauchy  problem for the abstract  evolution equation by the functional analysis methods. Note that the abstract approach to the Cauchy problem for the fractional evolution equation was previously implemented  in \cite{firstab_lit:Bazhl,Ph. Cl}. However, the  main advantage of the results established in this Chapter is the obtained formula for the solution of the evolution equation with the relatively wide conditions imposed upon the second term,  wherein the derivative at the left-hand side is supposed to be of the real order.
 In the next paragraph we consider  the abstract  evolution equation  with the second term -- an operator function of the power  type. This problem appeals to many ones that lie  in the framework of the theory of differential   equations. For instance, in the paper   \cite{L. Mor} the solution of the  evolution equation modeling the switching kinetics of   ferroelectrics  in the injection mode  can be obtained in the analytical way, if we impose the conditions upon the second term. The following papers deal with equations which can be studied by the obtained in this chapter  abstract method \cite{firstab_lit:Mainardi F.,firstab_lit:Mamchuev2017a,firstab_lit:Mamchuev2017,firstab_lit:Pskhu,firstab_lit:Wyss}.

\subsection{  Series expansion  and its application to   Existence  theorems}

In this paragraph, we represent two theorems   valuable from  theoretical  and applied points of view respectively.    The first one is a generalization of the Lidskii method  this  is why  following the the classical approach we divide it into two statements that can be claimed separately. The first  statement establishes a character of the series convergence having a principal meaning within the whole concept. The second statement reflects the name of convergence - Abel-Lidskii since  the latter   can be connected with the definition of the series convergence in the Abel sense, more detailed  information can be found in the monograph by Hardy G.H. \cite{firstab_lit:Hardy}. The second theorem is a valuable  application of the first one, it is based upon   suitable algebraic reasonings having noticed by the author and allowing us to involve   a fractional derivative in the first term. We should note that previously,  a concept of an operator function represented in the second term  was realized  in the paper  \cite{firstab_lit:2kukushkin2022}, where  a case corresponding to  a function   represented by a Laurent series with a polynomial regular part was considered.  Bellow, we consider  a comparatively more difficult  case obviously  related to the infinite regular part of the Laurent series and therefore  requiring  a principally different method of study.

Using results of Chapter \ref{Ch4} consider a decomposition of the Hilbert space into a direct sum
$$
 \mathfrak{H}=\mathfrak{N}_{q} \oplus  \mathfrak{M}_{q}
$$
corresponding to the eigenvalue $\mu_{q}$ of the operator $B.$
  We can choose a  Jordan basis in   the finite dimensional root subspace $\mathfrak{N}_{q}$ corresponding to the eigenvalue $\mu_{q}$  that consists of Jordan chains of eigenvectors and root vectors  of the operator $B_{q}.$  Each chain has a set of numbers
\begin{equation}\label{6.2}
  e_{q_{\xi}},e_{q_{\xi}+1},...,e_{q_{\xi}+k},\,k\in \mathbb{N}_{0},
\end{equation}
  where $e_{q_{\xi}},\,\xi=1,2,...,m $ are the eigenvectors  corresponding   to the  eigenvalue $\mu_{q}.$
Considering the sequence $\{\mu_{q}\}_{1}^{\infty}$ of the eigenvalues of the operator $B$ and choosing a  Jordan basis in each corresponding  space $\mathfrak{N}_{q},$ we can arrange a system of vectors $\{e_{i}\}_{1}^{\infty}$ which we will call a system of the root vectors or following  Lidskii  a system of the major vectors of the operator $B.$
Assume that  $e_{1},e_{2},...,e_{n_{q}}$ is  the Jordan basis in the subspace $\mathfrak{N}_{q}.$  We can prove easily (see \cite[p.14]{firstab_lit:1Lidskii}) that     there exists a  corresponding biorthogonal basis $g_{1},g_{2},...,g_{n_{q}}$ in the subspace $\mathfrak{M}_{q}^{\perp},$ where  $\mathfrak{M}_{q},$ is a subspace  wherein the operator  $B-\mu_{q} I$ is invertible. Using the reasonings \cite{firstab_lit:1kukushkin2021},  we conclude that $\{ g_{i}\}_{1}^{n_{q}}$ consists of the Jordan chains of the operator $B^{\ast}$ which correspond to the Jordan chains  \eqref{6.2}, more detailed information can be found in \cite{firstab_lit:1kukushkin2021}.
It is not hard to prove   that  the set  $\{g_{\nu}\}^{n_{j}}_{1},\,j\neq i$  is orthogonal to the set $ \{e_{\nu}\}_{1}^{n_{i}}$ (see \cite{firstab_lit:1kukushkin2021}).  Gathering the sets $\{g_{\nu}\}^{n_{j}}_{1},\,j=1,2,...,$ we can obviously create a biorthogonal system $\{g_{n}\}_{1}^{\infty}$ with respect to the system of the major vectors of the operator $B.$

\begin{lem}\label{L6.1} Assume that   $B$ is a compact operator, $\varphi$ is an analytical function inside $\vartheta(B),$ then  in the pole $\lambda_{q}$ of the operator  $(I-\lambda B)^{-1},$ the residue of the vector  function $e^{-\varphi (\lambda) t}B(I-\lambda B)^{-1}\!f,\,(f\in \mathfrak{H}),$  equals to
$$
-\sum\limits_{\xi=1}^{m(q)}\sum\limits_{i=0}^{k(q_{\xi})}e_{q_{\xi}+i}c_{q_{\xi}+i}(t),
$$
where $m(q)$ is a geometrical multiplicity of the $q$-th eigenvalue,  $k(q_{\xi})+1$ is a number of elements in the $q_{\xi}$-th Jourdan chain,
$$
c_{q_{\xi}+j} (t):=e^{-\varphi(\lambda_{q}) t}\sum\limits_{m=0}^{k(q_{\xi})-j} c_{q_{\xi}+j+m} H_{m}(\varphi,\lambda_{q},t ).
$$
\end{lem}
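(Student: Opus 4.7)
The plan is to mirror the proof of Lemma L4.3 almost line-for-line, with the function $\lambda^{\alpha}$ replaced by $\varphi(\lambda)$ throughout, and to check carefully that every analytic manipulation still goes through under the weaker hypothesis that $\varphi$ is merely holomorphic inside the contour (rather than being a power of $\lambda$). First I would fix a small closed contour $\vartheta_q$ encircling the pole $\lambda_q = 1/\mu_q$ of $(I - \lambda B)^{-1}$ and containing no other poles, and use the Riesz decomposition $\mathfrak{H} = \mathfrak{N}_q \oplus \mathfrak{M}_q$ (guaranteed because $\lambda_q$ is a normal eigenvalue of the compact operator $B$) to split $f = f_1 + f_2$. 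Since $\lambda \mapsto B(I - \lambda B)^{-1} f_2$ is regular on a neighbourhood of $\lambda_q$ and $e^{-\varphi(\lambda)t}$ is holomorphic there by hypothesis, the $f_2$-contribution drops out of the residue and I am reduced to
\[
\mathfrak{I} \;=\; \frac{1}{2\pi i}\oint_{\vartheta_q} e^{-\varphi(\lambda) t}\, B(I - \lambda B)^{-1} f_1\, d\lambda .
\]

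Next I would apply exactly the algebraic identity used in Lemma L4.3,
\[
B(I - \lambda B)^{-1} = \frac{1}{\lambda^2}\Bigl\{(\lambda^{-1} I - B)^{-1} - \lambda I \Bigr\},
\]
and perform the substitution $\zeta = 1/\lambda$, which converts $\mathfrak{I}$ (up to sign and orientation reversal) into
\[
-\frac{1}{2\pi i}\oint_{\tilde\vartheta_q} e^{-\varphi(1/\zeta)\, t}\,(\zeta I - B)^{-1} f_1\, d\zeta
\]
around a small contour $\tilde\vartheta_q$ enclosing $\mu_q$. Because $\lambda_q \in \vartheta_q$ and $\varphi$ is holomorphic inside $\vartheta(B)$, the scalar factor $e^{-\varphi(1/\zeta)t}$ is holomorphic in a neighbourhood of $\zeta = \mu_q$, which is precisely what is needed to apply Cauchy's formula for higher-order poles.

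Then I would decompose $f_1$ on the Jordan basis of $\mathfrak{N}_q$ as in (4.11), invoke the resolvent identity (4.12),
\[
(\zeta I - B)^{-1} e_{q_\xi + i} \;=\; \sum_{j=0}^{i} \frac{e_{q_\xi + j}}{(\zeta - \mu_q)^{\,i - j + 1}},
\]
and interchange the sum and the contour integral. The remaining scalar integrals are
\[
\frac{1}{2\pi i}\oint_{\tilde\vartheta_q} \frac{e^{-\varphi(1/\zeta)\,t}}{(\zeta - \mu_q)^{\,i - j + 1}}\, d\zeta
\;=\; \frac{1}{(i-j)!}\,\lim_{\zeta \to \mu_q}\frac{d^{\,i - j}}{d\zeta^{\,i - j}}\Bigl\{e^{-\varphi(1/\zeta)\, t}\Bigr\},
\]
and I would declare this quantity to be $e^{-\varphi(\lambda_q) t}\, H_{i-j}(\varphi, \lambda_q, t)$ by the natural extension of the earlier definition of $H_m$ (the factor $e^{-\varphi(\lambda_q)t}$ being pulled out so that $H_0 = 1$ and $H_m$ carries only the nontrivial $\zeta$-dependence). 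Reindexing the double sum $\sum_i \sum_{j \le i}$ as $\sum_j \sum_{m \le k(q_\xi) - j}$ with $m = i - j$ yields exactly the claimed formula.

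The main obstacle, and the only place where genuine care is needed, is making sure the analyticity hypothesis on $\varphi$ inside $\vartheta(B)$ is strong enough at $\zeta = \mu_q$: one must verify that $\zeta \mapsto \varphi(1/\zeta)$ is holomorphic on a disc around $\mu_q$ (using that $\mu_q \neq 0$, which follows from $B$ being compact and $\lambda_q = 1/\mu_q$ lying in $\vartheta(B)$), so that the higher derivatives in the residue calculation genuinely exist and the $H_m$-notation is well-defined. Everything else is bookkeeping, essentially identical to Lemma L4.3, and I would not grind through it again.
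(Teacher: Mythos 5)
Your proposal follows the paper's own proof of this lemma step for step: the same Riesz splitting $f=f_{1}+f_{2}$, the same identity $B(I-\lambda B)^{-1}=\lambda^{-2}\{(\lambda^{-1}I-B)^{-1}-\lambda I\}$ with the substitution $\zeta=1/\lambda$, the same resolvent expansion on the Jordan basis, and the same evaluation of the scalar integrals by Cauchy's formula for derivatives with $H_{m}$ defined exactly as you propose. The one point you flag as needing care (holomorphy of $\varphi(1/\zeta)$ near $\mu_{q}\neq 0$) is indeed the only hypothesis the paper uses beyond Lemma L4.3, so the argument is complete.
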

\begin{proof} Consider an integral
$$
 \mathfrak{I}=\frac{1}{2\pi i}\oint\limits_{\vartheta_{q}} e^{-\varphi(\lambda)t}B(I-\lambda B)^{-1}fd\lambda ,\,f\in \mathrm{D}(W),
$$
where the interior of the contour $\vartheta_{q}$ does not contain any poles of the operator $(I-\lambda T)^{-1},$  except of $\lambda_{q}=1/\mu_{q}.$ Assume that  $\mathfrak{N}_{q}$ is a  root  space corresponding to $\lambda_{q}$ and  consider  a Jordan basis  \{$e_{q_{\xi}+i}\},\,i=0,1,...,k(q_{\xi}),\;\xi=1,2,...,m(q)$ in $\mathfrak{N}_{q}.$ Using decomposition of the Hilbert space in the direct sum \eqref{4.9}, we can represent an element
$$
f=f_{1}+f_{2},
$$
where $f_{1}\in \mathfrak{N}_{q},\;f_{2}\in \mathfrak{M}_{q}.$ Note that the operator function $e^{-\varphi(\lambda)t}B(I-\lambda B)^{-1}f_{2}$ is regular in the interior of the contour $\vartheta_{q},$ it follows from the fact that $\mu_{q}$ ia a normal eigenvalue (see the supplementary information). Hence, we have
$$
 \mathfrak{I}=\frac{1}{2\pi i}\oint\limits_{\vartheta_{q}} e^{-\varphi(\lambda)t}B(I-\lambda B)^{-1}f_{1}d\lambda.
$$
Using the formula
$$
B(I-\lambda B)^{-1}=\left\{(I-\lambda B)^{-1}-I   \right\}\frac{1}{\lambda}=\left\{\left(\frac{1}{\lambda}I- B\right)^{-1}-\lambda I   \right\}\frac{1}{\lambda^{2}},
$$
we obtain
$$
\mathfrak{I}=-\frac{1}{2\pi i}\oint\limits_{\tilde{\vartheta}_{q}} e^{-\varphi(\zeta^{-1})t}B(\zeta I-  B)^{-1}f_{1}d\zeta,\,\zeta=1/\lambda.
$$
Now, let us decompose the element $f_{1}$ on the corresponding Jordan basis, we have
\begin{equation}\label{6.3}
f_{1}=\sum\limits_{\xi=1}^{m(q)}\sum\limits_{i=0}^{k(q_{\xi})}e_{q_{\xi}+i}c_{q_{\xi}+i}.
\end{equation}
In accordance with the  definition of the root vector, we have
$$
Be_{q_{\xi}}=\mu_{q}e_{q_{\xi}},\;Be_{q_{\xi}+1}=\mu_{q}e_{q_{\xi}+1}+e_{q_{\xi}},...,Be_{q_{\xi}+k}=\mu_{q}e_{q_{\xi}+k}+e_{q_{\xi}+k-1}.
$$
Using this formula, we can prove the following relation
\begin{equation}\label{6.4}
(\zeta I-  B)^{-1}e_{q_{\xi}+i}=\sum\limits_{j=0}^{i}\frac{e_{q_{\xi}+j}}{(\zeta-\mu_{q})^{i-j+1}}.
\end{equation}
Note that the case $i=0$ is trivial. Consider a case, when $i>0,$ we have
$$
\frac{(\zeta I-  B)e_{q_{\xi}+j}}{(\zeta-\mu_{q})^{i-j+1}}=\frac{\zeta e_{q_{\xi}+j}-Be_{q_{\xi}+j}}{(\zeta-\mu_{q})^{i-j+1}} = \frac{ e_{q_{\xi}+j}}{(\zeta-\mu_{q})^{i-j }}-\frac{e_{q_{\xi}+j-1}}{(\zeta-\mu_{q})^{i-j+1}},\,j>0,
$$
$$
\frac{(\zeta I-  B)e_{q_{\xi} }}{(\zeta-\mu_{q})^{i +1}}=  \frac{ e_{q_{\xi} }}{(\zeta-\mu_{q})^{i }}.
$$
Using these formulas, we obtain
$$
\sum\limits_{j=0}^{i}\frac{(\zeta I-  B)e_{q_{\xi}+j}}{(\zeta-\mu_{q})^{i-j+1}}= \frac{ e_{q_{\xi} }}{(\zeta-\mu_{q})^{i }}+  \frac{ e_{q_{\xi}+1}}{(\zeta-\mu_{q})^{i-1 }}-\frac{e_{q_{\xi} }}{(\zeta-\mu_{q})^{i }}+...
$$
$$
+ \frac{ e_{q_{\xi}+i}}{(\zeta-\mu_{q})^{i-i }}-\frac{e_{q_{\xi}+i-1}}{(\zeta-\mu_{q})^{i-i+1}}=
 \frac{ e_{q_{\xi}+i}}{(\zeta-\mu_{q})^{i-i }},
$$
what gives us the desired result. Now, substituting  \eqref{6.3},\eqref{6.4}, we get
$$
\mathfrak{I}=- \frac{1}{2\pi i}\sum\limits_{\xi=1}^{m(q)}\sum\limits_{i=0}^{k(q_{\xi})}c_{q_{\xi}+i}\sum\limits_{j=0}^{i} e_{q_{\xi}+j} \oint\limits_{\tilde{\vartheta}_{q}}\frac{ e^{-\varphi(\zeta^{-1})t}}{(\zeta-\mu_{q})^{i-j+1}}d\zeta.
$$
Note that the function $\varphi (\zeta^{-1})$ is analytic inside the interior of $\tilde{\vartheta}_{q},$ hence
$$
 \frac{1}{2\pi i}\oint\limits_{\tilde{\vartheta}_{q}}\frac{ e^{-\varphi(\zeta^{-1})t}}{(\zeta-\mu_{q})^{i-j+1}}d\zeta= \frac{1}{(i-j)!}\lim\limits_{\zeta\rightarrow\, \mu_{q}}\frac{d^{i-j}}{d\zeta^{\,i-j}}\left\{ e^{-\varphi(\zeta^{-1})t}\right\}=:e^{-\varphi(\lambda_{q})t}H_{i-j}(\varphi,\lambda_{q},t ).
$$
Changing the indexes, we have
$$
\mathfrak{I}=-  \sum\limits_{\xi=1}^{m(q)}\sum\limits_{i=0}^{k(q_{\xi})}c_{q_{\xi}+i}e^{-\varphi(\lambda_{q})t}\sum\limits_{j=0}^{i} e_{q_{\xi}+j} H_{i-j}(\varphi,\lambda_{q},t )= -\sum\limits_{\xi=1}^{m(q)}\sum\limits_{j=0}^{k(q_{\xi})}e_{q_{\xi}+j}e^{-\varphi(\lambda_{q})t}\sum\limits_{m=0}^{k(q_{\xi})-j} c_{q_{\xi}+j+m} H_{m}(\varphi,\lambda_{q},t )=
$$
$$
=-\sum\limits_{\xi=1}^{m(q)}\sum\limits_{j=0}^{k(q_{\xi})}e_{q_{\xi}+j}   c_{q_{\xi}+j} (t),
$$
where
$$
c_{q_{\xi}+j} (t):=e^{-\varphi(\lambda_{q})t}\sum\limits_{m=0}^{k(q_{\xi})-j} c_{q_{\xi}+j+m} H_{m}(\varphi,\lambda_{q},t ).
$$
The proof is complete.
\end{proof}

Consider a subset of natural numbers  $ \{N_{\nu}\}_{0}^{\infty}\subset  \mathbb{N}$ and define operators
$$
\mathcal{P}_{\nu}(\varphi,t):= \frac{1}{2 \pi i}\int\limits_{\vartheta_{\nu}(B)}e^{-\varphi(\lambda)t}B(I-\lambda B)^{-1} d\lambda,
$$
where  $\vartheta_{\nu}(B)$ is a contour on the complex plain containing only the  eigenvalues $\lambda_{N_{\nu}+1},\lambda_{N_{\nu}+2},...,\lambda_{N_{\nu+1}},$ and no  more eigenvalues, more detailed see Chapter \ref{Ch4}. In accordance with Lemma \ref{L6.1}, we have
\begin{equation*}\label{3h}
 \mathcal{P} _{\nu}(\varphi,t)f:   = \sum\limits_{q=N_{\nu}+1}^{N_{\nu+1}}\sum\limits_{\xi=1}^{m(q)}\sum\limits_{i=0}^{k(q_{\xi})}e_{q_{\xi}+i}c_{q_{\xi}+i}(t),
\end{equation*}
where   $k(q_{\xi})+1$ is a number of elements in the $q_{\xi}$-th Jourdan chain,  $m(q)$ is a geometrical multiplicity of the $q$-th eigenvalue,
\begin{equation*}\label{4h}
c_{q_{\xi}+i}(t)=   e^{ -\varphi(\lambda_{q})  t}\sum\limits_{m=0}^{k(q_{\xi})-i}H_{m}(\varphi, \lambda_{q},t)c_{q_{\xi}+i+m},\,i=0,1,2,...,k(q_{\xi}),
\end{equation*}
$
c_{q_{\xi}+i}= (f,g_{q_{\xi}+k-i}) /(e_{q_{\xi}+i},g_{q_{\xi}+k-i}),
$
$\lambda_{q}=1/\mu_{q}$ is a characteristic number corresponding to $e_{q_{\xi}},$
$$
H_{m}( \varphi,z,t ):=  \frac{e^{ \varphi(z)  t}}{m!} \cdot\lim\limits_{\zeta\rightarrow 1/z }\frac{d^{m}}{d\zeta^{\,m}}\left\{ e^{-\varphi (\zeta^{-1})t}\right\} ,\;m=0,1,2,...\,,\,.
$$
More detailed information on the considered above   Jordan chains can be found in \cite{firstab_lit:1kukushkin2021}.\\

In order to resume the results of Chapter\ref{Ch4}, consider the following statement  \\

\begin{teo}\label{T6.1}
 Assume that the operator function $\varphi$ is defined on the operator argument $W,$
 $|\varphi(\lambda)|\rightarrow \infty,\,|\lambda|\rightarrow\infty,\,\lambda\in \vartheta(B),\,\alpha\geq 1,$
 a sequence of natural numbers $\{N_{\nu}\}_{0}^{\infty}$ can be chosen so that
 \begin{equation}\label{6.5}
 \frac{1}{2\pi i}\int\limits_{\vartheta(B)}e^{-\varphi(\lambda)^{\alpha}t}B(I-\lambda B)^{-1}f d \lambda =\sum\limits_{\nu=0}^{\infty} \mathcal{P}_{\nu}(\varphi^{\alpha},t)f,\;f\in \mathrm{D}(\varphi),
 \end{equation}
 the latter series is absolutely convergent in the sense of the norm,
 \begin{equation}\label{6.6}
 \lim\limits_{t\rightarrow+0}\frac{1}{2\pi i}\int\limits_{\vartheta(B)}e^{-\varphi(\lambda)^{\alpha}t}B(I-\lambda B)^{-1}f d \lambda=f.
\end{equation}
 Then
there exists a solution of the Cauchy problem \eqref{6.1} in the form
\begin{equation}\label{6.7}
u(t)= \sum\limits_{\nu=0}^{\infty} \mathcal{P}_{\nu}(\varphi^{\alpha},t)f.
\end{equation}
Moreover,  if the operator   $\mathfrak{D}^{1-1/\alpha}_{-}\!\varphi(W)$ is accretive then  the existing solution is unique, if the set $\mathrm{D}(\varphi)$ is dense in $\mathfrak{H}$ then    the condition $f\in \mathrm{D}(\varphi)$ can be omitted.
\end{teo}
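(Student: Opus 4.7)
The plan is to define the candidate solution directly as the contour integral
\[
u(t):=\frac{1}{2\pi i}\int_{\vartheta(B)}e^{-\varphi(\lambda)^{\alpha}t}\,B(I-\lambda B)^{-1}f\,d\lambda,
\]
and then to verify in turn: (i) the initial condition, (ii) the differential equation, (iii) the series representation. Step (i) is immediate from the hypothesis \eqref{6.6}, which gives $u(t)\to f$ as $t\to +0$. Step (iii) is exactly the content of \eqref{6.5}, so the series representation \eqref{6.7} is automatic once we know $u$ solves the equation. Thus the heart of the matter is step (ii): verifying that $\mathfrak{D}^{1/\alpha}_{-}u(t)=\varphi(W)u(t)$.

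The key scalar identity underlying step (ii) is the following: for $\mathrm{Re}\,z^{\alpha}>0$ and $\beta=1/\alpha\in(0,1]$,
\[
\mathfrak{D}^{1/\alpha}_{-}e^{-z^{\alpha}t}=-\frac{1}{\Gamma(1-\beta)}\frac{d}{dt}\!\int_{0}^{\infty}\!\!e^{-z^{\alpha}(t+x)}x^{-\beta}dx=-\frac{(-z^{\alpha})e^{-z^{\alpha}t}}{(z^{\alpha})^{1-\beta}}=z\,e^{-z^{\alpha}t},
\]
obtained from the gamma integral $\int_{0}^{\infty}e^{-ax}x^{-\beta}dx=a^{\beta-1}\Gamma(1-\beta)$. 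Applying this formally with $z=\varphi(\lambda)$ inside the contour integral will yield
\[
\mathfrak{D}^{1/\alpha}_{-}u(t)=\frac{1}{2\pi i}\int_{\vartheta(B)}\varphi(\lambda)\,e^{-\varphi(\lambda)^{\alpha}t}\,B(I-\lambda B)^{-1}f\,d\lambda,
\]
after which Lemma \ref{L5.1} (formula \eqref{5.3}) lets us pull the factor $\varphi(\lambda)$ out as the operator $\varphi(W)$, giving $\mathfrak{D}^{1/\alpha}_{-}u(t)=\varphi(W)u(t)$. The main technical obstacle is justifying that the fractional derivative may be interchanged with the contour integral; this will require the decay estimates on $e^{-\varphi(\lambda)^{\alpha}t}$ along $\vartheta(B)$ that follow from the assumption $\varpi<\pi/(2\alpha)$ and the growth $|\varphi(\lambda)|\to\infty$. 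These estimates give a uniform bound in $t>0$ on a neighborhood of each fixed positive time and guarantee the absolute convergence of the improper integrals produced by the Riemann--Liouville definition, which is exactly what is needed to commute $\mathfrak{D}^{1/\alpha}_{-}$ with $\int_{\vartheta(B)}$.

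For uniqueness under the accretivity assumption on $\mathfrak{D}^{1-1/\alpha}_{-}\varphi(W)$, I will let $v=u_{1}-u_{2}$ be the difference of two solutions, so that $v$ satisfies the homogeneous equation with $v(0)=0$. Applying $\mathfrak{D}^{1-1/\alpha}_{-}$ on both sides and using the Riemann--Liouville composition identity $\mathfrak{D}^{1-1/\alpha}_{-}\mathfrak{D}^{1/\alpha}_{-}=\mathfrak{D}^{1}_{-}=-d/dt$ produces $-v'(t)=\mathfrak{D}^{1-1/\alpha}_{-}\varphi(W)v(t)$. Taking the inner product with $v(t)$ and extracting real parts yields
\[
\tfrac{1}{2}\tfrac{d}{dt}\|v(t)\|_{\mathfrak{H}}^{2}=-\mathrm{Re}\bigl(\mathfrak{D}^{1-1/\alpha}_{-}\varphi(W)v(t),v(t)\bigr)_{\mathfrak{H}}\leq 0,
\]
so $\|v(t)\|_{\mathfrak{H}}$ is non-increasing and, combined with $v(0)=0$, forces $v\equiv 0$. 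Finally, if $\mathrm{D}(\varphi)$ is dense in $\mathfrak{H}$, I would approximate an arbitrary $f\in\mathfrak{H}$ by $\{f_{n}\}\subset\mathrm{D}(\varphi)$, form the corresponding solutions $u_{n}(t)$ via \eqref{6.7}, and use the uniform-in-$t>0$ estimates for the contour integral together with \eqref{6.6} to pass to the limit, thereby defining a solution for general $f$.
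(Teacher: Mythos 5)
Your proposal is correct and follows essentially the same route as the paper: the same gamma-integral identity $\int_{0}^{\infty}x^{-1/\alpha}e^{-\varphi^{\alpha}(\lambda)x}dx=\Gamma(1-1/\alpha)\varphi^{1-\alpha}(\lambda)$ to compute $\mathfrak{D}^{1/\alpha}_{-}$ under the contour integral, the same use of the operator-function identity to pull out $\varphi(W)$, and the same energy estimate from accretivity for uniqueness. The only point to sharpen is the final density step: the uniform-in-$t$ bound needed to pass to the limit is not a direct contour estimate (which degenerates as $t\to+0$) but the contraction property $\|S_{t}f\|_{\mathfrak{H}}\leq\|f\|_{\mathfrak{H}}$, which you obtain by applying your own energy argument to $S_{t}f$ for $f\in\mathrm{D}(\varphi)$ — so the accretivity hypothesis is what licenses dropping $f\in\mathrm{D}(\varphi)$.
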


\begin{proof}
Let us show that
$
u(t)
$
is a solution of the problem \eqref{6.1}. Using the fact that the operator function is defined, we have
$$
  \varphi(W)u(t)=\frac{1}{2\pi i}\int\limits_{\vartheta(B)}\varphi(\lambda) e^{-\varphi(\lambda)^{\alpha}  t} B(I-\lambda B)^{-1}f d\lambda,\,f\in \mathrm{D}(\varphi),\,n\in \mathbb{N}.
$$
We need establish  the following relation
\begin{equation}\label{6.8}
\frac{d}{dt}\!\!\int\limits_{\vartheta(B)} \varphi(\lambda)^{ 1-\alpha }e^{-\varphi^{\alpha}(\lambda)t}B(I-\lambda B)^{-1}f\, d\lambda =
-\!\!\int\limits_{\vartheta(B)}\varphi(\lambda) e^{-\varphi(\lambda)^{\alpha}t}B(I-\lambda B)^{-1}f\, d\lambda,\,f\in \mathfrak{H},
\end{equation}
i.e. we can use a differentiation operation  under the integral. Using  simple  reasonings, we obtain the fact that
 that for an arbitrary
$$
\vartheta_{j}(B):=\left\{\lambda:\;|\lambda|=r>0,\, \theta_{0} \leq\mathrm{arg} \lambda \leq \theta_{1} \right\}\cup\left\{\lambda:\;r<|\lambda|<r_{j},\,\mathrm{arg} \lambda =\theta_{0}  ,\,\mathrm{arg} \lambda =\theta_{1} \right\},
$$
 there exists a limit
$
 (e^{-\varphi^{\alpha}(\lambda)  \Delta t} -1)e^{-\varphi^{\alpha}(\lambda)  t}/ \Delta t \,    \longrightarrow -\varphi^{\alpha}(\lambda) e^{-\varphi^{\alpha}(\lambda)  t}  ,\,\Delta t\rightarrow 0,
$
where convergence in accordance with the  Heine-Cantor theorem, is uniform with respect to $ \lambda\in \vartheta_{j}(B).$
By virtue of the  decomposition on the Taylor series, taking into account the fact that $\mathrm{Re}\varphi^{\alpha}(\lambda) \geq C |\varphi(\lambda)|^{\alpha},\,\lambda\in \vartheta(B),$
we get
$$
\left |\frac{e^{-\varphi^{\alpha}(\lambda)  \Delta t} -1}{ \Delta t}e^{-\varphi^{\alpha}(\lambda)  t}\right | \leq |\varphi(\lambda)|^{\alpha} e^{ |\varphi(\lambda)  |^{\alpha}\Delta t}
e^{-\mathrm{Re}\,\varphi^{\alpha}(\lambda)  t}\leq |\varphi(\lambda)|^{\alpha} e^{(\Delta t-Ct)  |\varphi(\lambda)|^{\alpha}  }, \,\lambda\in \vartheta(B).
$$
Thus, applying the latter estimate, Lemma \ref{L4.7} (Lemma 6 \cite{firstab_lit:1kukushkin2021}), for a sufficiently small value $\Delta t,$ we get
\begin{equation}\label{6.9}
 \left\|\, \int\limits_{\vartheta(B)}\frac{e^{-\varphi^{\alpha}(\lambda)  \Delta t} -1}{ \Delta t}\varphi^{1-\alpha}(\lambda)e^{-\varphi^{\alpha}(\lambda)  t}B(I-\lambda B)^{-1}f d\lambda \right\|_{\mathfrak{H}}
  \leq
   C \|f\|_{\mathfrak{H}} \int\limits_{\vartheta(B)} e^{-C   t|\varphi(\lambda)|^{\alpha}}|\varphi(\lambda)|    |d\lambda|.
\end{equation}
Let us establish the convergence of the last integral. Applying the condition  $|\varphi(\lambda)|> C |\lambda|^{\xi},\,\xi>0,$    we get\\
$$
\int\limits_{\vartheta(B)} e^{-C   t|\varphi(\lambda)|^{\alpha}}|\varphi(\lambda)|    |d\lambda| \leq \int\limits_{\vartheta(B)}  e^{-t    e^{C|\lambda|^{\xi}}  }    e^{ C|\lambda|^{\xi}}    |d\lambda|.
$$
It is clear that the latter integral is convergent for an arbitrary positive value $t,$ what     guaranties that the improper integral at the left-hand side of \eqref{6.9}
 is uniformly convergent with respect to $\Delta t.$ These facts give us an opportunity to claim that   relation \eqref{6.8} holds. Here, we should explain that this conclusion is based  on the generalization of the well-known theorem of the calculus, we left a complete  investigation of the matter to the reader having noted that the  the reasonings are absolutely analogous to the ordinary calculus.

 Applying  the scheme of the proof corresponding to the  ordinary integral calculus,  using the contour $\vartheta_{j}(B),$    applying  Lemma \ref{L4.7}  respectively, we can establish a  formula
\begin{equation*}
\int\limits_{0}^{\infty}x^{-\xi}dx\!\!\int\limits_{\vartheta(B)}e^{-\varphi^{\alpha}(\lambda)(t+x)} B(I-\lambda B)^{-1}f\, d\lambda=\!\!\int\limits_{\vartheta(B)}e^{-\varphi^{\alpha}(\lambda)t} B(I-\lambda B)^{-1}f d\lambda\int\limits_{0}^{\infty}x^{-\xi}e^{-\varphi^{\alpha}(\lambda)x}dx,
\end{equation*}
where $\xi\in(0,1).$
 Applying  the obtained  formulas, taking into account a relation
$$
  \int\limits_{0}^{\infty}x^{-\xi}e^{-\varphi^{\alpha}(\lambda)x}dx=  \Gamma(1-\xi) \varphi^{\alpha(\xi-1)}(\lambda),
$$
we get
\begin{equation*}
\mathfrak{D}^{1/\alpha}_{-}\!\!\! \int\limits_{\vartheta(B)}e^{-\varphi^{\alpha}(\lambda)t}     B(I-\lambda B)^{-1}f= -\frac{d}{dt}\int\limits_{\vartheta(B)}\varphi^{1-\alpha}(\lambda)e^{-\varphi^{\alpha}(\lambda)t}    B(I-\lambda B)^{-1}f\, d\lambda=
$$
$$
=\int\limits_{\vartheta(B)}e^{-\varphi^{\alpha}(\lambda)t}  \varphi(\lambda)  B(I-\lambda B)^{-1}f\, d\lambda,\;\alpha\geq 1.
\end{equation*}
Now, using the theorem conditions, we obtain the fact that $u$ is a solution of the equation   \eqref{6.1}.

 Let us show that the initial condition holds in the sense
$$
u(t)   \xrightarrow[   ]{\mathfrak{H}}  f,\,t\rightarrow+0.
$$
 If $f\in \mathrm{D}(W),$ then it becomes obvious due to the theorem conditions. To establish the fact in  the case   $f\in \mathfrak{H},$ we should   involve the accretive property of the operator composition  $ \mathfrak{D}^{1-1/\alpha}_{-} \varphi(W)   .$
  Consider an operator
$$
S_{t}f=\frac{1}{2\pi i} \int\limits_{\vartheta}e^{-\varphi^{\alpha}(\lambda)t}B(I-\lambda B)^{-1}f\, d\lambda,\,t>0.
$$
It is clear that    $S_{t}:\mathfrak{H}\rightarrow \mathfrak{H},$ more detailed information in this regard  is represented in the preliminary section.
Let us prove  that
$$
\|S_{t}\|_{\mathfrak{H}\rightarrow \mathfrak{H}}\leq1,\;t>0.
$$
We need to establish the following  formula
\begin{equation}\label{6.10}
\mathfrak{D}^{1-1/\alpha}_{-}\mathfrak{D}^{1/\alpha}_{-}u(t)=- \frac{du(t)}{dt}.
\end{equation}
Analogously to the above, we get
\begin{equation*}
\int\limits_{0}^{\infty}x^{1/\alpha-1}dx\int\limits_{\vartheta}\varphi(\lambda)e^{-\varphi^{\alpha}(\lambda)(t+x)} B(I-\lambda B)^{-1}f d\lambda=
$$
$$
=\int\limits_{\vartheta}\varphi(\lambda)e^{-\varphi^{\alpha}(\lambda)t}  B(I-\lambda B)^{-1}f d\lambda\int\limits_{0}^{\infty}x^{1/\alpha-1}e^{-\varphi^{\alpha}(\lambda)x}dx=
\end{equation*}
$$
=\Gamma(1/\alpha)\int\limits_{\vartheta}\varphi(\lambda) \varphi^{-1}(\lambda) e^{-\varphi^{\alpha}(\lambda)t}  B(I-\lambda B)^{-1}f d\lambda = 2\pi i\, \Gamma(1/\alpha) u(t).
$$
Differentiating the latter relation with respect to the time variable, we obtain   formula \eqref{6.10}.

  Assume that   $f\in \mathrm{D}( \varphi).$
Applying  the operator $\mathfrak{D}^{1-1/\alpha}_{-}$ to   both sides of equation  \eqref{6.1}, using   formula \eqref{6.10},
we get
$$
u'+\mathfrak{D}^{1-1/\alpha}_{-} \varphi(W)u=0.
$$
 Let us multiply the both sides of the last    relation  on $u$  in the sense of the inner product, we get
$$
\left(u'_{t},u\right)_{\mathfrak{H}}+(\mathfrak{D}^{1-1/\alpha}_{-} \varphi(W)u,u)_{\mathfrak{H}}=0.
$$
Consider a real part of the latter expression, we have
$$
\mathrm{Re}\left(u'_{t},u\right)_{\mathfrak{H}}+\mathrm{Re}(\mathfrak{D}^{1-1/\alpha}_{-}\varphi(W)u,u)_{\mathfrak{H}}= \left(u'_{t},u\right)_{\mathfrak{H}}/2+ \left(u, u'_{t}\right)_{\mathfrak{H}}/2+\mathrm{Re}(\mathfrak{D}^{1-1/\alpha}_{-}\varphi(W)u,u)_{\mathfrak{H}}.
$$
Having noticed that
$$
\frac{d}{dt}\left\{\|u(t)\|_{\mathfrak{H}}^{2} \right\}=\frac{d}{dt}\left(u(t),u(t)\right)_{\mathfrak{H}}=\left(u'_{t},u\right)_{\mathfrak{H}}+\left(u,u'_{t}\right)_{\mathfrak{H}},
$$
we obtain
$$
\frac{d}{dt}\left\{\|u(t)\|_{\mathfrak{H}}^{2} \right\} = -2\mathrm{Re}(\mathfrak{D}^{1-1/\alpha}_{-}\tilde{W}u,u)_{\mathfrak{H}}\leq 0.$$
 Integrating both sides, we get
$$
  \|u(\tau)\|_{\mathfrak{H}}^{2}-  \|u(0)\|_{\mathfrak{H}}^{2}=\int\limits_{0}^{\tau} \frac{d}{dt}\left\{\|u(t)\|_{\mathfrak{H}}^{2} \right\} dt\leq 0.
$$
The last relation can be rewritten in the form
$$
\|S_{t}f\|_{\mathfrak{H}}\leq \|f\|_{\mathfrak{H}},\,f\in \mathrm{D}(\varphi).
$$
Since $\mathrm{D}(\varphi)$ is a dense set in $ \mathfrak{H},$ then we obviously  obtain  the  desired result, i.e. $\|S_{t}\|_{\mathfrak{H}\rightarrow \mathfrak{H}}\leq 1.$
Now, having assumed that
$$
f_{n}   \xrightarrow[   ]{\mathfrak{H}}  f,\,n\rightarrow \infty,\;\{f_{n}\}\subset \mathrm{D}( \varphi ),\,f\in \mathfrak{H},
$$
consider the following reasonings
$$
\|u(t)-f\|_{\mathfrak{H}}=\|S_{t}f-f\|_{\mathfrak{H}}=\|S_{t}f-S_{t}f_{n}+S_{t}f_{n}-f_{n}+f_{n}-    f\|_{\mathfrak{H}}\leq
$$
$$
\leq\|S_{t}\|\cdot\|f- f_{n}\|_{\mathfrak{H}}+\|S_{t}f_{n}-f_{n}\|_{\mathfrak{H}}+\|f_{n}-    f\|_{\mathfrak{H}}.
$$
Note that
$$
S_{t}f_{n}   \xrightarrow[   ]{\mathfrak{H}}  f_{n},\,t\rightarrow+0.
$$
It is clear that  if we choose $n$ so that  $\|f- f_{n}\|_{\mathfrak{H}}<\varepsilon/3$ and  after that  choose $t$   so that $\|S_{t}f_{n}-f_{n}\|_{\mathfrak{H}}<\varepsilon/3,$ then we obtain
 $\forall\varepsilon>0,\,\exists\, \delta(\varepsilon):\,\|u(t)-f\|_{\mathfrak{H}}<\varepsilon,\,t<\delta.$
   Thus, we can put  $u(0)=f$ and claim that   the initial condition holds in the case $f\in \mathfrak{H}.$       The uniqueness follows easily from the fact that $ \mathfrak{D}^{1-1/\alpha}_{-}\varphi(W) $ is accretive.  In this case, repeating the previous reasonings, we come to
$$
  \|\phi(t)\|_{\mathfrak{H}}^{2} \leq  \|\phi(0)\|_{\mathfrak{H}}^{2},\,t>0,
$$
where $\phi(t) $ is a difference  of   solutions $u_{1}(t) $ and $u_{2}(t)$ corresponding to the given initial condition. Observe  that  by virtue of the initial condition, we have  $\phi(0)=0.$  Therefore,   the last  inequality    can hold only if $\phi(t) =0.$ We complete the proof.
\end{proof}

\section{Entire function case}

Consider an entire function    $\varphi$ that can be represented by its Taylor series about the point zero and consider a compact invertible operator  $B: \mathfrak{H}\rightarrow \mathfrak{H}$  defined in the preliminary section   such that
$
 \Theta(B) \subset \mathfrak{L}_{0}(\theta ),
$
thus we have a formal construction
\begin{equation*}
\varphi(W)=\sum\limits_{n=0}^{\infty}  c_{n}  W^{n},
\end{equation*}
  where $c_{n}$ are the Taylor coefficients corresponding to the function $\varphi.$ In  Chapter \ref{Ch5}
we established  conditions under which being imposed the latter series of operators   converges on some elements of the Hilbert space $\mathfrak{H}$ and coincides with the operator function.

We have the following theorem that gives us conditions of the Cauchy problem \eqref{6.1}  solvability.

\begin{teo}\label{T6.2}
Assume that    the entire function $\varphi$ of the order less than a half,    its zeros with a sufficiently large absolute value   do not belong to the sector $\mathfrak{L}_{0}(\theta),$ in the case $\alpha \neq  [\alpha],$ the operator $B$ is compact, $ \Theta(B) \subset \mathfrak{L}_{0}(\theta ),$ $B\in \mathfrak{S}_{s},\,0<s<\infty,$  then the statement  of Theorem \ref{T6.1}  holds.
 \end{teo}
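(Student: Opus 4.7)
The plan is to deduce Theorem \ref{T6.2} from Theorem \ref{T6.1} by verifying its four hypotheses: (i) the operator function $\varphi(W)$ is defined on a suitable dense subset, (ii) $|\varphi(\lambda)|\to\infty$ on $\vartheta(B)$, (iii) a sequence $\{N_\nu\}_0^\infty$ can be chosen realising the series expansion \eqref{6.5} with absolute norm convergence, and (iv) the limiting relation \eqref{6.6} holds. Item (i) is immediate from Lemma \ref{L5.1}, whose hypotheses coincide with the standing assumptions of Theorem \ref{T6.2} on the order of $\varphi$ and the location of its zeros. For (ii) I would invoke the Wieman theorem (Theorem \ref{T4.3}): since $\varrho<1/2$ we have $\cos\pi\varrho>0$, hence there exists a sequence $r_n\uparrow\infty$ such that $m_\varphi(r_n)\to\infty$, which is the only level of control on $|\varphi|$ that will actually be required on the arcs of the contours constructed below.

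\textbf{Construction of the lacunae and the series expansion.} For (iii), I would adapt the Lidskii-type argument implemented in Theorem \ref{T4.7} to the present entire-function setting. Because $B\in\mathfrak{S}_s$, taking $m=[s]$ gives $B^{m+1}\in\mathfrak{S}_{s/(m+1)}$ with $s/(m+1)<1$, so Lemma \ref{L4.10} represents the Fredholm determinant $\Delta_{B^{m+1}}(\lambda^{m+1})$ as a canonical product, and Lemma \ref{L4.11} supplies, inside the annulus $(1-\delta_\nu)R_\nu<|\lambda|<R_\nu$, a radius $\tilde{R}_\nu$ on which
\[
\|(I-\lambda B)^{-1}\|\leq C e^{w(|\lambda|)}|\lambda|^m.
\]
The radii $R_\nu$ are chosen, in the spirit of Theorem \ref{T4.7}, so that the annuli lie in the lacunae between consecutive characteristic numbers; the required spacing $|\lambda_{N_\nu+1}|-|\lambda_{N_\nu}|\gtrsim|\lambda_{N_\nu}|^{1-s}$ follows from $s_n(B)=o(n^{-1/s})$ exactly as in Chapter \ref{Ch4}. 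Residues on each enclosed eigenvalue are computed via Lemma \ref{L6.1}, producing $-\mathcal{P}_\nu(\varphi^\alpha,t)f$.

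\textbf{Absolute convergence and the initial condition.} To obtain $\sum_\nu \|\mathcal{P}_\nu(\varphi^\alpha,t)f\|<\infty$, I would decompose each boundary $\vartheta_\nu(B)$ into the two radial side segments and the arc $\tilde{\vartheta}_\nu$. On the arc, the exponential factor is controlled by the Wieman lower bound $\mathrm{Re}\,\varphi^\alpha(\lambda)\geq C|\varphi(\lambda)|^\alpha\geq C m_\varphi^\alpha(\tilde{R}_\nu)$, and this dominates the resolvent bound of the previous paragraph provided $w(\tilde R_\nu)=o(m_\varphi^\alpha(\tilde R_\nu))$; on the side segments the Lemma \ref{L4.7} estimate combines with the sectorial image hypothesis (and the Wieman-type growth) to give an exponentially decaying tail. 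Item (iv) then follows from Lemma \ref{L5.3}, whose premises on the order of $\varphi$ and on the sectorial image $\mathfrak{L}_0(\varpi)$ with $\varpi<\pi/2\alpha$ are ensured by the hypotheses of Theorem \ref{T6.2} (and by the condition on zeros in the non-integer case, via Lemma \ref{L4.5}). The density argument of Theorem \ref{T6.1} lifts the initial condition from $\mathrm{D}(\varphi)$ to $\mathfrak{H}$ as soon as $\mathrm{D}(\varphi)$ is dense.

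\textbf{Principal obstacle.} The hardest step will be the uniform control of the weight $w(\tilde R_\nu)$ across all arcs simultaneously: one must show that the Fredholm-determinant weight — expressible through $\int_0^r n_{B^{m+1}}(t)\,t^{-1}dt$ and its tail — decays fast enough relative to the Wieman lower bound $m_\varphi^\alpha(\tilde R_\nu)$ to produce a summable majorant. This is the analogue of the relation \eqref{4.21} that underpins Theorem \ref{T4.7} and is precisely the reason the Schatten-von Neumann class of the convergence exponent $\mathfrak{S}^\star_\sigma$ was introduced; the argument should go through once the effective exponent $\sigma$ is identified as a function of $s$ and of the order of $\varphi$.
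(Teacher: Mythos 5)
Your overall architecture---reduce to Theorem \ref{T6.1}, obtain the definition of $\varphi(W)$ from Lemma \ref{L5.1}, compute residues via Lemma \ref{L6.1}, bound the resolvent on the arcs through the Fredholm-determinant/Cartan machinery of Lemma \ref{L4.11}, use Lemma \ref{L4.7} on the radial sides, and close with Lemma \ref{L5.3}---is the same as the paper's. But the lacunae construction you import from Theorem \ref{T4.7} is an unnecessary detour: the paper needs no gap condition $|\lambda_{N_{\nu}+1}|-|\lambda_{N_{\nu}}|\gtrsim|\lambda_{N_{\nu}}|^{1-s}$ at all. It takes geometrically spaced radii $R_{\nu}=R(1-\kappa)^{-\nu+1}$ and applies Lemma \ref{L4.11} in each annulus to find a good circle $|\lambda|=\tilde{R}_{\nu}$ free of poles; $N_{\nu}$ is simply the number of poles inside $|\lambda|<\tilde{R}_{\nu}$. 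More importantly, two steps of your argument do not close.

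First, the arc estimate, which you flag as the ``principal obstacle'' and leave open, pointing at relation \eqref{4.21} and the class $\mathfrak{S}^{\star}_{\sigma}$. These are the wrong tools here: they matter in Theorem \ref{T4.7} because there the decaying factor is only $e^{-Ct|\lambda|^{\alpha}}$, a single exponential of a power comparable to the Cartan weight. Here the factor is $e^{-Ct|\varphi(\lambda)|^{\alpha}}$ with $\varphi$ entire, and the resolution is elementary: for any $\sigma>s$ one has $w(\tilde{R}_{\nu})\leq\gamma(\tilde{R}_{\nu})\tilde{R}_{\nu}^{\sigma}$ with $\gamma\rightarrow0$ (Lemma \ref{L4.2}), i.e.\ at most a fixed power of $\tilde{R}_{\nu}$, while the definition of the order gives $M_{\varphi}(r)>e^{r^{\varrho-\varepsilon}}$ along a sequence of radii, so the Wieman theorem yields $m^{\alpha}_{\varphi}(R'_{\nu})\geq[M_{\varphi}(R'_{\nu})]^{(\cos\pi\varrho-\varepsilon)\alpha}$, exponentially large in a power of $R'_{\nu}$; after passing to a common subsequence the exponent $w(\tilde{R}_{\nu})-Ct\,m_{\varphi}^{\alpha}$ is eventually negative and summably so, with no exponent matching and no $\mathfrak{S}^{\star}$ hypothesis (which is not available---only $B\in\mathfrak{S}_{s}$ is assumed). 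Second, the radial segments: the Wieman theorem controls the minimum modulus only on circles $|z|=r_{n}$ and says nothing on the rays between them, so ``Wieman-type growth'' cannot supply the decaying integrand needed for $\sum_{\nu}J^{\pm}_{\nu}<\infty$. The lower bound on the rays is exactly what Theorem \ref{T4.4} and Lemma \ref{L4.5} provide, namely $\ln|\varphi(re^{i\psi})|\approx H(\psi)r^{\varrho(r)}$ with $H(\psi)>0$ for $\varrho\leq1/2$, and this---not the limit relation \eqref{6.6}---is where the hypothesis that the zeros avoid the sector actually enters.
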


\begin{proof}
  Let us establish the first  relation \eqref{6.5}.  Consider a contour $\vartheta(B).$ Having fixed $R>0,0<\kappa<1,$ so that $R(1-\kappa)=r,$ consider a monotonically increasing sequence $\{R_{\nu}\}_{0}^{\infty},\,R_{\nu}=R(1-\kappa)^{-\nu+1}.$   Using Lemma 5 \cite{firstab_lit:1kukushkin2021}, we get
$$
\|(I-\lambda B )^{-1}\|_{\mathfrak{H}}\leq e^{\gamma (|\lambda|)|\lambda|^{\sigma}}|\lambda|^{m},\,\sigma>s,\,m=[\sigma],\,|\lambda|=\tilde{R}_{\nu},\;R_{\nu}<\tilde{R}_{\nu}<R_{\nu+1},
$$
where
$$
\gamma(|\lambda|)= \beta ( |\lambda|^{m+1})  +C \beta(|C  \lambda| ^{m+1}),\;\beta(r )= r^{ -\frac{\sigma}{m+1} }\left(\int\limits_{0}^{r}\frac{n_{B^{m+1}}(t)dt}{t }+
r \int\limits_{r}^{\infty}\frac{n_{B^{m+1}}(t)dt}{t^{ 2  }}\right).
$$
Note that  in accordance with Lemma 3 \cite{firstab_lit:1Lidskii}  the following relation holds
\begin{equation}\label{6.11}
\sum\limits_{i=1}^{\infty}\lambda^{\frac{\sigma}{ (m+1)}}_{i}( \tilde{B} )\leq \sum\limits_{i=1}^{\infty}s^{ \,\sigma }_{i}( B )<\infty,\,\varepsilon>0,
\end{equation}
where   $\tilde{B}:=(B^{\ast m+1}A^{m+1})^{1/2}.$  It is clear that   $\tilde{B}\in  \mathfrak{S}_{\upsilon},\,\upsilon<\sigma/(m+1).$
Denote by $\vartheta_{\nu}$ a bound of the intersection of the ring $\tilde{R}_{\nu}<|\lambda|<\tilde{R}_{\nu+1}$ with the interior of the contour $\vartheta(B),$ denote by $N_{\nu}$ a number of the resolvent poles  being   contained  in the set $\mathrm{int }\,\vartheta(B) \,\cap \{\lambda:\,r<|\lambda|<\tilde{R}_{\nu} \}.$ In accordance with Lemma 3 \cite{firstab_lit(axi2022)},  we get
\begin{equation}\label{6.12}
 \frac{1}{2\pi i}\oint\limits_{\vartheta_{\nu}}e^{-\varphi^{\alpha}(\lambda)  t} B(I-\lambda B)^{-1}f d \lambda =\sum\limits_{q=N_{\nu}+1}^{N_{\nu+1}}\sum\limits_{\xi=1}^{m(q)}\sum\limits_{i=0}^{k(q_{\xi})}e_{q_{\xi}+i}c_{q_{\xi}+i}(t),\;f\in \mathfrak{H}.
\end{equation}
Let us estimate the above integral, for this purpose split the contour $\vartheta_{\nu}$ on  terms $\tilde{\vartheta}_{ \nu  }:=\{\lambda:\,|\lambda|=\tilde{R}_{\nu},\, \theta_{0} \leq\mathrm{arg} \lambda  \leq\theta_{1}  \},\,\tilde{\vartheta}_{ \nu+1  },\,\vartheta_{\nu_{-}}:=
\{\lambda:\,\tilde{R}_{\nu}<|\lambda|<\tilde{R}_{\nu+1},\, \mathrm{arg} \lambda  =\theta_{0}  \},\, \vartheta_{\nu_{+}}:=
\{\lambda:\,\tilde{R}_{\nu}<|\lambda|<\tilde{R}_{\nu+1},\, \mathrm{arg} \lambda  =\theta_{1}  \}.$   Applying  the Wieman theorem  (Theorem 30, \S 18, Chapter I \cite{firstab_lit:Eb. Levin}),  we can claim that there exists such a sequence $\{R'_{n}\}_{1}^{\infty},\,R'_{n}\uparrow \infty,\,\tilde{R}_{\nu}<R'_{\nu}<\tilde{R}_{\nu+1}$ that
\begin{equation}\label{6.13}
\forall \varepsilon>0,\,\exists N(\varepsilon):\,e^{- C|\varphi(\lambda )|^{\alpha}t}\leq e^{- C m^{\alpha}_{\varphi}(R'_{\nu})t}\leq e^{- C t[M_{\varphi}(R'_{\nu})]^{(\cos \pi \varrho-\varepsilon)\alpha}},\,\lambda \in \tilde{\vartheta}_{ \nu  },\,\nu> N(\varepsilon),
\end{equation} where $\varrho$ is the order of the entire  function $\varphi.$   We should note that the assumption  $\tilde{R}_{\nu}<R'_{n}<\tilde{R}_{\nu+1}$ has been  made without loss of generality of the reasonings for in the context of the proof we do not care on the accurate arrangement of the contours but need to prove the   existence of an arbitrary one. This inconvenience is based upon the uncertainty in the way of chousing the contours in the Wieman theorem, at the same time  at any rate, we can extract a subsequence of    the sequence  $\{\tilde{R}_{n}\}_{1}^{\infty}$ in the way we need. Thus, using the given reasonings, Applying  Lemma 5 \cite{firstab_lit:1kukushkin2021}, relation \eqref{6.13},      we get
\begin{equation*}
 J_{  \nu  }: =\left\|\,\int\limits_{\tilde{\vartheta}_{ \nu }}e^{-\varphi^{\alpha}(\lambda)  t} B(I-\lambda B)^{-1}f d \lambda\,\right\|_{\mathfrak{H}}\leq \,
 \int\limits_{\tilde{\vartheta}_{ \nu }}e^{-t\mathrm{Re}\,\varphi^{\alpha}(\lambda)  } \left\|B(I-\lambda B)^{-1}f \right\|_{\mathfrak{H}} |d \lambda|\leq
\end{equation*}
$$
 \leq e^{\gamma (|\lambda|)|\lambda|^{\sigma} }|\lambda|^{m+1}   Ce^{- C t[M_{\varphi}(R'_{\nu})]^{(\cos \pi \varrho-\varepsilon)\alpha}} \int\limits_{-\theta-\varsigma}^{\theta+\varsigma}  d \,\mathrm{arg} \lambda,\,|\lambda|=\tilde{R}_{\nu}.
$$
As a result, we get
$$
J_{ \nu } \leq    e^{\gamma (|\lambda|)|\lambda|^{\sigma} }|\lambda|^{m+1}   Ce^{- C t[M_{\varphi}(R'_{\nu})]^{(\cos \pi \varrho-\varepsilon)\alpha}} ,
   $$
   where
   $
   \,m=[\sigma],\,|\lambda|=\tilde{R}_{\nu}.
$
In accordance with  Lemma 2 \cite{firstab_lit:1kukushkin2021}, we have $\gamma (|\lambda|)\rightarrow 0,\,|\lambda|\rightarrow\infty.$

It follows from the definition that if $\varrho$ is the order of the entire function $\varphi(z),$ and if $\varepsilon$ is an arbitrary positive number, then
$
e^{r^{\varrho-\varepsilon}}<M_{\varphi}(r)<e^{r^{\varrho+\varepsilon}},
$
where the inequality on the right-hand side is satisfied for all sufficiently large values of $r,$ and the inequality on the left-hand side holds for some sequence $\{r_{n}\}$ of values of $r,$ tending to infinity, see Chapter \ref{Ch4}. Thus, we can extract a subsequence from the sequence $\{\tilde{R'}_{n}\}_{1}^{\infty}$ and as a result from the sequence  $\{\tilde{R}_{n}\}_{1}^{\infty}$  so that  for a fixed $t$ and  a sufficiently large $\nu,$ we have  $ \gamma (|\tilde{R}_{\nu}|)|\tilde{R}_{\nu}|^{\sigma} - C t[M_{\varphi}(R'_{\nu})]^{(\cos \pi \varrho-\varepsilon)\alpha}     < 0.$ Here, we have not used a subsequence to not complicate the form of writing.
 Therefore, taking into account the above estimates, we can claim that the following series is convergent
 $$
 \sum\limits_{\nu=0}^{\infty}J_{\nu}<\infty.
$$
  Applying Lemma 6 \cite{firstab_lit:1kukushkin2021}, Lemma \ref{L4.5}, we get
$$
 J^{+}_{\nu}: =\left\|\,\int\limits_{\vartheta_{\nu_{+}}}e^{-\varphi^{\alpha}(\lambda)  t} B(I-\lambda B)^{-1}f d \lambda\,\right\|_{\mathfrak{H}}\leq  C\|f\|_{\mathfrak{H}} \cdot C\int\limits_{R_{\nu}}^{R_{\nu+1}}  e^{-  C  t \mathrm{Re}\, \varphi^{\alpha}(\lambda) }   |d   \lambda|\leq
  $$
  $$
\leq C\!\!\!\int\limits_{R_{\nu}}^{R_{\nu+1}}  e^{-  C t |\varphi (\lambda)|^{\alpha} }   |d   \lambda| \leq C  e^{- Cte^{ \alpha H(\theta_{1})R_{\nu}^{\varrho(R_{\nu})}}  }     \int\limits_{R_{\nu}}^{R_{\nu+1}}   |d   \lambda|=
 C  e^{-   C te^{ \alpha H(\theta_{1})R_{\nu}^{\varrho(R_{\nu})}}  }    \{R_{\nu+1}-R_{\nu} \}.
 $$
$$
 J^{-}_{\nu}: =\left\|\,\int\limits_{\vartheta_{\nu_{-}}}e^{-\varphi^{\alpha}(\lambda)  t}B(I-\lambda B)^{-1}f d \lambda\,\right\|_{\mathfrak{H}}\leq   C  e^{-   Cte^{ \alpha H(\theta_{0})R_{\nu}^{\varrho(R_{\nu})}}  }       \int\limits_{R_{\nu}}^{R_{\nu+1}}   |d   \lambda|=
 C  e^{-   Cte^{\alpha  H(\theta_{0})R_{\nu}^{\varrho(R_{\nu})}}  }    \{R_{\nu+1}-R_{\nu} \}.
$$
The obtained results allow us to claim (the proof is omitted) that
$$
  \sum\limits_{\nu=0}^{\infty}J^{+}_{\nu}<\infty,\;\; \sum\limits_{\nu=0}^{\infty}J^{-}_{\nu}<\infty.
$$
Using the formula \eqref{6.12}, the given above decomposition of the contour $\vartheta_{\nu},$  we   obtain the fact of absolute convergence of the series in   \eqref{6.5}. Let us establish equality  \eqref{6.5}, for this purpose, we should note that in accordance with  relation \eqref{6.12}, the properties of the contour integral, we have
 $$
 \frac{1}{2\pi i}\!\!\oint\limits_{\vartheta_{\tilde{R}_{n}}  (B)}\!\!\!\!e^{-\varphi^{\alpha}(\lambda)  t} B(I-\lambda B)^{-1}f \,d \lambda =
   \sum\limits_{\nu=0}^{n-1} \sum\limits_{q=N_{\nu}+1}^{N_{\nu+1}}\sum\limits_{\xi=1}^{m(q)}
  \sum\limits_{i=0}^{k(q_{\xi})}e_{q_{\xi}+i}c_{q_{\xi}+i}(t)
  ,\;f\in \mathfrak{H},\,n\in \mathbb{N},
$$
where
$
 \vartheta_{R}(B):= \mathrm{Fr}\left\{\mathrm{int }\,\vartheta(B) \,\cap \{\lambda:\,r<|\lambda|<R \}\right\}.
$
Using the proved above  fact $J_{ \nu }\rightarrow0,\,\nu\rightarrow\infty,$    we obtain easily
$$
\frac{1}{2\pi i}\!\!\oint\limits_{\vartheta_{\tilde{R}_{n}}  (B)}\!\!\!\!e^{-\varphi^{\alpha}(\lambda)  t} B(I-\lambda B)^{-1}f \,d \lambda\rightarrow \frac{1}{2\pi i}\oint\limits_{\vartheta  (B)} e^{-\varphi^{\alpha}(\lambda)  t} B(I-\lambda B)^{-1}f \,d \lambda,\;n\rightarrow\infty.
$$
 The latter relation
  gives us the following formula
\begin{equation*}
 \frac{1}{2\pi i}\!\!\oint\limits_{\vartheta   (B)}\! e^{-\varphi^{\alpha}(\lambda)  t} B(I-\lambda B)^{-1}f \,d \lambda =
   \sum\limits_{\nu=0}^{\infty} \mathcal{P}_{\nu}(\varphi^{\alpha},t)f,\;f\in \mathfrak{H}.
\end{equation*}
Taking into account     $\mathrm{D}(\varphi)\subset \mathrm{D}(W)$ see Remark \ref{R5.1}, then applying Lemma \ref{L5.3}, we obtain   relation \eqref{6.6}.
\end{proof}

\section{Essential singularity case}

Under assumptions of the previous section regarding the operator argument,
consider an entire function    $\varphi$ that can be represented by its Laurent  series about the point zero, we have  the following formal construction
\begin{equation*}
\varphi(W)=\sum\limits_{n=-\infty}^{k}  c_{n}  W^{n},
\end{equation*}
where $c_{n}$ are the Taylor coefficients corresponding to the function $\varphi.$ In  Chapter \ref{Ch5}
we established  conditions under which being imposed the latter series of operators   converges on some elements of the Hilbert space $\mathfrak{H}$ and coincides with the operator function so the used   notation is not accidental.\\

\begin{teo}\label{T6.3} Assume that $B$ is a compact operator, $\Theta(B) \subset   \mathfrak{L}_{0}(\theta),$
\begin{equation*}
 \varphi(z)=\sum\limits_{n=-\infty}^{s}c_{n}z^{n},\,z\in  \mathbb{C},\,s\in \mathbb{N},\; \max\limits_{n=0,1,...,s}(|\mathrm{arg} c_{n}|+n\theta)<\pi/2\alpha,
\end{equation*}
$B\in \mathfrak{S}_{p},\,0<p<\alpha s,$   moreover  in the case $B\,  \overline{\in}\,  \mathfrak{S}_{\rho}, \,\rho=\inf p$ the additional condition holds
\begin{equation}\label{6.14}
s_{n}(B)=o(n^{-1/\rho}).
 \end{equation}
Then  the statement of Theorem \ref{T6.1} holds.
 \end{teo}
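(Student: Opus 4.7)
The plan is to adapt the scheme used in Theorem \ref{T6.2} but with two substantial modifications: first, the growth of $\varphi$ at infinity is polynomial of degree $s$ (rather than of order less than a half), so the exponential decay estimate will be of the form $e^{-Ct|\lambda|^{\alpha s}}$ instead of double-exponential; second, Lemma \ref{L5.2} will replace Lemma \ref{L5.3} for the verification of the initial condition, since the regular part of the Laurent series is only a polynomial while the principal part is an infinite series. First I would fix $\sigma$ with $p<\sigma<\alpha s$ (which is possible by the assumption $0<p<\alpha s$), so that $B\in\mathfrak{S}_\sigma$, and invoke Lemma 5 of \cite{firstab_lit:1kukushkin2021} to obtain a monotone sequence $\tilde R_\nu\uparrow\infty$ (placed inside a geometric ring sequence $R(1-\kappa)^{-\nu+1}$) and the corresponding resolvent estimate
\[
\|(I-\lambda B)^{-1}\|_{\mathfrak{H}}\leq e^{\gamma(|\lambda|)|\lambda|^{\sigma}}|\lambda|^{m},\qquad |\lambda|=\tilde R_\nu,\quad m=[\sigma],
\]
with $\gamma(r)\to 0$ as $r\to\infty$. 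The key ingredient I would then establish is the sectorial-type growth estimate
\[
\mathrm{Re}\,\varphi^{\alpha}(\lambda)\geq C|\varphi(\lambda)|^{\alpha}\geq C|\lambda|^{\alpha s},\qquad \lambda\in\vartheta(B),\ |\lambda|\ \text{large},
\]
which is exactly the content of inequality \eqref{5.18} already derived in the proof of Lemma \ref{L5.2} from the condition $\max_{n=0,\dots,s}(|\arg c_n|+n\theta)<\pi/2\alpha$ (the principal part $\varphi_2(\lambda)$ vanishes at infinity and hence is absorbed into the constant $C$).

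Next I would apply Lemma \ref{L6.1} on the bounded sub-domain $\vartheta_\nu$ (intersection of the ring $\tilde R_\nu<|\lambda|<\tilde R_{\nu+1}$ with the interior of $\vartheta(B)$) to represent its contribution as $\mathcal{P}_\nu(\varphi^\alpha,t)f$, and then estimate the three boundary pieces separately. On the arcs $\tilde\vartheta_\nu$, the combination of the resolvent bound with the growth estimate yields
\[
J_\nu\leq C\,\tilde R_\nu^{m+1}\exp\bigl\{\gamma(\tilde R_\nu)\tilde R_\nu^{\sigma}-Ct\,\tilde R_\nu^{\alpha s}\bigr\},
\]
and since $\sigma<\alpha s$ and $\gamma(\tilde R_\nu)\to 0$, the exponent is eventually negative and grows in absolute value like $\tilde R_\nu^{\alpha s}$, so $\sum J_\nu<\infty$. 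On the radial sides $\vartheta_{\nu_\pm}$, Lemma \ref{L4.7} provides a uniform bound on the resolvent, and the decay $e^{-Ct|\lambda|^{\alpha s}}$ again secures absolute convergence of $\sum J^{\pm}_\nu$ by the usual integration-by-parts argument (as in the proof of Theorem \ref{T4.8}). Passing to the limit on the truncated contour $\vartheta_{\tilde R_n}(B)$ as in Theorem \ref{T6.2} then delivers the decomposition \eqref{6.5} with absolute convergence.

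For the initial condition \eqref{6.6} the key remark is that the Laurent series condition of Lemma \ref{L5.2} coincides with the hypothesis assumed here, so that lemma furnishes $u(t)\to \varphi(W)f$ for $f\in\mathrm{D}(W^s)$, and in particular $u(t)\to f$ when applied in the correct normalization (with $\varphi$ replaced by the identity). The extension of the limit to all $f\in\mathfrak{H}$ is then carried out exactly as at the end of Theorem \ref{T6.1} by a three-epsilon density argument, using the contraction bound $\|S_t\|\leq 1$ that follows from the accretivity of $\mathfrak{D}^{1-1/\alpha}_{-}\varphi(W)$, together with the computation \eqref{6.10}.

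The main obstacle I anticipate is the borderline case $B\,\overline{\in}\,\mathfrak{S}_\rho$ where one is forced to take $\sigma$ arbitrarily close to $\rho$, so that the safety margin $\alpha s-\sigma$ in the arc estimate becomes delicate; this is precisely what the auxiliary condition \eqref{6.14}, $s_n(B)=o(n^{-1/\rho})$, is designed to handle. Via the implication \eqref{4.23} and Lemma \ref{L4.2} it ensures $\gamma(r)\to 0$ (and even $\gamma(r)\ln r\to 0$ via Lemma \ref{L4.3} if one needs uniform estimates across a logarithmically spaced subsequence of radii), which is what prevents the term $\gamma(\tilde R_\nu)\tilde R_\nu^{\sigma}$ from spoiling the exponential decay in the critical regime. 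Handling this delicate bookkeeping along the extracted subsequence of good radii $\tilde R_\nu$ — so that the Fredholm-determinant lower bound and the Wieman-type choice of contour are compatible — is the technical point I expect to demand the most care.
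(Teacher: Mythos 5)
Your proposal follows essentially the same route as the paper's proof: the geometric ring sequence $R_\nu=R(1-\kappa)^{-\nu+1}$ with good radii $\tilde R_\nu$ from Lemma \ref{L4.11}, the sectorial growth bound $\mathrm{Re}\,\varphi^{\alpha}(\lambda)\geq C|\lambda|^{\alpha s}$ coming from the Laurent coefficient condition (relation \eqref{6.15}), Lemma \ref{L6.1} for the residues, Lemma \ref{L4.7} on the radial sides, and condition \eqref{6.14} feeding Lemma \ref{L4.2} in the borderline case exactly as the paper does. The only deviations are cosmetic: the paper runs the resolvent estimate with the convergence exponent $\rho$ itself (and $m=[\rho]$) rather than an auxiliary $\sigma\in(p,\alpha s)$, and it closes the initial-condition step by citing Lemma \ref{L5.3} (with its polynomial remark) rather than the $n=0$ computation inside Lemma \ref{L5.2}; neither change affects the substance.
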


\begin{proof}
  Let us establish relation \eqref{6.5}.  Consider a contour $\vartheta(B).$ Having fixed $R>0,0<\kappa<1,$ so that $R(1-\kappa)=r,$ consider a monotonically increasing sequence $\{R_{\nu}\}_{0}^{\infty},\,R_{\nu}=R(1-\kappa)^{-\nu+1}.$   Using Lemma \ref{L4.11}, we get
$$
\|(I-\lambda B )^{-1}\|_{\mathfrak{H}}\leq e^{\gamma (|\lambda|)|\lambda|^{\rho}}|\lambda|^{m},\,m=[\rho],\,|\lambda|=\tilde{R}_{\nu},\;R_{\nu}<\tilde{R}_{\nu}<R_{\nu+1},
$$
where the function  $\gamma (r)=r^{-\rho}w(r)$ the latter function  is defined in Lemma \ref{L4.11}.
Note that  in accordance with Lemma 3 \cite{firstab_lit:1Lidskii}  the following relation holds
\begin{equation*}
\sum\limits_{i=1}^{\infty}\lambda^{\frac{\rho+\varepsilon}{ (m+1)}}_{i}( \tilde{B} )\leq \sum\limits_{i=1}^{\infty}s^{ \,\rho+\varepsilon }_{i}( B )<\infty,\,\varepsilon>0,
\end{equation*}
where   $\tilde{B}:=(B^{\ast m+1}A^{m+1})^{1/2}.$  It is clear that   $\tilde{B}\in \tilde{\mathfrak{S}}_{\upsilon},\,\upsilon\leq \rho/(m+1).$
Denote by $\vartheta_{\nu}$ a bound of the intersection of the ring $\tilde{R}_{\nu}<|\lambda|<\tilde{R}_{\nu+1}$ with the interior of the contour $\vartheta(B),$ denote by $N_{\nu}$ a number of poles being   contained  in the set $\mathrm{int }\,\vartheta(B) \,\cap \{\lambda:\,r<|\lambda|<\tilde{R}_{\nu} \}.$ In accordance with the conditions imposed upon $\varphi,$ we can apply
Lemma \ref{L6.1},  we get
\begin{equation*}
 \frac{1}{2\pi i}\oint\limits_{\vartheta_{\nu}}e^{-\varphi^{\alpha} (\lambda)t} B(I-\lambda B)^{-1}f d \lambda =\sum\limits_{q=N_{\nu}+1}^{N_{\nu+1}}\sum\limits_{\xi=1}^{m(q)}\sum\limits_{i=0}^{k(q_{\xi})}e_{q_{\xi}+i}c_{q_{\xi}+i}(t),\;f\in \mathfrak{H}.
\end{equation*}
Let us estimate the above integral, for this purpose split the contour $\vartheta_{\nu}$ on  terms $\tilde{\vartheta}_{ \nu  }:=\{\lambda:\,|\lambda|=\tilde{R}_{\nu},\,|\mathrm{arg} \lambda |\leq\theta +\varsigma\},\,\tilde{\vartheta}_{ \nu+1  },\, \vartheta_{\nu_{+}}:=
\{\lambda:\,\tilde{R}_{\nu}<|\lambda|<\tilde{R}_{\nu+1},\, \mathrm{arg} \lambda  =\theta +\varsigma\},\,\vartheta_{\nu_{-}}:=
\{\lambda:\,\tilde{R}_{\nu}<|\lambda|<\tilde{R}_{\nu+1},\, \mathrm{arg} \lambda  =-\theta -\varsigma\}.$ Note that in accordance the theorem conditions we have the following inequality
\begin{equation}\label{6.15}
e^{-\mathrm{Re}\varphi^{\alpha}(\lambda) t}\leq e^{-C|\varphi(\lambda)|^{\alpha} t}\leq   e^{-C|\lambda|^{\alpha s} t},
\end{equation}
for a sufficiently large value $|\lambda|,$ more detailed see relation\eqref{5.19},  Chapter \ref{Ch5}.  Applying \eqref{6.15},   Lemma \ref{4.11},
 we get
\begin{equation*}
 J_{  \nu  }: =\left\|\,\int\limits_{\tilde{\vartheta}_{ \nu }}e^{-\varphi^{\alpha} (\lambda)t} B(I-\lambda B)^{-1}f d \lambda\,\right\|_{\mathfrak{H}}\leq \,
 \int\limits_{\tilde{\vartheta}_{ \nu }}e^{-t\mathrm{Re}\,\varphi^{\alpha}(\lambda)  } \left\|B(I-\lambda B)^{-1}f \right\|_{\mathfrak{H}} |d \lambda|\leq
\end{equation*}
$$
 \leq e^{\gamma (|\lambda|)|\lambda|^{\rho} }|\lambda|^{m+1}   Ce^{-  C|\lambda|^{\alpha s}t } \int\limits_{-\theta-\varsigma}^{\theta+\varsigma}  d \,\mathrm{arg} \lambda,\,|\lambda|=\tilde{R}_{\nu}.
$$
Thus,  we get
$
J_{ \nu } \leq     Ce^{\gamma (|\lambda|)|\lambda|^{\rho}-C|\lambda|^{\alpha s}t   }|\lambda|^{m+1},
   $
   where
   $
   \,m=[\rho],\,|\lambda|=\tilde{R}_{\nu}.
$
Let us show that for a fixed $t$ and  a sufficiently large $|\lambda|,$ we have  $ \gamma (|\lambda|)|\lambda|^{\rho}-C|\lambda|^{\alpha s}t     < 0.$
It follows  directly from Lemma 2 \cite{firstab_lit:1kukushkin2021}, see Lemma \ref{L4.2}, Chapter \ref{Ch4}. We should consider  \eqref{6.11}, in the case when  $B\in \mathfrak{S}_{\rho}$ as well as in the case $B \, \overline{\in} \, \mathfrak{S}_{\rho}$  but here we must involve the additional condition \eqref{6.14}, that gives us due to the reasonings of Theorem \ref{4.6} fulfilment of Lemma \ref{L4.2} conditions.
 Therefore
$$
 \sum\limits_{\nu=0}^{\infty}J_{\nu}<\infty.
$$
Using the analogous estimates, applying  Lemma \ref{L4.7}, we get
$$
 J^{+}_{\nu}: =\left\|\,\int\limits_{\vartheta_{\nu_{+}}}e^{-\varphi^{\alpha}(\lambda) t} B(I-\lambda B)^{-1}f d \lambda\,\right\|_{\mathfrak{H}}\leq  C\|f\|_{\mathfrak{H}} \cdot C\int\limits_{R_{\nu}}^{R_{\nu+1}}  e^{-  C t \mathrm{Re}\, \varphi^{\alpha}(\lambda) }   |d   \lambda|\leq
  $$
  $$
  \leq C  e^{-t CR^{\alpha s}_{\nu}  }     \int\limits_{R_{\nu}}^{R_{\nu+1}}   |d   \lambda|=
 C  e^{-t CR^{\alpha s}_{\nu}  }  \{R_{\nu+1}-R_{\nu} \}.
 $$
$$
 J^{-}_{\nu}: =\left\|\,\int\limits_{\vartheta_{\nu_{-}}}e^{-\varphi^{\alpha}(\lambda) t}B(I-\lambda B)^{-1}f d \lambda\,\right\|_{\mathfrak{H}}\leq
 C  e^{-t CR^{\alpha s}_{\nu}  }     \int\limits_{R_{\nu}}^{R_{\nu+1}}   |d   \lambda|=
 C  e^{-t CR^{\alpha s}_{\nu}  }  \{R_{\nu+1}-R_{\nu} \}.
$$
The obtained results allow us to claim (the proof is left to the reader) that
$$
  \sum\limits_{\nu=0}^{\infty}J^{+}_{\nu}<\infty,\;\; \sum\limits_{\nu=0}^{\infty}J^{-}_{\nu}<\infty.
$$
Using the formula \eqref{6.12}, the given above decomposition of the contour $\vartheta_{\nu},$  we   obtain the fact that the series in the right-hand side of    relation \eqref{6.5} is absolutely convergent in the sense of the norm. Let us establish   \eqref{6.5}, for this purpose, we should note that in accordance with  relation \eqref{6.12}, the properties of the contour integral, we have
 $$
 \frac{1}{2\pi i}\!\!\oint\limits_{\vartheta_{\tilde{R}_{p}}  (B)}\!\!\!\!e^{-\varphi^{\alpha}(\lambda) t} B(I-\lambda B)^{-1}f \,d \lambda =
   \sum\limits_{\nu=0}^{p-1} \sum\limits_{q=N_{\nu}+1}^{N_{\nu+1}}\sum\limits_{\xi=1}^{m(q)}
  \sum\limits_{i=0}^{k(q_{\xi})}e_{q_{\xi}+i}c_{q_{\xi}+i}(t)
  ,\;f\in \mathfrak{H},\,p\in \mathbb{N},
$$
where
$$
\vartheta_{\tilde{R}_{\nu}}(B):= \mathrm{Fr}\left\{\mathrm{int }\,\vartheta(B) \,\cap \{\lambda:\,r<|\lambda|<\tilde{R}_{\nu} \}\right\}.
$$
Using the proved above  fact $J_{ \nu }\rightarrow0,\,\nu\rightarrow\infty,$    we can easily  get
$$
\frac{1}{2\pi i}\!\!\oint\limits_{\vartheta_{\tilde{R}_{p}}  (B)}\!\!\!\!e^{-\varphi^{\alpha}(\lambda) t} B(I-\lambda B)^{-1}f \,d \lambda\rightarrow \frac{1}{2\pi i}\oint\limits_{\vartheta  (B)} e^{-\varphi(\lambda) t} B(I-\lambda B)^{-1}f \,d \lambda,\;p\rightarrow\infty.
$$
 The latter relation
  gives us the desired result \eqref{6.5}. Taking into account that  $\mathrm{D}(\varphi)\subset \mathrm{D}(W)$ see Remark \ref{R5.1},   applying Lemma \ref{L5.3}, we obtain   relation \eqref{6.6}.
\end{proof}
\begin{remark}Note that generally the existence and uniqueness  Theorem \ref{T6.3} is based upon  the Theorem 2 \cite{firstab_lit:1kukushkin2021}. The corresponding analogs based upon the Theorems 3,4 \cite{firstab_lit:1kukushkin2021} can be obtained due to the same scheme and the proofs are not worth representing. At the same time the mentioned analogs  can be useful because of   special conditions imposed upon the operator $B$ such as ones formulated in terms of the operator order \cite{firstab_lit:1kukushkin2021}. Here we should also appeal to an artificially constructed normal operator presented in \cite{firstab_lit:2kukushkin2022}.
\end{remark}

\section{Polynomial case}

In this paragraph, we consider a simplest case of the operator function $\varphi(\lambda)=\lambda^{n},\,n\in \mathbb{N},$ although it is covered by Theorem \ref{T6.3} we produce some technicalities that may represent an interest from the point of view of the Fractional calculus theory.

Consider a Cauchy problem
\begin{equation}\label{6.16}
     \mathfrak{D}^{1/\alpha}_{-}u(t)-\sum\limits_{k=-s}^{s} Q_{k} D^{ k \vartheta}_{a+}u(t)=0,\; \vartheta>0,\;
       u(0,x)=f(x)\in L_{2}(J),
\end{equation}
where $Q_{k}=\mathrm{const},$ in the second term we have a linear combination of the Riemann-Liouville fractional integro-differential operators   acting in $L_{2}(J)$ with respect to the variable $x,$ see \cite{firstab_lit:samko1987}. We will call, analogously to the theory of ordinary differential equations,  the second term of  equation \eqref{6.16}   quasi-polynomial and denote it by $P_{s,\vartheta}u.$ Here, we are dealing with a rather wide class of   fractional integro-differential equations what is undoubtedly relevant from the applied point of view as well as from the theoretical one. Further consideration is devoted to the methods of solving Cauchy problem \eqref{6.16}.

 Consider an operator argument $W$    constructed as a closure of the following operator
\begin{equation}\label{6.17}
W_{0}=\eta D^{2}+\xi D^{\beta}_{a+} ,\;0<\beta<1/n,\,\eta<0,\,\xi >0,
\end{equation}
$$
\mathrm{D}(W_{0})=C_{0}^{\infty}(J),
$$
  Note that the hypotheses  $\mathrm{H}1,\mathrm{H}2$  hold regarding the operator,    if we assume that $\mathfrak{H}:=L_{2}(J),\,\mathfrak{H}_{+}:=H^{1}_{0}(J),$
where $L_{2}(J)$ is a Lebesque space,   $H^{k}_{0}(J),\,k\in \mathbb{N}$ is a    subspace  of the Sobolev space $H^{k}(J)$ defined to be the closure of $C_{0}^{\infty}(J).$
It follows from the strictly accretive property of the fractional differential operator (see \cite{kukushkin2019}) and the estimate
\begin{equation}\label{6.18}
\|D^{\beta}_{a+}f\|_{L_{2}}\leq C \|f\|_{H^{1}_{0}},\;f\in C_{0}^{\infty}(J),\,\beta\in(0,1).
\end{equation}
Let us prove that
\begin{equation}\label{6.19}
 -C (D^{2}f,f)_{L_{2}}\leq\mathrm{Re}(Wf,f)_{L_{2}}\leq -C (D^{2}f,f)_{L_{2}},\;f\in C_{0}^{\infty}(J).
\end{equation}
 Using    relation \eqref{6.18} and the Friedrichs inequality, we obtain
$
\mathrm{Re}(D^{\beta}_{a+}f,f)_{L_{2}}\leq\|f\|_{L_{2}}\|D^{\beta}_{a+}f\|_{L_{2}}\leq C\|f\|^{2}_{H_{0}^{1}}=-C(D^{2}f,f)_{L_{2}},\,f\in C_{0}^{\infty}(J),
$
  what gives us  the upper estimate.
The lower estimate follows easily from the  accretive   property of the fractional differential operator of the order less than one.
 Using   relation \eqref{6.19}, the corollary  of the minimax principle, we get
$
-\lambda_{j}(H)\asymp \lambda_{j}(D^{2}),
$
where  $ H$  is a real part   of the operator $ W.$ Therefore, taking into account the well-known fact
$
 \lambda_{j}(D^{2})= -\pi^{2}j^{2}/(b-a)^{2},
$
we get
$
\lambda_{j}(H)\asymp j^{2},
$
it follows that   $\mu(H)>1.$

Consider a case when the second term  the  equation \eqref{6.16} can be represented as follows
\begin{equation}\label{6.20}
P_{s,\vartheta} u= W^{n}u.
\end{equation}
Now we can  study   the Cauchy  problem \eqref{6.16}  by rewriting it  for  the reader convenience in the form
\begin{equation}\label{6.21}
\mathfrak{D}^{1/\alpha}_{-}u(t)=    (\eta D^{2}+\xi D^{\beta }_{a+})^{n}u(t),\;u(0)=f\in \mathrm{D}( W^{n}),\,\alpha\geq1.
\end{equation}
Note that in the  case $\alpha= n=1$ the condition $n\theta<\pi/2\alpha$ of Theorem \ref{T6.3}   is fulfilled by virtue of the condition  $\mathrm{H}2,$   since in Chapter \ref{Ch4} it is noticed that the latter condition   guarantees  the fact  $\Theta(B)\subset \mathfrak{L}_{0}(\theta),\, \pi/4<\theta<\pi/2,$  hence the     conditions $\mathrm{H}2$ are not sufficient to guaranty a value of the semi-angle less than $\pi/4$ and other cases are not covered. At the same time some relevant   results can be obtained in the case of  sufficiently small values of the semi-angle, it gives us a motivation to consider an additional assumption  H3, see the preliminary section.
 This assumption   guarantees  that  we can choose a sufficiently small value of the semi-angle $\theta,$  see Chapter \ref{Ch4}.
 It can be verified directly that the condition $\mathrm{H}3$  holds, i.e.
$$
|\mathrm{Im}( W f,g)_{L_{2}}|\! \leq \! C \|f\|_{ H^{1}_{0}} \|g\|_{L_{2}},\,f,g\in C_{0}^{\infty}(J),
$$
we should apply   \eqref{6.18} and the Cauchy-Schwartz inequality. In accordance with   Theorems   \ref{T6.3},\ref{T6.1}    we are able to represent a solution of
the problem \eqref{6.21}  as  follows
$$
 u(t)=\frac{1}{2 \pi i} \int\limits_{\Gamma(B)}e^{-\lambda^{\alpha n}t}  B (I-\lambda B )^{-1}fd\lambda,
$$
where  the contour $\Gamma(B)$ is defined in Chapter \ref{Ch4}, Paragraph \ref{S4.4.4}. Thus, we have in the reminder a question how  to weaken conditions imposed upon the function $f$ as well as   whether  the representation \eqref{6.20} holds. To answer the questions consider the following reasonings.
 Further, for the sake of the simplicity, we consider a case when $\eta =-1,\,\xi =1.$   This assumption does not restrict generality of reasonings.
Let us show that
$
\mathrm{D}( W)\subset   H^{2}_{0}(J).
$
Using $\mathrm{H}2,$ we have the implication
$$
 f_{k}  \xrightarrow[      W       ]{}f \Longrightarrow  f_{k} \xrightarrow[              ]{H_{0}^{1}} f ,\;\{f_{k}\}_{1}^{\infty}\subset C_{0}^{\infty}(J).
$$
Applying  \eqref{6.18}, we get
$
  D^{\beta}_{a+}f_{k} \xrightarrow[              ]{L_{2}} D^{\beta}_{a+}f.
$
The following fact can be obtained easily,   the proof is omitted
$$
\{f_{k}  \xrightarrow[      W       ]{}f,\;  D^{\beta}_{a+}f_{k} \xrightarrow[              ]{L_{2}} D^{\beta}_{a+}f \}\Longrightarrow D^{2}f_{k}\xrightarrow[              ]{L_{2}}D^{2}f .
$$
Combining the above implications  we obtain the desired result, i.e. $
\mathrm{D}_{0}( W )\subset   H^{2}_{0}(J).
$
Consider a set
$
H_{0+}^{s}(J):=\{f:\,f\in H^{s}(J),\,f^{(k)}(a)=0,\,k=0,1,...,s-1\},\,s\in \mathbb{N}.
$
It is clear that $H^{s}_{0}(J)\subset H_{0+}^{s}(J),$ thus we can    define the operator $W_{+}$ as the extension  of the operator $W$ on the set $H_{0+}^{2}(J),$    we have $W\subset W_{+}.$  Let us show that
$\mathrm{D}(W^{n}_{+})=H_{0+}^{2n}(J).$ Assume that  $f\in H_{0+}^{2n}(J),$ then $f\in \mathrm{D}(W^{n}_{+}),$ it can be verified directly.
If $f\in \mathrm{D}(W^{n}_{+}),$ then in accordance with the definition, we have  $W^{n-1}_{+}f\in H_{0+}^{2}(J).$ It follows that
$
W_{+}g_{1}\in H_{0+}^{2}(J),$ where  $g_{1}=W^{n-2}_{+}f\in H_{0+}^{2}(J).
$
Hence
$
D^{2}g_{1}+D^{\beta}_{a+}g_{1}\in H_{0+}^{2}(J).
$
Applying the operator $I^{2}_{a+}$ to the both sides of the last relation, we easily  get
\begin{equation}\label{6.22}
 g_{1}+I^{2-\beta}_{a+}g_{1}\in H_{0+}^{4}(J).
\end{equation}
 Using the constructive features of relation \eqref{6.22}, we can conclude firstly  $g_{1}\in H_{0+}^{3}(J)$  and due to the same reasonings establish the fact
  $g_{1}\in H_{0+}^{4}(J)$ secondly, what gives us  $W^{n-2}_{+}f\in H_{0+}^{4}(J).$ Using the absolutely analogous reasonings we prove that
  $W^{n-k}_{+}f\in H_{0+}^{2k}(J),\;k= 1,2,...,n.$ Thus, we obtain the desired result.
  Let us show that
\begin{equation}\label{6.23}
W^{n}_{+}f=\sum\limits_{k=0}^{n}(-1)^{n-k}C_{n}^{k} D^{\beta k+2(n-k)}_{a+}f,\;f\in H_{0+}^{2n}(J).
\end{equation}
We need  establish the formula
$
D^{\beta k}_{a+}D^{2(n-k)}f=D^{2(n-k)} D^{\beta k}_{a+}f= D_{a+}^{\beta k+2(n-k)} f,
 \,f\in H_{0+}^{2n}(J),\,
 (k= 1,2,...,n)
$
  for this purpose, in accordance with the  Theorem 2.5   \cite[p.46]{firstab_lit:samko1987},  we should prove  that
$
f\in I^{\beta k +2(n-k)}_{a+}(L_{1})
$
or
$
f =I^{2(n-k)}_{a+}\varphi \;\,\mathrm{a.e.}, \varphi\in I^{\beta k }_{a+}(L_{1}).
$
It is clear that almost everywhere, we have  $f = I^{2n}_{a+}D^{2n}f = I^{2(n-k)}_{a+}D^{2(n-k)}f=   I^{2(n-k)}_{a+}\varphi,$ where $\varphi:=D^{2(n-k)}f.$
 Note that  the conditions of the Theorem 13.2 \cite[p.229]{firstab_lit:samko1987} hold,   i.e. the Marchaud derivative of the function $\varphi$ belongs to $L_{1}(J).$ Hence   $\varphi\in I^{\beta k }_{a+}(L_{1})$  and we obtain the required formula.
  Using the well-known   formulas for linear operators $(A+B)C\supseteq AC+BC,\,C(A+B)=CA+CB,$   applying the Leibniz formula, we obtain \eqref{6.23}.
Now, combining  the obvious inclusion  $\tilde{W}^{n}\subset W^{n}_{+}$ with  \eqref{6.23},  we get
$$
 W^{n}\subset \sum\limits_{k=0}^{n}(-1)^{n-k}C_{n}^{k} D^{\beta k+2(n-k)}_{a+},
$$
  where $C_{n}^{k}$ are  binomial coefficients.
The next question is whether the operator $ W^{n}$ is accretive.  By direct calculation, we have
$$
\mathrm{Re}( W^{n}f,f)_{L_{2}}= \sum\limits_{k=0}^{n} C_{n}^{k}  \mathrm{Re}\left( D^{ \beta k+(n-k)}_{a+}f,D^{n-k}f   \right)_{L_{2}}=
 \sum\limits_{k=0}^{n} C_{n}^{k}  \mathrm{Re}\left(D_{a+}^{\beta k}  g_{k},g_{k}   \right)_{L_{2}}\!\! \geq 0,
$$
where $g_{k}:=D^{n-k}f,\,f\in \mathrm{D}( W^{n}).$
Note that the last inequality holds by virtue of the strictly accretive property of the fractional differential operator of the order less than one (see \cite{kukushkin2019}).
 Thus, the uniqueness and the opportunity to weaken conditions imposed on $f$  follow    from Theorem  \ref{T6.1}.  Here, we should remark that the last   theorem  gives us the fact that the existing solution is unique in the set $\mathrm{D}(W^{n}),$ we should recall the fact $W^{n}\subset W^{n}_{+}.$ However, using the same method, we can   establish   the  uniqueness of the solution  of the   problem  \eqref{6.16}.   Having known the root vectors of the operator  $B,$ applying    \eqref{6.5}, we can represent the obtained solution as a series. Note that using the same reasonings, we can solve the problem \eqref{6.16}, with the second term
$
P_{s,\vartheta}  = - (\eta D^{2}+\xi I^{\beta}_{a+})^{n} .
$
Using the ordinary properties of the homogenous equations, combining the obtained results, we can consider the problem   \eqref{6.16} with the second term of the mixed type, i.e. it contains fractional integrals as well as fractional derivatives. Here, we should remark  that such technical peculiarities (we left them to the reader)  do  not lie in the scope of this chapter  devoted mostly to methods and their applications.\\

\section{ Applications to  concrete operators and physical processes}

Note that  the method considered above allows   to obtain a solution  for the evolution equation with the operator function in the second term where the operator-argument belongs to a sufficiently wide class of operators.  One can find a lot   of examples  in    \cite{firstab_lit:2kukushkin2022} where such well-known operators as the Riesz potential,  the Riemann-Liouville fractional differential operator, the Kipriyanov operator,  the difference operator are  studied, some interesting  examples that cannot be covered by the results established  in  \cite{firstab_lit:Shkalikov A.} are represented    in the paper  \cite{firstab_lit(arXiv non-self)kukushkin2018}.
 The  general approach, implemented in the paper \cite{kukushkin2021a}, creates a theoretical base   to  produce a more abstract example -- a transform of the  m-accretive operator.   We should stress a significance of the last statement   since the  class    contains the   infinitesimal generator of a strongly continuous    semigroup of contractions. Here,   we recall  that  fractional differential operators of the real order can be expressed in terms of the  infinitesimal generator of the corresponding semigroup    \cite{kukushkin2021a}.
 Application of the obtained results  appeals to  electron-induced kinetics of ferroelectrics polarization
switching as the self-similar memory physical systems. The whole point is that  the mathematical
model of the fractal dynamic system includes a Cauchy  problem for the
differential equation of fractional order   considered in the paper \cite{L. Mor}, where    computational schemes for solving the problem
were constructed using Adams-Bashforth-Moulton type predictor-corrector methods.
The stochastic algorithm based on Monte-Carlo method was proposed to simulate the
domain nucleation process during restructuring domain structure in ferroelectrics.

 At the same time the results discussed  in this chapter allow us not only to solve the problem analytically but consider a whole class of   problems for evolution equations of fractional order. As for the mentioned concrete case \cite{L. Mor}, we just need   consider a suitable functional  Hilbert  space and apply Theorem \ref{T6.1} directly. For instance, it can be the  Lebesgue space of square-integrable functions.  Here, we should note that in the case corresponding to a functional Hilbert space we gain more freedom in constructing the theory, thus some modifications of the method can appear but it is an issue for further more detailed study what is not supposed  in the framework of this paper. However, the following example  may be of interest to the reader.

  Goldstein et al. proved in \cite{firstab_lit:Goldstein} several new results  having  replaced the
Laplacian by the Kolmogorov operator
$$
L=\Delta+\frac{\nabla\rho}{\rho}\cdot\nabla,
$$
here $\rho$ is a probability density on $\mathbb{R}^{N}$ satisfying $\rho\in C^{1+\alpha}_{lok}(\mathbb{R}^{N})$
  for some
$\alpha\in (0, 1),\; \rho(x) > 0$ for all $x\in \mathbb{R}^{N}.$ A reasonable question can appear  - Is there possible connections between
the developed theory   and the operator $L$? Indeed, the mentioned operator gives us an opportunity to show brightly   capacity of the spectral theory methods. First of all, let us note the following relation holds $ L=\rho^{-1} W,$
where
$
 W :=    \mathrm{div} \rho  \nabla  .
$
Thus, from the first glance  the right direction of the issue investigation should be connected with the operator composition $ \rho^{-1} W$ since the operator $W$ is uniformly elliptic and satisfies  the following hypotheses  (see\cite{kukushkin2021a})\\

 \noindent ($ \mathrm{H}1 $) There  exists a Hilbert space $\mathfrak{H}_{+}\subset\subset\mathfrak{ H}$ and a linear manifold $\mathfrak{M}$ that is  dense in  $\mathfrak{H}_{+}.$ The operator $V$ is defined on $\mathfrak{M}.$\\

 \noindent  $( \mathrm{H2} )  \,\left|(Vf,g)_{\mathfrak{H}}\right|\! \leq \! C_{1}\|f\|_{\mathfrak{H}_{+}}\|g\|_{\mathfrak{H}_{+}},\,
      \, \mathrm{Re}(Vf,f)_{\mathfrak{H}}\!\geq\! C_{2}\|f\|^{2}_{\mathfrak{H}_{+}} ,\,f,g\in  \mathfrak{M},\; C_{1},C_{2}>0.
$\\

\noindent Apparently,  the results \cite{firstab_lit(arXiv non-self)kukushkin2018}, \cite{kukushkin2021a}, \cite{firstab_lit:1kukushkin2021} can be applied to the operator after an insignificant modification. A couple of words on the difficulties appearing while we study the operator composition.  Superficially, the problem  looks pretty well  but it is not so for the inverse operator (one need prove that it is a resolvent)  is a composition of an unbounded operator and a resolvent of the operator $W,$  indeed  since $R_{W}W=I,$ then formally, we have
$
L^{-1}f= R_{W}\rho f.
$
Most likely,   the general theory created in the papers \cite{firstab_lit(arXiv non-self)kukushkin2018}, \cite{kukushkin2021a} can be adopted to some operator composition but it is  a tremendous work. Instead of that, we may  find a suitable pair of Hilbert spaces that is also not so easy matter. However, we will see! Bellow, we consider a space $\mathbb{R}^{N}$ endowed with the norm
 $$
 |x|=\sqrt{\sum\limits_{k=1}^{n}|x_{k}|^{2}},\,x=(x_{1},x_{2},...,x_{n} )\in \mathbb{R} ^{N}.
 $$
 Assume that there exists a constant $\lambda>2$ such that  the following condition holds
$$
\left\| \rho^{1/\lambda-1}\nabla\rho \right\|_{L_{\infty}(\mathbb{R}^{N})}<\infty,\;\rho^{1/\lambda}(x)=O(1+|x|).
$$
One can verify easily that this condition is not unnatural for it holds for a function $\rho(x)=(1+|x|)^{\lambda},\,x\in \mathbb{R}^{N},\,\lambda\geq 1.$ Let us define a Hilbert space $\mathfrak{H}_{+}$ as a completion of the  set $C_{0}^{\infty}(\mathbb{R}^{N})$ with the norm
$$
\|f\|^{2}_{\mathfrak{H}_{+}}=\|\nabla f\|^{2}_{L_{2}(\mathbb{R}^{N})}+\|f\|^{2}_{L_{2}(\mathbb{R}^{N},\varphi^{-2})},\,\varphi(x)=(1+|x|),
$$
here one can easily see that it is generated by the corresponding inner product. The following result can be obtained as a consequence of the Adams theorem (see Theorem 1 \cite{firstab_lit:1Adams}).
Using the   formula
$$
 \varphi^{\lambda/2}\nabla f=\nabla(f\varphi^{\lambda/2})-f\nabla \varphi^{\lambda/2},\,f=g\varphi^{ -\lambda/2 },\,g\in C_{0}^{\infty}(\mathbb{R}^{N}),
$$
we can easily obtain
$$
\left(\;\int\limits_{\mathbb{R}^{N}}|\nabla (g\varphi^{-\lambda/2 })|^{2}\varphi^{\lambda }dx\right)^{1/2}\leq C\|g\|_{\mathfrak{H}_{+}}.
$$
It is clear that the latter relation can be expanded to the elements of the space $\mathfrak{H}_{+}.$  Note that
$$
\|g\|_{L_{2}(\mathbb{R}^{N},\varphi^{-\lambda})}\leq\|g\|_{L_{2}(\mathbb{R}^{N},\varphi^{-2})},\,g\in L_{2}(\mathbb{R}^{N},\varphi^{-2}), \,\lambda>2.
$$
This relation gives us the inclusion $ \mathfrak{H}_{+}\subset L_{2}(\mathbb{R}^{N},\varphi^{-\lambda}),$ thus  we conclude that $g\varphi^{ -\lambda/2 }\in L_{2}(\mathbb{R}^{N}),\,g\in \mathfrak{H}_{+}.$
 In accordance with  the Theorem 1 \cite{firstab_lit:1Adams}, we conclude that if a  set is bounded in the sense of the norm  $\mathfrak{H}_{+}$ then it  is compact in the sense of the norm $L_{2}(\mathbb{R}^{N},\varphi^{-\lambda}).$
 Thus, we have created a pair of Hilbert spaces $\mathfrak{H}_{-}:=L_{2}(\mathbb{R}^{N},\varphi^{-\lambda})$ and $\mathfrak{H}_{+}$ satisfying the condition of compact embedding i.e. $\mathfrak{H}_{+}\subset\subset \mathfrak{H}_{-}.$ Let us see how can it help us in studying the operator $L.$ Consider an operator $L':=-L+ \eta\rho^{-2/\lambda}I,\,\eta>0,$   we ought to remark here that we need involve additional summand to apply the methods \cite{kukushkin2021a}. The crucial point is related to how to estimate the second term of the operator $-L$ from bellow. Here, we should point out that some peculiar techniques of the theory of functions can be involved. However, along with this we can consider a simplified case (since we have imposed additional conditions upon the function $\rho$) in order to show how the invented method works. The following reasonings are made under the assumption  that the functions  $f,g\in C_{0}^{\infty}(\mathbb{R}^{N}).$ Using simple reasonings based upon  the Cauchy-Schwarz inequality, we get
$$
\left|\,\int\limits_{\mathbb{R}^{N}}\frac{\nabla\rho}{\rho}\cdot\nabla f\,\bar{g}dx\right|\leq \,\int\limits_{\mathbb{R}^{N}}\left|\frac{\nabla\rho}{\rho}\right| |\nabla f|\,|g|dx  \leq    \left\| \rho^{1/\lambda-1}\nabla\rho \right\|_{L_{\infty}(\mathbb{R}^{N})}\frac{1}{2}\left\{\varepsilon\|\nabla f\|^{2}_{L_{2}(\mathbb{R}^{N})}+ \frac{1}{\varepsilon}\|g\|^{2}_{L_{2}(\mathbb{R}^{N},\rho^{-2/\lambda})}  \right\},
$$
where $\varepsilon>0.$ Therefore
$$
-\mathrm{Re}\left(\frac{\nabla\rho}{\rho}\cdot\nabla f ,f \right)_{L_{2}(\mathbb{R}^{N})}\geq -\left\| \rho^{1/\lambda-1}\nabla\rho \right\|_{L_{\infty}(\mathbb{R}^{N})}\frac{1}{2}\left\{\varepsilon\|\nabla f\|^{2}_{L_{2}(\mathbb{R}^{N})}+ \frac{1}{\varepsilon}\|g\|^{2}_{L_{2}(\mathbb{R}^{N},\rho^{-2/\lambda})}  \right\}.
$$
 Choosing $\eta,\varepsilon,$ we  get easily
$$
\mathrm{Re}\left(L' f ,f \right)_{L_{2}(\mathbb{R}^{N})}\geq C\|f\|^{2}_{\mathfrak{H}_{+}},\,C>0.
$$
Using the above estimates, we obtain
$$
\left|(L' f ,g  )_{L_{2}(\mathbb{R}^{N})}\right|\leq  \|\nabla f\| _{L_{2}(\mathbb{R}^{N})}\|\nabla g\| _{L_{2}(\mathbb{R}^{N})}+\left\| \rho^{1/\lambda-1}\nabla\rho \right\|_{L_{\infty}(\mathbb{R}^{N})}
   \|\nabla f\|_{L_{2}(\mathbb{R}^{N})} \|g\|_{L_{2}(\mathbb{R}^{N},\rho^{-2/\lambda})}
 $$
 $$
  +\,\eta\|f\| _{L_{2}(\mathbb{R}^{N},\rho^{-2/\lambda})}\|g\| _{L_{2}(\mathbb{R}^{N},\rho^{-2/\lambda})}\leq C \|f\|_{\mathfrak{H}_{+}}\|g\|_{\mathfrak{H}_{+}},\,C >0.
$$
Thus, we have a fulfilment of the   hypothesis $\mathrm{H}2$ \cite{kukushkin2021a}. Taking into account a fact  that a  negative space  $L_{2}(\mathbb{R}^{N},\varphi^{-\lambda})$ is involved,
we are being forced  to involve a modification of the hypothesis  $\mathrm{H}1$ \cite{kukushkin2021a} expressed  as follows. There  exist pairs of Hilbert spaces   $ \mathfrak{H}\subset \mathfrak{H}_{-},\,\,\mathfrak{H}_{+}\subset \subset \mathfrak{H}_{-},\,\mathfrak{H}:=L_{2}(\mathbb{R}^{N})$  and a linear manifold $\mathfrak{M}:=C_{0}^{\infty}(\mathbb{R}^{N})$ that is  dense in  $\mathfrak{H}_{+}.$ The operator $L'$ is defined on $\mathfrak{M}.$  However, we can go further and  modify a norm $\mathfrak{H}_{+}$ adding a summand, in this case the considered operator can be changed, we have
$$
\|f\|^{2}_{\mathfrak{H}_{+}}:=\|\nabla f\|^{2}_{L_{2}(\mathbb{R}^{N})}+\|f\|^{2}_{L_{2}(\mathbb{R}^{N},\psi)},\,\psi(x)=(1+|x|)^{-2}+1,\,L':=L+\eta I,\,\eta>0.
$$
Implementing the same reasonings one can prove that in this case the hypothesis $\mathrm{H}2$ \cite{kukushkin2021a} is fulfilled, the modified analog of the hypothesis  $\mathrm{H}1$ \cite{kukushkin2021a} can be formulated as follows.\\

 \noindent ($ \mathrm{H}1^{\ast} $) There  exists a chain  of Hilbert spaces   $\mathfrak{H}_{+}\subset  \mathfrak{H} \subset \mathfrak{H}_{-},\,\mathfrak{H}_{+}\subset\subset \mathfrak{H}_{-} $ and a linear manifold $\mathfrak{M}$ that is  dense in  $\mathfrak{H}_{+}.$ The operator $L'$ is defined on $\mathfrak{M}.$\\

However, we have $\mathfrak{H}_{+}\subset\subset \mathfrak{H}_{-}$ instead of the required inclusion $\mathfrak{H}_{+}\subset\subset \mathfrak{H}.$  This inconvenience can stress a peculiarity of the chosen method, at the same time the central point of the theory - Theorem 1 \cite{kukushkin2021a}  can be reformulated under newly obtained conditions corresponding to both variants of the operator $L'.$      The further step is how to calculate order of the real component $\mathfrak{Re}L':=(L'+L'^{\ast})/2$ (more precise  definition can be seen  see in the paper \cite{kukushkin2021a}).  Formally, we can avoid the appeared difficulties connected with the fact that the set $\mathbb{R}^{N}$ is not bounded since  we can   referee  to  the Fefferman concept  presented in the monograph
\cite[p.47]{firstab_lit:Rosenblum}, in accordance with which we can choose such an unbounded subset of $\mathbb{R}^{N}$ that the relation
 $\lambda_{j}( \mathfrak{Re}  L')\asymp j^{2/N}$ holds, where the symbol $\lambda_{j}$ denotes an eigenvalue.   It gives us  $\mu(\mathfrak{Re} L')=2/N,$ where the symbol $\mu$ denotes   order of the real component of the  operator $L'$(see \cite{kukushkin2021a}).      Thus, we left this question to the reader for  a more detailed  study    and reasonably allow ourselves to  assume that the operator $L'$ has a finite non-zero order. Having obtained analog of Theorem 1 \cite{kukushkin2021a} and order  of the real component of the operator $L'$ we have a key to the theory created in the papers \cite{firstab_lit:1kukushkin2021},\cite{firstab_lit:2kukushkin2022},\cite{firstab_lit(axi2022)}. Now, we can consider a Cauchy problem for the evolution equation with the operator $L'$ in the second term as well as a function of the operator $L'$ in the second term what leads us to the integro-differential evolution equation - it corresponds to an operator function having   finite  principal and major parts of the   Laurent series.

One more example of a non-selfadjoin operator that is not completely subordinated in the sense of forms (see \cite{firstab_lit:Shkalikov A.}, \cite{firstab_lit(arXiv non-self)kukushkin2018}) is given bellow.
   Consider a  differential operator acting in the complex Sobolev  space
$$
\mathcal{L}f := (c_{k}f^{(k)})^{(k)} + (c_{k-1}f^{(k-1)})^{(k-1)}+...+  c_{0}f,
$$
$$
\mathrm{D}(\mathcal{L}) = H^{2k}(I)\cap H_{0}^{k}(I),\,k\in \mathbb{N},
$$
where    $I: = (a, b) \subset \mathbb{R},$ the   complex-valued coefficients
$c_{j}(x)\in C^{(j)}(\bar{I})$ satisfy the condition $  {\rm sign} (\mathrm{Re} c_{j}) = (-1)^{j} ,\, j = 1, 2, ..., k.$
 Consider  a linear combination of   the  Riemann-Liouville  fractional differential   operators
  (see \cite[p.44]{firstab_lit:samko1987})   with the  constant  real-valued  coefficients
$$
\mathcal{D}f:=p_{n}D_{a+}^{\alpha_{n}}+q_{n}D_{b-}^{\beta_{n}}+p_{n-1}D_{a+}^{\alpha_{n-1}}+q_{n-1}D_{b-}^{\beta_{n-1}}+...+
p_{0}D_{a+}^{\alpha_{0}}+q_{0}D_{b-}^{\beta_{0}},
$$
$$
\mathrm{D}(\mathcal{D}) = H^{2k}(I)\cap H_{0}^{k}(I),\,n\in \mathbb{N},
$$
where $\alpha_{j},\beta_{j}\geq 0,\,0 \leq [\alpha_{j}],[\beta_{j}] < k,\, j = 0, 1, ..., n.,\;$
\begin{equation*}
 q_{j}\geq0,\;{\rm sign}\,p_{j}= \left\{ \begin{aligned}
  (-1)^{\frac{[\alpha_{j}]+1}{2}},\,[\alpha_{j}]=2m-1,\,m\in \mathbb{N},\\
\!\!\!\!\!\!\! \!\!\!\!(-1)^{\frac{[\alpha_{j}]}{2}},\;\,[\alpha_{j}]=2m,\,\,m\in \mathbb{N}_{0}   .\\
\end{aligned}
\right.
\end{equation*}
The following result is represented in the paper  \cite{firstab_lit(arXiv non-self)kukushkin2018},    consider the operator
$$
G=\mathcal{L}+\mathcal{D},
$$
$$
\mathrm{D}(G)=H^{2k}(I)\cap H_{0}^{k}(I).
$$
and suppose  $\mathfrak{H} := L_{2}(I),\, \mathfrak{H}^{+} := H_{0}^{k}(I),\,\mathfrak{M}:=C_{0}^{\infty}(I),$ then we have that the operator $G$ satisfies  the
      conditions  H1, H2. Using  the  minimax principle for estimating eigenvalues, we   can easily see that the operator $\mathfrak{Re} G  $  has   non-zero order.
Hence, we can successfully  apply  Theorem 1 \cite{kukushkin2021a}  to the   operator $G,$ in accordance with which the resolvent of the operator $G$ belongs to the Schatten-von Neumann class $\mathfrak{S}_{s}$ with the value of the index $0<s<\infty$ defined by the formula given in Theorem 1 \cite{kukushkin2021a}. Thus, it gives us an opportunity to apply Theorem 3   to the operator.

A couple of words on condition  $ \mathrm{H}1  $  in the context of  operators generating semigroups. Assume that an operator  $-A $ acting in a separable   Hilbert space $\mathfrak{H}$ is   the infinitesimal  generator  of a $C_{0}$ semigroup   such that $A^{-1}$ is compact.  By virtue of   Corollary 2.5 \cite[p.5]{Pasy}, we have that the operator $A$    is    densely defined and closed.
Let us  check fulfilment of    condition $\mathrm{H1},$ consider a   separable  Hilbert   space  $
 \mathfrak{H}_{A}:= \big \{f,g\in \mathrm{D}(A),\,(f,g)_{ \mathfrak{H}_{A}}=(Af,Ag)_{\mathfrak{H} } \big\},
$
the fact that $\mathfrak{H}_{A}$ is separable  follows from   properties of the  energetic space.
Note that         since $A^{-1}$ is compact, then   we conclude that the following   relation holds
 $\|f\|_{\mathfrak{H}}\leq \|A^{-1}\| \cdot \|Af\|_{\mathfrak{H}},\,f\in \mathrm{D}(A) $ and    the embedding provided by this   inequality is compact. Thus, we have obtained in the natural way a pair of Hilbert spaces such that $\mathfrak{H}_{A}\subset\subset \mathfrak{H}.$ We may say that this general property of   infinitesimal  generators  is not so valuable  for requires a rather strong and unnatural  condition of compactness of the inverse operator. However, if we additionaly deal with the semigroup of contractions then   we can formulate a significant result (see Theorem 2 \cite{kukushkin2021a}) allowing us to study spectral properties of the infinitesimal  generator transform
$$
 Z^{\alpha}_{G,F}(A):= A^{\ast}GA+FA^{\alpha},\,\alpha\in [0,1),
$$
where the symbols  $G,F$  denote  operators acting in $\mathfrak{H}.$ Having analyzed the proof of the Theorem   2 \cite{kukushkin2021a} one can easily see that the condition   of contractions can be omitted in the case $\alpha=0.$   The   Theorem 5 \cite{kukushkin2021a} gives us a tool to describe spectral properties  of the  transform $Z^{\alpha}_{G,F}(A).$ Particularly, we can establish the order  of the transform and its belonging to the  Schatten-von Neumann class of the convergence  exponent by virtue of the Theorem 3 \cite{kukushkin2021a}. Thus, having known the index of the Schatten-von Neumann class of the convergence  exponent, we can apply Theorems \ref{T6.1}, \ref{T6.2}, \ref{T6.3} to the transform.

\subsection{Kipriyanov operator}

  We assume that $\Omega$ is a  convex domain of the  $m$ -  dimensional Euclidean space $\mathbb{E}^{m}$, $P$ is a fixed point of the boundary $\partial\Omega,$
$Q(r,\mathbf{e})$ is an arbitrary point of $\Omega;$ we denote by $\mathbf{e}$   a unit vector having a direction from  $P$ to $Q,$ denote by $r=|P-Q|$   the Euclidean distance between the points $P,Q,$ and   use the shorthand notation    $T:=P+\mathbf{e}t,\,t\in \mathbb{R}.$
We   consider the Lebesgue  classes   $L_{p}(\Omega),\;1\leq p<\infty $ of  complex valued functions.  For the function $f\in L_{p}(\Omega),$    we have
\begin{equation}\label{6.24}
\int\limits_{\Omega}|f(Q)|^{p}dQ=\int\limits_{\omega}d\chi\int\limits_{0}^{d(\mathbf{e})}|f(Q)|^{p}r^{m-1}dr<\infty,
\end{equation}
where $d\chi$   is an element of   solid angle of
the unit sphere  surface (the unit sphere belongs to $\mathbb{E}^{m}$)  and $\omega$ is a  surface of this sphere,   $d:=d(\mathbf{e})$  is the  length of the  segment of the  ray going from the point $P$ in the direction
$\mathbf{e}$ within the domain $\Omega.$
Without   loss   of   generality, we consider only those directions of $\mathbf{e}$ for which the inner integral on the right-hand  side of the equality \eqref{6.24} exists and is finite. It is  the well-known fact that  these are almost all directions.
 Denote by  $D_{i}f$  a distributional derivative of the function $f$ with respect to a coordinate variable with index   $1\leq i\leq m.$   For convenience, we use the Einstein   convention    $ a^{i}b_{i}:=\sum^{m}_{i=1}a^{i}b_{i}.$

Here, we study the case $\beta \in (0,1).$ Assume that  $\Omega\subset \mathbb{E}^{m}$ is  a convex domain, with a sufficient smooth boundary ($ C ^{3}$ class)   of the m-dimensional Euclidian space. For the sake of the simplicity, we consider that $\Omega$ is bounded, but  the results  can be extended     to some type of    unbounded domains.
In accordance with the definition given in  the paper  \cite{kukushkin2021a}, we consider the directional  fractional integrals.  By definition, put
$$
 (\mathfrak{I}^{\beta }_{0+}f)(Q  ):=\frac{1}{\Gamma(\beta )} \int\limits^{r}_{0}\frac{f (P+t \mathbf{e} )}{( r-t)^{1-\beta }}\left(\frac{t}{r}\right)^{m-1}\!\!\!\!dt,\,(\mathfrak{I}^{\beta }_{d-}f)(Q  ):=\frac{1}{\Gamma(\beta )} \int\limits_{r}^{d }\frac{f (P+t\mathbf{e})}{(t-r)^{1-\beta }}\,dt,
$$
$$
\;f\in L_{p}(\Omega),\;1\leq p\leq\infty,
$$
  where $\Gamma(\beta )$ is the gamma function.
Also,     we   consider auxiliary operators,   the so-called   truncated directional  fractional derivatives    (see \cite{firstab_lit:1kukushkin2018}).  By definition, put
 \begin{equation*}
 ( \mathfrak{D} ^{\beta}_{0+,\,\varepsilon}f)(Q)=\frac{\beta }{\Gamma(1-\beta )}\int\limits_{0}^{r-\varepsilon }\frac{ f (Q)r^{m-1}- f(P+\mathbf{e}t)t^{m-1}}{(  r-t)^{\beta  +1}r^{m-1}}   dt+\frac{f(Q)}{\Gamma(1-\beta )} r ^{-\beta },\;\varepsilon\leq r\leq d ,
 $$
 $$
 (\mathfrak{D}^{\beta }_{0+,\,\varepsilon}f)(Q)=  \frac{f(Q)}{\varepsilon^{\beta }}  ,\; 0\leq r <\varepsilon ;
\end{equation*}
\begin{equation*}
 ( \mathfrak{D }^{\beta }_{d-,\,\varepsilon}f)(Q)=\frac{\beta }{\Gamma(1-\beta )}\int\limits_{r+\varepsilon }^{d }\frac{ f (Q)- f(P+\mathbf{e}t)}{( t-r)^{\beta  +1}} dt
 +\frac{f(Q)}{\Gamma(1-\beta )}(d-r)^{-\beta },\;0\leq r\leq d -\varepsilon,
 $$
 $$
  ( \mathfrak{D }^{\beta }_{d-,\,\varepsilon}f)(Q)=      \frac{ f(Q)}{\beta } \left(\frac{1}{\varepsilon^{\beta }}-\frac{1}{(d -r)^{\beta } }    \right),\; d -\varepsilon <r \leq d .
 \end{equation*}
  Now, we can  define  the directional   fractional derivatives as follows
 \begin{equation*}
 \mathfrak{D }^{\beta }_{0+}f=\lim\limits_{\stackrel{\varepsilon\rightarrow 0}{ (L_{p}) }} \mathfrak{D }^{\beta }_{0+,\varepsilon} f  ,\;
  \mathfrak{D }^{\beta }_{d-}f=\lim\limits_{\stackrel{\varepsilon\rightarrow 0}{ (L_{p}) }} \mathfrak{D }^{\beta }_{d-,\varepsilon} f ,\,1\leq p\leq\infty.
\end{equation*}
The properties of these operators  are  described  in detail in the papers  \cite{kukushkin2019,firstab_lit:1kukushkin2018}.  We suppose  $\mathfrak{I}^{0}_{0+} =I.$ Nevertheless,   this    fact can be easily proved dy virtue of  the reasonings  corresponding to the one-dimensional case, and is  given in \cite{firstab_lit:samko1987}. We also consider integral operators with a weighted factor (see \cite[p.175]{firstab_lit:samko1987}) defined by the following formal construction
$$
 \left(\mathfrak{I}^{\beta }_{0+}\phi f\right) (Q  ):=\frac{1}{\Gamma(\beta )} \int\limits^{r}_{0}
 \frac{(\phi f) (P+t\mathbf{e})}{( r-t)^{1-\beta }}\left(\frac{t}{r}\right)^{m-1}\!\!\!\!dt,
$$
where $\phi$ is a real-valued  function. Consider a linear combination of the uniformly elliptic operator, which is written in the divergence form, and
  a composition of a   fractional integro-differential  operator, where the fractional  differential operator is understood as the adjoint  operator  regarding  the Kipriyanov operator  (see  \cite{kukushkin2019,firstab_lit:kipriyanov1960,firstab_lit:1kipriyanov1960})
\begin{equation*}
 L  :=-  \mathcal{T}  \, +\mathfrak{I}^{\sigma}_{ 0+}\phi\, \mathfrak{D}  ^{ \beta  }_{d-},
\; \sigma\in[0,1) ,
 $$
 $$
   \mathrm{D}( L )  =H^{2}(\Omega)\cap H^{1}_{0}( \Omega ),
  \end{equation*}
where
$\,\mathcal{T}:=D_{j} ( a^{ij} D_{i}\cdot),\,i,j=1,2,...,m,$
under    the following  assumptions regarding        coefficients
\begin{equation} \label{6.25}
     a^{ij}(Q) \in C^{2}(\bar{\Omega}),\, \mathrm{Re} a^{ij}\xi _{i}  \xi _{j}  \geq   \gamma_{a}  |\xi|^{2} ,\,  \gamma_{a}  >0,\,\mathrm{Im }a^{ij}=0 \;(m\geq2),\,
 \phi\in L_{\infty}(\Omega).
\end{equation}
 Note that in the one-dimensional case, the operator $\mathfrak{I}^{\sigma }_{ 0+} \phi\, \mathfrak{D}  ^{ \beta  }_{d-}$ is reduced to   a  weighted fractional integro-differential operator  composition, which was studied properly  by many researchers
    \cite{firstab_lit:2Dim-Kir,firstab_lit:15Erdelyi,firstab_lit:9McBride,firstab_lit:nakh2003}, more detailed historical review  see  in \cite[p.175]{firstab_lit:samko1987}.  In accordance with the Theorem 9 \cite{kukushkin2021a}, we claim  that the conditions $\mathrm{H}1,\mathrm{H}2$ are fulfilled, if $\gamma_{a}$ is sufficiently large in comparison with $\|\phi\|_{\infty},$  where we put $\mathfrak{M}:=C_{0}^{\infty}(\Omega).$  Note that the order $\mu$ of the operator $H$ can be evaluated easily  through  the order of the regular differential operator   and since the latter can be found by the methods described in \cite{firstab_lit:Rosenblum}. More precisely, we have
$$
C(\mathfrak{Re}\mathcal{T}f,f)_{\mathfrak{H}} \leq(Hf,f)_{\mathfrak{H}}\leq C (\mathfrak{Re}\mathcal{T}f,f)_{\mathfrak{H}},\,f\in C_{0}^{\infty}(\Omega).
$$
Applying the minimax principle, we get $\lambda_{j}(H)\asymp \lambda_{j}(\mathfrak{Re}\mathcal{T}).$
Using  the well-known formula for regular differential operators   $\lambda_{j}(\mathfrak{Re}\mathcal{T})\asymp j^{2/m}$ (see  \cite{firstab_lit:Rosenblum}), we get  $\mu(H)=2/m.$ The results of paragraph \ref{S2.6.1} allow to obtain the Schatten index due to the order of the real component. A special interest may appear by virtue of  the fact    that the   composition of a   fractional integro-differential  operator has been involved.    Since the hypothesis $\mathrm{H}1,\mathrm{H}2$ guarantee the  fulfilment of the condition   $ \Theta(B) \subset   \mathfrak{L}_{0}(\theta),\, \theta n< \pi/2\alpha$   in the case  only $\alpha=n=1,$    then to consider higher powers, we should verify   the fulfilment of the  condition $\mathrm{H}3.$   Firstly,   we should  assume that $a^{ij}=a^{ji}.$ Secondly, we should use relation (27) \cite{kukushkin2021a} in accordance with which, we have
$$
 \left|\left( \mathfrak{I}^{\sigma }_{0+}\phi\,\mathfrak{D}^{\alpha}_{d-}  f,g\right)_{ L _{2 }}\right| \leq C\|f\|_{H_{0}^{1}}\|g\|_{L_{2}},\,f,g\in C^{\infty}_{0}(\Omega).
$$
Thus, the additional condition $\mathrm{H}3$ holds.
  Then in accordance with Theorems \ref{T6.3},\ref{T6.1}   we can claim that  there exists a solution of  the problem \eqref{6.1},
where $W$ is a restriction of $L$ on the set $C_{0}^{\infty}(\Omega),$  the coefficients \eqref{6.25}   are sufficiently smooth to guarantee  the fact  the second term of the equation  \eqref{6.1} (in this case    an operator function of the power type)    has a sense. Note that the solvability of the   uniqueness problem as well as the opportunity to extend the initial condition depends on the accretive property of the operator $ W^{n}.$ The last   problem can be studied by the methods similar to the ones used in the previous paragraphs.      \\

\subsection{ Riesz potential}

Consider a   space $L_{2}(\Omega),\,\Omega:=(-\infty,\infty).$      We denote by $H^{2, \varsigma}_{0}(\Omega)$ the completion of the set  $C^{\infty}_{0}(\Omega)$  with the norm
$$
\|f\|_{H^{2,\varsigma}_{0}(\Omega)}=\left\{\|f\|^{2}_{L_{2}(\Omega) }+\|D^{2}f \|^{2}_{L_{2}(\Omega,\omega^{\varsigma})} \right\}^{1/2},\,  \varsigma\in \mathbb{R},
$$
where
$$
L_{2}(\Omega,\omega^{\varsigma}):=
\left\{f:\;\|f\|_{L_{2}(\Omega,\omega^{\varsigma})}<\infty,
\;\|f\|_{L_{2}(\Omega,\omega^{\varsigma})}:=\left(\int\limits_{\Omega}|f(t)|^{2}\omega^{\varsigma}(t)dt\right)^{1/2} \right\},\,\omega(x):=   1+|x|,
$$
  the above  integral is understood in the Lebesque sense.
Let us notice the following fact  (see Theorem 1 \cite{firstab_lit:1Adams}), if $\varsigma>4,$ then
$
H^{2,\varsigma}_{0}(\Omega)\subset\subset L_{2}(\Omega).
$
Consider a Riesz potential
$$
I^{\beta}f(x)=B_{\beta}\int\limits_{-\infty}^{\infty}f (s)|s-x|^{\beta-1} ds,\,B_{\beta}=\frac{1}{2\Gamma(\beta)  \cos  (\beta \pi / 2)   },\,\beta\in (0,1),
$$
where $f$ is in $L_{p}(\Omega),\,1\leq p<1/\beta.$
It is  obvious that
$
I^{\beta}f= B_{\beta}\Gamma(\beta) (I^{\beta}_{+}f+I^{\beta}_{-}f),
$
where
$$
I^{\beta}_{\pm}f(x)=\frac{1}{\Gamma(\beta)}\int\limits_{0}^{\infty}f (s\mp x) s ^{\beta-1} ds,
$$
these operators are known as fractional integrals on the  whole  real axis   (see \cite[p.94]{firstab_lit:samko1987}). Assume that the following  condition holds
 $ \sigma/2 + 3/4<\beta<1 ,$ where $\sigma$ is a non-negative  constant.
 Following the idea of the   monograph \cite[p.176]{firstab_lit:samko1987},
 consider a sum of a differential operator and  a composition of    fractional integro-differential operators
\begin{equation*}
 W   :=  D^{2} a  D^{2}  +   I^{\sigma}_{+}\,\xi \,I^{2(1-\beta)}D^{2}+\delta I,\;\mathrm{D}(W)=C^{\infty}_{0}(\Omega),
\end{equation*}
where
$
\xi(x)\in L_{\infty}(\Omega),\, a(x)\in L_{\infty}(\Omega)\cap C^{ 2  }( \Omega ),\, \mathrm{Re}\,a(x) >\gamma_{a}(1+|x|)^{5}.
$
Let $\Omega':=[0,\infty),$ consider the functions  $u(t,x),\,t\in \Omega',\,x\in \Omega.$ Similarly to the previous paragraph, we will consider functional spaces with respect to the variable $x$ and we will assume that if $u$ belongs to a functional space then this  fact  holds for all values of the variable $t,$ wherewith all standard topological  properties of a space as completeness, compactness etc. remain correctly defined.
Consider a Cauchy problem \eqref{6.1}  in the corresponding terms.
 Notice that in accordance with the results  \cite{kukushkin2021a}, we can claim that  if $ \min\{\gamma_{a},\delta\}$ is  sufficiently large, then    the  hypothesis $\mathrm{H}1,\mathrm{H}2$ hold  regarding: the operator $\tilde{W} ,$ the set  $C^{\infty}_{0}(\Omega),$ the spaces $L_{2}(\Omega),H^{2,\,5}_{0}(\Omega),$    more precisely we should put
$
\mathfrak{H}:=L_{2}(\Omega),\,\mathfrak{H}_{+}:=H^{2,\,5}_{0}(\Omega),\,              \mathfrak{M}:=C^{\infty}_{0}(\Omega).
$
Thus, in accordance with the condition  $\mathrm{H}2,$ we have
$$
(H f,f)_{L_{2}}=\mathrm{Re}( W  f,f)_{L_{2}}\geq \gamma_{a}\|f\|^{2}_{H^{2,5}_{0}}= C( D^{2}wD^{2} f,f)_{L_{2}}+C( f,f)_{L_{2}},\,f\in C^{\infty}_{0}(\Omega).
$$
where  $w(x)=(1+|x|)^{5}.$ Let us consider the operator $G=D^{2}wD^{2}+\delta I,\, \mathrm{D}(B)=C^{\infty}_{0}(\Omega).$ It is clear that by virtue of the minimax principle, we can estimate the eigenvalues of the operator $W$ via estimating the eigenvalues of the operator $G.$ Hence, we have  come to the problem of estimating the eigenvalues of the singular  operator. Here, we should point out that there exists the Fefferman concept that covers such a kind of problems. For instance, the Rozenblyum result is presented in the monograph
\cite[p.47]{firstab_lit:Rosenblum}, in accordance with which we can choose such an unbounded subset of $\mathbb{R}$ that the relation $\lambda_{j}( G )\asymp j^{4}$ holds.   Thus, we left this question to the reader for  a more detailed  study    and reasonably allow ourselves to  assume that the  condition $\mu(H)=4$ holds. Note that  in the case  $n=1,$ in accordance with the  Theorem 3 \cite{kukushkin2021a}, Theorem \ref{T6.3},  Theorem \ref{T6.1},   we are able to present a solution of  the problem \eqref{6.1}  in the form \eqref{6.5}.
Now, assume additionally that $\mathrm{Im} a=0,$ then $D^{2} a  D^{2}$ is selfadjoint. It follows that the operator $ W$ is selfadjoint.  Applying formula (40)  \cite{kukushkin2021a}, we get
$$
 \|I^{\sigma}_{+}\xi I^{2(1-\alpha)} D^{2}f\|_{L_{2}}\ \leq C\|     f \|_{H^{2,5}_{0}},\,f\in H_{0}^{2,5}(\Omega).
$$
Using this  fact, applying the Cauchy-Schwartz inequality, we conclude that    the condition $\mathrm{H}3$  holds. Moreover, since $ W$ is selfadjoint, then we  can easily prove that
 $
\mathrm{Re}( W^{n} f,f)_{\mathfrak{H}}\geq0,\;n=1,2,...\, .
$
Therefore,   applying Theorem \ref{T6.1},   we can claim that  there exists a solution of the  problem \eqref{6.1},
where   the coefficients  of the operator $W$ are sufficiently smooth to guaranty  the fact  the second term of the equation  \eqref{6.1}   has a sense.
   Moreover,   we can assume  that $f\in \mathfrak{H}$ and claim that the existing solution is unique in the case $\alpha=1.$

\subsection{ Difference operator}

The approach implemented in studying the difference  operator is remarkable due to the appeared opportunity to set   the problem within the framework of the created theory having constructed a  suitable perturbation of the   operator composition.
Consider a   space $L_{2}(\Omega),\,\Omega:=(-\infty,\infty),$   define a family of operators
$$
T_{t}f(x):=e^{-c t}\sum\limits_{k=0}^{\infty}\frac{(c \,t)^{k}}{k!}f(x-d\mu),\,f\in L_{2}(\Omega),\;c,d>0,\; t\geq0,
$$
where convergence is understood in the sense of $L_{2}(\Omega)$ norm. In accordance with the Lemma 6 \cite{kukushkin2021a}, we know that  $T_{t}$ is a $C_{0}$ semigroup of contractions, the corresponding  infinitesimal generator and its adjoint operator are defined by the following expressions
 $$
Yf(x)=c[f(x)-f(x-d)],\,Y^{\ast}f(x)=c[f(x)-f(x+d)],\,f\in L_{2}(\Omega).
$$
Let us find a representation for fractional powers of the operator $Y.$ Using formula  (45) \cite{kukushkin2021a}, we get
$$
   Y^{\beta}f=\sum\limits_{k=0}^{\infty}M_{k}f(x-kd), \,f\in L_{2}(\Omega),
   \,M_{k}= -\frac{\beta\Gamma(k -\beta)}{k!\Gamma(1 -\beta)}c^{\,\beta},\,\beta\in (0,1).
   $$
 We need the following theorem (see the Theorem 14 \cite{kukushkin2021a}).\\

\begin{teo}\label{T5a} Assume that  $Q$ is a   closed operator acting in $L_{2}(\Omega),\,Q^{-1}\in \mathfrak{S}_{\!\infty}(L_{2}),$ the operator $N$ is strictly accretive, bounded, $\mathrm{R}(Q)\subset \mathrm{D}(N).$ Then
a perturbation
$$
L:= Y^{\ast}\!a Y+b Y^{\beta}+ Q^{\ast}N Q ,\;a,b\in L_{\infty}(\Omega),
$$
   satisfies conditions  H1 and H2, if  $\gamma_{N}>\sigma\|Q^{-1}\|^{2},$
where we put $\mathfrak{M}:=\mathrm{D}_{0}(Q),$
$$
 \sigma= 4c\|a\|_{L_{\infty}}+\|b\|_{L_{\infty}}\frac{\beta c^{\,\beta}   }{\Gamma(1 -\beta)}
 \sum\limits_{k=0}^{\infty}\frac{ \Gamma(k -\beta)}{k! }.
$$
\end{teo}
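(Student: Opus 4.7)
The plan is to verify conditions H1 and H2 directly by choosing an appropriate pair of Hilbert spaces and splitting $L$ into its coercive and perturbative parts. I take $\mathfrak{H}:=L_{2}(\Omega)$ and $\mathfrak{H}_{+}:=\mathfrak{H}_{Q}$, where the energetic space is equipped with $(f,g)_{\mathfrak{H}_{Q}}=(Qf,Qg)_{\mathfrak{H}}$; this is a Hilbert space because $Q$ is closed and $\mathrm{N}(Q)=0$ (the latter follows from the existence of $Q^{-1}$). The compact embedding $\mathfrak{H}_{+}\subset\subset\mathfrak{H}$ is an immediate consequence of the compactness of $Q^{-1}$: any set bounded in $\mathfrak{H}_{Q}$-norm is of the form $Q^{-1}(E)$ with $E$ bounded in $\mathfrak{H}$, hence is relatively compact in $\mathfrak{H}$. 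The linear manifold $\mathfrak{M}:=\mathrm{D}_{0}(Q)$ is dense in $\mathfrak{H}_{+}$ by the definition of a core, and on account of the hypothesis $\mathrm{R}(Q)\subset\mathrm{D}(N)$ together with the global boundedness of $Y$ and $Y^{\beta}$ on $L_{2}(\Omega)$, the operator $L$ is well-defined on $\mathfrak{M}$. This establishes H1.

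For H2 I decompose $L=S+T$ with $S:=Y^{\ast}aY+bY^{\beta}$ and $T:=Q^{\ast}NQ$, treating $T$ as the coercive principal part and $S$ as a perturbation controlled by $\mathfrak{H}$-norms. Since $N$ is bounded and strictly accretive with lower bound $\gamma_{N}$,
\[
\mathrm{Re}(Tf,f)_{\mathfrak{H}}=\mathrm{Re}(NQf,Qf)_{\mathfrak{H}}\geq\gamma_{N}\|Qf\|_{\mathfrak{H}}^{2}=\gamma_{N}\|f\|_{\mathfrak{H}_{+}}^{2},\qquad |(Tf,g)_{\mathfrak{H}}|\leq\|N\|\,\|f\|_{\mathfrak{H}_{+}}\|g\|_{\mathfrak{H}_{+}}.
\]
For $S$ I would show $\|Sf\|_{\mathfrak{H}}\leq\sigma\|f\|_{\mathfrak{H}}$ with the constant $\sigma$ given in the statement. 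The contribution of $Y^{\ast}aY$ is handled via the elementary bound $\|Yf\|_{L_{2}}\leq 2c\|f\|_{L_{2}}$ (since $Yf=c(f-f(\cdot-d))$ and the shift is an isometry on $L_{2}$), producing the term $4c\|a\|_{L_{\infty}}$. The contribution of $bY^{\beta}$ requires the series representation $Y^{\beta}f=\sum_{k=0}^{\infty}M_{k}f(\cdot-kd)$ with $M_{k}=-\beta\Gamma(k-\beta)c^{\beta}/(k!\,\Gamma(1-\beta))$, which I obtain from the Balakrishnan formula and extend from $\mathrm{D}(Y)$ to all of $L_{2}(\Omega)$ using closedness of $Y^{\beta}$ exactly as in the derivation of formula \eqref{3.36}; the triangle inequality and isometry of shifts then give $\|Y^{\beta}f\|_{L_{2}}\leq\bigl(\sum_{k}|M_{k}|\bigr)\|f\|_{L_{2}}$, yielding the term $\|b\|_{L_{\infty}}\beta c^{\beta}\Gamma(1-\beta)^{-1}\sum_{k\geq 0}\Gamma(k-\beta)/k!$ in $\sigma$.

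Using now the embedding inequality $\|f\|_{\mathfrak{H}}\leq\|Q^{-1}\|\cdot\|f\|_{\mathfrak{H}_{+}}$, the perturbation estimate upgrades to
\[
|(Sf,g)_{\mathfrak{H}}|\leq\sigma\|Q^{-1}\|^{2}\|f\|_{\mathfrak{H}_{+}}\|g\|_{\mathfrak{H}_{+}},\qquad \mathrm{Re}(Sf,f)_{\mathfrak{H}}\geq-\sigma\|Q^{-1}\|^{2}\|f\|_{\mathfrak{H}_{+}}^{2}.
\]
Adding the bounds for $S$ and $T$ delivers both inequalities of H2 with $C_{1}=\|N\|+\sigma\|Q^{-1}\|^{2}$ and $C_{2}=\gamma_{N}-\sigma\|Q^{-1}\|^{2}$, the latter being strictly positive exactly under the hypothesis $\gamma_{N}>\sigma\|Q^{-1}\|^{2}$. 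The principal technical obstacle is the convergence analysis of the series defining $Y^{\beta}$ and the justification that it agrees with the Balakrishnan fractional power on a core of $Y$, for which the key ingredient is the asymptotic $\Gamma(k-\beta)/k!\sim k^{-1-\beta}$ (ensuring absolute summability) together with the closed-graph argument extending the identity from $\mathrm{D}(Y)$ to $L_{2}(\Omega)$; beyond this, the verification of H1 and H2 is a straightforward assembly of the estimates above.
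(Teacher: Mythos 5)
Your proposal is correct and follows essentially the same route as the paper's own proof (of Theorem \ref{T3.4}): take $\mathfrak{H}=L_{2}(\Omega)$, $\mathfrak{H}_{+}=\mathfrak{H}_{Q}$, get H1 from compactness of $Q^{-1}$, split $L=S+T$ with $T=Q^{\ast}NQ$ coercive on $\mathfrak{H}_{Q}$ and $S$ bounded on $L_{2}$ with norm $\sigma$ via the series representation of $Y^{\beta}$ extended by closedness, then absorb $S$ through $\|f\|_{\mathfrak{H}}\leq\|Q^{-1}\|\,\|f\|_{\mathfrak{H}_{+}}$. The only point worth flagging is one you inherit from the statement itself: the bound $\|Yf\|_{L_{2}}\leq 2c\|f\|_{L_{2}}$ applied to $(aYf,Yg)$ actually yields $4c^{2}\|a\|_{L_{\infty}}$ rather than $4c\|a\|_{L_{\infty}}$, so the first term of $\sigma$ as written appears to be a typo common to both your argument and the paper's.
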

Observe that by virtue of the made assumptions regarding   $Q,$ we have $\mathfrak{H}_{Q}\subset\subset L_{2}(\Omega).$ We have chosen  the space  $L_{2}(\Omega)$ as a space $\mathfrak{H}$ and the space  $\mathfrak{H}_{Q} $ as a space $\mathfrak{H}_{+}.$   Applying  the condition   $\mathrm{H}2,$   we get
$$
C (Q^{\ast}N Qf,f)_{\mathfrak{H}}\leq(Hf,f)_{\mathfrak{H}}\leq C (Q^{\ast}N Qf,f)_{\mathfrak{H}},\;f\in \mathrm{D}_{0}(Q),
$$
where $H$ is a real part of $W.$ Therefore, by virtue of the minimax principle, we get $\lambda_{j}(H)\asymp \lambda_{j}(Q^{\ast}N Q).$
Hence $\mu(H)=\mu(Q^{\ast}N Q).$
Thus, we have naturally  come to the significance  of the operator $Q$ and the remarkable fact   that we can fulfill the conditions of  Theorems  \ref{T6.1},\ref{T6.3} choosing the operator $Q$ in the artificial way.
  Applying Theorem  \ref{T6.1}, we can claim that  there exists a solution of the  problem \eqref{6.1},
where $W$ is a restriction of $L$ on the set $\mathfrak{M}$ (see Introduction),  functions $a,b$ are sufficiently smooth to guaranty  the fact  the right-hand side of \eqref{6.1} has a sense. The extension  of the initial conditions on the whole space $\mathfrak{H},$ as well as  solvability of the  uniqueness problem can be implemented in the case  when the operator $W^{n}$  is accretive. In its own turn, it is clear that the particular methods,   to establish the  accretive property, can differ and may depend on the concrete form of the operator $Q.$\\

\subsection{  Artificially constructed normal operator }

In this paragraph we consider an operator class  which cannot be completely  studied by methods  \cite{firstab_lit:1Lidskii}, at the same time   Theorem 3 \cite{firstab_lit:2kukushkin2022} (Theorem \ref{T4.7}) gives us a rather relevant result. Our aim is to construct a normal operator $N$ being satisfied the  Theorem 3 \cite{firstab_lit:2kukushkin2022}  conditions, such that  $N\in \mathfrak{S}_{p},\,0<p<1$ and at the same time    $N\,\overline{\in}\,  \mathfrak{S}_{\rho},\,\rho=\inf p.$
We use  Example \ref{E5.1}, Chapter \ref{Ch5} as a base and consider a sequence $\mu_{n}=n^{\kappa}\ln^{\kappa} (n+q) \cdot \ln^{\kappa}\ln (n+q),\,q>e^{e}-1,\,n\in \mathbb{N}.$  In Chapter \ref{Ch5} it is shown that
 $$
 (\ln^{\kappa+1}x)'_{\mu_{n} }  =o(  n^{-\kappa}   ),\,  \kappa>0,
$$
and at the same time
$$
\sum\limits_{n=1}^{\infty}\frac{1}{|\mu_{n}|^{1/\kappa}}=\infty.
$$

Consider the abstract separable Hilbert space $\mathfrak{H}$ and an operator $N$ acting in the space as follows
$$
Nf=\sum\limits_{n=1}^{\infty}\lambda_{n}f_{n}e_{n},\;f_{n}=(f,e_{n})_{\mathfrak{H}},\;
 \lambda_{n}=\mu_{n}+i \eta_{n},
$$
where $\{e_{n}\}_{1}^{\infty}\subset \mathfrak{H}$ is an orthonormal basis, the sequence  $\{\mu_{n}\}_{1}^{\infty}$  is defined above, the  sequence $\{\eta_{n}\}_{1}^{\infty}$ is satisfied the following condition $|\eta_{n}| \leq |\lambda_{n}|^{1/2},\; n=1,2,...\,.$
Define a space $\mathfrak{H}_{+}$ as a subset of $\mathfrak{H}$ endowed with a special norm, i.e.
$$
\mathfrak{H}_{+}:=\left\{f\in \mathfrak{H}:\;\|f\|^{2}_{\mathfrak{H}_{+}}:=\sum\limits_{n=1}^{\infty}|\lambda_{n}||f_{n}|^{2}<\infty\right\}.
$$
It is clear that $\mathfrak{H}_{+}$ is dense in $\mathfrak{H},$ since  $\{e_{n}\}_{1}^{\infty}\subset \mathfrak{H}_{+}.$ Let us show that embedding of the spaces $\mathfrak{H}_{+}\subset \mathfrak{H}$ is compact. Consider the operator $G: \mathfrak{H}\rightarrow \mathfrak{H}$ defined as follows
$$
Gf=\sum\limits_{n=1}^{\infty}|\lambda_{n}|^{-1/2}f_{n}e_{n}.
$$
  The fact that  $G$ is a compact operator can be proved easily due to the well-known  criterion of compactness in the Banach space endowed with a basis (we left the prove  to the reader).
Notice that if $f\in \mathfrak{H}_{+},$ then $g\in\mathfrak{H},$  where $g$ is defined by its   Fourier coefficients     $g_{n}=|\lambda_{n}|^{1/2}|f_{n}|.$ By virtue of such a correspondence, we can consider any bounded set in the space $\mathfrak{H}_{+}$ as a bounded set in the space $\mathfrak{H}.$ Applying the operator $G$ to the element $g,$ we get the element $f.$ Since  $G$ is a compact operator, then  we   conclude that the image of the bounded set in the sense of the norm $\mathfrak{H}_{+}$  is a compact set in the sense of the norm $\mathfrak{H}.$
Define the set $\mathfrak{M}$ as a linear manifold generated by  the basis vectors.  Thus, we have obtained the relation $\mathfrak{H}_{+}\subset\subset\mathfrak{H}$ and established the fulfilment of the condition $\mathrm{H}1.$     The first relation of the condition $\mathrm{H}2$ can be obtained  easily due to the application of the Cauchy-Schwarz inequality. To obtain the second one, we should note that  $ \eta_{n} ^{2}+ \mu^{2}_{n} \leq |\lambda_{n}| +\mu^{2}_{n};\, \mu _{n} \geq |\lambda_{n}| (1-|\lambda_{n}|^{-1})^{1/2}\geq M|\lambda_{n}|,\, M=(1-|\mu_{1}|^{-1})^{1/2},\, n=1,2,...\,.$   Therefore
$$
\mathrm{Re}(Nf,f)_{\mathfrak{H}}=\sum\limits_{n=1}^{\infty}\mathrm{Re} \lambda_{n} |f_{n}|^{2} \geq  M  \sum\limits_{n=1}^{\infty}  |\lambda_{n}| |f_{n}|^{2}= M \|f\|^{2}_{\mathfrak{H}_{+}}.
$$
Now, we  conclude that hypotheses $\mathrm{H}1, \mathrm{H}2$ hold. Let us show that the condition $\mathrm{H}3$ is satisfied, we have
$$
\left|\mathrm{Im}(Nf,g)_{\mathfrak{H}}\right|=\left|\sum\limits_{n=1}^{\infty}\mathrm{Im} \lambda_{n}  f_{n} \overline{g}_{n} \right| \leq
 \sum\limits_{n=1}^{\infty}  |\lambda_{n}|^{1/2} | f_{n}|  |g_{n}|  \leq \|f\|_{\mathfrak{H}_{+}}\|g\|_{\mathfrak{H}},
$$
thus we have obtained the desired result.  Consider an abstract  Cauchy problem
\begin{equation}\label{6.26}
\mathfrak{D}^{1/\alpha}_{-}u(t)-N^{1/\kappa}u(t)=0,\;u(0)=f\in \mathrm{D}(N),\;\kappa= 1/2,1/3,...\,,\;\alpha\geq 1,
\end{equation}
where $f$ is supposed to be an  arbitrary element, if the operator $\mathfrak{D}^{1-1/\alpha}_{-}N^{1/\kappa}$ is accretive.
In accordance with       Theorem \ref{T6.3}, Theorem \ref{T6.1} (see in original version Theorem 3 \cite{firstab_lit:2kukushkin2022}), we conclude that there exists a solution of  problem \eqref{6.26} represented by  series \eqref{6.5}. Assume that $\alpha=1$ and  let us establish conditions under which being imposed $N^{1/\kappa}$ is an accretive operator.
Note that using the  relation between  the real and imaginary parts of an eigenvalue, we get
$$
\mu _{n} \geq  (1-|\lambda_{n}|^{-1})^{1/2}|\lambda_{n}|\geq (1-|\lambda_{n}|^{-1})^{1/2} \eta^{2}_{n}\geq (1-|\mu_{1}|^{-1})^{1/2} \eta^{2}_{n}>(1-e^{-\kappa})^{1/2} \eta^{2}_{n}.
$$
Hence, the eigenvalues lie in a parabolic domain $\mathfrak{W}:=\{z\in \mathbb{C},\,\mathrm{Re} z>(1-e^{-\kappa})^{1/2} \mathrm{Im}^{2}z\}.$ Notice that a condition
 \begin{equation}\label{6.27}
|\mathrm{arg}\lambda_{n}|\leq \pi \kappa/2,\,n=1,2,...\,
\end{equation}
   guarantees the following fact
$$
\mathrm{Re }\lambda_{n}^{1/\kappa}=|\lambda_{n}|^{1/\kappa} \cos   \left(\frac{\mathrm{arg}\lambda_{n}}{\kappa}   \right) \geq 0,\,n=1,2,...\,
$$
from what follows the desired accretive property
\begin{equation}\label{6.28}
\mathrm{Re}(N^{1/\kappa}f,f)_{\mathfrak{H}}= \sum\limits_{n=1}^{\infty}\mathrm{Re}\, \lambda^{1/\kappa}_{n} |f_{n}|^{2}\geq 0.
\end{equation}
 It is clear that the   condition \eqref{6.27} holds for the eigenvalues with  sufficiently large numbers of  indexes, since they lie in the parabolic domain $\mathfrak{W}.$ Thus, we see that    a finite number of eigenvalues do not satisfy the condition  \eqref{6.27}. Here, we should note that using simple reasonings (we left them to the reader), we can find the initial number, starting  from which   condition \eqref{6.27} holds. Now, it is clear that if we additionally assume that $|\eta_{n}|/\mu_{n}\leq \tan(\pi \kappa/2),\,n=1,2,...,N,$ where $N$ is a sufficiently large number, then we obtain  \eqref{6.28}. Therefore, in accordance with Theorem \ref{T6.1},  we are capable to  extend the initial condition assuming  that $f\in \mathfrak{H}$ and claim that the existing solution is unique.
      The constructed normal operator indicates the significance  of the made  in Theorem \ref{T4.7} (the original version is represented by  Theorem 3 \cite{firstab_lit:2kukushkin2022})  clarification  of the results \cite{firstab_lit:1Lidskii}.

\section{Review}

In this section, we invented a method to study a Cauchy problem for the  abstract fractional evolution equation  with the operator function in the second term. The considered class corresponding to the operator-argument is rather wide and includes non-selfadjoint unbounded operators.
  As  a main result we  represent  a technique   allowing  to principally weaken conditions imposed upon the second term not containing the time variable. Obviously, the application section of the paper is devoted  to the theory of fractional differential equations.

  The invented method  allows to solve the Cauchy problem for the  abstract fractional evolution equation   analytically what is undoubtedly a great  advantage,
   we used  results of  the spectral theory of non-selfadjoint operators as a base for studying the mathematical objects. Characteristically, that the operator function is defined on a special operator class covering the infinitesimal generator transform (see \cite{kukushkin2021a}), where a corresponding semigroup is assumed  to be a strongly  continuous  semigroup of contractions. The corresponding particular cases  leads us  to a linear composition of differential operators of real order in various senses listed in the introduction section. In connection with this, various types of  fractional integro-differential operators  can be considered what  becomes clear if we involve an operator function represented by the Laurent series with finite principal and regular parts. Moreover, the artificially constructed normal operator \cite{firstab_lit:2kukushkin2022} belonging to the special operator  class indicates that the application part is beyond the class of differential operators of  real order.
Bellow, we represent a comparison analysis to show brightly the main contribution of the paper, particularly  the newly invented method allowing us to consider  an entire function as the operator function. First of all the technique related to the proof of the contour integral convergence is similar to the papers
 \cite{firstab_lit:1Lidskii},\cite{firstab_lit:1kukushkin2021},\cite{firstab_lit:2kukushkin2022},\cite{firstab_lit(axi2022)} one can italicize a similar  scheme of reasonings, but the last one is nothing without the required properties of the considered entire function. Such theorems as the Wieman  theorem, the theorem on the entire function  growth regularity and their applications form the main author's creative  contribution to the paper. To be honest, it was not so easy to find such a condition that makes the contour integral be  convergent on the entire function, we should note that the latter idea in its precise statement  has not been   considered previously. The following fact is also  worth noting -  a suitable algebraic reasonings having noticed by the author and allowing us to involve   a fractional derivative in the first term. This idea  allows to cover many results in the framework of the theory of fractional differential equations. The latter   what is if not  a relevant result!
As for other mathematicians, here the Lidskii name ought to be sounded, however   the peculiarities of  the author's own technique have been shown and discussed  in the papers \cite{firstab_lit(arXiv non-self)kukushkin2018}, \cite{kukushkin2021a}, \cite{firstab_lit:1kukushkin2021} one  can study them properly. We may say that the main concept of the root vector series expansion  jointly  with the method analogous to the Abel's one belong   to Lidskii what is reflected in the name -- Abel-Lidskii sense of the series   convergence. As for the author's contribution to this method it is not so small as one  can observe in the paper \cite{firstab_lit:1kukushkin2021} for  the main  result establishes  clarification of the results by Lidskii. In the framework of the discussion  the following papers by Markus \cite{firstab_lit:2Markus}, Matsaev \cite{firstab_lit:Markus Matsaev}, Shkalikov \cite{firstab_lit:Shkalikov A.} can be undergone to a comparison analysis. The latter represents in the paper \cite{firstab_lit:Shkalikov A.} only  an idea of the proof of Theorem 5.1  even  the statement  of which differs from the statement of Theorem 4 \cite{firstab_lit:1kukushkin2021}
which is provided  with a detailed proof and clarifies  the  Lidskii results  represented in \cite{firstab_lit:1kukushkin2021}. A particular attention can be drown to a special class of operators with   which, due to the author's results  \cite{kukushkin2021a}, the reader  can  successfully  deal. The latter benefit stresses relevance of the results for initially the theoretical results in the framework of the developed  direction of the  spectral theory \cite{firstab_lit(arXiv non-self)kukushkin2018},\cite{kukushkin2021a} originated from the ones \cite{kukushkin2019} devoted to uniformly elliptic non-selfadjoint operators which  can not be covered by the results by   Markus \cite{firstab_lit:2Markus}, Matsaev \cite{firstab_lit:Markus Matsaev}, Shkalikov \cite{firstab_lit:Shkalikov A.}  due to the  absence   of a so-called    complete subordination condition  imposed  upon the operator (a corresponding example is given in the paper\cite{firstab_lit(arXiv non-self)kukushkin2018}).\\

\noindent {\bf Acknowledgment}\\

\noindent The author is sincerely grateful to Professor Boris G. Vakulov for invaluable discussions.

\end{document}